\newcommand{\Z}{\mathbb{Z}}
\newcommand{\C}{\mathbb{C}}
\newcommand{\F}{\mathbb{F}}
\newcommand{\Q}{\mathbb{Q}}
\newcommand{\Ql}{\mathbb{Q}_\ell}
\newcommand{\Fp}{\mathbb{F}_p}
\newcommand{\bk}{\Bbbk}
\newcommand{\sA}{\mathsf{A}}
\newcommand{\lfrown}{\mathbin{\acute{\frown}}}
\newcommand{\rfrown}{\mathbin{\grave{\frown}}}
\newcommand{\Mod}{\mathrm{Mod}}
\DeclareMathOperator{\Sym}{Sym}
\newcommand{\Kb}{K^{\mathrm{b}}}
\newcommand{\mix}{{\mathrm{mix}}}
\newcommand{\mon}{{\mathrm{mon}}}
\newcommand{\Db}{D^{\mathrm{b}}}
\newcommand{\Dmix}{D^\mix}
\newcommand{\cone}{\mathrm{cone}}
\newcommand{\coH}{\mathsf{H}}
\newcommand{\Kar}{{\mathrm{Kar}}}
\newcommand{\fh}{\mathfrak{h}}
\newcommand{\uu}[1]{\underline{#1}}
\newcommand{\uv}{{\underline{v}}}
\newcommand{\uw}{{\underline{w}}}
\newcommand{\ux}{{\underline{x}}}
\newcommand{\s}[1]{{}_s\widehat{#1}}
\newcommand{\sov}[1]{{}_s\overline{#1}}
\newcommand{\uov}[1]{{}_u\overline{#1}}
\newcommand{\vov}[1]{{}_v\overline{#1}}
\newcommand{\uvov}[1]{{}_{\check u}\overline{#1}}
\renewcommand{\ss}[1]{\widehat{#1}_s}
\newcommand{\ssov}[1]{\overline{#1}_s}
\newcommand{\ttov}[1]{\overline{#1}_t}
\renewcommand{\t}[1]{{}_t\widehat{#1}}
\newcommand{\tov}[1]{{}_t\overline{#1}}
\renewcommand{\tt}[1]{\widehat{#1}_t}
\newcommand{\tdelta}{\widetilde{\delta}}
\renewcommand{\bot}{\mathrm{bot}}
\newcommand{\JW}{\mathrm{JW}}
\newcommand{\Br}{\mathrm{Br}}
\newcommand{\TD}{\mathrm{TD}}
\newcommand{\LL}{\mathrm{LL}}
\newcommand{\hW}{\widehat{W}}
\newcommand{\ovu}[1]{\overline{#1}_u}
\newcommand{\ovuv}[1]{\overline{#1}_{\check u}}
\newcommand{\huv}[1]{\widehat{#1}_{\check u}}
\newcommand{\cB}{\mathcal{B}}
\newcommand{\GKM}{\mathscr{G}}
\newcommand{\BKM}{\mathscr{B}}
\newcommand{\TKM}{\mathscr{T}}
\newcommand{\UKM}{\mathscr{U}}
\newcommand{\bX}{\mathbf{X}}
\newcommand{\Zdem}{{\Z'}}
\newcommand*\leftdash{\!\rotatebox[origin=c]{-45}{$\dabar@\dabar@\dabar@$}\!}
\newcommand*\rightdash{\!\rotatebox[origin=c]{45}{$\dabar@\dabar@\dabar@$}\!}
\renewcommand{\fatslash}{\!\mathord{\mathchar"2728}\;}
\renewcommand{\fatbslash}{\mathord{\mathchar"2729}}
\newcommand{\BGB}{\BKM \backslash \GKM / \BKM}
\newcommand{\UGB}{\UKM \fatbslash \GKM/\BKM}
\newcommand{\UGBold}{\UKM \backslash \GKM/\BKM}
\newcommand{\UGU}{\UKM \fatbslash \GKM\fatslash \UKM}
\newcommand{\UGUby}{\UKM\leftdash\GKM\rightdash\UKM}
\newcommand{\UGUvee}{\UKM^\vee \fatbslash \GKM^\vee\fatslash \UKM^\vee}
\newcommand{\UGUveeby}{\UKM^\vee \leftdash \GKM^\vee\rightdash \UKM^\vee}
\newcommand{\BGBvee}{\BKM^\vee\backslash \GKM^\vee/\BKM^\vee}
\newcommand{\BGUvee}{\BKM^\vee\backslash \GKM^\vee/\UKM^\vee}
\newcommand{\loc}{\mathrm{loc}}
\newcommand{\Loc}{\mathsf{Loc}}
\newcommand{\Parity}{\mathrm{Parity}}
\newcommand{\ParityBS}{\mathrm{Parity}_{\mathrm{BS}}}
\newcommand{\cE}{\mathcal{E}}
\newcommand{\Diag}{\mathscr{D}}
\newcommand{\DiagBS}{\mathscr{D}_{\mathrm{BS}}}
\newcommand{\DiagBSp}{\mathscr{D}_{\mathrm{BS}}^\oplus}
\newcommand{\oDiagBS}{\overline{\mathscr{D}}_{\mathrm{BS}}}
\newcommand{\oDiagBSp}{\overline{\mathscr{D}}{}^\oplus_{\mathrm{BS}}}
\newcommand{\oDiag}{\overline{\mathscr{D}}}
\newcommand{\even}{\mathrm{ev}}
\newcommand{\odd}{\mathrm{od}}
\newcommand{\ustar}{\mathbin{\underline{\star}}}
\newcommand{\Perv}{\mathrm{Perv}}
\newcommand{\Tilt}{\mathrm{Tilt}}
\newcommand{\TiltBSp}{\mathrm{\Tilt}_{\mathrm{BS}}^\oplus}
\newcommand{\cT}{\mathcal{T}}
\newcommand{\Tmon}{\widetilde{\mathcal{T}}}
\newcommand{\hatstar}{\mathbin{\widehat{\star}}}
\newcommand{\tD}{\widetilde{\Delta}}
\newcommand{\tN}{\widetilde{\nabla}}
\newcommand{\Conv}{\mathrm{Conv}}
\newcommand{\BE}{\mathsf{BE}}
\newcommand{\RE}{\mathsf{RE}}
\newcommand{\LM}{\mathsf{LM}}
\newcommand{\FM}{\mathsf{FM}}
\newcommand{\ForBERE}{\mathsf{For}^{\BE}_{\RE}}
\newcommand{\ForBELM}{\mathsf{For}^{\BE}_{\LM}}
\newcommand{\ForLMRE}{\mathsf{For}^{\LM}_{\RE}}
\newcommand{\ForFMLM}{\mathsf{For}^{\FM}_{\LM}}
\newcommand{\bs}{\mathbf{s}}
\newcommand{\bt}{\mathbf{t}}
\newcommand{\bu}{\mathbf{u}}
\newcommand{\cF}{\mathcal{F}}
\newcommand{\cG}{\mathcal{G}}
\newcommand{\cH}{\mathcal{H}}
\newsavebox\lowerdot
\savebox\lowerdot{%
\begin{tikzpicture}[scale=0.3,thick,baseline]
 \draw (0,-0.5) to (0,0.5);
 \node at (0,-0.5) {$\bullet$};
% \node at (0,1.4) {\small $s$};
\end{tikzpicture}%
}
\newsavebox\upperdot
\savebox\upperdot{%
\begin{tikzpicture}[scale=0.3,thick,baseline]
 \draw (0,-0.5) to (0,0.5);
 \node at (0,0.5) {$\bullet$};
% \node at (0,1.4) {\small $s$};
\end{tikzpicture}%
}
\newsavebox\upperlowerdot
\savebox\upperlowerdot{%
\begin{tikzpicture}[scale=0.3,thick,baseline]
 \draw (0,-1) to (0,-0.4);
 \draw (0,0.4) to (0,1);
 \node at (0,-0.4) {$\bullet$};
 \node at (0,0.4) {$\bullet$};
% \node at (0,1.4) {\small $s$};
\end{tikzpicture}%
}
\newsavebox\lowerupperdot
\savebox\lowerupperdot{%
\begin{tikzpicture}[scale=0.3,thick,baseline]
 \draw (0,-0.5) to (0,0.5);
 \node at (0,-0.5) {$\bullet$};
 \node at (0,0.5) {$\bullet$};
% \node at (0,1.4) {\small $s$};
\end{tikzpicture}%
}
\newsavebox\capmor
\savebox\capmor{%
\begin{tikzpicture}[yscale=0.1,xscale=0.1,baseline,thick] \draw[black] (-1,0) to[out=90, in=180] (0,2) to[out=0, in=90] (1,0); \end{tikzpicture}%
}
\newsavebox\cupmor
\savebox\cupmor{%
\begin{tikzpicture}[yscale=0.1,xscale=0.1,thick] \draw[black] (-1,2) to[out=90, in=180] (0,0) to[out=0, in=90] (1,2); \end{tikzpicture}%
}
\newsavebox\invymor
\savebox\invymor{%
\begin{tikzpicture}[yscale=0.2,xscale=0.1,baseline,thick] \draw (-1,-1) -- (0,0) -- (1,-1); \draw (0,0) -- (0,1); \end{tikzpicture}%
}
\newsavebox\ymor
\savebox\ymor{%
\begin{tikzpicture}[yscale=-0.2,xscale=0.1,baseline,thick] \draw (-1,-1) -- (0,0) -- (1,-1); \draw (0,0) -- (0,1); \end{tikzpicture}%
}
\newcommand{\id}{\mathrm{id}}
\newcommand{\pt}{\mathrm{pt}}
\newcommand{\simto}{\overset{\sim}{\to}}
\newcommand{\la}{\langle}
\newcommand{\ra}{\rangle}
\DeclareMathOperator{\Hom}{Hom}
\DeclareMathOperator{\End}{End}
\DeclareMathOperator{\uHom}{\underline{Hom}}
\DeclareMathOperator{\uEnd}{\underline{End}}
\DeclareMathOperator{\gHom}{\mathbb{H}\mathsf{om}}
\DeclareMathOperator{\gEnd}{\mathbb{E}\mathsf{nd}}
\newcommand{\sdots}{\ldots}
\def\lotimes{\@ifnextchar_{\@lotimessub}{\@lotimesnosub}}
\def\@lotimessub_#1{\mathchoice{\mathbin{\mathop{\otimes}^L}_{#1}}%
  {\otimes^L_{#1}}{\otimes^L_{#1}}{\otimes^L_{#1}}}
\def\@lotimesnosub{\mathbin{\mathop{\otimes}^L}}
\numberwithin{equation}{chapter}
\numberwithin{section}{chapter}
\numberwithin{figure}{chapter}
\newtheorem{thm}{Theorem}[section]
\newtheorem{lem}[thm]{Lemma}
\newtheorem{prop}[thm]{Proposition}
\newtheorem{cor}[thm]{Corollary}
\theoremstyle{definition}
\newtheorem{defn}[thm]{Definition}
\theoremstyle{remark}
\newtheorem{rmk}[thm]{Remark}
\newtheorem{ex}[thm]{Example}
\title{Free-monodromic mixed tilting sheaves \\ on flag varieties}
\author[P. N. Achar]{Pramod N. Achar}
\address{Department of Mathematics\\
  Louisiana State University\\
  Baton Rouge, LA 70803\\
  U.S.A.}
\email{pramod@math.lsu.edu}
\author[S. Makisumi]{Shotaro Makisumi}
\address{Department of Mathematics\\
Columbia University\\
New York, NY 10027\\
U.S.A.}
\email{makisumi@math.columbia.edu}
\author[S. Riche]{Simon Riche}
\address{Universit\'e Clermont Auvergne, CNRS, LMBP, F-63000 Clermont-Ferrand, France.
}
\email{simon.riche@uca.fr}
\author[G. Williamson]{Geordie Williamson}
\address{School of Mathematics and Statistics F07, University of
  Sydney NSW 2006, Australia. }
\email{g.williamson@sydney.edu.au}
\begin{document}
\frontmatter

\begin{abstract}
In this paper we propose a construction of a monoidal category of
``free-monodromic'' tilting perverse sheaves on (Kac--Moody) flag
varieties in the setting of the ``mixed modular derived category''
introduced by the first and third authors. This category shares most of the
properties of their counterpart in characteristic~$0$, defined by
Bezrukavnikov--Yun using certain pro-objects in triangulated
categories. This construction is the main new ingredient in the
construction of a ``modular Koszul duality'' equivalence for
constructible sheaves on flag varieties, see~\cite{mkdkm}.
\end{abstract}

\maketitle
\tableofcontents

\mainmatter
%==========================================================================
\chapter{Introduction}
%==========================================================================

%%%%%%%%%%%%%%%%%%%%%%%%%%%%%%%%%%%%%%%%%%%%%%%%%%%%%%%%%%%%%%%%%%%%%%%%%%%
\section{Koszul duality}
%%%%%%%%%%%%%%%%%%%%%%%%%%%%%%%%%%%%%%%%%%%%%%%%%%%%%%%%%%%%%%%%%%%%%%%%%%%

Algebraically, Koszul duality is a phenomenon that can link two seemingly unrelated rings $A$ and $A^!$. It often takes the form of an equivalence between the bounded derived categories of finitely generated \emph{graded} modules over $A$ and $A^!$, for suitable nonnegative gradings on $A$ and $A^!$. This equivalence sends simple $A$-modules concentrated in degree $0$ to injective graded $A^!$-modules, and their projective covers to simple graded $A^!$-modules concentrated in degree $0$.  We summarize this informally by writing
\begin{align*}
\Db \Mod^{\mathrm{fg}, \Z}(A) &\simto \Db \Mod^{\mathrm{fg}, \Z}(A^!) \\
\text{simple} &\mapsto \text{injective} \\
\text{projective} &\mapsto \text{simple.}
\end{align*}
See e.g.~\cite{bgs} for a detailed account of this construction.  A key point is that in this setting, Koszul duality is revealed by the construction of the gradings on $A$ and $A^!$.

%%%%%%%%%%%%%%%%%%%%%%%%%%%%%%%%%%%%%%%%%%%%%%%%%%%%%%%%%%%%%%%%%%%%%%%%%%%
\section{Koszul duality for constructible sheaves on flag varieties}
\label{sec:intro-cat-BGS}
%%%%%%%%%%%%%%%%%%%%%%%%%%%%%%%%%%%%%%%%%%%%%%%%%%%%%%%%%%%%%%%%%%%%%%%%%%%

The importance of this construction for the geometry of flag varieties was advocated in~\cite{bgs}. Consider a connected reductive algebraic group $G$ over an algebraically closed field $\F$ of characteristic $p>0$, with a Borel subgroup $B$, and set $\cB:=G/B$. Let $G^\vee$ be the Langlands dual connected reductive group over $\F$; let $B^\vee$ be a Borel subgroup of $G^\vee$; and set $\cB^\vee := G^\vee/B^\vee$. Finally, let
\[
 \Perv_{(B)}(\cB, \overline{\Ql}), \quad \text{resp.} \quad \Perv_{(B^\vee)}(\cB^\vee, \overline{\Ql}),
\]
be the category of perverse (\'etale) $\overline{\Ql}$-sheaves on $\cB$, resp.~$\cB^\vee$, which are constant along the $B$-orbits, resp.~$B^\vee$-orbits.

In~\cite{bgs}, in an attempt to interpret in a categorical way the inversion formula for Kazhdan--Lusztig polynomials, the authors proved that these two categories of perverse sheaves are Koszul dual to one another in the following sense: they constructed ``graded versions'' of these categories, denoted by $\Perv^\mix_{(B)}(\cB, \overline{\Ql})$ and $\Perv^\mix_{(B^\vee)}(\cB^\vee, \overline{\Ql})$ respectively, along with an equivalence of triangulated categories
\begin{equation}
\label{eqn:intro-koszul1}
\begin{aligned}
 \Db \Perv^\mix_{(B)}(\cB, \overline{\Ql}) &\simto \Db \Perv^\mix_{(B^\vee)}(\cB^\vee, \overline{\Ql}) \\
\text{simple} &\mapsto \text{injective} \\
\text{projective} &\mapsto \text{simple.}
\end{aligned}
\end{equation}
(As above, this should be understood as saying that simple objects \emph{of weight $0$} are sent to injective objects.)

Once again, Koszul duality is revealed by the construction of the graded version $\Perv^\mix_{(B)}(\cB, \overline{\Ql})$. This construction is related to (but different from) the category of mixed perverse sheaves $P_{\mathrm{m}}(\cB_0, \overline{\Ql})$ as developed in~\cite[\S5]{bbd}.  (Here, $\cB_0$ is the flag variety for a split $\Fp$-form of $G$, where $\Fp \subset \F$ is the prime field.)  Specifically, recall that every object in $P_{\mathrm{m}}(\cB_0, \overline{\Ql})$ is equipped with a (functorial) \emph{weight filtration}.  Then $\Perv^\mix_{(B)}(\cB, \overline{\Ql})$ is defined to be the full subcategory of the Serre subcategory of $P_{\mathrm{m}}(\cB_0,\overline{\Ql})$ generated by (half-)Tate twists of intersection cohomology complexes associated with constant local systems on the orbits of an $\Fp$-form of $B$, consisting of the objects $\cF$ such that the associated graded $\mathrm{gr}^\bullet(\cF)$ of the weight filtration on $\cF$ is a semisimple perverse sheaf.\footnote{In fact, the category considered in~\cite{bgs} is defined in slightly different terms, involving certain parity conditions. See~\cite[\S 7.2]{ar:kdsf} for the comparison with the definition given above.}

The category $\Perv_{(B)}(\cB, \overline{\Ql})$ also participates in another interesting derived equivalence called \emph{Ringel duality}, whose features are summarized as follows:
\begin{equation}
\label{eqn:intro-ringel}
\begin{aligned}
\Db\Perv_{(B)}(\cB, \overline{\Ql}) &\simto \Db \Perv_{(B)}(\cB, \overline{\Ql}) \\
\text{injective} &\mapsto \text{tilting} \\
\text{costandard} &\mapsto \text{standard} \\
\text{tilting} &\mapsto \text{projective}
\end{aligned}
\end{equation}
There is an analogous derived self-equivalence for $\Perv^\mix_{(B)}(\cB, \overline{\Ql})$ commuting with Tate twist, but unlike Koszul duality, Ringel duality is visible in the ungraded setting.

Following a suggestion of Be{\u\i}linson--Ginzburg~\cite{bg}, we can now form a variant of~\eqref{eqn:intro-koszul1} by composing it with the graded version of~\eqref{eqn:intro-ringel}.  Of course, the resulting functor sends (suitably normalized) tilting perverse sheaves to simple perverse sheaves (of weight~$0$).  Remarkably, it also sends simple objects to tilting objects:
\begin{equation}
\label{eqn:intro-koszul2}
\begin{aligned}
 \Db \Perv^\mix_{(B)}(\cB, \overline{\Ql}) &\simto \Db \Perv^\mix_{(B^\vee)}(\cB^\vee, \overline{\Ql}) \\
\text{simple} &\mapsto \text{tilting} \\
\text{tilting} &\mapsto \text{simple.}
\end{aligned}
\end{equation}
The additional symmetry exhibited by~\eqref{eqn:intro-koszul2} is rather useful as we pursue generalizations in the following sections.

%%%%%%%%%%%%%%%%%%%%%%%%%%%%%%%%%%%%%%%%%%%%%%%%%%%%%%%%
\section{The mixed derived category}
%%%%%%%%%%%%%%%%%%%%%%%%%%%%%%%%%%%%%%%%%%%%%%%%%%%%%%%%

In the setting of sheaves with coefficients of positive characteristic, Deligne's definition of mixed perverse sheaves is not available. In~\cite{modrap2}, the first and third authors of the present article propose a way to overcome this difficulty.

This solution has its roots in the earlier paper~\cite{ar:kdsf}, 
%where we study in detail 
dedicated to a detailed study of the category $\Db \Perv^\mix_{(B)}(\cB,
\overline{\Ql})$ defined by Be{\u\i}linson--Ginzburg--Soergel. The results of~\cite{ar:kdsf} imply that there is an equivalence of triangulated categories
\begin{equation}\label{eqn:intro-semis}
\Db\Perv^\mix_{(B)}(\cB,\overline{\Ql}) \cong \Kb\mathrm{Semis}_{(B)}(\cB,\overline{\Ql}),
\end{equation}
where $\mathrm{Semis}_{(B)}(\cB,\overline{\Ql})$ is the full additive subcategory of $\Db\Perv_{(B)}(\cB, \overline{\Ql})$ consisting of semisimple complexes.

Notably, the right-hand side of~\eqref{eqn:intro-semis} does not refer to Deligne's theory of weights.  In fact, on the right-hand side, we can replace $G$ by the corresponding \emph{complex} connected reductive group, $\cB$ by its flag variety, and the \'etale
derived category by the usual constructible derived category (in the analytic topology) to obtain a description which does not even refer to the \'etale topology. 

Let us therefore change our set-up: assume now that $G$, $B$, $T$, $\cB$ and the Langlands dual data are defined over $\C$, and all sheaves are considered with respect to
the analytic topology. We seek to imitate the right-hand side of~\eqref{eqn:intro-semis} for sheaves with positive-characteristic coefficients. As suggested implicitly
in~\cite{soergel}, and more explicitly in~\cite{mkd}, the correct
replacement for semisimple complexes in this setting are the
\emph{parity complexes} of Juteau--Mautner--Williamson~\cite{jmw}. If
$\bk$ is an arbitrary field, this suggests defining the ``mixed
derived category'' by 
\[
\Dmix_{(B)}(\cB,\bk) := \Kb \Parity_{(B)}(\cB, \bk),
\]
where $\Parity_{(B)}(\cB, \bk)$ is the full subcategory of the $B$-constructible derived category
$\Db_{(B)}(\cB, \bk)$ whose objects are the parity complexes. This is
the definition 
%we 
chosen in~\cite{modrap2}.

One difficulty with this definition is that the perverse t-structure on $\Dmix_{(B)}(\cB,\bk)$ is not immediately visible, but this problem is solved in~\cite{modrap2}.  The heart of the resulting t-structure, denoted by $\Perv^\mix_{(B)}(\cB, \bk)$, is a graded highest weight category, so it makes sense to consider \emph{tilting mixed perverse sheaves} in $\Dmix_{(B)}(\cB,\bk)$. Then, in the case when the characteristic of $\bk$ is good for $G$, building on the results of~\cite{modrap1}, an equivalence of triangulated categories
\begin{equation}\label{eqn:intro-koszul-mod}
\begin{aligned}
\Dmix_{(B)}(\cB,\bk) &\simto \Dmix_{(B^\vee)}(\cB^\vee, \bk) \\
\text{parity} &\mapsto \text{tilting} \\
\text{tilting} &\mapsto \text{parity}
\end{aligned}
\end{equation}
is constructed.
We call this equivalence \emph{Koszul duality}, even though it does not involve any Koszul rings in the algebraic sense in general.  (In particular, the behavior of \emph{simple} objects under~\eqref{eqn:intro-koszul-mod} is rather opaque.)

%%%%%%%%%%%%%%%%%%%%%%%%%%%%%%%%%%%%%%%%%%%%%%%%%%%%%%%%
\section{The case of Kac--Moody flag varieties}
\label{sec:intro-KM}
%%%%%%%%%%%%%%%%%%%%%%%%%%%%%%%%%%%%%%%%%%%%%%%%%%%%%%%%

Let $\GKM$ be a Kac--Moody group (over either $\F$ or $\C$, as appropriate), with Borel subgroup $\BKM$ and maximal torus $\TKM$. Let $\UKM \subset \BKM$ be the maximal unipotent subgroup.  Recall that the $\UKM$-orbits on the flag variety $\GKM/\BKM$ are the same as the $\BKM$-orbits (and that these orbits are affine spaces).  Any complex of sheaves that is constant along $\BKM$-orbits is automatically $\UKM$-equivariant, so we can replace $\Db_{(\BKM)}(\GKM/\BKM,\overline{\Ql})$ by the $\UKM$-equivariant derived category $\Db_{\UKM}(\GKM/\BKM,\overline{\Ql})$, or, in ``stacky'' notation, by
\[
\Db(\UGBold,\overline{\Ql}).
\]
The benefit of this change in notation will be seen below.

In~\cite{by}, the authors prove an analogue of~\eqref{eqn:intro-koszul2} for $\Perv(\UGBold,\overline{\Ql})$.  In general, this category no longer has enough projective (or injective) objects, so the proof cannot go through analogues of~\eqref{eqn:intro-koszul1} or~\eqref{eqn:intro-ringel}.

Instead, one of the main ideas of~\cite{by} is to exhibit $\Dmix(\UGBold,\overline{\Ql})$ as a module category on both the left and right for certain \emph{monoidal} categories.  On the right, the construction is easy: the $\BKM$-equivariant derived category of $\GKM/\BKM$, denoted by
\[
\Dmix(\BGB,\overline{\Ql}),
\]
has a monoidal structure given by the \emph{convolution product} $\star$.  It also acts on $\Dmix(\UGBold,\overline{\Ql})$ on the right by convolution.

The situation on the left is much more complicated.  One starts with the monodromy construction of Verdier~\cite{verdier}, which equips each $\cF$ in $\Perv(\UGBold,\overline{\Ql})$ with a functorial group homomorphism
\[
X_*(\TKM) \to \End(\cF)^\times,
\]
where each element of $X_*(\TKM)$ acts by a unipotent automorphism. Taking the logarithm of this action provides an algebra homomorphism
\begin{equation}\label{eqn:intro-leftmon}
\mathrm{S}(V) \to \End(\cF),
\end{equation}
where $V:=\overline{\Ql} \otimes_\Z X_*(\TKM)$. Via this map, called the \emph{left monodromy map}, each element of $V$ acts by a nilpotent endomorphism.  Building on Verdier's construction, Bezrukavnikov and Yun explain how to define a ``free-monodromic completion'' of $\Dmix(\UGBold,\overline{\Ql})$.  This category,\footnote{To be precise, in~\cite{by} the authors do not work with the category of perverse sheaves considered in~\S\ref{sec:intro-cat-BGS}, but drop the semisimplicity condition on the associated graded of the weight filtration. This change is not relevant for our constructions, hence will be neglected in this introduction.} denoted by
\[
\Dmix(\UGUby, \overline{\Ql}),
\]
adds in certain pro-objects that live on the $\TKM$-torsor $\UKM\backslash\GKM \to \BKM\backslash\GKM$, and that are equipped with free actions on both the left and the right by $\mathrm{S}(V)$.  This category has a monoidal structure, given by a \emph{monodromic convolution product} $\star^\mon$.  It contains lifts of tilting perverse sheaves, and it acts on $\Dmix(\UGBold, \overline{\Ql})$ on the left by monodromic convolution.

The main result of~\cite{by} is a \emph{monoidal} equivalence of categories
\begin{equation}
\label{eqn:intro-koszul-by1}
\begin{aligned}
\Dmix(\UGUby,\overline{\Ql}),\star^\mon &\simto \Dmix(\BGBvee,\overline{\Ql}),\star \\
\text{tilting} &\mapsto \text{simple.}
\end{aligned}
\end{equation}
Note that even when $\GKM$ is finite-dimensional (i.e., a reductive group), this is a new result, not contained in~\eqref{eqn:intro-koszul1} or~\eqref{eqn:intro-koszul2}.  The Kac--Moody version of~\eqref{eqn:intro-koszul2} is then obtained as the middle column of the following commutative diagram:
\begin{equation}
\label{eqn:intro-koszul-by2}
\hspace{-5pt}
\begin{tikzpicture}[baseline=(a.base)]
\node[scale=0.875] (a) at (0,0){
\begin{tikzcd}[column sep=small, row sep=large]
\Dmix(\UGUby,\overline{\Ql}) \ar[r] \ar[d, "\wr"', "\eqref{eqn:intro-koszul-by1}"]&
\Dmix(\UGBold,\overline{\Ql}) 
\ar[d, "\substack{\text{tilting} \\ {\rotatebox{-90}{\text{$\mapsto$}}} \\ \text{simple}}\,\wr"', "\hphantom{\wr}\,\substack{\text{simple} \\ {\rotatebox{-90}{\text{$\mapsto$}}} \\ \text{tilting}}"] &
\Dmix(\BGB,\overline{\Ql}) \ar[l] \ar[d, "\wr"', "\eqref{eqn:intro-koszul-by1}"] \\
\Dmix(\BGBvee,\overline{\Ql}) \ar[r] &
\Dmix(\BGUvee,\overline{\Ql}) &
\Dmix(\UGUveeby,\overline{\Ql}), \ar[l]
\end{tikzcd}};
\end{tikzpicture}
\end{equation}
where the horizontal arrows are induced by the action on the skyscraper sheaf.

%%%%%%%%%%%%%%%%%%%%%%%%%%%%%%%%%%%%%%%%%%%%%%%%%%%%%%%%
\section{Monodromy in the mixed modular derived category}
%%%%%%%%%%%%%%%%%%%%%%%%%%%%%%%%%%%%%%%%%%%%%%%%%%%%%%%%

One of the primary goals of this paper and its sequel~\cite{mkdkm} is to prove analogues of~\eqref{eqn:intro-koszul-by1} and~\eqref{eqn:intro-koszul-by2} with coefficients in positive characteristic. The first step is to understand the proper replacement for the left monodromy map~\eqref{eqn:intro-leftmon}.  We cannot simply copy the construction of~\cite{by}, because ``logarithm'' does not make sense in general in positive characteristic.

Henceforth, let $V:=\bk \otimes_\Z X_*(\TKM)$.  The solution to our problem is based on the following observation: the category $\Parity(\UGBold, \bk)$ is obtained from the category of equivariant parity sheaves $\Parity(\BGB, \bk)$ by ``killing'' the action of $\coH_{\BKM}^\bullet(\pt, \bk) = \mathrm{S}(V^*)$. Instead of tensoring with the trivial $\mathrm{S}(V^*)$-module, one might lose less information by tensoring with its Koszul resolution $\Lambda \otimes_\bk \mathrm{S}(V^*)$, where $\Lambda$ is the exterior algebra of $V^*$. This idea is made precise in~\S\ref{sec:lmon-mixed-complexes}, where we define the \emph{left monodromic category} $\Dmix(\UGB, \bk)$. The objects of this category are certain pairs $(\cF,\delta)$ which are ``almost'' (but not quite) complexes: $\cF$ is a sequence of objects in $\Parity(\BGB,\bk)$, and $\delta$ is a ``differential'' living in
\[
\Lambda \otimes \bigoplus_{i,j,k \in \Z} \Hom_{\Parity(\BGB, \bk)}(\cF^i, \cF^j \{k\})
\]
and which satisfies $\kappa(\delta) + \delta \circ \delta = 0$, where $\kappa$ is induced by the Koszul differential on $\Lambda \otimes_\bk \mathrm{S}(V^*)$. (Here, $\{k\}$ is the cohomological shift by $k$ in $\Parity(\BGB,\bk)$.) The $\Hom$-space from $(\cF,\delta_\cF)$ to $(\cG,\delta_\cG)$ is defined as the degree-$(0,0)$ cohomology of a complex of $\Z$-graded $\bk$-vector spaces whose underlying vector space is
\[
\Lambda \otimes \bigoplus_{i,j,k \in \Z} \Hom_{\Parity(\BGB, \bk)}(\cF^i, \cG^j \{k\}).
\]
The idea that complexes in $\Dmix(\UGBold,\bk)$ should be ``deformed'' by allowing the square of the differential not to vanish is one of the keys to our constructions.

If $(\cF, \delta)$ is an object of $\Dmix(\UGB, \bk)$, the ``contraction'' morphism $V \otimes \Lambda \to \Lambda$ acting on $\delta$ defines a morphism $\mathrm{S}(V) \to \bigoplus_{j \in \Z} \Hom(\cF, \cF \langle j \rangle)$, which has all the required properties to be called the left monodromy morphism. (Here, $\langle j \rangle$ is the $j$-th power of the ``Tate twist'' $\langle 1 \rangle:=\{-1\}[1]$.)

%%%%%%%%%%%%%%%%%%%%%%%%%%%%%%%%%%%%%%%%%%%%%%%%%%%%%%%%
\section{The free-monodromic derived category}
%%%%%%%%%%%%%%%%%%%%%%%%%%%%%%%%%%%%%%%%%%%%%%%%%%%%%%%%

Our construction of the right monodromy action is more brutal: 
%we simply force 
such an action is forced to exist by the definition of morphisms in the \emph{free-monodromic category} $\Dmix(\UGU, \bk)$. The objects of this category are certain pairs $(\cF, \delta)$, where $\cF$ is again a sequence of objects in $\Parity(\BGB,\bk)$, and the ``differential'' $\delta$ now lives in
\[
\Lambda \otimes \bigoplus_{i,j,k \in \Z} \Hom_{\Parity(\BGB, \bk)}(\cF^i, \cF^j \{k\}) \otimes \mathrm{S}(V)
\]
and is required to satisfy
\[
\kappa(\delta) + \delta \circ \delta = \sum_{i \in I} 1 \otimes (\id \ustar e_i) \otimes \check e_i,
\]
where $(e_i : i \in I)$ and $(\check e_i : i \in I)$ are dual bases of $V^*$ and $V$ respectively, and $\ustar$ is a graded version of the convolution product $\star$ on $\Parity(\BGB,\bk)$. The $\Hom$-space from $(\cF,\delta_\cF)$ to $(\cG,\delta_\cG)$ is defined as the degree-$(0,0)$ cohomology of a complex of $\Z$-graded $\bk$-vector spaces whose underlying vector space is
\begin{equation}
\label{eqn:intro-Hom-fm}
\Lambda \otimes \bigoplus_{i,j,k \in \Z} \Hom_{\Parity(\BGB, \bk)}(\cF^i, \cG^j \{k\}) \otimes \mathrm{S}(V).
\end{equation}

For $(\cF,\delta)$ in this category, we now have a natural algebra morphism
\[
\mathrm{S}(V) \otimes \mathrm{S}(V) \to \bigoplus_{j \in \Z} \Hom(\cF, \cF \langle j \rangle),
\]
defined on the first copy of $\mathrm{S}(V)$ in terms of the contraction morphism $V \otimes \Lambda \to \Lambda$, and on the second copy using the right-most factor in~\eqref{eqn:intro-Hom-fm}.
%from the definition of morphisms in $\Dmix(\UGU,\bk)$.

This definition is rather elementary, but it has a serious drawback: the asymmetry between the left and right monodromy morphisms makes it difficult to define a monodromic convolution product in $\Dmix(\UGU, \bk)$. The main result of this paper partially solves this problem.

%%%%%%%%%%%%%%%%%%%%%%%%%%%%%%%%%%%%%%%%%%%%%%%%%%%%%%%%
\section{Convolution}
%%%%%%%%%%%%%%%%%%%%%%%%%%%%%%%%%%%%%%%%%%%%%%%%%%%%%%%%

Let us briefly discuss the ideas that go into the construction of the monodromic convolution product.  The first step is to lift the monodromy action on $\Hom$-spaces to an action on the underlying complexes of graded vector spaces.  This is possible on a certain class of objects that we call \emph{convolutive} (see~\S\ref{sec:convolutive-UGU}), but not on arbitrary objects of $\Dmix(\UGU,\bk)$.  This class contains the \emph{free-monodromic tilting sheaves}, which are certain ``lifts'' of the tilting perverse sheaves in $\Perv^\mix(\UGB,\bk)$.

On the subcategory of convolutive objects, we are able to produce a candidate for monodromic convolution, denoted by $\hatstar$.  This operation is well defined on objects and on morphisms, but one runs into serious problems when trying to check that the product is bifunctorial.  Specifically, we do not know how to show that it obeys an ``interchange law'' on morphisms, except in the special case of certain ``nice'' morphisms.

To proceed further, we restrict to the full subcategory $\Tilt(\UGU,\bk)$ of free-monodromic tilting sheaves.  The intuitive idea is to show that there are ``enough'' morphisms in $\Tilt(\UGU,\bk)$ that are ``nice'' in the sense considered above.  We will not make the notion of ``enough'' precise here; our actual proof 
%in REF 
involves a localization argument and a reduction to a computation in a rank-$2$ subgroup (of finite type).  We solve the problem in the rank-$2$ case by developing a theory of free-monodromic lifts of ``minimal Rouquier complexes.''  As a consequence, we obtain the desired monoidal structure on $\Tilt(\UGU,\bk)$.

A summary of the principal categories considered in this paper is shown in Figure~\ref{fig:categories}.  Note that we still do not know how to equip all of $\Dmix(\UGU,\bk)$ with a monoidal structure.

\begin{figure}
\[
\begin{tikzcd}[column sep=0pt]
\hspace{-1em}\Tilt(\UGU,\bk), \hatstar\hspace{-1em} \ar[d, hook] &&&&
\hspace{-1em}\Parity(\BGB,\bk), \star\hspace{-1em} \ar[d, hook] \\
\hspace{-1em}\Dmix(\UGU,\bk)\hspace{-1em} \ar[dr, "\text{forget}"'] &&&&
\hspace{-1em}\Dmix(\BGB,\bk),\ustar\hspace{-1em} \ar[dl, "\text{forget}"] \\
& \hspace{-1em}\Dmix(\UGB,\bk) \ar[rr, "\sim"] & \hspace{0.5em} &
\Dmix(\UGBold,\bk)\hspace{-1em}
\end{tikzcd}
\]
\caption{Various categories of sheaves on flag varieties}\label{fig:categories}
\end{figure}

%%%%%%%%%%%%%%%%%%%%%%%%%%%%%%%%%%%%%%%%%%%%%%%%%%%%%%%%
\section{Application to Koszul duality}
\label{sec:intro-KD}
%%%%%%%%%%%%%%%%%%%%%%%%%%%%%%%%%%%%%%%%%%%%%%%%%%%%%%%%

In the subsequent paper~\cite{mkdkm}, we will prove that there is an equivalence of monoidal additive categories
\begin{equation}\label{eqn:intro-koszul-eqmon}
\Tilt(\UGU,\bk) \simto \Parity(\BGBvee,\bk);
\end{equation}
this result is a positive-characteristic analogue of~\eqref{eqn:intro-koszul-by1}. (At the moment, it cannot be upgraded to an equivalence of triangulated categories, since we do not know how to make $\Dmix(\UGU,\bk)$ into a monoidal category.)  We will then use this to establish the following analogue of~\eqref{eqn:intro-koszul-by2}:
\[
\hspace{-5pt}
%\begin{tikzpicture}[baseline=(a.base)]
%\node[scale=0.875] (a) at (0,0){
\begin{tikzcd}[column sep=small, row sep=large]
\Tilt(\UGU,\bk) \ar[r] \ar[d, "\wr"', "\eqref{eqn:intro-koszul-eqmon}"]&
\Dmix(\UGBold,\bk) 
\ar[d, "\substack{\text{tilting} \\ \rotatebox{-90}{\text{$\mapsto$}} \\ \text{parity}}\,\wr"', "\hphantom{\wr}\,\substack{\text{parity} \\ \rotatebox{-90}{\text{$\mapsto$}} \\ \text{tilting}}"] &
\Parity(\BGB,\bk) \ar[l] \ar[d, "\wr"', "\eqref{eqn:intro-koszul-eqmon}"] \\
\Parity(\BGBvee,\bk) \ar[r] &
\Dmix(\BGUvee,\bk) &
\Tilt(\UGUvee,\bk). \ar[l]
\end{tikzcd}
%};
%\end{tikzpicture}
\]
The middle column is called ``modular Koszul duality for Kac--Moody groups''; it generalizes~\eqref{eqn:intro-koszul2}, \eqref{eqn:intro-koszul-mod}, and~\eqref{eqn:intro-koszul-by2}.

%%%%%%%%%%%%%%%%%%%%%%%%%%%%%%%%%%%%%%%%%%%%%%%%%%%%%%%%
\section{The diagrammatic language}
%%%%%%%%%%%%%%%%%%%%%%%%%%%%%%%%%%%%%%%%%%%%%%%%%%%%%%%%

In this introduction, we have discussed our main results in the language of equivariant parity complexes on Kac--Moody flag varieties.  As shown in~\cite{rw}, these parity sheaves are equivalent to a special case of the \emph{Elias--Williamson diagrammatic category} defined in~\cite{ew}, and one could ask whether it is possible to work in that more general setting instead.  In fact, the majority of the present paper---the first nine chapters---are written in the diagrammatic language. This has two main benefits:
\begin{enumerate}
\item
it prepares for an expected generalization of the Koszul duality formalism developed in~\cite{mkdkm} to more general Coxeter groups;
\item
it makes certain explicit computations much easier.
\end{enumerate}
However, our main result relies on some results of~\cite{modrap2} whose current proofs require geometry.  In Chapter~\ref{chap:kac-moody}, we translate the main constructions into the language of parity complexes, in preparation for the main result in Chapter~\ref{chap:functoriality}.

%%%%%%%%%%%%%%%%%%%%%%%%%%%%%%%%%%%%%%%%%%%%%%%%%%%%%%%%
\section{Contents}
%%%%%%%%%%%%%%%%%%%%%%%%%%%%%%%%%%%%%%%%%%%%%%%%%%%%%%%%

We begin in Chapter~\ref{chap:Soergel-calculus} with a review of the construction of the Elias--Williamson diagrammatic category associated to a Coxeter group and a realization of this group. In Chapter~\ref{chap:dgg}, we fix our conventions on graded modules and differential graded graded algebras. 

The next four chapters contain the core material on left- and free-monodromic complexes.  Chapter~\ref{chap:soergel-diagrams} explains the construction of the left monodromy action in the mixed derived category, and Chapter~\ref{chap:fm-complexes} introduces our main object of study, the free-monodromic category. In Chapter~\ref{chap:fm-convolution}, we construct the convolution operation for convolutive free-monodromic complexes. In Chapter~\ref{chap:murmurs}, we lay the groundwork for showing that free-monodromic convolution is bifunctorial, by constructing the unitor and associator isomorphisms. 

Next, in Chapters~\ref{chap:dihedral} and~\ref{chap:rouquier}, we focus on finite dihedral groups.  Chapter~\ref{chap:dihedral} contains a detailed study of the diagrammatic category in this case. In Chapter~\ref{chap:rouquier}, we use this study to construct ``minimal Rouquier complexes'' (in both the biequivariant and the free-monodromic settings). 

In the last two chapters, we restrict our attention to the case of Cartan realizations of crystallographic Coxeter groups.  Chapter~\ref{chap:kac-moody} explains the relation with parity complexes and the constructions of~\cite{modrap2}. Finally, in Chapter~\ref{chap:functoriality}, we prove the main result: for Cartan realizations of crystallographic Coxeter groups, the convolution operation is a bifunctor on free-monodromic tilting sheaves.

%%%%%%%%%%%%%%%%%%%%%%%%%%%%%%%%%%%%%%%%%%%%%%%%%%%%%%%%
\section{Acknowledgements}
%%%%%%%%%%%%%%%%%%%%%%%%%%%%%%%%%%%%%%%%%%%%%%%%%%%%%%%%

This paper was initially posted on arXiv in 2017, and used in a crucial way in the companion paper~\cite{mkdkm}. It was submitted soon after that, but we were never able to obtain a report on it. It was revised in 2022 to take into account a better understanding of the foundations of the Elias--Williamson diagrammatic category obtained with~\cite{ew2}.

We are grateful to I.~Loseu for a number of enlightening conversations at an early stage of this work, B.~Elias for his help regarding the content of Chapter~\ref{chap:Soergel-calculus} (at a much later stage of this work), and C.~Vay for motivating discussions.

The first, second, and fourth authors discussed some of the ideas in
this paper during a workshop held at the American Institute for
Mathematics in March 2016. We are grateful to AIM for its hospitality
and fruitful working environment.

Part of this work was done during the second author's visit to Yale University for Fall 2016. The second author would like to thank the Yale mathematics department for their hospitality and great working conditions, as well as his dissertation advisor, Z.~Yun, for support through his Packard Foundation fellowship and for countless helpful conversations.

The fourth author would like to thank RIMS, Kyoto for an excellent
working environment and the opportunity to give a series of lectures
on this work.

P.A. was supported by NSF Grant No.~DMS-1500890. S.R. was partially supported by ANR Grant No.~ANR-13-BS01-0001-01. This project has received funding from the European Research Council (ERC) under the European Union's Horizon 2020 research and innovation programme (grant agreements No.~677147 and~101002592).

%==========================================================================
\chapter{Coxeter groups and Elias--Williamson calculus}
\label{chap:Soergel-calculus}
%==========================================================================

In this chapter, we discuss definitions, background, and notation related to realizations of Coxeter groups and the Elias--Williamson diagrammatic category.

%%%%%%%%%%%%%%%%%%%%%%%%%%%%%%%%%%%%%%%%%%%%%%%%%%%%%%%%%
\section{Realizations of Coxeter groups}
\label{sec:realizations}
%%%%%%%%%%%%%%%%%%%%%%%%%%%%%%%%%%%%%%%%%%%%%%%%%%%%%%%%%

The \emph{2-colored quantum numbers} $[n]_x, [n]_y \in \Z[x, y]$\index{quantum numbers!nx&@{$[n]_x, [n]_y$}} are defined by
\[
 [0]_x = [0]_y = 0, \quad [1]_x = [1]_y = 1, \quad [2]_x = x, \quad [2]_y = y,
\]
the recursive formula
\begin{equation} \label{eq:2q-recursion}
 [n]_x = \begin{cases}
          [2]_x[n-1]_x - [n-2]_x &\mbox{ for } n \ge 2 \mbox{ even;} \\
          [2]_y[n-1]_x - [n-2]_x &\mbox{ for } n \ge 2 \mbox{ odd,}
         \end{cases}
\end{equation}
and the same formula with $x$ and $y$ swapped. The main examples of such quantum numbers that will be relevant for us are the following:
\begin{itemize}
 \item $[3]_x=[3]_y=xy-1$;
 \item $[4]_x = x^2y-2x$, $[4]_y=xy^2-2y$;
 \item $[5]_x=[5]_y = x^2y^2-3xy+1$;
 \item $[6]_x = x^3 y^2 - 4 x^2 y + 3x$; $[6]_y = x^2 y^3 - 4 x y^2 + 3y$.
\end{itemize}
%It is a standard fact that $[k]_x=[k]_y$ if $k$ is odd; in this case we will write $[k]$ for both quantum numbers.
By induction one can easily prove that
\begin{gather}
 \label{eq:2q-s-t-odd}
 [n]_x = [n]_y \qquad \text{for $n$ odd;}\\
 \label{eq:2q-s-t-even}
 [2]_y[n]_x = [2]_x[n]_y \quad \text{for  $n$ even.}
\end{gather}
If $n$ is odd, we will sometimes write $[n]$ for $[n]_x=[n]_y$.

Let $(W,S)$ be a Coxeter system, where $S$ is finite (this assumption will be in force throughout the whole paper, though we will not repeat it), and let $\bk$ be a commutative ring. We assume we are given a free $\bk$-module $V$ of finite rank, together with subsets $\{\alpha_s^\vee : s \in S\} \subset V$ and $\{\alpha_s : s \in S\} \subset V^*:=\Hom_\bk(V,\bk)$. Given a pair $(s,t)$ of distinct simple reflections and $n \in \Z_{\geq 0}$, we denote by $[n]_s$, resp.~$[n]_t$,\index{quantum numbers!nxs&@{$[n]_s, [n]_t$}} the value of $[n]_x$, resp.~$[n]_y$, at $x=-\la\alpha_s^\vee, \alpha_t\ra$ and $y=-\la\alpha_t^\vee, \alpha_s\ra$. (Note that $[n]_s$ really depends on the pair $(s,t)$, and not only on $s$.) These are elements of $\bk$.

We will call any word $\uw$ in $S$ an \emph{expression}\index{expression}. We will denote its length by $\ell(\uw)$\index{lw@$\ell(\uw)$}, and the element of $W$ defined as the product of these simple reflections by $\pi(\uw)$\index{piw@$\pi(\uw)$}.
We will denote by $\hW$\index{What@$\hW$} the set of reduced expressions for elements of $W$.

Following~\cite{elias, ew}, we will say that $\fh:=(V, \{\alpha_s^\vee : s \in S\}, \{\alpha_s : s \in S\})$ is a \emph{realization}\index{realization} of $(W,S)$ over $\bk$ if
\begin{enumerate}
\item
\label{it:realization-cond-1}
$\langle \alpha_s^\vee, \alpha_s \rangle = 2$ for all $s \in S$;
\item
the assignment
\[
s \mapsto \bigl( v \mapsto v-\langle \alpha_s, v \rangle \alpha_s^\vee \bigr)
\]
defines a representation of $W$ on $V$;
\item
\label{it:realization-cond-3}
for any pair $(s,t)$ of distinct simple reflections such that $st$ has finite order $m_{st}$ in $W$, we have
\[
[m_{st}]_s = [m_{st}]_t=0.
\]
\end{enumerate}
(See~\cite[\S 3.1]{ew} for a discussion of condition~\eqref{it:realization-cond-3}.)

We will say that \emph{Demazure surjectivity}\index{Demazure surjectivity} holds if the morphisms of $\bk$-modules
\[
\alpha_s : V \to \bk \quad \text{and} \quad \alpha_s^\vee : V^* \to \bk
\]
are surjective for any $s \in S$. Note that this property follows from condition~\eqref{it:realization-cond-1} above if $2$ is invertible in $\bk$. We will say that the realization is \emph{balanced}\index{balanced} if for any pair $(s,t)$ as in~\eqref{it:realization-cond-3} we have
\[
[m_{st}-1]_s=[m_{st}-1]_t=1.
\]
(See~\cite[\S 5.2]{ew} for a discussion of this condition.) 
%Throughout this paper all realizations are tacitly assumed to be balanced and satisfy Demazure surjectivity.

Throughout this paper, all realizations are tacitly assumed to be balanced and to satisfy Demazure surjectivity. The main examples of realizations that we want to consider are Cartan realizations of crystallographic Coxeter groups; see~\S\ref{sec:Cartan-realizations} below for details.

%%%%%%%%%%%%%%%%%%%%%%%%%%%%%%%%%%%%%%%%%%%%%%%%%%%%%%%%%
\section{Jones--Wenzl projectors}
\label{sec:JW}
%%%%%%%%%%%%%%%%%%%%%%%%%%%%%%%%%%%%%%%%%%%%%%%%%%%%%%%%%

Let $A$ be a $\Z[x,y]$-algebra, where $x$ and $y$ are indeterminates. Associated to this algebra, we have the two-colored Temperley--Lieb category $2\mathscr{T} \hspace{-2pt} \mathscr{L}_A$, see~\cite[\S 6.4]{ew2}. (See also~\cite[\S 2.6]{el} for an extension of this definition to more colors.) The colors in this category will be denoted $s$ and $t$ (which should be thought of as the two generators of a dihedral group); $x$ is associated with $s$ and $y$ with $t$. The morphism spaces (when nonzero) admit a basis consisting of two-colored crossingless matchings. Composition in this category is induced by concatenation, where a circle with color $s$, resp.~$t$, inside is evaluated as $-y$, resp.~$-x$.

%Given a realization $V$ of $(W,S)$ a pair $s,t \in W$ of distinct elements, one can consider the restriction of the realization to the parabolic subgroup generated by $\{s,t\}$. 
We will need special notation for the ``interesting'' words in $\{s,t\}$:
for $n \ge 0$, we consider the words\index{ns@{$\s{n}$}}\index{nt@{$\t{n}$}}
\begin{equation*}
 \s{n} = (s,t, \sdots) \mbox{ ($n$ terms)}, \quad \t{n} = (t,s, \sdots) \mbox{ ($n$ terms)}. \\
\end{equation*}
%We will denote the images of these expressions in $W$ by $\sov{n}$\index{nsov@{$\sov{n}$}} and $\tov{n}$\index{ntov@{$\tov{n}$}} respectively. 
We define $\ss{n}$, $\tt{n}$
%$\ssov{n}$ and $\ttov{n}$ 
in a similar way, with the subscript indicating the last term in the expression.
%Consider the associated $2$-colored Temperley--Lieb $2$-category. (The definition is sketched in~\cite[\S A.6]{elias}, and made more explicit in~\cite[\S 6.4]{ew2} and---in a more general context---in~\cite[\S 2.6]{el}.) 
In the category $2\mathscr{T} \hspace{-2pt} \mathscr{L}_A$ one has a notion of \emph{Jones--Wenzl projector} $\mathcal{JW}_{\ss{n}}$ and $\mathcal{JW}_{\tt{n}}$ associated with words $\ss{n}$ and $\tt{n}$, see~\cite[Claim~2.14]{el} or~\cite[Definition~6.9]{ew2}. Such elements do not always exist (the answer depends on the choice of $A$), and it is a difficult problem to determine precisely when they do, except in the symmetric case (when $[2]_s = [2]_t$), see~\cite[Appendix]{el}.

One solution to this question, applicable for small values of $n$, is as follows. First, consider the case when $A=\Q(x,y)$. In this case, it is known that $\mathcal{JW}_{\ss{n}}$ and $\mathcal{JW}_{\tt{n}}$ exist for all $n$, and induction formulas to compute these morphisms are given in~\cite[Theorem~6.10 and Remark~6.11]{ew2}. Assume that, for a given $n$, one has an explicit expression, of $\mathcal{JW}_{\ss{n}}$ and $\mathcal{JW}_{\tt{n}}$, and that the coefficients in the expansion of these morphisms in the basis of crossingless matchings all belong to $\Z[x,y][1/a]$ for some $a \in \Z[x,y]$. Then if the given morphism $\Z[x,y] \to A$ extends to a morphism $\Z[x,y][1/a] \to A$ (in other words, if the image of $a$ in $A$ is invertible), one obtains morphisms in $2\mathscr{T} \hspace{-2pt} \mathscr{L}_A$ by evaluating all coefficients in $A$ using such an extension. It is clear from definitions that these morphisms are Jones--Wenzl projectors for $\ss{n}$ and $\tt{n}$, showing existence in this case.

For our purposes, the most important cases will be when $n \in \{2,3,4,6\}$ (see Section~\ref{sec:Cartan-realizations}). In these cases, in $2\mathscr{T} \hspace{-2pt} \mathscr{L}_{\Q(x,y)}$ the Jones--Wenzl projectors are as follows.
%
%For small values of $n$, one can compute explicitly the Jones--Wenzl projectors in the generic case (see~\cite[\S 6.6]{ew2}). Here we will write $x$, resp.~$y$, for the variable denoted $x_s$, resp.~$x_t$ in~\cite{ew2}. Also, 
%
(We will only write projectors for words starting with $s$, and will not indicate the colors of the regions since they can be easily determined. The projectors for the words starting with $t$ can be obtained by switching $s$ and $t$ and $x$ and $y$.) One finds that
\[
\mathcal{JW}_{(s,t)} = \begin{array}{c}
\begin{tikzpicture}[scale=0.3,thick]
\draw (0,-1) -- (0,2);
\end{tikzpicture}
\end{array}, \qquad
\mathcal{JW}_{(s,t,s)} =
\begin{array}{c}
\begin{tikzpicture}[scale=0.3,thick]
\draw (0,-1) -- (0,2);
\draw (1,-1) -- (1,2);
\end{tikzpicture}
\end{array}
+ \frac{1}{x} 
\begin{array}{c}
\begin{tikzpicture}[scale=0.3,thick]
\draw (0,-1) to[out=90,in=180] (0.5,0) to[out=0,in=90] (1,-1);
\draw (0,2) to[out=-90,in=180] (0.5,1) to[out=0,in=-90] (1,2);
\end{tikzpicture}
\end{array},
\]
\[
\mathcal{JW}_{(s,t,s,t)} =
\begin{array}{c}
\begin{tikzpicture}[scale=0.3,thick]
\draw (0,-1) -- (0,2);
\draw (1,-1) -- (1,2);
\draw (2,-1) -- (2,2);
\end{tikzpicture}
\end{array}
+ \frac{[2]_y}{[3]} 
\begin{array}{c}
\begin{tikzpicture}[scale=0.3,thick]
\draw (0,-1) to[out=90,in=180] (0.5,0) to[out=0,in=90] (1,-1);
\draw (0,2) to[out=-90,in=180] (0.5,1) to[out=0,in=-90] (1,2);
\draw (2,-1) -- (2,2);
\end{tikzpicture}
\end{array}
+ \frac{[2]_x}{[3]} 
\begin{array}{c}
\begin{tikzpicture}[scale=0.3,thick]
\draw (0,-1) to[out=90,in=180] (0.5,0) to[out=0,in=90] (1,-1);
\draw (0,2) to[out=-90,in=180] (0.5,1) to[out=0,in=-90] (1,2);
\draw (-1,-1) -- (-1,2);
\end{tikzpicture}
\end{array}
+ \frac{1}{[3]} 
\begin{array}{c}
\begin{tikzpicture}[scale=0.3,thick]
\draw (0,-1) to[out=90,in=180] (0.5,0) to[out=0,in=90] (1,-1);
\draw (3,2) to[out=-90,in=180] (3.5,1) to[out=0,in=-90] (4,2);
\draw (2,-1) -- (2,2);
\end{tikzpicture}
\end{array}
+ \frac{1}{[3]} 
\begin{array}{c}
\begin{tikzpicture}[scale=0.3,thick]
\draw (3,-1) to[out=90,in=180] (3.5,0) to[out=0,in=90] (4,-1);
\draw (0,2) to[out=-90,in=180] (0.5,1) to[out=0,in=-90] (1,2);
\draw (2,-1) -- (2,2);
\end{tikzpicture}
\end{array}.
\]
The next relevant case is $\mathcal{JW}_{(s,t,s,t,s,t)}$, whose expression is shown on Figure~\ref{fig:JW5}.

\begin{figure}
\label{fig:JW5}
\begin{multline*}
\mathcal{JW}_{(s,t,s,t,s,t)} =
\begin{array}{c}
\begin{tikzpicture}[scale=0.3,thick]
\draw (0,-1) -- (0,2);
\draw (1,-1) -- (1,2);
\draw (2,-1) -- (2,2);
\draw (3,-1) -- (3,2);
\draw (4,-1) -- (4,2);
\end{tikzpicture}
\end{array}
+ \frac{1}{[5]}
\begin{array}{c}
\begin{tikzpicture}[scale=0.3,thick]
\draw (0,-1) to[out=90,in=180] (0.5,0) to[out=0,in=90] (1,-1);
\draw (5,2) to[out=-90,in=180] (5.5,1) to[out=0,in=-90] (6,2);
\draw (2,-1) -- (2,2);
\draw (3,-1) -- (3,2);
\draw (4,-1) -- (4,2);
\end{tikzpicture}
\end{array}
+ \frac{1}{[5]}
\begin{array}{c}
\begin{tikzpicture}[scale=0.3,thick]
\draw (5,-1) to[out=90,in=180] (5.5,0) to[out=0,in=90] (6,-1);
\draw (0,2) to[out=-90,in=180] (0.5,1) to[out=0,in=-90] (1,2);
\draw (2,-1) -- (2,2);
\draw (3,-1) -- (3,2);
\draw (4,-1) -- (4,2);
\end{tikzpicture}
\end{array} \\
+ \frac{[2]_x}{[4]_x [5]}
\begin{array}{c}
\begin{tikzpicture}[scale=0.3,thick]
\draw (0,-1) to[out=90,in=180] (0.5,0) to[out=0,in=90] (1,-1);
\draw (-1,-1) to[out=90,in=180] (0.5,1) to[out=0,in=90] (2,-1);
\draw (5,2) to[out=-90,in=180] (5.5,1) to[out=0,in=-90] (6,2);
\draw (4,2) to[out=-90,in=180] (5.5,0) to[out=0,in=-90] (7,2);
\draw (3,-1) -- (3,2);
\end{tikzpicture}
\end{array}
+ \frac{[2]_x}{[4]_x [5]}
\begin{array}{c}
\begin{tikzpicture}[scale=0.3,thick]
\draw (5,-1) to[out=90,in=180] (5.5,0) to[out=0,in=90] (6,-1);
\draw (4,-1) to[out=90,in=180] (5.5,1) to[out=0,in=90] (7,-1);
\draw (0,2) to[out=-90,in=180] (0.5,1) to[out=0,in=-90] (1,2);
\draw (-1,2) to[out=-90,in=180] (0.5,0) to[out=0,in=-90] (2,2);
\draw (3,-1) -- (3,2);
\end{tikzpicture}
\end{array}
+ \frac{[2]_y [4]_x}{[2]_x [5]}
\begin{array}{c}
\begin{tikzpicture}[scale=0.3,thick]
\draw (0,-1) to[out=90,in=180] (0.5,0) to[out=0,in=90] (1,-1);
\draw (0,2) to[out=-90,in=180] (0.5,1) to[out=0,in=-90] (1,2);
\draw (2,-1) -- (2,2);
\draw (3,-1) -- (3,2);
\draw (4,-1) -- (4,2);
\end{tikzpicture}
\end{array} \\
+ \frac{[2]_x}{[5]}
\begin{array}{c}
\begin{tikzpicture}[scale=0.3,thick]
\draw (0,-1) to[out=90,in=180] (0.5,0) to[out=0,in=90] (1,-1);
\draw (4,2) to[out=-90,in=180] (4.5,1) to[out=0,in=-90] (5,2);
\draw (2,-1) -- (2,2);
\draw (3,-1) -- (3,2);
\draw (-1,-1) -- (-1,2);
\end{tikzpicture}
\end{array}
+ \frac{[2]_x}{[5]}
\begin{array}{c}
\begin{tikzpicture}[scale=0.3,thick]
\draw (4,-1) to[out=90,in=180] (4.5,0) to[out=0,in=90] (5,-1);
\draw (0,2) to[out=-90,in=180] (0.5,1) to[out=0,in=-90] (1,2);
\draw (2,-1) -- (2,2);
\draw (3,-1) -- (3,2);
\draw (-1,-1) -- (-1,2);
\end{tikzpicture}
\end{array}
+ \frac{[2]_x [2]_y}{[4]_x [5]}
\begin{array}{c}
\begin{tikzpicture}[scale=0.3,thick]
\draw (5,-1) to[out=90,in=180] (5.5,0) to[out=0,in=90] (6,-1);
\draw (4,-1) to[out=90,in=180] (5.5,1) to[out=0,in=90] (7,-1);
\draw (1,2) to[out=-90,in=180] (1.5,1) to[out=0,in=-90] (2,2);
\draw (-1,2) to[out=-90,in=180] (-0.5,1) to[out=0,in=-90] (0,2);
\draw (3,-1) -- (3,2);
\end{tikzpicture}
\end{array} \\
+ \frac{[2]_x [2]_y}{[4]_x [5]}
\begin{array}{c}
\begin{tikzpicture}[scale=0.3,thick]
\draw (5,2) to[out=-90,in=180] (5.5,1) to[out=0,in=-90] (6,2);
\draw (4,2) to[out=-90,in=180] (5.5,0) to[out=0,in=-90] (7,2);
\draw (1,-1) to[out=90,in=180] (1.5,0) to[out=0,in=90] (2,-1);
\draw (-1,-1) to[out=90,in=180] (-0.5,0) to[out=0,in=90] (0,-1);
\draw (3,-1) -- (3,2);
\end{tikzpicture}
\end{array}
+ \frac{[4]_x}{[5]}
\begin{array}{c}
\begin{tikzpicture}[scale=0.3,thick]
\draw (0,-1) to[out=90,in=180] (0.5,0) to[out=0,in=90] (1,-1);
\draw (0,2) to[out=-90,in=180] (0.5,1) to[out=0,in=-90] (1,2);
\draw (-1,-1) -- (-1,2);
\draw (-2,-1) -- (-2,2);
\draw (-3,-1) -- (-3,2);
\end{tikzpicture}
\end{array}
+ \frac{[2]_y}{[5]}
\begin{array}{c}
\begin{tikzpicture}[scale=0.3,thick]
\draw (4,-1) to[out=90,in=180] (4.5,0) to[out=0,in=90] (5,-1);
\draw (0,2) to[out=-90,in=180] (0.5,1) to[out=0,in=-90] (1,2);
\draw (2,-1) -- (2,2);
\draw (3,-1) -- (3,2);
\draw (6,-1) -- (6,2);
\end{tikzpicture}
\end{array} \\
+ \frac{[2]_y}{[5]}
\begin{array}{c}
\begin{tikzpicture}[scale=0.3,thick]
\draw (0,-1) to[out=90,in=180] (0.5,0) to[out=0,in=90] (1,-1);
\draw (4,2) to[out=-90,in=180] (4.5,1) to[out=0,in=-90] (5,2);
\draw (2,-1) -- (2,2);
\draw (3,-1) -- (3,2);
\draw (6,-1) -- (6,2);
\end{tikzpicture}
\end{array}
+ \frac{[2]_x^2}{[4]_x[5]}
\begin{array}{c}
\begin{tikzpicture}[scale=0.3,thick]
\draw (4,-1) to[out=90,in=180] (4.5,0) to[out=0,in=90] (5,-1);
\draw (6,-1) to[out=90,in=180] (6.5,0) to[out=0,in=90] (7,-1);
\draw (0,2) to[out=-90,in=180] (0.5,1) to[out=0,in=-90] (1,2);
\draw (-1,2) to[out=-90,in=180] (0.5,0) to[out=0,in=-90] (2,2);
\draw (3,-1) -- (3,2);
\end{tikzpicture}
\end{array}
+ \frac{[2]_x^2}{[4]_x[5]}
\begin{array}{c}
\begin{tikzpicture}[scale=0.3,thick]
\draw (4,2) to[out=-90,in=180] (4.5,1) to[out=0,in=-90] (5,2);
\draw (6,2) to[out=-90,in=180] (6.5,1) to[out=0,in=-90] (7,2);
\draw (0,-1) to[out=90,in=180] (0.5,0) to[out=0,in=90] (1,-1);
\draw (-1,-1) to[out=90,in=180] (0.5,1) to[out=0,in=90] (2,-1);
\draw (3,-1) -- (3,2);
\end{tikzpicture}
\end{array} \\
+ \frac{[2]_x([5]+2)}{[4]_x [5]}
\begin{array}{c}
\begin{tikzpicture}[scale=0.3,thick]
\draw (0,-1) to[out=90,in=180] (0.5,0) to[out=0,in=90] (1,-1);
\draw (0,2) to[out=-90,in=180] (0.5,1) to[out=0,in=-90] (1,2);
\draw (2,-1) -- (2,2);
\draw (3,-1) to[out=90,in=180] (3.5,0) to[out=0,in=90] (4,-1);
\draw (3,2) to[out=-90,in=180] (3.5,1) to[out=0,in=-90] (4,2);
\end{tikzpicture}
\end{array}
+ \frac{[2]_x [2]_y}{[5]}
\begin{array}{c}
\begin{tikzpicture}[scale=0.3,thick]
\draw (0,-1) to[out=90,in=180] (0.5,0) to[out=0,in=90] (1,-1);
\draw (3,2) to[out=-90,in=180] (3.5,1) to[out=0,in=-90] (4,2);
\draw (2,-1) -- (2,2);
\draw (-1,-1) -- (-1,2);
\end{tikzpicture}
\end{array}
+ \frac{[2]_x [2]_y}{[5]}
\begin{array}{c}
\begin{tikzpicture}[scale=0.3,thick]
\draw (3,-1) to[out=90,in=180] (3.5,0) to[out=0,in=90] (4,-1);
\draw (0,2) to[out=-90,in=180] (0.5,1) to[out=0,in=-90] (1,2);
\draw (2,-1) -- (2,2);
\draw (-1,-1) -- (-1,2);
\end{tikzpicture}
\end{array} \\
+ \frac{[2]_x^2 [2]_y}{[4]_x [5]}
\begin{array}{c}
\begin{tikzpicture}[scale=0.3,thick]
\draw (0,-1) to[out=90,in=180] (0.5,0) to[out=0,in=90] (1,-1);
\draw (-2,-1) to[out=90,in=180] (-1.5,0) to[out=0,in=90] (-1,-1);
\draw (3,2) to[out=-90,in=180] (3.5,1) to[out=0,in=-90] (4,2);
\draw (5,2) to[out=-90,in=180] (5.5,1) to[out=0,in=-90] (6,2);
\draw (2,-1) -- (2,2);
\end{tikzpicture}
\end{array}
+ \frac{[2]_x^2 [2]_y}{[4]_x [5]}
\begin{array}{c}
\begin{tikzpicture}[scale=0.3,thick]
\draw (3,-1) to[out=90,in=180] (3.5,0) to[out=0,in=90] (4,-1);
\draw (5,-1) to[out=90,in=180] (5.5,0) to[out=0,in=90] (6,-1);
\draw (0,2) to[out=-90,in=180] (0.5,1) to[out=0,in=-90] (1,2);
\draw (-2,2) to[out=-90,in=180] (-1.5,1) to[out=0,in=-90] (-1,2);
\draw (2,-1) -- (2,2);
\end{tikzpicture}
\end{array}
+ \frac{[2]_x [2]_y^2 [3]}{[4]_x [5]}
\begin{array}{c}
\begin{tikzpicture}[scale=0.3,thick]
\draw (0,-1) to[out=90,in=180] (0.5,0) to[out=0,in=90] (1,-1);
\draw (0,2) to[out=-90,in=180] (0.5,1) to[out=0,in=-90] (1,2);
\draw (2,-1) to[out=90,in=180] (2.5,0) to[out=0,in=90] (3,-1);
\draw (2,2) to[out=-90,in=180] (2.5,1) to[out=0,in=-90] (3,2);
\draw (4,-1) -- (4,2);
\end{tikzpicture}
\end{array} \\
+ \frac{[2]_x^3 [3]}{[4]_x [5]}
\begin{array}{c}
\begin{tikzpicture}[scale=0.3,thick]
\draw (0,-1) to[out=90,in=180] (0.5,0) to[out=0,in=90] (1,-1);
\draw (0,2) to[out=-90,in=180] (0.5,1) to[out=0,in=-90] (1,2);
\draw (2,-1) to[out=90,in=180] (2.5,0) to[out=0,in=90] (3,-1);
\draw (2,2) to[out=-90,in=180] (2.5,1) to[out=0,in=-90] (3,2);
\draw (-1,-1) -- (-1,2);
\end{tikzpicture}
\end{array}
+ \frac{[3]}{[5]}
\begin{array}{c}
\begin{tikzpicture}[scale=0.3,thick]
\draw (3,-1) to[out=90,in=180] (3.5,0) to[out=0,in=90] (4,-1);
\draw (0,2) to[out=-90,in=180] (0.5,1) to[out=0,in=-90] (1,2);
\draw (2,-1) -- (2,2);
\draw (5,-1) -- (5,2);
\draw (6,-1) -- (6,2);
\end{tikzpicture}
\end{array}
+ \frac{[3]}{[5]}
\begin{array}{c}
\begin{tikzpicture}[scale=0.3,thick]
\draw (0,-1) to[out=90,in=180] (0.5,0) to[out=0,in=90] (1,-1);
\draw (3,2) to[out=-90,in=180] (3.5,1) to[out=0,in=-90] (4,2);
\draw (2,-1) -- (2,2);
\draw (5,-1) -- (5,2);
\draw (6,-1) -- (6,2);
\end{tikzpicture}
\end{array}
+ \frac{[3]}{[5]} \begin{array}{c}
\begin{tikzpicture}[scale=0.3,thick]
\draw (0,-1) to[out=90,in=180] (0.5,0) to[out=0,in=90] (1,-1);
\draw (3,2) to[out=-90,in=180] (3.5,1) to[out=0,in=-90] (4,2);
\draw (2,-1) -- (2,2);
\draw (-1,-1) -- (-1,2);
\draw (-2,-1) -- (-2,2);
\end{tikzpicture}
\end{array} \\
+ \frac{[3]}{[5]}
\begin{array}{c}
\begin{tikzpicture}[scale=0.3,thick]
\draw (3,-1) to[out=90,in=180] (3.5,0) to[out=0,in=90] (4,-1);
\draw (0,2) to[out=-90,in=180] (0.5,1) to[out=0,in=-90] (1,2);
\draw (2,-1) -- (2,2);
\draw (-1,-1) -- (-1,2);
\draw (-2,-1) -- (-2,2);
\end{tikzpicture}
\end{array}
+ \frac{[2]_x [3]}{[4]_x [5]}
\begin{array}{c}
\begin{tikzpicture}[scale=0.3,thick]
\draw (4,-1) to[out=90,in=180] (4.5,0) to[out=0,in=90] (5,-1);
\draw (0,-1) to[out=90,in=180] (0.5,0) to[out=0,in=90] (1,-1);
\draw (0,2) to[out=-90,in=180] (0.5,1) to[out=0,in=-90] (1,2);
\draw (-1,2) to[out=-90,in=180] (0.5,0.5) to[out=0,in=-90] (2,2);
\draw (3,-1) -- (3,2);
\end{tikzpicture}
\end{array}
+ \frac{[2]_x [3]}{[4]_x [5]}
\begin{array}{c}
\begin{tikzpicture}[scale=0.3,thick]
\draw (-4,-1) to[out=90,in=180] (-3.5,0) to[out=0,in=90] (-3,-1);
\draw (0,-1) to[out=90,in=180] (0.5,0) to[out=0,in=90] (1,-1);
\draw (0,2) to[out=-90,in=180] (0.5,1) to[out=0,in=-90] (1,2);
\draw (-1,2) to[out=-90,in=180] (0.5,0.5) to[out=0,in=-90] (2,2);
\draw (-2,-1) -- (-2,2);
\end{tikzpicture}
\end{array}\\
+ \frac{[2]_x [3]}{[4]_x [5]}
\begin{array}{c}
\begin{tikzpicture}[scale=0.3,thick]
\draw (0,-1) to[out=90,in=180] (0.5,0) to[out=0,in=90] (1,-1);
\draw (-1,-1) to[out=90,in=180] (0.5,0.5) to[out=0,in=90] (2,-1);
\draw (4,2) to[out=-90,in=180] (4.5,1) to[out=0,in=-90] (5,2);
\draw (0,2) to[out=-90,in=180] (0.5,1) to[out=0,in=-90] (1,2);
\draw (3,-1) -- (3,2);
\end{tikzpicture}
\end{array}
+ \frac{[2]_x [3]}{[4]_x [5]}
\begin{array}{c}
\begin{tikzpicture}[scale=0.3,thick]
\draw (5,-1) to[out=90,in=180] (5.5,0) to[out=0,in=90] (6,-1);
\draw (4,-1) to[out=90,in=180] (5.5,0.5) to[out=0,in=90] (7,-1);
\draw (1,2) to[out=-90,in=180] (1.5,1) to[out=0,in=-90] (2,2);
\draw (5,2) to[out=-90,in=180] (5.5,1) to[out=0,in=-90] (6,2);
\draw (3,-1) -- (3,2);
\end{tikzpicture}
\end{array}
+ \frac{[2]_x [3]}{[4]_x [5]}
\begin{array}{c}
\begin{tikzpicture}[scale=0.3,thick]
\draw (4,-1) to[out=90,in=180] (5.5,0.2) to[out=0,in=90] (7,-1);
\draw (5,-1) to[out=90,in=180] (5.5,-0.2) to[out=0,in=90] (6,-1);
\draw (5,2) to[out=-90,in=180] (5.5,1.2) to[out=0,in=-90] (6,2);
\draw (4,2) to[out=-90,in=180] (5.5,0.8) to[out=0,in=-90] (7,2);
\draw (3,-1) -- (3,2);
\end{tikzpicture}
\end{array} \\
+ \frac{[2]_x [3]}{[4]_x [5]}
\begin{array}{c}
\begin{tikzpicture}[scale=0.3,thick]
\draw (4,-1) to[out=90,in=180] (5.5,0.2) to[out=0,in=90] (7,-1);
\draw (5,-1) to[out=90,in=180] (5.5,-0.2) to[out=0,in=90] (6,-1);
\draw (5,2) to[out=-90,in=180] (5.5,1.2) to[out=0,in=-90] (6,2);
\draw (4,2) to[out=-90,in=180] (5.5,0.8) to[out=0,in=-90] (7,2);
\draw (8,-1) -- (8,2);
\end{tikzpicture}
\end{array}
+ \frac{[2]_y [3]}{[5]}
\begin{array}{c}
\begin{tikzpicture}[scale=0.3,thick]
\draw (0,-1) to[out=90,in=180] (0.5,0) to[out=0,in=90] (1,-1);
\draw (0,2) to[out=-90,in=180] (0.5,1) to[out=0,in=-90] (1,2);
\draw (-1,-1) -- (-1,2);
\draw (-2,-1) -- (-2,2);
\draw (2,-1) -- (2,2);
\end{tikzpicture}
\end{array}
+ \frac{[2]_x [3]}{[5]}
\begin{array}{c}
\begin{tikzpicture}[scale=0.3,thick]
\draw (0,-1) to[out=90,in=180] (0.5,0) to[out=0,in=90] (1,-1);
\draw (0,2) to[out=-90,in=180] (0.5,1) to[out=0,in=-90] (1,2);
\draw (-1,-1) -- (-1,2);
\draw (2,-1) -- (2,2);
\draw (3,-1) -- (3,2);
\end{tikzpicture}
\end{array} \\
+ \frac{[2]_x^2 [3]}{[4]_x [5]}
\begin{array}{c}
\begin{tikzpicture}[scale=0.3,thick]
\draw (0,-1) to[out=90,in=180] (0.5,0) to[out=0,in=90] (1,-1);
\draw (2,-1) -- (2,2);
\draw (4,-1) to[out=90,in=180] (4.5,0) to[out=0,in=90] (5,-1);
\draw (3,2) to[out=-90,in=180] (3.5,1) to[out=0,in=-90] (4,2);
\draw (5,2) to[out=-90,in=180] (5.5,1) to[out=0,in=-90] (6,2);
\end{tikzpicture}
\end{array}
+ \frac{[2]_x^2 [3]}{[4]_x [5]}
\begin{array}{c}
\begin{tikzpicture}[scale=0.3,thick]
\draw (5,-1) to[out=90,in=180] (5.5,0) to[out=0,in=90] (6,-1);
\draw (0,2) to[out=-90,in=180] (0.5,1) to[out=0,in=-90] (1,2);
\draw (2,-1) -- (2,2);
\draw (3,-1) to[out=90,in=180] (3.5,0) to[out=0,in=90] (4,-1);
\draw (4,2) to[out=-90,in=180] (4.5,1) to[out=0,in=-90] (5,2);
\end{tikzpicture}
\end{array}
+ \frac{[2]_x [2]_y [3]}{[4]_x [5]}
\begin{array}{c}
\begin{tikzpicture}[scale=0.3,thick]
\draw (8,-1) to[out=90,in=180] (8.5,0) to[out=0,in=90] (9,-1);
\draw (7,-1) -- (7,2);
\draw (4,-1) to[out=90,in=180] (4.5,0) to[out=0,in=90] (5,-1);
\draw (3,2) to[out=-90,in=180] (3.5,1) to[out=0,in=-90] (4,2);
\draw (5,2) to[out=-90,in=180] (5.5,1) to[out=0,in=-90] (6,2);
\end{tikzpicture}
\end{array} \\
+ \frac{[2]_x [2]_y [3]}{[4]_x [5]}
\begin{array}{c}
\begin{tikzpicture}[scale=0.3,thick]
\draw (5,-1) to[out=90,in=180] (5.5,0) to[out=0,in=90] (6,-1);
\draw (8,2) to[out=-90,in=180] (8.5,1) to[out=0,in=-90] (9,2);
\draw (7,-1) -- (7,2);
\draw (3,-1) to[out=90,in=180] (3.5,0) to[out=0,in=90] (4,-1);
\draw (4,2) to[out=-90,in=180] (4.5,1) to[out=0,in=-90] (5,2);
\end{tikzpicture}
\end{array}
+ \frac{[2]_x^2 [3]}{[4]_x [5]}
\begin{array}{c}
\begin{tikzpicture}[scale=0.3,thick]
\draw (4,-1) to[out=90,in=180] (4.5,0) to[out=0,in=90] (5,-1);
\draw (6,-1) to[out=90,in=180] (6.5,0) to[out=0,in=90] (7,-1);
\draw (5,2) to[out=-90,in=180] (5.5,1) to[out=0,in=-90] (6,2);
\draw (4,2) to[out=-90,in=180] (5.5,0.5) to[out=0,in=-90] (7,2);
\draw (3,-1) -- (3,2);
\end{tikzpicture}
\end{array}
+ \frac{[2]_x [2]_y [3]}{[4]_x [5]}
\begin{array}{c}
\begin{tikzpicture}[scale=0.3,thick]
\draw (4,-1) to[out=90,in=180] (4.5,0) to[out=0,in=90] (5,-1);
\draw (6,-1) to[out=90,in=180] (6.5,0) to[out=0,in=90] (7,-1);
\draw (5,2) to[out=-90,in=180] (5.5,1) to[out=0,in=-90] (6,2);
\draw (4,2) to[out=-90,in=180] (5.5,0.5) to[out=0,in=-90] (7,2);
\draw (8,-1) -- (8,2);
\end{tikzpicture}
\end{array}\\
+ \frac{[2]_x^2 [3]}{[4]_x [5]}
\begin{array}{c}
\begin{tikzpicture}[scale=0.3,thick]
\draw (4,-1) to[out=90,in=180] (5.5,0.5) to[out=0,in=90] (7,-1);
\draw (5,-1) to[out=90,in=180] (5.5,0) to[out=0,in=90] (6,-1);
\draw (6,2) to[out=-90,in=180] (6.5,1) to[out=0,in=-90] (7,2);
\draw (4,2) to[out=-90,in=180] (4.5,1) to[out=0,in=-90] (5,2);
\draw (3,-1) -- (3,2);
\end{tikzpicture}
\end{array}
+ \frac{[2]_x [2]_y [3]}{[4]_x [5]}
\begin{array}{c}
\begin{tikzpicture}[scale=0.3,thick]
\draw (4,-1) to[out=90,in=180] (5.5,0.5) to[out=0,in=90] (7,-1);
\draw (5,-1) to[out=90,in=180] (5.5,0) to[out=0,in=90] (6,-1);
\draw (6,2) to[out=-90,in=180] (6.5,1) to[out=0,in=-90] (7,2);
\draw (4,2) to[out=-90,in=180] (4.5,1) to[out=0,in=-90] (5,2);
\draw (8,-1) -- (8,2);
\end{tikzpicture}
\end{array}.
\end{multline*}
\caption{Jones--Wenzel projector for $(s,t,s,t,s,t)$}
\end{figure}

From these formulas one sees that the Jones--Wenzl projectors for length-$m$ alternating words in $s$ and $t$ exist provided the images of the following elements in $A$ are invertible:
\begin{itemize}
\item
if $m=2$: no condition;
\item
if $m=3$: $[2]_x$ and $[2]_y$;
\item
if $m=4$: $[3]$;
\item
if $m=6$: $[5]$ and $\frac{[4]_x}{[2]_x}=\frac{[4]_y}{[2]_y}$.
\end{itemize}

%The case we will mostly be interested in is when $st$ has finite order $m$ and $n=m-1$. In this situation, by~\cite[Lemma~6.17]{ew2}, if the restriction of $st$ to the span of $\alpha_s$ and $\alpha_t$ has order exactly $m$ and if $\bk$ is a field, then the Jones--Wenzl projectors for $\ss{m-1}$ and $\tt{m-1}$ exist, and are rotatable.

Assuming that the Jones--Wenzl projectors for $\ss{n}$ and $\tt{n}$ exist, one can ask whether they are rotatable, see~\cite[Definition~6.16]{ew2}. Once again it is a delicate question in general to determine when this condition is satisfied. But if one has an explicit formula for the projectors, checking the rotatability condition is just a matter of computation. Using the formulas given above, one sees that, provided the conditions above are satisfied, the Jones--Wenzl projectors for length-$m$ alternating words in $s$ and $t$ are rotatable provided the images of the following elements in $A$ vanish:
\begin{itemize}
\item
if $m=2$: $[2]_x$ and $[2]_y$;
\item
if $m=3$: $[2]_x [2]_y - 1$;
\item
if $m=4$: $[2]_x [2]_y - 2$;
\item
if $m=6$: $[2]_x [2]_y - 3$.
\end{itemize}
One can remark that these conditions in fact automatically imply that those considered above are satisfied.

%%%%%%%%%%%%%%%%%%%%%%%%%%%%%%%%%%%%%%%%%%%%%%%%%%%%%%%%%
\section{The Elias--Williamson diagrammatic category}
\label{sec:ew-diagram}
%%%%%%%%%%%%%%%%%%%%%%%%%%%%%%%%%%%%%%%%%%%%%%%%%%%%%%%%%

Let $(W,S)$ be a Coxeter system, let $\bk$ be an integral domain,\footnote{The assumption that $\bk$ is an integral domain is not explicit in~\cite{ew}, but it is necessary for the ``localization'' techniques used in~\cite[Section~5]{ew} to make sense.} and let $\fh=(V, \{\alpha_s^\vee : s \in S\}, \{\alpha_s : s \in S\})$ be a balanced realization of $(W,S)$ over $\bk$ which satisfies Demazure surjectivity. We will consider the symmetric algebra $\mathrm{S}(V^*)$ as a graded ring with $V^*$ in degree $2$.

In~\cite{ew}, the authors associate to such data a $\bk$-linear graded (strict) monoidal category $\DiagBS(\fh, W)$\index{diagrammatic categories!DBS@$\DiagBS(\fh,W)$}. The assumptions that one needs for this category to be well behaved are made more explicit in~\cite[\S 5]{ew2}. In addition to the conditions considered in Section~\ref{sec:realizations}, namely Demazure surjectivity and balancedness (which is not strictly necessary, but simplifies the combinatorics), one needs to assume two extra conditions. First, for any pair $s,t \in S$ of distinct elements generating a finite subgroup of $W$, denoting by $m$ the order of $st$ in $W$, and considering $\bk$ as a $\Z[x,y]$-algebra via
\[
x \mapsto - \langle \alpha_s^\vee, \alpha_t \rangle, \quad y \mapsto - \langle \alpha_t^\vee, \alpha_s \rangle,
\]
one wants to assume that in the category $2\mathscr{T} \hspace{-2pt} \mathscr{L}_\bk$
the Jones--Wenzl projectors $\mathcal{JW}_{\ss{m}}$ and $\mathcal{JW}_{\tt{m}}$ exist and are rotatable. In this case, taking a deformation retract (see~\cite[\S 5.2]{ew}) one obtains a diagram of the form used below in the definition of $\DiagBS(\fh, W)$. (More precisely, what we will consider is the diagram obtained by adding vertical lines to the left and to the right, as in~\cite[(5.16)]{ew}.) The diagram obtained from $\mathcal{JW}_{(s,t, \cdots, t)}$ (if $m$ is even) or $\mathcal{JW}_{(s,t, \cdots, s)}$ (if $m$ is odd) will be denoted
\[
\begin{array}{c}
    \begin{tikzpicture}[yscale=0.5,xscale=0.3,baseline,thick]
    \draw (-3,-0.5) rectangle (3,0.5);
    \node at (0,0) {$\JW_{(s,t,\sdots, t)}$};
    \draw[red] (-2.5,-1.5) to (-2.5,-0.5);
    \draw (-1.5,-1.5) to (-1.5,-0.5);
    \draw (2.5,-1.5) to (2.5,-0.5);
        \draw[red] (-2.5,0.5) to (-2.5,1.5);
    \draw (-1.5,1.5) to (-1.5,0.5);
    \draw (2.5,1.5) to (2.5,0.5);
    \node at (0.2,-1) {$\cdots$};
    \node at (0.2,1) {$\cdots$};
        \node at (-2.5,-1.8) {\tiny $s$};
                \node at (-2.5,1.8) {\tiny $s$};
    \node at (-1.5,-1.8) {\tiny $t$};
    \node at (2.5,-1.8) {\tiny $t$};
    \node at (-1.5,1.8) {\tiny $t$};
    \node at (2.5,1.8) {\tiny $t$};
    \end{tikzpicture}
\end{array}
\quad \text{or} \quad
\begin{array}{c}
    \begin{tikzpicture}[yscale=0.5,xscale=0.3,baseline,thick]
    \draw (-3,-0.5) rectangle (3,0.5);
    \node at (0,0) {$\JW_{(s,t,\sdots, s)}$};
    \draw[red] (-2.5,-1.5) to (-2.5,-0.5);
    \draw (-1.5,-1.5) to (-1.5,-0.5);
    \draw[red] (2.5,-1.5) to (2.5,-0.5);
        \draw[red] (-2.5,0.5) to (-2.5,1.5);
    \draw (-1.5,1.5) to (-1.5,0.5);
    \draw[red] (2.5,1.5) to (2.5,0.5);
    \node at (0.2,-1) {$\cdots$};
    \node at (0.2,1) {$\cdots$};
        \node at (-2.5,-1.8) {\tiny $s$};
                \node at (-2.5,1.8) {\tiny $s$};
    \node at (-1.5,-1.8) {\tiny $t$};
    \node at (2.5,-1.8) {\tiny $s$};
    \node at (-1.5,1.8) {\tiny $t$};
    \node at (2.5,1.8) {\tiny $s$};
    \end{tikzpicture}
\end{array},
\]
and again called a Jones--Wenzl projector.
The last condition has to be considered only in case $W$ admits a parabolic subgroup of type $H_3$. In this case, the ``Zamolodchikov" relation one needs to impose (see~\cite[(5.12)]{ew}) is not known explicitly. One therefore needs to assume that there exists a linear combination of this form that is sent to $0$ by the operation described in~\cite[\S 2]{ew2}. Such a linear combination is then fixed, and its vanishing is imposed in the category $\DiagBS(\fh, W)$ (see~\eqref{it:Zamolodchikov} below).

The objects of the category $\DiagBS(\fh, W)$ are parametrized by expressions $\uw$; the object attached to $\uw$ will be denoted by $B_\uw$\index{Buw@$B_\uw$}. The morphisms are generated (under horizontal and vertical concatenation, and $\bk$-linear combinations) by four kinds of morphisms depicted by diagrams (to be read from bottom to top):
\begin{enumerate}
\item
for any homogeneous $f \in \mathrm{S}(V^*)$, a morphism
\[
    \begin{tikzpicture}[thick,scale=0.07,baseline]
      \node at (0,0) {$f$};
      \draw[dotted] (-5,-5) rectangle (5,5);
    \end{tikzpicture}
\]
from $B_\varnothing$ to itself, of degree $\deg(f)$;
\item
for any $s \in S$, ``dot'' morphisms
\[
       \begin{tikzpicture}[thick,scale=0.07,baseline]
      \draw (0,-5) to (0,0);
      \node at (0,0) {$\bullet$};
      \node at (0,-6.7) {\tiny $s$};
    \end{tikzpicture}
    \qquad \text{and} \qquad
      \begin{tikzpicture}[thick,baseline,xscale=0.07,yscale=-0.07]
      \draw (0,-5) to (0,0);
      \node at (0,0) {$\bullet$};
      \node at (0,-6.7) {\tiny $s$};
    \end{tikzpicture}
\]
from $B_s$ to $B_\varnothing$ and from $B_\varnothing$ to $B_s$, respectively, of degree $1$;
\item
for any $s \in S$, trivalent morphisms
\[
        \begin{tikzpicture}[thick,baseline,scale=0.07]
      \draw (-4,5) to (0,0) to (4,5);
      \draw (0,-5) to (0,0);
      \node at (0,-6.7) {\tiny $s$};
      \node at (-4,6.7) {\tiny $s$};
      \node at (4,6.7) {\tiny $s$};      
    \end{tikzpicture}
    \qquad \text{and} \qquad
        \begin{tikzpicture}[thick,baseline,scale=-0.07]
      \draw (-4,5) to (0,0) to (4,5);
      \draw (0,-5) to (0,0);
      \node at (0,-6.7) {\tiny $s$};
      \node at (-4,6.7) {\tiny $s$};
      \node at (4,6.7) {\tiny $s$};    
    \end{tikzpicture}
\]
from $B_s$ to $B_{(s,s)}$ and from $B_{(s,s)}$ to $B_s$, respectively, of degree $-1$;
\item
for any pair $(s,t)$ of distinct simple reflections such that $st$ has finite order $m_{st}$ in $W$, a morphism
\[
    \begin{tikzpicture}[yscale=0.5,xscale=0.3,baseline,thick]
\draw (-2.5,-1) to (0,0) to (-1.5,1);
\draw (-0.5,-1) to (0,0);
\draw[red] (-1.5,-1) to (0,0) to (-2.5,1);
\draw[red] (0,0) to (-0.5,1);
\node at (-2.5,-1.3) {\tiny $s$\vphantom{$t$}};
\node at (-1.5,1.3) {\tiny $s$\vphantom{$t$}};
\node at (-0.5,-1.3) {\tiny $s$\vphantom{$t$}};
\node at (-1.5,-1.3) {\tiny $t$};
\node at (-2.5,1.3) {\tiny $t$};
\node at (-0.5,1.3) {\tiny $t$};
\node at (0.6,-0.7) {\small $\cdots$};
\node at (0.6,0.7) {\small $\cdots$};
\draw (2.5,-1) -- (0,0);
\draw[red] (2.5,1) -- (0,0);
\node at (2.5,-1.3) {\tiny $s$\vphantom{$t$}};
\node at (2.5,1.3) {\tiny $t$};
\end{tikzpicture} \text{ if $m_{st}$ is odd or}
    \begin{tikzpicture}[yscale=0.5,xscale=0.3,baseline,thick]
\draw (-2.5,-1) to (0,0) to (-1.5,1);
\draw (-0.5,-1) to (0,0);
\draw[red] (-1.5,-1) to (0,0) to (-2.5,1);
\draw[red] (0,0) to (-0.5,1);
\node at (-2.5,-1.3) {\tiny $s$\vphantom{$t$}};
\node at (-1.5,1.3) {\tiny $s$\vphantom{$t$}};
\node at (-0.5,-1.3) {\tiny $s$\vphantom{$t$}};
\node at (-1.5,-1.3) {\tiny $t$};
\node at (-2.5,1.3) {\tiny $t$};
\node at (-0.5,1.3) {\tiny $t$};
\node at (0.6,-0.7) {\small $\cdots$};
\node at (0.6,0.7) {\small $\cdots$};
\draw[red] (2.5,-1) -- (0,0);
\draw (2.5,1) -- (0,0);
\node at (2.5,-1.3) {\tiny $t$};
\node at (2.5,1.3) {\tiny $s$\vphantom{$t$}};
\end{tikzpicture} \text{ if $m_{st}$ is even}
\]
from $B_{(s,t,\sdots)}$ to $B_{(t,s,\sdots)}$ (where each expression has length $m_{st}$, and colors alternate), of degree $0$.
\end{enumerate}
(Below we will sometimes omit the labels ``$s$'' or ``$t$'' when they
do not play any role.) We set:
\[
\begin{array}{c}
\begin{tikzpicture}[scale=0.3,thick]
\draw (-1,-1) to[out=90,in=180] (0,1) to[out=0,in=90] (1,-1);
\end{tikzpicture}
\end{array}
:=
\begin{array}{c}
\begin{tikzpicture}[scale=0.3,thick]
\draw (-1,-1) to (0,0) to (1,-1);
\draw (0,0) to (0,1);
\node at (0,1) {$\bullet$};
\end{tikzpicture}
\end{array},
\qquad
\begin{array}{c}
\begin{tikzpicture}[scale=-0.3,thick]
\draw (-1,-1) to[out=90,in=180] (0,1) to[out=0,in=90] (1,-1);
\end{tikzpicture}
\end{array}
:=
\begin{array}{c}
\begin{tikzpicture}[scale=-0.3,thick]
\draw (-1,-1) to (0,0) to (1,-1);
\draw (0,0) to (0,1);
\node at (0,1) {$\bullet$};
\end{tikzpicture}
\end{array}.
\]
These morphisms are subject to the following relations:
\begin{enumerate}
\item
the boxes add and multiply in the obvious way;
\item
Frobenius unit:
\[
\begin{array}{c}
\begin{tikzpicture}[scale=0.3,thick]
\draw (0,-1) -- (0,2);
\draw (0,0) -- (1,1);
\node at (1,1) {$\bullet$};
\end{tikzpicture}
\end{array}
=
\begin{array}{c}
\begin{tikzpicture}[scale=0.3,thick]
\draw (0,-1) -- (0,2);
\draw (0,1) -- (1,0);
\node at (1,0) {$\bullet$};
\end{tikzpicture}
\end{array}
=
\begin{array}{c}
\begin{tikzpicture}[scale=0.3,thick]
\draw (0,-1) -- (0,2);
\end{tikzpicture}
\end{array};
\]
\item
Frobenius associativity:
\[
\begin{array}{c}
\begin{tikzpicture}[scale=0.3,thick]
\draw (0,-1) -- (0,2);
\draw (0,0) -- (2,1);
\draw (2,-1) -- (2,2);
\end{tikzpicture}
\end{array}
=
\begin{array}{c}
\begin{tikzpicture}[scale=0.3,thick]
\draw (0,-1) -- (0,2);
\draw (0,1) -- (2,0);
\draw (2,-1) -- (2,2);
\end{tikzpicture}
\end{array}
=
\begin{array}{c}
\begin{tikzpicture}[scale=0.3,thick]
\draw (-1.5,2) -- (0,1) -- (0,0) -- (-1.5,-1);
\draw (0,1) -- (1.5,2);
\draw (0,0) -- (1.5,-1);
\end{tikzpicture}
\end{array};
\]
\item
needle relation:
\[
\begin{array}{c}
\begin{tikzpicture}[scale=0.3,thick]
\draw (0,2) -- (0,1) -- (-1,0) -- (0,-1) -- (0,-2);
\draw (0,1) -- (1,0) -- (0,-1);
\end{tikzpicture}
\end{array}
= 0;
\]
\item
barbell relation:
\[
\begin{array}{c}
\begin{tikzpicture}[scale=0.3,thick]
\draw (0,1) -- (0,-1);
\node at (0,1) {$\bullet$};
\node at (0,-1) {$\bullet$};
\end{tikzpicture}
\end{array}
=
\begin{array}{c}
    \begin{tikzpicture}[thick,scale=0.3]
      \node at (0,0) {$\alpha_s$};
      \draw[dotted] (-1,-1) rectangle (1,1);
    \end{tikzpicture}
    \end{array}
\]
(where $s$ is the color of the diagram on the left-hand side);
\item
nil-Hecke relation:
\[
\begin{array}{c}
\begin{tikzpicture}[scale=0.3,thick]
\draw (0,-2) -- (0,2);
\node at (0,-2.5) {\tiny $s$};
\node at (0,2.5) {\tiny $s$};
\node at (-1.5,0) {$f$};
    \end{tikzpicture}
    \end{array}
    =
    \begin{array}{c}
\begin{tikzpicture}[scale=0.3,thick]
\draw (0,-2) -- (0,2);
\node at (0,-2.5) {\tiny $s$};
\node at (0,2.5) {\tiny $s$};
\node at (1.5,0) {$s(f)$};
    \end{tikzpicture}
    \end{array}
    +
    \begin{array}{c}
\begin{tikzpicture}[scale=0.3,thick]
\draw (0,-2) -- (0,-1);
\draw (0,2) -- (0,1);
\node at (0,-2.5) {\tiny $s$};
\node at (0,2.5) {\tiny $s$};
\node at (0,-1) {$\bullet$};
\node at (0,1) {$\bullet$};
\node at (0,0) {$\partial_s(f)$};
    \end{tikzpicture}
    \end{array}
\]
(where $\partial_s : f \mapsto \frac{f-s(f)}{\alpha_s}$ is the Demazure operator\index{Demazure operator@{$\partial_s$}} associated with $s$);
\item
cyclicity of the $2m_{st}$-valent vertex:
\begin{gather*}
\begin{array}{c}
    \begin{tikzpicture}[yscale=0.5,xscale=0.3,thick]
\draw (-2.5,-1.5) to[out=90, in=-90] (-2.5,-1) to[out=60, in=-160] (0,0) to[out=140,in=-60] (-1.5,1) to[in=-90,out=90] (-1.5,1.5);
\draw (-0.5,-1.5) to[out=90,in=-90] (-0.5,-1) to[out=80,in=-110] (0,0);
\draw[red] (-1.5,-1.5) to[out=90, in=-90] (-1.5,-1) to[out=60, in=-140] (0,0) to[out=160, in=0] (-2,0.5) to[out=180, in=90] (-3.5,-1.5);
\draw[red] (0,0) to[out=110, in=-80] (-0.5,1) to[in=-90,out=90] (-0.5,1.5);
\node at (0.4,-1) {\small $\cdots$};
\node at (0.6,0.7) {\small $\cdots$};
\draw (3.5,1.5) to[out=-90,in=0] (2,-0.5) to[out=180,in=-20] (0,0);
\draw[red] (2.5,1.5) to[out=90,in=-90] (2.5,1) to[out=-120,in=20] (0,0);
\draw[red] (0,0) to[out=-40,in=120] (1.5,-1) to[out=-90,in=90] (1.5,-1.5); 
\end{tikzpicture}
\end{array}
=
    \begin{tikzpicture}[yscale=0.5,xscale=0.3,baseline,thick]
\draw[red] (-2.5,-1.5) to (0,0) to (-1.5,1.5);
\draw[red] (-0.5,-1.5) to (0,0);
\draw (-1.5,-1.5) to (0,0) to (-2.5,1.5);
\draw (0,0) to (-0.5,1.5);
\draw[red] (2.5,-1.5) -- (0,0);
\draw (2.5,1.5) -- (0,0);
\node at (0.6,-1) {\small $\cdots$};
\node at (0.6,1) {\small $\cdots$};
\end{tikzpicture}
=
\begin{array}{c}
    \begin{tikzpicture}[yscale=0.5,xscale=-0.3,thick]
\draw (-2.5,-1.5) to[out=90, in=-90] (-2.5,-1) to[out=60, in=-160] (0,0) to[out=140,in=-60] (-1.5,1) to[in=-90,out=90] (-1.5,1.5);
\draw (-0.5,-1.5) to[out=90,in=-90] (-0.5,-1) to[out=80,in=-110] (0,0);
\draw[red] (-1.5,-1.5) to[out=90, in=-90] (-1.5,-1) to[out=60, in=-140] (0,0) to[out=160, in=0] (-2,0.5) to[out=180, in=90] (-3.5,-1.5);
\draw[red] (0,0) to[out=110, in=-80] (-0.5,1) to[in=-90,out=90] (-0.5,1.5);
\node at (0.4,-1) {\small $\cdots$};
\node at (0.6,0.7) {\small $\cdots$};
\draw (3.5,1.5) to[out=-90,in=0] (2,-0.5) to[out=180,in=-20] (0,0);
\draw[red] (2.5,1.5) to[out=90,in=-90] (2.5,1) to[out=-120,in=20] (0,0);
\draw[red] (0,0) to[out=-40,in=120] (1.5,-1) to[out=-90,in=90] (1.5,-1.5); 
\end{tikzpicture}
\end{array}
\text{if $m_{st}$ is odd;}
\\
\begin{array}{c}
    \begin{tikzpicture}[yscale=0.5,xscale=0.3,thick]
\draw (-2.5,-1.5) to[out=90, in=-90] (-2.5,-1) to[out=60, in=-160] (0,0) to[out=140,in=-60] (-1.5,1) to[in=-90,out=90] (-1.5,1.5);
\draw (-0.5,-1.5) to[out=90,in=-90] (-0.5,-1) to[out=80,in=-110] (0,0);
\draw[red] (-1.5,-1.5) to[out=90, in=-90] (-1.5,-1) to[out=60, in=-140] (0,0) to[out=160, in=0] (-2,0.5) to[out=180, in=90] (-3.5,-1.5);
\draw[red] (0,0) to[out=110, in=-80] (-0.5,1) to[in=-90,out=90] (-0.5,1.5);
\node at (0.4,-1) {\small $\cdots$};
\node at (0.6,0.7) {\small $\cdots$};
\draw[red] (3.5,1.5) to[out=-90,in=0] (2,-0.5) to[out=180,in=-20] (0,0);
\draw (2.5,1.5) to[out=90,in=-90] (2.5,1) to[out=-120,in=20] (0,0);
\draw (0,0) to[out=-40,in=120] (1.5,-1) to[out=-90,in=90] (1.5,-1.5); 
\end{tikzpicture}
\end{array}
=
    \begin{tikzpicture}[yscale=0.5,xscale=0.3,baseline,thick]
\draw[red] (-2.5,-1.5) to (0,0) to (-1.5,1.5);
\draw[red] (-0.5,-1.5) to (0,0);
\draw (-1.5,-1.5) to (0,0) to (-2.5,1.5);
\draw (0,0) to (-0.5,1.5);
\draw (2.5,-1.5) -- (0,0);
\draw[red] (2.5,1.5) -- (0,0);
\node at (0.6,-1) {\small $\cdots$};
\node at (0.6,1) {\small $\cdots$};
\end{tikzpicture}
=
\begin{array}{c}
    \begin{tikzpicture}[yscale=0.5,xscale=-0.3,thick]
\draw[red] (-2.5,-1.5) to[out=90, in=-90] (-2.5,-1) to[out=60, in=-160] (0,0) to[out=140,in=-60] (-1.5,1) to[in=-90,out=90] (-1.5,1.5);
\draw[red] (-0.5,-1.5) to[out=90,in=-90] (-0.5,-1) to[out=80,in=-110] (0,0);
\draw (-1.5,-1.5) to[out=90, in=-90] (-1.5,-1) to[out=60, in=-140] (0,0) to[out=160, in=0] (-2,0.5) to[out=180, in=90] (-3.5,-1.5);
\draw (0,0) to[out=110, in=-80] (-0.5,1) to[in=-90,out=90] (-0.5,1.5);
\node at (0.4,-1) {\small $\cdots$};
\node at (0.6,0.7) {\small $\cdots$};
\draw (3.5,1.5) to[out=-90,in=0] (2,-0.5) to[out=180,in=-20] (0,0);
\draw[red] (2.5,1.5) to[out=90,in=-90] (2.5,1) to[out=-120,in=20] (0,0);
\draw[red] (0,0) to[out=-40,in=120] (1.5,-1) to[out=-90,in=90] (1.5,-1.5); 
\end{tikzpicture}
\end{array}
\text{if $m_{st}$ is even;}
\end{gather*}
\item
2-color associativity:
\begin{gather*}
\begin{array}{c}
    \begin{tikzpicture}[yscale=0.5,xscale=0.3,baseline,thick]
\draw[red] (-2.5,-1.5) to (0,0) to (-1.5,1.5);
\draw[red] (-0.5,-1.5) to (0,0);
\draw (-1.5,-1.5) to (0,0) to (-2.5,1.5);
\draw[red] (0,0) to (1,1.5);
\draw[red] (2.5,-1.5) -- (0,0);
\draw (3,1.5) -- (2,0.7) -- (0,0);
\draw (2,1.5) -- (2,0.7);
\node at (0.6,-1) {\small $\cdots$};
\node at (0,1) {\small $\cdots$};
\end{tikzpicture}
\end{array}
=
\begin{array}{c}
    \begin{tikzpicture}[yscale=0.5,xscale=0.3,baseline,thick]
    \draw[red] (-2.5,-1.5) to (-2,-0.9) to (0.5,-0.2);
    \draw[red] (-0.5,-1.5) to (0.5,-0.2);
    \draw (-1.5,-1.5) to (0.5,-0.2);
    \draw (0.5,-0.2) to (3,1.5);
    \draw (0.5,-0.2) to (-0.4,0) to (-0.5,0.6);
    \draw[red] (0.5,-0.2) to (0.7,0.3) to (-0.5,0.6);
    \draw[red] (2.5,-1.5) to (0.5,-0.2);
    \draw[red] (-2,-0.9) to (-0.5,0.6) to (-1.5,1.5);
    \draw (-0.5,0.6) to (-2.5,1.5);
     \draw (-0.5,0.6) to (2,1.5);
     \node at (0.8,-1) {$\cdots$};
     \node at (0.1,1.2) {$\cdots$};
     \node at (0.2,0.2) {\tiny $\cdots$};
    \end{tikzpicture}
\end{array}
\text{if $m_{st}$ is odd;}
\\
\begin{array}{c}
    \begin{tikzpicture}[yscale=0.5,xscale=0.3,baseline,thick]
\draw[red] (-2.5,-1.5) to (0,0) to (-1.5,1.5);
\draw[red] (-0.5,-1.5) to (0,0);
\draw (-1.5,-1.5) to (0,0) to (-2.5,1.5);
\draw (0,0) to (1,1.5);
\draw (2.5,-1.5) -- (0,0);
\draw[red] (3,1.5) -- (2,0.7) -- (0,0);
\draw[red] (2,1.5) -- (2,0.7);
\node at (0.6,-1) {\small $\cdots$};
\node at (0,1) {\small $\cdots$};
\end{tikzpicture}
\end{array}
=
\begin{array}{c}
    \begin{tikzpicture}[yscale=0.5,xscale=0.3,baseline,thick]
    \draw[red] (-2.5,-1.5) to (-2,-0.9) to (0.5,-0.2);
    \draw[red] (-0.5,-1.5) to (0.5,-0.2);
    \draw (-1.5,-1.5) to (0.5,-0.2);
    \draw[red] (0.5,-0.2) to (3,1.5);
    \draw (0.5,-0.2) to (-0.4,0) to (-0.5,0.6);
    \draw (0.5,-0.2) to (0.7,0.3) to (-0.5,0.6);
    \draw (2.5,-1.5) to (0.5,-0.2);
    \draw[red] (-2,-0.9) to (-0.5,0.6) to (-1.5,1.5);
    \draw (-0.5,0.6) to (-2.5,1.5);
     \draw[red] (-0.5,0.6) to (2,1.5);
     \node at (0.8,-1) {$\cdots$};
     \node at (0.1,1.2) {$\cdots$};
     \node at (0.2,0.2) {\tiny $\cdots$};
    \end{tikzpicture}
\end{array}
\text{if $m_{st}$ is even;}
\end{gather*}
\item
Jones--Wenzl relations:
\begin{gather*}
\begin{array}{c}
    \begin{tikzpicture}[yscale=0.5,xscale=0.3,baseline,thick]
\draw[red] (-2.5,-1.5) to (0,0) to (-1.5,1.5);
\draw[red] (-0.5,-1.5) to (0,0);
\draw (-1.5,-1.5) to (0,0) to (-2.5,1.5);
\draw[red] (0,0) to (1,1.5);
\draw[red] (2.5,-1.5) -- (0,0);
\draw (1.7,0.7) -- (0,0);
\node at (1.7,0.7) {$\bullet$};
\node at (0.6,-1) {\small $\cdots$};
\node at (0,1) {\small $\cdots$};
\node at (-2.5,-1.8) {\tiny $s$};
\node at (-2.5,1.8) {\tiny $t$};
\node at (-.5,-1.8) {\tiny $s$};
\node at (-1.5,-1.8) {\tiny $t$};
\node at (-1.5,1.8) {\tiny $s$};
\node at (2.5,-1.8) {\tiny $s$};
\node at (1,1.8) {\tiny $s$};
\end{tikzpicture}
\end{array}
=
\begin{array}{c}
    \begin{tikzpicture}[yscale=0.5,xscale=0.3,baseline,thick]
    \draw (-3,-0.5) rectangle (3,0.5);
    \node at (0,0) {$\JW_{(s,t,\sdots, s)}$};
    \draw[red] (-2.5,-1.5) to (-2.5,-0.5);
    \draw (-1.5,-1.5) to (-1.5,-0.5);
    \draw[red] (2.5,-1.5) to (2.5,-0.5);
        \draw[red] (-2.5,1) to (-2.5,0.5);
        \node[red] at (-2.5,1) {$\bullet$};
    \draw (-1.5,1.5) to (-1.5,0.5);
    \draw[red] (2.5,1.5) to (2.5,0.5);
    \node at (0.2,-1) {$\cdots$};
    \node at (0.2,1) {$\cdots$};
    \node at (-2.5,-1.8) {\tiny $s$};
    \node at (-1.5,-1.8) {\tiny $t$};
    \node at (2.5,-1.8) {\tiny $s$};
    \node at (-1.5,1.8) {\tiny $t$};
    \node at (2.5,1.8) {\tiny $s$};
    \end{tikzpicture}
\end{array}
\text{if $m_{st}$ is odd;}
\\
\begin{array}{c}
    \begin{tikzpicture}[yscale=0.5,xscale=0.3,baseline,thick]
\draw[red] (-2.5,-1.5) to (0,0) to (-1.5,1.5);
\draw[red] (-0.5,-1.5) to (0,0);
\draw (-1.5,-1.5) to (0,0) to (-2.5,1.5);
\draw (0,0) to (1,1.5);
\draw (2.5,-1.5) -- (0,0);
\draw[red] (1.7,0.7) -- (0,0);
\node[red] at (1.7,0.7) {$\bullet$};
\node at (0.6,-1) {\small $\cdots$};
\node at (0,1) {\small $\cdots$};
\node at (-2.5,-1.8) {\tiny $s$};
\node at (-2.5,1.8) {\tiny $t$};
\node at (-.5,-1.8) {\tiny $s$};
\node at (-1.5,-1.8) {\tiny $t$};
\node at (-1.5,1.8) {\tiny $s$};
\node at (2.5,-1.8) {\tiny $t$};
\node at (1,1.8) {\tiny $t$};
\end{tikzpicture}
\end{array}
=
\begin{array}{c}
    \begin{tikzpicture}[yscale=0.5,xscale=0.3,baseline,thick]
    \draw (-3,-0.5) rectangle (3,0.5);
    \node at (0,0) {$\JW_{(s,t,\sdots, t)}$};
    \draw[red] (-2.5,-1.5) to (-2.5,-0.5);
    \draw (-1.5,-1.5) to (-1.5,-0.5);
    \draw (2.5,-1.5) to (2.5,-0.5);
        \draw[red] (-2.5,1) to (-2.5,0.5);
        \node[red] at (-2.5,1) {$\bullet$};
    \draw (-1.5,1.5) to (-1.5,0.5);
    \draw (2.5,1.5) to (2.5,0.5);
    \node at (0.2,-1) {$\cdots$};
    \node at (0.2,1) {$\cdots$};
        \node at (-2.5,-1.8) {\tiny $s$};
    \node at (-1.5,-1.8) {\tiny $t$};
    \node at (2.5,-1.8) {\tiny $t$};
    \node at (-1.5,1.8) {\tiny $t$};
    \node at (2.5,1.8) {\tiny $t$};
    \end{tikzpicture}
\end{array}
\text{if $m_{st}$ is even}
\end{gather*}
(see~\S\ref{sec:JW-projectors} below for a review of the main properties of the ``Jones--Wenzl projectors\index{Jones--Wenzl projector}'' appearing on the right-hand sides of these relations);
\item
\label{it:Zamolodchikov}
Zamolodchikov relations: see~\cite[\S 5.1]{ew}.
\end{enumerate}

The composition of morphisms is induced by vertical concatenation.
The monoidal product in $\DiagBS(\fh,W)$ is induced by the assignment $B_\uv \star B_\uw:=B_{\uv \uw}$\index{convolution!star@$\star$}, and horizontal concatenation of diagrams. 

When we consider $\DiagBS(\fh,W)$ as a graded category as above, the graded $\bk$-module of morphisms from $B_\uw$ to $B_\uv$ will be denoted
\[
\Hom^\bullet_{\DiagBS(\fh,W)}(B_\uw, B_\uv).
\]
But sometimes it will be more convenient to consider it as a usual category endowed with a ``shift of grading'' autoequivalence $(1)$, whose $n$-th power will be denoted by $(n)$. From this perspective the objects of $\DiagBS(\fh,W)$ are the $B_{\uw}(n)$ where $\uw$ is an expression and $n \in \Z$. The morphism space from $B_{\uw}(n)$ to $B_{\uv}(m)$, denoted
\[
\Hom_{\DiagBS(\fh,W)}(B_{\uw}(n),B_{\uv}(m)),
\]
is the $\bk$-submodule of $\Hom^\bullet_{\DiagBS(\fh,W)}(B_\uw, B_\uv)$ consisting of elements of degree $m-n$.

%%%%%%%%%%%%%%%%%%%%%%%%%%%%%%%%%%%%%%%%%%%%%%%%%%%%%%%%%
\section{Additive hull of \texorpdfstring{$\DiagBS(\fh,W)$}{DBS(h,W)}}
\label{sec:additive-hull-Diag}
%%%%%%%%%%%%%%%%%%%%%%%%%%%%%%%%%%%%%%%%%%%%%%%%%%%%%%%%%

We will denote by $\DiagBSp(\fh,W)$\index{diagrammatic categories!DBSp@$\DiagBSp(\fh,W)$} the additive envelope of $\DiagBS(\fh,W)$.  In other words, $\DiagBSp(\fh,W)$ is the category in which:
\begin{itemize}
 \item
 an object $\cF$ is a finite (formal) direct sum of objects of $\DiagBS(\fh,W)$;
 \item
 a morphism from $\cF$ to $\cG$ is a matrix of morphisms in $\DiagBS(\fh,W)$ from the summands of $\cF$ to those of $\cG$.
\end{itemize}
The bifunctor $\star: \DiagBS(\fh,W) \times \DiagBS(\fh,W) \to \DiagBS(\fh,W)$ and the autoequivalence $(1): \DiagBS(\fh,W) \to \DiagBS(\fh,W)$ extend to formal direct sums in an obvious way.  In particular, the bifunctor\index{convolution!star@$\star$}
\[
\star: \DiagBSp(\fh,W) \times \DiagBSp(\fh,W) \to \DiagBSp(\fh,W),
\]
together with the unit object $B_\varnothing$, and the unitor and associator isomorphisms induced by those of $\DiagBS(\fh,W)$, make $\DiagBSp(\fh,W)$ into a monoidal category. As in any monoidal category, this operation obeys an ``interchange law'' with respect to composition. That is, given morphisms $f: \cF \to \cG$, $g: \cG \to \cH$, $f': \cF' \to \cG'$, and $g': \cG' \to \cH'$, we have an equality
\[
(g \circ f) \star (g' \circ f') = (g \star g') \circ (f \star f')
\]
of morphisms $\cF \star \cF' \to \cH \star \cH'$. (This property is part of the statement that $\star$ is a bifunctor.)

It will sometimes be convenient to pick out the ``even'' and ``odd'' parts of an object $\cF \in \DiagBSp(\fh,W)$.  Suppose $\cF = \bigoplus_{i=1}^k B_{\uw_i}(n_i)$.  We put
\[
\cF_\even = \bigoplus_{\substack{i \in \{1,\sdots, k\}\\ \text{$\ell(\uw_i) + n_i$ even}}} B_{\uw_i}(n_i)
\qquad\text{and}\qquad
\cF_\odd = \bigoplus_{\substack{i \in \{1,\sdots, k\}\\ \text{$\ell(\uw_i) + n_i$ odd}}} B_{\uw_i}(n_i),
\]
so that $\cF = \cF_\even \oplus \cF_\odd$.  Note that $\Hom_{\DiagBS(\fh,W)}(B_\uw(n), B_\uv(m))=0$ unless $\ell(\uw)+n$ and $\ell(\uv)+m$ have the same parity.  As a consequence, morphisms in $\DiagBSp(\fh,W)$ respect the decomposition above.  Explicitly, any morphism $f: \cF \to \cG$ in $\DiagBSp(\fh,W)$ can be written uniquely as
\begin{equation}\label{eqn:morphism-even-odd}
f = f_\even + f_\odd
\end{equation}
where $f_\even$ is a morphism $\cF_\even \to \cG_\even$, and $f_\odd$ is a morphism $\cF_\odd \to \cG_\odd$.  As an example, this decomposition interacts with $\star$ as follows:
\begin{align*}
(\cF \star \cG)_\even &= \cF_\even \star \cG_\even \oplus \cF_\odd \star \cG_\odd, \\
(\cF \star \cG)_\odd &= \cF_\even \star \cG_\odd \oplus \cF_\odd \star \cG_\even.
\end{align*}

If $\bk'$ is another integral domain and $\varphi : \bk \to \bk'$ is a
ring morphism, then $\bk' \otimes_\bk V$ affords a natural
realization of $W$, which will be denoted $\bk' \otimes_\bk \fh$, and
we have an ``extension of scalars'' functor
\[
\bk' : \DiagBS(\fh,W) \to \DiagBS(\bk' \otimes_\bk \fh,W),
\]
which induces a functor denoted similarly
\[
\bk' : \DiagBSp(\fh,W) \to \DiagBSp(\bk' \otimes_\bk \fh,W).
\]
These functors are compatible with the monoidal products. The functor $\bk'$ induces an isomorphism
\[
\bk' \otimes_\bk \Hom_{\DiagBSp(\fh,W)}(\cF, \cG) \simto \Hom_{\DiagBSp(\bk' \otimes_\bk \fh,W)}(\bk'(\cF), \bk'(\cG))
\]
for any $\cF, \cG$ in $\DiagBSp(\fh,W)$. (This fact follows from the existence of the ``double leaves basis'' of~\cite[Theorem~6.11]{ew}, which does not depend on the choice of coefficients.)

We will also denote by $\oDiagBS(\fh,W)$\index{diagrammatic categories!DBSbar@$\oDiagBS(\fh,W)$} the category obtained from $\DiagBS(\fh,W)$ by tensoring all morphism spaces over $\mathrm{S}(V^*)$ (for the action induced by adding a box on the left of any diagram) with the trivial module $\bk$ (a right module category over $\DiagBS(\fh,W)$), and by $\oDiagBSp(\fh,W)$\index{diagrammatic categories!DBSpbar@$\oDiagBSp(\fh,W)$} the additive closure of $\oDiagBS(\fh,W)$.

Finally, if $\bk$ is a field or a complete local ring, we will denote by $\Diag(\fh,W)$\index{diagrammatic categories!D@$\Diag(\fh,W)$} the Karoubian envelope of $\DiagBSp(\fh,W)$, and by $\oDiag(\fh,W)$ the Karoubian envelope of $\oDiagBSp(\fh,W)$.\index{diagrammatic categories!Dbar@$\oDiag(\fh,W)$} According to~\cite[Theorem~6.25]{ew}, there is a bijection
\[
\left\{
\begin{array}{c}
\text{indecomposable objects in $\Diag(\fh,W)$} \\
\text{up to isomorphism and grading shift}
\end{array}
\right\}
\overset{\sim}{\leftrightarrow}
W.
\]
The indecomposable object corresponding to $w \in W$ will be denoted by\index{Bw@$B_w$}
\[
B_w.
\]
It can be characterized as follows: for any reduced expression $\uw \in \hW$ with $\pi(\uw) = w$, $B_w$ is the unique indecomposable summand of $B_\uw$ that does not occur as a summand of any $B_\uv(n)$ with $\uv \in \hW$ and $\ell(\uv) < \ell(\uw)$.  We will study these objects further in the case of dihedral groups in Chapter~\ref{chap:dihedral}.

%%%%%%%%%%%%%%%%%%%%%%%%%%%%%%%%%%%%%%%%%%%%%%%%%%%%%%%%%%%%%%%%%%%%%%%%%%%
\section{More on 2-colored quantum numbers}
%%%%%%%%%%%%%%%%%%%%%%%%%%%%%%%%%%%%%%%%%%%%%%%%%%%%%%%%%%%%%%%%%%%%%%%%%%%

We conclude this chapter with brief statements of a few additional properties of the $2$-colored quantum numbers.  These properties will be needed in Chapter~\ref{chap:dihedral}.

Using the formulas~\eqref{eq:2q-s-t-odd}--\eqref{eq:2q-s-t-even},
\eqref{eq:2q-recursion} may be rewritten
\begin{equation} \label{eq:2q-recursion-alt}
 [n]_x = [2]_x[n-1]_y - [n-2]_x \qquad \mbox{for } n \ge 2
\end{equation}
(and similarly for $x$ and $y$ swapped).
It also follows that
\begin{equation} \label{eq:2q-product-0}
 [n]_x[m]_y = [m]_x[n]_y \qquad \mbox{for } n \equiv m \bmod 2;
\end{equation}
\begin{equation} \label{eq:2q-product-1}
 [2]_x[n]_y[m]_y = [2]_y[n]_x[m]_x \qquad \mbox{for } n \not\equiv m \bmod 2.
\end{equation}

The following are 2-color analogues of the classical Clebsch--Gordan formulas. They can be proved together by induction on $m$ (with $n$ fixed).

\begin{lem} Suppose that $n \ge m \ge 0$.
  \begin{enumerate}
  \item If $n, m$ are of the same parity, then
    \begin{equation}
      \label{eq:CB1}
      [n]_x[m]_y = \sum_{i \in [n-m+1,n+m-1] \cap (2\Z + 1)} [i]_x = \sum_{i \in [n-m+1,n+m-1] \cap (2\Z + 1)} [i]_y.
    \end{equation}
  \item  If $n, m$ are of different parity, then
    \begin{equation}
      \label{eq:CB2}
      [n]_x[m]_x = \sum_{i \in [n-m+1,n+m-1] \cap (2\Z)} [i]_x,
    \end{equation}
    and similarly for $y$.
  \end{enumerate}
\end{lem}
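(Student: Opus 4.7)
The plan is to prove (1) and (2) simultaneously by induction on $m$, with $n \ge m$ fixed in each step. The base cases $m = 0$ and $m = 1$ are immediate: when $m = 0$ both sides vanish (the sum is over the empty set), and when $m = 1$ the sum on the right collapses to the single term $[n]_x$, which matches $[n]_x \cdot [1]_y = [n]_x$ in case (1) (when $n$ is odd) and $[n]_x \cdot [1]_x = [n]_x$ in case (2) (when $n$ is even). For the inductive step $m \ge 2$, I will use the ``mixed'' form \eqref{eq:2q-recursion-alt} of the recursion, which together with the $x$-$y$-swapped version reads
\[
[m]_y = [2]_y [m-1]_x - [m-2]_y, \qquad [m]_x = [2]_x [m-1]_y - [m-2]_x.
\]

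In case (1), I expand $[n]_x [m]_y = [2]_y \cdot [n]_x [m-1]_x - [n]_x [m-2]_y$. Since $(n, m-1)$ have different parity, the induction hypothesis \eqref{eq:CB2} rewrites the first product as $\sum_{i \in I_1} [i]_x$ with $I_1 = [n-m+2, n+m-2] \cap 2\Z$; and since $(n, m-2)$ have the same parity, \eqref{eq:CB1} rewrites the second as $\sum_{i \in I_3} [i]_x$ with $I_3 = [n-m+3, n+m-3] \cap (2\Z+1)$. Rearranging the recursion applied to $i+1$ odd yields $[2]_y [i]_x = [i-1]_x + [i+1]_x$ for every even $i$. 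Substituting, the claim reduces to showing that $(I_1 - 1) + (I_1 + 1) - I_3$ (as multisets of odd integers) equals $[n-m+1, n+m-1] \cap (2\Z+1)$; this is straightforward inclusion--exclusion, since $(I_1-1) \cap (I_1+1) = I_3$ and $(I_1-1) \cup (I_1+1)$ is the target. The second equality in \eqref{eq:CB1} is automatic because $[i]_x = [i]_y$ for odd $i$ by \eqref{eq:2q-s-t-odd}.

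Case (2) is handled symmetrically: expand $[n]_x [m]_x = [2]_x \cdot [n]_x [m-1]_y - [n]_x [m-2]_x$, where now $(n, m-1)$ have the same parity and $(n, m-2)$ have different parity. Applying the induction hypothesis to each product and using $[2]_x [i]_x = [i-1]_x + [i+1]_x$ for odd $i$ (again rearranging the recursion), the same combinatorial collapse delivers $\sum_{i \in [n-m+1, n+m-1] \cap 2\Z} [i]_x$. The ``similarly for $y$'' statement follows by running the identical argument with $x$ and $y$ interchanged throughout, since the hypotheses of the lemma and the defining recursions for the quantum numbers are symmetric under this swap.

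The only nontrivial ingredient is the index-set bookkeeping: verifying that the interval shifts $I_1 \mp 1$ and the middle set $I_3$ combine to exactly the target interval with the correct parity. This becomes transparent after observing that $n-m+1$ is odd in case (1) and even in case (2), so that the target set is always an arithmetic progression of common difference $2$ starting at $n-m+1$ and ending at $n+m-1$, which is precisely $(I_1 - 1) \cup (I_1 + 1)$.
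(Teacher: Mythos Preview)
Your proof is correct and follows exactly the approach indicated in the paper, which merely states that the two formulas ``can be proved together by induction on $m$ (with $n$ fixed)'' without giving details. Your filling in of the inductive step via the recursion~\eqref{eq:2q-recursion-alt} and the multiset bookkeeping on the index sets is precisely what is needed.
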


The following formulas are obtained by applying \eqref{eq:CB1} to each product: for all $n \ge 1$, we have
\begin{equation} \label{eq:2q-square-identity}
 [n]_x[n]_y = [n-1]_x[n+1]_y + 1 = [n-1]_y[n+1]_x + 1.
\end{equation}

%==========================================================================
\chapter{Bigraded modules and dgg algebras}
\label{chap:dgg}
%==========================================================================

This chapter contains preliminaries, notation, and conventions (especially sign conventions) for bigraded modules and dgg (differential graded graded) modules.

%%%%%%%%%%%%%%%%%%%%%%%%%%%%%%%%%%%%%%%%%%%%%%%%%%%%%%%%%%%%%%%%%%%%%%%%%%%
\section{Gradings and differentials}
%%%%%%%%%%%%%%%%%%%%%%%%%%%%%%%%%%%%%%%%%%%%%%%%%%%%%%%%%%%%%%%%%%%%%%%%%%%

Let $\bk$ be a commutative ring. Consider a bigraded $\bk$-module
\[
M = \bigoplus_{i,j \in \Z} M^i_j.
\]
Elements of $M^i_j$ are said to be \emph{homogeneous of bidegree $(i,j)$}.  The \emph{cohomological degree} of an element $m \in M^i_j$, denoted by $|m|$, is defined to be $i$.  (The integer $j$ is called its \emph{internal degree}.  We will not need separate notation for the internal degree.)  We define three kinds of shift-of-grading functors, acting on objects as follows:
\begin{align*}
M[n]^i_j &= M^{i+n}_j, &
M\la n\ra^i_j &= M^i_{j-n}, & 
M(n)^i_j &= M^{i+n}_{j+n}.
\end{align*}
(The actions on morphisms are the obvious ones.)
The three shift functors commute with each other and are related by
\[
\la n\ra = (-n)[n].
\]
If $M$ and $N$ are two bigraded $\bk$-modules, their tensor product $M \otimes N$ is equipped with the bigrading given by
\[
(M \otimes N)^i_j = \bigoplus_{\substack{p+q=i\\ r+s = j}} M^p_r \otimes N^q_s.
\]
Moreover, we equip the category of bigraded $\bk$-modules with the usual ``super'' symmetric monoidal structure: that is, the isomorphism 
\[
\tau: M \otimes N \to N \otimes M
\]
is given by $\tau(m \otimes n) = (-1)^{|m||n|} n \otimes m$ for
homogeneous elements $m \in M$, $n \in N$.  This sign rule makes
numerous appearances in formulas below.

A \emph{dgg $\bk$-module} is a bigraded $\bk$-module equipped with a map $d: M \to M[1]$, called the \emph{differential}, such that $d \circ d = 0$. When $M$ and $N$ are dgg $\bk$-modules, we equip $M \otimes N$ with the differential
\[
d_{M \otimes N}(m \otimes n) = d_M(m) \otimes n + (-1)^{|m|} m \otimes d_N(n)
\]
for $m \in M$ and $n \in N$.
A \emph{dgg algebra} is a dgg $\bk$-module $A$ that is also a $\bk$-algebra, with the unit $1$ homogeneous of bidegree $(0,0)$, and such that the multiplication map $A \otimes A \to A$ is a map of dgg $\bk$-modules. 

Given a dgg $\bk$-module $M$, the tensor algebra $\mathrm{T}(M) = \bigoplus_{n \ge 0} M^{\otimes n}$ has a natural structure of a dgg algebra.
The \emph{symmetric algebra} on $M$, denoted by $\Sym(M)$\index{Sym@$\Sym$}, is the quotient of $\mathrm{T}(M)$ by the ideal generated by elements of the form
\[
x \otimes y - (-1)^{|x||y|} y \otimes x
\]
for $x,y \in M$, and elements of the form
\[
x \otimes x
\]
for $x \in M$ such that $|x|$ is odd.
The differential on $\mathrm{T}(M)$ induces a differential on $\Sym(M)$, making it into a dgg $\bk$-algebra.

%%%%%%%%%%%%%%%%%%%%%%%%%%%%%%%%%%%%%%%%%%%%%%%%%%%%%%%%%%%%%%%%%%%%%%%%%%%
\section{Symmetric algebras associated to realizations}
%%%%%%%%%%%%%%%%%%%%%%%%%%%%%%%%%%%%%%%%%%%%%%%%%%%%%%%%%%%%%%%%%%%%%%%%%%%

We fix a Coxeter system $(W,S)$, a commutative ring $\bk$, and a realization $\fh=(V, \{\alpha_s^\vee : s \in S\}, \{\alpha_s : s \in S\})$ of $(W,S)$ over $\bk$.

We will consider the dgg algebras\index{R@$R$}\index{Rv@$R^\vee$}\index{Lambda@$\Lambda$}\index{Lambdav@$\Lambda^\vee$}
\begin{equation}\label{eqn:poly}
\begin{aligned}
R &= \Sym \bigl( V^*(-2) \bigr), &
 R^\vee &= \Sym \bigl( V\la-2\ra \bigr), \\
\Lambda &= \Sym \bigl( V^*(-2)[1] \bigr), &
  \Lambda^\vee &= \Sym \bigl( V\la-2\ra[1] \bigr).
\end{aligned}
\end{equation}
In other words, $R$ is the symmetric algebra on the dgg $\bk$-module that consists of $V^*$ in bidegree $(2,2)$, and likewise for the other rings. In particular, $\Lambda$ is generated by elements of bidegree $(1,2)$, $R^\vee$ by elements of bidegree $(0,-2)$, and $\Lambda^\vee$ by elements of bidegree $(-1,-2)$. Note that $R$ and $R^\vee$ are isomorphic to polynomial rings, whereas $\Lambda$ and $\Lambda^\vee$ are exterior algebras.  We will also occasionally use\index{Rnat@$R^\natural$}
\[
R^\natural = \Sym(V^*\la 2\ra).
\]
All of these are dgg algebras with trivial differential.

Let $\mathsf{a}$ be the cone of the identity map $V^*(-2) \to V^*(-2)$.  Thus, we have $\mathsf{a}^2_2 = \mathsf{a}^1_2 = V^*$; all other $\mathsf{a}^i_j$ vanish; and the differential $\mathsf{a}^1_2 \to \mathsf{a}^2_2$ is the identity map.  Let $\sA$\index{A@{$\sA$}} be the symmetric algebra on $\mathsf{a}$:
\[
\sA = \Sym\left(
\begin{tikzcd}[column sep=tiny, row sep=small]
\text{deg.~$(2,2)$:} & V^* \\
\text{deg.~$(1,2)$:} & V^* \ar[u, "\id"']
\end{tikzcd}
\right)
\]
Note that $R$ is naturally a dgg subalgebra of $\sA$, whereas $\Lambda$ is a quotient of $\sA$.

Let $B$ denote one of the rings defined so far. Each of these rings admits obvious unit and counit maps, denoted by\index{eta@$\eta$}\index{epsilon@$\epsilon$}
\[
\eta_B: \bk \to B
\qquad\text{and}\qquad
\epsilon_B: B \to \bk.
\]
For $\sA$, these maps are both quasi-isomorphisms.  Moreover, the isomorphism
\[
\coH^\bullet_\bullet(\sA) \xrightarrow[\sim]{\epsilon_{\sA}} \bk
\]
is an isomorphism of bigraded $R$-modules.  Thus, $\sA$ can be regarded as a $K$-projective resolution of the trivial $R$-module in the sense of~\cite{spaltenstein}.

%%%%%%%%%%%%%%%%%%%%%%%%%%%%%%%%%%%%%%%%%%%%%%%%%%%%%%%%%%%%%%%%%%%%%%%%%%%
\section{Derivations of symmetric and exterior algebras}
\label{sec:derivations}
%%%%%%%%%%%%%%%%%%%%%%%%%%%%%%%%%%%%%%%%%%%%%%%%%%%%%%%%%%%%%%%%%%%%%%%%%%%

Define a ``contraction'' map\index{frownl@$\lfrown$}
\[
\lfrown: V\la-2\ra[1] \otimes \Lambda \to \Lambda
\]
as follows: for $x \in V$ and $r_1, \sdots, r_k \in V^*$, we put
\[
x \lfrown (r_1 \wedge \cdots \wedge r_k) = \sum_{i=1}^k (-1)^{i+1} r_i(x) r_1 \wedge \cdots \wedge \widehat{r_{i}} \wedge \cdots \wedge r_k.
\]
Given $x, y \in V$, the maps $x \lfrown({-}), y \lfrown({-}): \Lambda \to \Lambda\la 2\ra[-1]$ anticommute, so there is an induced map
\[
\lfrown: \Lambda^\vee \otimes \Lambda \to \Lambda.
\]
It is easy to see that for $x \in V$, $x \lfrown ({-})$ is a graded derivation of $\Lambda$:
\begin{equation}
\label{eqn:frown-deriv}
x \lfrown (rs) = (x \lfrown r)s + (-1)^{|r|}r(x \lfrown s).
\end{equation}

Similarly, we define a map\index{frownr@$\rfrown$}
\[
\rfrown: R^\vee \otimes V^*\la 2\ra \to R^\vee
\]
as follows: for $x_1, \sdots, x_k \in V$ and $r \in V^*$, we put
\[
(x_1 \cdots x_k) \rfrown r = \sum_{i=1}^k r(x_i) x_1 \cdots \widehat{x_i} \cdots x_k.
\]
Given $r,s \in V^*$, the maps $({-}) \rfrown r, ({-})\rfrown s: R^\vee \to R^\vee\la-2\ra$ commute, so there is an induced map
\[
\rfrown: R^\vee \otimes R^\natural \to R^\vee.
\]
It is easy to see that for $r \in V^*$, $({-}) \rfrown r$ is a derivation of $R^\vee$:
\begin{equation}
\label{eqn:rfrown-deriv}
(xy) \rfrown r = (x \rfrown r) y + x (y \rfrown r).
\end{equation}

%%%%%%%%%%%%%%%%%%%%%%%%%%%%%%%%%%%%%%%%%%%%%%%%%%%%%%%%%%%%%%%%%%%%%%%%%%%
\section{Graded modules over polynomial rings}
%%%%%%%%%%%%%%%%%%%%%%%%%%%%%%%%%%%%%%%%%%%%%%%%%%%%%%%%%%%%%%%%%%%%%%%%%%%

Let $\bk$ be a commutative ring as above, and consider the $\bk$-algebra $A:=\bk[x_1, \sdots, x_r]$ (for some $r \in \Z_{\geq 0}$), with the grading such that each $x_i$ has degree $-2$. Below we consider $\bk$ as a graded $A$-module, with each $x_i$ acting by $0$.

The following lemma is an application of the graded Nakayama lemma. It is similar to~\cite[Lemma~2.8(2)]{mr:etsps}, but the present assumptions are slightly different since we do not assume that our complex $M$ is bounded.

\begin{lem}
\label{lem:cohomology}
Let $M$ be a complex of graded $A$-modules. Assume that 
\begin{enumerate}
\item
\label{it:lem-cohomology-assumption-1}
for each $i \in \Z$, the graded $A$-module $M^i$ is free of finite rank;
\item
\label{it:lem-cohomology-assumption-2}
for any $i \in \Z \smallsetminus \{0\}$, we have $\coH^i(\bk \otimes_{A} M) = 0$;
\item
\label{it:lem-cohomology-assumption-3}
the graded $\bk$-module $\coH^0(\bk \otimes_A M)$ is free of finite rank.
\end{enumerate}
Then $\coH^i(M)=0$ for any $i \in \Z \smallsetminus \{0\}$, $\coH^0(M)$ is graded free over $A$, and moreover the natural morphism
\[
\bk \otimes_A \coH^0(M) \to \coH^0(\bk \otimes_A M)
\]
is an isomorphism of $\bk$-modules.
\end{lem}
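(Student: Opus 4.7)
The plan is to argue by induction on $r$, the number of polynomial generators. When $r=0$, we have $A = \bk$ and $\bk \otimes_A(-)$ is the identity, so the hypotheses of the lemma literally coincide with its conclusions (and the natural map is the identity).

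For $r \geq 1$, set $A' := A/(x_r A) \cong \bk[x_1, \sdots, x_{r-1}]$ and consider $M' := A' \otimes_A M$. Each $M'^i = M^i/x_r M^i$ is finitely generated graded free over $A'$, and $\bk \otimes_{A'} M' \cong \bk \otimes_A M$, so the lemma's hypotheses hold for $M'$ over $A'$. The inductive hypothesis then yields $\coH^i(M') = 0$ for $i \neq 0$, $\coH^0(M')$ finitely generated graded free over $A'$, and $\bk \otimes_{A'} \coH^0(M') \simto \coH^0(\bk \otimes_A M)$.

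Since $x_r$ is a non-zero-divisor on $M$ (as each $M^i$ is $A$-free), we obtain a short exact sequence $0 \to M \xrightarrow{x_r} M \to M' \to 0$ of complexes (up to an internal degree shift on the leftmost term). The long exact sequence in cohomology, combined with the vanishing of $\coH^i(M')$ for $i \neq 0$, shows that multiplication by $x_r$ on $\coH^i(M)$ is an isomorphism for $i \notin \{0,1\}$ and is surjective for $i = 1$. Each $\coH^i(M)$ is a finitely generated graded $A$-module (by noetherianity) that is bounded above in internal degree, because $A$ lives in non-positive degrees and $\coH^i(M)$ is a subquotient of the finitely generated $M^i$; the graded Nakayama lemma therefore forces $\coH^i(M) = 0$ for all $i \neq 0$. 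The long exact sequence then collapses to a short exact sequence $0 \to \coH^0(M) \xrightarrow{x_r} \coH^0(M) \to \coH^0(M') \to 0$, identifying $\coH^0(M)/x_r \coH^0(M)$ with $\coH^0(M')$. Reducing further modulo $(x_1, \sdots, x_{r-1})$ and invoking the inductive isomorphism for $M'$ then yields $\bk \otimes_A \coH^0(M) \simto \coH^0(\bk \otimes_A M)$.

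It remains to show $\coH^0(M)$ is graded free over $A$. Since $x_r$ is regular on $\coH^0(M)$, a change-of-rings calculation (using that $A' \otimes_A^L \coH^0(M) \simeq \coH^0(M)/x_r \coH^0(M) = \coH^0(M')$ concentrated in degree $0$) gives $\mathrm{Tor}^A_i(\bk, \coH^0(M)) \cong \mathrm{Tor}^{A'}_i(\bk, \coH^0(M'))$, and the right-hand side vanishes for $i > 0$ by the inductive freeness of $\coH^0(M')$ over $A'$. Lifting a homogeneous $A'$-basis of $\coH^0(M')$ to $\coH^0(M)$ produces an $A$-linear map $\psi \colon F \to \coH^0(M)$ from a finitely generated graded free $A$-module $F$, with $\psi \otimes_A \bk$ an isomorphism. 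Graded Nakayama makes $\psi$ surjective, and the vanishing $\mathrm{Tor}^A_1(\bk, \coH^0(M)) = 0$ applied to the sequence $0 \to \ker\psi \to F \to \coH^0(M) \to 0$ forces $\bk \otimes_A \ker\psi = 0$, whence $\ker\psi = 0$ by a further appeal to Nakayama. Thus $\coH^0(M) \cong F$ is graded free, completing the induction. The main technical obstacle is the unboundedness of $M$: the argument circumvents this by relying entirely on graded Nakayama, whose applicability rests only on the automatic boundedness-above in internal degree enjoyed by every finitely generated graded $A$-module.
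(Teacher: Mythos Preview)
Your proof is correct, and its overall architecture (induction on $r$ via the short exact sequence $0 \to M \xrightarrow{x_r} M \to M' \to 0$, then graded Nakayama) matches the paper's argument for the vanishing of $\coH^i(M)$, $i\neq 0$. The paper phrases this as an explicit diagram chase on the diagram $M_{(j)} \to M_{(j-1)}$, choosing a cocycle of maximal internal degree; your long-exact-sequence formulation is the same computation in slightly more conceptual language.

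Where you genuinely diverge is in the treatment of freeness of $\coH^0(M)$ and the isomorphism $\bk\otimes_A\coH^0(M)\simto\coH^0(\bk\otimes_A M)$. The paper does \emph{not} continue the induction here: instead it shows that $\ker(d^0)$ and $\ker(d^1)$ are acyclic for $\bk\otimes_A(-)$ (using that the Koszul resolution of $\bk$ has length $r$), passes through the truncation $\tau^{\le 0}M$ to obtain $\bk\lotimes_A\coH^0(M)\cong\bk\otimes_A M$ in the derived category, and then invokes an external reference (\cite[Lemma~2.8(1)]{mr:etsps}) to conclude freeness. Your approach keeps the induction running: the change-of-rings identification $\mathrm{Tor}^A_i(\bk,\coH^0(M))\cong\mathrm{Tor}^{A'}_i(\bk,\coH^0(M'))$ (valid because $x_r$ is regular on $\coH^0(M)$) reduces freeness to the inductive hypothesis for $M'$, and the explicit basis-lifting plus $\mathrm{Tor}_1$-vanishing finishes things off. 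This is more self-contained, avoiding the citation, and arguably more elementary; the paper's route, on the other hand, gives the stronger intermediate statement $\bk\lotimes_A\coH^0(M)\cong\bk\otimes_A M$ directly.
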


\begin{proof}
For $0 \le j \le r$, let $A_{(j)} = \bk[x_1, \sdots, x_j]$, regarded as a quotient of $A$ by the ideal $(x_{j+1}, \sdots, x_r)$.  We also let $M_{(j)} = M/(x_{j+1},\sdots,x_r)M$.  This is a chain complex whose differentials are denoted by $d_{(j)}^i: M_{(j)}^i \to M_{(j)}^{i+1}$. Note that each term $M_{(j)}^i$ of this complex is a free graded $A_{(j)}$-module of finite rank.  We will begin by proving that
\begin{equation}\label{eqn:cohomology-free}
\coH^i(M_{(j)}) = 0
\qquad\text{for all $i \ne 0$.}
\end{equation}
We proceed by induction on $j$.  If $j = 0$, then this is just assumption~\eqref{it:lem-cohomology-assumption-2} above.  Now suppose that $j > 0$, and that $\coH^i(M_{(j-1)}) = 0$ for all $i \ne 0$.  Let $p_j: M_{(j)} \to M_{(j-1)}$ be the obvious quotient map, and consider the commutative diagram
\begin{equation}\label{eqn:cohomology-free-lift}
\begin{tikzcd}
M_{(j)}^{i-1} \ar[r, "d_{(j)}^{i-1}"] \ar[d, "p_j^{i-1}"] &
M_{(j)}^{i} \ar[r, "d_{(j)}^{i}"] \ar[d, "p_j^{i}"] &
M_{(j)}^{i+1} \ar[d, "p_j^{i+1}"] \\
M_{(j-1)}^{i-1} \ar[r, "d_{(j-1)}^{i-1}"] &
M_{(j-1)}^{i} \ar[r, "d_{(j-1)}^{i}"] &
M_{(j-1)}^{i+1}
\end{tikzcd}
\end{equation}
The bottom row is exact by assumption.  We will prove that the top row is as well by a diagram chase.  If the top row is not exact, there is a homogeneous element $y \in M_{(j)}^i$ such that $d_{(j)}^i(y) = 0$ but that is not in the image of $d_{(j)}^{i-1}$.  Choose such a $y$ of maximal internal degree.  Of course, $p_j^i(y)$ is in the image of $d_{(j-1)}^{i-1}$.  In other words, there is an element $z \in M_{(j)}^{i-1}$ such that $y - d_{(j)}^{i-1}(z) \in \ker p_j^i$.  That is, there is an element $u \in M_{(j)}^i$ such that
\[
y - d_{(j)}^{i-1}(z) = x_ju.
\]
Now apply $d_{(j)}^i$ to this equation: we obtain
\[
0 = d_{(j)}^i(x_ju) = x_j d_{(j)}^i(u).
\]
Since $M_{(j)}^{i+1}$ is a free $A_{(j)}$-module, this implies that $d_{(j)}^i(u) = 0$.  The equation above shows that $\deg u = \deg y + 2$, so by the maximality of the degree of $y$, $u$ must be in the image of $d_{(j)}^{i-1}$, say $u = d_{(j)}^{i-1}(v)$.  Then
\[
y = d_{(j)}^{i-1}(z + x_j v),
\]
contradicting the assumption that $y$ is not in the image of $d_{(j)}^{i-1}$.  Thus, the top row of~\eqref{eqn:cohomology-free-lift} is exact, as desired.

The claim in~\eqref{eqn:cohomology-free} is now proved.  In the special case where $j = r$, it says that
\[
\coH^i(M) = 0
\qquad\text{for all $i \ne 0$.}
\]

Now, let us prove that $\ker(d^0)$ and $\ker(d^1)$ are acyclic for the functor $\bk \otimes_A (-)$. The two cases are similar, so we only consider $\ker(d^0)$. Consider the exact sequence
\[
0 \to \ker(d^0) \to M^0 \xrightarrow{d^0} M^1 \xrightarrow{d^1} M^2 \xrightarrow{d^2} \cdots \xrightarrow{d^{r-2}} M^{r-1} \xrightarrow{d^{r-1}} \ker(d^r) \to 0.
\]
Then for any $i>0$ we deduce an isomorphism
\[
\mathrm{Tor}^A_i(\bk, \ker(d^0)) \cong \mathrm{Tor}^A_{i+r}(\bk, \ker(d^r))=0,
\]
proving the desired vanishing. (Here the fact that
$\mathrm{Tor}^A_{i+r}(\bk, \ker(d^r))=0$ follows from the fact that
one can compute this $\bk$-module using the Koszul resolution of $\bk$
over $A$, which has length $r$.)

Next, we consider the truncated complex $\tau^{\leq 0} M$ with
\[
(\tau^{\leq 0} M)^i = \begin{cases}
M^{i} & \text{if $i<0$;} \\
\ker(d^0) & \text{if $i=0$;} \\
0 & \text{if $i>0$.}
\end{cases}
\]
Then the natural morphism $\tau^{\leq 0} M \to M$ is a quasi-isomorphism since $\coH^i(M)=0$ for $i>0$. We claim that the induced morphism
\[
\bk \otimes_A (\tau^{\leq 0} M) \to \bk \otimes_A M
\]
is also a quasi-isomorphism. Indeed, it is clear by construction and our assumption~\eqref{it:lem-cohomology-assumption-2} respectively that this morphism induces an isomorphism
\[
\coH^i\bigl( (\tau^{\leq 0} M) \otimes_A \bk \bigr) \simto \coH^i \bigl( M \otimes_A \bk \bigr)
\]
when $i<-1$ or when $i>0$. The fact that this map is also an isomorphism when $i=0$ and $i=-1$ follows from the fact that the natural morphism $\bk \otimes_A \ker(d^0) \to \ker(\bk \otimes_A d^0)$ is an isomorphism, which itself follows from the observation that $\ker(d^1) \cong M^0 / \ker(d^0)$ is acyclic for the functor $\bk \otimes_A (-)$.

Combining what we have proved so far we obtain the following isomorphisms in the derived category of graded $\bk$-modules:
\[
\bk \lotimes_A \coH^0(M) \cong
\bk \lotimes_A (\tau^{\leq 0} M) \cong \bk \otimes_A (\tau^{\leq 0} M) \cong \bk \otimes_A M.
\]
(Here the second isomorphism uses the fact that $\ker(d^0)$ is acyclic for $\bk \otimes_A (-)$.)
Then using~\cite[Lemma~2.8(1)]{mr:etsps} and our assumptions~\eqref{it:lem-cohomology-assumption-2} and~\eqref{it:lem-cohomology-assumption-3} we deduce that $\coH^0(M)$ is graded free over $A$. Applying $\coH^0(-)$ to the previous isomorphisms we also deduce the desired isomorphism $\bk \otimes_A \coH^0(M) \cong \coH^0(\bk \otimes_A M)$.
\end{proof}

%==========================================================================
\chapter{Complexes of Elias--Williamson diagrams}
\label{chap:soergel-diagrams}
%==========================================================================

Fix a Coxeter system $(W,S)$, an integral domain $\bk$, and a realization $\fh=(V, \{\alpha_s^\vee : s \in S\}, \{\alpha_s : s \in S\})$ of $(W,S)$ over $\bk$.  In this chapter, we will define three triangulated categories in terms of this data, called $\BE(\fh,W)$, $\RE(\fh,W)$, and $\LM(\fh,W)$. They should be regarded as generalizations of the ``mixed modular derived categories'' defined in~\cite{modrap2}.  The main result of this chapter, Theorem~\ref{thm:leftmon-con}, asserts that $\RE(\fh,W)$ and $\LM(\fh,W)$ are actually equivalent.

The definition of $\RE(\fh,W)$ is quite a bit easier than that of $\LM(\fh,W)$, but there is an advantage to working with the latter: it reveals hidden structure in the category, called the \emph{left monodromy action}.  The construction of this action extends an earlier construction due to the second author~\cite{makisumi}.

%%%%%%%%%%%%%%%%%%%%%%%%%%%%%%%%%%%%%%%%%%%%%%%%%%%%%%%%%%%%%%%%%%%%%%%%%%%
\section{\texorpdfstring{$\DiagBSp$}{DBS+}-sequences}
\label{sec:ps-convolution}
%%%%%%%%%%%%%%%%%%%%%%%%%%%%%%%%%%%%%%%%%%%%%%%%%%%%%%%%%%%%%%%%%%%%%%%%%%%

If $\mathscr{A}$ is an additive category, we define an \emph{$\mathscr{A}$-sequence}\index{sequence@{$\mathscr{A}$-sequence}} to be a sequence $(X^i)_{i \in \Z}$ of objects of $\mathscr{A}$ such that $X^i = 0$ for all but finitely many $i$'s.
In particular we will consider $\DiagBSp(\fh,W)$-sequences, which we will simply call $\DiagBSp$-sequences. Given a $\DiagBSp$-sequence
$\cF$, we define three shift-of-grading functors as follows:\index{shifts!a@{$(1)$}}\index{shifts!b@{$\langle 1\rangle$}}\index{shifts!c@{$[1]$}}
\[
\cF[n]^i = \cF^{i+n},
\qquad
\cF\la n\ra^i = \cF^{i+n}(-n).
\qquad
\cF(n)^i = \cF^i(n).
\]
These shift functors commute with each other and are related by the formula $\la 1 \ra = [1](-1)$.

Given two $\DiagBSp$-sequences $\cF = (\cF^i)_{i \in \Z}$ and $\cG = (\cG^i)_{i \in \Z}$, we define a bigraded $\bk$-module $\uHom_{\BE}(\cF,\cG)$\index{HomuBE@$\uHom_\BE$} by
\[
\uHom_{\BE}(\cF,\cG)^i_j = \prod_{q-p=i-j} \Hom_{\DiagBSp(\fh,W)}(\cF^p,\cG^q(j)).
\]
In this product, only a finite number of terms are nonzero, so the product can also be written as a direct sum. 

\begin{rmk}
The notation ``$\BE$'' stands for ``biequivariant.''  It is motivated by the case of Cartan realizations of crystallographic Coxeter groups; see Chapter~\ref{chap:kac-moody}.  
\end{rmk}

Note that we have canonical identifications
\begin{equation}
\label{eqn:Hom-shift}
\uHom_\BE(\cF,\cG)[n] = \uHom_\BE(\cF,\cG[n]) = \uHom_\BE(\cF[-n],\cG),
\end{equation}
and similarly for $\la n\ra$ and $(n)$.  In particular, for $f \in \uHom_\BE(\cF,\cG)$, we denote by\index{t@$\bt$}
\[
\bt^n(f)
\] 
the corresponding element in any of the spaces in~\eqref{eqn:Hom-shift}. Of course, if $f$ is homogeneous, we have
\[
|\bt^n(f)| = |f| - n.
\]
Suppose $\cH$ is another $\DiagBSp$-sequence.  There is an associative and bigraded ``composition'' map
\begin{equation}\label{eqn:composition-mor}
\circ: \uHom_\BE(\cG,\cH) \otimes \uHom_\BE(\cF,\cG) \to \uHom_\BE(\cF,\cH)
\end{equation}
defined (in the obvious way) as follows: if $f$ is in $\Hom_{\DiagBSp(\fh,W)}(\cF^p,\cG^q(j))$ and $g$ is in $\Hom_{\DiagBSp(\fh,W)}(\cG^q,\cH^r(k))$, then 
\[
g \circ f = (g (j)) \bullet f \in \Hom_{\DiagBSp(\fh,W)}(\cF^p,\cH^r(j+k)),
\]
where on the right-hand side $\bullet$ means composition in $\DiagBSp(\fh,W)$.  Obviously, composition commutes with $\bt$:
\begin{equation}
\label{eqn:composition-shift}
\bt^m(g) \circ \bt^n(f) = \bt^{m+n}(g \circ f).
\end{equation}

As a special case, the composition map makes $\uEnd_\BE(\cF) := \uHom_\BE(\cF,\cF)$ into a bigraded ring. Consider in particular the ring $\uEnd_\BE(B_\varnothing)$, where we regard $B_\varnothing$ as the sequence $(\sdots, 0, B_\varnothing, 0, \sdots)$ concentrated in degree~$0$. It is easy to check that there is a canonical identification
\begin{equation}
\label{eqn:R-End-E1}
R \cong \uEnd_\BE(B_\varnothing).
\end{equation}

Given $\DiagBSp$-sequences $\cF$ and $\cG$, we define their convolution product $\cF \ustar \cG$\index{convolution!staru@$\ustar$} by
\begin{equation}\label{eqn:convolution-obj}
(\cF \ustar \cG)^i = \bigoplus_{p+q=i} \cF^p \star \cG^q.
\end{equation}
For $\DiagBSp$-sequences $\cF$, $\cF'$, $\cG$, $\cG'$, we define a map
\begin{equation}\label{eqn:convolution-mor}
\ustar: \uHom_\BE(\cF,\cG) \otimes \uHom_\BE(\cF',\cG') \to \uHom_\BE(\cF \ustar \cF', \cG \ustar \cG')
\end{equation}
as follows. Let $f \in \Hom(\cF^p,\cG^q(j))$ and $h \in \Hom((\cF')^{p'}, (\cG')^{q'}(j'))$, 
and write $f = f_\even + f_\odd$ as in~\eqref{eqn:morphism-even-odd}. Then we set
\[
f \ustar h = (-1)^{p(q'-p'+j')} f_\even \star h \oplus (-1)^{(p+1)(q'-p'+j')} f_\odd \star h.
\]
One can check from the definitions that~\eqref{eqn:composition-mor} and~\eqref{eqn:convolution-mor} obey a ``signed interchange law'': if $f \in \uHom_\BE(\cF,\cG)$, $g \in \uHom_\BE(\cG,\cH)$, $f' \in \uHom_\BE(\cF',\cG')$, and $g' \in \uHom_\BE(\cG',\cH')$, we have
\begin{equation}\label{eqn:interchange}
(g \circ f) \ustar (g' \circ f') = (-1)^{|f||g'|}(g \ustar g') \circ (f \ustar f').
\end{equation}
Note that convolution with $B_\varnothing$ on the left or on the right makes any $\uHom_\BE(\cF,\cG)$ into both a left and right $R$-module.  In fact, $\uHom_\BE(\cF,\cG)$ is free as both a left and right $R$-module.  (Again, this follows from the existence of the ``double leaves basis''; see~\cite[Theorem~6.11]{ew}.)

%%%%%%%%%%%%%%%%%%%%%%%%%%%%%%%%%%%%%%%%%%%%%%%%%%%%%%%%%%%%%%%%%%%%%%%%%%%
\section{Biequivariant complexes}
\label{sec:equ-mixed-complexes}
%%%%%%%%%%%%%%%%%%%%%%%%%%%%%%%%%%%%%%%%%%%%%%%%%%%%%%%%%%%%%%%%%%%%%%%%%%%

In this and the following sections, we will define various triangulated categories in terms of $\DiagBSp$-sequences.  In each case, the objects will be $\DiagBSp$-sequences with additional data.

\begin{defn}
The \emph{biequivariant category} of $(\fh,W)$, denoted by\index{categories@{categories of $\DiagBSp$-sequences}!BEhW@$\BE(\fh,W)$}\index{biequivariant}
\[
\BE(\fh,W),
\]
is the category whose objects are pairs $(\cF,\delta)$, where $\cF$ is a $\DiagBSp$-sequence, and $\delta \in \uEnd_\BE(\cF)^{1}_{0}$ is an element such that $\delta \circ \delta = 0$.

Before defining morphisms, we observe that for any two objects $(\cF,\delta_\cF)$, $(\cG,\delta_\cG)$ in $\BE(\fh,W)$, the bigraded $\bk$-module $\uHom_\BE(\cF,\cG)$ can be made into a dgg $\bk$-module with the differential
\[
d_{\uHom_\BE(\cF,\cG)}(f) = \delta_\cG \circ f + (-1)^{|f|+1} f \circ \delta_\cF.
\]
Then morphisms in $\BE(\fh,W)$ are given by
\[
\Hom_{\BE(\fh,W)}(\cF,\cG) = \coH^{0}_0(\uHom_\BE(\cF,\cG)).
\]
Composition is defined in the obvious way (see below).
\end{defn}

If $\bk'$ is another integral domain and $\varphi : \bk \to \bk'$ is a ring morphism, then the functor $\bk'$ of~\S\ref{sec:additive-hull-Diag} induces a functor
\[
\BE(\fh,W) \to \BE( \bk' \otimes_\bk \fh, W),
\]
which will also be denoted $\bk'$. In fact, for $\cF$ and $\cG$ in $\BE(\fh,W)$, the morphism
\[
\Hom_{\BE(\fh,W)}(\cF, \cG) \to \Hom_{\BE(\bk' \otimes_\bk \fh,W)}(\bk'(\cF), \bk'(\cG))
\]
induced by $\bk'$ is obtained from the isomorphism
\[
\bk' \otimes_\bk \uHom_\BE(\cF, \cG) \simto \uHom_\BE(\bk'(\cF), \bk'(\cG))
\]
induced by the functor of~\S\ref{sec:additive-hull-Diag}.

Note that $\BE(\fh,W)$ is simply the bounded homotopy category of the additive category $\DiagBSp(\fh,W)$; in particular it has a natural triangulated structure, with shift functor defined by
\[
(\cF, \delta) [1] = (\cF[1], -\delta).
\]
The operation $\langle n \rangle$ on $\DiagBSp$-sequences induces an autoequivalence of $\BE(\fh,W)$, denoted similarly and which satisfies
\[
(\cF, \delta) \langle n \rangle = (\cF \langle n \rangle, \delta).
\]
Then we set $(n):=\langle -n \rangle[n]$ (so that this equivalence multiplies differentials by $(-1)^n$).

We will denote the total cohomology of $\uHom_\BE(\cF,\cG)$ by
\[
\gHom_\BE(\cF,\cG) = \coH^{\bullet}_{\bullet}(\uHom_\BE(\cF,\cG)).
\]
This is a bigraded $\bk$-module.  It is easy to check that with the differentials defined above, the map~\eqref{eqn:composition-mor} is a map of chain complexes, so it induces a map
\[
\circ: \gHom_\BE(\cG,\cH) \otimes \gHom_\BE(\cF,\cG) \to \gHom_\BE(\cF,\cH).
\]
In particular, composition of morphisms in $\BE(\fh,W)$ is defined by taking the bidegree-$(0,0)$ component of each term above. 
Under the natural identification $\BE(\fh,W) \cong \Kb(\DiagBSp(\fh,W))$, we clearly have
\[
 \gHom_\BE(\cF,\cG)^i_j = \Hom_{\Kb(\DiagBSp(\fh,W))} \bigl( \cF, \cG[i-j](j) \bigr).
\]
We also set $\gEnd_\BE(\cF) := \gHom_\BE(\cF, \cF)$.

We define a convolution operation on objects of $\BE(\fh,W)$ by equipping the $\DiagBSp$-sequence $\cF \ustar \cG$ with the differential
\[
\delta_{\cF \ustar \cG} := \delta_\cF \ustar \id + \id \ustar \delta_\cG.
\]
Using the signed interchange law~\eqref{eqn:interchange}, one can check that $\delta_{\cF \ustar \cG} \circ \delta_{\cF \ustar \cG} = 0$, as required.  Moreover, one can check using again the signed interchange law that the convolution map~\eqref{eqn:convolution-mor} is a map of chain complexes, so we get an induced map
\[
\ustar: \gHom_\BE(\cF,\cG) \otimes \gHom_\BE(\cF',\cG') \to \gHom_\BE(\cF \ustar \cF', \cG \ustar \cG')
\]
that again obeys a signed interchange law.  These considerations give us a well-defined bifunctor\index{convolution!staru@$\ustar$}
\[
\ustar: \BE(\fh,W) \times \BE(\fh,W) \to \BE(\fh,W),
\]
making $\BE(\fh,W)$ into a monoidal category, with unit object $B_\varnothing$, considered as a $\DiagBSp$-sequence concentrated in position $0$, and endowed with the trivial differential.
  
\begin{ex}
\label{ex:std-costd}
Let $s \in S$.  Consider the $\DiagBSp$-sequence
\[
(\sdots, 0, B_s, B_\varnothing(1), 0, \sdots)
\]
with nonzero terms in positions $0$ and $1$.  We define $\Delta_s \in \BE(\fh,W)$ to be the pair consisting of this sequence together with the differential given by
\[
\delta = 
\begin{tikzpicture}[scale=-0.3,thick,baseline]
 \draw (0,0) to (0,1);
 \node at (0,0) {$\bullet$};
 \node at (0,1.4) {\tiny $s$};
\end{tikzpicture}
\in \Hom_{\DiagBS(\fh,W)}(B_s,B_\varnothing(1)) = 
\uEnd_{\BE}(\Delta_s)^1_0.
\]
A more concise way to describe this is with the following picture:
\[
\Delta_s = 
\begin{tikzcd}
B_\varnothing(1) \\
B_s. \arrow{u}{\usebox\upperdot}
\end{tikzcd}
\]
Similarly, we define $\nabla_s$ to be the object
\[
\nabla_s =
\begin{tikzcd}
B_s \\
B_\varnothing(-1), \arrow{u}{\usebox\lowerdot}
\end{tikzcd}
\]
where the underlying $\DiagBSp$-sequence is concentrated in positions $-1$ and $0$.

More generally, for any expression $\uw = (s_1, \sdots, s_k)$, we define\index{standard object!Deltauw@$\Delta_\uw$}\index{costandard object!nablauw@$\nabla_\uw$}
\[
\Delta_\uw = \Delta_{s_1} \ustar \cdots \ustar \Delta_{s_k}
\qquad\text{and}\qquad
\nabla_\uw = \nabla_{s_1} \ustar \cdots \ustar \nabla_{s_k}.
\]
In the case where $\uw$ is a reduced expression, we call $\Delta_\uw$ a \emph{standard object}, and $\nabla_\uw$ a \emph{costandard object}.
\end{ex}

\begin{rmk}
It is likely that, under suitable assumptions, for any reduced expression $\uw \in \hW$ the objects $\Delta_\uw$ and $\nabla_\uw$ depend only on $\pi(\uw) \in W$ (up to isomorphism).  We will see that this holds in the case of Cartan realizations of crystallographic Coxeter groups in Chapter~\ref{chap:kac-moody}. It also follows from Proposition~\ref{prop:Rouquier-convolution-new} that this property holds if the conditions of Chapter~\ref{chap:dihedral} are satisfied for any pair of simple reflections generating a finite subgroup of $W$.
\end{rmk}

\begin{lem}
\label{lem:convolution-DN-new}
Let $s \in S$. There exists isomorphisms
\[
\Delta_s \ustar \nabla_s \cong \nabla_s \ustar \Delta_s \cong B_\varnothing
\]
in $\BE(\fh,W)$.
\end{lem}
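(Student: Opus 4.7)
The plan is to directly exhibit $\Delta_s \ustar \nabla_s$ as homotopy equivalent to $B_\varnothing$ in $\BE(\fh, W) = \Kb(\DiagBSp(\fh, W))$.

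First, unfolding Example~\ref{ex:std-costd} together with~\eqref{eqn:convolution-obj} and the sign convention of~\eqref{eqn:convolution-mor}, I would write $\Delta_s \ustar \nabla_s$ as the three-term complex
\[
0 \to B_s(-1) \xrightarrow{d^{-1}} B_{(s,s)} \oplus B_\varnothing \xrightarrow{d^0} B_s(1) \to 0,
\]
whose differentials are assembled from the two kinds of ``dot'' morphisms tensored with identities; in particular $d^{-1}$ has components $-\id_{B_s} \star \text{dot-up}$ and $\text{dot-down} \star \id$, and $d^0$ has components $\text{dot-down} \star \id_{B_s}$ and $\id \star \text{dot-up}$.

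Next, I would introduce chain maps
\[
\iota\colon B_\varnothing \to \Delta_s \ustar \nabla_s,
\qquad
\pi\colon \Delta_s \ustar \nabla_s \to B_\varnothing,
\]
nontrivial only in cohomological degree $0$, with $\iota$ given by the column $(-\text{cup},\ \id_{B_\varnothing})^T$ and $\pi$ by the row $(\text{cap},\ \id_{B_\varnothing})$, where cup and cap are the degree-zero morphisms $B_\varnothing \leftrightarrow B_{(s,s)}$ defined in~\S\ref{sec:ew-diagram}. The chain-map conditions $d^0 \circ \iota = 0$ and $\pi \circ d^{-1} = 0$ follow from the Frobenius unit relation, and $\pi \circ \iota = \id_{B_\varnothing} - \text{cap} \circ \text{cup} = \id_{B_\varnothing}$ because $\text{cap} \circ \text{cup}$ is a needle.

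The heart of the argument is to construct a nullhomotopy between $\iota \circ \pi$ and $\id$. I would take
\[
h^0 = (-\text{trivalent-merging},\ 0)\colon B_{(s,s)} \oplus B_\varnothing \to B_s(-1),
\]
\[
h^1 = (\text{trivalent-splitting},\ 0)^T\colon B_s(1) \to B_{(s,s)} \oplus B_\varnothing.
\]
The identities $h^0 \circ d^{-1} = \id$ on $B_s(-1)$ and $d^0 \circ h^1 = \id$ on $B_s(1)$ are both immediate from the Frobenius unit relation. Of the four entries of the middle identity $d^{-1} \circ h^0 + h^1 \circ d^0 = \id - \iota \circ \pi$, the off-diagonal ones recover $\text{cup}$ and $-\text{cap}$ directly from the definitions of cup and cap, and the $(2,2)$ entry is trivial. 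The remaining $(1,1)$ identity,
\[
(\id_{B_s} \star \text{dot-up}) \circ \text{trivalent-merging} + \text{trivalent-splitting} \circ (\text{dot-down} \star \id_{B_s}) = \id_{B_{(s,s)}} + \text{cup} \circ \text{cap},
\]
is the main obstacle; it is a standard diagrammatic identity in the Elias--Williamson calculus, following from Frobenius associativity together with the barbell relation. Finally, the isomorphism $\nabla_s \ustar \Delta_s \cong B_\varnothing$ is obtained by the same argument with the roles of cup and cap, and correspondingly of trivalent-splitting and trivalent-merging, interchanged.
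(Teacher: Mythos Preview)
Your proposal is correct and follows essentially the same approach as the paper's proof: both write out the three-term complex explicitly, define the chain maps using $\id$, cup and cap, verify one composite is the identity via the needle relation, and construct an explicit homotopy from the trivalent morphisms, reducing the remaining check to the diagrammatic identity you highlight (which is exactly the paper's equation~\eqref{eqn:formula-BsBs}). The only cosmetic differences are the ordering of the summands in degree~$0$ (you use $B_{(s,s)}\oplus B_\varnothing$, the paper uses $B_\varnothing\oplus B_{(s,s)}$) and a minor imprecision: $\mathrm{cap}\circ\mathrm{cup}$ is not literally the needle diagram but rather the needle composed with dots at both ends, hence vanishes by the needle relation.
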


\begin{proof}
We will prove that $\Delta_s \ustar \nabla_s \cong B_\varnothing$; the proof that $\nabla_s \ustar \Delta_s \cong B_\varnothing$ is similar.

By definition, the biequivariant complex $\Delta_s \ustar \nabla_s$ can be depicted as follows:
\[
B_s(-1) \xrightarrow{
\begin{bmatrix}
\begin{array}{c}
\begin{tikzpicture}[yscale=0.3,xscale=0.2,thick]
\draw (0,-1) -- (0,0);
\node at (0,0) {\small $\bullet$};
\end{tikzpicture}
\end{array} \\
- \begin{array}{c}
\begin{tikzpicture}[yscale=0.3,xscale=0.2,thick]
\draw (-1,-1) -- (-1,1);
\draw (0,1) -- (0,0);
\node at (0,0) {\small $\bullet$};
\end{tikzpicture}
\end{array}
\end{bmatrix}
} B_\varnothing \oplus B_{(s,s)} \xrightarrow{
\begin{bmatrix}
\begin{array}{c}
\begin{tikzpicture}[yscale=0.3,xscale=0.2,thick]
\draw (0,1) -- (0,0);
\node at (0,0) {\small $\bullet$};
\end{tikzpicture}
\end{array} &
\begin{array}{c}
\begin{tikzpicture}[yscale=0.3,xscale=0.2,thick]
\draw (1,-1) -- (1,1);
\draw (0,-1) -- (0,0);
\node at (0,0) {\small $\bullet$};
\end{tikzpicture}
\end{array}
\end{bmatrix}
}
B_s(1).
\]
We consider the morphisms $f : \Delta_s \ustar \nabla_s \to B_\varnothing$ and $g : B_\varnothing \to \Delta_s \ustar \nabla_s$ defined by the following diagram:
\[
\begin{tikzcd}[row sep=1.5cm, ampersand replacement=\&]
B_s(-1) \ar[r] \ar[d, shift right=0.5ex] \&
  B_\varnothing \oplus B_{(s,s)} \ar[r] \ar[d, shift right=0.5ex, "{\begin{bmatrix}\displaystyle 1 & \usebox\capmor \end{bmatrix}}"'] \&
  B_s(1) \ar[d, shift right=0.5ex] \\
0 \ar[r] \ar[u, shift right=0.5ex] \&
  B_\varnothing \ar[r] \ar[u, shift right=0.5ex, "{\begin{bmatrix} \displaystyle 1 \\ -\usebox\cupmor\end{bmatrix}}"'] \&
  0. \ar[u, shift right=0.5ex]
\end{tikzcd}
\]
Then $f \circ g=\id$, and the following morphism $h$ satisfies $\id-g \circ f=d(h)$:
\[
\begin{tikzcd}[row sep=1.5cm, ampersand replacement=\&]
B_s(-1) \ar[r] \&
  B_\varnothing \oplus B_{(s,s)} \ar[r] \ar[ld, "{\begin{bmatrix} \displaystyle 0 & -\usebox\invymor \end{bmatrix}}" description] \&
  B_s(1) \ar[ld, "{\begin{bmatrix} \displaystyle 0 \\ \usebox\ymor \end{bmatrix}}" description] \\
B_s(-1) \ar[r] \& B_\varnothing \oplus B_{(s,s)} \ar[r] \& B_s(1).
\end{tikzcd}
\]
(This computation uses the equality
\begin{equation}
\label{eqn:formula-BsBs}
\begin{array}{c} \begin{tikzpicture}[xscale=0.3,yscale=0.5,thick]
\draw (-1,-1) -- (-1,1);
\draw (0,-1) -- (0,1);
\end{tikzpicture}
\end{array}
+
\begin{array}{c} \begin{tikzpicture}[xscale=0.3,yscale=0.5,thick]
\draw (-1,-1) to[out=90, in=180] (0,-0.3) to[out=0, in=90] (1,-1);
\draw (-1,1) to[out=-90, in=180] (0,0.3) to[out=0,in=-90] (1,1);
\end{tikzpicture}
\end{array}
=
\begin{array}{c} \begin{tikzpicture}[xscale=0.3,yscale=0.5,thick]
\draw (0,-1) -- (0,0);
\node at (0,0) {\small $\bullet$};
\draw (1,-1) -- (1,0);
\draw (0,1) -- (1,0) -- (2,1);
\end{tikzpicture}
\end{array}
+
\begin{array}{c} \begin{tikzpicture}[xscale=-0.3,yscale=-0.5,thick]
\draw (0,-1) -- (0,0);
\node at (0,0) {\small $\bullet$};
\draw (1,-1) -- (1,0);
\draw (0,1) -- (1,0) -- (2,1);
\end{tikzpicture}
\end{array}
\end{equation}
in $\DiagBS(\fh,W)$.)
\end{proof}

%%%%%%%%%%%%%%%%%%%%%%%%%%%%%%%%%%%%%%%%%%%%%%%%%%%%%%%%%%%%%%%%%%%%%%%%%%%
\section{Right-equivariant complexes}
\label{sec:con-mixed-complexes}
%%%%%%%%%%%%%%%%%%%%%%%%%%%%%%%%%%%%%%%%%%%%%%%%%%%%%%%%%%%%%%%%%%%%%%%%%%%

Given $\DiagBSp$-sequences $\cF, \cG$, we define a new bigraded $\bk$-module $\uHom_{\RE}(\cF,\cG)$ by\index{HomuRE@$\uHom_\RE$}
\[
\uHom_{\RE}(\cF,\cG) := \bk \otimes_R \uHom_{\BE}(\cF,\cG).
\]
Note that the composition map~\eqref{eqn:composition-mor} induces a map
\[
\circ: \uHom_{\RE}(\cG,\cH) \otimes \uHom_{\RE}(\cF,\cG) \to \uHom_{\RE}(\cF,\cH).
\]
We will also write $\uEnd_{\RE}(\cF)$ for $\uHom_{\RE}(\cF,\cF)$.

\begin{rmk}
 Here, ``$\RE$'' stands for ``right equivariant.'' Again, this terminology is motivated by the case of Cartan realizations of crystallographic Coxeter groups; see Chapter~\ref{chap:kac-moody}. 
\end{rmk}

\begin{defn}\label{defn:constructible}
The \emph{right-equivariant category} of $(\fh,W)$, which we denote by\index{categories@{categories of $\DiagBSp$-sequences}!BREhW@$\RE(\fh,W)$}\index{right-equivariant}
\[
\RE(\fh,W),
\]
is the category whose objects are pairs $(\cF,\delta)$, where $\cF$ is a $\DiagBSp$-sequence, and $\delta \in \uEnd_{\RE}(\cF)^{1}_{0}$ is an element such that $\delta \circ \delta = 0$.

For two such objects $(\cF,\delta_\cF)$ and $(\cG,\delta_\cG)$, we make $\uHom_{\RE}(\cF,\cG)$ into a chain complex with the differential
\[
d_{\uHom_{\RE}(\cF,\cG)}(f) = \delta_\cG \circ f + (-1)^{|f|+1} f \circ \delta_\cF.
\]
Then morphisms in $\RE(\fh,W)$ are given by
\[
\Hom_{\RE(\fh,W)}(\cF,\cG) = \coH^{0}_{0}(\uHom_{\RE}(\cF,\cG)),
\]
with the natural composition maps.
\end{defn}

As in~\S\ref{sec:equ-mixed-complexes},
if $\bk'$ is another integral domain and $\varphi : \bk \to \bk'$ is a ring morphism, then we have an ``extension of scalars'' functor
\[
\bk' : \RE(\fh,W) \to \RE(\bk' \otimes_\bk \fh, W),
\]
induced on morphisms by the natural isomorphism of complexes
\[
\bk' \otimes_\bk \uHom_{\RE}(\cF, \cG) \simto \uHom_{\RE}(\bk'(\cF), \bk'(\cG))
\]
for $\cF, \cG$ in $\RE(\fh,W)$.

It is clear from the construction that $\RE(\fh,W)$ is equivalent to the bounded homotopy category of the additive category $\oDiagBSp(\fh,W)$; in particular this category has a natural triangulated structure.
The operation $\langle n \rangle$ on $\DiagBSp$-sequences induces an autoequivalence of $\RE(\fh,W)$, which we will also denote $\langle n \rangle$. Then we set $(n):=\langle -n \rangle[n]$.

As in~\S\ref{sec:equ-mixed-complexes}, we also put
\[
\gHom_{\RE}(\cF,\cG) := \coH^{\bullet}_{\bullet}(\uHom_{\RE}(\cF,\cG)), \quad \gEnd_{\RE}(\cF) := \gHom_{\RE}(\cF, \cF).
\]

For any two $\DiagBSp$-sequences $\cF,\cG$, there is an obvious map
\begin{equation}\label{eqn:uhom-eqvt-con}
\uHom_{\BE}(\cF,\cG) \to \uHom_{\RE}(\cF,\cG)
\end{equation}
that commutes with composition. This lets us define a functor\index{forgetful functor!ForBERE@$\ForBERE$}
\[
\ForBERE: \BE(\fh,W) \to \RE(\fh,W)
\]
as follows: it sends an object $(\cF,\delta)$ to the pair $(\cF,\bar\delta)$ where $\bar\delta$ is the image of $\delta$ under $\uEnd_\BE(\cF) \to \uEnd_{\RE}(\cF)$, and it acts on morphisms by the map
\[
\coH^{0}_{0}(\uHom_\BE(\cF,\cG)) \to \coH^{0}_{0}(\uHom_{\RE}(\cF,\cG))
\]
induced by~\eqref{eqn:uhom-eqvt-con}. This functor is triangulated.

The category~$\RE(\fh,W)$ is not monoidal, but it retains a convolution action of $\BE(\fh,W)$ on the right:\index{convolution!staru@$\ustar$}
\[
\ustar: \RE(\fh,W) \times \BE(\fh,W) \to \RE(\fh,W).
\]
The following result is immediate from the definitions.

\begin{lem}
\label{lem:convolution-be-re}
For $\cF, \cG \in \BE(\fh,W)$, there is a natural isomorphism
\[
\ForBERE(\cF \ustar \cG) \cong \ForBERE(\cF) \ustar \cG.
\]
\end{lem}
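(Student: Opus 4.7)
The plan is to observe that both sides are literally equal as objects of $\RE(\fh,W)$ once the convolution bifunctor $\ustar: \RE(\fh,W) \times \BE(\fh,W) \to \RE(\fh,W)$ is unpacked. The first task is therefore to make that bifunctor explicit, by showing it is the descent of the $\BE \times \BE$ convolution. Concretely, I would show that the map from~\eqref{eqn:convolution-mor},
\[
\ustar: \uHom_\BE(\cF, \cF') \otimes \uHom_\BE(\cG, \cG') \to \uHom_\BE(\cF \ustar \cG, \cF' \ustar \cG'),
\]
factors through the quotient $\bk \otimes_R ({-})$ on the first tensor factor (where $R$ acts by convolution with $B_\varnothing$ on the left), and hence induces a chain map
\[
\ustar: \uHom_\RE(\cF, \cF') \otimes \uHom_\BE(\cG, \cG') \to \uHom_\RE(\cF \ustar \cG, \cF' \ustar \cG').
\]
For this descent, one needs $(r \cdot f) \ustar g = r \cdot (f \ustar g)$ for $r \in R$. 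Writing the left $R$-action as $r \cdot f = (r \ustar \id_{\cF'}) \circ f$ and applying the signed interchange law~\eqref{eqn:interchange} together with associativity of $\ustar$, one obtains $(r \cdot f) \ustar g = \pm (r \ustar \id_{\cF' \ustar \cG'}) \circ (f \ustar g)$; the sign is $(-1)^{|f||\id_{\cG'}|} = 1$ because $|\id_{\cG'}| = 0$, and since the generators of $R$ sit in bidegree $(2,2)$, all elements of $R$ have even cohomological degree, so the resulting identity is genuinely $R$-linear.

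With that convolution defined, the isomorphism of the lemma is the identity on the underlying $\DiagBSp$-sequence. Indeed, both $\ForBERE(\cF \ustar \cG)$ and $\ForBERE(\cF) \ustar \cG$ have underlying sequence $\cF \ustar \cG$ given by~\eqref{eqn:convolution-obj}. The differential of $\ForBERE(\cF \ustar \cG)$ is by definition the image of $\delta_{\cF \ustar \cG} = \delta_\cF \ustar \id_\cG + \id_\cF \ustar \delta_\cG$ under the surjection $\uEnd_\BE(\cF \ustar \cG) \twoheadrightarrow \uEnd_\RE(\cF \ustar \cG)$. On the other hand, by the construction just outlined, the differential of $\ForBERE(\cF) \ustar \cG$ is $\bar\delta_\cF \ustar \id_\cG + \id_\cF \ustar \delta_\cG$ computed via the induced convolution $\uHom_\RE \otimes \uHom_\BE \to \uHom_\RE$, which is by design the image of the $\BE$-differential under the same quotient. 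Naturality in $\cF$ and $\cG$ follows from the fact that every map involved is ultimately induced by the universal convolution on $\BE$.

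The main (and only) obstacle is therefore the sign bookkeeping in the descent step. Once it is checked that the signed interchange law degenerates on elements of $R$ (thanks to the evenness of the cohomological degree of $R$), everything else is a tautology, which explains why the authors can call the statement ``immediate from the definitions.''
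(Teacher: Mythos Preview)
Your proposal is correct and is precisely the unpacking of definitions that the paper has in mind; the paper offers no argument beyond ``immediate from the definitions,'' and what you have written is the natural way to make that phrase precise. Your handling of the only nontrivial point---the $R$-linearity needed to descend~\eqref{eqn:convolution-mor} to $\uHom_\RE \otimes \uHom_\BE \to \uHom_\RE$---is sound: since $r$ acts through $B_\varnothing$ sitting in position~$0$ and $\id_{\cG'}$ has cohomological degree~$0$, the signs in the interchange law and in the definition of $\ustar$ all collapse.
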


\begin{ex}
\label{ex:ts-con}
Let $s \in S$. Consider the $\DiagBSp$-sequence
\[
(\sdots, 0, B_\varnothing(-1), B_s, B_\varnothing (1), 0, \sdots),
\]
with nonzero terms in positions $-1$, $0$, and $1$. We define the object $\cT_s'$ as the pair consisting of this $\DiagBSp$-sequence together with the differential\index{tilting object!Tsp@$\cT_s'$}
given by
\begin{multline*}
\delta = 
\begin{tikzpicture}[scale=0.3,thick,baseline]
 \draw (0,0) to (0,1);
 \node at (0,0) {$\bullet$};
 \node at (0,1.4) {\tiny $s$};
\end{tikzpicture}
\oplus
\begin{tikzpicture}[scale=-0.3,thick,baseline]
 \draw (0,0) to (0,1);
 \node at (0,0) {$\bullet$};
 \node at (0,1.4) {\tiny $s$};
\end{tikzpicture}
\in \Hom_{\oDiagBS(\fh,W)}(B_\varnothing(-1),B_s)
\oplus \Hom_{\oDiagBS(\fh,W)}(B_s,B_\varnothing(1)) = 
\\
\uEnd_{\RE}(\cT'_s)^1_0.
\end{multline*}
A more concise way to describe this is with the following picture:
\[
\begin{tikzcd}
B_\varnothing(1) \\
B_s \arrow{u}{\usebox\upperdot}
\\
B_\varnothing(-1). \arrow{u}{\usebox\lowerdot}
\end{tikzcd}
\]

In the setting of Cartan realizations of crystallographic Coxeter groups, this object corresponds to a tilting mixed perverse sheaf in the sense of~\cite{modrap2}; see~\S\ref{sec:parity-sequences} for details. In general, writing explicitly the complex $\uEnd_\RE(\cT_s')$, it is not difficult to check that $\gEnd_\RE(\cT_s')^i_j=\{0\}$ unless $(i,j) \in \{(0,0), (0,-2)\}$. The $\bk$-module $\gEnd_\RE(\cT_s')^0_0$ is free of rank $1$, generated by $\id_{\cT_s'}$, while $\gEnd_\RE(\cT_s')^0_{-2}$ is also free of rank $1$, and generated by the morphism
\[
\begin{tikzcd}[column sep=large]
B_\varnothing(1) \\
B_s \ar[u, "\usebox\upperdot" swap] \\
B_\varnothing(-1) \ar[u, "\usebox\lowerdot" swap]
\ar[r, "\id"]
&B_\varnothing(-1) \\
&B_s(-2) \ar[u, "\usebox\upperdot" swap] \\
&B_\varnothing(-3). \ar[u, "\usebox\lowerdot" swap]
\end{tikzcd}
\]
\end{ex}

\begin{lem}
\label{lem:End-Delta}
For any expression $\uw$, we have
\[
\gEnd_\RE(\ForBERE(\Delta_\uw)) \cong \bk.
\]
\end{lem}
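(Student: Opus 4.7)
The plan is to proceed by induction on $\ell(\uw)$, using the invertibility of each $\Delta_s$ in $\BE(\fh,W)$ to reduce to the trivial case.

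For the base case $\uw = \varnothing$, we have $\Delta_\varnothing = B_\varnothing$, and \eqref{eqn:R-End-E1} gives $\uEnd_\BE(B_\varnothing) \cong R$. Therefore
\[
\uEnd_\RE(B_\varnothing) = \bk \otimes_R R \cong \bk,
\]
with zero differential and concentrated in bidegree $(0,0)$, so $\gEnd_\RE(B_\varnothing) \cong \bk$.

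For the inductive step, write $\uw = \uv \cdot s$, so that $\Delta_\uw = \Delta_\uv \ustar \Delta_s$ by definition. Lemma~\ref{lem:convolution-be-re} yields $\ForBERE(\Delta_\uw) \cong \ForBERE(\Delta_\uv) \ustar \Delta_s$. Lemma~\ref{lem:convolution-DN-new} provides the isomorphisms $\Delta_s \ustar \nabla_s \cong B_\varnothing \cong \nabla_s \ustar \Delta_s$ in $\BE(\fh,W)$; combined with the associativity and unit properties of the right convolution action of $\BE(\fh,W)$ on $\RE(\fh,W)$, these show that the endofunctor $(-) \ustar \Delta_s$ of $\RE(\fh,W)$ is an equivalence of triangulated categories with quasi-inverse $(-) \ustar \nabla_s$, and that it commutes with the internal shift $\langle 1\rangle$. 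It therefore induces a bigraded isomorphism
\[
\gEnd_\RE(\ForBERE(\Delta_\uv) \ustar \Delta_s) \simto \gEnd_\RE(\ForBERE(\Delta_\uv)),
\]
and the induction hypothesis identifies the right-hand side with $\bk$.

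The only point requiring careful checking is that the right action $\ustar: \RE(\fh,W) \times \BE(\fh,W) \to \RE(\fh,W)$ has enough associativity and unit data to upgrade the isomorphisms $\Delta_s \ustar \nabla_s \cong B_\varnothing$ into an honest equivalence of $\RE(\fh,W)$ compatible with $[1]$ and $\langle 1\rangle$. This is formal from the construction of $\ustar$ on $\DiagBSp$-sequences together with the signed interchange law~\eqref{eqn:interchange}; the required natural isomorphisms can be written down explicitly in terms of those appearing in the proof of Lemma~\ref{lem:convolution-DN-new}.
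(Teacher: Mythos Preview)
Your proof is correct and follows essentially the same approach as the paper: both reduce to the base case $\gEnd_\RE(B_\varnothing)\cong\bk$ by using Lemma~\ref{lem:convolution-DN-new} to see that each $({-})\ustar\Delta_s$ is an autoequivalence of $\RE(\fh,W)$, and then invoke Lemma~\ref{lem:convolution-be-re}. The only cosmetic difference is that the paper strips off all the $\Delta_{s_i}$ at once rather than framing the argument as an induction.
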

\begin{proof}
Suppose $\uw = (s_1, \sdots, s_r)$.  We have
\[
\ForBERE(\Delta_\uw) \cong \ForBERE(B_\varnothing \ustar \Delta_{s_1} \ustar \cdots \ustar \Delta_{s_r})
\cong \ForBERE(B_\varnothing) \ustar \Delta_{s_1} \ustar \cdots \ustar \Delta_{s_r}
\]
by Lemma~\ref{lem:convolution-be-re}.
Now, Lemma~\ref{lem:convolution-DN-new} implies that the functor $\RE(\fh,W) \to \RE(\fh,W)$ given by $\cF \mapsto \cF \ustar \Delta_s$ is an equivalence of categories.  The calculation above then implies that $\gEnd_\RE(\ForBERE(\Delta_\uw)) \cong \gEnd_\RE(B_\varnothing) \cong \bk$.
\end{proof}

%%%%%%%%%%%%%%%%%%%%%%%%%%%%%%%%%%%%%%%%%%%%%%%%%%%%%%%%%%%%%%%%%%%%%%%%%%%
\section{Left-monodromic complexes}
\label{sec:lmon-mixed-complexes}
%%%%%%%%%%%%%%%%%%%%%%%%%%%%%%%%%%%%%%%%%%%%%%%%%%%%%%%%%%%%%%%%%%%%%%%%%%%

Given $\DiagBSp$-sequences $\cF, \cG$, we define a new bigraded $\bk$-module $\uHom_{\LM}(\cF,\cG)$ by\index{HomuLM@$\uHom_\LM$}
\[
\uHom_\LM(\cF,\cG) := \Lambda \otimes_\bk \uHom_\BE(\cF,\cG).
\]
We equip it with a composition map
\begin{equation}
\label{eqn:composition-uHomU}
\circ: \uHom_\LM(\cG,\cH) \otimes \uHom_\LM(\cF,\cG) \to \uHom_\LM(\cF,\cH)
\end{equation}
given by
\[
(r \otimes f) \circ (s \otimes g) = (-1)^{|f||s|} (r \wedge s) \otimes (f \circ g),
\]
where $r,s \in \Lambda$, $f \in \uHom_\BE(\cG,\cH)$, and $g \in \uHom_\BE(\cF,\cG)$. We will also write $\uEnd_\LM(\cF)$ for $\uHom_\LM(\cF,\cF)$. (Here ``$\LM$'' stands for ``left monodromic.'')

Let $\bs: \uHom_\LM(\cF,\cG) \to \uHom_\LM(\cF,\cG)$\index{s@$\bs$} be the map given by
\[
\bs(r \otimes f) = (-1)^{|r|}r \otimes f
\]
for $r \in \Lambda$ and $f \in \uHom_\BE(\cF,\cG)$.  It is clear that $\bs$ is an involution, and it is easy to check that for $f \in \uHom_\LM(\cF,\cG)$ and $g \in \uHom_\LM(\cG,\cH)$, we have
\begin{equation}\label{eqn:composition-bs}
\bs(g \circ f) = \bs(g) \circ \bs(f).
\end{equation}
The morphism $\bt : \uHom_\BE(\cF,\cG) \to \uHom_\BE(\cF,\cG[1])$ induces a morphism from $\uHom_\LM(\cF,\cG)$ to $\uHom_\LM(\cF,\cG[1])$, which will also be denoted $\bt$.
Note that, unlike in~\eqref{eqn:composition-shift}, composition in $\uHom_\LM$ does \emph{not} commute with $\bt$.  Instead, it is easy to check that we have
\begin{equation}\label{eqn:composition-shift-leftmon}
\bt^m(g) \circ \bt^n(f) = \bt^{m+n}(g \circ \bs^m(f)).
\end{equation}
(This formula is easier to remember if one thinks of $\bt$ as an operation \emph{on the right}. Then pulling $\bt$ across a morphism $f$ results in applying $\bs$ to $f$.)
Note also that $\bt$ and $\bs$ commute.

Let $\kappa: \uHom_\LM(\cF, \cG) \to \uHom_\LM(\cF,\cG)[1]$\index{kappa@{$\kappa$}} be the map given by
\[
\kappa((r_1 \wedge \cdots \wedge r_k) \otimes f) = \sum_{i = 1}^k (-1)^{i+1} (r_1 \wedge \cdots \wedge \widehat{r_i} \wedge \cdots \wedge r_k) \otimes (r_i \ustar f)
\]
for $r_1, \sdots, r_k \in V^*$ and $f \in \uHom_\BE(\cF, \cG)$.  (Note that on the left-hand side of the tensor sign, the $r_j$'s belong to $\Lambda^{1}_{2}$, while on the right-hand side, they belong to $R^{2}_{2}$.) It is easy to check that
\begin{equation}\label{eqn:kappa-square}
\kappa \circ \kappa = 0
\end{equation}
and that for all $f \in \uHom_\LM(\cF,\cG)$ and $g \in \uHom_\LM(\cG, \cH)$ we have
\begin{align}
\kappa(\bt(f)) &=\bt(\kappa(f)), \label{eqn:kappa-bt} \\
\kappa(\bs(f)) &= -\bs(\kappa(f)), \label{eqn:kappa-bs} \\
\kappa(g \circ f) &= \kappa(g) \circ f + (-1)^{|g|} g \circ \kappa(f). \label{eqn:kappa-derivation}
\end{align}

\begin{lem}\label{lem:uhom-mon-con}
Let $\cF$ and $\cG$ be two $\DiagBSp$-sequences.  Regard $\uHom_\LM(\cF,\cG)$ as a chain complex with differential $\kappa$.  Then $\uHom_\LM(\cF,\cG)$ is naturally a $K$-projective left dgg $\sA$-module.  Moreover, there is a natural isomorphism of bigraded $\bk$-modules
\[
\bk \otimes_\sA \uHom_\LM(\cF,\cG) \simto \uHom_{\RE}(\cF,\cG).
\]
\end{lem}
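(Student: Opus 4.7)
The strategy is to identify $\uHom_\LM(\cF,\cG)$ with the ``change-of-rings'' module $\sA \otimes_R \uHom_\BE(\cF,\cG)$, where $\sA$ is regarded as an $(\sA,R)$-bimodule via the dgg subalgebra inclusion $R \hookrightarrow \sA$, and $\uHom_\BE(\cF,\cG)$ is viewed as a left dgg $R$-module with zero differential (using the identification $R \cong \uEnd_\BE(B_\varnothing)$ of~\eqref{eqn:R-End-E1} and left convolution). Once this identification is in place, both conclusions will fall out by formal homological algebra.

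To build the $\sA$-module structure, I would first observe that, as a bigraded algebra, $\sA$ decomposes as $\Lambda \otimes_\bk R$: the generators of $R$ lie in even cohomological degree and therefore Koszul-commute with the generators of $\Lambda$. Specifying a left $\sA$-module is then the same as specifying commuting actions of $\Lambda$ and $R$. On $\uHom_\LM(\cF,\cG) = \Lambda \otimes_\bk \uHom_\BE(\cF,\cG)$ I would let $\Lambda$ act by exterior multiplication on the first factor and $R$ act on the second factor via left convolution with $B_\varnothing$. These actions are on distinct tensor factors, so they commute on the nose, yielding a well-defined left $\sA$-action and an isomorphism of bigraded left $\sA$-modules $\sA \otimes_R \uHom_\BE(\cF,\cG) \simto \uHom_\LM(\cF,\cG)$.

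Next I would check that $\kappa$ agrees with the natural tensor-product differential $d_\sA \otimes \id$ on $\sA \otimes_R \uHom_\BE(\cF,\cG)$. Recall that $d_\sA$ sends each degree-$(1,2)$ generator of $\mathsf{a}$ to the corresponding degree-$(2,2)$ generator, extended as a graded derivation of $\sA$. Applied to a pure tensor $(r_1 \wedge \cdots \wedge r_k) \otimes f$, this derivation removes one $r_i$ from the exterior factor (picking up a sign $(-1)^{i-1}$ from commuting $d_\sA$ past $i-1$ odd generators of $\Lambda$) and inserts its degree-$(2,2)$ companion into the $R$-factor; moving that element across $\otimes_R$, where by construction it acts on $f$ as $r_i \ustar f$, reproduces exactly the formula defining $\kappa$. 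This shows $\uHom_\LM(\cF,\cG)$ is a dgg left $\sA$-module, and the derivation property over $\sA$ is automatic from the tensor-product construction.

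The remaining two assertions are then essentially formal. As noted after~\eqref{eqn:interchange}, $\uHom_\BE(\cF,\cG)$ is free as a bigraded left $R$-module; choosing a homogeneous $R$-basis produces an isomorphism of dgg left $\sA$-modules between $\uHom_\LM(\cF,\cG)$ and a direct sum of bidegree shifts of $\sA$. Each such shift is $K$-projective, since $\Hom_\sA(\sA,-)$ is the identity functor on underlying complexes and hence preserves quasi-isomorphisms, and $K$-projectivity is stable under arbitrary direct sums. For the second claim, associativity of the tensor product yields
\[
\bk \otimes_\sA \uHom_\LM(\cF,\cG) \cong (\bk \otimes_\sA \sA) \otimes_R \uHom_\BE(\cF,\cG) \cong \bk \otimes_R \uHom_\BE(\cF,\cG),
\]
and the right-hand side is by definition $\uHom_\RE(\cF,\cG)$. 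The one place that requires genuine care is the Koszul sign match identifying $\kappa$ with $d_\sA \otimes \id$; once the sign conventions on $\sA$ are pinned down, everything else is bookkeeping.
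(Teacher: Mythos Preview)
Your proof is correct and follows essentially the same approach as the paper. The paper's proof is a terse two-liner that points to the freeness of $\uHom_\BE(\cF,\cG)$ over $R$ for the first claim and says the second is ``clear from the definitions''; you have simply unpacked the underlying identification $\uHom_\LM(\cF,\cG) \cong \sA \otimes_R \uHom_\BE(\cF,\cG)$ (which the paper makes explicit only later, in~\eqref{eqn:parity-ugb-h} during the proof of Theorem~\ref{thm:leftmon-con}) and spelled out why $\kappa$ matches $d_\sA \otimes \id$.
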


\begin{proof}
The first assertion follows from the last sentence in~\S\ref{sec:ps-convolution}. The second assertion is clear from the definitions.
\end{proof}

\begin{defn}\label{defn:left-mon}
The \emph{left-monodromic category} of $(\fh,W)$, denoted by\index{categories@{categories of $\DiagBSp$-sequences}!BLMhW@$\LM(\fh,W)$}\index{left-monodromic}
\[
\LM(\fh,W),
\]
is the category defined as follows.
\begin{itemize}
\item
The objects are pairs $(\cF,\delta)$, where $\cF$ is a $\DiagBSp$-sequence and $\delta$ is an element of $\uEnd_\LM(\cF)^{1}_{0}$ such that $\delta \circ \delta + \kappa(\delta) = 0$.
\item
For two such objects $(\cF,\delta_\cF)$ and $(\cG,\delta_\cG)$, we make $\uHom_{\LM}(\cF,\cG)$ into a dgg $\bk$-module with the differential
\[
d_{\uHom_\LM(\cF,\cG)}(f) = \kappa(f) + \delta_\cG \circ f + (-1)^{|f|+1} f \circ \delta_\cF.
\]
Then morphisms in $\LM(\fh,W)$ are given by
\[
\Hom_{\LM(\fh,W)}(\cF,\cG) = \coH^{0}_{0}(\uHom_\LM(\cF,\cG)).
\]
\item
For three objects $(\cF,\delta_\cF)$, $(\cG,\delta_\cG)$ and $(\cH,
\delta_\cH)$, using~\eqref{eqn:kappa-derivation} one can check that
the composition map~\eqref{eqn:composition-uHomU} is a morphism of dgg $\bk$-modules; hence it induces a morphism
\[
\Hom_{\LM(\fh,W)}(\cG,\cH) \otimes \Hom_{\LM(\fh,W)}(\cF,\cG) \to \Hom_{\LM(\fh,W)}(\cF,\cH),
\]
which defines the composition in $\LM(\fh,W)$.
\end{itemize}
\end{defn}

Let us verify that the differential given above actually makes $\uHom_\LM(\cF,\cG)$ into a chain complex.  We have:
\begin{multline}\label{eqn:leftmon-diff}
d(d(f)) = \kappa(\kappa(f)) + \kappa(\delta_\cG f) + (-1)^{|f|+1} \kappa(f \delta_\cF) \\
+ \delta_\cG \kappa(f) + \delta_\cG^2 f + (-1)^{|f|+1} \delta_\cG f\delta_\cF \\
+ (-1)^{|f|} \kappa(f) \delta_\cF + (-1)^{|f|} \delta_\cG f \delta_\cF - f \delta_\cF^2.
\end{multline}
Using~\eqref{eqn:kappa-square} and~\eqref{eqn:kappa-derivation}, this simplifies to
\[
\kappa(\delta_\cG)f + \delta_\cG^2 f  - f \kappa(\delta_\cF) - f \delta_\cF^2.
\]
The definition of left-monodromic complexes tells us that this vanishes.

We will use the term \emph{chain map}\index{chain map} to refer to an element $f \in \uHom_\LM(\cF,\cG)^0_0$ satisfying $d_{\uHom_\LM(\cF,\cG)}(f) = 0$. Such an element defines a morphism from $\cF$ to $\cG$ in $\LM(\fh,W)$.

As in~\S\S\ref{sec:equ-mixed-complexes}--\ref{sec:con-mixed-complexes},
if $\bk'$ is another integral domain and $\varphi : \bk \to \bk'$ is a ring morphism, then we have an ``extension of scalars'' functor
\[
\bk' : \LM(\fh,W) \to \LM(\bk' \otimes_\bk \fh, W),
\]
induced on morphisms by the natural isomorphism of complexes
\begin{equation}
\label{eqn:ext-scalars-uHomU}
\bk' \otimes_\bk \uHom_{\LM}(\cF, \cG) \simto \uHom_{\LM}(\bk'(\cF), \bk'(\cG))
\end{equation}
for $\cF$, $\cG$ in $\LM(\fh,W)$.

The operation $\langle n \rangle$ on $\DiagBSp$-sequences defines an autoequivalence of the category $\LM(\fh,W)$ (which does not change the differential); this autoequivalence will also be denoted by $\langle n \rangle$.

As in the previous sections, we also put
\[
\gHom_\LM(\cF,\cG) := \coH^{\bullet}_{\bullet}(\uHom_\LM(\cF,\cG)), \quad \gEnd_\LM(\cF) := \gHom_\LM(\cF, \cF).
\]

%%%%%%%%%%%%%%%%%%%%%%%%%%%%%%%%%%%%%%%%%%%%%%%%%%%%%%%%%%%%%%%%%%%%%%%%%%%
\section{Triangulated structure}
\label{sec:LM-triangulated}
%%%%%%%%%%%%%%%%%%%%%%%%%%%%%%%%%%%%%%%%%%%%%%%%%%%%%%%%%%%%%%%%%%%%%%%%%%%

We now explain how to equip $\LM(\fh,W)$ with the structure of a triangulated category.  For an object $(\cF,\delta_\cF) \in \LM(\fh,W)$, we define the shift operation by
\[
(\cF,\delta_\cF)[1] = (\cF[1], -\bs(\delta_\cF)).
\]
(Using~\eqref{eqn:composition-bs} and~\eqref{eqn:kappa-bs}, one can see that $-\bs(\delta_\cF)$ satisfies the defining condition for a differential, so that the right-hand side is indeed an object of $\LM(\fh,W)$.)  For a morphism $f: \cF \to \cG$, we define
\[
f[1]: \cF[1] \to \cG[1]
\qquad\text{by}\qquad
f[1] = \bs(f).
\]
To be more precise, choose a chain map $\tilde f$ corresponding to $f$, and define $f[1]$ to be the morphism corresponding to $\bs(\tilde f) \in \uHom_\LM(\cF,\cG) = \uHom_\LM(\cF [1],\cG[1])$.  It is left to the reader to check that $\bs(\tilde f)$ is indeed a chain map $\cF[1] \to \cG[1]$, and that the corresponding morphism in $\LM(\fh,W)$ is independent of the choice of $\tilde f$.

Next, given a chain map $f: \cF \to \cG$, we will define a new left-monodromic complex denoted by $\cone(f)$.  Its underlying $\DiagBSp$-sequence will be $\cF[1] \oplus \cG$.  We will write elements of $\uEnd_\LM(\cF[1] \oplus \cG)$ by writing them as matrices
\[
\begin{bmatrix}
a & b \\ c & d
\end{bmatrix}
\in 
\begin{bmatrix}
\uEnd_\LM(\cF[1]) & \uHom_\LM(\cG,\cF[1]) \\
\uHom_\LM(\cF[1],\cG) & \uEnd_\LM(\cG)
\end{bmatrix}.
\]
Let $\cone(f) = (\cF[1]\oplus \cG, \delta_{\cone(f)})$ be the complex with differential given by
\[
\delta_{\cone(f)} =
\begin{bmatrix}
-\bs(\delta_\cF) & 0 \\
\bt^{-1}(f) & \delta_\cG
\end{bmatrix}.
\]
Let us check that this is indeed a differential: we have
\begin{multline*}
\delta_{\cone(f)} \circ \delta_{\cone(f)} + \kappa(\delta_{\cone(f)}) \\
=
\begin{bmatrix}
-\bs(\delta_\cF) & 0 \\
\bt^{-1}(f) & \delta_\cG
\end{bmatrix}
\begin{bmatrix}
-\bs(\delta_\cF) & 0 \\
\bt^{-1}(f) & \delta_\cG
\end{bmatrix}
+
\begin{bmatrix}
\kappa(-\bs(\delta_\cF)) & 0 \\
\kappa(\bt^{-1}(f)) & \kappa(\delta_\cG)
\end{bmatrix} \\
= \begin{bmatrix}
\bs(\delta_\cF \circ \delta_\cF + \kappa(\delta_\cF)) & 0 \\
\kappa(\bt^{-1}(f)) + \delta_\cG \circ \bt^{-1}(f) - \bt^{-1}(f) \circ \bs(\delta_\cF) &
\delta_\cG \circ \delta_\cG + \kappa(\delta_\cG)
\end{bmatrix}
\end{multline*}
The diagonal entries clearly vanish.  For the lower left entry, using~\eqref{eqn:composition-shift-leftmon} and~\eqref{eqn:kappa-bt}, we have
\begin{multline*}
\kappa(\bt^{-1}(f)) + \delta_\cG \circ \bt^{-1}(f) - \bt^{-1}(f) \circ \bs(\delta_\cF)  \\
= \bt^{-1} \bigl( \kappa(f) + \delta_\cG \circ f - f \circ \delta_\cF \bigr) = \bt^{-1} \bigl( d_{\uHom_\LM(\cF,\cG)}(f) \bigr) = 0,
\end{multline*}
as desired.

Any diagram in $\LM(\fh,W)$ isomorphic to one of the form
\[
\cF \xrightarrow{f} \cG \xrightarrow{\alpha(f)} \cone(f) \xrightarrow{\beta(f)} \cF[1]
\]
for $f$ a chain map is called a \emph{distinguished triangle}.
Here, $\alpha(f)$ and $\beta(f)$ are the obvious chain maps, given by
\[
\alpha(f) =
\begin{bmatrix}
0 \\ \id_\cG
\end{bmatrix},
\qquad
\beta(f) = \begin{bmatrix} \id_{\cF[1]} & 0 \end{bmatrix}.
\]

\begin{prop} \label{prop:UGB-left-triangulated-structure}
With the definitions above, $\LM(\fh,W)$ has the structure of a triangulated category.
\end{prop}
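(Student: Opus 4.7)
The natural framework is to realize $\LM(\fh,W)$ as the $0$-th cohomology category $H^0(\widetilde{\LM})$ of a pre-triangulated DG category $\widetilde{\LM}=\widetilde{\LM}(\fh,W)$ built on the same objects, and then to invoke the standard result (Bondal--Kapranov, Keller) that the $H^0$ of a pre-triangulated DG category carries a canonical triangulated structure with distinguished triangles as prescribed by the mapping-cone construction.

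First I would construct this DG enhancement. Define $\widetilde{\LM}$ with the same objects as $\LM(\fh,W)$ and morphism complexes $(\uHom_\LM(\cF,\cG), d_{\uHom_\LM(\cF,\cG)})$ with composition given by~\eqref{eqn:composition-uHomU}. The verification that this is a DG category amounts to: (i) $d^2=0$, already carried out in~\eqref{eqn:leftmon-diff}; (ii) associativity of composition, a straightforward consequence of the associativity of $\circ$ in $\uHom_\BE$ and of the wedge in $\Lambda$, together with a sign check; and (iii) the Leibniz rule $d(g \circ f) = d(g) \circ f + (-1)^{|g|} g \circ d(f)$, which follows from~\eqref{eqn:kappa-derivation} together with associativity and the Maurer--Cartan equations $\kappa(\delta_\cG)+\delta_\cG \circ \delta_\cG = 0$ and $\kappa(\delta_\cF) + \delta_\cF\circ\delta_\cF = 0$ used in an intermediate cancellation identical to the one at~\eqref{eqn:leftmon-diff}.

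Second I would check that $\widetilde{\LM}$ is pre-triangulated, i.e., closed under shifts and cones in the DG-categorical sense. For the shift, using~\eqref{eqn:composition-bs} and~\eqref{eqn:kappa-bs} one verifies that $-\bs(\delta)$ satisfies the Maurer--Cartan equation whenever $\delta$ does, so that $(\cF[1], -\bs(\delta))$ is a genuine object; the identification of morphism complexes $\uHom_\LM(\cF[1],\cG[1]) \cong \uHom_\LM(\cF,\cG)$ induced by $\bs$ must intertwine the differentials with the correct sign, which is a direct computation using~\eqref{eqn:composition-shift-leftmon} and~\eqref{eqn:kappa-bs}, and also shows that $f[1]=\bs(f)$ is a well-defined morphism in $\LM(\fh,W)$, independent of the chain-map representative. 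For the cone of a closed degree-$0$ morphism $f$, the computation given in the text shows that $\delta_{\cone(f)}$ satisfies the Maurer--Cartan equation, and an analogous matrix-level computation exhibits $\cone(f)$ as the DG-theoretic mapping cone: $\Hom_{\widetilde{\LM}}(\cH,\cone(f))$ sits in a short exact sequence of complexes with $\Hom_{\widetilde{\LM}}(\cH,\cF[1])$ and $\Hom_{\widetilde{\LM}}(\cH,\cG)$, yielding the expected strict distinguished triangles.

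The main obstacle is the sign bookkeeping arising from the exterior-algebra twisting. Unlike in the usual homotopy category, $\circ$ in $\uHom_\LM$ does not commute with $\bt$ (compare~\eqref{eqn:composition-shift-leftmon}) because of the $\bs$-correction coming from parity in $\Lambda$, and $\kappa$ itself anticommutes with $\bs$. Threading these signs consistently through the Leibniz rule, the shift verification, and the cone computation requires care, but each individual identity is obtained by direct manipulation from the formulas established earlier in this section. Once $\widetilde{\LM}$ is confirmed to be a pre-triangulated DG category, the triangulated axioms on $\LM(\fh,W)=H^0(\widetilde{\LM})$ are automatic, and the class of distinguished triangles coincides with the one defined above.
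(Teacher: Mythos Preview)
Your approach is correct and takes a genuinely different route from the paper. The paper proceeds by direct verification: it adapts the classical arguments of Kashiwara--Schapira~\cite[Lemma~I.4.2 and Proposition~I.4.4]{ks} for the homotopy category of complexes, illustrating with an explicit check of axiom (TR3) (constructing maps $\phi$, $\psi$ between $\cF[1]$ and $\cone(\alpha(f))$ and exhibiting an explicit homotopy~\eqref{eqn:tr3-proof}), and leaves the remaining axioms to the reader. Your route instead packages $\LM(\fh,W)$ as $H^0$ of a pre-triangulated DG category and invokes the Bondal--Kapranov/Keller machinery.

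Both approaches rest on the same sign bookkeeping you identify (the interaction of $\bs$, $\bt$, and $\kappa$ via~\eqref{eqn:composition-bs}--\eqref{eqn:kappa-derivation}), so neither is ``free.'' Your framing buys conceptual clarity and avoids checking the octahedral axiom by hand; the paper's buys elementarity and keeps the argument self-contained for readers unfamiliar with DG enhancements. One point to watch in your argument: to invoke the DG machinery you need the paper's shift $(\cF,\delta)\mapsto(\cF[1],-\bs(\delta))$ and $f\mapsto\bs(f)$ to coincide with the abstract DG shift (characterized by $\uHom_\LM(\cH,\cF[1])\cong\uHom_\LM(\cH,\cF)[1]$). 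This is exactly the content of Remark~\ref{rmk:Hom-shift} (proved just after the proposition), where $\bs$ furnishes the required isomorphism of dgg modules; you should cite or reproduce that computation rather than leave it implicit. Similarly, verifying that the paper's $\cone(f)$ represents the DG-theoretic cone requires passing through this same $\bs$-identification on the $\cF[1]$-summand.
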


\begin{proof}[Proof sketch]
The arguments of~\cite[Lemma~I.4.2 and Proposition~I.4.4]{ks} can readily be adapted to our setting.  For instance, the proof of axiom (TR3) from~\cite[Proposition~I.4.4]{ks} involves the study of the following diagram of chain maps:
\[
\begin{tikzcd}[column sep=large]
\cG \ar[r, "\alpha(f)"] \ar[d, equal] & \cone(f) \ar[r, "\beta(f)"] \ar[d, equal]
  & \cF[1] \ar[r, "-f{[1]}"] \ar[d, "\phi", bend left=15] & \cG[1] \ar[d, equal] \\
\cG \ar[r, "\alpha(f)"] & \cone(f) \ar[r, "\alpha(\alpha(f))"]
  & \cone(\alpha(f)) \ar[r, "\beta(\alpha(f))"] \ar[u, "\psi", bend left=15] & \cG[1].
\end{tikzcd}
\]
Here, $\cone(\alpha(f))$ has underlying $\DiagBSp$-sequence $\cG[1] \oplus \cF[1] \oplus \cG$, and $\phi: \cF[1] \to \cone(\alpha(f))$ and $\psi: \cone(\alpha(f)) \to \cF[1]$ are the chain maps given by
\[
\phi =
\begin{bmatrix}
-f[1] \\ \id_{\cF[1]} \\ 0 
\end{bmatrix},
\qquad
\psi = 
\begin{bmatrix} 0 & \id_{\cF[1]} & 0 \end{bmatrix}.
\]
One can check directly that $\psi \circ \phi = \id_{\cF[1]}$, that $\psi \circ \alpha(\alpha(f)) = \beta(f)$, and that $\beta(\alpha(f)) \circ \phi = -f[1]$.  Finally, we claim that the chain map $\phi \circ \psi$ gives rise to the identity morphism of $\cone(\alpha(f))$ in $\LM(\fh,W)$.  This claim follows from the observation that
\begin{equation}\label{eqn:tr3-proof}
\id_{\cone(\alpha(f))} - \phi \circ \psi = d_{\uEnd_\LM(\cone(\alpha(f)))}(r),
\end{equation}
where
{\small
\[
r =
\begin{bmatrix}
0 & 0 & \bt(\id_\cG) \\ 0 & 0 & 0 \\ 0 & 0 & 0
\end{bmatrix}
\in
\begin{bmatrix}
\uEnd_\LM(\cG[1]) & \uHom_\LM(\cF[1],\cG[1]) & \uHom_\LM(\cG,\cG[1]) \\
\uHom_\LM(\cG[1],\cF[1]) & \uEnd_\LM(\cF[1]) & \uHom_\LM(\cG,\cF[1]) \\
\uHom_\LM(\cG[1],\cG) & \uHom_\LM(\cF[1],\cG) & \uEnd_\LM(\cG)
\end{bmatrix}.
\]}%
To check~\eqref{eqn:tr3-proof}, observe that
\[
\delta_{\cone(\alpha(f))} =
\begin{bmatrix}
-\bs(\delta_\cG) & 0 & 0 \\
0 & -\bs(\delta_\cF) & 0 \\
\bt^{-1}(\id_\cG) & \bt^{-1}(f) & \delta_\cG
\end{bmatrix}.
\]
Using~\eqref{eqn:composition-shift-leftmon}, one can check that
\[
\id_{\cone(\alpha(f))} - \phi \circ \psi = \kappa(r) + \delta_{\cone(\alpha(f))} \circ r + r \circ \delta_{\cone(\alpha(f))},
\]
as claimed.  We omit the proof of the other axioms.
\end{proof}

\begin{rmk}
\label{rmk:Hom-shift}
For $\cF,\cG \in \LM(\fh,W)$, we have a canonical identification of bigraded $\bk$-modules
\[
\uHom_\LM(\cF,\cG[1]) = \uHom_\LM(\cF,\cG)[1]
\]
induced by~\eqref{eqn:Hom-shift},
but the two sides do \emph{not} have the same differential: for $f \in \uHom_\LM(\cF,\cG[1])$ we have
\begin{align*}
d_{\uHom_\LM(\cF,\cG[1])}(f) &= \kappa(f) - \bs(\delta_\cG) \circ f + (-1)^{|f|+1} f \circ \delta_\cF, \\
d_{\uHom_\LM(\cF,\cG)[1]}(f) &= -\bt(d_{\uHom_\LM(\cF,\cG)}(\bt^{-1}(f))) \\
&=
-\kappa(f) - \bt(\delta_\cG \circ \bt^{-1}(f)) - (-1)^{|\bt^{-1}(f)|+1} \bt(\bt^{-1}(f) \circ \delta_\cF) \\
&= -\kappa(f) - \delta_\cG \circ f - (-1)^{|f|} f \circ \bs(\delta_\cF).
\end{align*}
From these equations, one can see that
\[
d_{\uHom_\LM(\cF,\cG[1])}(\bs(f)) = \bs(d_{\uHom_\LM(\cF,\cG)[1]}(f)).
\]
In other words, $\bs: \uHom_\LM(\cF,\cG[1]) \to \uHom_\LM(\cF,\cG)[1]$ is an isomorphism of dgg $\bk$-modules, which therefore induces an isomorphism of $\bk$-modules
\[
\Hom_{\LM(\fh,W)}(\cF, \cG[1]) \simto \gHom_\LM(\cF,\cG)^1_0.
\]
From this we obtain an isomorphism of bigraded $\bk$-modules
\[
\gHom_\LM(\cF,\cG) \simto \bigoplus_{n,m \in \Z} \Hom_{\LM(\fh,W)}(\cF, \cG[n]\langle -m \rangle).
\]
Below we will use this isomorphism to identify the two sides.
\end{rmk}

%%%%%%%%%%%%%%%%%%%%%%%%%%%%%%%%%%%%%%%%%%%%%%%%%%%%%%%%%%%%%%%%%%%%%%%%%%%
\section{Right-equivariant versus left-monodromic complexes}
%%%%%%%%%%%%%%%%%%%%%%%%%%%%%%%%%%%%%%%%%%%%%%%%%%%%%%%%%%%%%%%%%%%%%%%%%%%

In this section, we will show that the categories $\RE(\fh,W)$ and $\LM(\fh,W)$ are actually equivalent.

\begin{lem}
\label{lem:dmixUGB-generate}
The category $\LM(\fh,W)$ is generated as a triangulated category by the objects in $\DiagBS(\fh,W)$.
\end{lem}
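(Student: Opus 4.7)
The plan is to prove the lemma by induction on the total number of indecomposable summands (across all positions) of the underlying $\DiagBSp$-sequence $\cF$. The key observation driving the argument is that the differential $\delta$ of a left-monodromic complex strictly increases cohomological position. Indeed, by bidegree analysis, a component of $\delta$ living in $\Lambda^k \otimes \uEnd_\BE(\cF)$ with total bidegree $(1,0)$ must have its $\uEnd_\BE$ part in bidegree $(1-k,-2k)$, which consists of morphisms $\cF^p \to \cF^{p+k+1}(-2k)$. Since $k \ge 0$, every component of $\delta$ shifts position upward by at least $1$.

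The base case is $\cF = 0$, which is trivial; slightly more generally, if $\cF$ is concentrated in a single position $p=a$, then the bidegree analysis forces $\delta = 0$ (any putative nonzero component would land in $\cF^{a+k+1}=0$), so $\cF$ identifies with a direct sum of cohomological shifts of objects $B_\uw(n) \in \DiagBS(\fh,W)$, and finite direct sums are in $\langle \DiagBS(\fh,W) \rangle_\Delta$ by taking iterated cones of zero maps. For the inductive step, let $b$ be the largest position where $\cF^b \neq 0$, and pick any indecomposable summand $X = B_\uw(n)$ of $\cF^b$. Let $\iota\colon X[-b] \hookrightarrow \cF$ be the inclusion of $X$ at position $b$, equipped with zero differential. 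Then $\iota$ is a chain map because $\delta\circ\iota$ lands in $\cF^q$ for $q > b$, hence is zero. Let $\cF'$ denote the quotient $\DiagBSp$-sequence (obtained by deleting the summand $X$ from $\cF^b$), equipped with the differential $\bar\delta$ obtained by post-composing $\delta$ with the projection.

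I would then check that $\bar\delta$ satisfies $\bar\delta \circ \bar\delta + \kappa(\bar\delta) = 0$, making $\cF'$ into a genuine object of $\LM(\fh,W)$. This is straightforward given the position-increasing property: the discarded components of $\delta$ (those with image in $X$) cannot participate in any composition whose final target is $\cF'$, because $\delta$ has no components mapping $X$ to anywhere else; and $\kappa$ commutes with the projection because it acts via the $\Lambda$ factor and via left convolution with elements of $\uEnd_\BE(B_\varnothing)$, neither of which alters the position. Consequently, $(\bar\delta\circ\bar\delta + \kappa(\bar\delta))$ is the projection of the vanishing expression $\delta\circ\delta + \kappa(\delta)$, hence zero.

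Next I would exhibit the sequence $X[-b] \xrightarrow{\iota} \cF \xrightarrow{\pi} \cF'$ as part of a distinguished triangle in $\LM(\fh,W)$, by checking that the natural map from $\cone(\iota)$ to $\cF'$ (projecting the $X[-b][1]$ summand to zero and the $\cF$ summand via $\pi$) is an isomorphism in $\LM(\fh,W)$; the obstruction vanishes because the underlying sequence-level sequence is termwise split, exactly as in the analogous argument for $\Kb$ of an additive category. Given this triangle, the inductive hypothesis applied to $\cF'$ (which has one fewer summand) together with the fact that $X[-b]$ is a shift of the object $B_\uw(n) \in \DiagBS(\fh,W)$ yields the result. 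The main technical obstacle is really packaged into the single observation that $\delta$ strictly increases position, which makes both the quotient construction and the ``termwise split'' nature of the short exact sequence work; once this is recognized, the remaining verifications are routine bookkeeping.
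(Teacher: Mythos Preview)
Your proof is correct and rests on the same key observation as the paper's: every component of $\delta$ strictly increases cohomological position, so one can peel off pieces from the top by induction. The paper removes the entire top term $\cF^n$ at once rather than a single summand, but this is a cosmetic difference.

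Where the two arguments diverge slightly is in how the distinguished triangle is produced. You take the inclusion $\iota\colon X[-b]\hookrightarrow\cF$ and then need to show that $\cone(\iota)\simeq\cF'$ in $\LM(\fh,W)$; this is a genuine homotopy equivalence (the underlying sequences differ by $X[-b+1]\oplus X[-b]$), and your appeal to ``the analogous argument for $\Kb$ of an additive category'' is a bit quick, since $\LM(\fh,W)$ is not literally such a category. The paper instead observes that the off-diagonal component $c\colon\cF''\to\cF'$ of $\delta$ gives a chain map $\bt(c)\colon\cF''[-1]\to\cF'$ whose cone is \emph{equal} to $\cF$ on the nose, so no homotopy equivalence is needed. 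You could do the same: the component $\beta\colon\cF'\to X[-b]$ of $\delta$ yields a chain map $\bt(\beta)\colon\cF'[-1]\to X[-b]$ with $\cone(\bt(\beta))=\cF$, giving the triangle $X[-b]\to\cF\to\cF'\xrightarrow{[1]}$ directly. This is marginally cleaner than contracting the acyclic piece in $\cone(\iota)$, though your route certainly works once the explicit homotopy is written down (as in the paper's proof of (TR3)).
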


Here, we regard an object of $\DiagBS(\fh,W)$ as an object of $\LM(\fh,W)$ by viewing it as a $\DiagBSp$-sequence concentrated in degree $0$, and then equipping it with the zero differential.

\begin{proof}
Let $\cF \in \LM(\fh,W)$.  In the underlying $\DiagBSp$-sequence
\[
(\sdots, \cF^{-1}, \cF^0, \cF^1, \sdots),
\]
let $n$ be the largest integer such that $\cF^n \ne 0$.  Define two new $\DiagBSp$-sequences
\[
\cF' = (\sdots, 0, \cF^n, 0, \sdots)
\qquad\text{and}\qquad
\cF'' = (\sdots, \cF^{n-2}, \cF^{n-1}, 0, \sdots)
\]
where in each case $\cF^i$ is the $i$-th entry,
so that $\cF = \cF'' \oplus \cF'$.  Write its differential as a matrix
\[
\delta_\cF =
\begin{bmatrix}
a & b \\ c & d
\end{bmatrix}
\in 
\begin{bmatrix}
\uEnd_\LM(\cF'') & \uHom_\LM(\cF',\cF'') \\
\uHom_\LM(\cF'',\cF') & \uEnd_\LM(\cF')
\end{bmatrix}.
\]
We claim that $b = 0$.  Indeed, degree considerations show that $\uHom_\BE(\cF',\cF'')^i_j$ can be nonzero only when $i - j \le -1$, but
\[
b \in \uHom_\LM(\cF',\cF'')^1_0 = \bigoplus_{k \ge 0} \Lambda^k_{2k} \otimes \uHom_\BE(\cF',\cF'')^{1-k}_{-2k} = \{0\}.
\]
Similar reasoning shows that $d = 0$ as well.

Next, the condition $\delta_\cF \circ \delta_\cF + \kappa(\delta_\cF) = 0$ implies that $a$ also satisfies the defining condition for differentials.  In other words, $(\cF'',a)$ is an object of $\LM(\fh,W)$, as is $(\cF',0)$.

Finally, consider the element $\bt(c) \in \uHom_\LM(\cF''[-1],\cF')$.  The condition on $\delta_\cF$ implies that $\kappa(c) + ca = 0$, or
\[
\kappa(\bt(c)) + 0 \circ \bt(c) - \bt(c) \circ (-\bs(a)) = 0.
\]
This says that $\bt(c)$ is a chain map $\cF''[-1] \to \cF'$. Moreover, the diagram
\[
\cF''[-1] \xrightarrow{\bt(c)} \cF' \to \cF \to \cF''
\]
is a distinguished triangle, since it can be identified with $\cF''[-1] \xrightarrow{\bt(c)} \cF' \to \cone(\bt(c)) \to \cF''$.

Now, $\cF'$ is a cohomological shift of an object of $\DiagBSp(\fh,W)$, and $\cF''$ has an underlying $\DiagBSp$-sequence with fewer nonzero terms than that of $\cF$.  By an induction argument on the number of nonzero terms, we conclude that $\LM(\fh,W)$ is generated by $\DiagBS(\fh,W)$.
\end{proof}

For any two $\DiagBSp$-sequences $\cF,\cG$, there is a natural map
\begin{equation}\label{eqn:uhom-eqvt-mon}
\uHom_\BE(\cF,\cG) \to \uHom_\LM(\cF,\cG)
\end{equation}
induced by the unit $\eta_\Lambda: \bk \to \Lambda$. If $\delta \in \uEnd_\BE(\cF)$, then its image $\bar\delta \in \uEnd_\LM(\cF)$ under~\eqref{eqn:uhom-eqvt-mon} satisfies $\kappa(\bar\delta) = 0$.  In particular, if $\delta \circ \delta = 0$, then we deduce that $\bar\delta \circ \bar\delta + \kappa(\bar\delta) = 0$.  We can therefore define a functor\index{forgetful functor!ForBELM@$\ForBELM$}
\[
\ForBELM: \BE(\fh,W) \to \LM(\fh,W)
\]
that sends $(\cF,\delta)$ to $(\cF,\bar\delta)$. (Once this notation is introduced, one can restate Lemma~\ref{lem:dmixUGB-generate} as saying that $\LM(\fh,W)$ is generated by the essential image of the functor $\ForBELM$.)

Next, consider the natural map
\[
\uHom_\LM(\cF,\cG) \to \uHom_{\RE}(\cF,\cG)
\]
that is induced by the counit $\epsilon_\sA: \sA \to \bk$ together with the isomorphism from Lemma~\ref{lem:uhom-mon-con}. Considerations similar to those above give us a functor\index{forgetful functor!ForLMRE@$\ForLMRE$}
\[
\ForLMRE: \LM(\fh,W) \to \RE(\fh,W).
\]

It is clear that the functors $\ForLMRE$ and $\ForBELM$ are triangulated, that they commute with the functors $\langle n \rangle$, and that we have
\[
\ForBERE = \ForLMRE \circ \ForBELM.
\]

\begin{thm}
\label{thm:leftmon-con}
The functor $\ForLMRE: \LM(\fh,W) \to \RE(\fh,W)$ is an equivalence of triangulated categories.
\end{thm}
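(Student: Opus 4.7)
The plan is to prove the theorem in three stages: establish fully faithfulness on a convenient generating family of objects, propagate the isomorphism to all of $\LM(\fh,W)$ by triangulated devissage, and then conclude essential surjectivity.

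For the first stage, view each $\cF \in \DiagBS(\fh,W)$ as a $\DiagBSp$-sequence concentrated in position $0$ with zero differential; this places $\DiagBS(\fh,W)$ inside $\LM(\fh,W)$. For two such objects $\cF, \cG$, I claim the natural chain map $\uHom_\LM(\cF,\cG) \to \uHom_\RE(\cF,\cG)$ is a quasi-isomorphism, which by Remark~\ref{rmk:Hom-shift} will give the bijection on morphism spaces of all internal bidegrees. Since $\delta_\cF = \delta_\cG = 0$, the left-hand complex carries only the $\kappa$ differential and the right-hand complex has zero differential. Using the canonical bigraded identification $\sA = R \otimes \Lambda$ (with the Koszul differential on $\sA$), together with the $R$-action on $\uHom_\BE(\cF,\cG)$ via $R \cong \uEnd_\BE(B_\varnothing)$ acting by $\ustar$, one rewrites
\[
\uHom_\LM(\cF,\cG) = \Lambda \otimes \uHom_\BE(\cF,\cG) \cong \sA \otimes_R \uHom_\BE(\cF,\cG),
\]
and checks directly from the definition of $\kappa$ that it becomes the natural differential on the right. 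Because $\uHom_\BE(\cF,\cG)$ is free---hence flat---over $R$ (the last paragraph of~\S\ref{sec:ps-convolution}, an incarnation of the double leaves basis of~\cite{ew}), and $\epsilon_\sA:\sA \to \bk$ is a quasi-isomorphism, the induced map
\[
\sA \otimes_R \uHom_\BE(\cF,\cG) \to \bk \otimes_R \uHom_\BE(\cF,\cG) = \uHom_\RE(\cF,\cG)
\]
is the required quasi-isomorphism.

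For the second stage, I would extend fully faithfulness by a five-lemma argument. Fix $\cG \in \DiagBS(\fh,W)$ and let $\mathscr{C}_\cG \subset \LM(\fh,W)$ denote the full subcategory of objects $\cF$ for which $\ForLMRE$ induces isomorphisms on $\Hom_{\LM(\fh,W)}(\cF, \cG[n]\langle m\rangle)$ for every $n,m \in \Z$. Because $\ForLMRE$ is triangulated and commutes with $\langle 1\rangle$, the subcategory $\mathscr{C}_\cG$ is stable under cones, shifts, and $\langle 1\rangle$; by the first stage it contains $\DiagBS(\fh,W)$, so Lemma~\ref{lem:dmixUGB-generate} forces $\mathscr{C}_\cG = \LM(\fh,W)$. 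A symmetric devissage in the first variable, again using Lemma~\ref{lem:dmixUGB-generate}, then yields fully faithfulness on all of $\LM(\fh,W)$.

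Essential surjectivity is then formal. Under the identification $\RE(\fh,W) \cong \Kb(\oDiagBSp(\fh,W))$, the category $\RE(\fh,W)$ is generated as a triangulated category by the objects of $\oDiagBS(\fh,W)$, and each of these is $\ForLMRE$ applied to the corresponding object of $\DiagBS(\fh,W) \subset \LM(\fh,W)$. The essential image of a fully faithful triangulated functor is a triangulated subcategory, so it equals $\RE(\fh,W)$. The principal obstacle is concentrated in stage one: recognizing $\kappa$ as the Koszul differential on $\sA \otimes_R \uHom_\BE(\cF,\cG)$ and exploiting $R$-freeness to reduce the quasi-isomorphism to that of $\epsilon_\sA:\sA \to \bk$; everything that follows is routine triangulated-category formalism.
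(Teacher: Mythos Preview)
Your proposal is correct and follows essentially the same route as the paper: reduce to objects of $\DiagBS(\fh,W)$ via Lemma~\ref{lem:dmixUGB-generate}, identify $(\uHom_\LM(\cF,\cG),\kappa)$ with $\sA \otimes_R \uHom_\BE(\cF,\cG)$, and use $R$-freeness together with the quasi-isomorphism $\epsilon_\sA$ to conclude. The paper compresses your second and third stages into a single sentence, leaving the standard d\'evissage implicit, but the content is the same.
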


\begin{proof}
In view of Lemma~\ref{lem:dmixUGB-generate} and the fact that objects of $\DiagBS(\fh,W)$ also generate $\RE(\fh,W)$, it is enough to show that $\ForLMRE$ is fully faithful on cohomological shifts of these objects.  In fact, we will show that if $\cF,\cG \in \DiagBS(\fh,W)$, then the natural map
\[
\uHom_\LM(\cF,\cG) \to \uHom_{\RE}(\cF,\cG)
\]
is a quasi-isomorphism, which will be enough by Remark~\ref{rmk:Hom-shift}.  Since $\cF$ and $\cG$ have zero differentials, the differential on $\uHom_\LM(\cF,\cG)$ is just $\kappa$, while $\uHom_{\RE}(\cF,\cG)$ and $\uHom_\BE(\cF,\cG)$ have zero differential.

Recall that $\sA$ is isomorphic as a bigraded $\bk$-module to $\Lambda \otimes R$.  We define an isomorphism of bigraded $\bk$-modules
\begin{equation}\label{eqn:parity-ugb-h}
h: \sA \otimes_R \uHom_\BE(\cF,\cG) \simto \uHom_\LM(\cF,\cG)
\end{equation}
by setting
\[
h(r \otimes x \otimes f) = r \otimes (x \star f)
\qquad\text{where $r \in \Lambda$, $x \in R$, and $f \in \uHom_\BE(\cF,\cG)$.}
\]
It is easy to see that $h$ is a morphism of dgg $\bk$-modules. Consider the following commutative diagram:
\[
\begin{tikzcd}[column sep=small]
\sA \otimes_R \uHom_\BE(\cF,\cG) \ar[rr, "h", "\sim"'] \ar[dr, "\epsilon_\sA \otimes \id"'] 
  && \uHom_\LM(\cF,\cG) \ar[dl, "\ForLMRE"] \\
& \bk \otimes_R \uHom_{\BE}(\cF,\cG) = \uHom_{\RE}(\cF,\cG).
\end{tikzcd}
\]
Since $\epsilon_\sA: \sA \to \bk$ is a quasi-isomorphism of $R$-modules, and since $\uHom_\BE(\cF,\cG)$ is free over $R$ (see~\S\ref{sec:ps-convolution}), the left-hand diagonal arrow is a quasi-isomorphism.  We conclude that the right-hand diagonal arrow is a quasi-isomorphism as well, as desired.
\end{proof}

\begin{ex}
\label{ex:t1-leftmon}
Consider $B_\varnothing$ as an object of $\LM(\fh,W)$. (This object will often be denoted by $\cT_\varnothing$\index{tilting object!Tnoth@$\cT_\varnothing$}.) As a special case of~\eqref{eqn:parity-ugb-h}, using~\eqref{eqn:R-End-E1}, we obtain an isomorphism of dgg algebras
\[
\uEnd_\LM(\cT_\varnothing) \cong \sA.
\]
Taking cohomology, we find that
\begin{equation}\label{eqn:t1-leftmon}
\gEnd_\LM(\cT_\varnothing) \cong \bk.
\end{equation}
\end{ex}

\begin{ex}\label{ex:ts-leftmon}
Let us write down concretely the object in $\LM(\fh,W)$ that corresponds to the object $\cT_s' \in \RE(\fh,W)$ from Example~\ref{ex:ts-con} under the equivalence of Theorem~\ref{thm:leftmon-con}. (This object will be denoted $\cT_s$\index{tilting object!Ts@$\cT_s$}.) The $\DiagBSp$-sequence is the same as in Example~\ref{ex:ts-con}, but the differential is now given by
\begin{multline*}
\delta = \begin{tikzpicture}[scale=0.3,thick,baseline]
 \draw (0,0) to (0,1);
 \node at (0,0) {$\bullet$};
 \node at (0,1.4) {\tiny $s$};
\end{tikzpicture} \oplus \begin{tikzpicture}[scale=-0.3,thick,baseline]
 \draw (0,0) to (0,1);
 \node at (0,0) {$\bullet$};
 \node at (0,1.4) {\tiny $s$};
\end{tikzpicture} \oplus (-\alpha_s \otimes \id) \\
\in \Hom(B_\varnothing(-1),B_s) \oplus \Hom(B_s,B_\varnothing(1)) \oplus (\Lambda^{1}_{2} \otimes \Hom(B_\varnothing(-1), (B_\varnothing(1))(-2))) \\
= \uEnd_\LM(\cT_s)^1_0.
\end{multline*}
We depict this differential as follows:
\[
\begin{tikzcd}
B_\varnothing(1) \\
B_s \ar[u, "\usebox\upperdot" swap] \\
B_\varnothing(-1). \ar[u, "\usebox\lowerdot" swap] \ar[uu, bend left=60, "(-2)" description, "-\alpha_s \otimes \id" near end]
\end{tikzcd}
\]
Using the fact that $\epsilon_s \circ \eta_s = \alpha_s \star \id$ (see the barbell relation in~\S\ref{sec:ew-diagram}), one can check that $\delta^2$ and $-\kappa(\delta)$ are both given by
\[
\begin{tikzcd}
B_\varnothing(1) \\
B_s \\
B_\varnothing(-1). \ar[uu, bend left=60, "1 \otimes (\alpha_s \star \id)"]
\end{tikzcd}
\]
\end{ex}

As for the category $\RE(\fh,W)$, the category $\LM(\fh,W)$
admits a natural convolution action of $\BE(\fh,W)$ on the right
\begin{equation}
\label{eqn:ustar-UGB}
\ustar: \LM(\fh,W) \times \BE(\fh,W) \to \LM(\fh,W),
\end{equation}
defined as follows. First, for $\DiagBSp$-sequences $\cF, \cF', \cG, \cG'$ and for $f=r \otimes f' \in \uHom_\LM(\cF, \cG)$ (where $r \in \Lambda$ and $f' \in \uHom_\BE(\cF, \cG)$), $g \in \uHom_\BE(\cF', \cG')$ we set
\[
f \ustar g := r \otimes (f' \ustar g) \in \uHom_\LM(\cF \ustar \cF', \cG \ustar \cG').
\]
Then the convolution product of the objects $(\cF, \delta_\cF) \in \LM(\fh,W)$ and $(\cG, \delta_\cG) \in \BE(\fh,W)$ is defined as the pair
\[
(\cF \ustar \cG, \delta_\cF \ustar \id_\cG + \id_\cF \ustar \delta_\cG).
\]

The functors $\ForBELM$ and $\ForLMRE$ are compatible with this action in the sense that if $\cF$, $\cG$ belong to $\BE(\fh,W)$ and $\cH$ belongs to $\LM(\fh,W)$, then there exist functorial isomorphisms
\[
\ForBELM(\cF \ustar \cG) \cong \ForBELM(\cF) \ustar \cG, \qquad \ForLMRE(\cH \ustar \cG) \cong \ForLMRE(\cH) \ustar \cG.
\]

%%%%%%%%%%%%%%%%%%%%%%%%%%%%%%%%%%%%%%%%%%%%%%%%%%%%%%%%%%%%%%%%%%%%%%%%%%%
\section{The left monodromy action}
\label{sec:left-monodromy}
%%%%%%%%%%%%%%%%%%%%%%%%%%%%%%%%%%%%%%%%%%%%%%%%%%%%%%%%%%%%%%%%%%%%%%%%%%%

The main motivation for introducing the left-monodromic category $\LM(\fh,W)$ is the construction given in Theorem~\ref{thm:frown} below. It provides an analogue in our setting of the construction of the monodromy action in~\cite{verdier}.

Recall from~\S\ref{sec:derivations} the map $\lfrown: \Lambda^\vee \otimes \Lambda \to \Lambda$.  This map induces a morphism of bigraded $\bk$-modules
\[
\lfrown: \Lambda^\vee \otimes \uHom_\LM(\cF,\cG) \to \uHom_\LM(\cF,\cG)
\]
for any two $\DiagBSp$-sequences $\cF, \cG$.

\begin{lem}
\label{lem:kappa-frown}
Let $f \in \uHom_\LM(\cF,\cG)$ and $g \in \uHom_\LM(\cG,\cH)$.  For any $x \in V = (\Lambda^\vee)^{-1}_{-2}$, we have
\[
x \lfrown (g \circ f) = (x \lfrown g) \circ f + (-1)^{|g|} g \circ (x \lfrown f)
\]
and
\[
x \lfrown \kappa(f) = -\kappa(x \lfrown f).
\]
\end{lem}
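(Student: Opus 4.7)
The plan is to reduce both identities to the case of pure tensors and then carry out a direct sign computation.

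By $\bk$-bilinearity of $\lfrown$, $\circ$, and $\kappa$, it suffices to treat the case
\[
f = s \otimes f', \qquad g = r \otimes g',
\]
with $r, s \in \Lambda$ homogeneous and $f' \in \uHom_\BE(\cF,\cG)$, $g' \in \uHom_\BE(\cG,\cH)$. In particular $|f| = |s| + |f'|$, $|g| = |r| + |g'|$, $|x \lfrown s| = |s|-1$, and $|x \lfrown r| = |r|-1$.

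For the first identity I would unfold the left-hand side using~\eqref{eqn:composition-uHomU},
\[
g \circ f = (-1)^{|g'||s|}(r \wedge s) \otimes (g' \circ f'),
\]
and then apply the derivation law~\eqref{eqn:frown-deriv} on $\Lambda$ to get
\[
x \lfrown (r \wedge s) = (x \lfrown r) \wedge s + (-1)^{|r|}\, r \wedge (x \lfrown s).
\]
Expanding the two terms on the right-hand side by the same composition formula produces
\[
(x \lfrown g) \circ f = (-1)^{|g'||s|}\bigl((x \lfrown r) \wedge s\bigr) \otimes (g' \circ f'),
\]
\[
g \circ (x \lfrown f) = (-1)^{|g'|(|s|-1)}\bigl(r \wedge (x \lfrown s)\bigr) \otimes (g' \circ f').
\]
The only sign to check is that $(-1)^{|g|} \cdot (-1)^{|g'|(|s|-1)} = (-1)^{|r|+|g'||s|}$, which follows from $|g| = |r|+|g'|$. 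This matches the prefactors obtained from the derivation formula applied to $g \circ f$.

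For the second identity, write $f = (r_1 \wedge \cdots \wedge r_k) \otimes f'$. Then both $x \lfrown \kappa(f)$ and $\kappa(x \lfrown f)$ become double sums indexed by an ordered pair $(i,j)$ with $i \neq j$, where one index records the factor $r_i$ moved into $\ustar$ and the other records the factor $r_j$ paired against $x$ (via $r_j(x)$). I would organize these sums by the unordered pair $\{p,q\}$ with $p<q$ of factors removed from the wedge. For each such pair each side contributes exactly two terms, proportional to
\[
r_p(x) \cdot (r_q \ustar f') \quad \text{and} \quad r_q(x) \cdot (r_p \ustar f').
\]
A careful tally of the signs $(-1)^{i+j}$ and $(-1)^{i+j+1}$ coming from the definitions of $\kappa$ and $\lfrown$ shows that the two contributions for $x \lfrown \kappa(f)$ match, term for term, the negatives of the corresponding contributions for $\kappa(x \lfrown f)$, which gives the claim.

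The only real obstacle is sign bookkeeping; once the reductions above are in place there is no conceptual content, only careful tracking of Koszul signs and of the shift in wedge positions when an index is deleted.
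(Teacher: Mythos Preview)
Your proof is correct and is essentially a detailed unfolding of the paper's own proof, which simply says that the first formula follows from~\eqref{eqn:frown-deriv} and the second ``can be checked easily.'' Your sign checks for both identities are accurate; in particular, for the second identity the opposite signs for the two orderings $j<i$ versus $j>i$ are exactly what produces the global minus sign.
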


\begin{proof}
The first formula follows from~\eqref{eqn:frown-deriv}; the second one can be checked easily.
\end{proof}

\begin{thm}
\label{thm:frown}
Let $\cF \in \LM(\fh,W)$.  There is a bigraded $\bk$-algebra homomorphism\index{muF@$\mu_\cF$}
\[
\mu_\cF: R^\vee \to \gEnd_\LM(\cF)
\]
determined by setting $\mu_\cF(x) = x \lfrown \delta$ for $x \in V = (R^\vee)^{0}_{-2}$.  More generally, for any $\cF, \cG \in \LM(\fh,W)$, the bigraded $\bk$-module $\gHom_\LM(\cF,\cG)$ is equipped with a canonical structure of a left $R^\vee$-module\index{convolution!starhat@$\hatstar$}
\begin{equation}\label{eqn:leftmon-rv-action}
\hatstar: R^\vee \otimes \gHom_\LM(\cF,\cG) \to \gHom_\LM(\cF,\cG)
\end{equation}
given by $x \hatstar f = \mu_\cG(x) \circ f = f \circ \mu_\cF(x)$.  This action is compatible with composition in the following way: if $f \in \gHom_\LM(\cF,\cG)$, $g \in \gHom_\LM(\cG,\cH)$, and $x \in R^\vee$, then
\begin{equation}\label{eqn:leftmon-interchange}
x \hatstar (g \circ f) = (x \hatstar g) \circ f = g \circ (x \hatstar f).
\end{equation}
\end{thm}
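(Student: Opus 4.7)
The plan is to prove the theorem in four steps. First, for $x \in V$ I will check that $x \lfrown \delta$ is a cocycle in $\uEnd_\LM(\cF)$, so it defines a class $[x \lfrown \delta] \in \gEnd_\LM(\cF)^0_{-2}$. Second, I will show that the classes $[x \lfrown \delta]$ and $[y \lfrown \delta]$ commute for $x, y \in V$; combined with the universal property of the symmetric algebra $R^\vee = \Sym(V \la -2 \ra)$, this will yield the algebra homomorphism $\mu_\cF$. Third, I will verify that for any $x \in V$ and any class $f \in \gHom_\LM(\cF,\cG)$ one has $\mu_\cG(x) \circ f = f \circ \mu_\cF(x)$; multiplicativity of $\mu_\cG$ and $\mu_\cF$ then extends this equality to all of $R^\vee$, so that $x \hatstar f$ is well defined. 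Finally, the interchange law~\eqref{eqn:leftmon-interchange} is immediate from the two descriptions of $\hatstar$ combined with associativity of composition in $\gHom_\LM$.

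All three computational steps use the same technique: apply a contraction operator to a defining identity and collect terms via the derivation rules of Lemma~\ref{lem:kappa-frown}. For closedness, applying $x \lfrown ({-})$ to the relation $\kappa(\delta) + \delta \circ \delta = 0$ and using $x \lfrown \kappa = -\kappa \circ (x \lfrown)$ together with the derivation rule yields $d(x \lfrown \delta) = 0$ directly. For commutativity, applying $x \lfrown y \lfrown ({-})$ to the same relation and using both derivation rules, together with the anticommutation of $(x \lfrown)$ and $(y \lfrown)$ on $\Lambda$-valued objects, produces after simplification
\[
(x \lfrown \delta) \circ (y \lfrown \delta) - (y \lfrown \delta) \circ (x \lfrown \delta) = d(x \lfrown y \lfrown \delta),
\]
so the commutator is a boundary. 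Similarly, if $\tilde f \in \uHom_\LM(\cF,\cG)^0$ is a cycle representing $f$, then applying $x \lfrown$ to $\kappa(\tilde f) + \delta_\cG \circ \tilde f - \tilde f \circ \delta_\cF = 0$ gives
\[
(x \lfrown \delta_\cG) \circ \tilde f - \tilde f \circ (x \lfrown \delta_\cF) = d(x \lfrown \tilde f),
\]
which establishes the bimodule compatibility on generators of $R^\vee$.

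The main obstacle will be sign bookkeeping. The signs in the differential (the factor $(-1)^{|f|+1}$), in the derivation rules for $\kappa$ and for $\lfrown$, and in the anticommutation of successive contractions all interact, so obtaining each of the above identities requires carefully tracking the cohomological degrees of the various terms, notably $|\delta| = 1$, $|x \lfrown \delta| = 0$, and $|x \lfrown y \lfrown \delta| = -1$. Once the signs are pinned down, all three computations are entirely formal and short.
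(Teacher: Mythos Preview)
Your proposal is correct and uses the same basic technique as the paper (applying $x\lfrown$ to the defining identities and collecting terms via Lemma~\ref{lem:kappa-frown}). The only organizational difference is that the paper proves the bimodule identity $\mu_\cG(x)\circ f = f\circ\mu_\cF(x)$ first and then obtains the commutativity of $\mu_\cF(x)$ and $\mu_\cF(y)$ as the special case $\cF=\cG$, $f=\mu_\cF(y)$, avoiding your separate computation with $x\lfrown y\lfrown(\delta^2+\kappa(\delta))$.
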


The action of $R^\vee$ on $\gHom_\LM(\cF,\cG)$ defined in this theorem is called the \emph{left monodromy action}\index{monodromy action!left}.

\begin{proof}
Consider the linear map
\[
\tilde\mu_\cF: V\la-2\ra \to \uEnd_\LM(\cF)
\]
given by $\tilde\mu_\cF(x) = x \lfrown \delta$.  We first observe that $d(\tilde\mu_\cF(x)) = 0$:
\[
d(x \lfrown \delta) = \kappa(x \lfrown \delta) + \delta \circ (x \lfrown \delta) - (x \lfrown \delta) \circ \delta
= -x \lfrown \kappa(\delta) - x \lfrown \delta^2 = 0
\]
by Lemma~\ref{lem:kappa-frown}.
Therefore, $\tilde\mu_\cF$ does indeed induce a map $\mu_\cF: V\la -2\ra \to \gEnd_\LM(\cF)$.  To prove that this map extends to a $\bk$-algebra morphism $R^\vee \to \gEnd_\LM(\cF)$, we must show that $\mu_\cF(x) \circ \mu_\cF(y) = \mu_\cF(y) \circ \mu_\cF(x)$.  We will instead prove more generally that for any $f \in \gHom_\LM(\cF,\cG)$, we have
\begin{equation}\label{eqn:mon-commute}
\mu_\cG(x) \circ f = f \circ \mu_\cF(x).
\end{equation}
Choose a representative $\tilde f \in \uHom_\LM(\cF,\cG)$ of $f$.  Using Lemma~\ref{lem:kappa-frown}, it is easy to check that
\[
x \lfrown d(\tilde f) = d(-x \lfrown \tilde f) + (x \lfrown \delta_\cG) \circ \tilde f - \tilde f \circ (x \lfrown \delta_\cF).
\]
But the left-hand side is zero (since $d(\tilde f) = 0$), so now this equation says that $(x \lfrown \delta_\cG) \circ \tilde f$ and $\tilde f \circ (x \lfrown \delta_\cF)$ become equal in $\gHom_\LM(\cF,\cG)$.  In other words,~\eqref{eqn:mon-commute} holds.  This shows that we have the desired map $\mu_\cF: R^\vee \to \gEnd_\LM(\cF)$. It also shows that $\hatstar$ is well-defined.  Lastly,~\eqref{eqn:leftmon-interchange} is an easy consequence of~\eqref{eqn:mon-commute}.
\end{proof}

\begin{lem}\label{lem:eqvt-mon-action}
If $\cF \in \BE(\fh,W)$, then the map $\mu_\cF: R^\vee \to \gEnd_\LM(\ForBELM(\cF))$ factors through $\epsilon_{R^\vee}: R^\vee \to \bk$.
\end{lem}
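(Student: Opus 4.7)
The plan is essentially a direct computation, exploiting the fact that $\ForBELM$ produces differentials living in the degree-zero part of the exterior factor. Specifically, if $(\cF, \delta) \in \BE(\fh,W)$, then by the definition of $\ForBELM$ the differential on $\ForBELM(\cF)$ is
\[
\bar\delta = 1 \otimes \delta \in \Lambda^0_0 \otimes \uEnd_\BE(\cF)^1_0 \subset \uEnd_\LM(\cF)^1_0,
\]
where $1 \in \Lambda$ denotes the unit (the image of $\eta_\Lambda : \bk \to \Lambda$).

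Now I would simply compute $\mu_\cF(x)$ on a generator $x \in V = (R^\vee)^0_{-2}$. By the definition of $\mu_\cF$ in Theorem~\ref{thm:frown} and the definition of $\lfrown$ on $\uHom_\LM$ from~\S\ref{sec:derivations},
\[
\mu_\cF(x) = x \lfrown \bar\delta = (x \lfrown 1) \otimes \delta = 0,
\]
since contracting $x \in V$ against the unit $1 \in \Lambda$ (a length-$0$ wedge) gives the empty sum.

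To conclude, I would invoke that $\mu_\cF$ is a $\bk$-algebra homomorphism and that $R^\vee = \Sym(V\la -2\ra)$ is generated as a $\bk$-algebra by $V$. Hence $\mu_\cF$ annihilates the augmentation ideal $R^\vee_{>} = \ker(\epsilon_{R^\vee})$ and therefore factors through $\epsilon_{R^\vee} : R^\vee \to \bk$, as desired. There is no real obstacle here; the content of the lemma lies entirely in the observation that $\ForBELM$ produces differentials of pure internal exterior-degree zero, on which the contraction operation vanishes identically.
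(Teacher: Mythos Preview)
Your proof is correct and follows essentially the same approach as the paper's: both observe that the differential of $\ForBELM(\cF)$ lies in $\Lambda^0_0 \otimes \uEnd_\BE(\cF)^1_0$, whence $x \lfrown \bar\delta = 0$ for all $x \in V$. You spell out the concluding step (that vanishing on generators forces factorization through $\epsilon_{R^\vee}$) slightly more explicitly than the paper, but the argument is the same.
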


\begin{proof}
The differential $\delta$ for $\ForBELM(\cF)$ lies in $\Lambda^{0}_{0} \otimes \uEnd_\BE(\cF)^{1}_{0} \subset \uEnd_\LM(\cF)^{1}_{0}$, so it is clear from the definition of $\lfrown$ that $x \lfrown \delta = 0$ for all $x \in V$.
\end{proof}

\begin{ex}
\label{ex:ts-mon-action}
Consider the object $\cT_s$ from Example~\ref{ex:ts-leftmon}.
For $x \in V$, the map $\mu_{\cT_s}(x) \in
\gEnd_\LM(\cT_s)^0_{-2}$ or $\mu_{\cT_s}(x): \cT_s \to \cT_s\la 2\ra$ is given by
\[
\begin{tikzcd}[column sep=large]
B_\varnothing(1) \\
B_s \ar[u, "\usebox\upperdot" swap] \\
B_\varnothing(-1) \ar[u, "\usebox\lowerdot" swap] \ar[uu, bend left=60, "(-2)" description, "-\alpha_s \otimes \id" near end]
\ar[r, "-\alpha_s(x)"]
&B_\varnothing(-1) \\
&B_s(-2) \ar[u, "\usebox\upperdot" swap] \\
&B_\varnothing(-3). \ar[u, "\usebox\lowerdot" swap] \ar[uu, bend left=60, "(-2)" description, "-\alpha_s \otimes \id" near end]
\end{tikzcd}
\]
In particular, the monodromy action of $R^\vee$ on $\gEnd_\LM(\cT_s)$ is nontrivial.  In light of Lemma~\ref{lem:eqvt-mon-action}, this shows that the object $\cT_s \in \LM(\fh,W)$ is not in the essential image of $\ForBELM$, and then that the object $\cT'_s \in \RE(\fh,W)$ is not in the essential image of $\ForBERE$; see~\S\ref{sec:parity-sequences} for more details.

Using the description of $\gEnd_\RE(\cT_s')$ in Example~\ref{ex:ts-con} and the ``Demazure surjectivity'' assumption, we see that
$\mu_{\cT_s} : R^\vee \to \gEnd_\LM(\cT_s)$ is surjective, and induces an
isomorphism
\[
R^\vee / ((R^\vee)^s_+ \cdot R^\vee) \simto \gEnd_\LM(\cT_s),
\]
where $(R^\vee)^s_+ = \{x \in \ker(\epsilon_{R^\vee}) \mid s \cdot x = x\}$.
\end{ex}

%%%%%%%%%%%%%%%%%%%%%%%%%%%%%%%%%%%%%%%%%%%%%%%%%%%%%%%%%%%%%%%%%%%%%%%%%%%
\section{Another triangulated structure on \texorpdfstring{$\LM(\fh,W)$}{LM(h,W)}}
%%%%%%%%%%%%%%%%%%%%%%%%%%%%%%%%%%%%%%%%%%%%%%%%%%%%%%%%%%%%%%%%%%%%%%%%%%%

For later use, in this section we introduce a second triangulated structure on the category $\LM(\fh,W)$. For clarity, here we
write $\Sigma_\ell$ for the functor on $\LM(\fh,W)$ that was denoted by $[1]$ in~\S\ref{sec:LM-triangulated}, reserving $[1]$ for the shift of a $\DiagBSp$-sequence. Using this notation, in~\S\ref{sec:LM-triangulated} we defined a triangulated structure on the category with shift $(\LM(\fh,W), \Sigma_\ell)$. Call this the \emph{left triangulated structure} and use a subscript $\ell$ to denote the associated constructions; for example, for any chain map $f$, we have the left cone $\cone_\ell(f)$ and left standard triangle involving morphisms $\alpha_\ell(f)$, $\beta_\ell(f)$.

Let $\cF, \cG$ be $\DiagBSp$-sequences. Define
\[
 \bu_{\cF,\cG} : \uHom_\LM(\cF, \cG) \to \uHom_\LM(\cF, \cG)
\]
by
\[
\bu_{\cF,\cG}(f) = (-1)^pf_\even + (-1)^{p+1}f_\odd
 \qquad\text{for}\qquad f \in \Hom_{\DiagBSp(\fh,W)}(\cF^p, \cG^q(j)),
\]
and extending $\Lambda$-linearly. (See~\eqref{eqn:morphism-even-odd} for the decomposition $f = f_\even + f_\odd$. The subscript ``$\cF,\cG$'' is necessary in this notation since for instance under the canonical identification $\uHom_\LM(\cF,\cG) = \uHom_\LM(\cF[1],\cG[1])$, the maps $\bu_{\cF,\cG}$ and $\bu_{\cF[1],\cG[1]}$ do \emph{not} coincide.) The map $\bu_{\cF,\cG}$ is clearly an involution. Let $f \in \uHom_\LM(\cF, \cG)$ and $g \in \uHom_\LM(\cG, \cH)$. The following formulas are clear:
\begin{equation} \label{eq:kappa-bu}
 \kappa(\bu_{\cF,\cG}(f)) = \bu_{\cF,\cG}(\kappa(f))
\end{equation}
\begin{equation} \label{eq:composition-bu}
 g \circ \bu_{\cF,\cG}(f) = \bu_{\cF,\cH}(g \circ f).
\end{equation}
One can also check directly that
\begin{equation} \label{eq:bu-composition}
 \bu_{\cG,\cH}(g) \circ f = (-1)^{|f|} \bu_{\cF,\cH}(g \circ \bs(f))
\end{equation}
if $f$ is homogeneous.

Let $\cF, \cG, \cF', \cG'$ be $\DiagBSp$-sequences, let $f \in \Hom_{\DiagBSp(\fh,W)}(\cF^p, \cG^q(j))$, and let $h \in \uHom_\BE(\cF', \cG')$. The following formulas may be checked directly:
\begin{gather*}
 \bt^n(f \ustar h) = (-1)^{np} f_\even \ustar \bt^n(h) + (-1)^{n(p+1)} f_\odd \ustar \bt^n(h);
\\
 \bu_{\cF \ustar \cF', \cG \ustar \cG'}(f \ustar h) = (-1)^p f_\even \ustar \bu_{\cF',\cG'}(h) + (-1)^{p+1} f_\odd \ustar \bu_{\cF',\cG'}(h).
\end{gather*}
It follows that for any $f \in \uHom_\LM(\cF, \cG)$ we have
\begin{equation} \label{eq:btbu-conv}
 \bt^n(\bu_{\cF \ustar \cF', \cG \ustar \cG'}(f \ustar h)) = f \ustar \bt^n(\bu_{\cF',\cG'}(h)) \qquad\text{for } n \text{ odd}.
\end{equation}

We now define a new shift autoequivalence $\Sigma_r$ on $\LM(\fh,W)$. For an object $(\cF, \delta_\cF) \in \LM(\fh,W)$, define
\[
 \Sigma_r(\cF, \delta_\cF) = (\cF[1], \delta_\cF).
\]
For a morphism $f : \cF \to \cG$, define
\[
 \Sigma_r(f) : \Sigma_r\cF \to \Sigma_r\cG \qquad\text{by}\qquad \Sigma_r(f) = f.
\]

Let us now define the \emph{right triangulated structure}, a triangulated structure on the category with shift $(\LM(\fh,W), \Sigma_r)$. Given a chain map $f : \cF \to \cG$, the \emph{right cone} $\cone_r(f)$ is the left-monodromic complex with underlying $\DiagBSp$-sequence $\cF[1] \oplus \cG$ and differential
\[
 \delta_{\cone_r(f)} =
 \begin{bmatrix}
  \delta_\cF & 0 \\
  \bt^{-1}(\bu_{\cF,\cG}(f)) & \delta_\cG
 \end{bmatrix}
 \in
 \begin{bmatrix}
  \uEnd_\LM(\cF[1]) & \uHom_\LM(\cG, \cF[1]) \\
  \uHom_\LM(\cF[1], \cG) & \uEnd_\LM(\cG)
 \end{bmatrix}.
\]
Let us check that this is indeed a differential. We have
\begin{multline*}
 \delta_{\cone_r(f)} \circ \delta_{\cone_r(f)} + \kappa(\delta_{\cone_r(f)}) =\\
 \begin{bmatrix}
  \delta_\cF \circ \delta_\cF + \kappa(\delta_\cF) & \\
  \bt^{-1}(\bu_{\cF,\cG}(f)) \circ \delta_\cF + \delta_\cG \circ \bt^{-1}(\bu_{\cF,\cG}(f)) + \kappa(\bt^{-1}(\bu_{\cF,\cG}(f))) & \delta_\cG \circ \delta_\cG + \kappa(\delta_\cG)
 \end{bmatrix}.
\end{multline*}
The diagonal entries clearly vanish. By~\eqref{eqn:composition-shift-leftmon} and~\eqref{eqn:kappa-bt}, the lower left entry equals
\[
 \bt^{-1} \bigl( \kappa(\bu_{\cF,\cG}(f)) + \delta_\cG \circ \bu_{\cF,\cG}(f) + \bu_{\cF,\cG}(f) \circ \bs(\delta_\cF) \bigr),
\]
and by~\eqref{eq:kappa-bu}, \eqref{eq:composition-bu}, and \eqref{eq:bu-composition}, this equals
\[
 \bt^{-1} \circ \bu_{\cF,\cG} \bigl( \kappa(f) + \delta_\cG \circ f - f \circ \delta_\cF \bigr) = \bt^{-1} \circ \bu \bigl(d(f) \bigr) = 0,
\]
as desired.

A \emph{right distinguished triangle} is a diagram in $\LM(\fh,W)$ isomorphic to a \emph{right standard triangle}, one of the form
\[
 \cF \xrightarrow{f} \cG \xrightarrow{\alpha_r(f)} \cone_r(f) \xrightarrow{\beta_r(f)} \Sigma_r\cF
\]
for some chain map $f$, where $\alpha_r(f)$ and $\beta_r(f)$ are the obvious chain maps (involving no sign).

\begin{prop}
\label{prop:left-vs-right-triangulated-structure}
 The definitions above give a triangulated structure on $(\LM(\fh,W), \Sigma_r)$. Moreover, there is a natural isomorphism $\eta : \Sigma_\ell \simto \Sigma_r$ such that $(\id_{\LM(\fh,W)}, \eta)$ is a triangulated equivalence
 \[
 (\LM(\fh,W), \Sigma_\ell) \simto (\LM(\fh,W), \Sigma_r).
 \]
\end{prop}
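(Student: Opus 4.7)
The plan has two parts: (i) construct the natural isomorphism $\eta$, and (ii) verify that the constructions on the right side yield a triangulated category and that $(\id,\eta)$ takes left distinguished triangles to right distinguished triangles.

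For (i), I will define $\eta_\cF \in \uEnd_\LM(\cF[1])^0_0$ to be the morphism acting on $\cF[1]^p_\even$ by the scalar $(-1)^{p+1}$ and on $\cF[1]^p_\odd$ by $(-1)^p$. This can be repackaged as $-\bu_{\cF[1],\cF[1]}(\id_{\cF[1]})$ or, under the tautological identification of underlying bigraded modules $\uEnd_\LM(\cF) = \uEnd_\LM(\cF[1])$, as $\bu_{\cF,\cF}(\id_\cF)$. Since $\eta_\cF$ has $\Lambda$-degree $0$, $\kappa(\eta_\cF) = 0$; and the identity
\[
\delta_\cF \circ \eta_\cF + \eta_\cF \circ \bs(\delta_\cF) = 0
\]
follows from one application each of \eqref{eq:composition-bu} and \eqref{eq:bu-composition} (the latter using $|\delta_\cF|=1$), showing that $\eta_\cF$ is a chain map $\Sigma_\ell\cF \to \Sigma_r\cF$. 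Invertibility ($\eta_\cF \circ \eta_\cF = \id_{\cF[1]}$) and naturality on morphisms in $\LM(\fh,W)$ (which are represented by chain maps of bidegree $(0,0)$) both follow from these same identities together with the involutivity of $\bu$.

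For (ii), the key step is to show that for any chain map $f : \cF \to \cG$, the matrix
\[
\Phi_f := \begin{bmatrix} \eta_\cF & 0 \\ 0 & \id_\cG \end{bmatrix} : \cone_\ell(f) \longrightarrow \cone_r(f)
\]
is a chain map (and hence an isomorphism, since $\eta_\cF$ and $\id_\cG$ are). The diagonal entries of the chain-map condition reduce to the chain-map condition for $\eta_\cF$ and to $\delta_\cG \circ \delta_\cG + \kappa(\delta_\cG) = 0$; the $(1,2)$ entry vanishes trivially; and the $(2,1)$ entry reduces, via \eqref{eqn:composition-shift-leftmon} and $\bs(\eta_\cF)=\eta_\cF$, to the identity $\bu_{\cF,\cG}(f) \circ \eta_\cF = f$, which follows in turn from \eqref{eq:composition-bu} and the involutivity of $\bu$. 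By construction $\Phi_f \circ \alpha_\ell(f) = \alpha_r(f)$ and $\eta_\cF \circ \beta_\ell(f) = \beta_r(f) \circ \Phi_f$, so $\Phi_f$ gives an isomorphism in $\LM(\fh,W)$ between the image of the left standard triangle of $f$ under $(\id,\eta)$ and the right standard triangle of $f$.

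Since left (respectively right) distinguished triangles are by definition those isomorphic to left (respectively right) standard triangles, this shows that the class of right distinguished triangles is exactly the image under $(\id,\eta)$ of the class of left distinguished triangles. The triangulated axioms for the right structure then transfer from those established in Proposition~\ref{prop:UGB-left-triangulated-structure}, and $(\id,\eta)$ is tautologically a triangulated equivalence. The main delicacy throughout is sign bookkeeping: $\bu_{\cF,\cG}$ and $\bu_{\cF[1],\cG[1]}$ are distinct operators on the common underlying bigraded module (differing by an overall sign), so the identities \eqref{eq:kappa-bu}--\eqref{eq:btbu-conv} and \eqref{eqn:composition-shift-leftmon} must be deployed with care, much as in the proof of Proposition~\ref{prop:UGB-left-triangulated-structure}.
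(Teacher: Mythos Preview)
Your proposal is correct and follows essentially the same approach as the paper: the paper also takes $\eta_\cF = \bu_{\cF,\cF}(\id_\cF)$, verifies it is a chain map and natural via \eqref{eq:composition-bu} and \eqref{eq:bu-composition}, and then defines exactly the same block-diagonal matrix (called $\gamma_f$ there) to identify $\cone_\ell(f)$ with $\cone_r(f)$, concluding by transfer of structure from Proposition~\ref{prop:UGB-left-triangulated-structure}. Your added explicit description of $\eta_\cF$ in terms of the signs $(-1)^{p+1}$, $(-1)^p$ on the even/odd pieces of $\cF[1]^p$ is a helpful unwinding of the definition but not a different idea.
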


\begin{proof}
 For $\cF \in \LM(\fh,W)$, define $\eta_\cF \in \uHom_\LM(\Sigma_\ell\cF, \Sigma_r\cF) \cong \uHom_\LM(\cF, \cF)$ by
 $\eta_\cF = \bu_{\cF,\cF}(\id_\cF)$. By definition,
 \begin{multline*}
  d(\eta_\cF) = \kappa(\eta_\cF) + \delta_{\Sigma_r\cF} \circ \eta_\cF - \eta_\cF \circ \delta_{\Sigma_\ell\cF}
  \\
  = \kappa(\bu_{\cF,\cF}(\id_\cF)) + \delta_\cF \circ \bu_{\cF,\cF}(\id_\cF) + \bu_{\cF,\cF}(\id_\cF) \circ \bs(\delta_\cF).
 \end{multline*}
 By \eqref{eq:kappa-bu}, \eqref{eq:composition-bu}, and \eqref{eq:bu-composition}, this equals
 \[
  \bu_{\cF,\cF} \bigl( \kappa(\id_\cF) + \delta_\cF \circ \id_\cF - \id_\cF \circ \delta_\cF \bigr) = 0.
 \]
 Thus $\eta_\cF$ is a chain map and defines a morphism $\eta_\cF : \Sigma_\ell\cF \to \Sigma_r\cF$ in $\LM(\fh,W)$, which is clearly an isomorphism. 
 
 Now we check that $\eta$ is an isomorphism of functors
 $\eta : \Sigma_\ell \simto \Sigma_r$. In fact, let $\cF, \cG \in \LM(\fh,W)$ and let $f : \cF \to \cG$ be a morphism. Then by definition,
 \[
  \eta_\cG \circ \Sigma_\ell(f) - \Sigma_r(f) \circ \eta_\cF = \bu_{\cG,\cG}(\id_\cG) \circ \bs(f) - f \circ \bu_{\cF,\cF}(\id_\cF),
 \]
 and by \eqref{eq:composition-bu} and \eqref{eq:bu-composition}, this equals
 \[
  \bu_{\cF,\cG}(\id_\cG \circ f - f \circ \id_\cF) = 0,
 \]
 as desired.
 
 Let $\cF, \cG \in \LM(\fh,W)$. For any chain map $f : \cF \to \cG$, define
 \[
 \gamma_f \in \uHom_\LM(\cone_\ell(f), \cone_r(f))
 \]
 by
 \[
  \gamma_f =
  \begin{bmatrix}
   \bu_{\cF,\cF}(\id_\cF) & 0 \\
   0 & \id_\cG
  \end{bmatrix}
  \in
  \begin{bmatrix}
   \uEnd_\LM(\cF[1]) & \uHom_\LM(\cG, \cF[1]) \\
   \uHom_\LM(\cF[1], \cG) & \uEnd_\LM(\cG)
  \end{bmatrix}.
 \]
By definition,
 \begin{multline*}
  d(\gamma_f) = \kappa(\gamma_f) + \delta_{\cone_r(f)} \circ \gamma_f - \gamma_f \circ \delta_{\cone_\ell(f)} \\
  = \begin{bmatrix}
     \kappa(\bu_{\cF,\cF}(\id_\cF)) & \\
     & \kappa(\id_\cG)
    \end{bmatrix}
    +
    \begin{bmatrix}
     \delta_\cF & \\
     \bt^{-1}(\bu_{\cF,\cG}(f)) & \delta_\cG
    \end{bmatrix}
    \begin{bmatrix}
     \bu_{\cF,\cF}(\id_\cF) & \\
     & \id_\cG
    \end{bmatrix}
    \\
    -
    \begin{bmatrix}
     \bu_{\cF,\cF}(\id_\cF) & \\
     & \id_\cG
    \end{bmatrix}
    \begin{bmatrix}
     -\bs(\delta_\cF) & \\
     \bt^{-1}(f) & \delta_\cG
    \end{bmatrix} \\
  = \begin{bmatrix}
     \delta_\cF \circ \bu_{\cF,\cF}(\id_\cF) + \bu_{\cF,\cF}(\id_\cF) \circ \bs(\delta_\cF) & \\
     \bt^{-1}(\bu_{\cF,\cG}(f)) \circ \bu_{\cF,\cF}(\id_\cF) - \id_\cG \circ \bt^{-1}(f) & \delta_\cG \circ \id_\cG - \id_\cG \circ \delta_\cG
    \end{bmatrix}.
 \end{multline*}
 Using \eqref{eqn:composition-shift-leftmon}, \eqref{eq:composition-bu}, and \eqref{eq:bu-composition}, one easily checks that each entry vanishes. Thus $\gamma_f$ is a chain map and defines a morphism $\gamma_f : \cone_\ell(f) \to \cone_r(f)$ in $\LM(\fh,W)$, which is clearly an isomorphism.
 
 This shows that we have the following isomorphism of diagrams in $\LM(\fh,W)$:
 \[
  \begin{tikzcd}[column sep=large]
   \cF \ar[r, "f"] \ar[d, equal] & \cG \ar[r, "\alpha_\ell(f)"] \ar[d, equal]
    & \cone_\ell(f) \ar[r, "\beta_\ell(f)"] \ar[d, "\gamma_f"] & \Sigma_\ell\cF \ar[d, "\eta_\cF"] \\
   \cF \ar[r, "f"] & \cG \ar[r, "\alpha_r(f)"]
    & \cone_r(f) \ar[r, "\beta_r(f)"] & \Sigma_r\cF
  \end{tikzcd}
 \]
 It follows that the isomorphism
 \[
 (\id_{\LM(\fh,W)}, \eta) : (\LM(\fh,W), \Sigma_\ell) \simto (\LM(\fh,W), \Sigma_r)
 \]
of categories with shift identifies the collection of left distinguished triangles with the collection of right distinguished triangles. The result now follows since, by Proposition~\ref{prop:UGB-left-triangulated-structure}, the former defines a triangulated structure on $(\LM(\fh,W), \Sigma_\ell)$.
\end{proof}

%%%%%%%%%%%%%%%%%%%%%%%%%%%%%%%%%%%%%%%%%%%%%%%%%%%%%%%%%%%%%%%%%%%%%%%%%%%
\section{Karoubian envelopes}
\label{sec:karoubian1}
%%%%%%%%%%%%%%%%%%%%%%%%%%%%%%%%%%%%%%%%%%%%%%%%%%%%%%%%%%%%%%%%%%%%%%%%%%%

In this section, we assume that $\bk$ is a field or a complete local
ring, so that the indecomposable objects $B_w$ in the Karoubian
envelope $\Diag(\fh,W)$ of $\DiagBSp(\fh,W)$ are defined (see~\S\ref{sec:additive-hull-Diag}). All the constructions carried
out earlier in this chapter can be repeated with $\DiagBSp(\fh,W)$
replaced by $\Diag(\fh,W)$.  However, we will see below that the
resulting triangulated categories are equivalent to those obtained
from $\DiagBSp(\fh,W)$. 

In particular, we will consider $\Diag(\fh,W)$-sequences, which we will simply call $\Diag$-sequences.
Given two $\Diag$-sequences $\cF$ and $\cG$, we define the shift-of-grading functors $[n]$, $\la n\ra$, and $(n)$, as well as the bigraded $\bk$-modules
\[
\uHom_\BE(\cF,\cG),
\qquad
\uHom_\RE(\cF,\cG),
\qquad
\uHom_\LM(\cF,\cG)
\]
in the same way as before.  We then define categories\index{Kar@$({-})_\Kar$}
\[
\BE(\fh,W)_\Kar,
\qquad
\RE(\fh,W)_\Kar,
\qquad
\LM(\fh,W)_\Kar.
\]
In each case, objects of the category are pairs $(\cF,\delta)$ where $\cF$ is a $\Diag$-sequence and $\delta$ is a ``differential'' in an appropriate sense.

\begin{lem}
\label{lem:idem-equiv}
The obvious functors
\begin{align*}
\BE(\fh,W) &\to \BE(\fh,W)_\Kar, \\
\RE(\fh,W) &\to \RE(\fh,W)_\Kar, \\
\LM(\fh,W) &\to \LM(\fh,W)_\Kar
\end{align*}
are all equivalences of categories.
\end{lem}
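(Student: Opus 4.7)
The plan is to identify each of the three categories in the lemma with a bounded homotopy category of an additive category, thereby reducing the statement to a general result about Karoubian envelopes. By construction, $\BE(\fh,W) = \Kb(\DiagBSp(\fh,W))$ and $\RE(\fh,W) \cong \Kb(\oDiagBSp(\fh,W))$; their Karoubian counterparts satisfy $\BE(\fh,W)_\Kar = \Kb(\Diag(\fh,W))$ and $\RE(\fh,W)_\Kar \cong \Kb(\oDiag(\fh,W))$. The $\LM$-case reduces to the $\RE$-case: the proof of Theorem~\ref{thm:leftmon-con} uses only the monoidal structure on $\DiagBSp(\fh,W)$ (in particular convolution with $\sA$) and the double leaves basis, both of which are inherited by $\Diag(\fh,W)$, so the same argument yields an equivalence $\ForLMRE : \LM(\fh,W)_\Kar \simto \RE(\fh,W)_\Kar$ that is intertwined with the $\DiagBSp$-version via the obvious functors. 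Hence it suffices to show that the inclusions
\[
\Kb(\DiagBSp(\fh,W)) \to \Kb(\Diag(\fh,W)), \qquad \Kb(\oDiagBSp(\fh,W)) \to \Kb(\oDiag(\fh,W))
\]
are equivalences of triangulated categories.

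Full faithfulness is immediate from the definitions: the Hom complexes $\uHom_\BE(\cF,\cG)$ and $\uHom_\RE(\cF,\cG)$ are built from the spaces $\Hom_{\DiagBSp(\fh,W)}(\cF^p,\cG^q(j))$ (resp.\ from their analogues in $\oDiagBSp(\fh,W)$), and these coincide with the corresponding Hom spaces in $\Diag(\fh,W)$ (resp.\ $\oDiag(\fh,W)$), since the inclusion of an additive category into its Karoubian envelope is by construction fully faithful.

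For essential surjectivity, I would use the following principle: since $\Diag(\fh,W)$ is the Karoubian envelope of $\DiagBSp(\fh,W)$, the induced functor on bounded homotopy categories exhibits $\Kb(\Diag(\fh,W))$ as the triangulated idempotent completion of $\Kb(\DiagBSp(\fh,W))$ (a theorem of Balmer--Schlichting). Thus the inclusion is an equivalence precisely when its source is already Karoubian. To establish this, I would invoke the standing hypothesis that $\bk$ is a field or complete local ring: then $\Diag(\fh,W)$ is a Krull--Schmidt category (endomorphism algebras of indecomposables are local, by Fitting's lemma applied to the finite-length modules arising from the double leaves basis), and a standard argument shows that the bounded homotopy category of a Krull--Schmidt category is again Krull--Schmidt, hence Karoubian. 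Applied to our setting, this allows one to split idempotents of $\Kb(\DiagBSp(\fh,W))$ by working with minimal complex representatives in $\Kb(\Diag(\fh,W))$ and then lifting the resulting summands back to $\DiagBSp$-sequences.

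The main obstacle is the last lifting step. Concretely, given an idempotent $e \in \End_{\Kb(\DiagBSp)}(\cF)$, the splitting $\cF \cong e\cF \oplus (1-e)\cF$ a priori takes place only in $\Kb(\Diag(\fh,W))$, and one must verify that each summand admits a representative whose terms lie in $\DiagBSp(\fh,W)$. This is handled term by term: writing each $\cF^i = \bigoplus_j B_{w_{i,j}}(n_{i,j})$ in $\Diag(\fh,W)$, one chooses idempotents $e^i$ on $\DiagBSp$-objects $\widetilde\cF{}^i$ realizing the partition induced by $e$, lifts the differential of $\cF$ accordingly, and observes that the resulting complement can be absorbed (up to a direct sum with a zero-differential $\DiagBSp$-sequence, which is harmless) into a $\DiagBSp$-representative of the summand. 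The $\oDiagBSp/\oDiag$ case is treated by the identical argument.
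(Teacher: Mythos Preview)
Your framework is sound: full faithfulness is indeed immediate, and reducing the $\LM$ case to the $\RE$ case via a Karoubian version of Theorem~\ref{thm:leftmon-con} is legitimate (and arguably tidier than the paper's route, which instead repeats the generation argument of Lemma~\ref{lem:dmixUGB-generate} for $\LM(\fh,W)_\Kar$ before proceeding).

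However, your essential surjectivity argument has a genuine gap. The Balmer--Schlichting detour leads you to the right question, but your ``lifting step'' does not resolve it. Showing that $\Kb(\Diag(\fh,W))$ is Krull--Schmidt tells you nothing about whether $\Kb(\DiagBSp(\fh,W))$ is idempotent complete; you then try to lift summands back, but when you write $\cF^i = \bigoplus_j B_{w_{i,j}}(n_{i,j})$, the $B_{w_{i,j}}$ live in $\Diag$, not $\DiagBSp$, and you offer no mechanism for replacing them by $\DiagBSp$-objects while keeping a compatible differential. The sentence ``observes that the resulting complement can be absorbed'' is not a proof.

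The paper's argument is more direct and supplies exactly the missing idea. Since the functor is triangulated and fully faithful, it suffices to show that each indecomposable $B_w \in \Diag(\fh,W)$, viewed as a one-term complex, lies in the essential image. This is done by induction on the Bruhat order: for a reduced expression $\uw$ with $\pi(\uw) = w$, one has $B_\uw \cong B_w \oplus \cF$ where $\cF$ is a direct sum of various $B_v(n)$ with $v < w$. Hence $B_w$ is isomorphic to the cone of a morphism $\cF \to B_\uw$; since $B_\uw$ is in the image by construction and $\cF$ is by induction, so is $B_w$. This cone trick is the key step your proposal is missing.
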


\begin{proof}
We first consider the functor $\BE(\fh,W) \to \BE(\fh,W)_\Kar$.
Recall that $\DiagBSp(\fh,W)$ is a full subcategory of $\Diag(\fh,W)$.  As a consequence, it is easy to see that for any two objects 
$\cF$, $\cG$ in $\BE(\fh,W)$, our functor induces an isomorphism
\[
\Hom_{\BE(\fh,W)}(\cF,\cG) \simto \Hom_{\BE(\fh,W)_\Kar}(\cF,\cG).
\]
Hence $\BE(\fh,W) \to \BE(\fh,W)_\Kar$ is fully faithful.

To show that it is essentially surjective, it suffices to show that each $B_w$ lies in its essential image.  We proceed by induction on $w$ with respect to the Bruhat order.  If $w = 1$, then $B_1 = B_\varnothing$ certainly lies in the image.  Otherwise, choose a reduced expression $\uw$ for $w$.  We have $B_\uw \cong B_w \oplus \cF$, where $\cF$ is a direct sum of various objects of the form $B_v(n)$ with $v < w$. In particular, $B_w$ is isomorphic to the cone of a certain morphism $\cF \to B_\uw$. Now, $B_\uw$ is in the image of our functor, and so is $\cF$, by induction. Using the fact that our functor $\BE(\fh,W) \to \BE(\fh,W)_\Kar$ is fully faithful and triangulated, we deduce that $B_w$ indeed belongs to its essential image.

The same argument shows that $\RE(\fh,W) \to \RE(\fh,W)_\Kar$ is an equivalence.  For the left-monodromic case, one must first imitate the proof of Lemma~\ref{lem:dmixUGB-generate} to show that $\LM(\fh,W)_\Kar$ is generated by the objects of $\Diag(\fh,W)$.  Then one can repeat the argument above.
\end{proof}

%==========================================================================
\chapter{Free-monodromic complexes}
\label{chap:fm-complexes}
%==========================================================================

In this chapter we introduce our main object of study: the category $\FM(\fh,W)$ of ``free-monodromic complexes'' associated to a realization of a Coxeter group.  In this category, as in $\LM(\fh,W)$, there is a left monodromy action of $R^\vee$; but in addition, we find a new \emph{right monodromy action}.  The last section contains a number of nontrivial examples of objects and morphisms in this category, including examples of the monodromy actions.

%%%%%%%%%%%%%%%%%%%%%%%%%%%%%%%%%%%%%%%%%%%%%%%%%%%%%%%%%%%%%%%%%%%%%%%%%%%
\section{Definitions}
\label{sec:fm-mixed-complexes}
%%%%%%%%%%%%%%%%%%%%%%%%%%%%%%%%%%%%%%%%%%%%%%%%%%%%%%%%%%%%%%%%%%%%%%%%%%%

Let $\cF$ and $\cG$ be $\DiagBSp$-sequences. We set\index{HomuFM@$\uHom_\FM$}
\[
\uHom_\FM(\cF,\cG) = \Lambda \otimes \uHom_\BE(\cF,\cG) \otimes R^\vee.
\]
(Here, ``$\FM$'' stands for ``free-monodromic.'')
If $\cH$ is another $\DiagBSp$-sequence,
we equip these bigraded $\bk$-modules with a composition map
\begin{equation}
\label{eqn:composition-mon}
\circ: \uHom_\FM(\cG,\cH) \otimes \uHom_\FM(\cF,\cG) \to \uHom_\FM(\cF,\cH)
\end{equation}
given by
\[
(r \otimes f \otimes x) \circ (s \otimes g \otimes y) = (-1)^{|x||s| + |f||s| + |x||g|} (r \wedge s) \otimes (f \circ g) \otimes (xy).
\]
where $r,s \in \Lambda$, $f \in \uHom_\BE(\cG,\cH)$, $g \in \uHom_\BE(\cF,\cG)$, and $x, y \in R^\vee$.  (In fact, in this formula, $|x|$ is even, so the sign is equal to $(-1)^{|f||s|}$.) We also set $\uEnd_\FM(\cF):=\uHom_\FM(\cF, \cF)$.

In a minor abuse of notation, we denote by
\[
\kappa: \uHom_\FM(\cF,\cG) \to \uHom_\FM(\cF,\cG)[1]
\]
the map $\kappa \otimes \id_{R^\vee}: \uHom_\LM(\cF,\cG) \otimes R^\vee \to \uHom_\LM(\cF,\cG)[1] \otimes R^\vee$. It is clear that~\eqref{eqn:kappa-square} and~\eqref{eqn:kappa-derivation} are again satisfied in this setting.

Choose a basis $e_1, \sdots, e_r$ for $V^*$, and let $\check e_1, \sdots, \check e_r$ be the dual basis for $V$.  Consider the following elements of $\uEnd_\FM(\cF)$:\index{theta1@{$\theta$}}\index{theta2@{$\Theta$}}
\begin{align*}
\Theta = \Theta_\cF &= \sum_{i = 1}^r 1 \otimes (\id \ustar e_i) \otimes \check e_i \in \uEnd_\FM(\cF)^{2}_{0}, \\
\theta = \theta_\cF &= \sum_{i = 1}^r e_i \otimes \id \otimes \check e_i \in \uEnd_\FM(\cF)^{1}_{0}.
\end{align*}
We will also consider
\[
\kappa(\theta) = \sum_{i = 1}^r 1 \otimes (e_i \ustar \id) \otimes \check e_i \in \uEnd_\FM(\cF)^{2}_{0}.
\]
Of course, these elements are independent of the choice of basis. All three are ``graded-central'' in the following sense: for any $f \in \uHom_\FM(\cF,\cG)$, we have
\begin{equation}\label{eqn:Theta-center}
f \Theta_\cF = \Theta_\cG f, \qquad
f \theta_\cF = (-1)^{|f|}\theta_\cG f, \qquad
f \kappa(\theta_\cF) = \kappa(\theta_\cG) f.
\end{equation}

\begin{defn}
The \emph{free-monodromic category} of $(\fh,W)$, denoted by\index{categories@{categories of $\DiagBSp$-sequences}!BFMhW@$\FM(\fh,W)$}\index{free-monodromic}
\[
\FM(\fh,W),
\]
is the category defined as follows.
\begin{itemize}
\item
The objects are pairs $(\cF,\delta)$, where $\cF$ is a $\DiagBSp$-sequence and $\delta$ is an element in $\uEnd_\FM(\cF)^{1}_{0}$ such that $\delta \circ \delta + \kappa(\delta) = \Theta$.
\item
For two such objects $(\cF,\delta_\cF)$ and $(\cG,\delta_\cG)$, we make $\uHom_\FM(\cF,\cG)$ into a dgg $\bk$-module with the differential
\[
d_{\uHom_\FM(\cF,\cG)}(f) = \kappa(f) + \delta_\cG \circ f + (-1)^{|f|+1} f \circ \delta_\cF.
\]
Then morphisms in $\FM(\fh,W)$ are given by
\[
\Hom_{\FM(\fh,W)}(\cF,\cG) = \coH^{0}_{0}(\uHom_\FM(\cF,\cG)).
\]
\item
For three objects $(\cF,\delta_\cF)$, $(\cG,\delta_\cG)$ and $(\cH,
\delta_\cH)$, one can check that the composition
map~\eqref{eqn:composition-mon} is a morphism of dgg $\bk$-modules; hence it induces a morphism
\[
\Hom_{\FM(\fh,W)}(\cG,\cH) \otimes \Hom_{\FM(\fh,W)}(\cF,\cG) \to \Hom_{\FM(\fh,W)}(\cF,\cH),
\]
which defines the composition in $\FM(\fh,W)$.
\end{itemize}
\end{defn}

The objects of $\FM(\fh,W)$ will be sometimes called \emph{free-monodromic complexes}. To check that this definition makes sense,
let us verify that the differential given above actually makes $\uHom_\FM(\cF,\cG)$ into a dgg $\bk$-module.  The same calculation as in~\eqref{eqn:leftmon-diff} shows that
\[
d(d(f)) = \kappa(\delta_\cG)f + \delta_\cG^2 f  - f \kappa(\delta_\cF) - f \delta_\cF^2
= \Theta_\cG f - f \Theta_\cF,
\]
and this vanishes by~\eqref{eqn:Theta-center}.

As in $\LM(\fh,W)$, if $\cF$ and $\cG$ are free-monodromic complexes, we will use the term \emph{chain map}\index{chain map} to refer to an element $f \in \uHom_\FM(\cF,\cG)^0_0$ such that $d(f)=0$. Such an element defines a morphism in $\FM(\fh,W)$.

As in~\S\S\ref{sec:equ-mixed-complexes}--\ref{sec:lmon-mixed-complexes},
if $\bk'$ is another integral domain and $\varphi : \bk \to \bk'$ is a ring morphism, then we have an ``extension of scalars'' functor
\[
\bk' : \FM(\fh,W) \to \FM(\bk' \otimes_\bk \fh, W),
\]
induced on morphisms by the natural isomorphism of complexes
\[
\bk' \otimes_\bk \uHom_{\FM}(\cF, \cG) \simto \uHom_{\FM}(\bk'(\cF), \bk'(\cG))
\]
for $\cF$, $\cG$ in $\FM(\fh,W)$.

The operation $\langle n \rangle$ on $\DiagBSp$-sequences clearly defines an autoequivalence of the category $\FM(\fh,W)$, which we will also denote by $\langle n \rangle$.
As in the previous section, we also put
\[
\gHom_\FM(\cF,\cG) = \coH^{\bullet}_{\bullet}(\uHom_\FM(\cF,\cG)), \quad \gEnd_\FM(\cF) := \gHom_\FM(\cF, \cF).
\]

For any two $\DiagBSp$-sequences $\cF, \cG$, there is a natural map
\begin{equation}
\label{eqn:uhom-mon-lmon}
\uHom_\FM(\cF,\cG) \to \uHom_\LM(\cF,\cG)
\end{equation}
induced by the counit $\epsilon_{R^\vee} : R^\vee \to \bk$. These maps are compatible with composition. Moreover, the image of $\Theta \in \uEnd_\FM(\cF)$ under~\eqref{eqn:uhom-mon-lmon} is $0$.  Therefore, if $\delta \in \uEnd_\FM(\cF)$ satisfies $\delta \circ \delta + \kappa(\delta) = \Theta$, then its image $\bar\delta \in \uEnd_\LM(\cF)$ under~\eqref{eqn:uhom-mon-lmon} satisfies $\bar\delta \circ \bar \delta + \kappa(\bar\delta) = 0$.  We can therefore define a functor\index{forgetful functor!ForFMLM@$\ForFMLM$}
\[
\ForFMLM: \FM(\fh,W) \to \LM(\fh,W)
\]
that sends $(\cF,\delta)$ to $(\cF,\bar\delta)$.

\begin{rmk}
\label{rmk:FM-triangulated}
Proceeding as in~\S\ref{sec:LM-triangulated}, one can construct a triangulated structure on the category $\FM(\fh,W)$. We will not need this structure, so we will not consider this construction in detail.
\end{rmk}

%%%%%%%%%%%%%%%%%%%%%%%%%%%%%%%%%%%%%%%%%%%%%%%%%%%%%%%%%%%%%%%%%%%%%%%%%%%
\section{The left and right monodromy actions}
\label{sec:fm-monodromy}
%%%%%%%%%%%%%%%%%%%%%%%%%%%%%%%%%%%%%%%%%%%%%%%%%%%%%%%%%%%%%%%%%%%%%%%%%%%

The contraction maps $\lfrown: \Lambda^\vee \otimes \Lambda \to \Lambda$ and $\rfrown: R^\vee \otimes R^\natural \to R^\vee$ induce maps\index{frownl@$\lfrown$}\index{frownr@$\rfrown$}
\begin{align*}
\lfrown&: \Lambda^\vee \otimes \uHom_\FM(\cF,\cG) \to \uHom_\FM(\cF,\cG), \\
\rfrown&: \uHom_\FM(\cF,\cG) \otimes R^\natural \to \uHom_\FM(\cF,\cG).
\end{align*}

\begin{lem}
\label{lem:kappa-bifrown}
Let $f \in \uHom_\FM(\cF,\cG)$ and $g \in \uHom_\FM(\cG,\cH)$.
\begin{enumerate}
\item
\label{it:kappa-bifrown-left} 
For any $x \in V = (\Lambda^\vee)^{-1}_{-2}$, we have
\begin{gather*}
x \lfrown (g \circ f) = (x \lfrown g) \circ f + (-1)^{|g|} g \circ (x \lfrown f), \\
x \lfrown \kappa(f) = -\kappa(x \lfrown f), \\
x \lfrown \Theta = 0.
\end{gather*}
\item 
\label{it:kappa-bifrown-right}
For any $r \in V^* = (R^\natural)^{0}_{2}$, we have
\begin{gather*}
(g \circ f) \rfrown r= (g \rfrown r) \circ f + g \circ (f \rfrown r), \\
\kappa(f) \rfrown r = \kappa(f \rfrown r), \\
\Theta \rfrown r = 1 \otimes (\id \ustar r) \otimes 1.
\end{gather*}
\end{enumerate}
\end{lem}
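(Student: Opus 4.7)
The plan is to observe that all six identities are essentially formal consequences of the tensor structure $\uHom_\FM(\cF,\cG) = \Lambda \otimes \uHom_\BE(\cF,\cG) \otimes R^\vee$, together with the fact that $\lfrown$ acts only on the leftmost tensor factor ($\Lambda$), $\rfrown$ acts only on the rightmost factor ($R^\vee$), and $\kappa$ is (in the obvious abuse of notation) $\kappa \otimes \id_{R^\vee}$, where the left-hand $\kappa$ is the one from the $\LM$-setting. A crucial observation that keeps the sign bookkeeping manageable is that elements of $V \subset R^\vee$ and of $V^* \subset R^\natural$ both have cohomological degree $0$, so that $\lfrown$ and $\rfrown$ do not interact with the $R^\vee$ factor of the composition sign in $\uHom_\FM$.

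For the statements in part (i), I would write $f = s \otimes f' \otimes z$ and $g = r \otimes g' \otimes y$ with $s,r \in \Lambda$, $f',g' \in \uHom_\BE$, $y,z \in R^\vee$. Since $|y|=|z|=0$, the composition sign in $\uHom_\FM$ collapses to $(-1)^{|g'||s|}$, and the first identity then reduces to the derivation property~\eqref{eqn:frown-deriv} of $\lfrown$ on $\Lambda$, yielding exactly the formula after using $|x \lfrown g| = |g|-1$ to reshuffle exponents. The second identity reduces to the corresponding anticommutation $x \lfrown \kappa = -\kappa \circ (x \lfrown ({-}))$ in $\LM(\fh,W)$, which was one of the formulas in Lemma~\ref{lem:kappa-frown} and is the standard fact that interior contraction anticommutes with the Koszul differential on $\Lambda \otimes R$. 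The third identity $x \lfrown \Theta = 0$ is immediate, since $\Theta = \sum_i 1 \otimes (\id \ustar e_i) \otimes \check e_i$ has $\Lambda$-component $1 \in \Lambda^0_0$ and $x \lfrown 1 = 0$.

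For part (ii), the first identity is the derivation property~\eqref{eqn:rfrown-deriv} of $\rfrown$ on $R^\vee$ extended across the tensor product; the sign check is trivial because $\rfrown$ acts only on $R^\vee$, whose elements are cohomologically even, so $\rfrown$ slides past the composition sign unchanged. The second identity is immediate because $\kappa$ and $({-}) \rfrown r$ act on disjoint tensor factors and therefore strictly commute. For the third, I would compute
\[
\Theta \rfrown r = \sum_i 1 \otimes (\id \ustar e_i) \otimes (\check e_i \rfrown r) = \sum_i r(\check e_i) \cdot 1 \otimes (\id \ustar e_i) \otimes 1,
\]
and conclude by the expansion $r = \sum_i r(\check e_i) e_i$ in $V^*$ together with $\bk$-linearity of $\ustar$ in its second argument.

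The only genuine subtlety I anticipate is the sign verification in the first formula of part (i): the two terms $(x \lfrown g) \circ f$ and $(-1)^{|g|} g \circ (x \lfrown f)$ must combine to reproduce the Koszul sign $(-1)^{|r|}$ coming out of $x \lfrown (r \wedge s)$. This is not difficult, but it is the one place where a careful ledger of degree shifts is required; once one observes $|x\lfrown r| = |r|-1$ and $|x \lfrown g| = |g|-1$, the exponents all fall into place.
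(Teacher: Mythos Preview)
Your proposal is correct and follows essentially the same route as the paper's proof. The paper merely cites Lemma~\ref{lem:kappa-frown} for the first two formulas in part~(i), \eqref{eqn:rfrown-deriv} for the first formula in part~(ii), and declares the remaining formulas ``clear''; you have supplied exactly the unpacking of those references, including the sign ledger, that the paper omits.
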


\begin{proof}
The first two formulas in~\eqref{it:kappa-bifrown-left} follow from Lemma~\ref{lem:kappa-frown}. The third one is clear. The first formula in~\eqref{it:kappa-bifrown-right} follows from~\eqref{eqn:rfrown-deriv}, and the second and third formulas are clear.
\end{proof}

\begin{thm}
\label{thm:bifrown}
Let $\cF \in \FM(\fh,W)$.  There is a bigraded $\bk$-algebra homomorphism\index{muF@$\mu_\cF$}
\[
\mu_\cF: R^\vee \to \gEnd_\FM(\cF)
\]
determined by setting $\mu_\cF(x) = x \lfrown \delta$ for $x \in V = (R^\vee)^{0}_{-2}$.  More generally, for any $\cF, \cG \in \FM(\fh,W)$, the bigraded module $\gHom_\FM(\cF,\cG)$ is equipped with a canonical structure of a left $R^\vee$-module\index{convolution!starhat@$\hatstar$}
\begin{equation}\label{eqn:left-rv-action}
\hatstar: R^\vee \otimes \gHom_\FM(\cF,\cG) \to \gHom_\FM(\cF,\cG)
\end{equation}
given by $x \hatstar f = \mu_\cG(x) \circ f = f \circ \mu_\cF(x)$.  This action is compatible with composition in the following way: if $f \in \gHom_\FM(\cF,\cG)$, $g \in \gHom_\FM(\cG,\cH)$, and $x \in R^\vee$, then
\[
x \hatstar (g \circ f) = (x \hatstar g) \circ f = g \circ (x \hatstar f).
\]
\end{thm}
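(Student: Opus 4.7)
The strategy is to imitate the proof of Theorem~\ref{thm:frown}, the only subtlety being that the cocycle equation for $\delta$ now reads $\delta \circ \delta + \kappa(\delta) = \Theta$ instead of $=0$. The key observation is the third identity in Lemma~\ref{lem:kappa-bifrown}\eqref{it:kappa-bifrown-left}, namely $x \lfrown \Theta = 0$ for $x \in V$: this will absorb the $\Theta$-term without issue, and lets the proof of Theorem~\ref{thm:frown} go through almost verbatim.

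First I would define a $\bk$-linear map $\tilde\mu_\cF \colon V\la -2\ra \to \uEnd_\FM(\cF)$ by $\tilde\mu_\cF(x) = x \lfrown \delta$ (where $\lfrown$ acts on the $\Lambda$ factor of $\uEnd_\FM(\cF) = \Lambda \otimes \uEnd_\BE(\cF) \otimes R^\vee$) and verify that $\tilde\mu_\cF(x)$ is a cocycle. Using the first two identities of Lemma~\ref{lem:kappa-bifrown}\eqref{it:kappa-bifrown-left}, the straightforward computation
\[
d(x \lfrown \delta) = \kappa(x \lfrown \delta) + \delta \circ (x \lfrown \delta) - (x \lfrown \delta) \circ \delta = -x \lfrown \bigl( \kappa(\delta) + \delta \circ \delta \bigr) = -x \lfrown \Theta
\]
yields the cocycle property precisely because $x \lfrown \Theta = 0$. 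Consequently $\tilde\mu_\cF(x)$ descends to a class $\mu_\cF(x) \in \gEnd_\FM(\cF)^0_{-2}$.

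Next, for arbitrary $\cF, \cG \in \FM(\fh,W)$ and any chain map $\tilde f \in \uHom_\FM(\cF,\cG)^0_0$ representing a morphism $f$, I would apply $x \lfrown ({-})$ to the identity $d(\tilde f) = 0$. Using Lemma~\ref{lem:kappa-bifrown}\eqref{it:kappa-bifrown-left} once more, this rearranges to
\[
(x \lfrown \delta_\cG) \circ \tilde f - \tilde f \circ (x \lfrown \delta_\cF) = d\bigl(-x \lfrown \tilde f\bigr),
\]
so that $\mu_\cG(x) \circ f = f \circ \mu_\cF(x)$ in $\gHom_\FM(\cF,\cG)$. Specializing $\cF = \cG$ and $f = \mu_\cF(y)$ for $y \in V$ yields the commutation $\mu_\cF(x) \mu_\cF(y) = \mu_\cF(y) \mu_\cF(x)$ (note that $V$ sits in even cohomological degree inside $R^\vee$, so no sign issues arise). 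By the universal property of $R^\vee = \Sym(V\la-2\ra)$, the map $\mu_\cF$ then extends uniquely to a bigraded $\bk$-algebra homomorphism $R^\vee \to \gEnd_\FM(\cF)$.

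Finally, the two expressions $\mu_\cG(x) \circ f$ and $f \circ \mu_\cF(x)$ agree by the preceding step and thus unambiguously define $x \hatstar f$, giving $\gHom_\FM(\cF,\cG)$ the structure of a left $R^\vee$-module. The compatibility with composition $x \hatstar (g \circ f) = (x \hatstar g) \circ f = g \circ (x \hatstar f)$ is then immediate from the associativity of composition and the defining equalities. The entire argument is a routine adaptation of Theorem~\ref{thm:frown}, and the only place requiring genuine attention is the vanishing $d(x \lfrown \delta) = 0$ above, which survives only thanks to $x \lfrown \Theta = 0$; I do not anticipate any serious obstacle beyond a careful bookkeeping of bidegrees and signs.
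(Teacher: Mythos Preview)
Your proposal is correct and is exactly the argument the paper has in mind: its proof reads in full ``The same arguments as for Theorem~\ref{thm:frown} apply in this context,'' and you have simply unpacked those arguments, correctly isolating $x \lfrown \Theta = 0$ from Lemma~\ref{lem:kappa-bifrown}\eqref{it:kappa-bifrown-left} as the one new ingredient. One tiny remark: you phrase the centrality step for a chain map $\tilde f$ of bidegree $(0,0)$, but the statement concerns the full bigraded module $\gHom_\FM(\cF,\cG)$; the same computation works verbatim for cocycles in any bidegree (this is what becomes~\eqref{eqn:lfrown-diff} in the paper), so no change is needed beyond dropping that restriction.
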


\begin{proof}
The same arguments as for Theorem~\ref{thm:frown} apply in this context.
\end{proof}

As in~\S\ref{sec:left-monodromy}, we call the $R^\vee$-action on $\gHom_\FM(\cF,\cG)$ defined above the \emph{left monodromy action}\index{monodromy action!left}.
For later use, we note that as in the proof of Theorem~\ref{thm:frown} we have, for all $f \in \uHom_\FM(\cF,\cG)$ and all $x \in V$, 
\begin{equation}\label{eqn:lfrown-diff}
x \lfrown d(f) = d(-x \lfrown f) + (x \lfrown \delta_\cG) \circ f -  f \circ (x \lfrown \delta_\cF).
\end{equation}

There is an obvious right action of $R^\vee$ on $\uHom_\FM(\cF,\cG)$, and it is easy to see that the differential on this space commutes with the right $R^\vee$-action, so $\gHom_\FM(\cF,\cG)$ inherits the structure of a right $R^\vee$-module.  This action, called the \emph{right monodromy action}\index{monodromy action!right}, will be denoted by
\begin{equation}\label{eqn:right-rv-action}
\hatstar : \gHom_\FM(\cF,\cG) \otimes R^\vee \to \gHom_\FM(\cF,\cG).
\end{equation}
The right monodromy action is compatible with composition: if $f \in \gHom_\FM(\cF,\cG)$, $g \in \gHom_\FM(\cG,\cH)$, and $x \in R^\vee$, then
\begin{equation}\label{eqn:rightmon-interchange}
(g \circ f) \hatstar x = (g \hatstar x) \circ f = g \circ (f \hatstar x).
\end{equation}

From~\eqref{eqn:uhom-mon-lmon}, we see that
\[
\uHom_\LM(\cF,\cG) \cong \uHom_\FM(\cF,\cG) \otimes_{R^\vee} \bk
\]
for any $\DiagBSp$-sequences $\cF$ and $\cG$. If $\cF$ and $\cG$ are free-monodromic complexes, then this isomorphism induces an isomorphism of dgg $\bk$-modules
\begin{equation}
\label{eqn:HomU-Hommon}
\uHom_\LM(\ForFMLM(\cF),\ForFMLM(\cG)) \cong \uHom_\FM(\cF,\cG) \otimes_{R^\vee} \bk.
\end{equation}

\begin{lem}
\label{lem:hom-mon-lmon}
Let $\cF$ and $\cG$ be free-monodromic complexes. Assume that
\[
\Hom_{\LM(\fh,W)}(\ForFMLM(\cF),\ForFMLM(\cG)[i] \langle j \rangle) = 0 \quad \text{unless $i=0$,}
\]
and that each $\Hom_{\LM(\fh,W)}(\ForFMLM(\cF),\ForFMLM(\cG) \langle j \rangle)$ is free over $\bk$.
Then we have
\[
\gHom_{\FM}(\cF,\cG)^i_j = 0 \quad \text{unless $i=0$,}
\]
$\gHom_{\FM}(\cF,\cG)^0_\bullet$ is graded free as a right $R^\vee$-module, and the morphism
\[
\gHom_{\FM}(\cF,\cG)^0_\bullet \otimes_{R^\vee} \bk \to \gHom_{\LM}(\ForFMLM(\cF),\ForFMLM(\cG))^0_\bullet
\]
induced by the functor $\ForFMLM$ is an isomorphism.
\end{lem}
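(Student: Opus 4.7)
The plan is to apply Lemma~\ref{lem:cohomology} to the complex $M = \uHom_\FM(\cF, \cG)$, viewed as a complex of graded right $R^\vee$-modules. The right $R^\vee$-action is multiplication on the last factor of $\uHom_\FM(\cF, \cG) = \Lambda \otimes_\bk \uHom_\BE(\cF, \cG) \otimes_\bk R^\vee$; this commutes with the differential $d$ since $R^\vee$ is commutative and concentrated in cohomological degree $0$ (this is exactly the commutation that makes~\eqref{eqn:right-rv-action} well defined). In the terminology of Lemma~\ref{lem:cohomology}, the cohomological index is the first index $i$ of $\uHom_\FM(\cF, \cG)^i_\bullet$, while the grading on each $M^i$ as an $R^\vee$-module uses the internal index $j$.

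First I would verify the hypotheses. Each $M^i$ decomposes as a finite sum $\bigoplus_{a + b = i} \Lambda^a \otimes_\bk \uHom_\BE(\cF, \cG)^b \otimes_\bk R^\vee$ (finite because $\Lambda^a = 0$ for $a$ large), and as a graded right $R^\vee$-module it is free, with a $\bk$-basis coming from $\bigoplus_a \Lambda^a \otimes \uHom_\BE(\cF, \cG)^{i-a}$. The key finiteness input is that $\uHom_\BE(\cF, \cG)^b$ is finite-dimensional over $\bk$: since $\cF$ and $\cG$ are $\DiagBSp$-sequences (hence finitely supported in cohomological degree), and since the relation $q - p = b - f$ forces $f$ to lie in a finite range once $b$ is fixed, the space $\uHom_\BE(\cF, \cG)^b$ reduces to a finite sum of finite-dimensional graded pieces of various $\Hom^\bullet(\cF^p, \cG^q)$. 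This gives condition~\eqref{it:lem-cohomology-assumption-1} of Lemma~\ref{lem:cohomology}. For conditions~\eqref{it:lem-cohomology-assumption-2} and~\eqref{it:lem-cohomology-assumption-3}, the isomorphism~\eqref{eqn:HomU-Hommon} identifies $\bk \otimes_{R^\vee} M$ with $\uHom_\LM(\ForFMLM(\cF), \ForFMLM(\cG))$, so Remark~\ref{rmk:Hom-shift} translates the two assumptions of the present lemma into the required statements $\coH^i(\bk \otimes_{R^\vee} M) = 0$ for $i \ne 0$ and $\coH^0(\bk \otimes_{R^\vee} M)$ free (of finite rank, by the argument above) over $\bk$.

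Applying Lemma~\ref{lem:cohomology} then produces all three conclusions wanted: $\gHom_\FM(\cF, \cG)^i = 0$ for $i \ne 0$; $\gHom_\FM(\cF, \cG)^0$ is graded free over $R^\vee$; and the natural map $\bk \otimes_{R^\vee} \gHom_\FM(\cF, \cG)^0 \to \gHom_\LM(\ForFMLM(\cF), \ForFMLM(\cG))^0$ is an isomorphism. To match this last map with the one induced by $\ForFMLM$, I would observe that both come from the quotient $\uHom_\FM(\cF, \cG) \twoheadrightarrow \uHom_\FM(\cF, \cG) \otimes_{R^\vee} \bk \cong \uHom_\LM(\ForFMLM(\cF), \ForFMLM(\cG))$ induced by $\epsilon_{R^\vee}$, which is precisely the map~\eqref{eqn:uhom-mon-lmon} used to define $\ForFMLM$ on morphism complexes.

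The only genuine content is the finite-rank verification for $M^i$ as a graded $R^\vee$-module, i.e.\ the finite-dimensionality of $\uHom_\BE(\cF, \cG)^b$ for each $b$; once this is in hand, everything reduces to Lemma~\ref{lem:cohomology} together with routine bookkeeping (compatibility of grading conventions, and the use of commutativity of $R^\vee$ to switch between left and right module statements in the cited lemma).
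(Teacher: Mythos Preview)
Your proof is correct and follows the same route as the paper: translate the hypotheses via Remark~\ref{rmk:Hom-shift} and~\eqref{eqn:HomU-Hommon}, then apply Lemma~\ref{lem:cohomology} to $M = \uHom_\FM(\cF,\cG)$. You are in fact more careful than the paper, which does not explicitly verify hypothesis~\eqref{it:lem-cohomology-assumption-1} of Lemma~\ref{lem:cohomology}; your argument for the finite-rank freeness of each $M^i$ over $R^\vee$ (via the finite support of $\cF,\cG$ and the finite-rank graded pieces of $\Hom$ in $\DiagBSp$) is the right one.
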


\begin{proof}
Using Remark~\ref{rmk:Hom-shift}, we see that our assumption implies that
\[
\gHom_\LM(\ForFMLM(\cF), \ForFMLM(\cG))^i_j=0 \quad \text{unless $i=0$},
\]
or in other words that the complex $\uHom_\LM(\ForFMLM(\cF), \ForFMLM(\cG))$ is concentrated in cohomological degree $0$, and finally, using~\eqref{eqn:HomU-Hommon}, that the complex $\uHom_\FM(\cF,\cG) \otimes_{R^\vee} \bk$ is concentrated in cohomological degree $0$. Then the desired properties follow from Lemma~\ref{lem:cohomology}.
\end{proof}

Finally, in parallel with Theorem~\ref{thm:bifrown}, one might study the map
\[
V^* \to \uEnd_\FM(\cF)
\qquad\text{given by}\qquad
r \mapsto \delta \rfrown r.
\]
This \emph{does not} extend to a ring homomorphism with values in~$\gEnd_\FM(\cF)$, because it is not true that $d(\delta \rfrown r) = 0$ in general.  Instead, we have
\[
d(\delta \rfrown r) = \kappa(\delta \rfrown r) + \delta(\delta \rfrown r) + (\delta \rfrown r)\delta = \kappa(\delta) \rfrown r + \delta^2 \rfrown r = \Theta \rfrown r = \id \ustar r
\]
by Lemma~\ref{lem:kappa-bifrown}\eqref{it:kappa-bifrown-right}.
More generally, we have the following analogue of~\eqref{eqn:lfrown-diff}: for all $f \in \uHom_\FM(\cF,\cG)$ and all $r \in V^*$, 
\begin{equation}\label{eqn:rfrown-diff}
d(f) \rfrown r = d(f \rfrown r) + (\delta_\cG \rfrown r) \circ f + (-1)^{|f|+1} f \circ (\delta_\cF \rfrown r).
\end{equation}

%%%%%%%%%%%%%%%%%%%%%%%%%%%%%%%%%%%%%%%%%%%%%%%%%%%%%%%%%%%%%%%%%%%%%%%%%%%
\section{Examples of free-monodromic complexes}
%%%%%%%%%%%%%%%%%%%%%%%%%%%%%%%%%%%%%%%%%%%%%%%%%%%%%%%%%%%%%%%%%%%%%%%%%%%

We will now exhibit some examples of free-monodromic complexes. 

%--------------------------------------------------------------------------
\subsection{The free-monodromic skyscraper sheaf}
\label{sss:t1-freemon}
%--------------------------------------------------------------------------

Let $\Tmon_\varnothing$\index{tilting object!Ttilnoth@$\Tmon_\varnothing$} denote the $\DiagBSp$-sequence consisting of $B_\varnothing$, concentrated in degree~$0$.  We make it into a free-monodromic complex by equipping it with the differential
\[
\delta = \theta \in \uEnd_\FM(\Tmon_\varnothing)^1_0.
\]
We have $\delta^2 = 0$ and $\kappa(\delta) = \Theta$, so this element does indeed make $\Tmon_\varnothing$ into a free-monodromic complex. 
From~\eqref{eqn:R-End-E1} we deduce that we have
\[
\uEnd_\FM(\Tmon_\varnothing) = \Lambda \otimes R \otimes R^\vee.
\]
It is easy to see from the definitions that the differential in this case is $d_{\sA} \otimes \id_{R^\vee}$, so that the bigraded $\bk$-algebra homomorphism $\uEnd_\FM(\Tmon_\varnothing) \to R^\vee$ induced by the counits of $\Lambda$ and $R$ is a quasi-isomorphism of dgg $\bk$-algebras.  In particular, we have
\begin{equation}\label{eqn:endmon-t1}
\gEnd_\FM(\Tmon_\varnothing) \cong R^\vee.
\end{equation}

We now compute the left monodromy action on $\Tmon_\varnothing$. For any $x \in V =
(R^\vee)^{0}_{-2}$ we have
\[
x \lfrown \delta = x \lfrown \theta = \sum_{i = 1}^r \langle x, e_i
\rangle 1 \otimes 1 \otimes \check
e_i = 1 \otimes 1 \otimes x.\]
Thus
\begin{equation}
  \label{eq:1mon=id}
\mu_{\Tmon_\varnothing} = \id_{R^\vee} : 
R^\vee \to  \gEnd_\FM(\Tmon_\varnothing)
\stackrel{\eqref{eqn:endmon-t1}}{=} R^\vee.
\end{equation}

%--------------------------------------------------------------------------
\subsection{The free-monodromic complex associated to a simple reflection}
\label{sss:ts-freemon}
%--------------------------------------------------------------------------

Let $s$ be a simple reflection, and consider the object $\cT_s$ from Example~\ref{ex:ts-leftmon}.  We define the object $\Tmon_s$\index{tilting object!Ttils@$\Tmon_s$} by equipping the same $\DiagBSp$-sequence with the structure of a free-monodromic complex using the differential
\begin{multline*}
\delta = \usebox\lowerdot \oplus \usebox\upperdot \oplus (-\alpha_s \otimes \id) \oplus (\usebox\lowerdot \otimes \alpha_s^\vee) \oplus (-\alpha_s \otimes \id \otimes \alpha_s^\vee) \oplus (-\alpha_s \otimes \id \otimes \alpha_s^\vee) + \theta \\
\in \Hom(B_{\varnothing}(-1),B_s) \oplus \Hom(B_s,B_{\varnothing}(1)) \oplus (\Lambda^{1}_{2} \otimes \Hom(B_{\varnothing}(-1), (B_{\varnothing}(1))(-2))) \\
\oplus (\Hom(B_{\varnothing}(1), B_s(2)) \otimes (R^\vee)^{0}_{-2})
\oplus (\Lambda^{1}_{2} \otimes \End(B_{\varnothing}(1)) \otimes (R^\vee)^{0}_{-2}) \\
\oplus (\Lambda^{1}_{2} \otimes \End(B_s) \otimes (R^\vee)^{0}_{-2})
+ \uEnd_\FM(\Tmon_s)^{1}_{0} 
\subset \uEnd_\FM(\Tmon_s)^{1}_{0}.
\end{multline*}
We depict this with the following picture:
\[
\begin{tikzcd}
B_{\varnothing}(1) \ar[loop right, in=0, out=20, distance=40, "\theta - \alpha_s \otimes \id \otimes \alpha_s^\vee" pos=0.6] \ar[d, bend left=80, "(2)" description, "\ \ 1 \otimes \usebox\lowerdot \otimes \alpha_s^\vee" pos=0.4] \\
B_s \ar[u, "\usebox\upperdot"] \ar[loop right, in=-20, out=0, distance=50, "\theta - \alpha_s \otimes \id \otimes \alpha_s^\vee"] \\
B_{\varnothing}(-1) \ar[u, "\usebox\lowerdot"] \ar[uu, bend left=60, "(-2)" description, "-\alpha_s \otimes \id \otimes 1" near end] \ar[loop right, in=-20, out=20, distance=35, "\theta"]
\end{tikzcd}
\]
To verify that this element is indeed a differential, we first remark that in the $\bk$-algebra $\uEnd_\FM(\Tmon_s)$, by~\eqref{eqn:Theta-center} we have 
\[
\theta \usebox\lowerdot = -\usebox\lowerdot \theta \quad \text{and} \quad \theta \usebox\upperdot = -\usebox\upperdot \theta.
\]
We then compute the various components of $\delta^2 \in \uEnd_\FM(\Tmon_s)$:
\begin{align*}
B_{\varnothing}(-1) \leadsto B_{\varnothing}(-1) &: \theta^2 = 0 \\
B_{\varnothing}(-1) \leadsto B_s &: \usebox\lowerdot \theta + (\theta \usebox\lowerdot - \alpha_s \otimes \usebox\lowerdot \otimes \alpha_s^\vee) + (1 \otimes \usebox\lowerdot \otimes \alpha_s^\vee)(-\alpha_s \otimes \id \otimes 1) \\
&\qquad  = 0 \\
B_{\varnothing}(-1) \leadsto B_{\varnothing}(1) &: \usebox\lowerupperdot - (\alpha_s \otimes \id \otimes 1)\theta - \theta(\alpha_s \otimes \id \otimes 1) + (\alpha_s \wedge \alpha_s) \otimes \id \otimes \alpha_s^\vee \\
&\qquad = \alpha_s \star \id \\
B_s \leadsto B_s &: (\theta - \alpha_s \otimes \id \otimes \alpha_s^\vee)^2 + 1 \otimes \usebox\upperlowerdot \otimes \alpha_s^\vee  = 1 \otimes \usebox\upperlowerdot \otimes \alpha_s^\vee \\
B_s \leadsto B_{\varnothing}(1) &: (\theta-\alpha_s \otimes \id \otimes \alpha_s^\vee) \usebox\upperdot + \usebox\upperdot (\theta-\alpha_s \otimes \id \otimes \alpha_s^\vee)=0  \\
B_{\varnothing}(1) \leadsto B_s &: (\theta - \alpha_s \otimes \id \otimes \alpha_s^\vee)(1 \otimes \usebox\lowerdot \otimes \alpha_s^\vee) \\
&\qquad + (1 \otimes \usebox\lowerdot \otimes \alpha_s^\vee)(\theta - \alpha_s \otimes \id \otimes \alpha_s^\vee) = 0 \\
B_{\varnothing}(1) \leadsto B_{\varnothing}(1) &: 1 \otimes \usebox\lowerupperdot \otimes \alpha_s^\vee + (\theta - \alpha_s \otimes \id \otimes \alpha_s^\vee)^2 = 1 \otimes (\alpha_s \star \id) \otimes \alpha_s^\vee.
\end{align*}
We depict this as follows:
\[
\delta^2 =
\begin{tikzcd}
B_{\varnothing}(1) \ar[loop right, in=-20, out=20, distance=35, "1 \otimes (\alpha_s \star \id) \otimes \alpha_s^\vee"]\\
B_s \ar[loop right, in=-20, out=20, distance=35, "1 \otimes \usebox\upperlowerdot \otimes \alpha_s^\vee"] \\
B_{\varnothing}(-1). \ar[uu, bend left=60, "1 \otimes (\alpha_s \star \id) \otimes 1"]
\end{tikzcd}
\]
On the other hand, we have
\[
\kappa(\delta) =
\begin{tikzcd}
B_{\varnothing}(1) \ar[loop right, in=-20, out=20, distance=35, "\Theta - 1 \otimes (\alpha_s \star \id) \otimes \alpha_s^\vee"]\\
B_s \ar[loop right, in=-20, out=20, distance=35, "\kappa(\theta) - 1 \otimes (\alpha_s \star \id) \otimes \alpha_s^\vee"] \\
B_{\varnothing}(-1). \ar[uu, bend left=60, "-1 \otimes (\alpha_s \star \id) \otimes 1"] \ar[loop right, in=-20, out=20, distance=35, "\Theta"]
\end{tikzcd}
\]
Consider the $0$th term of this $\DiagBSp$-sequence.  In $\uEnd_\BE(B_s)$, by the nil-Hecke relation (see~\S\ref{sec:ew-diagram}) we have
\[
\id \star e_i  = \la\alpha_s^\vee, e_i\ra \usebox\upperlowerdot + s(e_i) \star \id = \la\alpha_s^\vee, e_i\ra \usebox\upperlowerdot + (e_i - \la \alpha_s^\vee,e_i\ra\alpha_s) \star \id.
\]
Since $\sum_i \la \alpha_s^\vee, e_i\ra \check e_i = \alpha_s^\vee$, we deduce that in $\uEnd_\BE(B_s) \otimes R^\vee$, we have
\[
\Theta = \usebox\upperlowerdot \otimes \alpha_s^\vee + \kappa(\theta) - (\alpha_s \star \id) \otimes \alpha_s^\vee.
\]
In particular, we conclude that $\delta^2 + \kappa(\delta) = \Theta$ in $\uEnd_\FM(\Tmon_s)$, as desired.

Let us now compute the left monodromy action on $\Tmon_s$.  For $x \in V$, note that $x \lfrown \theta = \sum e_i(x) \otimes \id \otimes \check e_i = 1 \otimes \id \otimes x$, and that $x \lfrown (\alpha_s \otimes \id \otimes \alpha_s^\vee) = 1 \otimes \id \otimes \alpha_s(x)\alpha_s^\vee$.  We deduce that
\begin{equation*}
\label{eqn:ts-free-leftmon-deg2}
\mu_{\Tmon_s}(x) = 
\begin{tikzcd}[column sep = huge]
B_{\varnothing}(1) \ar[r, "1 \otimes \id \otimes s(x)"]  & B_{\varnothing}(1) \\
B_s  \ar[r, bend left=20, "1\otimes \id \otimes s(x)"] & B_s  \\
B_{\varnothing}(-1) \ar[r, "1\otimes \id \otimes x"] \ar[uur, bend right=20, "(-2)" description, "-\alpha_s(x) \cdot 1 \otimes \id \otimes 1" pos=0.1] & B_{\varnothing}(-1) 
\end{tikzcd}.
\end{equation*}
From this one can deduce that more generally for any $f \in R^\vee$ we have
\begin{equation}
\label{eqn:ts-free-leftmon}
\mu_{\Tmon_s}(f) = 
\begin{tikzcd}[column sep = huge]
B_{\varnothing}(1) \ar[r, "1 \otimes \id \otimes s(f)"]  & B_{\varnothing}(1) \\
B_s  \ar[r, bend left=20, "1\otimes \id \otimes s(f)"] & B_s  \\
B_{\varnothing}(-1) \ar[r, "1\otimes \id \otimes f"] \ar[uur, bend right=20, "(-2)" description, "-\partial_s(f) \cdot 1 \otimes \id \otimes 1" pos=0.1] & B_{\varnothing}(-1) 
\end{tikzcd},
\end{equation}
where $\partial_s$ is the Demazure operator as in~\S\ref{sec:ew-diagram}.

\begin{prop}
\label{prop:End-Tmons}
There is a canonical isomorphism of bigraded $\bk$-alge\-bras
\[
\gEnd_\FM(\Tmon_s) \cong R^\vee \otimes_{(R^\vee)^s} R^\vee.
\]
\end{prop}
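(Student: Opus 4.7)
The plan is to use Lemma~\ref{lem:hom-mon-lmon} to reduce the question to the left-monodromic computation already carried out in Example~\ref{ex:ts-mon-action}, and then to lift the answer by incorporating the right monodromy action. A direct inspection of the differential of $\Tmon_s$ shows that $\ForFMLM(\Tmon_s) = \cT_s$: the summands involving a nontrivial right $R^\vee$-factor are annihilated by $\epsilon_{R^\vee}$, leaving exactly the differential of $\cT_s$ from Example~\ref{ex:ts-leftmon}. Combined with Example~\ref{ex:ts-mon-action}, this gives $\gEnd_\LM(\cT_s) \cong R^\vee/((R^\vee)^s_+ \cdot R^\vee)$, which is concentrated in cohomological degree~$0$ and free over $\bk$ (using that $R^\vee$ is free of rank~$2$ over $(R^\vee)^s$). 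Lemma~\ref{lem:hom-mon-lmon} then yields that $\gEnd_\FM(\Tmon_s)$ is concentrated in cohomological degree~$0$, is graded-free as a right $R^\vee$-module, and that $\ForFMLM$ induces an isomorphism $\gEnd_\FM(\Tmon_s) \otimes_{R^\vee} \bk \simto R^\vee/((R^\vee)^s_+ R^\vee)$.

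Next, I would construct the candidate algebra map
\[
\varphi : R^\vee \otimes_\bk R^\vee \to \gEnd_\FM(\Tmon_s), \qquad f \otimes g \mapsto \mu_{\Tmon_s}(f) \hatstar g.
\]
This is a bigraded $\bk$-algebra morphism: $\mu_{\Tmon_s}$ is a ring map by Theorem~\ref{thm:bifrown}, the right monodromy is an algebra action, and the two actions commute by~\eqref{eqn:rightmon-interchange}. To see that $\varphi$ descends to $R^\vee \otimes_{(R^\vee)^s} R^\vee$, one needs the equality $\mu_{\Tmon_s}(h) = \id_{\Tmon_s} \hatstar h$ in $\gEnd_\FM(\Tmon_s)$ for every $h \in (R^\vee)^s$. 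This is exactly what the explicit formula~\eqref{eqn:ts-free-leftmon} gives when $s(h)=h$: the diagonal correction vanishes because $\partial_s(h)=0$, and each of the three arrows $1 \otimes \id \otimes s(h)$, $1 \otimes \id \otimes s(h)$, $1 \otimes \id \otimes h$ collapses to right multiplication by $h$ on the rightmost $R^\vee$-factor of $\id$, i.e.\ the right monodromy action by $h$.

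Finally, I would conclude by graded Nakayama. Both the domain $R^\vee \otimes_{(R^\vee)^s} R^\vee$ (using a basis such as $\{1 \otimes 1, \alpha_s^\vee \otimes 1\}$ for the left $R^\vee$ as free $(R^\vee)^s$-module) and the target are graded-free of rank~$2$ as right $R^\vee$-modules. Upon applying $({-}) \otimes_{R^\vee} \bk$, the map $\varphi$ is identified with the isomorphism $R^\vee/((R^\vee)^s_+ R^\vee) \simto \gEnd_\LM(\cT_s)$ from Example~\ref{ex:ts-mon-action}, using the compatibility $\ForFMLM \circ \mu_{\Tmon_s} = \mu_{\cT_s}$ of the left monodromy actions with the forgetful functor. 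The graded Nakayama lemma for morphisms of graded-free modules over the (negatively-graded) polynomial ring $R^\vee$ then forces $\varphi$ itself to be a bigraded algebra isomorphism.

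The main subtlety is the well-definedness check, namely the identity $\mu_{\Tmon_s}(h) = \id \hatstar h$ for $h \in (R^\vee)^s$: this is precisely what forces the left and right monodromy actions to descend correctly to the tensor product over $(R^\vee)^s$. Thanks to the explicit computation already carried out in~\eqref{eqn:ts-free-leftmon}, this verification is immediate, and the proof is largely a matter of assembling the pieces provided by Theorem~\ref{thm:bifrown}, Lemma~\ref{lem:hom-mon-lmon}, and Example~\ref{ex:ts-mon-action}.
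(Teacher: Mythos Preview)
Your proof is correct and follows essentially the same approach as the paper: both define the map $R^\vee \otimes R^\vee \to \gEnd_\FM(\Tmon_s)$ via left and right monodromy, use~\eqref{eqn:ts-free-leftmon} to see it descends to $R^\vee \otimes_{(R^\vee)^s} R^\vee$, and then conclude by combining Lemma~\ref{lem:hom-mon-lmon} with Example~\ref{ex:ts-mon-action} and a graded Nakayama argument. Your write-up is somewhat more explicit about the compatibility of $\ForFMLM$ with the left monodromy maps, but the underlying argument is the same.
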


Here, $(R^\vee)^s$ is the subring of $R^\vee$ consisting of elements that are fixed by $s$. 

\begin{proof}
In view of Example~\ref{ex:ts-mon-action} and Lemma~\ref{lem:hom-mon-lmon}, as a bigraded right $R^\vee$-module, $\gEnd_\FM(\Tmon_s)$ must be free of rank~$2$, and generated in bidegrees $(0,0)$ and $(0,-2)$.

Let $\phi: R^\vee \otimes R^\vee \to \gEnd_\FM(\Tmon_s)$ be the map given by $\phi(x \otimes y) = \mu_{\Tmon_s}(x)y$.  
By Theorem~\ref{thm:bifrown}, this map is a bigraded $\bk$-algebra morphism. Moreover,
it follows from~\eqref{eqn:ts-free-leftmon} that $\phi$ descends to a map $\phi' : R^\vee \otimes_{(R^\vee)^s} R^\vee \to \gEnd_\FM(\Tmon_s)$.
Now the graded right $R^\vee$-module $R^\vee \otimes_{(R^\vee)^s} R^\vee$ is also free of rank~$2$, and generated in bidegrees $(0,0)$ and $(0,-2)$, see e.g.~\cite[Claim~3.9]{ew}. From the discussion in Example~\ref{ex:ts-mon-action} we know that $\phi' \otimes_{R^\vee} \bk$ is an isomorphism. It follows that $\phi'$ is also an isomorphism.
\end{proof}

%--------------------------------------------------------------------------
\subsection{Another free-monodromic complex associated to $s$}
\label{sss:ts-freemon-2}
%--------------------------------------------------------------------------

The free-monodromic complex $\Tmon_s$ described in \S\ref{sss:ts-freemon} appears to be somewhat ``asymmetric'': indeed, there is another free-mono\-dromic complex $\Tmon'_s$ on the same underlying $\DiagBSp$-sequence given by\index{tilting object!Ttilsp@$\Tmon_s'$}
\[
\delta = \begin{tikzcd}
B_\varnothing(1) \ar[loop right, in=-20, out=20, distance=40, "\theta"] \\
B_s \ar[u, "\usebox\upperdot"] \ar[loop right, in=0, out=20, distance=50, "\theta - \alpha_s \otimes \id \otimes \alpha_s^\vee"] \ar[d, bend left=80, "(2)" description, "\ \ 1 \otimes \usebox\upperdot \otimes \alpha_s^\vee" pos=0.6] \\
B_\varnothing(-1) \ar[u, "\usebox\lowerdot"] \ar[uu, bend left=60, "(-2)" description, "-\alpha_s \otimes \id \otimes 1" near end] \ar[loop right, in=-20, out=0, distance=35, "\theta - \alpha_s \otimes \id \otimes \alpha_s^\vee" pos=0.4]
\end{tikzcd}.
\]
In fact, $\Tmon_s$ and $\Tmon_s'$ are isomorphic objects of $\FM(\fh,W)$.  Consider the elements $f \in \uHom_\FM(\Tmon_s,\Tmon_s')$ and $g \in \uHom_\FM(\Tmon_s',\Tmon_s)$ given by
\[
f =
\begin{tikzcd}[column sep=huge]
B_\varnothing(1) \ar[r, "\id"] \ar[ddr, bend right=20, "(2)" description, "1 \otimes \id \otimes \alpha_s^\vee" pos=0.9] & B_\varnothing(1) \\
B_s  \ar[r, bend left=20, "\id"] & B_s  \\
B_\varnothing(-1) \ar[r, "\id"] & B_\varnothing(-1) 
\end{tikzcd}
\qquad
g =
\begin{tikzcd}[column sep=huge]
B_\varnothing(1) \ar[r, "\id"] \ar[ddr, bend right=20, "(2)" description, "-1 \otimes \id \otimes \alpha_s^\vee" pos=0.9] & B_\varnothing(1) \\
B_s  \ar[r, bend left=20, "\id"] & B_s  \\
B_\varnothing(-1) \ar[r, "\id"] & B_\varnothing(-1) 
\end{tikzcd}.
\]

One can check that $f$ and $g$ both define morphisms in $\FM(\fh,W)$, and that they are isomorphisms, inverse to one another.

In fact, $\Tmon_s$ and $\Tmon'_s$ are \emph{canonically} isomorphic, in the following sense.  Note that the objects $\ForFMLM(\Tmon_s)$ and $\ForFMLM(\Tmon_s')$ are equal. It follows from Lemma~\ref{lem:hom-mon-lmon} that the natural map
\[
\gHom_\FM(\Tmon_s,\Tmon_s') \to \gEnd_\LM(\ForFMLM(\Tmon_s))
\]
is an isomorphism in degree $(0,0)$, so the identity map $\id: \ForFMLM(\Tmon_s) \to \ForFMLM(\Tmon_s')$ lifts to a \emph{unique} morphism in $\FM(\fh,W)$.  This is the morphism induced by the chain map $f$ considered above.

In the remainder of the paper, we will work only with $\Tmon_s$.

%--------------------------------------------------------------------------
\subsection{Free-monodromic unit and counit morphisms}
\label{sss:unit-freemon}
%--------------------------------------------------------------------------

We define a morphism\index{etahats@$\hat\eta_s$}
\[
\hat\eta_s: \Tmon_\varnothing\la -1\ra \to \Tmon_s
\]
in $\FM(\fh,W)$ by
\begin{multline*}
\hat\eta_s = \id_{B_\varnothing(1)} \oplus ( -1 \otimes \id_{B_\varnothing(1)} \otimes \alpha_s^\vee) \\
\in \Hom(B_\varnothing(1), B_\varnothing(1)) \oplus (\Hom(B_\varnothing(1), B_\varnothing(-1)(2)) \otimes (R^\vee)^{0}_{-2}) \\
\subset \uHom_\FM(\Tmon_\varnothing \langle -1 \rangle, \Tmon_s).
\end{multline*}
This morphism is depicted by:
\[
\begin{tikzcd}[column sep=large]
B_\varnothing(1) \ar[loop left,"\theta"]
  \ar[rr, "\id"]
  \ar[ddrr, bend right=20, "(2)" description, "-1 \otimes \id \otimes \alpha_s^\vee\ \ "']
&&B_\varnothing(1) \ar[loop right, in=0, out=20, distance=40, "\theta - \alpha_s \otimes \id \otimes \alpha_s^\vee" pos=0.6] \ar[d, bend left=80, "(2)" description, "\ \ 1 \otimes \usebox\lowerdot \otimes \alpha_s^\vee" pos=0.4] \\
&&B_s \ar[u, "\usebox\upperdot"] \ar[loop right, in=-20, out=0, distance=50, "\theta - \alpha_s \otimes \id \otimes \alpha_s^\vee" pos=0.6] \\
&&B_\varnothing(-1). \ar[u, "\usebox\lowerdot"] \ar[uu, bend left=60, "(-2)" description, "-\alpha_s \otimes \id \otimes 1" near end] \ar[loop right, in=-20, out=20, distance=35, "\theta"]
\end{tikzcd}
\]
To verify that this is indeed a morphism, we must check that $d(\hat\eta_s) = 0$. It is clear that $\kappa(\hat\eta_s) = 0$, so it is enough to check that
\[
\delta \hat\eta_s = \hat\eta_s\delta.
\]
The picture above makes it straightforward to verify this.

Next, we define a morphism\index{epsilonhats@$\hat\epsilon_s$}
\[
\hat\epsilon_s : \Tmon_s \to \Tmon_\varnothing\la 1\ra
\]
in $\FM(\fh,W)$  by
\[
\hat\epsilon_s = - \id_{B_\varnothing(-1)} \in \Hom(B_\varnothing(-1), B_\varnothing(-1)) \subset \uHom_\FM(\Tmon_s,\Tmon_\varnothing\la 1\ra)^{0}_{0}.
\]
This morphism is depicted as follows:
\[
\begin{tikzcd}
B_\varnothing(1) \ar[loop right, in=0, out=20, distance=40, "\theta - \alpha_s \otimes \id \otimes \alpha_s^\vee" pos=0.6] \ar[d, bend left=80, "(2)" description, "\ \ 1 \otimes \usebox\lowerdot \otimes \alpha_s^\vee" pos=0.4] \\
B_s \ar[u, "\usebox\upperdot"] \ar[loop right, in=-20, out=0, distance=50, "\theta - \alpha_s \otimes \id \otimes \alpha_s^\vee" pos=0.6] \\
B_\varnothing(-1) \ar[u, "\usebox\lowerdot"] \ar[uu, bend left=60, "(-2)" description, "-\alpha_s \otimes \id \otimes 1" near end] \ar[loop left, "\theta"]
\ar[rrrr, "-\id"']
&&&& B_\varnothing(-1). \ar[loop right,"\theta"]
\end{tikzcd}.
\]

The following proposition shows that these morphisms satisfy relations similar to the barbell and nil-Hecke relations of~\S\ref{sec:ew-diagram}.

\begin{prop}
\begin{enumerate}
\item 
\label{it:nilHecke-1}
In $\gEnd_\FM(\Tmon_\varnothing)$, we have
\[
\hat\epsilon_s \circ \hat\eta_s = \alpha_s^\vee \hatstar \id = \id \hatstar \alpha_s^\vee.
\]
\item 
\label{it:nilHecke-2}
In $\gEnd_\FM(\Tmon_s)$, for any $x \in V$ we have
\[
\id \hatstar x = \alpha_s(x) \hat\eta_s \hat\epsilon_s + s(x) \hatstar \id.
\]
\end{enumerate}
\end{prop}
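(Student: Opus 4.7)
The plan is to prove both identities by direct computation using the explicit pictures of $\hat\eta_s$, $\hat\epsilon_s$, the left-monodromy formula~\eqref{eq:1mon=id} for $\Tmon_\varnothing$, the formula~\eqref{eqn:ts-free-leftmon} for $\mu_{\Tmon_s}$, and the composition rule of~\S\ref{sec:fm-mixed-complexes}. All signs coming from that composition rule will turn out to be trivial in both calculations, because every $\Lambda$- and $R^\vee$-factor appearing is a scalar in bidegree $(0,0)$ or lives in $(R^\vee)^0_{-2}$, and every underlying $\DiagBSp$-morphism has cohomological degree~$0$.

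For (1), I would compute $\hat\epsilon_s \circ \hat\eta_s$ inside $\uHom_\FM(\Tmon_\varnothing\langle -1\rangle,\Tmon_\varnothing\langle 1\rangle)^0_0$. Since $\hat\epsilon_s$ is supported at position $-1$ and the $\id_{B_\varnothing(1)}$-summand of $\hat\eta_s$ lies at position $1$, only the summand $-1\otimes \id \otimes \alpha_s^\vee$ of $\hat\eta_s$ contributes, and the composition formula yields $\hat\epsilon_s\circ\hat\eta_s = 1\otimes \id \otimes \alpha_s^\vee$. Under the isomorphism~\eqref{eqn:endmon-t1} this becomes $\alpha_s^\vee \in R^\vee$. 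On the other hand, the right-monodromy $\id \hatstar \alpha_s^\vee$ acts by right multiplication on the $R^\vee$-factor of $\id_{\Tmon_\varnothing} = 1\otimes \id \otimes 1$, giving the same element; and $\alpha_s^\vee \hatstar \id = \mu_{\Tmon_\varnothing}(\alpha_s^\vee)$ is also $\alpha_s^\vee$ by~\eqref{eq:1mon=id}. This settles~\eqref{it:nilHecke-1}.

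For (2), I would view $\hat\eta_s \hat\epsilon_s$ as an element of $\gEnd_\FM(\Tmon_s)^0_{-2}$ by shifting $\hat\eta_s$ by $\langle 2\rangle$ so that source and target match. Only the position-$(-1)$ component of $\hat\epsilon_s$ contributes, and composition with each summand of $\hat\eta_s$ produces: from the $\id_{B_\varnothing(1)}$-summand, a term $-\id$ from position $-1$ to position $1$ (an internal shift $-2$ matching the identification $B_\varnothing(-1)\simeq B_\varnothing(1)(-2)$); and from the $-1\otimes \id\otimes \alpha_s^\vee$-summand, a term $1\otimes \id \otimes \alpha_s^\vee$ at position $-1\to -1$. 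Scaling by $\alpha_s(x)$ makes $\alpha_s(x)\hat\eta_s\hat\epsilon_s$ explicit. Next, $\id\hatstar x$ is $1\otimes \id\otimes x$ at each of the three positions, and $s(x)\hatstar\id = \mu_{\Tmon_s}(s(x))$ is read off from~\eqref{eqn:ts-free-leftmon} using $s^2 = \id$ and $\partial_s(s(x)) = -\alpha_s(x)$ for $x\in V$.

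Matching components: at positions $0\to 0$ and $1\to 1$, the identities $s(s(x)) = x$ force $\id\hatstar x$ and $s(x)\hatstar \id$ to agree (and $\alpha_s(x)\hat\eta_s\hat\epsilon_s$ contributes nothing). At position $-1\to -1$, the discrepancy $1\otimes\id\otimes(x-s(x)) = \alpha_s(x)\cdot 1\otimes\id\otimes\alpha_s^\vee$ equals the corresponding component of $\alpha_s(x)\hat\eta_s\hat\epsilon_s$. At position $-1\to 1$, $\mu_{\Tmon_s}(s(x))$ contributes the scalar $-\partial_s(s(x)) = \alpha_s(x)$, which exactly cancels the $-\alpha_s(x)\id$ coming from $\alpha_s(x)\hat\eta_s\hat\epsilon_s$, and the asserted identity in $\gEnd_\FM(\Tmon_s)$ follows. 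The only bookkeeping subtlety is the grading-shift identification at the start of (2); there is no conceptual obstacle, as all data is given explicitly.
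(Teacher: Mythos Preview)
Your proposal is correct and follows essentially the same approach as the paper: both parts are proved by direct computation in the underlying $\uHom_\FM$-spaces using the explicit descriptions of $\hat\eta_s$, $\hat\epsilon_s$, and formula~\eqref{eqn:ts-free-leftmon}. The paper organizes~\eqref{it:nilHecke-2} slightly differently---it computes $\id \hatstar x - s(x)\hatstar\id$ first via~\eqref{eqn:ts-free-leftmon} and then matches with $\alpha_s(x)\hat\eta_s\hat\epsilon_s$---but this is the same component-by-component verification you describe.
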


\begin{proof} 
We first prove~\eqref{it:nilHecke-1}. We have
\[
\hat\epsilon_s \circ \hat\eta_s = 1 \otimes \id \otimes \alpha_s^\vee
\in \Lambda_0^0 \otimes \uHom_\BE(\Tmon_\varnothing\langle -1 \rangle,\Tmon_\varnothing\la
1\ra)^{0}_{2} \otimes R^0_{-2} \subset \uEnd_\FM(\Tmon_\varnothing)^0_{-2}.
\]
It is immediate from the definitions that (the image of) $1 \otimes \id
\otimes \alpha_s^\vee$ in $\gEnd_\FM(\Tmon_\varnothing)_{-2}^0$ agrees with $\id
\hatstar \alpha_s^\vee$. The equality $\alpha_s^\vee \hatstar \id =\id
\hatstar \alpha_s^\vee$ follows from \eqref{eq:1mon=id}.

We now turn to~\eqref{it:nilHecke-2}. Choose $x \in V$. Using the identities
$\alpha_s(sx) = -\alpha_s(x)$ and $x - s(x) = \alpha_s(x)
\alpha_s^\vee$ together with \eqref{eqn:ts-free-leftmon}, we obtain
\begin{equation*}
\id \hatstar x - s(x) \hatstar \id = 
\begin{tikzcd}[column sep = huge]
B_\varnothing(1) \ar[r, "0"]  & B_\varnothing(1) \\
B_s  \ar[r, bend left=20, "0"] & B_s  \\
B_\varnothing(-1) \ar[r, "1\otimes \id \otimes \alpha_s(x) \alpha_s^\vee" pos=0.45] \ar[uur, bend right=0, "(-2)" description, "-\alpha_s(x) \cdot 1 \otimes \id \otimes 1" pos=0.01] & B_\varnothing(-1) 
\end{tikzcd}.
\end{equation*}
It is easy to check that this morphism agrees with $\alpha_s(x)
\hat\eta_s \hat\epsilon_s$, and (2) follows.
\end{proof}

%==========================================================================
\chapter{Free-monodromic convolution}
\label{chap:fm-convolution}
%==========================================================================

\begin{figure}
\[
\begin{tikzcd}
&&&
\DiagBSp(\fh,W), \star \ar[d, hook] \\
\FM(\fh,W) \ar[dr, "\ForFMLM"'] &&&
\BE(\fh,W),\ustar \ar[dl, "\ForBERE"] \\
& \LM(\fh,W) \ar[r, "\sim", "\ForLMRE"'] &
\RE(\fh,W)
\end{tikzcd}
\]
\caption{Various categories of $\DiagBSp$-sequences}\label{fig:diag-categories}
\end{figure}

A summary of the categories defined in the preceding two chapters is shown in Figure~\ref{fig:diag-categories} below.  The categories in the rightmost portion of this diagram are monoidal.  The rest of this paper is guided by the dream that $\FM(\fh,W)$ ``ought to'' be monoidal as well (or at least have a large monoidal subcategory) with respect to some new operation $\hatstar$.

In this chapter, we define $\hatstar$ for a certain class of objects, called \emph{convolutive} objects, as well as for morphisms between them.  Unfortunately, we do not know how to define $\hatstar$ on all of $\FM(\fh,W)$; and when it is defined, it is not obvious whether it is a bifunctor.  Indeed, the question of the bifunctoriality of $\hatstar$ will occupy us for the rest of the paper.  In the meantime, the reader should take care not to be led astray by the fact that the notation ``looks'' like a bifunctor.  

(The notation $\hatstar$ has already been used for the left and right monodromy actions in~\eqref{eqn:left-rv-action} and~\eqref{eqn:right-rv-action}, but those actions will not  appear in this chapter.  This overlap in  notation will be justified later by Lemma~\ref{lem:unitor}.)

%%%%%%%%%%%%%%%%%%%%%%%%%%%%%%%%%%%%%%%%%%%%%%%%%%%%%%%%%%%%%%%%%%%%%%%%%%%
\section{Convolutive complexes}
\label{sec:convolutive-UGU}
%%%%%%%%%%%%%%%%%%%%%%%%%%%%%%%%%%%%%%%%%%%%%%%%%%%%%%%%%%%%%%%%%%%%%%%%%%%

Let $\cF$ and $\cG$ be free-monodromic complexes.  Throughout this chapter, we will frequently need to refer to the subspaces $\Lambda \otimes \uHom_\BE(\cF,\cG)$ and $\uHom_\BE(\cF,\cG) \otimes R^\vee$ of $\uHom_\FM(\cF,\cG)$.  For brevity, we introduce the following notation:\index{HomuFMlhd@$\uHom_\FM^\lhd$}\index{HomuFMrhd@$\uHom_\FM^\rhd$}
\begin{align*}
\uHom_\FM^\lhd(\cF,\cG) &= \Lambda \otimes \uHom_\BE(\cF,\cG), \\
\uHom_\FM^\rhd(\cF,\cG) &= \uHom_\BE(\cF,\cG) \otimes R^\vee.
\end{align*}
Note that composition respects these subspaces: that is, if $f \in \uHom_\FM^\lhd(\cF,\cG)$ and $g \in \uHom_\FM^\lhd(\cG,\cH)$, then $g \circ f \in \uHom_\FM^\lhd(\cF,\cH)$, and likewise for the $\uHom_\FM^\rhd$ spaces. (On the other hand, the differential does \emph{not} stabilize these subspaces in general.) The convolution map $\ustar$ on $\uHom_\BE$ induces a map\index{convolution!staru@$\ustar$}
\begin{equation}
\label{eqn:ustar-rhd-lhd}
\ustar: \uHom_\FM^\lhd(\cF,\cG) \otimes \uHom_\FM^\rhd(\cF',\cG') 
 \to \uHom_\FM(\cF \ustar \cF', \cG \ustar \cG').
\end{equation}
By~\eqref{eqn:interchange},
this map obeys the signed interchange law:
\begin{equation}
\label{eqn:interchange-law-rhd-lhd}
(g \circ f) \ustar (k \circ h) = (-1)^{|f||k|}(g \ustar k) \circ (f \ustar h).
\end{equation}

\begin{defn}
A free-monodromic complex $(\cF,\delta)$ is said to be \emph{convolutive}\index{convolutive} if
\[
\delta \in (\bk \oplus V^*(-2)[1]) \otimes \uEnd_\BE(\cF) \otimes (\bk \oplus V\la -2\ra) \subset \uEnd_\FM(\cF).
\]
The \emph{category of convolutive complexes}, denoted by $\Conv_\FM(\fh,W)$\index{ConvFM@$\Conv_\FM$}, is defined to be the full (but not strictly full!) subcategory of $\FM(\fh,W)$ consisting of convolutive complexes.
\end{defn}

\begin{lem}
\label{lem:convolutive-mu-nu}
Let $\cF$ be a convolutive free-monodromic complex.
\begin{enumerate}
\item
\label{it:convolutive-mu}
The map $\mu_\cF : V\la -2\ra \to \uEnd_\FM(\cF)$ given by $\mu_\cF(x) = x \lfrown \delta$ extends to a dgg algebra homomorphism\index{muF@$\mu_\cF$}
\[
\mu_\cF : R^\vee \to \uEnd_\FM(\cF)
\]
which takes values in $\uEnd_\FM^\rhd(\cF)$.
\item
\label{it:convolutive-nu}
The map $\nu_\cF : V^*\langle 2\rangle[-1] \to \uEnd_\FM(\cF)$ given by $\nu_\cF(r) = \delta \rfrown r$ extends to a bigraded ring homomorphism\index{nuF@$\nu_\cF$}
\[
\nu_\cF: \Lambda \to \uEnd_\FM^\lhd(\cF).
\]
This map is not a dgg algebra homomorphism in general; instead, for $r \in V^*\langle 2\rangle[-1]$, it satisfies
\[
d(\nu_\cF(r)) = \id \ustar r.
\]
\end{enumerate}
\end{lem}

\begin{proof}
\eqref{it:convolutive-mu}
The claims that $\mu_\cF(V) \subset \uEnd_\FM^\rhd(\cF)$ and $\nu_\cF(V^*) \subset \uEnd_\FM^\lhd(\cF)$ are immediate from the definition of convolutivity.  This definition also implies that
\begin{equation}\label{eqn:convol-frown}
\begin{aligned}
x \lfrown (y \lfrown \delta) &= 0 \qquad \text{for all $x, y \in V$,} \\
(\delta \rfrown s) \rfrown r &= 0 \qquad \text{for all $r,s \in V^*$.}
\end{aligned}
\end{equation}

To show that $\mu_\cF$ extends to a ring homomorphism from $R^\vee$, we must show that $\mu_\cF(x)\mu_\cF(y) = \mu_\cF(y)\mu_\cF(x)$ for all $x,y \in V$.  From~\eqref{eqn:lfrown-diff}, we have
\[
x \lfrown d(y \lfrown \delta) = d(-x \lfrown (y \lfrown \delta)) + (x \lfrown \delta)(y \lfrown \delta) - (y \lfrown \delta)(x \lfrown \delta).
\]
Recall from Theorem~\ref{thm:bifrown} that $d(y \lfrown \delta) = 0$.  The first term on the right-hand side also vanishes by~\eqref{eqn:convol-frown}, so we conclude that $\mu_\cF(x)\mu_\cF(y) - \mu_\cF(y)\mu_\cF(x) = 0$, as desired. The fact that $\mu_\cF$ is compatible with differentials also follows from Theorem~\ref{thm:bifrown}.

\eqref{it:convolutive-nu}
To show that $\nu_\cF$ extends to a ring morphism $\Lambda \to \uEnd_\FM^\lhd(\cF)$ we need to prove that for any $r \in V^*$ we have
\begin{equation}
\label{eqn:condition-nu}
(\delta \rfrown r) \circ (\delta \rfrown r)=0.
\end{equation}
If $e_1, \sdots, e_r$ is a basis of $V^*$ and $\check e_1, \sdots, \check e_r$ is the dual basis for $V$, then we write
\[
\delta = \delta_0 + \sum_i e_i \otimes \delta_i + \sum_i \delta_i' \otimes \check e_i + \sum_{i,j} e_i \otimes \delta_{i,j} \otimes \check e_j
\]
with $\delta_0$, $\delta_i$, $\delta'_i$ and $\delta_{i,j}$ in $\uEnd_\BE(\cF)$. By assumption we have $\delta \circ \delta + \kappa(\delta)=\Theta$. Considering the components of this equality in $\uEnd_\BE(\cF) \otimes \bk \check e_i \check e_j$, we deduce that
\[
\delta'_i \circ \delta'_i = 0
\]
for any $i$, and that
\[
\delta'_i \delta'_j + \delta'_j \delta'_i=0
\]
if $i \neq j$. Similarly, if $i<k$, considering the components in $\bk e_i \wedge e_k \otimes \uEnd_\BE(\cF) \otimes \bk \check e_j \check e_l$ we see that
\[
\delta_{i,j} \delta_{k,j} - \delta_{k,j} \delta_{i,j}=0
\]
for any $j$, and that
\[
\delta_{i,j} \delta_{k,l} + \delta_{i,l} \delta_{k,j} - \delta_{k,j} \delta_{i,l} - \delta_{k,l} \delta_{i,j}=0.
\]
Now we have
\[
\delta \rfrown r = \sum_i \langle \check e_i,r \rangle \delta'_i + \sum_{i,j} \langle \check e_j, r \rangle e_i \otimes \delta_{i,j},
\]
and using the formulas above one easily checks that~\eqref{eqn:condition-nu} is satisfied.

The fact that $d(\nu_\cF(r)) = \id \ustar r$ has already been noticed in the comments after Lemma~\ref{lem:hom-mon-lmon}.
%For $\nu_\cF$, we see from~\eqref{eqn:rfrown-diff} that
%\[
%d(\delta \rfrown s) \rfrown r = d((\delta \rfrown s) \rfrown r) + (\delta \rfrown r)(\delta \rfrown s) + (\delta \rfrown s)(\delta \rfrown r).
%\]
%By the comments following Lemma~\ref{lem:hom-mon-lmon},
%the left-hand side reduces to $(\id \ustar s) \rfrown r = 0$, and the first term on the right-hand side again vanishes by~\eqref{eqn:convol-frown}.  We deduce that $\nu_\cF(r)\nu_\cF(s) + \nu_\cF(s)\nu_\cF(r) = 0$, so $\nu_\cF$ extends to a ring homomorphism from $\Lambda$, as desired.
\end{proof}

\begin{rmk}
\label{rmk:mu-nu-commute}
 We have observed in Theorem~\ref{thm:bifrown} that if $\cF,\cG$ are free-monodromic complexes and if $f \in \gHom_\FM(\cF,\cG)$, then $f \circ \mu_\cF(x) = \mu_\cG(x) \circ f$ in $\gHom_\FM(\cF,\cG)$ for any $x \in R^\vee$. If $\cF$ and $\cG$ are convolutive, then the analogous claim in $\uHom_\FM(\cF, \cG)$ is \emph{not} true in general. However, if $f$ and $d(f)$ both belong to $\uHom_\FM^\rhd(\cF,\cG)$, then~\eqref{eqn:lfrown-diff} shows that $f \circ \mu_\cF(x) = \mu_\cG(x) \circ f$ in $\uHom_\FM(\cF,\cG)$.
 
 Similarly, if $f$ and $d(f)$ both belong to $\uHom_\FM^\lhd(\cF,\cG)$, then by~\eqref{eqn:rfrown-diff} we have $\nu_\cG(y) \circ f = (-1)^{|y||f|} f \circ \nu_\cF(y)$ for any $y \in \Lambda$.
\end{rmk}

\begin{ex}
\label{ex:T1-mon-mu-nu}
Consider the special case $\cF=\Tmon_\varnothing$ (see~\S\ref{sss:t1-freemon}). Then we have
\[
\uEnd_\FM(\Tmon_\varnothing) = \Lambda \otimes R \otimes R^\vee.
\]
From the definitions one can check that for $x \in V$ and $y \in V^*$ we have
\[
\mu_{\Tmon_\varnothing}(x) = 1 \otimes 1 \otimes x, \qquad \nu_{\Tmon_\varnothing}(y) = y \otimes 1 \otimes 1.
\]
\end{ex}

As in~\S\ref{sec:fm-mixed-complexes}, let us choose a basis $e_1, \sdots, e_r$ for $V^*$, and let $\check e_1, \sdots, \check e_r$ be the dual basis for $V$.
If $(\cF,\delta)$ is a convolutive complex, we define
\[
\delta^\lhd := \delta - \sum \nu_{\cF}(e_i) \check e_i
\qquad\text{and}\qquad
\delta^\rhd := \delta - \sum e_i \mu_{\cF}(\check e_i).
\]

\begin{lem}
\label{lem:delta-lhd-rhd}
Let $(\cF,\delta)$ be a convolutive free-monodromic complex.
\begin{enumerate}
\item
\label{it:delta-lhd-rhd-1}
We have $\delta^\lhd \in \uEnd_\FM^\lhd(\cF)$.  Moreover, for any $r \in \Lambda$, we have
\[
\delta\nu_{\cF}(r) + (-1)^{|r|+1}\nu_{\cF}(r)\delta = \delta^\lhd \nu_{\cF}(r) + (-1)^{|r|+1}\nu_{\cF}(r)\delta^\lhd.
\]
\item
\label{it:delta-lhd-rhd-2}
We have $\delta^\rhd \in \uEnd_\FM^\rhd(\cF)$.  Moreover, for any $x \in R^\vee$, we have
\[
\delta\mu_{\cF}(x) - \mu_{\cF}(x)\delta = 0 = \delta^\rhd \mu_{\cF}(x) - \mu_{\cF}(x) \delta^\rhd.
\]
\item
\label{it:delta-lhd-rhd-3}
We have
\[
(\delta^\rhd)^2 = \delta^2
\qquad\text{and}\qquad
d(\delta^\rhd) \in \uEnd_\FM^\rhd(\cF).
\]
\end{enumerate}
\end{lem}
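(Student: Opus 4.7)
The plan is to work directly in $\uEnd_\FM(\cF) = \Lambda \otimes \uEnd_\BE(\cF) \otimes R^\vee$ using the four-fold decomposition $\delta = \delta_{00} + \delta_{01} + \delta_{10} + \delta_{11}$ supplied by convolutivity, where the index $(a,b) \in \{0,1\}^2$ records whether the $\Lambda$- and $R^\vee$-factors are nontrivial. Unwinding the definitions of $\mu_\cF$ (via $\lfrown$ on the $\Lambda$-slot) and $\nu_\cF$ (via $\rfrown$ on the $R^\vee$-slot) yields
\[
 \sum_i e_i \mu_\cF(\check e_i) = \delta_{10} + \delta_{11},
 \qquad
 \sum_i \nu_\cF(e_i) \check e_i = \delta_{01} + \delta_{11},
\]
so $\delta^\rhd = \delta_{00} + \delta_{01} \in \uEnd_\FM^\rhd(\cF)$ and $\delta^\lhd = \delta_{00} + \delta_{10} \in \uEnd_\FM^\lhd(\cF)$, establishing the first sentences of~\eqref{it:delta-lhd-rhd-1} and~\eqref{it:delta-lhd-rhd-2}.

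For the commutator identities in~\eqref{it:delta-lhd-rhd-2}, I would apply $x \lfrown({-})$ to the defining relation $\delta^2 = \Theta - \kappa(\delta)$. By Lemma~\ref{lem:kappa-bifrown}\eqref{it:kappa-bifrown-left} the left-hand side becomes $\mu_\cF(x)\delta - \delta\mu_\cF(x)$ (using $|\delta|=1$), while the right-hand side becomes $0 + \kappa(\mu_\cF(x))$, which vanishes because $\mu_\cF(x) \in \uEnd_\FM^\rhd(\cF)$ has trivial $\Lambda$-part. This gives $\delta\mu_\cF(x) = \mu_\cF(x)\delta$ for $x \in V$, and the general case in $R^\vee$ follows by multiplicativity of $\mu_\cF$. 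To replace $\delta$ by $\delta^\rhd$, observe that $\mu_\cF(x)$ has cohomological degree zero and trivial $\Lambda$-content, so commuting it past each $e_i \in \Lambda^1_2$ is sign-free; combined with Remark~\ref{rmk:mu-nu-commute} (which supplies $\mu_\cF(x) \mu_\cF(\check e_i) = \mu_\cF(\check e_i)\mu_\cF(x)$ in $\uEnd_\FM(\cF)$), this shows that $\mu_\cF(x)$ commutes with $\delta - \delta^\rhd = \sum_i e_i \mu_\cF(\check e_i)$, yielding the second identity.

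For~\eqref{it:delta-lhd-rhd-1}, it suffices to show that $\delta - \delta^\lhd = \sum_i \nu_\cF(e_i) \check e_i$ graded-commutes with $\nu_\cF(r)$ with sign $(-1)^{|r|}$. Since $|\check e_i| = 0$, each factor $\check e_i$ slides past $\nu_\cF(r)$ in $\uEnd_\FM(\cF)$ without a sign, so by associativity the claim reduces to
\[
 \sum_i \nu_\cF(e_i) \nu_\cF(r) \check e_i = (-1)^{|r|} \sum_i \nu_\cF(r) \nu_\cF(e_i) \check e_i.
\]
This follows from the graded-commutation relation $\nu_\cF(e_i)\nu_\cF(r) = (-1)^{|r|} \nu_\cF(r)\nu_\cF(e_i)$, which holds formally because $\nu_\cF$ is a bigraded ring homomorphism (Lemma~\ref{lem:convolutive-mu-nu}\eqref{it:convolutive-nu}) and $\Lambda$ is graded commutative, so $\nu_\cF(e_i \wedge r) = (-1)^{|r|} \nu_\cF(r \wedge e_i)$.

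For~\eqref{it:delta-lhd-rhd-3}, set $\Delta = \delta - \delta^\rhd = \sum_i e_i \mu_\cF(\check e_i)$ and expand $\delta^2 = (\delta^\rhd)^2 + (\delta^\rhd \Delta + \Delta \delta^\rhd) + \Delta^2$. The vanishing $\Delta^2 = 0$ is an antisymmetry argument: since $|\mu_\cF(\check e_i)| = 0$ and $\mu_\cF(\check e_i)$ has trivial $\Lambda$-part, it commutes with $e_j$ without sign, so $\Delta^2 = \sum_{i,j} e_i e_j \mu_\cF(\check e_i \check e_j)$, which vanishes because $e_i e_j$ is antisymmetric in $(i,j)$ while $\mu_\cF(\check e_i \check e_j) = \mu_\cF(\check e_j \check e_i)$ is symmetric (with $e_i^2 = 0$ handling the diagonal). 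The cross-term $\delta^\rhd \Delta + \Delta \delta^\rhd$ vanishes because the $\uEnd_\BE$-component of $\delta^\rhd$ has cohomological degree $1$, so moving $\delta^\rhd$ past each $e_i$ contributes a sign $-1$, and part~\eqref{it:delta-lhd-rhd-2} then gives $\delta^\rhd \Delta = -\Delta \delta^\rhd$. Hence $(\delta^\rhd)^2 = \delta^2$, and the second claim follows from $d(\delta^\rhd) = \kappa(\delta^\rhd) + \delta\delta^\rhd + \delta^\rhd\delta = 0 + 2(\delta^\rhd)^2 = 2(\Theta - \kappa(\delta))$, which lies in $\uEnd_\FM^\rhd(\cF)$ since $\kappa$ strictly lowers $\Lambda$-degree (so $\kappa(\delta) \in \uEnd_\FM^\rhd(\cF)$) and $\Theta$ is manifestly there. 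The main technical burden throughout will be the sign bookkeeping in the noncommutative composition on $\uEnd_\FM(\cF)$; the most delicate step is the vanishing of the cross-term in~\eqref{it:delta-lhd-rhd-3}, which requires simultaneously tracking the cohomological degrees $|\delta^\rhd| = 1$, $|e_i| = 1$, and $|\mu_\cF(\check e_i)| = 0$ against the explicit sign formula defining $\circ$.
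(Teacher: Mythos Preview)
Your proof is correct and follows essentially the same approach as the paper's: both rely on the graded-centrality of $e_i$ and $\check e_i$, the ring-homomorphism property of $\mu_\cF$ and $\nu_\cF$, and the vanishing $d(\mu_\cF(x))=0$ (which you derive by contracting the defining relation $\delta^2+\kappa(\delta)=\Theta$, while the paper cites it directly). Your computation $d(\delta^\rhd)=2(\delta^\rhd)^2=2(\Theta-\kappa(\delta))$ for the final claim is a slightly slicker variant of the paper's term-by-term analysis of $d(\delta)-\sum_i d(e_i\mu_\cF(\check e_i))$, but both arguments land in $\uEnd_\FM^\rhd(\cF)$ for the same reason.
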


\begin{proof}
Part~\eqref{it:delta-lhd-rhd-1} follows from the facts that $\nu_{\cF}$ is a ring morphism and that the elements $\check e_i$ are central in $\uEnd_\FM(\cF)$. 
In~\eqref{it:delta-lhd-rhd-2}, the first equality follows from the fact that $d(\mu_\cF(x))=0$. We deduce the second equality by noting that $\mu_{\cF}$ is an algebra homomorphism, and that
$\mu_{\cF}(x)$ commutes with $e_i$ (because it is of even degree). Let us now prove~\eqref{it:delta-lhd-rhd-3}. We have
\begin{multline*}
(\delta^\rhd)^2 = (\delta - \sum_i e_i \mu_{\cF}(\check e_i))^2 \\
= \delta^2 - \delta \cdot \left( \sum_i e_i \mu_\cF(\check e_i) \right) - \left( \sum_i e_i \mu_\cF(\check e_i) \right) \cdot \delta + \left( \sum_i e_i \mu_\cF(\check e_i) \right)^2.
\end{multline*}
The fourth term on the right-hand side is easily seen to vanish, and then the first formula follows from~\eqref{it:delta-lhd-rhd-2} and the fact that $\delta \cdot e_i = -e_i \cdot \delta$ (because $\delta$ is of odd degree). Finally, we observe that
\[
d(\delta^\rhd) = d(\delta) - \sum_i d(e_i \mu_\cF(\check e_i)) = \Theta_{\cF} + \delta^2 - \sum_i d(e_i) \cdot \mu_\cF(\check e_i) + \sum_i e_i \cdot d(\mu_\cF(\check e_i)).
\]
On the right-hand side, the first term belongs to $\uEnd_\FM^\rhd(\cF)$, as does the second one, since $(\delta^\rhd)^2 = \delta^2$. Since $d(e_i) = 1 \otimes (e_i \ustar \id) \otimes 1$, the third term belongs to $\uEnd_\FM^\rhd(\cF)$. And the fourth one vanishes because $\mu$ is a dgg algebra morphism. The claim follows.
\end{proof}

%%%%%%%%%%%%%%%%%%%%%%%%%%%%%%%%%%%%%%%%%%%%%%%%%%%%%%%%%%%%%%%%%%%%%%%%%%%
\section{Convolution on dgg Hom spaces}
\label{sec:fm-convolution-morph}
%%%%%%%%%%%%%%%%%%%%%%%%%%%%%%%%%%%%%%%%%%%%%%%%%%%%%%%%%%%%%%%%%%%%%%%%%%%

Let $\cF, \cG, \cF', \cG'$ be convolutive free-monodromic complexes.  We define the map\index{convolution!starhat@$\hatstar$}
\[
\hatstar: \uHom_\FM(\cF,\cG) \otimes \uHom_\FM(\cF',\cG') \to \uHom_\FM(\cF \ustar \cF', \cG \ustar \cG')
\]
to be the following composition:
\begin{multline*}
\uHom_\FM(\cF,\cG) \otimes \uHom_\FM(\cF',\cG') \\
= \Lambda \otimes \uHom_\BE(\cF,\cG) \otimes R^\vee \otimes \Lambda \otimes \uHom_\BE(\cF',\cG') \otimes R^\vee 
\xrightarrow{\id \otimes \id \otimes \mu_{\cG'} \otimes \nu_\cF \otimes \id \otimes \id} \\
\Lambda \otimes \uHom_\BE(\cF,\cG) \otimes \uEnd_\BE(\cG') \otimes R^\vee \otimes \Lambda \otimes \uEnd_\BE(\cF) \otimes \uHom_\BE(\cF',\cG') \otimes R^\vee \\
\xrightarrow{\tau}
\Lambda \otimes \Lambda \otimes \uHom_\BE(\cF,\cG) \otimes \uEnd_\BE(\cF) \otimes  \uEnd_\BE(\cG') \otimes \uHom_\BE(\cF',\cG') \otimes R^\vee \otimes R^\vee\\
\xrightarrow{\circ \otimes \circ \otimes \circ \otimes \circ}
\Lambda \otimes \uHom_\BE(\cF,\cG) \otimes \uHom_\BE(\cF',\cG') \otimes R^\vee \\
\xrightarrow{\id \otimes \ustar \otimes \id}
\Lambda \otimes \uHom_\BE(\cF \ustar \cF', \cG \ustar \cG') \otimes R^\vee
= \uHom_\FM(\cF \ustar \cF', \cG \ustar \cG').
\end{multline*}
It will be useful to express this composition in the following way. Consider some $f \in \uHom_\FM(\cF,\cG)$, and suppose $f = rf'x$, where $r \in \Lambda$, $f' \in \uHom_\BE(\cF,\cG)$, and $x \in R^\vee$.  Similarly, suppose $h \in \uHom_\FM(\cF',\cG')$ can be written as $h = th'z$.  Then we have
\[
f \hatstar h = rf' \nu_{\cF}(t) \ustar \mu_{\cG'}(x) h'z,
\]
where $\ustar$ is the map defined in~\eqref{eqn:ustar-rhd-lhd}.

\begin{lem}
\label{lem:monconv-weak-interchange}
Let $\cF, \cG, \cH, \cF', \cG', \cH'$ be convolutive free-monodromic complexes.
Let $f \in \uHom_\FM(\cF,\cG)$, $g \in \uHom_\FM(\cG,\cH)$, $h \in \uHom_\FM(\cF',\cG')$, $k \in \uHom_\FM(\cG',\cH')$.  Assume that one of the following conditions holds:
\begin{enumerate}
\item
The elements $f$ and $d(f)$ both lie in $\uHom_\FM^\lhd(\cF,\cG)$.
\item
The elements $k$ and $d(k)$ both lie in $\uHom_\FM^\rhd(\cG',\cH')$.
\item
We have $f \in \uHom_\FM^\lhd(\cF,\cG)$ and $k \in \uHom_\FM^\rhd(\cF,\cG)$.
\end{enumerate}
Then we have
\begin{equation}\label{eqn:monconv-weak-interchange}
(g \circ f) \hatstar (k \circ h) = (-1)^{|f||k|}(g \hatstar k) \circ (f \hatstar h).
\end{equation}
\end{lem}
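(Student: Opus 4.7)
My plan is to prove the identity by direct expansion of both sides, reducing it to an application of the signed interchange law for $\ustar$ (equation~\eqref{eqn:interchange-law-rhd-lhd}) together with the commutation formulas for $\mu$ and $\nu$ supplied by Remark~\ref{rmk:mu-nu-commute} and Lemma~\ref{lem:convolutive-mu-nu}. Fix tensor decompositions $f = r \otimes f' \otimes x$, $g = s \otimes g' \otimes y$, $h = t \otimes h' \otimes z$, $k = u \otimes k' \otimes w$ of the arguments and extend $\bk$-linearly; note that the hypothesis forces $x = 1$ in cases~(1) and~(3), and $u = 1$ in cases~(2) and~(3).

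Using the composition formula in $\uHom_\FM$ and the definition of $\hatstar$, the left-hand side of~\eqref{eqn:monconv-weak-interchange} becomes, up to an explicit sign,
\[
(s \wedge r)(g' \circ f')\,\nu_{\cF}(u \wedge t) \;\ustar\; \mu_{\cH'}(yx)\,(k' \circ h')\,(wz).
\]
For the right-hand side, we first apply the definition of $\hatstar$ to each factor:
\[
(g \hatstar k) \circ (f \hatstar h) = \bigl[s g' \nu_{\cG}(u) \ustar \mu_{\cH'}(y)\, k' w\bigr] \circ \bigl[r f' \nu_{\cF}(t) \ustar \mu_{\cG'}(x)\, h' z\bigr],
\]
and then apply the signed interchange law~\eqref{eqn:interchange-law-rhd-lhd} for $\ustar$ to rewrite the composition as
\[
\pm\,\bigl[(s g' \nu_{\cG}(u)) \circ (r f' \nu_{\cF}(t))\bigr] \;\ustar\; \bigl[(\mu_{\cH'}(y)\, k' w) \circ (\mu_{\cG'}(x)\, h' z)\bigr].
\]

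The crux of the proof is to identify the two bracketed factors with the corresponding factors in the expansion of the LHS. This requires (i) transporting $\nu_{\cG}(u)$ past $r f'$, so that it merges with $\nu_{\cF}(t)$ via the fact that $\nu$ is a graded ring homomorphism (Lemma~\ref{lem:convolutive-mu-nu}\eqref{it:convolutive-nu}); and (ii) transporting $\mu_{\cG'}(x)$ past $k' w$, so that it merges with $\mu_{\cH'}(y)$ via the corresponding statement for $\mu$. Each transport holds at the level of $\uHom_\FM$ (not merely $\gHom_\FM$) by Remark~\ref{rmk:mu-nu-commute}, provided the morphism being passed through and its differential both lie in the appropriate subspace ($\uHom_\FM^\lhd$ for the $\nu$-transport, $\uHom_\FM^\rhd$ for the $\mu$-transport). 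Under hypothesis~(1), $f$ and $d(f)$ lie in $\uHom_\FM^\lhd$, so transport~(i) is legal, while $x = 1$ makes $\mu_{\cG'}(x) = \id$, rendering~(ii) vacuous; hypothesis~(2) is the symmetric case; hypothesis~(3) trivializes both because $x = 1$ and $u = 1$.

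What remains is a bookkeeping check that the signs coming from the composition formula (applied twice on the LHS and then again inside each bracket of the RHS), the interchange law, and the commutations of $\nu$ and $\mu$ past elements of given parity, combine to give exactly the factor $(-1)^{|f||k|}$. The main obstacle is this sign computation, which is mechanical but somewhat intricate; the essential degrees to track are $|r|,|s|,|t|,|u|,|x|,|y|,|w|,|z|$ together with the parities of $f',g',h',k'$ as morphisms in $\DiagBSp(\fh,W)$. Using that $|x|, |y|, |z|, |w|$ are even (as elements of $R^\vee$) and that $\nu$ and $\mu$ preserve the cohomological degree, the signs collapse and the two expressions agree up to the asserted factor $(-1)^{|f||k|}$.
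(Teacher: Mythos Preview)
Your overall strategy---expand both sides using the definition of $\hatstar$, apply the signed interchange law~\eqref{eqn:interchange-law-rhd-lhd} for $\ustar$, and then use Remark~\ref{rmk:mu-nu-commute} to move $\nu_\cG(u)$ past $f$ (or $\mu_{\cG'}(x)$ past $k$)---is exactly the mechanism the paper uses. However, there is a subtle gap in the step ``fix tensor decompositions $f = r\otimes f'\otimes x,\ldots$ and extend $\bk$-linearly.'' The hypothesis $d(f)\in\uHom_\FM^\lhd$ in case~(1) is a condition on $f$ as a whole and does \emph{not} pass to the individual simple tensors in a decomposition $f=\sum_i r_i f'_i$: each $r_i f'_i$ lies in $\uHom_\FM^\lhd$, but its differential $d(r_if'_i)=\kappa(r_if'_i)+\delta_\cG(r_if'_i)\pm(r_if'_i)\delta_\cF$ generally picks up $R^\vee$-components from $\delta_\cF,\delta_\cG$. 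So Remark~\ref{rmk:mu-nu-commute} does not apply termwise, and your transport step~(i) is not justified as written. (Symmetrically for $k$ in case~(2).)

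The paper sidesteps this by a small reorganization that is worth noting. It first establishes the special cases $f=\id_\cF$ and $k=\id_{\cG'}$ by direct computation (these need no hypothesis at all). Using these, the general identity reduces to the single equality
\[
f\hatstar k=(-1)^{|f||k|}(\id_\cG\hatstar k)\circ(f\hatstar\id_{\cG'}).
\]
To prove this in case~(1), one decomposes only $k=uk'w$ into a simple tensor and keeps $f$ abstract: since $f\in\uHom_\FM^\lhd$ one has $f\hatstar k=f\,\nu_\cF(u)\ustar k'w$ and $(\id_\cG\hatstar k)(f\hatstar\id_{\cG'})=(-1)^{|f||k'|}\nu_\cG(u)\,f\ustar k'w$, and now Remark~\ref{rmk:mu-nu-commute} applies to $f$ itself to give $\nu_\cG(u)f=(-1)^{|u||f|}f\,\nu_\cF(u)$. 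Your argument can be repaired in the same way: reduce $g,h,k$ to simple tensors by linearity, but in case~(1) leave $f$ undecomposed (and symmetrically for $k$ in case~(2)); then the transport you need is exactly what Remark~\ref{rmk:mu-nu-commute} provides.
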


In particular, this lemma can be invoked when either $f$ or $k$ is an identity map.

\begin{proof}
Let us first consider the special case where $\cF = \cG$ and $f = \id_\cF$.  We may assume without loss of generality that $g = s g' y$, where $s \in \Lambda$, $g' \in \uHom_\BE(\cF,\cH)$, and $y \in R^\vee$.  Similarly, assume that $h = t h' z$ and $k = u k' w$.  We have
\[
g \hatstar (k h) = (-1)^{|t||k'|} sg'y \hatstar ut k'h' wz 
=  (-1)^{|t||k'|} sg' \nu_{\cF}(ut) \ustar \mu_{\cH'}(y) k'h'wz
\]
and
\begin{multline*}
(g \hatstar k) \circ (\id_\cF \hatstar h) = (sg' \nu_\cF(u) \ustar \mu_{\cH'}(y) k'w) (\nu_\cF(t) \ustar h'z) \\
= (-1)^{|t||k'|} (sg'\nu_\cF(u) \nu_\cF(t) \ustar \mu_{\cH'}(y) k' h'wz) 
\end{multline*}
by~\eqref{eqn:interchange-law-rhd-lhd}. Hence $g \hatstar (k h)=(g \hatstar k) \circ (\id_\cF \hatstar h)$ as desired.
A similar calculation proves~\eqref{eqn:monconv-weak-interchange} in the special case where $\cG' = \cH'$ and $k = \id_{\cG'}$.

We now turn to the general case.  By the special cases considered above, we have
\begin{align*}
g \hatstar k &= (g \hatstar \id_{\cH'})(\id_{\cG} \hatstar k), \\
f \hatstar h &= (f \hatstar \id_{\cG'})(\id_{\cF} \hatstar h), \\
(gf) \hatstar (kh) &= (g \hatstar \id_{\cH'})(f \hatstar k)(\id_\cF \hatstar h).
\end{align*}
Thus, to prove~\eqref{eqn:monconv-weak-interchange} in general, it suffices to show that
\begin{equation}\label{eqn:monconv-weak-special}
f \hatstar k = (-1)^{|f||k|}(\id_\cG \hatstar k) \circ (f \hatstar \id_{\cG'}).
\end{equation}

Suppose now that $f, d(f) \in \uHom_\FM^\lhd(\cF,\cG)$. Assume without loss of generality that $k = uk'w$ with $u \in \Lambda$, $k' \in \uHom_\BE(\cG',\cH')$, and $w \in R^\vee$.  Our assumption on $f$ implies that
\[
f \hatstar k = f \nu_\cF(u) \ustar k'w
\]
and 
\[
(\id_\cG \hatstar k) \circ (f \hatstar \id_{\cG'}) = (\nu_\cG(u) \ustar k'w)(f \ustar  \id_{\cG'}) = (-1)^{|f||k'|}\nu_\cG(u)f \ustar k'w
\]
by~\eqref{eqn:interchange-law-rhd-lhd}.
Our assumption on $f$ and $d(f)$ implies that 
$\nu_\cG(u)f = (-1)^{|u||f|}f \nu_\cF(u)$, see Remark~\ref{rmk:mu-nu-commute}. The equality~\eqref{eqn:monconv-weak-special} follows.

A similar computation yields~\eqref{eqn:monconv-weak-special} if we instead assume that $k$ and $d(k)$ lie in $\uHom_\FM^\rhd(\cG',\cH')$.  Finally, if $f \in \uHom_\FM^\lhd(\cF,\cG)$ and $k \in \uHom_\FM^\rhd(\cF,\cG)$, then each instance of $\hatstar$ in~\eqref{eqn:monconv-weak-special} reduces to $\ustar$, so the claim follows from~\eqref{eqn:interchange-law-rhd-lhd} in this case as well.
\end{proof}

%%%%%%%%%%%%%%%%%%%%%%%%%%%%%%%%%%%%%%%%%%%%%%%%%%%%%%%%%%%%%%%%%%%%%%%%%%%
\section{Convolution of objects}
%%%%%%%%%%%%%%%%%%%%%%%%%%%%%%%%%%%%%%%%%%%%%%%%%%%%%%%%%%%%%%%%%%%%%%%%%%%

In this section, we will define the operation $\hatstar$ on free-monodromic complexes. 

\begin{defn}\label{defn:monconv}
Let $\cF$ and $\cF'$ be two convolutive free-monodromic complexes. Their \emph{monodromic convolution product}, denoted by $\cF \hatstar \cF'$, is defined to be the convolutive free-monodromic complex whose underlying $\DiagBSp$-sequence is $\cF \ustar \cF'$, and whose differential is given by\index{convolution!starhat@$\hatstar$}
\[
\delta_{\cF \hatstar \cF'} = \delta_\cF \hatstar \id_{\cF'} + \id_{\cF} \hatstar \delta_{\cF'} - \psi_{\cF \hatstar \cF'},
\]
where
\[
\psi_{\cF \hatstar\cF'} = \sum_i \nu_{\cF}(e_i) \ustar \mu_{\cF'}(\check e_i).
\]
(Here, as usual, $e_1, \sdots, e_r$ and $\check e_1, \sdots, \check e_r$ are dual bases for $V^*$ and $V$ respectively.)
\end{defn}

Of course, we must check that $\delta_{\cF \hatstar \cF'}$ obeys the defining condition for a free-monodromic complex.  This will be done in Lemma~\ref{lem:monconv-diff-ax} below.

\begin{lem}
\label{lem:convdiff-formula}
For any two convolutive free-monodromic complexes $\cF$ and $\cF'$, we have
\[
\delta_{\cF \hatstar \cF'} = \delta_\cF \hatstar \id_{\cF'} + \id_{\cF} \hatstar \delta_{\cF'}^\rhd = \delta_\cF^\lhd \hatstar \id_{\cF'} + \id_{\cF} \hatstar \delta_{\cF'} = \delta_{\cF}^\lhd \hatstar \id_{\cF'} + \id_{\cF} \hatstar \delta_{\cF'}^\rhd + \psi_{\cF \hatstar \cF'}.
\]
\end{lem}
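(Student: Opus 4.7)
The plan is to verify the three claimed equalities by a direct calculation, using nothing beyond the explicit formula
\[
f \hatstar h = rf' \nu_{\cF}(t) \ustar \mu_{\cG'}(x) h'z \qquad\text{for } f = rf'x,\ h = th'z
\]
from~\S\ref{sec:fm-convolution-morph}, the definitions of $\delta^\lhd$ and $\delta^\rhd$ from~\S\ref{sec:convolutive-UGU}, and Definition~\ref{defn:monconv} of $\delta_{\cF \hatstar \cF'}$. Everything reduces to identifying the correction term $\psi_{\cF \hatstar \cF'}$ in two different ways.

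First I would compute $\id_\cF \hatstar (e_i \otimes \id_{\cF'} \otimes \check e_i)$ for each $i$. Writing $\id_\cF = 1 \otimes \id_\cF \otimes 1$ and applying the formula with $t = e_i$, $h' = \id_{\cF'}$, $z = \check e_i$, and using the convolutivity of $\cF'$ to identify $\id_{\cF'} \cdot \check e_i$ with $\mu_{\cF'}(\check e_i)$, one gets
\[
\id_\cF \hatstar \bigl( e_i \otimes \id_{\cF'} \otimes \check e_i \bigr) = \nu_\cF(e_i) \ustar \mu_{\cF'}(\check e_i).
\]
Summing over $i$ and recalling the definition of $\delta_{\cF'}^\rhd$ yields
\[
\id_\cF \hatstar \delta_{\cF'}^\rhd = \id_\cF \hatstar \delta_{\cF'} - \psi_{\cF \hatstar \cF'}.
\]
A symmetric calculation, where now one decomposes $\nu_\cF(e_i)\check e_i$ as $e_i \otimes \id_\cF \otimes \check e_i$ and applies the formula with $r = e_i$, $f' = \id_\cF$, $x = \check e_i$ on the left-hand factor and $\id_{\cF'}$ on the right, gives
\[
\delta_\cF^\lhd \hatstar \id_{\cF'} = \delta_\cF \hatstar \id_{\cF'} - \psi_{\cF \hatstar \cF'}.
\]

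Given these two identities, all three equalities of the lemma follow by substitution into the definition $\delta_{\cF \hatstar \cF'} = \delta_\cF \hatstar \id_{\cF'} + \id_\cF \hatstar \delta_{\cF'} - \psi_{\cF \hatstar \cF'}$: the first two equalities are each obtained by absorbing one copy of $\psi_{\cF \hatstar \cF'}$ into $\id_\cF \hatstar \delta_{\cF'}$ or into $\delta_\cF \hatstar \id_{\cF'}$ respectively, and the third is obtained by absorbing $\psi_{\cF \hatstar \cF'}$ twice and adding it back once. There is no genuine obstacle here; the only point requiring slight care is keeping track of the convolutivity hypothesis (needed to write $\id_{\cF'} \cdot \check e_i = \mu_{\cF'}(\check e_i)$ and similarly on the other side), and making sure the signs in the formula for $\hatstar$ trivialise, which they do because $e_i$ sits in $\Lambda$ while $\id_\cF$ and $\id_{\cF'}$ are of even degree.
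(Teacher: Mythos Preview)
Your overall strategy matches the paper's exactly: establish the two identities
\[
\delta_\cF \hatstar \id_{\cF'} = \delta_\cF^\lhd \hatstar \id_{\cF'} + \psi_{\cF\hatstar\cF'}
\quad\text{and}\quad
\id_\cF \hatstar \delta_{\cF'} = \id_\cF \hatstar \delta_{\cF'}^\rhd + \psi_{\cF\hatstar\cF'},
\]
then substitute into Definition~\ref{defn:monconv}. However, your intermediate computation contains a genuine confusion. You compute $\id_\cF \hatstar (e_i \otimes \id_{\cF'} \otimes \check e_i)$ and then invoke ``convolutivity of $\cF'$ to identify $\id_{\cF'}\cdot\check e_i$ with $\mu_{\cF'}(\check e_i)$.'' But this identification is false: $\id_{\cF'}\cdot\check e_i = 1\otimes\id_{\cF'}\otimes\check e_i$ is simply the right $R^\vee$-action, whereas $\mu_{\cF'}(\check e_i) = \check e_i \lfrown \delta_{\cF'}$. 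These differ already for $\cF' = \Tmon_s$ (cf.~\eqref{eqn:ts-free-leftmon}). The same error appears on the other side, where you write ``decompose $\nu_\cF(e_i)\check e_i$ as $e_i\otimes\id_\cF\otimes\check e_i$''; again $\nu_\cF(e_i) = \delta_\cF \rfrown e_i$ is not $e_i\otimes\id_\cF\otimes 1$ in general. Relatedly, $\delta_{\cF'} - \delta_{\cF'}^\rhd = \sum_i e_i\,\mu_{\cF'}(\check e_i)$, not $\sum_i e_i\otimes\id_{\cF'}\otimes\check e_i = \theta_{\cF'}$.

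The fix is easy and is what the paper does. What convolutivity actually gives you is that $\mu_{\cF'}(\check e_i)\in\uEnd_\FM^\rhd(\cF')$ (Lemma~\ref{lem:convolutive-mu-nu}). Then $e_i\,\mu_{\cF'}(\check e_i)$ is a sum of terms of the form $e_i\otimes h'\otimes z$, and applying the $\hatstar$ formula with $t=e_i$, $x=1$ (so $\mu_{\cF'}(x)=\id_{\cF'}$) gives $\id_\cF \hatstar (e_i\,\mu_{\cF'}(\check e_i)) = \nu_\cF(e_i)\ustar\mu_{\cF'}(\check e_i)$, as desired. The symmetric computation uses $\nu_\cF(e_i)\in\uEnd_\FM^\lhd(\cF)$ and $\mu_{\cF'}(1)=\id$ to get $(\nu_\cF(e_i)\check e_i)\hatstar\id_{\cF'} = \nu_\cF(e_i)\ustar\mu_{\cF'}(\check e_i)$.
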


\begin{proof}
We have
\begin{multline*}
\delta_\cF \hatstar \id_{\cF'} = \delta_\cF^\lhd \hatstar \id_{\cF'} + \sum_i \nu_{\cF}(e_i)\check e_i \hatstar \id_{\cF'} \\
= \delta_\cF^\lhd \hatstar \id_{\cF'} + \sum_i \nu_\cF(e_i) \ustar \mu_{\cF'}(\check e_i) = \delta_\cF^\lhd \hatstar \id_{\cF'} + \psi_{\cF \hatstar \cF'}.
\end{multline*}
A similar calculation shows that
\[
\id_{\cF} \hatstar \delta_{\cF'} = \id_{\cF} \hatstar \delta_{\cF'}^\rhd + \psi_{\cF \hatstar \cF'},
\]
and the desired equalities follow.
\end{proof}

\begin{lem}\label{lem:monconv-diff-ax}
Let $\cF$ and $\cF'$ be convolutive free-monodromic complexes.  The element $\delta_{\cF \hatstar \cF'} \in \uEnd_\FM(\cF \ustar \cF')$ satisfies $\delta_{\cF \hatstar \cF'}^2 + \kappa(\delta_{\cF \hatstar \cF'}) = \Theta_{\cF \ustar \cF'}$.
\end{lem}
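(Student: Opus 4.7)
The plan is to use Lemma~\ref{lem:convdiff-formula} to decompose
\[
\delta_{\cF \hatstar \cF'} = A + B, \qquad A := \delta_\cF \hatstar \id_{\cF'}, \quad B := \id_\cF \hatstar \delta_{\cF'}^\rhd,
\]
and then expand $(A+B)^2 + \kappa(A+B)$ term by term, matching the result with $\Theta_{\cF \ustar \cF'}$.

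For the four quadratic terms, I will invoke the weak interchange law of Lemma~\ref{lem:monconv-weak-interchange}. Hypothesis~(2) of that lemma (requiring $k$ and $d(k)$ to lie in $\uHom_\FM^\rhd$) covers the products $A \circ A$, $A \circ B$, and $B \circ A$: for the first two this is trivial since $k = \id_{\cF'}$, and for the third it follows from Lemma~\ref{lem:delta-lhd-rhd}\eqref{it:delta-lhd-rhd-3}. For $B \circ B$, hypothesis~(1) holds since $f = \id_\cF$. The resulting formulas are
\[
A^2 = \delta_\cF^2 \hatstar \id_{\cF'}, \qquad B^2 = \id_\cF \hatstar \delta_{\cF'}^2, \qquad A \circ B + B \circ A = 0,
\]
where $(\delta_{\cF'}^\rhd)^2 = \delta_{\cF'}^2$ again comes from Lemma~\ref{lem:delta-lhd-rhd}\eqref{it:delta-lhd-rhd-3}. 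For the $\kappa$-terms, since the operation $({-}) \hatstar \id_{\cF'}$ does not modify the $\Lambda$-factor, a direct unfolding of the definitions gives $\kappa(A) = \kappa(\delta_\cF) \hatstar \id_{\cF'}$, while $B = \id_\cF \ustar \delta_{\cF'}^\rhd$ has trivial $\Lambda$-component (as $\delta_{\cF'}^\rhd \in \uEnd_\FM^\rhd(\cF')$), so $\kappa(B) = 0$. Combining these ingredients and invoking $\delta_\cF^2 + \kappa(\delta_\cF) = \Theta_\cF$ yields
\[
\delta_{\cF \hatstar \cF'}^2 + \kappa(\delta_{\cF \hatstar \cF'}) = \Theta_\cF \hatstar \id_{\cF'} + \id_\cF \hatstar \delta_{\cF'}^2.
\]

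The final step is to identify this with $\Theta_{\cF \ustar \cF'}$. Since $\Theta_{\cF'} \in \uEnd_\FM^\rhd(\cF')$, the definition of $\hatstar$ together with associativity of $\ustar$ immediately gives $\id_\cF \hatstar \Theta_{\cF'} = \Theta_{\cF \ustar \cF'}$, and $\Theta_{\cF'} = \delta_{\cF'}^2 + \kappa(\delta_{\cF'})$. Thus everything reduces to the key identity
\[
\Theta_\cF \hatstar \id_{\cF'} = \id_\cF \hatstar \kappa(\delta_{\cF'}),
\]
which will be the crux of the argument. To prove it, I apply $\kappa$ to the convolutive decomposition $\delta_{\cF'} = \delta_{\cF'}^\rhd + \sum_j e_j \mu_{\cF'}(\check e_j)$: the first summand contributes nothing since it has trivial $\Lambda$-component, while a direct expansion gives $\kappa(e_j \cdot \mu_{\cF'}(\check e_j)) = e_j \ustar \mu_{\cF'}(\check e_j)$, so $\kappa(\delta_{\cF'}) = \sum_j e_j \ustar \mu_{\cF'}(\check e_j) \in \uEnd_\FM^\rhd(\cF')$. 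Applying $\id_\cF \hatstar$ (which reduces to $\id_\cF \ustar$ on $\uEnd_\FM^\rhd$-elements) and using associativity of $\ustar$ yields $\id_\cF \hatstar \kappa(\delta_{\cF'}) = \sum_j (\id_\cF \ustar e_j) \ustar \mu_{\cF'}(\check e_j)$. On the other hand, writing $\Theta_\cF = \sum_i (\id_\cF \ustar e_i) \check e_i$ and unfolding the definition of $\hatstar$ produces $\Theta_\cF \hatstar \id_{\cF'} = \sum_i (\id_\cF \ustar e_i) \ustar \mu_{\cF'}(\check e_i)$, which matches, completing the proof.
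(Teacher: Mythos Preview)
Your proof is correct and follows essentially the same approach as the paper's: the same decomposition $\delta_{\cF \hatstar \cF'} = (\delta_\cF \hatstar \id_{\cF'}) + (\id_\cF \hatstar \delta_{\cF'}^\rhd)$ from Lemma~\ref{lem:convdiff-formula}, the same use of Lemma~\ref{lem:monconv-weak-interchange} and Lemma~\ref{lem:delta-lhd-rhd}\eqref{it:delta-lhd-rhd-3} to expand the square, and the same final identification $\Theta_\cF \hatstar \id_{\cF'} = \id_\cF \hatstar \kappa(\delta_{\cF'})$ via the explicit formula $\sum_i (\id_\cF \ustar e_i) \ustar \mu_{\cF'}(\check e_i)$. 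The only cosmetic difference is that you invoke hypothesis~(1) of Lemma~\ref{lem:monconv-weak-interchange} for the product $B \circ B$, whereas the paper uniformly invokes hypothesis~(2) (which also applies there, since $k = \delta_{\cF'}^\rhd$ satisfies it).
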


\begin{proof}
Let us use the formula $\delta_{\cF \hatstar \cF'} = \delta_\cF \hatstar \id + \id \hatstar \delta_{\cF'}^\rhd$ from Lemma~\ref{lem:convdiff-formula}.  Since $\delta_{\cF'}^\rhd$ and $d(\delta_{\cF'}^\rhd)$ both lie in $\uEnd_\FM^\rhd(\cF')$ (see Lemma~\ref{lem:delta-lhd-rhd}\eqref{it:delta-lhd-rhd-3}), we can use Lemma~\ref{lem:monconv-weak-interchange} to see that
\begin{multline*}
\delta_{\cF \hatstar \cF'}^2 = (\delta_\cF \hatstar \id_{\cF'})^2 + (\delta_\cF \hatstar \id_{\cF'})(\id_{\cF} \hatstar \delta_{\cF'}^\rhd) + (\id_{\cF} \hatstar \delta_{\cF'}^\rhd)(\delta_\cF \hatstar \id_{\cF'}) + (\id_{\cF} \hatstar \delta_{\cF'}^\rhd)^2 \\
= \delta_{\cF}^2 \hatstar \id_{\cF'} + \id_{\cF} \hatstar (\delta_{\cF'}^\rhd)^2
= \delta_{\cF}^2 \hatstar \id_{\cF'} + \id_{\cF} \hatstar \delta_{\cF'}^2,
\end{multline*}
where the last equality again comes from Lemma~\ref{lem:delta-lhd-rhd}\eqref{it:delta-lhd-rhd-3}.  Next, it is clear that $\kappa(f) \hatstar \id_{\cF'} = \kappa(f \hatstar \id_{\cF'})$ for any $f$.  On the other hand, we have $\kappa(\id_{\cF} \hatstar \delta_{\cF'}^\rhd) = 0$.  Thus,
\begin{multline*}
\delta_{\cF \hatstar \cF'}^2 + \kappa(\delta_{\cF \hatstar \cF'})
= \delta_{\cF}^2 \hatstar \id_{\cF'} + \id_{\cF} \hatstar \delta_{\cF'}^2 + \kappa(\delta_\cF) \hatstar \id_{\cF'} \\
= \Theta_{\cF} \hatstar \id_{\cF'} + \id_{\cF} \hatstar \Theta_{\cF'} - \id_{\cF} \hatstar \kappa(\delta_{\cF'}).
\end{multline*}
Note that $\Theta_{\cF \ustar \cF'} = \id_{\cF} \hatstar \Theta_{\cF'}$.  Thus, to complete the proof, we must show that
\[
\Theta_{\cF} \hatstar \id_{\cF'} = \id_{\cF} \hatstar \kappa(\delta_{\cF'}).
\]
For the left-hand side, we have
\[
\Theta_\cF \hatstar \id_{\cF'} = \sum_i (\id_\cF \ustar e_i) \ustar \mu_{\cF'}(\check e_i).
\]
For the right-hand side, note that $\kappa(\delta_{\cF'}^\rhd) = 0$, so
\[
\kappa(\delta_{\cF'}) = \kappa \left( \sum_i e_i \mu_{\cF'}(\check e_i) \right) = \sum_i (e_i \ustar \id_{\cF'}) \cdot \mu_{\cF'}(\check e_i).
\]
It follows that $\id_{\cF} \hatstar \kappa(\delta_{\cF'}) = \Theta_\cF \hatstar \id_{\cF'}$, as desired.
\end{proof}

%%%%%%%%%%%%%%%%%%%%%%%%%%%%%%%%%%%%%%%%%%%%%%%%%%%%%%%%%%%%%%%%%%%%%%%%%%%
\section{Convolution of morphisms}
\label{sec:convolution-morph}
%%%%%%%%%%%%%%%%%%%%%%%%%%%%%%%%%%%%%%%%%%%%%%%%%%%%%%%%%%%%%%%%%%%%%%%%%%%

In this section we check that the operation $\hatstar$ on maps defined in~\S\ref{sec:fm-convolution-morph} is compatible with the appropriate differentials, and hence that it induces an operation on morphisms in $\FM(\fh,W)$.

\begin{prop}
\label{prop:monconv-diff}
Let $\cF, \cG, \cF', \cG'$ be convolutive free-monodromic complexes.
For any $f \in \uHom_\FM(\cF,\cG)$ and $h \in \uHom_\FM(\cF',\cG')$, we have
\[
d(f) \hatstar h + (-1)^{|f|} f \hatstar d(h)
= d(f \hatstar h).
\]
\end{prop}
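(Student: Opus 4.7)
By bilinearity, I may assume $f = r \otimes f' \otimes x$ and $h = t \otimes h' \otimes z$ are simple tensors in $\Lambda \otimes \uHom_\BE \otimes R^\vee$, so that by the definition of $\hatstar$ on morphisms
\[
 f \hatstar h = \bigl(r\, f' \circ \nu_\cF(t)\bigr) \ustar \bigl(\mu_{\cG'}(x) \circ h'\, z\bigr)
\]
lies in the image of the map~\eqref{eqn:ustar-rhd-lhd}. Unpacking the differential gives
\[
 d(f \hatstar h) = \kappa(f \hatstar h) + \delta_{\cG \hatstar \cG'} \circ (f \hatstar h) + (-1)^{|f|+|h|+1}(f \hatstar h) \circ \delta_{\cF \hatstar \cF'}.
\]

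For the two composition terms, I will use the mixed decompositions from Lemma~\ref{lem:convdiff-formula}, namely $\delta_{\cG \hatstar \cG'} = \delta_\cG \hatstar \id_{\cG'} + \id_\cG \hatstar \delta^\rhd_{\cG'}$ and $\delta_{\cF \hatstar \cF'} = \delta^\lhd_\cF \hatstar \id_{\cF'} + \id_\cF \hatstar \delta_{\cF'}$. Each of the four resulting summands satisfies one of the hypotheses of Lemma~\ref{lem:monconv-weak-interchange}: the two summands with an identity factor use condition~(2), since $\id$ and its zero differential lie in $\uHom_\FM^\rhd$; the summand $\id_\cG \hatstar \delta^\rhd_{\cG'}$ uses condition~(2) with both $\delta^\rhd_{\cG'}$ and $d(\delta^\rhd_{\cG'})$ in $\uHom_\FM^\rhd$ by Lemma~\ref{lem:delta-lhd-rhd}\eqref{it:delta-lhd-rhd-2}--\eqref{it:delta-lhd-rhd-3}; and the summand $\delta^\lhd_\cF \hatstar \id_{\cF'}$ uses condition~(3), since $\delta^\lhd_\cF \in \uHom_\FM^\lhd$ by Lemma~\ref{lem:delta-lhd-rhd}\eqref{it:delta-lhd-rhd-1} and $\id \in \uHom_\FM^\rhd$. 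Summing and comparing with $d(f) \hatstar h + (-1)^{|f|} f \hatstar d(h)$ reduces the proposition to the single Leibniz-type identity
\begin{multline*}
 \kappa(f \hatstar h) = \kappa(f) \hatstar h + (-1)^{|f|} f \hatstar \kappa(h) \\
   + (-1)^{|f|+1}\bigl(f \circ (\delta_\cF - \delta^\lhd_\cF)\bigr) \hatstar h + (-1)^{|f|} f \hatstar \bigl((\delta_{\cG'} - \delta^\rhd_{\cG'}) \circ h\bigr).
\end{multline*}

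Since $\delta_\cF - \delta^\lhd_\cF = \sum_i \nu_\cF(e_i)\check e_i$ and $\delta_{\cG'} - \delta^\rhd_{\cG'} = \sum_i e_i \mu_{\cG'}(\check e_i)$, this last identity will follow by applying the derivation property~\eqref{eqn:kappa-derivation} of $\kappa$ to the composition $r f' \circ \nu_\cF(t)$ appearing in the left factor of $f \hatstar h$, together with the explicit evaluation $\kappa(\nu_\cF(e_i)) = \Theta_\cF \rfrown e_i - \delta_\cF^2 \rfrown e_i$ obtained from Lemma~\ref{lem:kappa-bifrown}\eqref{it:kappa-bifrown-right} applied to the defining relation $\kappa(\delta_\cF) = \Theta_\cF - \delta_\cF^2$ for the convolutive complex $\cF$; the $\Theta_\cF$-term generates the genuine Koszul contributions $\kappa(f) \hatstar h$ and $(-1)^{|f|} f \hatstar \kappa(h)$, while the $\delta_\cF^2$-term reorganizes into the correction involving $\nu_\cF(e_i) \check e_i$ (and symmetrically for $\mu_{\cG'}$).

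The main obstacle is the sign- and correction-bookkeeping in this last step: the $\Theta$-versus-$\delta^2$ contributions from $\kappa(\nu_\cF(e_i))$ and the parallel computation for $\mu_{\cG'}$ must combine so that the $\Lambda$-derivative terms are exactly the unadorned $\kappa(f)\hatstar h$ and $(-1)^{|f|} f \hatstar \kappa(h)$, while all residual terms assemble into the $(\delta_\cF - \delta^\lhd_\cF)$- and $(\delta_{\cG'} - \delta^\rhd_{\cG'})$-corrections with the correct signs. A useful sanity check is the specialization $f = \id_\cF$, $h = \id_{\cF'}$, in which the proposition collapses to $d(\id_{\cF \hatstar \cF'}) = 0$, that is, to Lemma~\ref{lem:monconv-diff-ax}; the cancellation in its proof is the template that must be matched in the general case.
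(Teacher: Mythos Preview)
Your reduction via Lemma~\ref{lem:monconv-weak-interchange} has a genuine gap at the term $(f \hatstar h) \circ (\delta^\lhd_\cF \hatstar \id_{\cF'})$. Condition~(3) of that lemma requires the \emph{bottom-left} factor to lie in $\uHom_\FM^\lhd$ and the \emph{top-right} factor to lie in $\uHom_\FM^\rhd$; in your setup the bottom-left is $\delta^\lhd_\cF$ (fine) but the top-right is the arbitrary $h$, not $\id_{\cF'}$ (which sits in the bottom-right slot). Nor does condition~(1) save you: it would need $d(\delta^\lhd_\cF) \in \uHom_\FM^\lhd$, but $d(\delta^\lhd_\cF)$ contains $\sum_i(\delta^\lhd_\cF \nu_\cF(e_i) + \nu_\cF(e_i)\delta^\lhd_\cF)\check e_i$ with nontrivial $R^\vee$-part (the bracket equals $(\id \ustar e_i) - \kappa(\nu_\cF(e_i))$, which is generically nonzero). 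A similar mislabeling occurs for $(f \hatstar h)\circ(\id_\cF \hatstar \delta_{\cF'})$, where condition~(1), not~(2), is what applies---though that one is harmless.

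The desired interchange for the $\delta^\lhd_\cF$-term \emph{is} true for simple tensors, but for a different reason: since $f \hatstar h = (rf'\nu_\cF(t)) \ustar (\mu_{\cG'}(x)h'z)$ already lies in the image of~\eqref{eqn:ustar-rhd-lhd}, and $\delta^\lhd_\cF \hatstar \id_{\cF'} = \delta^\lhd_\cF \ustar \id_{\cF'}$ does too, the plain signed interchange~\eqref{eqn:interchange-law-rhd-lhd} applies directly. This is exactly how the paper handles it---the term appears inside the analysis of $\kappa(f) \hatstar h$ (equation~\eqref{eqn:monconv-diff1}), justified by~\eqref{eqn:interchange-law-rhd-lhd} rather than Lemma~\ref{lem:monconv-weak-interchange}. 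More broadly, your strategy (expand $d(f\hatstar h)$ and peel off the $\delta$-terms) and the paper's (expand $d(f)\hatstar h + (-1)^{|f|} f \hatstar d(h)$ into six pieces) are the same computation from opposite ends; the ``Leibniz-type identity'' you reduce to is the content of~\eqref{eqn:monconv-diff1}--\eqref{eqn:monconv-diff5}, and the sign bookkeeping you flag as the main obstacle is exactly what those equations carry out explicitly. Your sketch via $\kappa(\nu_\cF(e_i)) = (\Theta_\cF - \delta_\cF^2) \rfrown e_i$ is on the right track but is not yet a proof.
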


\begin{proof}
Assume that $f = rf'x$, where $r \in \Lambda$, $f' \in \uHom_\BE(\cF,\cG)$, and $x \in R^\vee$.  We likewise assume that $h$ can be written in the form $h = th'z$.  Assume furthermore that $t = t_1 \cdots t_k$, where $t_1, \sdots, t_k \in V^*$.  Below, we will use the notation
\[
t^{(j)} = t_1 \cdots \widehat{t_{j}} \cdots t_k.
\]

By definition, we have
\begin{multline}
\label{eqn:monconv-diff}
d(f) \hatstar h + (-1)^{|f|} f \hatstar d(h) =
\kappa(f) \hatstar h + (\delta_\cG f) \hatstar h + (-1)^{|f|+1} (f \delta_\cF) \hatstar h \\
+ (-1)^{|f|} f \hatstar \kappa(h) + (-1)^{|f|} f \hatstar (\delta_{\cG'} h) + (-1)^{|f|+|h|+1} f \hatstar (h \delta_{\cF'}).
\end{multline}

We will analyze each term in the right-hand side of~\eqref{eqn:monconv-diff} successively. For the first one,
we have
\begin{multline*}
\kappa(f) \hatstar h = \kappa(rf')x \hatstar th'z = \kappa(rf')\nu_{\cF}(t) \ustar \mu_{\cG'}(x)h' z\\
= \kappa(rf'\nu_{\cF}(t)) \ustar \mu_{\cG'}(x)h'z - (-1)^{|r|+|f'|} rf'\kappa(\nu_{\cF}(t)) \ustar \mu_{\cG'}(x)h'z.
\end{multline*}
Using Lemma~\ref{lem:delta-lhd-rhd}\eqref{it:delta-lhd-rhd-1}, we have
\begin{multline*}
\kappa(\nu_\cF(t)) = d(\nu_\cF(t)) - \delta_\cF \nu_\cF(t) - (-1)^{|t|+1} \nu_\cF(t)\delta_\cF 
\\
= d(\nu_\cF(t)) - \delta_\cF^\lhd \nu_\cF(t) - (-1)^{|t|+1} \nu_\cF(t) \delta_\cF^\lhd.
\end{multline*}
Now, observe that
\begin{multline*}
d(\nu_\cF(t)) = \sum_j (-1)^{j+1} \nu_\cF(t_1 \cdots t_{j-1}) d(\nu_\cF(t_j)) \nu_\cF(t_{j+1} \cdots t_k) \\
= \sum_j (-1)^{j+1} \nu_\cF(t^{(j)}) \ustar t_j
\end{multline*}
by Lemma~\ref{lem:convolutive-mu-nu}\eqref{it:convolutive-nu}.
Since by definition and~\eqref{eqn:interchange-law-rhd-lhd} we have $rf'\nu_\cF(t)\delta_\cF^\lhd \ustar \mu_{\cG'}(x)h'z = (-1)^{|h'|}(f \hatstar h)(\delta_\cF^\lhd \hatstar \id_{\cF'})$, and since $|r|+|f'| \equiv |f| \pmod 2$ and $|t|+|h'| \equiv |h| \pmod 2$, we deduce that
\begin{multline}\label{eqn:monconv-diff1}
\kappa(f) \hatstar h = \kappa(f \hatstar h) - \sum_j (-1)^{|f|+j+1} (rf'\nu_\cF(t^{(j)}) \ustar t_j) \ustar \mu_{\cG'}(x)h'z\\
+ (-1)^{|f|} rf'\delta_\cF^\lhd \nu_{\cF}(t) \ustar \mu_{\cG'}(x)h'z + (-1)^{|f|+|h|+1} (f \hatstar h)(\delta_\cF^\lhd \hatstar \id_{\cF'}).
\end{multline}

For the second term in the right-hand side of~\eqref{eqn:monconv-diff},
by Lemma~\ref{lem:monconv-weak-interchange} we have
\begin{equation}\label{eqn:monconv-diff2}
(\delta_\cG f) \hatstar h = (\delta_\cG \hatstar \id_{\cG'}) (f \hatstar h).
\end{equation}
Next, for the third term we have
\begin{multline}\label{eqn:monconv-diff3}
(f \delta_\cF) \hatstar h = (f \delta_\cF^\lhd) \hatstar h + \sum_i (f \nu_\cF(e_i)\check e_i) \hatstar h \\
= rf'\delta_\cF^\lhd \nu_\cF(t) \ustar \mu_{\cG'}(x) h'z + \sum_i rf' \nu_\cF(e_i)\nu_\cF(t) \ustar \mu_{\cG'}(\check e_i)\mu_{\cG'}(x) h'z.
\end{multline}

To analyze the fourth term in the right-hand side of~\eqref{eqn:monconv-diff}, we
observe that
\[
\kappa(th') 
= \sum_j (-1)^{j+1} t^{(j)} (t_j \ustar h').
\]
Therefore,
\begin{multline}\label{eqn:monconv-diff4}
f \hatstar \kappa(h)  = \sum_j (-1)^{j+1} f \hatstar (t^{(j)}( t_j \ustar h')z) \\
= \sum_j (-1)^{j+1} rf'\nu_{\cF}(t^{(j)}) \ustar \mu_{\cG'}(x)(t_j \ustar h')z \\
= \sum_j (-1)^{j+1} (rf'\nu_{\cF}(t^{(j)}) \ustar t_j) \ustar \mu_{\cG'}(x)h'z.
\end{multline}

We now consider the fifth term in the right-hand side of~\eqref{eqn:monconv-diff}.
We have
\begin{multline*}
f \hatstar (\delta_{\cG'}h) = f \hatstar (\delta_{\cG'}^\rhd h) + \sum_i f \hatstar (e_i\mu_{\cG'}(\check e_i)h) \\
= (-1)^{|t|} f \hatstar t\delta_{\cG'}^\rhd h'z + \sum_i f \hatstar (e_it \mu_{\cG'}(\check e_i) h'z) \\
= (-1)^{|t|}rf' \nu_\cF(t) \ustar \mu_{\cG'}(x) \delta_{\cG'}^\rhd h'z + \sum_i rf' \nu_\cF(e_i)\nu_\cF(t) \ustar \mu_{\cG'}(x)\mu_{\cG'}(\check e_i) h' z.
\end{multline*}
Recall from Lemma~\ref{lem:delta-lhd-rhd}\eqref{it:delta-lhd-rhd-1} that $\mu_{\cG'}(x)\delta_{\cG'}^\rhd = \delta_{\cG'}^\rhd\mu_{\cG'}(x)$.  Since by~\eqref{eqn:interchange-law-rhd-lhd} we have $rf' \nu_\cF(t) \ustar \delta_{\cG'}^\rhd \mu_{\cG'}(x)h'z = (-1)^{|f|+|t|}(\id_{\cG} \hatstar \delta_{\cG'}^\rhd)(f \hatstar h)$, we find that
\begin{equation}\label{eqn:monconv-diff5}
f \hatstar (\delta_{\cG'}h) = (-1)^{|f|}(\id_{\cG} \hatstar \delta_{\cG'}^\rhd)(f \hatstar h) + \sum_i rf' \nu_{\cF}(e_i)\nu_{\cF}(t) \ustar \mu_{\cG'}(x)\mu_{\cG'}(\check e_i) h' z.
\end{equation}

Finally, for the last term in the right-hand side of~\eqref{eqn:monconv-diff}, by Lemma~\ref{lem:monconv-weak-interchange}, we have
\begin{equation}\label{eqn:monconv-diff6}
f \hatstar (h\delta_{\cF'}) = (f \hatstar h)(\id_{\cF} \hatstar \delta_{\cF'}).
\end{equation}

Let us now consolidate our preceding calculations. Combining~\eqref{eqn:monconv-diff1} and \eqref{eqn:monconv-diff4}, we have
\begin{multline}\label{eqn:monconv-diff7}
\kappa(f) \hatstar h + (-1)^{|f|} f \hatstar \kappa(h) \\
= \kappa(f \hatstar h) + (-1)^{|f|+|h|+1} (f \hatstar h)(\delta_\cF^\lhd \hatstar \id_{\cF'}) + (-1)^{|f|} rf'\delta_\cF^\lhd \nu_\cF(t) \ustar \mu_{\cG'}(x)h'z.
\end{multline}
Now combine~\eqref{eqn:monconv-diff7} with~\eqref{eqn:monconv-diff3} and~\eqref{eqn:monconv-diff5}:
\begin{multline}\label{eqn:monconv-diff8}
\kappa(f) \hatstar h + (-1)^{|f|} f \hatstar \kappa(h) + (-1)^{|f|+1} (f\delta_\cF) \hatstar h + (-1)^{|f|} f \hatstar (\delta_{\cG'}h) \\
= \kappa(f \hatstar h) + (-1)^{|f|+|h|+1} (f \hatstar h)(\delta_\cF^\lhd \hatstar \id_{\cF'}) + (\id_{\cG} \hatstar \delta_{\cG'}^\rhd)(f \hatstar h).
\end{multline}
Finally, combine~\eqref{eqn:monconv-diff8} with~\eqref{eqn:monconv-diff2} and~\eqref{eqn:monconv-diff6} and use Lemma~\ref{lem:convdiff-formula} to obtain
\begin{multline*}
d(f) \hatstar h + (-1)^{|f|} f \hatstar d(h) \\
= \kappa(f \hatstar h) + (-1)^{|f|+|h|+1} (f \hatstar h)(\delta_\cF^\lhd \hatstar \id_{\cF'}) + (\id_{\cG} \hatstar \delta_{\cG'}^\rhd)(f \hatstar h) \\
+ (\delta_\cG \hatstar \id_{\cG'})(f \hatstar h) + (-1)^{|f|+|h|+1} (f \hatstar h)(\id_{\cF} \hatstar \delta_{\cF'}) \\
= \kappa(f \hatstar h) + \delta_{\cG \hatstar \cG'}(f \hatstar h) + (-1)^{|f|+|h|+1}(f \hatstar h)\delta_{\cF \hatstar \cF'},
\end{multline*}
as desired.
\end{proof}

If $\cF, \cG, \cF', \cG'$ are convolutive free-monodromic complexes,
Proposition~\ref{prop:monconv-diff} tells us that
\[
\hatstar: \uHom_\FM(\cF,\cG) \otimes \uHom_\FM(\cF',\cG') \to \uHom_\FM(\cF \hatstar \cF', \cG \hatstar \cG')
\]
is a map of chain complexes, so it induces a map on cohomology:
\begin{equation}\label{eqn:hatstar-mor-defn}
\hatstar: \gHom_\FM(\cF,\cG) \otimes \gHom_\FM(\cF',\cG') \to \gHom_\FM(\cF \hatstar \cF', \cG \hatstar \cG').
\end{equation}
We reiterate that we do \emph{not} claim that $\hatstar$ is a bifunctor at this point. However, if $\cF$ is a fixed object of $\Conv_\FM(\fh,W)$, then it follows from Lemma~\ref{lem:monconv-weak-interchange} that the assignment\index{convolution!starhat@$\hatstar$}
\[
 \cG \mapsto \cF \hatstar \cG, \qquad g \mapsto \id_{\cF} \hatstar g
\]
is a functor from $\Conv_\FM(\fh,W)$ to itself, which we will denote $\cF \hatstar (-)$. Similarly, the assignment
\[
 \cG \mapsto \cG \hatstar \cF, \qquad g \mapsto g \hatstar \id_{\cF}
\]
defines a functor, which will be denoted $(-) \hatstar \cF$.

More generally, we have the following result.

\begin{lem}
\label{lem:interchange-rhd-lhd}
 Let $\cF, \cF', \cG, \cG', \cH, \cH'$ be convolutive free-monodromic complexes, and let $f : \cF \to \cG$, $g : \cG \to \cH$, $h : \cF' \to \cG'$, $k : \cG' \to \cH'$ be morphisms in $\FM(\fh,W)$. Assume either that $f$ admits a lift which belongs to $\uHom_\FM^{\lhd}(\cF, \cG)$, or that $k$ admits a lift which belongs to $\uHom^{\rhd}_\FM(\cG',\cH')$. Then we have
 \[
  (g \circ f) \hatstar (k \circ h) = (g \hatstar k) \circ (f \hatstar h).
 \]
\end{lem}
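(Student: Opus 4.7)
The plan is to reduce the claim to the chain-level interchange statement already established in Lemma~\ref{lem:monconv-weak-interchange}, and then descend to cohomology. Since $f, g, h, k$ are morphisms in $\FM(\fh,W)$, i.e., classes in $\gHom_\FM(-,-)^{0}_{0}$, each of them lifts to a chain map (an element $\tilde{\phantom{f}}$ with $d(\tilde{\phantom{f}}) = 0$) of bidegree $(0,0)$ in the appropriate $\uHom_\FM$ complex.

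First I would exploit the hypothesis. In the case where $f$ admits a lift $\tilde f \in \uHom_\FM^\lhd(\cF,\cG)$, I pick that lift for $f$, together with arbitrary chain-level representatives $\tilde g, \tilde h, \tilde k$ of the other three morphisms. Since $\tilde f$ is a chain map, $d(\tilde f) = 0$ certainly lies in $\uHom_\FM^\lhd(\cF,\cG)$, so condition~(1) of Lemma~\ref{lem:monconv-weak-interchange} is met. In the other case I choose $\tilde k \in \uHom_\FM^\rhd(\cG',\cH')$; then $d(\tilde k) = 0$ lies in $\uHom_\FM^\rhd(\cG',\cH')$, so condition~(2) is met. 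In either case Lemma~\ref{lem:monconv-weak-interchange} yields
\[
 (\tilde g \circ \tilde f) \hatstar (\tilde k \circ \tilde h) = (-1)^{|\tilde f||\tilde k|} (\tilde g \hatstar \tilde k) \circ (\tilde f \hatstar \tilde h)
\]
at the chain level. Since $|\tilde f| = |\tilde k| = 0$, the sign is trivial.

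Finally, I would pass to cohomology. By Proposition~\ref{prop:monconv-diff} the operation $\hatstar$ on chain-level elements descends to the operation $\hatstar$ on morphisms in $\FM(\fh,W)$ defined in~\eqref{eqn:hatstar-mor-defn}, and the chain-level composition $\circ$ descends to composition in $\FM(\fh,W)$ by definition; applying $\coH^{0}_{0}$ to the identity above therefore yields $(g \circ f) \hatstar (k \circ h) = (g \hatstar k) \circ (f \hatstar h)$ in $\Hom_{\FM(\fh,W)}(\cF \hatstar \cF', \cH \hatstar \cH')$, as desired.

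The only real subtlety is purely bookkeeping: one must note that the hypothesis of the lemma asks for a lift of $f$ (respectively $k$) of a prescribed form, and that such a lift is automatically a chain map because $f$ (respectively $k$) is a morphism of degree $(0,0)$ in $\FM(\fh,W)$. Once this is observed, the desired equality is essentially the degree-$0$ specialization of Lemma~\ref{lem:monconv-weak-interchange}, so no further obstacle arises.
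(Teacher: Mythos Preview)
Your proof is correct and follows exactly the paper's approach: the paper's proof simply reads ``This follows directly from Lemma~\ref{lem:monconv-weak-interchange},'' and you have spelled out the details of that reduction. The key observation---that a lift of a cohomology class is by definition a cocycle, so $d(\tilde f)=0$ (resp.\ $d(\tilde k)=0$) trivially lies in the required subspace---is exactly what makes the hypotheses of Lemma~\ref{lem:monconv-weak-interchange} available.
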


\begin{proof}
 This follows directly from Lemma~\ref{lem:monconv-weak-interchange}.
\end{proof}

%%%%%%%%%%%%%%%%%%%%%%%%%%%%%%%%%%%%%%%%%%%%%%%%%%%%%%%%%%%%%%%%%%%%%%%%%%%
\section{The categories \texorpdfstring{$\Conv_\FM^{\rhd}(\fh,W)$}{ConvFM>(h,W)} and \texorpdfstring{$\Conv_\FM^{\lhd}(\fh,W)$}{ConvFM<(h,W)}}
\label{sec:conv-categories}
%%%%%%%%%%%%%%%%%%%%%%%%%%%%%%%%%%%%%%%%%%%%%%%%%%%%%%%%%%%%%%%%%%%%%%%%%%%

We can now define the category $\Conv_\FM^{\rhd}(\fh,W)$\index{ConvFMrhd@$\Conv_\FM^\rhd$} as follows: 
\begin{itemize}
 \item 
 the objects of this category are the same as those of $\Conv_\FM(\fh,W)$ (i.e.~the convolutive free-monodromic complexes);
\item
the morphisms in $\Conv_\FM^{\rhd}(\fh,W)$ from $\cF$ to $\cG$ are elements of the $\bk$-module $\gHom_\FM(\cF,\cG)^0_0$ which admit a lift in $\uHom_\FM(\cF,\cG)^0_0$ which belongs to $\uHom_\FM^\rhd(\cF,\cG)$.
 \end{itemize}
 The category $\Conv_\FM^{\lhd}(\fh,W)$\index{ConvFMlhd@$\Conv_\FM^\lhd$} is defined in a similar way. 

By construction $\Conv_\FM^{\rhd}(\fh,W)$ and $\Conv_\FM^{\lhd}(\fh,W)$ are subcategories of the category $\Conv_\FM(\fh,W)$ (but not full subcategories), and it is easily checked that the operation $\hatstar$ sends morphisms in $\Conv_\FM^{\rhd}(\fh,W)$ to morphisms in $\Conv_\FM^{\rhd}(\fh,W)$, and similarly for $\Conv_\FM^{\lhd}(\fh,W)$. It follows from Lemma~\ref{lem:interchange-rhd-lhd} that the operation $\hatstar$ is a bifunctor on $\Conv_\FM^{\rhd}(\fh,W)$ and on $\Conv_\FM^{\lhd}(\fh,W)$.

%%%%%%%%%%%%%%%%%%%%%%%%%%%%%%%%%%%%%%%%%%%%%%%%%%%%%%%%%%%%%%%%%%%%%%%%%%%
\section{Action of \texorpdfstring{$\Conv_\FM(\fh,W)$}{ConvFM(h,W)} on \texorpdfstring{$\Conv_\LM(\fh,W)$}{ConvLM(h,W)}}
\label{sec:action-convfm-lm}
%%%%%%%%%%%%%%%%%%%%%%%%%%%%%%%%%%%%%%%%%%%%%%%%%%%%%%%%%%%%%%%%%%%%%%%%%%%

Adapting the definition in~\S\ref{sec:convolutive-UGU}, we will say that an object $(\cF, \delta)$ in $\LM(\fh,W)$ is \emph{convolutive}\index{convolutive} if
\[
\delta \in (\bk \oplus V^*(-2)[1]) \otimes \uEnd_\BE(\cF) \subset \uEnd_\LM(\cF).
\]
We will denote by $\Conv_\LM(\fh,W)$\index{ConvLM@$\Conv_\LM$} the full subcategory of $\LM(\fh,W)$ whose objects are convolutive. Then, as in Lemma~\ref{lem:convolutive-mu-nu}, if $(\cG,\delta)$ is convolutive then the morphism $\mu_\cG$ of Theorem~\ref{thm:frown} lifts to a dgg-algebra morphism
\[
\mu_\cG : R^\vee \to \uEnd_\LM(\cG)
\]
which takes values in $\uEnd_\BE(\cG)$. 
Using this construction one can define, for $\cF, \cG$ in $\Conv_\FM(\fh,W)$ and $\cF', \cG'$ in $\Conv_\LM(\fh,W)$, a map
\begin{equation}\label{eqn:hatstar-fmlm-dgg-defn}
\uHom_\FM(\cF,\cG) \otimes \uHom_\LM(\cF',\cG') \to \uHom_\LM(\cF \ustar \cF', \cG \ustar \cG')
\end{equation}
as follows: for $f=rf'x$ (with $r \in \Lambda$, $f' \in \uHom_\BE(\cF,\cG)$, $x \in R^\vee$) and $h=th'$ (with $t \in \Lambda$, $h' \in \uHom_\BE(\cF',\cG')$) we set
\[
f \hatstar h := r f' \nu_\cF(t) \ustar \mu_{\cG'}(x) h',
\]
where
\[
\ustar : \uHom_\FM^\lhd(\cF,\cG) \otimes \uHom_\BE(\cF',\cG') \to \uHom_\LM(\cF \ustar \cF', \cG \ustar \cG')
\]
is the natural convolution morphism.
Finally, if $(\cF,\delta_\cF)$ belongs to $\Conv_\FM(\fh,W)$ and $(\cG,\delta_\cG)$ belongs to $\Conv_\LM(\fh,W)$, we can define an object $\cF \hatstar \cG$\index{convolution!starhat@$\hatstar$} of $\Conv_\LM(\fh,W)$ by setting its underlying $\DiagBSp$-sequence to be $\cF \ustar \cG$, and equipping it with the differential
\[
\delta_{\cF \hatstar \cG} = \delta_\cF \hatstar \id_\cG + \id_\cF \hatstar \delta_\cG - \sum_i \nu_\cF(e_i) \ustar \mu_\cG(\check e_i),
\]
where $e_1, \sdots, e_r$ and $\check e_1, \sdots, \check e_r$ are dual bases for $V^*$ and $V$ respectively. An analogue of Proposition~\ref{prop:monconv-diff} tells us that~\eqref{eqn:hatstar-fmlm-dgg-defn} is a map of chain complexes, so it induces a map on cohomology
\begin{equation}\label{eqn:hatstar-fmlm-mor-defn}
\gHom_\FM(\cF,\cG) \otimes \gHom_\LM(\cF',\cG') \to \gHom_\LM(\cF \ustar \cF', \cG \ustar \cG').
\end{equation}
For any fixed $\cF$, the assignment $\cG \mapsto \cF \hatstar \cG$ is functorial, and for any fixed $\cG$, the assignment $\cF \mapsto \cF \hatstar \cG$ is functorial. However, we do \emph{not} claim that $\hatstar$ is a bifunctor from $\Conv_\FM(\fh,W) \times \Conv_\LM(\fh,W)$ to $\Conv_\LM(\fh,W)$.

It is clear that the functor $\ForFMLM : \FM(\fh,W) \to \LM(\fh,W)$ sends convolutive free-monodromic complexes to convolutive objects of $\LM(\fh,W)$. Moreover, if $\cF$ and $\cG$ belong to $\Conv_\FM(\fh,W)$ then there exists a functorial isomorphism
\begin{equation}
\label{eqn:For-hatstar}
\ForFMLM(\cF \hatstar \cG) \cong \cF \hatstar \ForFMLM(\cG).
\end{equation}

\begin{lem}
\label{lem:convolution-mon-equ}
Let $\cF$ be in $\FM(\fh,W)$ and $\cG$ be in $\BE(\fh,W)$. Then $\ForBELM(\cG)$ belongs to $\Conv_\LM(\fh,W)$, and there exists a canonical isomorphism
\[
\cF \hatstar \ForBELM(\cG) \cong \ForFMLM(\cF) \ustar \cG
\]
in $\LM(\fh,W)$, where $\ustar$ is as in~\eqref{eqn:ustar-UGB}. This isomorphism is functorial in $\cF$ (for fixed $\cG$) and in $\cG$ (for fixed $\cF$).
\end{lem}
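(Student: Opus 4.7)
First I would verify that $\ForBELM(\cG)$ is convolutive in $\LM(\fh,W)$. By the construction of $\ForBELM$ in~\S\ref{sec:con-mixed-complexes}, its differential is the image of $\delta_\cG$ under the unit map $\bk \to \Lambda$, so it lies in $\Lambda^0_0 \otimes \uEnd_\BE(\cG) \subset \uEnd_\LM(\cG)$. In particular it satisfies the convolutivity condition trivially.

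The key technical observation is the following. Since $\delta_{\ForBELM(\cG)}$ lies in $\Lambda^0_0 \otimes \uEnd_\BE(\cG)$, the definition of $\lfrown$ shows that $x \lfrown \delta_{\ForBELM(\cG)} = 0$ for every $x \in V$. Therefore the dgg-algebra morphism $\mu_{\ForBELM(\cG)} : R^\vee \to \uEnd_\LM(\ForBELM(\cG))$ (which, as in Lemma~\ref{lem:convolutive-mu-nu}, is determined by its values on $V$) factors as $\mu_{\ForBELM(\cG)} = \epsilon_{R^\vee} \cdot \id$. In particular $\mu_{\ForBELM(\cG)}(\check e_i) = 0$ for each $i$, and hence the correction term in Definition~\ref{defn:monconv} (adapted to~\eqref{eqn:hatstar-fmlm-dgg-defn}) vanishes:
\[
\delta_{\cF \hatstar \ForBELM(\cG)} = \delta_\cF \hatstar \id_{\ForBELM(\cG)} + \id_\cF \hatstar \delta_{\ForBELM(\cG)}.
\]

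Now I would unwind the two remaining summands using the formula defining~\eqref{eqn:hatstar-fmlm-dgg-defn}. For a typical term $\delta_\cF = r f' x$ (with $r \in \Lambda$, $f' \in \uEnd_\BE(\cF)$, $x \in R^\vee$), the identity $\mu_{\ForBELM(\cG)}(x) = \epsilon_{R^\vee}(x) \cdot \id$ gives
\[
(rf'x) \hatstar \id_{\ForBELM(\cG)} = rf' \ustar \epsilon_{R^\vee}(x) \id_\cG,
\]
which is precisely $\delta_{\ForFMLM(\cF)} \ustar \id_\cG$ (since $\ForFMLM$ is induced by $\epsilon_{R^\vee}$). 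Similarly, a direct computation shows that $\id_\cF \hatstar \delta_{\ForBELM(\cG)} = \id_{\ForFMLM(\cF)} \ustar \delta_\cG$. Comparing with the definition of the right action $\ustar$ of $\BE(\fh,W)$ on $\LM(\fh,W)$ in~\eqref{eqn:ustar-UGB}, I conclude that the identity map on the common underlying $\DiagBSp$-sequence $\cF \ustar \cG$ gives the desired isomorphism $\cF \hatstar \ForBELM(\cG) \cong \ForFMLM(\cF) \ustar \cG$ in $\LM(\fh,W)$.

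Functoriality in $\cF$ (resp.~in $\cG$) with fixed $\cG$ (resp.~$\cF$) is immediate from this concrete description: both sides of the isomorphism are obtained by the same rule at the level of underlying $\DiagBSp$-sequences, and the forgetful functors $\ForBELM$, $\ForFMLM$ together with the bifunctoriality of $\ustar$ (on one side) and the functoriality in each variable of $\hatstar$ noted after~\eqref{eqn:hatstar-fmlm-mor-defn} (on the other side) ensure that a morphism in $\cF$ or $\cG$ induces the same underlying map on $\cF \ustar \cG$ on both sides. I expect no real obstacle here; the only subtle point is keeping track of the vanishing $\mu_{\ForBELM(\cG)}|_{(R^\vee)^{>0}} = 0$, which makes the formally complicated $\hatstar$ collapse onto the ordinary convolution $\ustar$.
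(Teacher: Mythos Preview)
Your proof is correct and follows essentially the same approach as the paper. The paper's own argument invokes the analogue of Lemma~\ref{lem:convdiff-formula} to write $\delta_{\cF \hatstar \ForBELM(\cG)} = \delta_\cF^\lhd \hatstar \id_\cG + \id_\cF \hatstar \delta_\cG = \delta_\cF^\lhd \ustar \id_\cG + \id_\cF \ustar \delta_\cG$ and identifies this with the differential of $\ForFMLM(\cF) \ustar \cG$; your direct observation that $\mu_{\ForBELM(\cG)}$ factors through $\epsilon_{R^\vee}$ (so that $\psi$ vanishes and $\delta_\cF \hatstar \id_{\ForBELM(\cG)}$ collapses to $\delta_{\ForFMLM(\cF)} \ustar \id_\cG$) is exactly the same computation unpacked.
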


\begin{proof}
The fact that $\ForBELM(\cG)$ belongs to $\Conv_\LM(\fh,W)$ is obvious by construction. Then, by the obvious analogue of Lemma~\ref{lem:convdiff-formula}, we have
\[
\delta_{\cF \hatstar \ForBELM(\cG)} = \delta_\cF^\lhd \hatstar \id_\cG + \id_\cF \hatstar \delta_\cG = \delta_\cF^\lhd \ustar \id_\cG + \id_\cF \ustar \delta_\cG.
\]
This is precisely the differential of $\ForFMLM(\cF) \ustar \cG$, so we are done.
\end{proof}

%%%%%%%%%%%%%%%%%%%%%%%%%%%%%%%%%%%%%%%%%%%%%%%%%%%%%%%%%%%%%%%%%%%%%%%%%%%
\section{Karoubian envelopes}
\label{sec:karoubian2}
%%%%%%%%%%%%%%%%%%%%%%%%%%%%%%%%%%%%%%%%%%%%%%%%%%%%%%%%%%%%%%%%%%%%%%%%%%%

In this section, as in~\S\ref{sec:karoubian1}, we assume that $\bk$ is a field or a complete local ring, and we work with the Karoubian envelope $\Diag(\fh,W)$ of $\DiagBSp(\fh,W)$. We define the bigraded $\bk$-module
\[
\uHom_\FM(\cF,\cG)
\]
for $\cF,\cG$
$\Diag$-sequences,
and the category
\[
\FM(\fh,W)_\Kar,
\]
by the same formulas as for $\DiagBSp(\fh,W)$.  It makes sense to speak of convolutive objects in $\FM(\fh,W)_\Kar$ and to define the operation $\hatstar$ in the various settings we have discussed above.  One can also define the categories
\[
\Conv_\FM^{\rhd}(\fh,W)_\Kar
\qquad\text{and}\qquad
\Conv_\FM^{\lhd}(\fh,W)_\Kar
\]
as in~\S\ref{sec:conv-categories}.

\begin{lem}
\label{lem:idem-equiv-fm}
The obvious functor
\[
\FM(\fh,W) \to \FM(\fh,W)_\Kar
\]
is fully faithful.
\end{lem}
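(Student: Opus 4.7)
The plan is essentially a direct unwinding of the definitions, since the construction of $\FM(\fh,W)$ depends on the ambient additive category only through the bigraded $\bk$-modules $\uHom_\BE(\cF,\cG)$ together with the composition and convolution operations on them.

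First, I would observe that since $\DiagBSp(\fh,W)$ is a \emph{full} (additive) subcategory of $\Diag(\fh,W)$, for any two $\DiagBSp$-sequences $\cF$ and $\cG$ the obvious map gives an equality
\[
\uHom_\BE^{\DiagBSp}(\cF,\cG) = \uHom_\BE^{\Diag}(\cF,\cG)
\]
of bigraded $\bk$-modules, and this equality is compatible with composition~\eqref{eqn:composition-mor} and convolution~\eqref{eqn:convolution-mor}. Tensoring on the left with $\Lambda$ and on the right with $R^\vee$ then yields
\[
\uHom_\FM^{\DiagBSp}(\cF,\cG) = \uHom_\FM^{\Diag}(\cF,\cG),
\]
and all of the additional structure --- the composition~\eqref{eqn:composition-mon}, the map $\kappa$, and the distinguished elements $\theta_\cF$ and $\Theta_\cF$ --- coincides under this identification, because each is defined by the same formula in terms of the underlying $\uHom_\BE$ data.

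Next, I would note that an object of $\FM(\fh,W)$ is just a pair $(\cF,\delta)$ whose underlying $\DiagBSp$-sequence is in particular a $\Diag$-sequence, and whose differential lies in $\uEnd_\FM^{\DiagBSp}(\cF)^1_0 = \uEnd_\FM^{\Diag}(\cF)^1_0$; the defining condition $\delta\circ\delta + \kappa(\delta) = \Theta$ is therefore the same in both settings. Similarly, for two such objects $(\cF,\delta_\cF)$ and $(\cG,\delta_\cG)$, the differential
\[
d(f) = \kappa(f) + \delta_\cG \circ f + (-1)^{|f|+1} f \circ \delta_\cF
\]
on $\uHom_\FM(\cF,\cG)$ is the same whether computed in $\FM(\fh,W)$ or in $\FM(\fh,W)_\Kar$.

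Passing to bidegree-$(0,0)$ cohomology gives an equality
\[
\Hom_{\FM(\fh,W)}(\cF,\cG) = \Hom_{\FM(\fh,W)_\Kar}(\cF,\cG),
\]
and composition is preserved because it is induced from the chain-level composition on $\uHom_\FM$. This is precisely full faithfulness. There is no real obstacle: unlike in Lemma~\ref{lem:idem-equiv}, we are \emph{not} asserting essential surjectivity (which would require a substitute for the triangulated argument used there, and Remark~\ref{rmk:FM-triangulated} makes clear we prefer to avoid that triangulated structure), so no further argument is needed.
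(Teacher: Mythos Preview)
Your proof is correct and follows exactly the same approach as the paper's: both observe that since $\DiagBSp(\fh,W)$ is a full subcategory of $\Diag(\fh,W)$, the dgg $\bk$-module $\uHom_\FM(\cF,\cG)$ is literally unchanged when $\cF,\cG$ are regarded as $\Diag$-sequences, so the morphism spaces in the two categories coincide. Your version simply spells out the intermediate steps (compatibility of $\kappa$, $\Theta$, the differential, and composition) that the paper compresses into ``the lemma follows immediately.''
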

It is likely that this functor is actually an equivalence of categories, but to imitate the proof of Lemma~\ref{lem:idem-equiv}, we would require more information about generating $\FM(\fh,W)_\Kar$ as a triangulated category.  We will not pursue this question in this paper.

\begin{proof}
Since $\DiagBSp(\fh,W)$ is a full subcategory of $\Diag(\fh,W)$, we see that for any two $\DiagBSp$-sequences $\cF$ and $\cG$, the bigraded $\bk$-module $\uHom_\FM(\cF,\cG)$ remains unchanged if we regard $\cF$ and $\cG$ as $\Diag$-sequences.  The lemma follows immediately.
\end{proof}

%==========================================================================
\chapter{Hints of functoriality}
\label{chap:murmurs}
%==========================================================================

A monoidal category consists of a category equipped with a bifunctor and some additional data (natural isomorphisms satisfying some compatibility conditions). In the previous chapter, we emphasized the fact that we do not know at the moment whether $\hatstar$ is a bifunctor on $\Conv_\FM(\fh,W)$.  Nevertheless, it is possible to go ahead and construct the additional data associated with being monoidal; indeed, we will need this additional data as we continue to study $\hatstar$ in later chapters.  This task occupies the first three sections of the chapter.  As an application, we will define and study morphisms between free-monodromic analogues of the standard and costandard objects from Example~\ref{ex:std-costd}.

We saw in~\S\ref{sec:convolution-morph} that for a fixed object $\cF \in \Conv_\FM(\fh,W)$, the operations $\cF\hatstar ({-})$ and $({-}) \hatstar \cF$ \emph{are} endofunctors of $\Conv_\FM(\fh,W)$.  In the last two sections of this chapter, we show that these functors extend to all of $\FM(\fh,W)$.

%%%%%%%%%%%%%%%%%%%%%%%%%%%%%%%%%%%%%%%%%%%%%%%%%%%%%%%%%%%%%%%%%%%%%%%%%%%
\section{Unitor isomorphisms}
%%%%%%%%%%%%%%%%%%%%%%%%%%%%%%%%%%%%%%%%%%%%%%%%%%%%%%%%%%%%%%%%%%%%%%%%%%%

The object $\Tmon_\varnothing$ considered in~\S\ref{sss:t1-freemon} is a unit for the operation $\hatstar$, in the following sense.

\begin{lem}
\label{lem:unitor}
There exist canonical isomorphisms of functors from the category $\Conv_\FM(\fh,W)$ to itself:
\[
\lambda : \Tmon_\varnothing \hatstar (-) \simto \id
\qquad\text{and}\qquad \varrho :
(-) \hatstar \Tmon_\varnothing \simto \id.
\]
Moreover, for any $\cF, \cG \in \Conv_\FM(\fh,W)$, the following diagrams commute:
\begin{gather*}
\begin{tikzcd}[ampersand replacement=\&]
\gEnd_\FM(\Tmon_\varnothing) \otimes \gHom_\FM(\cF,\cG)
\ar[r, "\hatstar", "\text{\eqref{eqn:hatstar-mor-defn}}"'] \ar[d, "\wr"', "\eqref{eqn:endmon-t1}"] \&
\gHom_\FM(\Tmon_\varnothing \hatstar \cF,\Tmon_\varnothing \hatstar \cG)
\ar[d, "\lambda", "\wr"'] \\
R^\vee \otimes \gHom_\FM(\cF,\cG)
\ar[r, "\hatstar", "\eqref{eqn:left-rv-action}"'] \&
\gHom_\FM(\cF,\cG)
\end{tikzcd}
\\
\begin{tikzcd}[ampersand replacement=\&]
\gHom_\FM(\cF,\cG) \otimes \gEnd_\FM(\Tmon_\varnothing) 
\ar[r, "\hatstar", "\eqref{eqn:hatstar-mor-defn}"'] \ar[d, "\wr"', "\eqref{eqn:endmon-t1}"] \&
\gHom_\FM(\cF \hatstar \Tmon_\varnothing,\cG \hatstar \Tmon_\varnothing)
\ar[d, "\varrho", "\wr"'] \\
\gHom_\FM(\cF,\cG) \otimes R^\vee
\ar[r, "\hatstar", "\eqref{eqn:right-rv-action}"'] \&
\gHom_\FM(\cF,\cG)
\end{tikzcd}
\end{gather*}
\end{lem}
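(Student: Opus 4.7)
The plan is to take $\lambda_\cF$ and $\varrho_\cF$ to be the morphisms in $\Conv_\FM(\fh,W)$ induced by the canonical unit isomorphisms $B_\varnothing \ustar \cF \simto \cF$ and $\cF \ustar B_\varnothing \simto \cF$ of the underlying $\DiagBSp$-sequences; the content is then to verify that these are chain maps, that the resulting transformations are natural, and that they fit into the two compatibility squares.

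To see $\lambda_\cF$ is a chain map, I would expand $\delta_{\Tmon_\varnothing \hatstar \cF}$ using Definition~\ref{defn:monconv} and Lemma~\ref{lem:convdiff-formula}. The decisive computation is
\[
\delta_{\Tmon_\varnothing} \hatstar \id_\cF \;=\; \sum_i \bigl(e_i \cdot \id_{B_\varnothing}\bigr) \ustar \mu_\cF(\check e_i) \;=\; \psi_{\Tmon_\varnothing \hatstar \cF},
\]
which rests on $\delta_{\Tmon_\varnothing} = \theta$ together with the identity $\nu_{\Tmon_\varnothing}(e_i) = e_i \cdot \id_{B_\varnothing}$ obtained by applying $\rfrown$ to $\theta$. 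Hence this term cancels the correction $-\psi_{\Tmon_\varnothing \hatstar \cF}$ in the differential, leaving only $\id_{\Tmon_\varnothing} \hatstar \delta_\cF$; decomposing a representative $\delta_\cF = \sum t h' z$ and again invoking $\nu_{\Tmon_\varnothing}(t) = t \cdot \id_{B_\varnothing}$, one checks that under $B_\varnothing \ustar \cF \cong \cF$ this reduces to $\delta_\cF$ itself. The case of $\varrho_\cF$ is entirely parallel, with $\mu_{\Tmon_\varnothing} = \id_{R^\vee}$ from~\eqref{eq:1mon=id} playing the role of the $\nu$ identity. Naturality of both $\lambda$ and $\varrho$ then follows by applying the same manipulations to a decomposition of an arbitrary morphism $f : \cF \to \cG$.

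For the compatibility with the left monodromy, fix $x \in R^\vee$; under~\eqref{eqn:endmon-t1} it corresponds to the class of the cocycle $1 \otimes \id_{B_\varnothing} \otimes x \in \uHom_\FM^\rhd(\Tmon_\varnothing)$. Writing a representative $f = rf'y$ of a morphism in $\gHom_\FM(\cF, \cG)$, the formula from~\S\ref{sec:fm-convolution-morph} gives $g \hatstar f = (r \cdot \id_{B_\varnothing}) \ustar \mu_\cG(x) f' y$, which after applying $\lambda$ is identified with $r \cdot \mu_\cG(x) \cdot f' \cdot y$. Since $\mu_\cG(x)$ has cohomological degree zero, no Koszul signs intervene in pulling $\mu_\cG(x)$ past $r$, so this element equals $\mu_\cG(x) \circ f$, which by Theorem~\ref{thm:bifrown} is precisely $x \hatstar f$. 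The square for $\varrho$ is handled dually, combining the right monodromy description~\eqref{eqn:right-rv-action} with $\mu_{\Tmon_\varnothing} = \id_{R^\vee}$. The only potential obstacle is the bookkeeping of signs in compositions involving $\Lambda$-factors, but these are controlled by the fact that $\mu$ and $\nu$ have cohomological degree zero, making every calculation routine.
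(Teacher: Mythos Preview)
Your proof is correct and follows essentially the same approach as the paper. The paper's argument is terser: it invokes Lemma~\ref{lem:convdiff-formula} together with the observation that $\delta_{\Tmon_\varnothing}^\lhd = \delta_{\Tmon_\varnothing}^\rhd = 0$ (which is exactly your cancellation $\delta_{\Tmon_\varnothing} \hatstar \id_\cF = \psi_{\Tmon_\varnothing \hatstar \cF}$, repackaged), and then appeals to the computation of $\mu_{\Tmon_\varnothing}$ and $\nu_{\Tmon_\varnothing}$ in Example~\ref{ex:T1-mon-mu-nu} for both the chain-map check and the commutativity of the diagrams. One small slip: $\nu_\cF(r)$ has cohomological degree~$1$ for $r \in V^*$, not~$0$; but your actual calculations only use that $\mu_\cG(x)$ has degree~$0$, so this does not affect the argument.
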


This lemma at last justifies the use of $\hatstar$ for the left and right monodromy actions, as promised at the beginning of Chapter~\ref{chap:fm-convolution}.

\begin{proof}
It is clear that we have canonical isomorphisms of $\DiagBSp$-sequences $\cT_\varnothing \ustar \cF \cong \cF \cong \cF \ustar \cT_\varnothing$ (because $B_\varnothing$ is the unit object in the monoidal category $\DiagBS(\fh,W)$). The fact that these isomorphisms of $\DiagBSp$-sequences are also isomorphisms in $\Conv_\FM(\fh,W)$ follows from Lemma~\ref{lem:convdiff-formula}, the fact that $\delta_{\Tmon_\varnothing}^\rhd=\delta_{\Tmon_\varnothing}^\lhd=0$, and the computation of $\mu_{\Tmon_\varnothing}$ and $\nu_{\Tmon_\varnothing}$ in Example~\ref{ex:T1-mon-mu-nu}.

The commutativity of the diagrams also follows from the definitions and the computation of $\mu_{\Tmon_\varnothing}$ and $\nu_{\Tmon_\varnothing}$ in Example~\ref{ex:T1-mon-mu-nu}.
\end{proof}

Our construction is such that for any $\cF$, the isomorphisms $\lambda_\cF$ and $\varrho_\cF$ come from isomorphisms of the underlying $\DiagBSp$-sequences. As a consequence, these isomorphisms belong to both $\Conv_\FM^\rhd(\fh,W)$ and $\Conv_\FM^\lhd(\fh,W)$.

There is a straightforward analogue of Lemma~\ref{lem:unitor} in the context of the action of $\Conv_\FM(\fh,W)$ on $\Conv_\LM(\fh,W)$ discussed in~\S\ref{sec:action-convfm-lm}. We omit the proof.

\begin{lem}
\label{lem:unitor-action}
There exist canonical isomorphisms of functors
\begin{align*}
\lambda' : \Tmon_\varnothing \hatstar (-) \simto \id &: \Conv_\LM(\fh,W) \to \Conv_\LM(\fh,W), \\
\varrho': ({-}) \hatstar \cT_\varnothing \simto \ForFMLM &: \Conv_\FM(\fh,W) \to \Conv_\LM(\fh,W).
\end{align*}
Moreover, for any $\cF,\cG \in \Conv_\FM(\fh,W)$ and $\cF',\cG' \in \Conv_\LM(\fh,W)$, the following diagrams commute:
\begin{gather*}
\begin{tikzcd}[ampersand replacement=\&]
\gEnd_\FM(\Tmon_\varnothing) \otimes \gHom_\LM(\cF',\cG')
\ar[r, "\hatstar", "\text{\eqref{eqn:hatstar-fmlm-mor-defn}}"'] \ar[d, "\wr"', "\eqref{eqn:endmon-t1}"] \&
\gHom_\LM(\Tmon_\varnothing \hatstar \cF',\Tmon_\varnothing \hatstar \cG')
\ar[d, "\lambda'", "\wr"'] \\
R^\vee \otimes \gHom_\LM(\cF',\cG')
\ar[r, "\hatstar", "\eqref{eqn:leftmon-rv-action}"'] \&
\gHom_\LM(\cF',\cG')
\end{tikzcd}
\\
\begin{tikzcd}[ampersand replacement=\&]
\gHom_\FM(\cF,\cG) \otimes \gEnd_\LM(\cT_\varnothing) 
\ar[r, "\hatstar", "\eqref{eqn:hatstar-fmlm-mor-defn}"'] \ar[d, "\wr"', "\eqref{eqn:t1-leftmon}"] \&
\gHom_\LM(\cF \hatstar \cT_\varnothing,\cG \hatstar \cT_\varnothing)
\ar[d, "\varrho'", "\wr"'] \\
\gHom_\FM(\cF,\cG) \otimes_{R^\vee} \bk
\ar[r, "\ForFMLM"] \&
\gHom_\LM(\ForFMLM(\cF),\ForFMLM(\cG))
\end{tikzcd}
\end{gather*}
\end{lem}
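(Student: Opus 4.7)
The plan is to closely mirror the proof of Lemma~\ref{lem:unitor}, adapting the computations to the mixed FM--LM convolution defined in~\S\ref{sec:action-convfm-lm}. On the level of underlying $\DiagBSp$-sequences, both $\Tmon_\varnothing \ustar \cF'$ and $\cF \ustar \cT_\varnothing$ are canonically identified with $\cF'$ and $\cF$, respectively, via the unitor isomorphisms of the monoidal category $\DiagBS(\fh,W)$ (here we use that $\cT_\varnothing$ and $\Tmon_\varnothing$ both have $B_\varnothing$ as their underlying $\DiagBSp$-sequence). The only content is therefore that these isomorphisms intertwine the differentials, after which functoriality is automatic.

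For $\lambda'$, fix $\cF' \in \Conv_\LM(\fh,W)$ and let us use the obvious analogue of Lemma~\ref{lem:convdiff-formula} for the FM--LM action. It gives
\[
\delta_{\Tmon_\varnothing \hatstar \cF'} = \delta_{\Tmon_\varnothing}^\lhd \hatstar \id_{\cF'} + \id_{\Tmon_\varnothing} \hatstar \delta_{\cF'}.
\]
From Example~\ref{ex:T1-mon-mu-nu}, we have $\delta_{\Tmon_\varnothing} = \theta = \sum_i e_i \otimes \id \otimes \check e_i$ and $\nu_{\Tmon_\varnothing}(e_i) = e_i \otimes 1 \otimes 1$, so $\delta_{\Tmon_\varnothing}^\lhd = 0$. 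Writing $\delta_{\cF'} = \sum_k t_k h_k'$ with $t_k \in \Lambda$ and $h_k' \in \uEnd_\BE(\cF')$, one then checks directly from the definition of the FM--LM convolution that $\id_{\Tmon_\varnothing} \hatstar \delta_{\cF'} = \sum_k \nu_{\Tmon_\varnothing}(t_k) \ustar h_k'$, which corresponds to $\sum_k t_k (\id_{B_\varnothing} \ustar h_k')$, i.e.~to $\delta_{\cF'}$ under the unitor identification. Hence the underlying $\DiagBSp$-sequence isomorphism lifts to an isomorphism $\lambda'_{\cF'} : \Tmon_\varnothing \hatstar \cF' \simto \cF'$ in $\Conv_\LM(\fh,W)$; naturality in $\cF'$ follows from the fact that the underlying identification is natural at the level of $\DiagBSp$-sequences.

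For $\varrho'$, fix $\cF \in \Conv_\FM(\fh,W)$. Since $\delta_{\cT_\varnothing} = 0$, we have $\mu_{\cT_\varnothing}(\check e_i) = \check e_i \lfrown 0 = 0$ for each $i$, so the correction term in the definition of $\cF \hatstar \cT_\varnothing$ vanishes and
\[
\delta_{\cF \hatstar \cT_\varnothing} = \delta_\cF \hatstar \id_{\cT_\varnothing}.
\]
Writing $\delta_\cF = \sum_k r_k f_k' x_k$ with $r_k \in \Lambda$, $f_k' \in \uEnd_\BE(\cF)$, $x_k \in R^\vee$, and observing that the dgg-algebra lift of $\mu_{\cT_\varnothing}$ furnished by the convolutivity of $\cT_\varnothing$ must factor through $\epsilon_{R^\vee}$ (as $\delta_{\cT_\varnothing} = 0$), we get $\delta_\cF \hatstar \id_{\cT_\varnothing} = \sum_k \epsilon_{R^\vee}(x_k) \, r_k f_k' \ustar \id_{\cT_\varnothing}$, which under the unitor identification corresponds precisely to the image $\bar\delta_\cF$ of $\delta_\cF$ in $\uEnd_\LM(\cF)$ under~\eqref{eqn:uhom-mon-lmon}. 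This is by definition the differential of $\ForFMLM(\cF)$, producing the desired isomorphism $\varrho'_\cF : \cF \hatstar \cT_\varnothing \simto \ForFMLM(\cF)$, again natural in $\cF$.

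Finally, the two commutative diagrams are verified by unwinding definitions. For the first one, given a representative $1 \otimes 1 \otimes x \in \uEnd_\FM(\Tmon_\varnothing)$ of an element $x \in R^\vee$ (via~\eqref{eqn:endmon-t1} and Example~\ref{ex:T1-mon-mu-nu}) and a chain map $f \in \uHom_\LM(\cF',\cG')$, the element $(1 \otimes 1 \otimes x) \hatstar f$ equals $\mu_{\cG'}(x) \circ (\id_{\Tmon_\varnothing} \hatstar f)$ by the definition of $\hatstar$ in~\eqref{eqn:hatstar-fmlm-dgg-defn}, which under $\lambda'$ becomes $\mu_{\cG'}(x) \circ f = x \hatstar f$ in the sense of~\eqref{eqn:leftmon-rv-action}. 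The second diagram is obtained similarly, tracking $\nu_{\Tmon_\varnothing}$ in place of $\mu_{\Tmon_\varnothing}$ and using that the right $R^\vee$-action on $\uHom_\FM(\cF,\cG)$ passes to $\uHom_\FM(\cF,\cG) \otimes_{R^\vee} \bk \cong \uHom_\LM(\ForFMLM(\cF), \ForFMLM(\cG))$ via the counit. No step here is genuinely difficult; the only care required is in keeping straight the three distinct roles played by $R^\vee$ (the internal factor of $\uHom_\FM$, the left monodromy action, and the algebra through which $\mu_{\Tmon_\varnothing}$ and $\mu_{\cT_\varnothing}$ are computed).
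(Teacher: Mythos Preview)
Your proposal is correct and follows exactly the approach the paper intends: the paper actually omits the proof entirely, stating only that it is ``a straightforward analogue of Lemma~\ref{lem:unitor}'' and leaving the details to the reader. Your sketch supplies those details faithfully, using the FM--LM analogue of Lemma~\ref{lem:convdiff-formula} together with $\delta_{\Tmon_\varnothing}^\lhd = 0$ and $\mu_{\cT_\varnothing} = \epsilon_{R^\vee}$, just as the paper's proof of Lemma~\ref{lem:unitor} uses $\delta_{\Tmon_\varnothing}^\rhd = \delta_{\Tmon_\varnothing}^\lhd = 0$ and the explicit formulas for $\mu_{\Tmon_\varnothing}$, $\nu_{\Tmon_\varnothing}$ from Example~\ref{ex:T1-mon-mu-nu}.

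One small slip in your final paragraph: for the second diagram the relevant map is $\mu_{\cT_\varnothing}$ (which factors through $\epsilon_{R^\vee}$ since $\delta_{\cT_\varnothing}=0$), not ``$\nu_{\Tmon_\varnothing}$''; the object on the right is $\cT_\varnothing \in \Conv_\LM$, for which no $\nu$ is defined. This does not affect the argument, since you already identified the key point---that the $R^\vee$-factor of $f \in \uHom_\FM(\cF,\cG)$ is killed by $\mu_{\cT_\varnothing}$, so $f \hatstar \id_{\cT_\varnothing}$ becomes $\ForFMLM(f)$ under $\varrho'$.
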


%%%%%%%%%%%%%%%%%%%%%%%%%%%%%%%%%%%%%%%%%%%%%%%%%%%%%%%%%%%%%%%%%%%%%%%%%%%
\section{Associator isomorphism}
%%%%%%%%%%%%%%%%%%%%%%%%%%%%%%%%%%%%%%%%%%%%%%%%%%%%%%%%%%%%%%%%%%%%%%%%%%%

Since the convolution product $\star$ makes the category $\DiagBSp(\fh,W)$ into a monoidal category, for any $\DiagBSp$-sequences $\cF$, $\cG$ and $\cH$ there exists a canonical isomorphism
\begin{equation}
\label{eqn:associativity-parity-sequences}
\cF \ustar (\cG \ustar \cH) \cong (\cF \ustar \cG) \ustar \cH.
\end{equation}
In the following lemma we use this isomorphism to identify these objects.

\begin{lem}
\label{lem:associativity-morphisms}
Let $\cF$, $\cG$ and $\cH$ be convolutive free-monodromic complexes.
\begin{enumerate}
\item
\label{it:associativity-morphisms-1}
If $\cF'$ belongs to $\Conv_\FM(\fh,W)$ and $f \in \uHom_\FM(\cF,\cF')$, then we have
\[
(f \hatstar \id_{\cG}) \hatstar \id_\cH = f \hatstar (\id_\cG \hatstar \id_\cH).
\]
\item
\label{it:associativity-morphisms-2}
If $\cG'$ belongs to $\Conv_\FM(\fh,W)$ and $g \in \uHom_\FM(\cG, \cG')$, then we have
\[
(\id_{\cF} \hatstar g) \hatstar \id_{\cH} = \id_{\cF} \hatstar (g \hatstar \id_{\cH}).
\]
\item
\label{it:associativity-morphisms-3}
If $\cH'$ belongs to $\Conv_\FM(\fh,W)$ and $h \in \uHom_\FM(\cH,\cH')$, then we have
\[
(\id_\cF \hatstar \id_{\cG}) \hatstar h = \id_\cF \hatstar (\id_\cG \hatstar h).
\]
\end{enumerate}
\end{lem}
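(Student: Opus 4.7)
The plan is to unwind the defining formula
\[
 f \hatstar h = rf'\,\nu_\cF(t) \ustar \mu_{\cG'}(x)\, h' z
\]
(for $f = rf'x$ and $h = th'z$) on both sides of each claimed equality, and to compare term by term, using the associativity of $\ustar$ on $\DiagBSp$-sequences together with the ring-homomorphism properties of $\mu$ and $\nu$ from Lemma~\ref{lem:convolutive-mu-nu}. First I would observe that $\id_\cG \hatstar \id_\cH = \id_\cG \ustar \id_\cH = \id_{\cG \ustar \cH}$, since the only potentially non-trivial ingredients are $\nu_\cG(1)$ and $\mu_\cH(1)$, each of which reduces to an identity.

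I would treat part~\eqref{it:associativity-morphisms-2} first, as it is the easiest. Writing $g = tg'z$, a direct computation shows that both $(\id_\cF \hatstar g) \hatstar \id_\cH$ and $\id_\cF \hatstar (g \hatstar \id_\cH)$ reduce to $\nu_\cF(t) \ustar g' \ustar \mu_\cH(z)$. The point is that the $\Lambda$-component $t$ of $g$ is absorbed into $\nu_\cF$ by convolution with $\id_\cF$ on the left, while the $R^\vee$-component $z$ is absorbed into $\mu_\cH$ by convolution with $\id_\cH$ on the right; since these two operations act on disjoint tensor slots, they commute trivially and the result is independent of the parenthesization.

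For part~\eqref{it:associativity-morphisms-1}, writing $f = rf'x$ and iterating the formula, one finds
\[
 (f \hatstar \id_\cG) \hatstar \id_\cH = \sum_{\alpha, \beta} r\, (f' \ustar g_\alpha \ustar h_{\alpha, \beta})\, z_{\alpha, \beta},
\]
where $\mu_\cG(x) = \sum_\alpha g_\alpha \otimes y_\alpha$ and $\mu_\cH(y_\alpha) = \sum_\beta h_{\alpha, \beta} \otimes z_{\alpha, \beta}$. On the other hand, using $\id_\cG \hatstar \id_\cH = \id_{\cG \ustar \cH}$, one obtains $f \hatstar (\id_\cG \hatstar \id_\cH) = rf' \ustar \mu_{\cG \hatstar \cH}(x)$. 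Agreement therefore reduces to the compatibility identity
\[
 \mu_{\cG \hatstar \cH}(x) = \sum_{\alpha, \beta} (g_\alpha \ustar h_{\alpha, \beta}) \otimes z_{\alpha, \beta}
\]
in $\uEnd_\BE(\cG \ustar \cH) \otimes R^\vee$. For $x \in V$, this can be checked by substituting the explicit form of $\delta_{\cG \hatstar \cH}$ from Lemma~\ref{lem:convdiff-formula} into $\mu_{\cG \hatstar \cH}(x) = x \lfrown \delta_{\cG \hatstar \cH}$ and expanding, using the derivation property of $\lfrown$ in Lemma~\ref{lem:kappa-bifrown}\eqref{it:kappa-bifrown-left} and the defining properties of $\delta^\lhd$ and $\delta^\rhd$; the extension to general $x \in R^\vee$ is then immediate from multiplicativity of $\mu$. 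Part~\eqref{it:associativity-morphisms-3} would be handled dually, with $\mu$ replaced by $\nu$ and Lemma~\ref{lem:kappa-bifrown}\eqref{it:kappa-bifrown-right} in place of~\eqref{it:kappa-bifrown-left}.

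The main obstacle is the verification of the compatibility identity for $\mu_{\cG \hatstar \cH}$ (and dually for $\nu_{\cF \hatstar \cG}$): this is the only step that requires more than the formal properties of the definition, and it calls for careful bookkeeping of bidegrees and sign conventions when expanding $\delta_{\cG \hatstar \cH}$. Once this compatibility is in hand, the remaining verifications are routine applications of the unwinding strategy described above.
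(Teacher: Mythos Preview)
Your proposal is correct and follows essentially the same approach as the paper. Both treat part~\eqref{it:associativity-morphisms-2} first by direct expansion, and both reduce parts~\eqref{it:associativity-morphisms-1} and~\eqref{it:associativity-morphisms-3} to the key compatibility identities $\mu_{\cG \hatstar \cH}(x) = \mu_\cG(x) \hatstar \id_\cH$ and $\nu_{\cF \hatstar \cG}(r) = \id_\cF \hatstar \nu_\cG(r)$, verified first for $x \in V$ (resp.\ $r \in V^*$) by contracting against the explicit differential from Lemma~\ref{lem:convdiff-formula}.

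Two small remarks on presentation. First, the paper states the compatibility identity in the compact form $\mu_{\cG \hatstar \cH}(x) = \mu_\cG(x) \hatstar \id_\cH$ rather than your expanded double-sum; this is the same content but makes the subsequent argument cleaner. Second, the verification for $x \in V$ does not really use the derivation property of $\lfrown$ with respect to composition from Lemma~\ref{lem:kappa-bifrown}\eqref{it:kappa-bifrown-left}; rather, one uses the formula $\delta_{\cG \hatstar \cH} = \delta_\cG \hatstar \id_\cH + \id_\cG \hatstar \delta_\cH^\rhd$ and observes that $\id_\cG \hatstar \delta_\cH^\rhd$ has trivial $\Lambda$-component, so $x \lfrown ({-})$ kills it and passes through the first term. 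Finally, your ``immediate from multiplicativity of $\mu$'' for the extension to general $x \in R^\vee$ tacitly uses the weak interchange law (Lemma~\ref{lem:monconv-weak-interchange}) to get $(\mu_\cG(x)\mu_\cG(y)) \hatstar \id_\cH = (\mu_\cG(x) \hatstar \id_\cH)(\mu_\cG(y) \hatstar \id_\cH)$; the paper cites this explicitly, and it is worth doing so.
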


\begin{proof}
We first prove~\eqref{it:associativity-morphisms-2}. We can assume that $g = r g' x$ with $r \in \Lambda$, $g' \in \uHom_\BE(\cG, \cG')$ and $x \in R^\vee$. Then we have
\[
(\id_{\cF} \hatstar g) \hatstar \id_{\cH} = (\nu_\cF(r) \ustar g'x) \hatstar \id_\cH = (\nu_\cF(r) \ustar g') \ustar \mu_\cH(x).
\]
On the other hand we have
\[
\id_{\cF} \hatstar (g \hatstar \id_{\cH}) = \id_\cF \hatstar (rg' \ustar \mu_\cH(x)) = \nu_\cF(r) \ustar (g' \ustar \mu_\cH(x)).
\]
Hence the desired identification follows from the associativity of $\star$ on $\DiagBSp(\fh,W)$.

Now we prove~\eqref{it:associativity-morphisms-1}. We can assume that $f = r f' x$ with $r \in \Lambda$, $f' \in \uHom_\BE(\cF, \cF')$ and $x \in R^\vee$. We remark that for any $y \in V$, by Lemma~\ref{lem:convdiff-formula} we have
\[
\mu_{\cG \hatstar \cH}(y) = y \lfrown (\delta_\cG \hatstar \id_\cH + \id_\cG \hatstar \delta_\cH^\rhd) = y \lfrown (\delta_\cG \hatstar \id_\cH) = (y \lfrown \delta_\cG) \hatstar \id_\cH = \mu_\cG(y) \hatstar \id_\cH.
\]
Using Lemma~\ref{lem:monconv-weak-interchange} we deduce that this formula holds for any $y \in R^\vee$, and then that
\begin{multline*}
(f \hatstar \id_{\cG}) \hatstar \id_\cH = (rf' \hatstar \mu_{\cG}(x)) \hatstar \id_\cH = ((rf' \hatstar \id_\cG) \circ (\id_\cF \hatstar \mu_{\cG}(x))) \hatstar \id_\cH \\
= ((rf' \hatstar \id_\cG) \hatstar \id_\cH) \circ ((\id_\cF \hatstar \mu_{\cG}(x)) \hatstar \id_\cH) = (rf' \hatstar \id_{\cG \hatstar\cH}) \circ (\id_\cF \hatstar (\mu_{\cG}(x) \hatstar \id_\cH)) \\
= rf' \hatstar (\mu_\cG(x) \hatstar \id_\cH) = rf' \hatstar \mu_{\cG \hatstar \cH}(x) = f \hatstar \id_{\cG \hatstar \cH} = f \hatstar (\id_\cG \hatstar \id_\cH).
\end{multline*}

The proof of~\eqref{it:associativity-morphisms-3} is similar. We can assume that $h=rh'x$ with $r \in \Lambda$, $h' \in \uHom_\BE(\cH, \cH')$ and $x \in R^\vee$. By Lemma~\ref{lem:convdiff-formula},
for any $s \in V^*$ we have
\[
\nu_{\cF \hatstar \cG}(s) = (\delta_\cF^\lhd \hatstar \id_\cG + \id_\cF \hatstar \delta_\cG) \rfrown s = (\id_\cF \hatstar \delta_\cG) \rfrown s = \id_\cF \hatstar (\delta_\cG \rfrown s) = \id_\cF \hatstar \nu_\cG(s).
\]
Using again Lemma~\ref{lem:monconv-weak-interchange} we deduce that this formula holds for any $s \in \Lambda$, and then that
\begin{multline*}
\id_\cF \hatstar (\id_\cG \hatstar h) = \id_\cF \hatstar (\nu_\cG(r) \hatstar h'x) =  \id_\cF \hatstar ((\nu_\cG(r) \hatstar \id_{\cH'}) \circ (\id_\cG \hatstar h'x)) \\
=  (\id_\cF \hatstar (\nu_\cG(r) \hatstar \id_{\cH'})) \circ (\id_\cF \hatstar (\id_\cG \hatstar h'x))
=  ((\id_\cF \hatstar \nu_\cG(r)) \hatstar \id_{\cH'}) \circ (\id_{\cF \hatstar \cG} \hatstar h'x) \\
=  (\id_\cF \hatstar \nu_\cG(r)) \hatstar h'x = \nu_{\cF \hatstar \cG}(r) \hatstar h'x = \id_{\cF \hatstar \cG} \hatstar h = (\id_\cF \hatstar \id_\cG) \hatstar h,
\end{multline*}
as desired.
\end{proof}

\begin{prop}
\label{prop:associator}
For any $\cF, \cG, \cH$ in $\Conv_\FM(\fh,W)$,
the isomorphism~\eqref{eqn:associativity-parity-sequences} induces an isomorphism
\[
\alpha_{\cF,\cG,\cH} : \cF \hatstar (\cG \hatstar \cH) \cong (\cF \hatstar \cG) \hatstar \cH
\]
in $\Conv_\FM(\fh,W)$. Moreover, this identification is compatible with morphisms in the sense that if
$\cF, \cF', \cG, \cG', \cH, \cH'$ are convolutive free-monodromic complexes and if $f : \cF \to \cF'$, $g : \cG \to \cG'$ and $h : \cH \to \cH'$ are morphisms in $\Conv_\FM(\fh,W)$, then the following diagram commutes:
\[
\begin{tikzcd}[row sep=large,column sep=huge]
\cF \hatstar (\cG \hatstar \cH) \ar[r, "\alpha_{\cF,\cG,\cH}"] \ar[d, "f \hatstar (g \hatstar h)" description] & (\cF \hatstar \cG) \hatstar \cH \ar[d, "(f \hatstar g) \hatstar h" description] \\
\cF' \hatstar (\cG' \hatstar \cH') \ar[r, "\alpha_{\cF',\cG',\cH'}"] & (\cF' \hatstar \cG') \hatstar \cH'.
\end{tikzcd}
\]
\end{prop}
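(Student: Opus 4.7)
\emph{Plan.} The proposition asserts two things: that the associativity isomorphism of $\DiagBSp$-sequences lifts to an isomorphism $\alpha_{\cF,\cG,\cH}$ in $\Conv_\FM(\fh,W)$, and that $\alpha$ is natural in its three arguments.

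For the first claim, let $\phi : \cF \ustar (\cG \ustar \cH) \simto (\cF \ustar \cG) \ustar \cH$ denote the associator of $\DiagBSp$-sequences. I will verify that $\phi$ intertwines $\delta_{\cF \hatstar (\cG \hatstar \cH)}$ and $\delta_{(\cF \hatstar \cG) \hatstar \cH}$. Using Lemma~\ref{lem:convdiff-formula} twice, each differential becomes a sum of ``main'' terms of the form $\delta_\cF \hatstar \id_\cG \hatstar \id_\cH$ (and cyclic variants) together with two $\psi$-type correction terms. The main terms match across $\phi$ directly via parts~\eqref{it:associativity-morphisms-1}--\eqref{it:associativity-morphisms-3} of Lemma~\ref{lem:associativity-morphisms}. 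For the $\psi$-terms, the formulas $\mu_{\cG \hatstar \cH}(y) = \mu_\cG(y) \hatstar \id_\cH$ (for $y \in V$) and $\nu_{\cF \hatstar \cG}(s) = \id_\cF \hatstar \nu_\cG(s)$ (for $s \in V^*$), both established inside the proof of Lemma~\ref{lem:associativity-morphisms}, allow me to rewrite $\psi_{\cF \hatstar (\cG \hatstar \cH)}$ and $\psi_{(\cF \hatstar \cG) \hatstar \cH}$ as iterated $\hatstar$-products; one more application of Lemma~\ref{lem:associativity-morphisms} then shows that they account exactly for the discrepancy between the two main-term expansions. This bookkeeping is the principal delicacy of the proof: the various $\psi$-terms and cross terms must rearrange correctly, but the relevant cancellations are driven by the compatibility of $\mu$ and $\nu$ with convolution noted above.

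For the naturality claim, the key observation is that the factorization $u \hatstar v = (u \hatstar \id) \circ (\id \hatstar v)$ is valid in our setting for any morphisms $u, v$ in $\Conv_\FM(\fh,W)$, since identity morphisms lie in both $\uHom_\FM^\lhd$ and $\uHom_\FM^\rhd$ so that Lemma~\ref{lem:monconv-weak-interchange} applies. Combining this with the one-variable functoriality of $\cF \hatstar ({-})$ and $({-}) \hatstar \cF$ recalled in Chapter~\ref{chap:fm-convolution}, I decompose
\[
 f \hatstar (g \hatstar h) = \bigl(f \hatstar \id_{\cG' \hatstar \cH'}\bigr) \circ \bigl(\id_\cF \hatstar (g \hatstar \id_{\cH'})\bigr) \circ \bigl(\id_\cF \hatstar (\id_\cG \hatstar h)\bigr)
\]
and, analogously,
\[
 (f \hatstar g) \hatstar h = \bigl((f \hatstar \id_{\cG'}) \hatstar \id_{\cH'}\bigr) \circ \bigl((\id_\cF \hatstar g) \hatstar \id_{\cH'}\bigr) \circ \bigl(\id_{\cF \hatstar \cG} \hatstar h\bigr).
\]
Parts~\eqref{it:associativity-morphisms-1}--\eqref{it:associativity-morphisms-3} of Lemma~\ref{lem:associativity-morphisms} then match each factor on the first line with the corresponding factor on the second line through the associator, yielding the commutativity of the naturality diagram. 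This step is largely formal once the Lemma~\ref{lem:monconv-weak-interchange}-based factorization is in place.
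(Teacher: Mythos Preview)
Your proposal is correct, and for the naturality (second) claim it is essentially identical to the paper's argument: the paper also factors $f \hatstar (g \hatstar h)$ and $(f \hatstar g) \hatstar h$ exactly as you do, using Lemma~\ref{lem:monconv-weak-interchange}, and then matches the three factors via Lemma~\ref{lem:associativity-morphisms}.

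For the first claim (matching the differentials), the paper takes a slightly cleaner route than yours. Rather than expanding via the original Definition~\ref{defn:monconv} (which produces three ``main'' terms and two $\psi$-correction terms per side), the paper uses the alternative formulas $\delta_{\cF \hatstar \cF'} = \delta_\cF^\lhd \hatstar \id_{\cF'} + \id_\cF \hatstar \delta_{\cF'} = \delta_\cF \hatstar \id_{\cF'} + \id_\cF \hatstar \delta_{\cF'}^\rhd$ from Lemma~\ref{lem:convdiff-formula}. Applying these twice gives each triple differential as a sum of exactly three terms (e.g.\ $\delta_\cF^\lhd \hatstar (\id_\cG \hatstar \id_\cH) + \id_\cF \hatstar (\delta_\cG \hatstar \id_\cH) + \id_\cF \hatstar (\id_\cG \hatstar \delta_\cH^\rhd)$), with no $\psi$-terms to track; Lemma~\ref{lem:associativity-morphisms} then matches them directly. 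Your approach---tracking the $\psi$-terms explicitly via the formulas for $\mu_{\cG \hatstar \cH}$ and $\nu_{\cF \hatstar \cG}$---is valid and leads to the same conclusion, but involves more bookkeeping that the paper's choice of differential formula sidesteps entirely.
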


\begin{proof}
To prove the first claim we have to check that if $(\cF,\delta_\cF)$, $(\cG,\delta_\cG)$ and $(\cH,\delta_\cH)$ are convolutive free-monodromic complexes, then the isomorphism~\eqref{eqn:associativity-parity-sequences} is an isomorphism of free-monodromic complexes, or in other words that the differentials on the two sides identify. For this we use Lemma~\ref{lem:convdiff-formula} to see that
\[
\delta_{\cF \hatstar (\cG \hatstar \cH)} = \delta_\cF^\lhd \hatstar \id_{\cG \hatstar \cH} + \id_\cF \hatstar \delta_{\cG \hatstar \cH} = \delta_\cF^\lhd \hatstar (\id_{\cG} \hatstar \id_{\cH}) + \id_\cF \hatstar (\delta_{\cG} \hatstar \id_{\cH} + \id_\cG \hatstar \delta_\cH^\rhd). 
\]
On the other hand, for similar reasons we have
\[
\delta_{(\cF \hatstar \cG) \hatstar \cH} = \id_{\cF \hatstar \cG} \hatstar \delta_\cH^\rhd + \delta_{\cF \hatstar \cG} \hatstar \id_\cH = (\id_\cF \hatstar \id_\cG) \hatstar \delta_\cH^\rhd + (\delta_\cF^\lhd \hatstar \id_\cG + \id_\cF \hatstar \delta_\cG) \hatstar \id_\cH.
\]
Hence the desired identification follows from Lemma~\ref{lem:associativity-morphisms}.

Next, we prove the second claim.
In fact, we will prove that this equality holds in $\uHom_\FM(\cF \hatstar (\cG \hatstar \cH), (\cF' \hatstar \cG') \hatstar \cH')$, and for any $f \in \uHom_\FM(\cF, \cF')$, $g \in \uHom_\FM(\cG, \cG')$, $h \in \uHom_\FM(\cH, \cH')$.
For this we use Lemma~\ref{lem:monconv-weak-interchange} repeatedly to see that
\begin{multline*}
f \hatstar (g \hatstar h) = (f \hatstar \id_{\cG' \hatstar \cH'}) \circ (\id_\cF \hatstar (g \hatstar h)) \\
= \bigl( f \hatstar (\id_{\cG'} \hatstar \id_{\cH'}) \bigr) \circ \bigl( \id_\cF \hatstar ((g \hatstar \id_{\cH'}) \circ (\id_\cG \hatstar h)) \bigr) \\
= \bigl( f \hatstar (\id_{\cG'} \hatstar \id_{\cH'}) \bigr) \circ \bigl( \id_\cF \hatstar (g \hatstar \id_{\cH'}) \bigr) \circ \bigl( \id_\cF \hatstar (\id_\cG \hatstar h) \bigr).
\end{multline*}
On the other hand, for similar reasons we have
\begin{multline*}
(f \hatstar g) \hatstar h = ((f \hatstar g) \hatstar \id_{\cH'}) \circ (\id_{\cF \hatstar \cG} \hatstar h) \\
= \bigl( ((f \hatstar \id_{\cG'}) \circ (\id_\cF \hatstar g)) \hatstar \id_{\cH'} \bigr) \circ \bigl( (\id_\cF \hatstar \id_\cG) \hatstar h \bigr) \\
= \bigl( (f \hatstar \id_{\cG'}) \hatstar \id_{\cH'} \bigr) \circ  \bigl((\id_\cF \hatstar g) \hatstar \id_{\cH'} \bigr) \circ \bigl( (\id_\cF \hatstar \id_\cG) \hatstar h \bigr).
\end{multline*}
Hence the desired identification follows once again from Lemma~\ref{lem:associativity-morphisms}.
\end{proof}

Once again, for any $\cF$, $\cG$ and $\cH$ the isomorphism $\alpha_{\cF, \cG, \cH}$ belongs to morphisms both in $\Conv_\FM^{\rhd}(\fh,W)$ and in $\Conv_\FM^{\lhd}(\fh,W)$.

There are analogues of Lemma~\ref{lem:associativity-morphisms} and Proposition~\ref{prop:associator} in the context of the action of $\Conv_\FM(\fh,W)$ on $\Conv_\LM(\fh,W)$, similar to Lemma~\ref{lem:unitor-action}.  We leave it to the reader to formulate these statements.

%%%%%%%%%%%%%%%%%%%%%%%%%%%%%%%%%%%%%%%%%%%%%%%%%%%%%%%%%%%%%%%%%%%%%%%%%%%
\section{Coherence conditions and \texorpdfstring{$n$}{n}-fold convolution product}
\label{sec:coherence}
%%%%%%%%%%%%%%%%%%%%%%%%%%%%%%%%%%%%%%%%%%%%%%%%%%%%%%%%%%%%%%%%%%%%%%%%%%%

\begin{prop}
\label{prop:hatstar-coherence}
The isomorphisms $\lambda$, $\varrho$, and $\alpha$ satisfy the coherence conditions as spelled out e.g.~in~\cite[\S VII.1]{maclane}. In particular, they equip the categories $\Conv_\FM^{\rhd}(\fh,W)$ and $\Conv^{\lhd}_\FM(\fh,W)$ with structures of monoidal categories.
\end{prop}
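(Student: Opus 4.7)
The plan is to reduce the pentagon and triangle coherence identities for $\lambda$, $\varrho$, $\alpha$ to the corresponding identities already known to hold in the monoidal category $(\DiagBSp(\fh,W), \star)$, extended in the obvious way to a monoidal structure $\ustar$ on $\DiagBSp$-sequences. The key observation, implicit in the proofs of Lemma~\ref{lem:unitor} and Proposition~\ref{prop:associator}, is that each structural isomorphism $\lambda_\cF$, $\varrho_\cF$, $\alpha_{\cF,\cG,\cH}$ admits a representative in $\uHom_\FM(-,-)^0_0$ of the form $1 \otimes \phi \otimes 1$, where $\phi$ is the corresponding structural isomorphism for the underlying $\DiagBSp$-sequences with respect to $\ustar$. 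In particular, such a representative lies simultaneously in $\uHom_\FM^\lhd$ and $\uHom_\FM^\rhd$, so each of the three structural morphisms defines a morphism in both $\Conv_\FM^\rhd(\fh,W)$ and $\Conv_\FM^\lhd(\fh,W)$.

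Second, I would expand each edge of the pentagon and triangle diagrams in $\Conv_\FM^\rhd(\fh,W)$ and $\Conv_\FM^\lhd(\fh,W)$; these diagrams make sense thanks to the bifunctoriality of $\hatstar$ on these subcategories established in Lemma~\ref{lem:interchange-rhd-lhd}. Every edge is either a structural morphism itself, or has the form $\id \hatstar \phi$ or $\phi \hatstar \id$ for a structural morphism $\phi$. Applying Lemma~\ref{lem:monconv-weak-interchange} to $\phi \hatstar \id$ and $\id \hatstar \phi$ (using that $\phi$ has trivial $\Lambda$- and $R^\vee$-components, so that its lift lies in both $\uHom_\FM^\lhd$ and $\uHom_\FM^\rhd$ and the hypotheses of that lemma are met), one sees that at the level of underlying $\DiagBSp$-sequences $\hatstar$ collapses to $\ustar$ and each structural morphism collapses to its $\DiagBSp$-sequence counterpart. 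Hence the two sides of each coherence equation reduce, on underlying $\DiagBSp$-sequences, to the two sides of the corresponding coherence equation for $\ustar$.

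These reduced equations hold because the monoidal structure of $(\DiagBSp(\fh,W), \star)$ extends to a monoidal structure on $\DiagBSp$-sequences with product $\ustar$, whose unitors and associator satisfy the pentagon and triangle axioms by construction. The main technical point is the bookkeeping in the second paragraph: once one confirms that $\hatstar$ applied to identity maps and to structural morphisms reduces to $\ustar$ on underlying $\DiagBSp$-sequences, the coherence of $\lambda$, $\varrho$, $\alpha$ follows with no further computation, and the monoidal structures on $\Conv_\FM^\rhd(\fh,W)$ and $\Conv_\FM^\lhd(\fh,W)$ result immediately.
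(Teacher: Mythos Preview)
Your proposal is correct and follows essentially the same approach as the paper: the paper's proof simply observes that $\lambda$, $\varrho$, and $\alpha$ are induced by the unitor and associator isomorphisms of $\DiagBSp$-sequences coming from the monoidal structure on $\DiagBSp(\fh,W)$, and concludes that the coherence conditions for $\hatstar$ follow from those for $\star$. Your version spells out more of the technical bookkeeping (in particular, why edges of the form $\id \hatstar \phi$ and $\phi \hatstar \id$ also reduce to their $\ustar$ counterparts), but the underlying idea is the same.
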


\begin{proof}
All our isomorphisms are induced by isomorphisms of $\DiagBSp$-sequences provided by the unitor and associator isomorphisms for the bifunctor $\star$ on the category $\DiagBSp(\fh,W)$. Since these isomorphisms are part of the monoidal structure on $\DiagBSp(\fh,W)$, they satisfy the coherence conditions, and we deduce these conditions for the category $\Conv_\FM(\fh,W)$.
\end{proof}

Since the categories $\Conv^{\rhd}_\FM(\fh,W)$ and $\Conv_\FM^{\lhd}(\fh,W)$ are monoidal, the $n$-fold convolution product $\cF_1 \hatstar \cF_2 \hatstar \cdots \hatstar \cF_n$ is defined up to a unique isomorphism for any convolutive free-monodromic complexes $\cF_1, \sdots, \cF_n$, as in~\cite[\S VII.2, Corollary]{maclane}. (Of course, this object is the same whether considered in $\Conv_\FM^{\rhd}(\fh,W)$ or in $\Conv_\FM^{\lhd}(\fh,W)$.) In fact even more is true: this object is defined canonically. Indeed, since the category $\DiagBSp(\fh,W)$ is a \emph{strict} monoidal category, for any $\cG_1, \sdots, \cG_n$ we can define unambiguously the $n$-fold convolution $\cG_1 \star \cdots \star \cG_n$. This object is canonically isomorphic to the object obtained for any choice of parentheses determining the order in which the convolution products are taken. Hence a similar construction is possible for $\DiagBSp$-sequences. And finally if $\cF_1, \sdots, \cF_n$ are as above, there exists a unique differential on the $\DiagBSp$-sequence $\cF_1 \ustar \cdots \ustar \cF_n$ which identifies with the corresponding differential on the object of $\Conv_\FM(\fh,W)$ obtained for any choice of parentheses.

Since our associator isomorphism is compatible with all morphisms (not only with those in $\Conv^{\rhd}_\FM(\fh,W)$ or in $\Conv_\FM^{\lhd}(\fh,W)$), a similar claim holds for morphisms: if $\cF_1, \sdots, \cF_n$ and $\cG_1, \sdots, \cG_n$ are convolutive free-monodromic complexes, and if $f_i : \cF_i \to \cG_i$ are morphisms in $\Conv_\FM(\fh,W)$, then the canonical identifications between the objects obtained by convolving $\cF_1, \sdots, \cF_n$ and $\cG_1, \sdots, \cG_n$ with any choice of parentheses are compatible with the corresponding morphisms obtained by convolving $f_1, \sdots, f_n$; therefore, there exists a unique morphism
\[
 f_1 \hatstar \cdots \hatstar f_n : \cF_1 \hatstar \cdots \hatstar \cF_n \to \cG_1 \hatstar \cdots \hatstar \cG_n
\]
which identifies with the morphism obtained for any choice of parentheses.

%%%%%%%%%%%%%%%%%%%%%%%%%%%%%%%%%%%%%%%%%%%%%%%%%%%%%%%%%%%%%%%%%%%%%%%%%%%
\section{Free-monodromic standard and costandard objects}
%%%%%%%%%%%%%%%%%%%%%%%%%%%%%%%%%%%%%%%%%%%%%%%%%%%%%%%%%%%%%%%%%%%%%%%%%%%

For any simple reflection $s$, we define the (convolutive) free-monodromic complexes $\tD_s$ and $\tN_s$ with respective underlying $\DiagBSp$-sequences
\[
(\sdots, 0, B_s, B_\varnothing(1), 0, \sdots) \quad \text{and} \quad (\sdots, 0, B_\varnothing(-1), B_s, 0, \sdots)
\]
(where in each case $B_s$ is in position $0$) and with differential defined as follows:
\[
\tD_s =
 \begin{tikzcd}
B_\varnothing(1) \ar[loop right, in=0, out=20, distance=40, "\theta - \alpha_s \otimes \id \otimes \alpha_s^\vee" pos=0.6] \ar[d, bend left=80, "(2)" description, "\ \ 1 \otimes \usebox\lowerdot \otimes \alpha_s^\vee" pos=0.4] \\
B_s \ar[u, "\usebox\upperdot"] \ar[loop right, in=-20, out=0, distance=50, "\theta - \alpha_s \otimes \id \otimes \alpha_s^\vee"]
\end{tikzcd}
\qquad \quad
\tN_s =
 \begin{tikzcd}
B_s \ar[loop right, in=0, out=20, distance=40, "\theta - \alpha_s \otimes \id \otimes \alpha_s^\vee" pos=0.6] \ar[d, bend left=80, "(2)" description, "\ \ 1 \otimes \usebox\upperdot \otimes \alpha_s^\vee" pos=0.7] \\
B_\varnothing(-1) \ar[u, "\usebox\lowerdot"] \ar[loop right, in=-20, out=0, distance=50, "\theta - \alpha_s \otimes \id \otimes \alpha_s^\vee"]
\end{tikzcd}
\]
We also set $\tD_\varnothing = \tN_\varnothing = \Tmon_\varnothing$.

For any expression $\uw=(t_1, \sdots, t_k)$, we set\index{standard object!Deltatuw@{$\tD_{\uw}$}}\index{costandard object!nablatuw@{$\tN_{\uw}$}}
\[
 \tD_{\uw} := \tD_{t_1} \hatstar \cdots \hatstar \tD_{t_k}, \qquad \tN_{\uw} := \tN_{t_1} \hatstar \cdots \hatstar \tN_{t_k}.
\]

\begin{lem}
\label{lem:mu-tD-tN-new}
 For any expression $\uw$ and any $f \in R^\vee$, in $\uEnd_\FM(\tD_\uw)$ we have
 \[
  \mu_{\tD_{\uw}}(f) = 1 \otimes \id_{\tD_{\uw}} \otimes \pi(\uw)^{-1}(f).
 \]
 Similarly, in $\uEnd_\FM(\tN_\uw)$ we have
 \[
  \mu_{\tN_{\uw}}(f) = 1 \otimes \id_{\tN_{\uw}} \otimes \pi(\uw)^{-1}(f).
 \]
\end{lem}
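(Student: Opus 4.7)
The plan is to proceed by induction on $\ell(\uw)$, reducing at each step to the generating case $f \in V$. Since both sides of the desired identity are algebra maps in $f$ (the right side because $\pi(\uw)^{-1}$ is an algebra automorphism of $R^\vee$), and since $V$ generates $R^\vee$, it suffices to establish the formula for $f = y \in V$---provided we first know that $\mu_{\tD_\uw}$ is indeed an algebra homomorphism. For this I would check that each $\tD_\uw$ (resp.\ $\tN_\uw$) is convolutive, so that Lemma~\ref{lem:convolutive-mu-nu} applies. Convolutivity of $\tD_s$ and $\tN_s$ is immediate from their explicit differentials, and Lemma~\ref{lem:convdiff-formula} together with the bidegree bounds on $\delta_\cF^\lhd$, $\delta_\cG^\rhd$ and on $\psi_{\cF \hatstar \cG} = \sum_i \nu_\cF(e_i) \ustar \mu_\cG(\check e_i)$ show that the convolution of convolutive objects is convolutive; iterating gives the claim for every $\tD_\uw$ and $\tN_\uw$.

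The base cases are handled by direct inspection. For $\uw = \varnothing$, the formula reduces to Example~\ref{ex:T1-mon-mu-nu}. For $\uw = (s)$, all components of $\delta_{\tD_s}$ with trivial $\Lambda$-part are killed by $y \lfrown ({-})$, so only the loop $\theta - \alpha_s \otimes \id \otimes \alpha_s^\vee$ (present on both summands) contributes; this gives $\mu_{\tD_s}(y) = 1 \otimes \id_{\tD_s} \otimes (y - \alpha_s(y)\alpha_s^\vee) = 1 \otimes \id_{\tD_s} \otimes s(y)$, matching $\pi((s))^{-1} = s$. The computation for $\tN_s$ is identical, because the loop terms on $\tN_s$ have the same form.

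For the inductive step, write $\uw = (t_1, \sdots, t_k)$ and $\uw' = (t_2, \sdots, t_k)$, and use the associator isomorphism to identify $\tD_\uw$ with $\tD_{t_1} \hatstar \tD_{\uw'}$. The key tool is the identity
\[
\mu_{\cG \hatstar \cH}(y) = \mu_\cG(y) \hatstar \id_\cH
\qquad\text{for } y \in V,
\]
which appears inside the proof of Lemma~\ref{lem:associativity-morphisms}\eqref{it:associativity-morphisms-1} and which---crucially---holds already in $\uEnd_\FM(\cG \hatstar \cH)$ (not merely in $\gEnd$): by Lemma~\ref{lem:convdiff-formula}, $y \lfrown ({-})$ annihilates $\id_\cG \hatstar \delta_\cH^\rhd$ and passes through $\hatstar$ into the first factor. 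Substituting the base-case value $\mu_{\tD_{t_1}}(y) = 1 \otimes \id \otimes t_1(y)$ and unfolding the definition of $\hatstar$ on morphisms yields
\[
\mu_{\tD_\uw}(y) = \id_{\tD_{t_1}} \ustar \mu_{\tD_{\uw'}}(t_1(y)).
\]
The inductive hypothesis then transforms the right side into $1 \otimes \id_{\tD_\uw} \otimes \pi(\uw')^{-1}(t_1(y)) = 1 \otimes \id_{\tD_\uw} \otimes \pi(\uw)^{-1}(y)$, using $\pi(\uw) = t_1 \pi(\uw')$.

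The argument for $\tN_\uw$ is entirely parallel. No substantial obstacle arises; the only point requiring care is verifying that the identity $\mu_{\cG \hatstar \cH}(y) = \mu_\cG(y) \hatstar \id_\cH$ is available at the level of $\uEnd_\FM$, so that the inductive calculation really is an equality and not merely a statement modulo boundaries---but this is a direct bookkeeping of $\Lambda$-parts through Lemma~\ref{lem:convdiff-formula}.
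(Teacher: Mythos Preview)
Your proof is correct and follows essentially the same route as the paper's: induction on $\ell(\uw)$, using the identity $\mu_{\cG \hatstar \cH}(y) = \mu_\cG(y) \hatstar \id_\cH$ (from the proof of Lemma~\ref{lem:associativity-morphisms}) together with the length-$1$ computation. The paper peels off the \emph{last} simple reflection and works with a general $f \in R^\vee$ throughout (via the unitor of Lemma~\ref{lem:unitor}, regarding $f$ as $1 \otimes 1 \otimes f \in \uEnd_\FM(\Tmon_\varnothing)$), whereas you peel off the first and reduce to generators $y \in V$; these are interchangeable variations of the same argument.
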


\begin{proof}
 We treat the case of $\tD_{\uw}$; the case of $\tN_\uw$ is similar.
 We prove the claim by induction on the length of $\uw$. If this length is $0$ or $1$, then the result can be checked directly. If $\uw$ is of positive length, let us denote by $s$ the last simple reflection appearing in $\uw$, and let $\uv$ be the expression obtained from $\uw$ by omitting $s$.  Regard $f$ as an element of $\gEnd(\Tmon_\varnothing)$, via~\eqref{eqn:endmon-t1}.  Using Lemmas~\ref{lem:unitor} and~\ref{lem:associativity-morphisms}, we have 
 \begin{multline*}
  \mu_{\tD_{\uw}}(f) = f \hatstar \id_{\tD_{uw}}
  = f \hatstar (\id_{\tD_{\uv}} \hatstar \id_{\tD_s})
  = (f \hatstar \id_{\tD_{\uv}}) \hatstar \id_{\tD_s}  \\
  = \mu_{\tD_{\uv}}(f) \hatstar \id_{\tD_s} 
  = (\id_{\tD_{\uv}} \cdot \pi(\uv)^{-1}(f)) \hatstar \id_{\tD_s} \\
  = \id_{\tD_{\uv}} \ustar \mu_{\tD_s}(\pi(\uv)^{-1}(f))
  = \id_{\tD_{\uw}} \cdot s(\pi(\uv)^{-1}(f)).
 \end{multline*}
 The claim follows.
\end{proof}

If $s$ is a simple reflection, we can consider the chain map $p_s \in \uHom_\FM^\rhd(\tD_s, \tN_s)$ defined as follows: 
 \[
   \begin{tikzcd}[column sep=huge]
B_\varnothing(1) \ar[loop, in=160, out=180, distance=30, "\theta - \alpha_s \otimes \id \otimes \alpha_s^\vee" pos=0.3] \ar[d, bend left=80, "(2)" description, "\ \ 1 \otimes \usebox\lowerdot \otimes \alpha_s^\vee" pos=0.4] \ar[rrdd, in=120, out=0, "1 \otimes \id \otimes \alpha_s^\vee" near start, "(2)" description] \\
B_s \ar[u, "\usebox\upperdot"] \ar[loop, in=200, out=180, distance=30, "\theta - \alpha_s \otimes \id \otimes \alpha_s^\vee", pos=0.5, swap] \ar[rr, "\id"'] &&
B_s \ar[loop, in=0, out=20, distance=40, "\theta - \alpha_s \otimes \id \otimes \alpha_s^\vee" pos=0.6] \ar[d, bend left=80, "(2)" description, "\ \ 1 \otimes \usebox\upperdot \otimes \alpha_s^\vee" pos=0.7] \\
&& B_\varnothing(-1) \ar[u, "\usebox\lowerdot"'] \ar[loop right, in=-20, out=0, distance=50, "\theta - \alpha_s \otimes \id \otimes \alpha_s^\vee"]
\end{tikzcd}
 \]
and the chain map $q_s \in \uHom^\rhd_\FM(\tN_s, \tD_s \langle 2 \rangle)$ defined as follows:
 \[
   \begin{tikzcd}[column sep=huge]
B_s \ar[loop, in=160, out=180, distance=30, "\theta - \alpha_s \otimes \id \otimes \alpha_s^\vee" pos=0.3] \ar[d, bend left=80, "(2)" description, "\ \ 1 \otimes \usebox\upperdot \otimes \alpha_s^\vee" pos=0.7] \ar[rrdd, in=120, out=0, "1 \otimes \id \otimes \alpha_s^\vee" near start, "(2)" description] \\
B_\varnothing(-1) \ar[u, "\usebox\lowerdot"] \ar[loop, in=210, out=190, distance=20, "\theta - \alpha_s \otimes \id \otimes \alpha_s^\vee", swap, pos=0.5] \ar[rr, "\id"'] &&
B_\varnothing(-1) \ar[loop, in=0, out=20, distance=40, "\theta - \alpha_s \otimes \id \otimes \alpha_s^\vee" pos=0.6] \ar[d, bend left=80, "(2)" description, "\ \ 1 \otimes \usebox\lowerdot \otimes \alpha_s^\vee" pos=0.5] \\
&& B_s(-2) \ar[u, "\usebox\upperdot"'] \ar[loop right, in=-20, out=0, distance=50, "\theta - \alpha_s \otimes \id \otimes \alpha_s^\vee"]
\end{tikzcd}
 \]
 One can check that we have $q_s \circ p_s = 1 \otimes \id_{\tD_s} \otimes \alpha_s^\vee$ in $\uEnd_\FM(\tD_s)$, and $p_s \circ q_s = 1 \otimes \id_{\tN_s} \otimes \alpha_s^\vee$ in $\uEnd_\FM(\tN_s)$.
 
If now $\uw=(s_1, \sdots, s_k)$ is an expression, we set
\[
 p_\uw := p_{s_1} \hatstar \cdots \hatstar p_{s_k}, \quad q_\uw := q_{s_1} \hatstar \cdots \hatstar q_{s_k}
\]
and define
\[
 a_\uw := \prod_{i=1}^k s_k \cdots s_{i+1} (\alpha_{s_i}^\vee) \quad \in R^\vee.
\]
(When $i=k$, the product $s_k \cdots s_{i+1}$ is interpreted as $1 \in W$.)
Then $p_\uw$ belongs to $\uHom^\rhd_\FM(\tD_\uw, \tN_\uw)$, $q_\uw$ belongs to $\uHom_\FM^\rhd(\tN_\uw, \tD_\uw \langle 2 \ell(\uw) \rangle)$, and both of them are chain maps.

\begin{lem}
\label{lem:tD-tN-uw}
 For any expression $\uw$ we have
 \[
 q_\uw \circ p_\uw = 1 \otimes \id_{\tD_\uw} \otimes a_\uw \quad \text{and} \quad p_\uw \circ q_\uw = 1 \otimes \id_{\tN_\uw} \otimes a_\uw
 \]
 in $\uEnd_\FM(\tD_\uw)$ and $\uEnd_\FM(\tN_\uw)$ respectively.
\end{lem}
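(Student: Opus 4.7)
The plan is to proceed by induction on $k = \ell(\uw)$, the cases $k = 0, 1$ being immediate from the conventions $\tD_\varnothing = \tN_\varnothing = \Tmon_\varnothing$, $p_\varnothing = q_\varnothing = \id_{\Tmon_\varnothing}$, $a_\varnothing = 1$ and the equalities $q_s \circ p_s = 1 \otimes \id_{\tD_s} \otimes \alpha_s^\vee$, $p_s \circ q_s = 1 \otimes \id_{\tN_s} \otimes \alpha_s^\vee$ recorded just before the lemma.

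For the inductive step, write $\uw = \uv \cdot (s)$ with $s = s_k$ and $\uv = (s_1, \sdots, s_{k-1})$. By definition and the discussion in~\S\ref{sec:coherence}, we may identify $p_\uw = p_\uv \hatstar p_s$ and $q_\uw = q_\uv \hatstar q_s$. I will focus on the first identity; the second is proved by an entirely parallel argument using the second part of Lemma~\ref{lem:mu-tD-tN-new}. Since $q_s \in \uHom_\FM^\rhd(\tN_s, \tD_s \langle 2 \rangle)$, the interchange law of Lemma~\ref{lem:interchange-rhd-lhd} applies and yields
\[
 q_\uw \circ p_\uw = (q_\uv \hatstar q_s) \circ (p_\uv \hatstar p_s) = (q_\uv \circ p_\uv) \hatstar (q_s \circ p_s).
\]
By the induction hypothesis and the base case, this equals $(1 \otimes \id_{\tD_\uv} \otimes a_\uv) \hatstar (1 \otimes \id_{\tD_s} \otimes \alpha_s^\vee)$.

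It remains to compute this last convolution explicitly. Using the definition of $\hatstar$ from~\S\ref{sec:fm-convolution-morph} (applied to $r = t = 1$, $f' = \id_{\tD_\uv}$, $h' = \id_{\tD_s}$, $x = a_\uv$, $z = \alpha_s^\vee$), and the fact that $\nu_{\tD_\uv}(1) = 1$, this becomes
\[
 \id_{\tD_\uv} \ustar \bigl( \mu_{\tD_s}(a_\uv) \cdot \id_{\tD_s} \cdot \alpha_s^\vee \bigr).
\]
By Lemma~\ref{lem:mu-tD-tN-new}, $\mu_{\tD_s}(a_\uv) = 1 \otimes \id_{\tD_s} \otimes s(a_\uv)$, and the expression reduces to $1 \otimes \id_{\tD_\uw} \otimes s(a_\uv) \alpha_s^\vee$. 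Finally, a direct inspection of the definition
\[
 a_\uv = \prod_{i=1}^{k-1} s_{k-1} \cdots s_{i+1}(\alpha_{s_i}^\vee)
\]
shows that $s(a_\uv) \cdot \alpha_s^\vee = \prod_{i=1}^{k} s_k \cdots s_{i+1}(\alpha_{s_i}^\vee) = a_\uw$ (with the convention that the empty product of simple reflections is $1 \in W$ when $i = k$), completing the induction.

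No step presents a real obstacle: the only subtlety is the applicability of the interchange law, which is guaranteed by the fact that $p_s$ and $q_s$ (hence $p_\uv$ and $q_\uv$ by iterating) all lie in $\uHom_\FM^\rhd$, so Lemma~\ref{lem:interchange-rhd-lhd} may be invoked freely; and the bookkeeping for the product $a_\uw$, which is a short combinatorial verification.
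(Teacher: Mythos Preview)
Your proof is correct and follows exactly the approach the paper indicates (induction on $k$ using Lemmas~\ref{lem:monconv-weak-interchange} and~\ref{lem:mu-tD-tN-new}); the paper leaves the details to the reader, and you have supplied them accurately. One small point: since the desired equalities live in $\uEnd_\FM$ rather than in $\gEnd_\FM$, you should invoke Lemma~\ref{lem:monconv-weak-interchange} directly (its hypothesis (2) applies because $q_s$ is a chain map in $\uHom_\FM^\rhd$, so $d(q_s)=0$) rather than Lemma~\ref{lem:interchange-rhd-lhd}, which is stated only at the level of morphisms in $\FM(\fh,W)$.
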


\begin{proof}
 The proof proceeds by induction on $k$, using Lemmas~\ref{lem:monconv-weak-interchange} and~\ref{lem:mu-tD-tN-new}; details are left to the reader.
\end{proof}

%%%%%%%%%%%%%%%%%%%%%%%%%%%%%%%%%%%%%%%%%%%%%%%%%%%%%%%%%%%%%%%%%%%%%%%%%%%
\section{Convolution of morphisms revisited}
%%%%%%%%%%%%%%%%%%%%%%%%%%%%%%%%%%%%%%%%%%%%%%%%%%%%%%%%%%%%%%%%%%%%%%%%%%%

For technical reasons, it is sometimes useful to have a convolution operation defined on the whole of $\FM(\fh,W)$ or $\LM(\fh,W)$, and not only on convolutive objects. In this section and the next one we show that if $\cF$ is a fixed convolutive free-monodromic complex, then indeed the functor $\cF \hatstar (-)$ extends to an endofunctor of $\FM(\fh,W)$ or $\LM(\fh,W)$.

Let $\cF \in \Conv_\FM(\fh,W)$ and $\cF', \cG' \in \FM(\fh,W)$. Define a map
\[
 h \mapsto \id_\cF \hatstar h : \uHom_\FM(\cF', \cG') \to \uHom_\FM(\cF \ustar \cF', \cF \ustar \cG')
\]
by
\[
 \id_\cF \hatstar (th'z) := \nu_\cF(t) \ustar h'z
\]
for $t \in \Lambda$, $h' \in \uHom_\BE(\cF', \cG')$, $z \in R^\vee$.

\begin{lem}
 Let $\cF \in \Conv_\FM(\fh,W)$ and $\cF', \cG', \cH' \in \FM(\fh,W)$. Let $h \in \uHom_\FM(\cF', \cG')$ and $k \in \uHom_\FM(\cG', \cH')$. Then we have
 \begin{equation} \label{eq:convolutive-id-hatstar-composition}
  \id_\cF \hatstar (k \circ h) = (\id_\cF \hatstar k) \circ (\id_\cF \hatstar h);
 \end{equation}
 \begin{equation} \label{eq:hatstar-exchange-kappa}
  (\delta_\cF^\lhd \ustar \id_{\cG'}) \circ (\id_\cF \hatstar h) + (-1)^{|h|+1}(\id_\cF \hatstar h) \circ (\delta_\cF^\lhd \ustar \id_{\cF'}) = \id_\cF \hatstar \kappa(h) - \kappa(\id_\cF \hatstar h);
 \end{equation}
 \begin{equation} \label{eq:id-conv-Theta}
  \id_\cF \hatstar \Theta_{\cF'} = \Theta_{\cF \ustar \cF'}.
 \end{equation}
\end{lem}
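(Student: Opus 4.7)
The plan is to verify each of the three identities by direct computation from the definition $\id_\cF \hatstar (t h' z) := \nu_\cF(t) \ustar h' z$, using the same basic toolkit as in the proof of Proposition~\ref{prop:monconv-diff}: the fact that $\nu_\cF$ is a bigraded ring homomorphism with partial derivation property $d(\nu_\cF(r)) = \id_\cF \ustar r$ for $r \in V^*$ (Lemma~\ref{lem:convolutive-mu-nu}\eqref{it:convolutive-nu}), the signed interchange law~\eqref{eqn:interchange-law-rhd-lhd} for $\ustar$ between $\uHom_\FM^\lhd$ and $\uHom_\FM^\rhd$, Lemma~\ref{lem:delta-lhd-rhd}\eqref{it:delta-lhd-rhd-1} relating $\delta_\cF$ and $\delta_\cF^\lhd$, and the simple observation that $\kappa(f \ustar g) = \kappa(f) \ustar g$ whenever $f \in \uHom_\FM^\lhd$ and $g \in \uHom_\FM^\rhd$, which holds because $\kappa$ acts only on the $\Lambda$-component and that component lies entirely in the left input.

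For~\eqref{eq:convolutive-id-hatstar-composition}, write $h = th'z$ and $k = uk'w$ as pure tensors with $t,u \in \Lambda$, $h', k' \in \uHom_\BE$, and $z, w \in R^\vee$. The composition formula~\eqref{eqn:composition-mon} yields $k \circ h = (-1)^{|k'||t|}\,(u \wedge t) \otimes (k' \circ h') \otimes (wz)$, where the signs simplify because $R^\vee$ is concentrated in even cohomological degrees. Combining this with $\nu_\cF(u \wedge t) = \nu_\cF(u) \circ \nu_\cF(t)$ and applying the signed interchange law to $(\nu_\cF(u) \ustar k'w) \circ (\nu_\cF(t) \ustar h'z) = (\id_\cF \hatstar k) \circ (\id_\cF \hatstar h)$, both sides of~\eqref{eq:convolutive-id-hatstar-composition} reduce to $(-1)^{|k'||t|}\,\nu_\cF(u \wedge t) \ustar (k' \circ h')(wz)$. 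Identity~\eqref{eq:id-conv-Theta} is equally straightforward: writing $\Theta_{\cF'} = \sum_i 1 \otimes (\id_{\cF'} \ustar e_i) \otimes \check e_i$, applying the definition with $\nu_\cF(1) = \id_\cF$ gives $\id_\cF \hatstar \Theta_{\cF'} = \sum_i \id_\cF \ustar (\id_{\cF'} \ustar e_i) \cdot \check e_i$, and the associativity of $\star$ on $\DiagBSp$-sequences identifies this with $\sum_i (\id_{\cF \ustar \cF'} \ustar e_i) \cdot \check e_i = \Theta_{\cF \ustar \cF'}$.

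The substance of the lemma is~\eqref{eq:hatstar-exchange-kappa}. Applying the commutation $\kappa(f \ustar g) = \kappa(f) \ustar g$ to $\id_\cF \hatstar h = \nu_\cF(t) \ustar h'z$ immediately gives $\kappa(\id_\cF \hatstar h) = \kappa(\nu_\cF(t)) \ustar h'z$. The element $\kappa(\nu_\cF(t))$ is then rewritten via the dgg identity $\kappa(\phi) = d(\phi) - \delta_\cF \circ \phi - (-1)^{|\phi|+1}\phi \circ \delta_\cF$ applied to $\phi = \nu_\cF(t)$, after which Lemma~\ref{lem:delta-lhd-rhd}\eqref{it:delta-lhd-rhd-1} lets one replace $\delta_\cF$ by $\delta_\cF^\lhd$ in the two commutator terms. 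The derivative $d(\nu_\cF(t))$ expands, exactly as in the proof of Proposition~\ref{prop:monconv-diff}, to $\sum_j (-1)^{j+1} \nu_\cF(t^{(j)}) \ustar t_j$; convolving with $h'z$ and comparing with $\kappa(h) = \sum_j (-1)^{j+1} t^{(j)}(t_j \ustar h')z$ identifies this contribution with $\id_\cF \hatstar \kappa(h)$. The two remaining terms, after using $|h| = |t| + |h'|$, yield precisely $-(\delta_\cF^\lhd \ustar \id_{\cG'}) \circ (\id_\cF \hatstar h) + (-1)^{|h|}(\id_\cF \hatstar h) \circ (\delta_\cF^\lhd \ustar \id_{\cF'})$, which is the negative of the left-hand side of~\eqref{eq:hatstar-exchange-kappa}. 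The main obstacle is the sign bookkeeping in this last step, together with verifying that $d(\nu_\cF(t))$ genuinely rearranges as claimed despite the non-centrality of $\id_\cF \ustar t_j$ against the individual $\nu_\cF(t_i)$ — but these are precisely the manipulations already performed in the proof of Proposition~\ref{prop:monconv-diff}, and can be carried out in the same way.
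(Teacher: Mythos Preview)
Your proposal is correct and follows essentially the same approach as the paper: the paper also declares \eqref{eq:convolutive-id-hatstar-composition} and \eqref{eq:id-conv-Theta} straightforward, and for \eqref{eq:hatstar-exchange-kappa} it computes $\id_\cF \hatstar \kappa(h)$ and $\kappa(\id_\cF \hatstar h) = \kappa(\nu_\cF(t)) \ustar h'z$ separately, expanding $\kappa(\nu_\cF(t))$ via the identity $\kappa(\phi) = d(\phi) - \delta_\cF^\lhd\phi - (-1)^{|\phi|+1}\phi\delta_\cF^\lhd$ and the formula $d(\nu_\cF(t)) = \sum_j (-1)^{j+1}\nu_\cF(t^{(j)}) \ustar t_j$ exactly as in the proof of Proposition~\ref{prop:monconv-diff}. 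Your sign and commutation worries are the right things to flag, and they are resolved just as you say, by the same manipulations already carried out there.
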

\begin{proof}
 The first and third formulas are straightforward, so we focus on the second formula. We may assume that $h = th'z$, where $t \in \Lambda$, $h' \in \uHom_\BE(\cF', \cG')$, $z \in R^\vee$. As in the proof of Proposition~\ref{prop:monconv-diff}, assume further that $t = t_1 \cdots t_k$, where $t_1, \sdots, t_k \in V^*$, and write
 \[
  t^{(j)} = t_1 \cdots \widehat{t_j} \cdots t_k.
 \]
 Then
 \[
  \id_\cF \hatstar \kappa(h) = \id_\cF \hatstar \left( \sum_j(-1)^{j+1}t^{(j)} (t_j \ustar h') z \right)= \sum_j (-1)^{j+1} \nu_\cF(t^{(j)}) \ustar t_j \ustar h'z,
 \]
 and since $\cF$ is convolutive, again as in the proof of Proposition~\ref{prop:monconv-diff},
 \[
  \kappa(\nu_\cF(t)) = \sum_j (-1)^{j+1}\nu_\cF(t^{(j)}) \ustar t_j - \delta_\cF^\lhd \nu_\cF(t) - (-1)^{|t|+1}\nu_\cF(t) \delta_\cF^\lhd.
 \]
 It follows that
 \[
  \id_\cF \hatstar \kappa(h) - \kappa(\id_\cF \hatstar h) = (\delta_\cF^\lhd \nu_\cF(t) + (-1)^{|t|+1}\nu_\cF(t)\delta_\cF^\lhd) \ustar h'z.
 \]
 The left-hand side of the formula also equals
 \begin{multline*}
  (\delta_\cF^\lhd \ustar \id_{\cG'}) \circ (\nu_\cF(t) \ustar h'z) + (-1)^{|h|+1}(\nu_\cF(t) \ustar h'z) \circ (\delta_\cF^\lhd \ustar \id_{\cF'}) \\
  = \delta_\cF^\lhd \nu_\cF(t) \ustar h'z + (-1)^{|t|+1}\nu_\cF(t) \delta_\cF^\lhd \ustar h'z
 \end{multline*}
by~\eqref{eqn:interchange-law-rhd-lhd}.
\end{proof}

%%%%%%%%%%%%%%%%%%%%%%%%%%%%%%%%%%%%%%%%%%%%%%%%%%%%%%%%%%%%%%%%%%%%%%%%%%%
\section{Convolution as a functor in one variable}
%%%%%%%%%%%%%%%%%%%%%%%%%%%%%%%%%%%%%%%%%%%%%%%%%%%%%%%%%%%%%%%%%%%%%%%%%%%

\begin{defn}
 Given $\cF \in \Conv_\FM(\fh,W)$ and $\cF' \in \FM(\fh,W)$, define $C_\cF(\cF')$ to be the free-monodromic complex whose underlying $\DiagBSp$-sequence is $\cF \ustar \cF'$ and whose differential is given by
 \[
  \delta_{\cF \hatstar \cF'} = \id_\cF \hatstar \delta_{\cF'} + \delta_\cF^\lhd \ustar \id_{\cF'}.
 \]
\end{defn}
Let us check that this defines a free-monodromic complex. By definition,
\begin{multline*}
 \delta_{\cF \hatstar \cF'} \circ \delta_{\cF \hatstar \cF'} + \kappa(\delta_{\cF \hatstar \cF'}) = (\id_\cF \hatstar \delta_{\cF'}) \circ (\id_\cF \hatstar \delta_{\cF'}) +
 (\id_\cF \hatstar \delta_{\cF'}) \circ (\delta_\cF^\lhd \ustar \id_{\cF'}) \\
 + (\delta_\cF^\lhd \ustar \id_{\cF'}) \circ (\id_\cF \hatstar \delta_{\cF'}) + (\delta_\cF^\lhd \ustar \id_{\cF'}) \circ (\delta_\cF^\lhd \ustar \id_{\cF'}) + \kappa(\id_\cF \hatstar \delta_{\cF'}) + \kappa(\delta_\cF^\lhd \ustar \id_{\cF'}).
\end{multline*}
By \eqref{eq:convolutive-id-hatstar-composition}, the first term equals $\id_\cF \hatstar (\delta_{\cF'} \circ \delta_{\cF'})$. By \eqref{eq:hatstar-exchange-kappa}, the second, third, and fifth terms together equal $\id_\cF \hatstar \kappa(\delta_{\cF'})$. By~\eqref{eqn:interchange-law-rhd-lhd} and the definitions, the fourth and six terms together equal $(\delta_\cF^\lhd \circ \delta_\cF^\lhd + \kappa(\delta_\cF^\lhd)) \ustar \id_{\cF'} = 0$. So all together, we get $\id_\cF \hatstar \Theta_{\cF'}$, which equals $\Theta_{\cF \ustar \cF'}$ by \eqref{eq:id-conv-Theta}, as desired.

\begin{lem}
 Let $\cF \in \Conv_\FM(\fh,W)$ and $\cF', \cG' \in \FM(\fh,W)$. Let $h \in \uHom_\FM(\cF', \cG')$. Then we have
 \[
  d(\id_\cF \hatstar h) = \id_\cF \hatstar d(h).
 \]
\end{lem}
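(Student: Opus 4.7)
The plan is a direct computation, unfolding both sides and invoking the two auxiliary formulas that were established just before the statement. The proof should be essentially routine once we recognize that the required identities have already been isolated in the preceding lemma.

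First I would expand $d(\id_\cF \hatstar h)$ according to the differential on $C_\cF(\cF')$ and $C_\cF(\cG')$:
\[
d(\id_\cF \hatstar h) = \kappa(\id_\cF \hatstar h) + \delta_{C_\cF(\cG')} \circ (\id_\cF \hatstar h) + (-1)^{|h|+1}(\id_\cF \hatstar h) \circ \delta_{C_\cF(\cF')},
\]
and then substitute $\delta_{C_\cF(\cG')} = \id_\cF \hatstar \delta_{\cG'} + \delta_\cF^\lhd \ustar \id_{\cG'}$ and similarly for $\delta_{C_\cF(\cF')}$. This produces six terms. I would group them into two sets of three: those involving $\id_\cF \hatstar \delta_{\cG'}$ and $\id_\cF \hatstar \delta_{\cF'}$ (together with the part $\id_\cF \hatstar \kappa(h)$ that will appear), and those involving $\delta_\cF^\lhd \ustar \id$ together with $\kappa(\id_\cF \hatstar h)$.

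Next, I would apply the composition compatibility \eqref{eq:convolutive-id-hatstar-composition} to collapse
\[
(\id_\cF \hatstar \delta_{\cG'}) \circ (\id_\cF \hatstar h) = \id_\cF \hatstar (\delta_{\cG'} \circ h), \qquad (\id_\cF \hatstar h) \circ (\id_\cF \hatstar \delta_{\cF'}) = \id_\cF \hatstar (h \circ \delta_{\cF'}).
\]
For the remaining terms, formula \eqref{eq:hatstar-exchange-kappa} is exactly tailored to the combination at hand:
\[
(\delta_\cF^\lhd \ustar \id_{\cG'}) \circ (\id_\cF \hatstar h) + (-1)^{|h|+1}(\id_\cF \hatstar h) \circ (\delta_\cF^\lhd \ustar \id_{\cF'}) = \id_\cF \hatstar \kappa(h) - \kappa(\id_\cF \hatstar h).
\]
Adding this to the previous two equalities, the $\kappa(\id_\cF \hatstar h)$ contributions cancel against the one produced by $d$, and what remains is exactly $\id_\cF \hatstar \bigl( \kappa(h) + \delta_{\cG'} \circ h + (-1)^{|h|+1} h \circ \delta_{\cF'} \bigr) = \id_\cF \hatstar d(h)$.

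There is no real obstacle here: the two preceding identities were set up precisely so that this cancellation works. The only mild subtlety is keeping the signs straight when pushing $\delta_\cF^\lhd \ustar \id$ through $\id_\cF \hatstar h$, which is handled by the signed interchange law \eqref{eqn:interchange-law-rhd-lhd} used in the proof of \eqref{eq:hatstar-exchange-kappa}. So the lemma reduces to a bookkeeping exercise whose substance is entirely contained in the preceding lemma.
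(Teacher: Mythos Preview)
Your proposal is correct and follows essentially the same approach as the paper: expand the differential using the definition of $\delta_{C_\cF(\cF')}$ and $\delta_{C_\cF(\cG')}$, then apply \eqref{eq:convolutive-id-hatstar-composition} and \eqref{eq:hatstar-exchange-kappa} to regroup the terms into $\id_\cF \hatstar d(h)$. The paper's proof is slightly more terse but invokes exactly the same two identities in the same way.
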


\begin{proof}
 By definition,
 \begin{multline*}
  d(\id_\cF \hatstar h) = \kappa(\id_\cF \hatstar h) + \delta_{\cF \hatstar \cG'} \circ (\id_\cF \hatstar h) + (-1)^{|h|+1}(\id_\cF \hatstar h) \circ \delta_{\cF \hatstar \cF'} \\
  = \kappa(\id_\cF \hatstar h) +  (\id_\cF \hatstar \delta_{\cG'} + \delta_\cF^\lhd \hatstar \id_{\cG'}) \circ (\id_\cF \hatstar h) \\
  + (-1)^{|h|+1}(\id_\cF \hatstar h) \circ (\id_\cF \hatstar \delta_{\cF'} + \delta_\cF^\lhd \hatstar \id_{\cF'}).
 \end{multline*}
 By \eqref{eq:convolutive-id-hatstar-composition} and \eqref{eq:hatstar-exchange-kappa}, this equals
 \[
  \id_\cF \hatstar (\kappa(h) + \delta_{\cG'} \circ h + (-1)^{|h|+1}h \circ \delta_{\cF'}) = \id_\cF \hatstar d(h). \qedhere
 \]
\end{proof}

Thus $\id_\cF \hatstar (-)$ induces a map on cohomology, which we also call $C_\cF$. Now, \eqref{eq:convolutive-id-hatstar-composition} shows that $C_\cF$ defines an endofunctor of $\FM(\fh,W)$. We have thus proved the following result.

\begin{prop}
\label{prop:convolution-Dmix}
 For any fixed object $\cF$ in $\Conv_\FM(\fh,W)$, there is an additive functor
 \[
  C_{\cF} : \FM(\fh,W) \to \FM(\fh,W)
 \]
 whose restriction to $\Conv_\FM(\fh,W)$ is the functor $\cF \hatstar (-)$.
\end{prop}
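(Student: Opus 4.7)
The plan is to assemble the definitions and identities already established in the preceding two lemmas into a functor. On objects, set $C_\cF(\cF')$ to be the free-monodromic complex given just before the proposition; the computation following that definition verifies that $\delta_{\cF \hatstar \cF'} \circ \delta_{\cF \hatstar \cF'} + \kappa(\delta_{\cF \hatstar \cF'}) = \Theta_{\cF \ustar \cF'}$, so $C_\cF(\cF')$ is indeed an object of $\FM(\fh,W)$.

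On morphisms, given a class $\phi \in \Hom_{\FM(\fh,W)}(\cF',\cG')$, choose a representative chain map $h \in \uHom_\FM(\cF',\cG')^0_0$ and define $C_\cF(\phi)$ to be the class of $\id_\cF \hatstar h$. The second lemma above the proposition asserts that $d(\id_\cF \hatstar h) = \id_\cF \hatstar d(h)$, which simultaneously shows that $\id_\cF \hatstar h$ is a chain map (so it represents a morphism between the cones defining $C_\cF(\cF')$ and $C_\cF(\cG')$) and that replacing $h$ by $h + d(h')$ changes $\id_\cF \hatstar h$ by a coboundary. Hence $C_\cF(\phi)$ depends only on $\phi$. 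Functoriality $C_\cF(k \circ h) = C_\cF(k) \circ C_\cF(h)$ is exactly formula~\eqref{eq:convolutive-id-hatstar-composition}, and additivity is immediate from $\bk$-linearity of $\id_\cF \hatstar(-)$ on each $\uHom_\FM$-space.

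It remains to identify $C_\cF|_{\Conv_\FM(\fh,W)}$ with $\cF \hatstar (-)$. For $\cF' \in \Conv_\FM(\fh,W)$, Lemma~\ref{lem:convdiff-formula} gives $\delta_{\cF \hatstar \cF'} = \delta_\cF^\lhd \hatstar \id_{\cF'} + \id_\cF \hatstar \delta_{\cF'}$; moreover $\delta_\cF^\lhd \in \uEnd_\FM^\lhd(\cF)$, and unwinding the definition of $\hatstar$ from~\S\ref{sec:fm-convolution-morph} shows that for $f \in \uHom_\FM^\lhd(\cF,\cG)$ one has $f \hatstar \id_{\cF'} = f \ustar \id_{\cF'}$. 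So the differential of $\cF \hatstar \cF'$ in the sense of Definition~\ref{defn:monconv} agrees with the differential of $C_\cF(\cF')$. A parallel unwinding of the definition of $\id_\cF \hatstar h$ for $h = th'z$ yields $\nu_\cF(t) \ustar h'z$, matching the Chapter~\ref{chap:fm-convolution} formula when the left argument is the identity.

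There is no real obstacle: essentially everything has been done in the two supporting lemmas and in Lemma~\ref{lem:convdiff-formula}. The only point requiring a line of thought is the verification that $C_\cF$ is well defined on morphism classes (not just chain maps), and that comparison reduces at once to $d(\id_\cF \hatstar h') = \id_\cF \hatstar d(h')$, already proved. Thus the proof is a short assembly, writing out the two bullet points (object and morphism definitions), invoking the lemma for well-definedness and functoriality, and concluding with the direct check that the restriction recovers $\cF \hatstar (-)$ via Lemma~\ref{lem:convdiff-formula}.
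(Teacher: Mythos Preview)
Your proposal is correct and follows essentially the same approach as the paper: the proof there consists of the sentence ``Thus $\id_\cF \hatstar (-)$ induces a map on cohomology, which we also call $C_\cF$; now \eqref{eq:convolutive-id-hatstar-composition} shows that $C_\cF$ defines an endofunctor of $\FM(\fh,W)$,'' with the verification of well-definedness and the object-level definition already carried out in the two preceding lemmas exactly as you describe. Your explicit check that the restriction to $\Conv_\FM(\fh,W)$ recovers $\cF \hatstar (-)$ via Lemma~\ref{lem:convdiff-formula} is a bit more than the paper spells out, but it is the natural verification and matches what is implicit there.
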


The same argument also yields for every $\cF \in \Conv_\FM(\fh,W)$ an endofunctor $C_\cF$ of $\LM(\fh,W)$ whose restriction to $\Conv_\LM(\fh,W)$ is $\cF \hatstar (-)$. In subsequent sections, in both settings we will sometimes write $\cF \hatstar \cG$ for $C_{\cF}(\cG)$ if no confusion is likely.

We will need the following property of this construction.

\begin{prop} \label{prop:convolutive-hatstar-exact}
 For any $\cF \in \Conv_\FM(\fh,W)$, the functor
 \[
  C_\cF : \LM(\fh,W) \to \LM(\fh,W)
 \]
is triangulated.
\end{prop}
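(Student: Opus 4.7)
My strategy is to exploit the alternative right triangulated structure $(\LM(\fh,W), \Sigma_r)$ introduced in Section~\ref{sec:LM-triangulated} (more precisely, in the subsequent section on the right triangulated structure). The key observation is that both the shift $\Sigma_r$ and the modification of the differential performed by $C_\cF$ are defined without invoking $\bs$: the differential $\delta_{C_\cF(\cG)} = \id_\cF \hatstar \delta_\cG + \delta_\cF^\lhd \ustar \id_\cG$ contains no sign flips, and $\Sigma_r$ simply shifts the underlying $\DiagBSp$-sequence. This makes the comparison of $C_\cF$ with $\Sigma_r$ essentially tautological, whereas a direct comparison with $\Sigma_\ell$ would require threading delicate sign computations through the definition of $\hatstar$. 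Once $C_\cF$ is shown to be triangulated for $\Sigma_r$, I transfer the structure to $\Sigma_\ell$ using Proposition~\ref{prop:left-vs-right-triangulated-structure}.

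The first step is to check that $C_\cF \circ \Sigma_r = \Sigma_r \circ C_\cF$ on the nose, under the canonical identification $\cF \ustar (\cG[1]) = (\cF \ustar \cG)[1]$ of $\DiagBSp$-sequences: both sides carry the differential $\id_\cF \hatstar \delta_\cG + \delta_\cF^\lhd \ustar \id_\cG$, and compatibility on morphisms is immediate from the formula $C_\cF(g) = \id_\cF \hatstar g$. The second step is to show that for any chain map $f : \cG \to \cH$ in $\LM(\fh,W)$, there is a canonical isomorphism of right standard triangles between $C_\cF(\cone_r(f))$ and $\cone_r(C_\cF(f))$, realizing the former as the latter under the identification $\cF \ustar (\cG[1] \oplus \cH) = (\cF \ustar \cG)[1] \oplus (\cF \ustar \cH)$. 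Writing out $\delta_{C_\cF(\cone_r(f))}$ and $\delta_{\cone_r(C_\cF(f))}$ as $2 \times 2$ matrices of morphisms, the diagonal entries coincide with $\delta_{C_\cF(\cG)}$ and $\delta_{C_\cF(\cH)}$ on both sides by definition; only the off-diagonal entries need comparison. The standard maps $\alpha_r(f)$ and $\beta_r(f)$, being block-matrix injections and projections involving no signs, tautologically satisfy $C_\cF(\alpha_r(f)) = \alpha_r(C_\cF(f))$ and $C_\cF(\beta_r(f)) = \beta_r(C_\cF(f))$.

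The main computation, which I expect to be the only nontrivial step, is the identity
\[
\id_\cF \hatstar \bt^{-1}(\bu_{\cG,\cH}(f)) = \bt^{-1}(\bu_{\cF \ustar \cG,\, \cF \ustar \cH}(\id_\cF \hatstar f))
\]
matching the off-diagonal entries. To prove it, I write $f = t \otimes f'$ with $t \in \Lambda$ and $f' \in \uHom_\BE(\cG,\cH)$, and expand $\nu_\cF(t) = \sum_a r_a \otimes g_a$ in $\Lambda \otimes \uEnd_\BE(\cF)$; then $\id_\cF \hatstar f = \sum_a r_a \otimes (g_a \ustar f')$. Since $\bu$ is $\Lambda$-linear and $\bt$ likewise acts only on the $\uHom_\BE$-factor, both sides reduce to $\sum_a r_a \otimes \bt^{-1}(\bu(g_a \ustar f'))$ and $\sum_a r_a \otimes (g_a \ustar \bt^{-1}(\bu(f')))$ respectively, which are equal by formula~\eqref{eq:btbu-conv} applied componentwise with $n = -1$ (odd). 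This is the heart of the proof; everything else is bookkeeping.

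Combining the two steps yields that $C_\cF$ is a triangulated endofunctor of $(\LM(\fh,W), \Sigma_r)$. By Proposition~\ref{prop:left-vs-right-triangulated-structure}, the identity functor equipped with $\eta$ is a triangulated equivalence $(\LM(\fh,W), \Sigma_\ell) \simto (\LM(\fh,W), \Sigma_r)$; conjugating $C_\cF$ by this equivalence gives the desired triangulated structure on $C_\cF$ viewed as an endofunctor of $(\LM(\fh,W), \Sigma_\ell)$, with natural isomorphism $\Sigma_\ell \circ C_\cF \simto C_\cF \circ \Sigma_\ell$ given by the composition $C_\cF(\eta^{-1}) \circ \eta_{C_\cF(-)}$.
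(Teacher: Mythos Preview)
Your proposal is correct and follows essentially the same approach as the paper: reduce to the right triangulated structure via Proposition~\ref{prop:left-vs-right-triangulated-structure}, observe that $C_\cF \circ \Sigma_r = \Sigma_r \circ C_\cF$ strictly, identify $C_\cF(\cone_r(f))$ with $\cone_r(C_\cF(f))$ via the obvious isomorphism of underlying $\DiagBSp$-sequences, and verify the only nontrivial entry (the off-diagonal one) using~\eqref{eq:btbu-conv}. The paper applies~\eqref{eq:btbu-conv} directly with $f = \nu_\cF(s) \in \uEnd_\FM^\lhd(\cF)$ rather than first expanding $\nu_\cF(t)$ as a sum in $\Lambda \otimes \uEnd_\BE(\cF)$, but this is a cosmetic difference.
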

\begin{proof}
 By Proposition~\ref{prop:left-vs-right-triangulated-structure}, we may check this using the right triangulated structure on $\LM(\fh,W)$. 
 The advantage of this convention is that we have an equality $C_\cF \circ \Sigma_r = \Sigma_r \circ C_\cF$ on the nose, so that 
we do not need to keep track of natural isomorphisms.
 
 We must check that $C_\cF$ takes a standard triangle to a distinguished triangle. In fact for any $\cF', \cG' \in \LM(\fh,W)$ and any chain map $h : \cF' \to \cG'$, we claim that there is an isomorphism of triangles
 \[
  \begin{tikzcd}[column sep=large]
   C_\cF(\cF') \ar[r, "\id_\cF \hatstar h"] \ar[d, equal] & C_\cF(\cG') \ar[r, "\id_\cF \hatstar \alpha_r(h)"] \ar[d, equal]
    & C_\cF(\cone_r(h)) \ar[r, "\id_\cF \hatstar \beta_r(h)"] \ar[d, "\sigma"] & \Sigma_r C_\cF(\cF') \ar[d, equal] \\
   C_\cF(\cF') \ar[r, "\id_\cF \hatstar h"] & C_\cF(\cG') \ar[r, "\alpha_r(\id_\cF \hatstar h)"]
    & \cone_r(\id_\cF \hatstar h) \ar[r, "\beta_r(\id_\cF \hatstar h)"] & \Sigma_r C_\cF(\cF')
  \end{tikzcd}
 \]
 in $(\LM(\fh,W), \Sigma_r)$. Here, $\sigma$ is the evident isomorphism of the underlying $\DiagBSp$-sequences
 \[
  \cF \ustar (\cF'[1] \oplus \cG') \simto (\cF \ustar \cF')[1] \oplus (\cF \ustar \cG').
 \]
 In terms of these decompositions,
 \[
  \id_\cF \hatstar \alpha_r(h) = \id_\cF \hatstar \begin{bmatrix} 0 \\ \id_{\cG'} \end{bmatrix},
  \qquad
  \id_\cF \hatstar \beta_r(h) = \id_\cF \hatstar \begin{bmatrix} \id_{\cF'[1]} & 0 \end{bmatrix},
 \]
 \[
  \alpha_r(\id_\cF \hatstar h) = \begin{bmatrix} 0 \\ \id_{\cF \ustar \cG'} \end{bmatrix},
  \qquad
  \beta_r(\id_\cF \hatstar h) = \begin{bmatrix} \id_{(\cF \ustar \cF')[1]} & 0 \end{bmatrix},
 \]
 so the diagram above commutes. It remains to check that $\sigma$ is a chain map, i.e.~that it identifies the differentials
 \begin{multline*}
  \delta_{C_\cF ( \cone_r(h))} = \id_\cF \hatstar \delta_{\cone_r(h)} + \delta_\cF^\lhd \ustar \id_{\cone_r(h)}
  \\
  = \id_\cF \hatstar
  \begin{bmatrix}
   \delta_{\cF'} & \\
   \bt^{-1}(\bu_{\cF',\cG'}(h)) & \delta_{\cG'}
  \end{bmatrix}
  + \delta_\cF^\lhd \ustar
  \begin{bmatrix}
   \id_{\cF'} & \\
   & \id_{\cG'}
  \end{bmatrix}
 \end{multline*}
 and
 \[
  \delta_{\cone_r(\id_\cF \hatstar h)} =
   \begin{bmatrix}
    \delta_{C_\cF(\cF')} & \\
    \bt^{-1}(\bu_{\cF \ustar \cF', \cF \ustar \cG'}(\id_\cF \hatstar h)) & \delta_{C_\cF(\cG')}
   \end{bmatrix}.
 \]
 This is clear for the diagonal entries. For the lower left entry, we may assume that $h = s \otimes h'$ for $s \in \Lambda$ and $h' \in \uHom_\BE(\cF', \cG')$. Then
 \[
  \id_\cF \hatstar \bt^{-1}(\bu_{\cF',\cG'}(h)) = \id_\cF \hatstar (s \otimes \bt^{-1}(\bu_{\cF',\cG'}(h'))) = \nu_\cF(s) \ustar \bt^{-1}(\bu_{\cF',\cG'}(h'))
 \]
 and
 \[
  \bt^{-1}(\bu_{\cF \ustar \cF',\cF \ustar \cG'}(\id_\cF \hatstar h)) = \bt^{-1}(\bu_{\cF \ustar \cF',\cF \ustar \cG'}(\nu_\cF(s) \ustar h')),
 \]
 which agree by \eqref{eq:btbu-conv}.
\end{proof}

%%%%%%%%%%%%%%%%%%%%%%%%%%%%%%%%%%%%%%%%%%%%%%%%%%%%%%%%%%%%%%%%%%%%%%%%%%%
\section{Tilting complexes and the functoriality conjecture}
\label{sec:functoriality-conjecture}
%%%%%%%%%%%%%%%%%%%%%%%%%%%%%%%%%%%%%%%%%%%%%%%%%%%%%%%%%%%%%%%%%%%%%%%%%%%

\begin{defn}
Let $\uw = (s_1,\sdots,s_r)$ be an expression. The \emph{free-monodromic Bott--Samelson tilting complex} associated to $\uw$ is the convolutive free-monodromic complex given by\index{tilting object!Tmonuw@$\Tmon_\uw$}
\[
\Tmon_\uw := \Tmon_{s_1} \hatstar \cdots \hatstar \Tmon_{s_r}.
\]
(This complex is well defined by the remarks in~\S\ref{sec:coherence}.)
We denote by
\[
\TiltBSp(\fh,W)
\]
the full additive subcategory of $\Conv_\FM(\fh,W)$ whose objects are direct sums of objects of the form $\Tmon_\uw\la n\ra$.
\end{defn}

We expect that, under suitable assumptions on $\fh$, the operation $\hatstar$ equips $\TiltBSp(\fh,W)$ with the structure of a monoidal category. Since we have already discussed the existence of unitor and associator isomorphisms, as well as the coherence conditions (see~\S\ref{sec:coherence}), the content of this claim really comes down to the question of whether $\hatstar$ defines a bifunctor on $\TiltBSp(\fh,W)$. Therefore, this conjecture can be rephrased as follows:
\begin{equation}
\label{eqn:conj-interchange}
\begin{array}{c}
\text{\emph{Let $f : \cF \to \cG$, $g : \cG \to \cH$, $h : \cF' \to \cG'$, $k : \cG' \to \cH'$ be morphisms}} \\
\text{\emph{in $\TiltBSp(\fh,W)$. Then we have
$(g \circ f) \hatstar (k \circ h) = (g \hatstar k) \circ (f \hatstar h)$.}}
\end{array}
\end{equation}

The main result of this paper is the proof of this property in the case of Cartan realizations of crystallographic Coxeter groups, when $\bk$ is Noetherian and of finite global dimension: see Theorem~\ref{thm:functoriality-hatstar}.

We also expect that, again under suitable technical conditions, the monoidal category $(\TiltBSp(\fh,W), \hatstar)$ is closely related to the category $\DiagBSp(\fh^\vee,W)$, where $\fh^\vee = (V^*, \{\alpha_s : s \in S\}, \{\alpha_s^\vee : s \in S\})$. This statement will be proved, in the case of Cartan realizations of crystallographic Coxeter groups, in~\cite{mkdkm}.

%==========================================================================
\chapter{Finite dihedral groups}
\label{chap:dihedral}
%==========================================================================

Our strategy for proving~\eqref{eqn:conj-interchange} is to reduce it to the case of certain specific morphisms for which explicit computations are possible.  Those computations will take place in a finite dihedral group.  This chapter and the next lay the groundwork for those computations.

Thus, in this chapter, we assume that $W$ is a finite dihedral group, generated by simple reflections $s$ and $t$. We denote by $m_{st}$ the order of $st$. We also assume that $\bk$ is a field, and we fix a balanced realization $\fh$ of $W$ over $\bk$.
Recall that in this setting we have defined $[n]_s, [n]_t \in \bk$ in~\S\ref{sec:realizations}. We also set $a_{st}=\langle \alpha_s^\vee, \alpha_t \rangle$ and $a_{ts} = \langle \alpha_t^\vee, \alpha_s \rangle$.

We will make the following additional assumptions:
\begin{enumerate}
 \item
 \label{it:dihassump-quantum-numbers}
 for $1 \leq n < m_{st}$, we have $[n]_s \in \bk^\times$ and $[n]_t \in \bk^\times$;
 \item
 \label{it:dihassump-faithful}
 the $W$-action on $V$ is faithful;
 \item
  \label{it:dihassump-locnondeg}
 we have $4 - a_{st} a_{ts} \in \bk^\times$.
\end{enumerate}
In the language of~\cite[\S 3.3]{elias}, we assume ``local non-degeneracy'' and ``lesser invertibility.''
Note that~\eqref{it:dihassump-locnondeg} implies that Demazure surjectivity is satisfied.

Starting from a Cartan realization $\fh$ of a crystallographic Coxeter system $(W',S')$ (see~\S\ref{sec:Cartan-realizations} below), given $s,t \in S'$ distinct and generating a finite subgroup $W$ of $W'$, one obtains a realization of $W$ by restriction. In this case we have $m_{st} \in \{2,3,4,6\}$,~\eqref{it:dihassump-locnondeg} is always satisfied, and~\eqref{it:dihassump-quantum-numbers} is automatic if $m_{st} \in \{2,3\}$, equivalent to $\mathrm{char}(\bk) \neq 2$ if $m_{st}=4$, and equivalent to $\mathrm{char}(\bk) \notin \{2,3\}$ if $m_{st}=6$.

%is equivalent to $\mathrm{char}(\bk)$ being good for $W$.

%%%%%%%%%%%%%%%%%%%%%%%%%%%%%%%%%%%%%%%%%%%%%%%%%%%%%%%%%%%%%%%%%%%%%%%%%%%
\section{Roots for dihedral groups}
\label{sec:roots}
%%%%%%%%%%%%%%%%%%%%%%%%%%%%%%%%%%%%%%%%%%%%%%%%%%%%%%%%%%%%%%%%%%%%%%%%%%%

Thanks to assumption~\eqref{it:dihassump-locnondeg}, we can define elements $\varpi_s, \varpi_t \in V$ by the following formulas:\index{pis@{$\varpi_s$}}\index{pit@{$\varpi_t$}}
\begin{equation}\label{eqn:varpi-defn}
\varpi_s = \frac{1}{4 - a_{st}a_{ts}}( 2 \alpha_s^\vee - a_{st} \alpha_t^\vee),  
\qquad
\varpi_t = \frac{1}{4 - a_{st}a_{ts}}( -a_{ts} \alpha_s^\vee + 2 \alpha_t^\vee).
\end{equation}
These satisfy
\[
\langle \alpha_u, \varpi_{v} \rangle = \delta_{u,v} \quad \text{for $u,v \in \{s,t\}$.}
\]

Define $\alpha_{s, n}, \alpha_{t, n} \in V^*$ for $n \ge 1$ as follows:
\begin{equation}\label{eq:alpha-s-t-n-pre}
 \begin{array}{rclcccrcl}
  \alpha_{s, 1} &=& \alpha_s      & & & & \alpha_{t, 1} &=& \alpha_t \\
  \alpha_{s, 2} &=& s(\alpha_t)   & & & & \alpha_{t, 2} &=& t(\alpha_s) \\
  \alpha_{s, 3} &=& st(\alpha_s)  & & & & \alpha_{t, 3} &=& ts(\alpha_t) \\
  \alpha_{s, 4} &=& sts(\alpha_t) & & & & \alpha_{t, 4} &=& tst(\alpha_s) \\
                &\vdots&          & & & &               &\vdots&
 \end{array}
\end{equation}
Induction using that $\alpha_{s,n} = s(\alpha_{t,n-1})$ and $\alpha_{t,n} = t(\alpha_{s,n-1})$ together with~\eqref{eq:2q-recursion-alt} show that
\begin{equation} \label{eq:alpha-s-t-n}
 \alpha_{s, n} = [n]_s\alpha_s + [n-1]_t\alpha_t, \qquad \alpha_{t, n} = [n-1]_s\alpha_s + [n]_t\alpha_t.
\end{equation}
Generalizing~\eqref{eqn:varpi-defn}, we define 
\begin{equation} \label{eq:varpi-s-t-n}
 \begin{aligned}
  \varpi_t^{s,n} &= -\frac{[n-1]_t}{[n]_s}\varpi_s + \varpi_t,   & \varpi_{s,n}^t &= \frac{1}{[n]_s}\varpi_s, \\
  \varpi_s^{t,n} &= \varpi_s - \frac{[n-1]_s}{[n]_t}\varpi_t, & \varpi_{t,n}^s &= \frac{1}{[n]_t}\varpi_t.
 \end{aligned}
\end{equation}
The pair $\{\varpi^{t,n}_s, \varpi^s_{t,n}\}$ is ``dual'' to the set $\{\alpha_s, \alpha_{t,n}\}$, in the sense that
\begin{align*}
\la \alpha_s, \varpi^{t,n}_s \ra &= 1, & \la \alpha_{t,n}, \varpi^{t,n}_s\ra &= 0, \\ 
\la \alpha_s, \varpi_{t,n}^s \ra &= 0, & \la \alpha_{t,n}, \varpi_{t,n}^s\ra &= 1.
\end{align*}
(If $\dim V = 2$, then $\{\varpi^{t,n}_s, \varpi^s_{t,n}\}$ is simply the dual basis to the basis $\{\alpha_s, \alpha_{t,n}\}$ for $V^*$.)  The pair $\{\varpi_t^{s,n}, \varpi^t_{s,n}\}$ is dual in the same sense to $\{\alpha_t, \alpha_{s,n}\}$.  

The following lemma shows that condition~\eqref{it:dihassump-faithful} follows from~\eqref{it:dihassump-quantum-numbers} and~\eqref{it:dihassump-locnondeg} in most cases.

\begin{lem}
\label{lem:dihedral-faithfulness}
Assume that~\eqref{it:dihassump-quantum-numbers} and~\eqref{it:dihassump-locnondeg} are satisfied, and that $\mathrm{char}(\bk) \neq 2$ if $m_{st}$ is even. Then the $W$-action on $V$ is faithful.
\end{lem}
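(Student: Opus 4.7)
The plan is to show directly that no non-identity $w \in W$ acts trivially on $V$; by duality this is equivalent to showing no such $w$ acts trivially on $V^*$. The elements of $W$ partition into reflections (each conjugate in $W$ to $s$ or $t$) and rotations $(st)^k$ with $1 \le k \le m_{st}-1$, and I would treat these two classes separately.

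For the reflections, it suffices to show that $s$ itself acts nontrivially, since conjugation preserves the property of acting trivially. The formula $s \cdot v = v - \langle \alpha_s, v \rangle \alpha_s^\vee$ combined with Demazure surjectivity shows that $s$ acts trivially iff $\alpha_s^\vee = 0$. When $m_{st} \ge 3$ we have $\langle \alpha_s^\vee, \alpha_t \rangle = a_{st} = -[2]_s \ne 0$ by assumption~\eqref{it:dihassump-quantum-numbers}; when $m_{st} = 2$ we instead use $\langle \alpha_s^\vee, \alpha_s \rangle = 2 \ne 0$ from the extra hypothesis $\mathrm{char}(\bk) \ne 2$. Either way $\alpha_s^\vee \ne 0$, and reflections act faithfully.

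For the rotations, I first note that $\alpha_s, \alpha_t$ are linearly independent in $V^*$: if $\alpha_t = c\alpha_s$, then pairing against $\alpha_s^\vee$ and $\alpha_t^\vee$ would force $a_{st}a_{ts} = 4$, contradicting assumption~\eqref{it:dihassump-locnondeg}. A short induction using~\eqref{eq:2q-recursion} (and the relations~\eqref{eq:2q-s-t-odd}--\eqref{eq:2q-s-t-even}) yields the identity $st(\alpha_{s,n}) = \alpha_{s,n+2}$, where $\alpha_{s,n}$ is defined by the formula~\eqref{eq:alpha-s-t-n} for all $n \ge 0$. Hence $(st)^k(\alpha_s) = \alpha_{s,2k+1} = [2k+1]_s \alpha_s + [2k]_t \alpha_t$, and by linear independence $(st)^k$ can fix $\alpha_s$ only if $[2k+1]_s = 1$ and $[2k]_t = 0$.

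The main technical step is controlling the vanishing locus of the 2-colored quantum numbers outside the range $1 \le n \le m_{st}-1$ already covered by assumption~\eqref{it:dihassump-quantum-numbers}. Using the recursion~\eqref{eq:2q-recursion} together with balancedness ($[m_{st}-1]_x = 1$) and the defining vanishing $[m_{st}]_x = 0$, I would prove by induction on $j$ the reflection symmetry $[m_{st}+j]_x = -[m_{st}-j]_x$ for $0 \le j \le m_{st}$; in particular $[m_{st}+1]_x = -1$. Together with assumption~\eqref{it:dihassump-quantum-numbers} this gives $[n]_x \ne 0$ for $1 \le n \le 2m_{st}-1$ with $n \ne m_{st}$. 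Therefore $[2k]_t = 0$ with $1 \le 2k \le 2(m_{st}-1)$ forces $m_{st}$ even and $k = m_{st}/2$; the remaining equation becomes $[m_{st}+1]_s = 1$, which by the symmetry equals $-1$, contradicting $\mathrm{char}(\bk) \ne 2$. I expect this symmetry-and-parity bookkeeping, together with the small-$m_{st}$ edge cases where assumptions~\eqref{it:dihassump-quantum-numbers} and~\eqref{it:dihassump-locnondeg} play non-interchangeable roles, to be the main subtlety.
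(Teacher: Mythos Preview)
Your proof is correct, but it takes a different route from the paper's. The paper argues more directly: it fixes an arbitrary nontrivial $x$ with a reduced expression $\ux$ starting with $s$, and splits into two cases according to whether $\ux$ ends in $t$ or $s$. In each case one computes $x(\alpha_t)$ (and in the second case also $x(\alpha_s)$) as $\pm\alpha_{s,n}$ or $\alpha_{s,n+1}$ for $n = \ell(x) \le m_{st}$, reads off the resulting quantum-number constraints from~\eqref{eq:alpha-s-t-n}, and reaches an immediate contradiction with assumption~\eqref{it:dihassump-quantum-numbers}, balancedness, and the $\mathrm{char}(\bk) \ne 2$ hypothesis. Only quantum numbers $[k]$ with $k \le m_{st}$ (plus implicitly $[m_{st}+1]$) ever appear.

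Your reflection/rotation dichotomy is more structured but costs a little more: the rotation case forces you to look at $\alpha_{s,2k+1}$ with $2k+1$ up to $2m_{st}-1$, so you need to control quantum numbers beyond the range of assumption~\eqref{it:dihassump-quantum-numbers}. You resolve this correctly via the reflection symmetry $[m_{st}+j]_x = -[m_{st}-j]_x$, which is a nice consequence of balancedness and the recursion. You also make explicit the linear independence of $\alpha_s$ and $\alpha_t$ (from assumption~\eqref{it:dihassump-locnondeg}), which the paper uses without comment. So your argument is slightly longer but makes the roles of the three hypotheses more transparent; the paper's version is terser by exploiting that a reduced expression has length at most $m_{st}$.
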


\begin{proof}
Assume for a contradiction that $x \in W \smallsetminus \{1\}$ acts trivially on $V$. Swapping $s$ and $t$ if necessary, we can assume that $x$ admits a reduced expression $\ux$ starting with $s$. We set $n=\ell(x)$.

First, assume that the last simple reflection in $\ux$ is $t$. Then $x(\alpha_t)=-\alpha_{s,n}=\alpha_t$. By~\eqref{eq:alpha-s-t-n}, this implies that $[n]_s=0$ and $[n-1]_t=-1$. The first condition implies that $n=m_{st}$ (so that $m_{st}$ is even), and since our realization is balanced the second condition implies that $\mathrm{char}(\bk)=2$. This contradicts our assumptions.

Now, assume that the last simple reflection in $\ux$ is $s$. Then $x(\alpha_t) = \alpha_{s,n+1}=\alpha_t$. By~\eqref{eq:alpha-s-t-n}, this implies that $[n+1]_s=0$ and $[n]_t=1$, hence that $n=m_{st}-1$. On the other hand $x(\alpha_s) = -\alpha_{s,n}=\alpha_s$, which implies that $[n]_s=-1$ and $[n-1]_t=0$. Hence $n=1$, $m_{st}=2$, and again the fact that our realization is balanced implies that $\mathrm{char}(\bk)=2$, a contradiction.
\end{proof}

%%%%%%%%%%%%%%%%%%%%%%%%%%%%%%%%%%%%%%%%%%%%%%%%%%%%%%%%%
\section{Jones--Wenzl projectors}
\label{sec:JW-projectors}
%%%%%%%%%%%%%%%%%%%%%%%%%%%%%%%%%%%%%%%%%%%%%%%%%%%%%%%%%

For $u \in \{s,t\}$, we denote by $\check{u}$ the element of $\{s,t\}$ which is different from $u$.

For $\uw \in \hW$, recall that we have a \emph{Jones--Wenzl projector}\index{JW@{$\mathrm{JW}_\uw$}}\index{Jones--Wenzl projector}
\[
 \mathrm{JW}_{\uw} \in \End(B_{\uw}),
\]
see Sections~\ref{sec:JW}--\ref{sec:ew-diagram},~\cite[\S A.6]{elias},~\cite[\S 5.2]{ew},~\cite[\S 6]{ew2} and references therein. 
The value of these endomorphisms on expressions of length $\leq 3$ is as follows:
\begin{gather*}
\mathrm{JW}_{(u)} = \id_{B_u}, 
\quad \mathrm{JW}_{(u,\check{u})} = \id_{B_{(u,\check{u})}}, \quad
\mathrm{JW}_{(u,\check{u},u)} =
\begin{array}{c}
\begin{tikzpicture}[xscale=0.3, yscale=0.2,thick]
\draw (-1.5,3) -- (-1.5,-3);
\draw (0,3) -- (0,-3);
\draw (1.5,3) -- (1.5,-3);
\node at (-1.5,-3.6) {\tiny $u$};
\node at (-1.5,3.6) {\tiny $u$};
\node at (1.5,-3.6) {\tiny $u$};
\node at (1.5,3.6) {\tiny $u$};
\node at (0,-3.6) {\tiny $\check{u}$};
\node at (0,3.6) {\tiny $\check{u}$};
\end{tikzpicture}
\end{array}
+
\frac{[1]}{[2]_u}
\begin{array}{c}
\begin{tikzpicture}[xscale=0.3, yscale=0.2,thick]
\draw (-1.5,3) -- (-1.5,-3);
\draw (0,3) -- (0,1.5);
\draw (0,-3) -- (0,-1.5);
\node at (0,1.5) {\small $\bullet$};
\node at (0,-1.5) {\small $\bullet$};
\draw (-1.5,0) -- (1.5,0);
\draw (1.5,3) -- (1.5,-3);
\node at (-1.5,-3.6) {\tiny $u$};
\node at (-1.5,3.6) {\tiny $u$};
\node at (1.5,-3.6) {\tiny $u$};
\node at (1.5,3.6) {\tiny $u$};
\node at (0,-3.6) {\tiny $\check{u}$};
\node at (0,3.6) {\tiny $\check{u}$};
\end{tikzpicture}
\end{array}.
\end{gather*}
They satisfy the following properties.
\begin{enumerate}[label=(JW\arabic*)]
 \item
 \label{it:JW-projector}
 $\mathrm{JW}_{\uw}$ is an idempotent.  According to~\cite[Theorem~6.24]{elias}, its image can be identified with the indecomposable object
 \[
 B_w \in \Diag(\fh,W),
 \]
 where $w = \pi(\uw)$. (In particular, this image depends only on $w$, and not on $\uw$.)
 \item
 \label{it:JW'}
 There exists a diagram $\mathrm{JW}'_{\uw}$ such that
\[
\begin{array}{c}
\begin{tikzpicture}[xscale=0.3, yscale=0.2,thick]
 \draw (-3,-2) rectangle (3,2);
 \node at (0,0) {$\mathrm{JW}_{\uw}$};
 \draw (-2, -4) -- (-2, -2);
 \draw (2,-4) -- (2, -2);
  \draw (-2, 4) -- (-2, 2);
 \draw (2,4) -- (2, 2);
 \node at (0,3) {$\cdots$};
 \node at (0,-3) {$\cdots$};
\end{tikzpicture}
\end{array} =
\begin{array}{c}
\begin{tikzpicture}[xscale=0.3, yscale=0.2,thick]
 \draw (-3,-2) rectangle (3,2);
 \node at (0,0) {$\mathrm{JW}'_{\uw}$};
 \draw (-2, -4) -- (-2, -2);
 \draw (2,-4) -- (2, -2);
  \draw (-2, 4) -- (-2, 2);
 \draw (2,4) -- (2, 2);
 \draw (-4,-4) -- (-4,4);
 \draw (4,-4) -- (4,4);
 \draw (-4,0) -- (-3,0);
 \draw (4,0) -- (3,0);
 \node at (0,3) {$\cdots$};
 \node at (0,-3) {$\cdots$};
\end{tikzpicture}
\end{array}.
\]
 \item
 \label{it:death-pitchfork}
 Adding a pitchfork (see~\cite[\S 5.3.4]{elias}) on top or bottom of $\mathrm{JW}_{\uw}$ produces $0$.
 \item
 \label{it:JW-horizontal-reflection}
 $\mathrm{JW}_{\uw}$ is invariant under horizontal reflection.
% \item
% \label{it:JW-vertical-reflection}
% If $\uw=(s_1, \sdots, s_r)$, then the vertical reflection of $\mathrm{JW}_{\uw}$ is $\mathrm{JW}_{(s_r, \sdots, s_1)}$.
 \item
 \label{it:JW-recurrence-1}
 If $2 \leq r \leq m_{st}-1$ and $\uw = (s_1, \sdots, s_{r+1})$, then setting $\uv = (s_1, \sdots, s_{r})$ and $\underline{u} = (s_1, \sdots, s_{r-1})$ we have (see e.g.~\cite[Theorem~6.10]{ew2}\footnote{Applying the formula from~\cite[Theorem~6.10]{ew2} one obtains a formula similar to that in~\ref{it:JW-recurrence-1}, but without the box $\mathrm{JW}_{\underline{u}}$. This box can be added thanks to the ``death by pitchfork'' property, see~\ref{it:death-pitchfork}.})
 \[
  \begin{array}{c}
\begin{tikzpicture}[xscale=0.3, yscale=0.2,thick]
 \draw (-3,-2) rectangle (3,2);
 \node at (0,0) {$\mathrm{JW}_{\uw}$};
 \draw (-2, -4) -- (-2, -2);
 \draw (1,-4) -- (1,-2);
 \draw (2,-4) -- (2, -2);
  \draw (-2, 4) -- (-2, 2);
 \draw (1,4) -- (1,2);
 \draw (2,4) -- (2, 2);
 \node at (-0.5,3) {$\cdots$};
 \node at (-0.5,-3) {$\cdots$};
 \node at (1,-4.6) {\tiny $s_{r}$};
 \node at (2.7,-4.6) {\tiny $s_{r+1}$};
 \node at (1,4.6) {\tiny $s_{r}$};
 \node at (2.7,4.6) {\tiny $s_{r+1}$};
\end{tikzpicture}
\end{array}
=
\begin{array}{c}
\begin{tikzpicture}[xscale=0.3, yscale=0.2,thick]
 \draw (-3,-2) rectangle (2,2);
 \node at (-0.5,0) {$\mathrm{JW}_{\uv}$};
 \draw (-2, -4) -- (-2, -2);
 \draw (1,-4) -- (1,-2);
 \draw (3,-4) -- (3, 4);
  \draw (-2, 4) -- (-2, 2);
 \draw (1,4) -- (1,2);
 \node at (-0.5,3) {$\cdots$};
 \node at (-0.5,-3) {$\cdots$};
 \node at (1,-4.6) {\tiny $s_{r}$};
 \node at (3,-4.6) {\tiny $s_{r+1}$};
 \node at (1,4.6) {\tiny $s_{r}$};
 \node at (3,4.6) {\tiny $s_{r+1}$};
\end{tikzpicture}
\end{array}
+
\frac{[r-1]_{s_{r}}}{[r]_{s_{r+1}}}
\begin{array}{c}
\begin{tikzpicture}[xscale=0.3,yscale=0.25,thick]
 \draw (-3,-1) rectangle (1,1);
 \node at (-1,0) {$\mathrm{JW}_{\underline{u}}$};
 \draw (-3,-5) rectangle (2,-3);
 \node at (-1,-4) {$\mathrm{JW}_{\underline{v}}$};
 \draw (-3,3) rectangle (2,5);
 \node at (-1,4) {$\mathrm{JW}_{\underline{v}}$}; 
 \draw (-2, -6) -- (-2, -5);
 \draw (1, -6) -- (1, -5);
 \draw (-2, 6) -- (-2, 5);
 \draw (1, 6) -- (1, 5);
 \draw (-2,-3) -- (-2,-1);
 \draw (0,-3) -- (0,-1);
 \draw (-2,3) -- (-2,1);
 \draw (0,3) -- (0,1);
 \draw (3,-6) -- (3,-1.5);
 \draw (3,1.5) -- (3,6);
 \draw (0,-1.5) -- (3,-1.5);
 \draw (0,1.5) -- (3,1.5);
 \draw (1,-3) -- (1,-2.2);
 \node at (1,-2.2) {\small $\bullet$};
 \draw (1,3) -- (1,2.2);
 \node at (1,2.2) {\small $\bullet$};
 \node at (-2,-6.6) {\tiny $s_1$};
 \node at (1,-6.6) {\tiny $s_{r}$};
 \node at (3,-6.6) {\tiny $s_{r+1}$};
 \node at (-2,6.6) {\tiny $s_1$};
 \node at (1,6.6) {\tiny $s_{r}$};
 \node at (3,6.6) {\tiny $s_{r+1}$};
  \node at (-0.9,-2) {$\cdots$};
  \node at (-0.9,2) {$\cdots$};
  \node at (-0.5,-5.5) {$\cdots$};
  \node at (-0.5,5.5) {$\cdots$};
\end{tikzpicture}
\end{array}.
 \]
 \item
 \label{it:JW-recurrence-2}
 If $3 \leq r \leq m_{st}-1$ and $\uw = (s_1, \sdots, s_{r+1})$, then setting $\uv = (s_1, \sdots, s_{r})$ we have (see e.g.~\cite[Remark~6.11]{ew2})
 \begin{multline*}
\begin{array}{c}
\begin{tikzpicture}[xscale=0.3,yscale=0.25,thick]
 \draw (-3,-2) rectangle (3,2);
 \node at (0,0) {$\mathrm{JW}_{\uw}$};
 \draw (-2, -4) -- (-2, -2);
 \draw (1,-4) -- (1,-2);
 \draw (2,-4) -- (2, -2);
  \draw (-2, 4) -- (-2, 2);
 \draw (1,4) -- (1,2);
 \draw (2,4) -- (2, 2);
 \node at (-0.5,3) {$\cdots$};
 \node at (-0.5,-3) {$\cdots$};
 \node at (1,-4.6) {\tiny $s_{r}$};
 \node at (2.7,-4.6) {\tiny $s_{r+1}$};
 \node at (1,4.6) {\tiny $s_{r}$};
 \node at (2.7,4.6) {\tiny $s_{r+1}$};
\end{tikzpicture}
\end{array}
=
\begin{array}{c}
\begin{tikzpicture}[xscale=0.3,yscale=0.25,thick]
 \draw (-3,-2) rectangle (2,2);
 \node at (-0.5,0) {$\mathrm{JW}_{\uv}$};
 \draw (-2, -4) -- (-2, -2);
 \draw (1,-4) -- (1,-2);
 \draw (3,-4) -- (3, 4);
  \draw (-2, 4) -- (-2, 2);
 \draw (1,4) -- (1,2);
 \node at (-0.5,3) {$\cdots$};
 \node at (-0.5,-3) {$\cdots$};
 \node at (1,-4.6) {\tiny $s_{r}$};
 \node at (3,-4.6) {\tiny $s_{r+1}$};
 \node at (1,4.6) {\tiny $s_{r}$};
 \node at (3,4.6) {\tiny $s_{r+1}$};
\end{tikzpicture}
\end{array}
+ \frac{1}{[r]_{s_{r+1}}}
\begin{array}{c}
\begin{tikzpicture}[xscale=0.3,yscale=0.25,thick]
 \draw (-3,-2) rectangle (3,2);
 \node at (0,0) {$\mathrm{JW}_{\uv}$};
 \draw (-2, -3) -- (-2, -2);
 \node at (-2,-3) {\small $\bullet$};
 \draw (-1,-4) -- (-1,-2);
 \draw (1,-4) -- (1,-2);
 \draw (2,-4) -- (2, -2);
  \draw (-2, 4) -- (-2, 2);
 \draw (1,3) -- (1,2);
 \draw (2,2.5) -- (2, 2);
 \node at (-0.5,3) {$\cdots$};
 \node at (0,-3) {$\cdots$};
 \draw (-4,-4) -- (-4,4);
 \node at (-4,-4.6) {\tiny $s_1$};
 \node at (-4.3,4.6) {\tiny $s_1$};
 \node at (-3,4.6) {\tiny $\check{s_1}$};
 \node at (-1.8,4.6) {\tiny $s_1$};
 \draw (-4,3) -- (-2,3);
 \draw (-3,3.5) -- (-3,4);
 \node at (-3,3.5) {\small $\bullet$};
 \node at (2,2.5) {\small $\bullet$};
 \draw (1,3) -- (4,3);
 \draw (4,4) -- (4,-4);
  \node at (2,-4.6) {\tiny $s_r$};
  \node at (4,-4.6) {\tiny $s_{r+1}$};
  \node at (4,4.6) {\tiny $s_{r+1}$};
\end{tikzpicture}
\end{array}
\\
+
\sum_{a=2}^{r-2}
\frac{[r-a]_{\check{s_a}}}{[r]_{s_{r+1}}}
\begin{array}{c}
\begin{tikzpicture}[xscale=0.3,yscale=0.25,thick]
 \draw (-3,-2) rectangle (3,2);
 \node at (0,0) {$\mathrm{JW}_{\uv}$};
 \draw (-2, -4) -- (-2, -2);
 \draw (1,-4) -- (1,-2);
 \draw (2,-4) -- (2, -2);
  \draw (-2, 4) -- (-2, 2);
 \draw (1,3) -- (1,2);
 \draw (1,3) -- (4,3);
 \draw (2,2.5) -- (2, 2);
 \node at (2,2.5) {\small $\bullet$};
 \draw (4,-4) -- (4,4);
 \draw (-0.5,2) -- (-0.5,3);
 \draw (-0.5,4) -- (-0.5,3.5);
 \node at (-0.5,3.5) {\small $\bullet$};
 \node at (-0.5,4.5) {\tiny $\check{s_a}$};
 \node at (-1.4,4.5) {\tiny $s_a$};
  \node at (0.4,4.5) {\tiny $s_a$};
 \draw (-1,3) -- (0,3);
 \draw (-1,3) -- (-1,4);
 \draw (0,3) -- (0,4);
 \node at (-1.2,2.5) {\tiny $\cdots$};
  \node at (0.3,2.5) {\tiny $\cdots$};
 \node at (-0.5,-3) {$\cdots$};
 \node at (4,4.5) {\tiny $s_{r+1}$};
 \node at (4,-4.5) {\tiny $s_{r+1}$};
 \node at (2,-4.5) {\tiny $s_r$};
\end{tikzpicture}
\end{array}
+
\frac{[r-1]_{s_r}}{[r]_{s_{r+1}}}
\begin{array}{c}
\begin{tikzpicture}[xscale=0.3,yscale=0.25,thick]
 \draw (-3,-2) rectangle (2,2);
 \node at (-0.5,0) {$\mathrm{JW}_{\uv}$};
 \draw (-2, -4) -- (-2, -2);
 \draw (1,-4) -- (1,-2);
 \draw (3,-4) -- (3, 4);
 \draw (-2, 4) -- (-2, 2);
 \draw (1,2.5) -- (1,2);
 \node at (1,2.5) {\small $\bullet$};
 \draw (0,4) -- (0,2);
 \draw (0,3) -- (3,3);
 \draw (1,4) -- (1,3.5);
 \node at (1,3.5) {$\bullet$};
 \node at (-1,3) {$\cdots$};
 \node at (-0.5,-3) {$\cdots$};
 \node at (1,-4.6) {\tiny $s_r$};
 \node at (3,-4.6) {\tiny $s_{r+1}$};
 \node at (1,4.6) {\tiny $s_{r}$};
 \node at (3,4.6) {\tiny $s_{r+1}$};
\end{tikzpicture}
\end{array}.
\end{multline*}
\end{enumerate}

Note that using~\ref{it:JW-horizontal-reflection}
%--\ref{it:JW-vertical-reflection} 
one can reflect the relations in~\ref{it:JW-recurrence-1} and~\ref{it:JW-recurrence-2} to produce new relations. The relations~\ref{it:JW-recurrence-1}--\ref{it:JW-recurrence-2} also have ``left'' versions corresponding to adding a strand on the left.

%%%%%%%%%%%%%%%%%%%%%%%%%%%%%%%%%%%%%%%%%%%%%%%%%%%%%%%%%
\section{Further properties of Jones--Wenzl projectors}
%%%%%%%%%%%%%%%%%%%%%%%%%%%%%%%%%%%%%%%%%%%%%%%%%%%%%%%%%

In this section we collect a number of consequences of the properties of Jones--Wenzl projectors stated in~\S\ref{sec:JW-projectors}, to be used later.

\begin{lem}
\label{lem:JW-dot-trivalent}
Let $\uw \in \hW$. Then we have
\[
\begin{array}{c}
\begin{tikzpicture}[xscale=0.3, yscale=0.2,thick]
 \draw (-3,-2) rectangle (3,2);
 \node at (0,0) {$\mathrm{JW}_{\uw}$};
 \draw (-2, -3) -- (-2, -2);
 \node at (-2,-3) {\small $\bullet$};
 \draw (-1,-4) -- (-1,-2);
 \draw (2,-4) -- (2, -2);
  \draw (-2, 4) -- (-2, 2);
 \draw (2,4) -- (2, 2);
 \node at (0,3) {$\cdots$};
 \node at (0.5,-3) {$\cdots$};
 \draw (-4,-4) -- (-4,3) -- (-2,3);
\end{tikzpicture}
\end{array}
=
\begin{array}{c}
\begin{tikzpicture}[xscale=0.3, yscale=0.2,thick]
 \draw (-3,-2) rectangle (3,2);
 \node at (0,0) {$\mathrm{JW}_{\uw}$};
 \draw (-2, -4) -- (-2, -2);
 \draw (2,-4) -- (2, -2);
  \draw (-2, 4) -- (-2, 2);
 \draw (2,4) -- (2, 2);
 \node at (0,3) {$\cdots$};
 \node at (0,-3) {$\cdots$};
\end{tikzpicture}
\end{array}.
\]
\end{lem}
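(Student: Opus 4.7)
\medskip

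\noindent\textit{Proof plan.} The left-hand side may be written algebraically as
\[
\phi \;:=\; (m_{s_1} \star \id_{B_{\uv}}) \circ (\id_{B_{s_1}} \star \mathrm{JW}_{\uw}) \circ (\id_{B_{s_1}} \star \eta_{s_1} \star \id_{B_{\uv}}),
\]
where $\uv = (s_2,\sdots,s_r)$ is obtained from $\uw = (s_1,\sdots,s_r)$ by deleting the leftmost simple reflection, $m_{s_1}$ is the trivalent merging morphism, and $\eta_{s_1}$ is the lower-dot morphism. The goal is to show $\phi = \mathrm{JW}_\uw$. The plan is to proceed by induction on $r = \ell(\uw)$.

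For the base cases $r \leq 2$, one has $\mathrm{JW}_\uw = \id_{B_\uw}$, so $\phi$ collapses directly: substituting the identity and applying the Frobenius unit relation $m_{s_1} \circ (\id_{B_{s_1}} \star \eta_{s_1}) = \id_{B_{s_1}}$ in the leftmost factor gives $\phi = \id_{B_\uw} = \mathrm{JW}_\uw$.

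For the inductive step $r \geq 3$, apply the horizontal reflection of the recurrence~\ref{it:JW-recurrence-1} (which is valid thanks to~\ref{it:JW-horizontal-reflection}): write
\[
\mathrm{JW}_\uw \;=\; \id_{B_{s_1}} \star \mathrm{JW}_{\uv} \;+\; c \cdot X,
\]
where $c \in \bk$ is an explicit quantum-number coefficient and $X$ is the correction diagram built from copies of $\mathrm{JW}_{\uv}$ at the top and bottom and $\mathrm{JW}_{\underline{u}}$ in the middle (with $\underline{u} = (s_3,\sdots,s_r)$), together with the ``extra'' $s_1$-strand bypassing the middle block on the left. By linearity $\phi = \phi_1 + c\cdot\phi_2$. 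In $\phi_1$, the leftmost $B_{s_1}$ tensor factor is untouched by $\id_{B_{s_1}} \star \mathrm{JW}_\uv$, so the dot-trivalent on that strand collapses by Frobenius unit just as in the base case, giving $\phi_1 = \id_{B_{s_1}} \star \mathrm{JW}_\uv$.

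The main work is the analysis of $\phi_2$, which we expect to equal $X$. In $\phi_2$, the new bypass strand introduced by $\phi$ runs to the left of the \emph{existing} $s_1$-bypass inside $X$; the new dot caps the bottom of that inner bypass, while $\phi$'s trivalent merges the new bypass with the top of the inner one. Using Frobenius associativity, the two parallel $s_1$-strands can be combined, and the internal trivalents of $X$ at heights $\pm 1.5$ (which connect the inner bypass to the middle $\mathrm{JW}_{\underline{u}}$ block) can be rearranged. After this rearrangement one arrives at an expression in which the induction hypothesis applies to each of the top and bottom $\mathrm{JW}_\uv$ subblocks (since $\ell(\uv) < \ell(\uw)$), and Frobenius unit handles the remaining dot-trivalent configurations. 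Combining these simplifications yields $\phi_2 = X$, and hence $\phi = \id_{B_{s_1}} \star \mathrm{JW}_\uv + c \cdot X = \mathrm{JW}_\uw$, completing the induction.

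The principal obstacle is the diagrammatic bookkeeping in Step $\phi_2$: the interaction between the two $s_1$-bypass strands and the middle $\mathrm{JW}_{\underline{u}}$ block must be tracked carefully, and one must check that the Frobenius moves (in particular Frobenius associativity~\S\ref{sec:ew-diagram}) indeed put the diagram into a form where the induction hypothesis is directly applicable to each top/bottom copy of $\mathrm{JW}_\uv$. An alternative approach would bypass the recurrence entirely by arguing that both sides are degree-$0$ endomorphisms of $B_\uw$ factoring through $B_w$ that agree on $B_w \subset B_\uw$, together with a uniqueness statement for such endomorphisms coming from the pitchfork-vanishing property~\ref{it:death-pitchfork}; this would give a cleaner proof if the relevant uniqueness can be established.
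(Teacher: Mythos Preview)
Your inductive approach via the recurrence~\ref{it:JW-recurrence-1} is far more complicated than necessary, and the inductive step as you describe it has a gap. In $\phi_2$ you propose to apply the induction hypothesis to the top and bottom copies of $\mathrm{JW}_\uv$ inside the correction term $X$, but the hypothesis concerns a very specific configuration (a dot on the bottom-left input and an extra strand merged by a trivalent at the top-left output), and neither copy of $\mathrm{JW}_\uv$ in $X$ sits in such a configuration: their leftmost strands are attached to the middle block, not to a dot or to an external merging strand. You would need to explain concretely which Frobenius moves produce such a configuration, and you have not done so.

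The paper bypasses all of this by using property~\ref{it:JW'} instead of the recurrence. That property says directly that the leftmost strand of $\mathrm{JW}_\uw$ is a through-strand, connected by a single horizontal trivalent to an inner box $\mathrm{JW}'_\uw$. Substituting this into the left-hand side, the $s_1$-colored portion on the left becomes: an external strand coming up from the bottom, a through-strand carrying a dot at its bottom end and a trivalent branch to $\mathrm{JW}'_\uw$ in the middle, and a merging trivalent at the top. One application of Frobenius unit absorbs the dot into the middle trivalent, and then Frobenius associativity identifies the result with the original through-strand picture of~\ref{it:JW'}; hence the left-hand side equals $\mathrm{JW}_\uw$. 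No induction is needed. (Incidentally, the same observation would rescue your $\phi_2$ step without induction: the leftmost strand in the correction term $X$ is also a bypass strand, so the same Frobenius collapse applies to it directly. But once you see this, the decomposition via~\ref{it:JW-recurrence-1} is pointless, since~\ref{it:JW'} already gives the through-strand property for $\mathrm{JW}_\uw$ itself.)
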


\begin{proof}
This property is a direct consequence of~\ref{it:JW'} and the Frobenius relations.
\end{proof}

\begin{lem}
\label{lemJW-handle}
Let $\uw \in \hW$. We have
\[
  \begin{array}{c}\begin{tikzpicture}[xscale=0.3, yscale=0.3,thick]
   \draw (-3,1) -- (-3,3); \node at (-1.4,2) {$\cdots$}; \draw (0,1) -- (0,3);
   \draw (-3.5,-1) rectangle (0.5,1); \node at (-1.4,0) {\small $\JW_\uw$};
   \draw (-3,-3) -- (-3,-1); \node at (-1.4,-2) {$\cdots$}; \draw (0,-3) -- (0,-1); \draw (0,-2) -- (1.5,-2) -- (1.5,2) -- (0,2);
  \end{tikzpicture}\end{array}
  = 0.
\]
\end{lem}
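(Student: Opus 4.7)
The plan is to identify the ``handle'' configuration with a pitchfork sitting on the rightmost strand of $\JW_\uw$, at which point property~\ref{it:death-pitchfork} immediately yields zero.

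First, I would interpret the handle precisely: the polyline on the right attaches to the rightmost top and bottom output strands of $\JW_\uw$ via two trivalent vertices---a ``split'' at $(0,-2)$ and a ``merge'' at $(0,2)$---joined by the vertical segment at $x=1.5$. Using the Frobenius associativity and unit relations recalled in~\S\ref{sec:ew-diagram}, one can isotope this configuration so as to collapse the vertical segment onto the rightmost output of $\JW_\uw$: sliding the split vertex up along the right side of the diagram (or equivalently, the merge vertex down) turns the circuit on the right into a small loop attached to a single strand emerging from $\JW_\uw$. Together with the two trivalent vertices that originally connected the handle to $\JW_\uw$, this loop is precisely a pitchfork in the sense of~\cite[\S 5.3.4]{elias}, attached either above or below $\JW_\uw$.

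Once the pitchfork is exposed, property~\ref{it:death-pitchfork} (pitchfork-death) immediately gives zero. If the pitchfork happens to end up sitting below $\JW_\uw$ rather than above after the isotopy, I would invoke~\ref{it:JW-horizontal-reflection} and~\ref{it:JW-vertical-reflection} to reflect the diagram into the form directly covered by~\ref{it:death-pitchfork}.

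The main obstacle will be the careful bookkeeping of Frobenius moves during the second step. Naive application of the relations can produce secondary terms decorated with extra dots or auxiliary trivalent vertices attached to $\JW_\uw$; these terms must either vanish by a second invocation of pitchfork-death, by the needle or barbell relations, or be simplified using Lemma~\ref{lem:JW-dot-trivalent}, which absorbs a dotted trivalent on an outer strand into $\JW_\uw$. As a fallback, one could instead proceed by induction on $\ell(\uw)$, expanding $\JW_\uw$ via the recurrence~\ref{it:JW-recurrence-2} and checking that each resulting term gives zero individually; but this combinatorial route seems considerably more cumbersome, and obscures the geometric reason for the vanishing.
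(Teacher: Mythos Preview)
Your identification of the handle as a pitchfork is incorrect, and the proposed isotopy does not work. A pitchfork in the sense of~\ref{it:death-pitchfork} means a trivalent merge attached to two \emph{adjacent strands of $\JW_\uw$}; here only the single rightmost strand of $\JW_\uw$ is involved, together with an external strand created by the handle itself. Merging the rightmost output with an auxiliary strand that does not come out of $\JW_\uw$ is not a pitchfork on $\JW_\uw$, so~\ref{it:death-pitchfork} does not apply. Moreover, you cannot ``slide the split vertex up'' or ``collapse the vertical segment'' by Frobenius moves alone: the segment of the rightmost strand between the two handle trivalents passes \emph{through} the $\JW_\uw$ box, so it is not available for isotopy. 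Frobenius associativity only lets you rearrange adjacent trivalent vertices on a bare strand; here there are none adjacent to the handle vertices until you open up the box.

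The paper's argument is different and much shorter: it uses property~\ref{it:JW'} to rewrite $\JW_\uw$ so that the rightmost strand becomes a genuine vertical line carrying a single trivalent to $\JW'_\uw$. After this substitution, one Frobenius associativity move slides that trivalent outside the handle region, leaving the handle attached to a bare strand segment. That configuration is exactly the needle, which vanishes by the needle relation of~\S\ref{sec:ew-diagram}. Neither~\ref{it:JW'} nor the needle relation appears in your proposal; these are the two ingredients you are missing. Your fallback via the recurrence~\ref{it:JW-recurrence-2} would eventually work but is, as you say, far more laborious than the one-line argument above.
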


\begin{proof}
This property is a consequence of~\ref{it:JW'} and the needle relation.
\end{proof}

\begin{lem}
\label{lem:JW-2dots}
Let $\uw \in \hW$, and
 assume that $\ell(\uw) \geq 2$. Then we have
 \[
    \begin{array}{c}
\begin{tikzpicture}[xscale=0.3, yscale=0.2,thick]
 \draw (-3,-2) rectangle (4,2);
 \node at (0.5,0) {$\mathrm{JW}_{\uw}$};
 \draw (-2, -4) -- (-2, -2);
 \draw (1,-4) -- (1,-2);
 \draw (2,-3) -- (2, -2);
 \node at (3,-3) {\small $\bullet$};
  \node at (2,-3) {\small $\bullet$};
 \draw (3,-3) -- (3,-2);
  \draw (-2, 4) -- (-2, 2);
 \draw (3,4) -- (3,2);
 \node at (0.5,3) {$\cdots$};
 \node at (-0.5,-3) {$\cdots$};
 \end{tikzpicture}
\end{array}
=
    \begin{array}{c}
\begin{tikzpicture}[yscale=0.2, xscale=-0.3,thick]
 \draw (-3,-2) rectangle (4,2);
 \node at (0.5,0) {$\mathrm{JW}_{\uw}$};
 \draw (-2, -4) -- (-2, -2);
 \draw (1,-4) -- (1,-2);
 \draw (2,-3) -- (2, -2);
 \node at (3,-3) {\small $\bullet$};
  \node at (2,-3) {\small $\bullet$};
 \draw (3,-3) -- (3,-2);
  \draw (-2, 4) -- (-2, 2);
 \draw (3,4) -- (3,2);
 \node at (0.5,3) {$\cdots$};
 \node at (-0.5,-3) {$\cdots$};
 \end{tikzpicture}
\end{array}.
 \]
\end{lem}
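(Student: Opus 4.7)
The equality asserts that composing $\mathrm{JW}_\uw$ with the unit morphism $\eta_{s_{k-1}} \otimes \eta_{s_k}$ on the last two bottom strands equals composing with $\eta_{s_1} \otimes \eta_{s_2}$ on the first two; both sides are morphisms $B_{(s_1,\ldots,s_{k-2})} \to B_\uw$, since in the dihedral alternating setting we have $(s_3, \ldots, s_k) = (s_1, \ldots, s_{k-2})$. My plan is to proceed by induction on $k = \ell(\uw) \geq 2$. The base case $k = 2$ is immediate: $\mathrm{JW}_{(s_1,s_2)} = \id_{B_{(s_1,s_2)}}$, so both sides reduce to $\eta_{s_1} \otimes \eta_{s_2}$.

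For the inductive step, I would apply the recursion \ref{it:JW-recurrence-2} to $\mathrm{JW}_\uw$, writing it as $(\mathrm{JW}_\uv \otimes \id_{s_k}) + \sum_a c_a D_a$, where $\uv = (s_1, \ldots, s_{k-1})$ and each $D_a$ is a correction diagram of pitchfork type on the rightmost strands (with coefficients $c_a$ given by ratios of 2-colored quantum numbers). Composing the LHS with $\eta_{s_{k-1}} \otimes \eta_{s_k}$ at the bottom right turns the main term into $\bigl(\mathrm{JW}_\uv \circ (\id \otimes \eta_{s_{k-1}})\bigr) \otimes \eta_{s_k}$; each correction term simplifies via the Frobenius relations---the bottom dot on an $s_{k-1}$-strand beneath a trivalent vertex collapses to an identity strand, while a bottom dot on the rightmost $s_k$-strand either kills the term via the needle relation (Lemma~\ref{lemJW-handle}) or via the pitchfork-death relation \ref{it:death-pitchfork}, or produces a barbell evaluating to an element of $\mathrm{S}(V^*)$ which can then be absorbed using Lemma~\ref{lem:JW-dot-trivalent}. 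After these simplifications, the LHS reduces to a canonical expression of the form $(\mathrm{JW}_{(s_1,\ldots,s_{k-2})} \otimes \eta_{s_{k-1}} \otimes \eta_{s_k}) \circ (\text{scalar diagram})$. A parallel analysis of the RHS---using the horizontally-reflected form of \ref{it:JW-recurrence-2}, justified by \ref{it:JW-horizontal-reflection}, or equivalently by reading \ref{it:JW-recurrence-2} applied to $\uw$ from the opposite side---yields the same canonical expression, proving the equality.

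The main obstacle will be the combinatorial bookkeeping of the correction terms: they come with coefficients of the form $[r-a]_{\check{s}_a}/[r]_{s_{r+1}}$ from \ref{it:JW-recurrence-2}, and verifying that the contributions from both the LHS and the RHS reductions match demands careful application of the 2-colored quantum number identities \eqref{eq:2q-product-0}, \eqref{eq:2q-product-1}, and \eqref{eq:2q-square-identity}. Assumption \eqref{it:dihassump-quantum-numbers} ensures that all $[n]_u$ for $1 \leq n < m_{st}$ are invertible in $\bk$, so the coefficients appearing are well defined; the balanced realization assumption plays an essential role in making the outer coefficients from the two recursions match.
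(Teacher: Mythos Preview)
Your inductive strategy via the Jones--Wenzl recursion \ref{it:JW-recurrence-2} is not wrong in principle, but it is far more laborious than needed, and you have not actually carried out the bookkeeping you yourself flag as the main obstacle. The paper's proof is essentially a one-line local computation: for $u \in \{s,t\}$ choose $\delta \in V^*$ with $\langle \alpha_u^\vee, \delta \rangle = 1$; then repeated use of the nil-Hecke relation yields a local identity of the form
\[
\text{(strand, dot, dot)} \;=\; \text{(dot, dot, strand)} \;+\; \text{(terms each containing a pitchfork)}.
\]
By death-by-pitchfork \ref{it:death-pitchfork}, those extra terms vanish after composing with $\mathrm{JW}_\uw$. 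Hence the pair of consecutive bottom dots can be slid one position at a time from the rightmost two strands to the leftmost two, with no induction on $\ell(\uw)$, no appeal to \ref{it:JW-recurrence-2}, and no quantum-number identities at all.

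By contrast, your route would require separately handling $\ell(\uw)=3$ (since \ref{it:JW-recurrence-2} needs $r\ge 3$, i.e.\ $\ell(\uw)\ge 4$), and your assertion that each correction term ``either kills the term via the needle relation \ldots\ or produces a barbell'' is not substantiated---the correction diagrams in \ref{it:JW-recurrence-2} are more intricate than a single trivalent collapse, and showing that the LHS and RHS reductions agree really would demand the coefficient-matching you anticipate. The paper's local sliding argument sidesteps all of this.
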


\begin{proof}
Let $u \in \{s,t\}$, and let $\delta \in V^*$ be such that $\langle \delta, \alpha_u^\vee \rangle = 1$. (Such an element exists by the Demazure surjectivity assumption) Then we have
\[
    \begin{array}{c}
\begin{tikzpicture}[xscale=0.3,yscale=0.5,thick]
 \draw (-1,-1) -- (-1,1);
 \draw (0,0) -- (0,1);
 \draw (1,0) -- (1,1);
 \node at (0,0) {\small $\bullet$};
 \node at (1,0) {\small $\bullet$};
 \node at (-1,-1.3) {\tiny $u$};
\end{tikzpicture}
\end{array}
=
    \begin{array}{c}
\begin{tikzpicture}[xscale=0.3,yscale=0.5,thick]
 \draw (-1,-1) -- (-1,1);
 \draw (0,0.5) -- (0,1);
 \draw (-1,-0.5) -- (1,-0.5) -- (1,1);
 \node at (0,0.5) {\small $\bullet$};
 \node at (-1,-1.3) {\tiny $u$};
 \node at (0.2,-0.1) {\small $\delta$};
\end{tikzpicture}
\end{array}
-
    \begin{array}{c}
\begin{tikzpicture}[xscale=0.3,yscale=0.5,thick]
 \draw (-1,-1) -- (-1,1);
 \draw (0,0.5) -- (0,1);
 \draw (-1,0) -- (1,0) -- (1,1);
 \node at (0,0.5) {\small $\bullet$};
 \node at (-1,-1.3) {\tiny $u$};
 \node at (0.4,-0.4) {\small $u(\delta)$};
\end{tikzpicture}
\end{array}
=
    \begin{array}{c}
\begin{tikzpicture}[xscale=0.3,yscale=0.5,thick]
 \draw (-1,0) -- (-1,1);
 \draw (0,0) -- (0,1);
 \draw (1,-1) -- (1,1);
 \node at (0,0) {\small $\bullet$};
 \node at (-1,0) {\small $\bullet$};
 \node at (1,-1.3) {\tiny $u$};
\end{tikzpicture}
\end{array}
+
    \begin{array}{c}
\begin{tikzpicture}[xscale=0.3,yscale=0.5,thick]
 \draw (-1,-1) -- (-1,1);
 \draw (0,0.5) -- (0,1);
 \draw (-1,0) -- (1,0) -- (1,1);
 \node at (0,0.5) {\small $\bullet$};
 \node at (-1,-1.3) {\tiny $u$};
 \node at (-2.3,0) {\small $u(\delta)$};
\end{tikzpicture}
\end{array}
-
    \begin{array}{c}
\begin{tikzpicture}[xscale=0.3,yscale=0.5,thick]
 \draw (-1,-1) -- (-1,1);
 \draw (0,0.5) -- (0,1);
 \draw (-1,0) -- (1,0) -- (1,1);
 \node at (0,0.5) {\small $\bullet$};
 \node at (-1,-1.3) {\tiny $u$};
 \node at (0.4,-0.4) {\small $u(\delta)$};
\end{tikzpicture}
\end{array}.
\]
Using this equation and the ``death by pitchfork'' property~\ref{it:death-pitchfork}, we see that when adding dots on two consecutive strands at the bottom of a Jones--Wenzl morphism, the result does not depend on the choice of strands; in particular, we obtain the equality of the lemma.
\end{proof}

\begin{lem}
\label{lem:JW-dots-JW}
Let $\uw \in \hW$, and assume that $\ell(\uw) \geq 3$. Then we have
\[
\begin{array}{c}
\begin{tikzpicture}[yscale=0.3,xscale=0.4,thick]
 \draw (-3,-3) rectangle (3,-1);
 \node at (0,-2) {$\mathrm{JW}_{\uw}$};
  \draw (-3,1) rectangle (3,3);
 \node at (0,2) {$\mathrm{JW}_{\uw}$};
 \draw (-2,-1)  -- (-2,1);
 \draw (2,-1) -- (2,1);
 \draw (0,-1) -- (0,-0.4);
 \draw (0,0.4) -- (0,1);
 \node at (0,-0.4) {$\bullet$};
 \node at (0,0.4) {$\bullet$};
 \node at (-1,0) {\small $\cdots$};
 \node at (1,0) {\small $\cdots$};
 \draw (-2, -4) -- (-2, -3);
 \draw (2,-4) -- (2, -3);
  \draw (-2, 4) -- (-2, 3);
 \draw (2,4) -- (2, 3);
 \node at (0,3.5) {$\cdots$};
 \node at (0,-3.5) {$\cdots$};
\end{tikzpicture}
\end{array}
=0
\]
(where the dots are on the $i$-th strands for some $i \in \{2, \sdots, \ell(\uw)-1\}$).
\end{lem}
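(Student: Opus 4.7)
The proof proceeds by induction on $r = \ell(\uw) \geq 3$, combined with the Jones--Wenzl recursions from~\S\ref{sec:JW-projectors}.

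For the base case $r = 3$, the only interior strand is $i = 2$, and $\uw = (u, \check u, u)$ for some $u \in \{s,t\}$. Using the explicit formula $\mathrm{JW}_{(u, \check u, u)} = \id + \tfrac{1}{[2]_u} D$ from~\S\ref{sec:JW-projectors}, one expands the composition $\JW_\uw \circ (\id_{B_u} \otimes (\eta_{\check u} \circ \epsilon_{\check u}) \otimes \id_{B_u}) \circ \JW_\uw$ as a sum of four terms. Using the barbell relation $\epsilon_{\check u} \circ \eta_{\check u} = \alpha_{\check u}$, the nil-Hecke relation, and the Frobenius relations, each term reduces to an explicit combination of diagrams. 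These combine to zero using the 2-colored quantum-number identities from~\S\ref{sec:realizations} together with the invertibility of $[2]_u$ in $\bk$ guaranteed by our running assumptions.

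For the inductive step with $r \geq 4$, the key combinatorial observation is that for any $i \in \{2, \ldots, r-1\}$, at least one of the following holds: either $i \leq r-2$, so $i$ is strictly interior to $\uv = (s_1, \ldots, s_{r-1})$; or $i \geq 3$, so $i-1$ is strictly interior to $\uv' = (s_2, \ldots, s_r)$. In the first case we apply the Jones--Wenzl recursion~\ref{it:JW-recurrence-2} to each of the two copies of $\JW_\uw$, expressing each as $\JW_\uv \star \id_{B_{s_r}}$ plus correction terms; in the second case we apply the horizontal reflection of~\ref{it:JW-recurrence-2}. The ``main'' contribution, where both copies of $\JW_\uw$ are replaced by $\JW_\uv \star \id$ (resp.\ $\id \star \JW_{\uv'}$), reduces directly to the statement of the lemma for the shorter expression and vanishes by the inductive hypothesis. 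The correction terms each involve configurations with additional trivalent vertices, dots, and horizontal strands connecting the two $\JW_\uv$ projectors; upon further expansion, these produce either needle loops attached to $\JW_\uv$ (vanishing by Lemma~\ref{lemJW-handle}), pitchfork configurations on top of $\JW_\uv$ (vanishing by property~\ref{it:death-pitchfork}), or trivalent-dot configurations absorbable by Lemma~\ref{lem:JW-dot-trivalent}.

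The main technical obstacle is bookkeeping: the product of the corrections coming from the two expansions of $\JW_\uw$ yields many cross-terms, each requiring individual analysis. In some of these cross-terms, Lemma~\ref{lem:JW-2dots} is needed to permute a pair of adjacent dots into a configuration where one of the vanishing arguments above (or the inductive hypothesis applied after a further rewriting via property~\ref{it:JW'}) can be invoked. While each individual vanishing step is routine, the combinatorial complexity of enumerating all cross-terms constitutes the bulk of the technical work.
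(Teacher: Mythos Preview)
Your inductive approach is vastly more complicated than necessary, and your sketch leaves the bulk of the work unverified: you write that ``the combinatorial complexity of enumerating all cross-terms constitutes the bulk of the technical work'' but do not carry this out. In particular, when you expand \emph{both} copies of $\JW_\uw$ via~\ref{it:JW-recurrence-2}, the mixed terms (main~$\times$~correction) do not obviously reduce to the inductive hypothesis, since the correction terms already carry extra dots and trivalents that shift the strand pattern; the claim that each such term dies by a needle, pitchfork, or absorption argument needs genuine checking, and invoking Lemma~\ref{lem:JW-2dots} to ``permute a pair of adjacent dots'' is too vague to constitute a proof.

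The paper's argument is a one-step direct computation with no induction. Since $2 \le i \le \ell(\uw)-1$, the strands at positions $i-1$ and $i+1$ both exist and have the same colour $u := s_{i-1} = s_{i+1}$, while the broken strand at position $i$ has colour $\check u$. In the middle region the $\check u$-strand has been removed by the two dots, so the $u$-strands at positions $i\pm 1$ are adjacent there. One then applies the elementary identity
\[
\id_{B_{(u,u)}} \;=\;
\begin{array}{c}
\begin{tikzpicture}[yscale=0.4,xscale=0.5,thick]
\draw (-1,-1) -- (-1,1);
\draw (0,-1) -- (0,1);
\draw (-1,0) -- (0,0);
\node at (-0.5,0.6) {\scriptsize $\delta$};
\end{tikzpicture}
\end{array}
-
\begin{array}{c}
\begin{tikzpicture}[yscale=0.4,xscale=0.5,thick]
\draw (-1,-1) -- (-1,1);
\draw (0,-1) -- (0,1);
\draw (-1,0) -- (0,0);
\node at (-0.5,-0.6) {\scriptsize $u(\delta)$};
\end{tikzpicture}
\end{array}
\]
for any $\delta \in V^*$ with $\langle \alpha_u^\vee, \delta\rangle = 1$ (an immediate consequence of the nil-Hecke relation). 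Each $H$-diagram factors through $B_u$ via merge/split trivalents; composed with the dots on the $\check u$-strand at position $i$, the portion touching either $\JW_\uw$ is exactly a pitchfork (two $u$-strands at $i\pm1$ joined by a cup/cap, with the $\check u$-strand at $i$ terminated by a dot). By property~\ref{it:death-pitchfork}, both terms vanish. That is the entire proof.
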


\begin{proof}
This fact follows from the relation
\[
\begin{array}{c}
\begin{tikzpicture}[yscale=0.6,xscale=0.8,thick]
\draw (-1,-1) -- (-1,1);
\draw (0,-1) -- (0,1);
\node at (-1,-1.3) {\tiny $u$};
\node at (0,-1.3) {\tiny $u$};
\end{tikzpicture}
\end{array}
=
\begin{array}{c}
\begin{tikzpicture}[yscale=0.6,xscale=0.8,thick]
\draw (-1,-1) -- (-1,1);
\draw (0,-1) -- (0,1);
\draw (-1,0) -- (0,0);
\node at (-1,-1.3) {\tiny $u$};
\node at (0,-1.3) {\tiny $u$};
\node at (-0.5,0.6) {\small $\delta$};
\end{tikzpicture}
\end{array}
-
\begin{array}{c}
\begin{tikzpicture}[yscale=0.6, xscale=0.8,thick]
\draw (-1,-1) -- (-1,1);
\draw (0,-1) -- (0,1);
\draw (-1,0) -- (0,0);
\node at (-1,-1.3) {\tiny $u$};
\node at (0,-1.3) {\tiny $u$};
\node at (-0.5,-0.6) {\small $u(\delta)$};
\end{tikzpicture}
\end{array}
\]
(where $u \in \{s,t\}$ and $\delta \in V^*$ are such that $\langle \delta, \alpha_u^\vee \rangle = 1$) and the ``death by pitchfork'' property.
\end{proof}

\begin{lem}
\label{lem:composition-JW}
Let $\uv_1, \uv_2, \uv_3 \in \hW$, and assume that $\uw=\uv_1 \uv_2 \uv_3 \in \hW$. Then
\[
\begin{array}{c}
\begin{tikzpicture}[scale=0.3,thick]
\draw (-5,0.5) rectangle (5,2.5);
\draw (-4.5,2.5) -- (-4.5,3.5);
\draw (4.5,2.5) -- (4.5,3.5);
\node at (0,3) {$\cdots$};
\draw (-2,-0.5) rectangle (2,-2.5);
\draw (-1.5,-2.5) -- (-1.5,-3.5);
\draw (1.5,-2.5) -- (1.5,-3.5);
\node at (0,-3) {$\cdots$};
\draw (-1.5,-0.5) -- (-1.5,0.5);
\draw (1.5,-0.5) -- (1.5,0.5);
\node at (0,0) {$\cdots$};
\draw (-4.5,-3.5) -- (-4.5,0.5);
\draw (-2.5,-3.5) -- (-2.5,0.5);
\node at (-3.5,-1) {$\cdots$};
\draw (4.5,-3.5) -- (4.5,0.5);
\draw (2.5,-3.5) -- (2.5,0.5);
\node at (3.5,-1) {$\cdots$};
\node at (0,1.5) {$\mathrm{JW}_{\uw}$};
\node at (0,-1.5) {$\mathrm{JW}_{\uv_2}$};
\end{tikzpicture}
\end{array}
=
\begin{array}{c}
\begin{tikzpicture}[scale=0.3,thick]
\draw (-5,-1.5) rectangle (5,1.5);
\draw (-4.5,-3.5) -- (-4.5,-1.5);
\draw (4.5,-3.5) -- (4.5,-1.5);
\draw (-4.5,3.5) -- (-4.5,1.5);
\draw (4.5,3.5) -- (4.5,1.5);
\node at (0,0) {$\mathrm{JW}_{\uw}$};
\node at (0,-2.5) {$\cdots$};
\node at (0,2.5) {$\cdots$};
\end{tikzpicture}
\end{array}.
\]
\end{lem}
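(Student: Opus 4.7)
The plan is to proceed by induction on $\ell(\uv_2)$. The base case $\ell(\uv_2) \le 1$ is trivial since then $\mathrm{JW}_{\uv_2} = \id_{B_{\uv_2}}$, so that the left-hand side of the claimed identity just reads $\mathrm{JW}_{\uw} \circ \id_{B_\uw} = \mathrm{JW}_\uw$. (Note that the statement is also symmetric under the reflections~\ref{it:JW-horizontal-reflection} and~\ref{it:JW-vertical-reflection}, which may be useful.)

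For the inductive step, let $\uv_2 = (s_1, \ldots, s_r)$ with $r \ge 2$, and apply the recurrence~\ref{it:JW-recurrence-1} (reindexed so that the ``$r$'' and ``$r+1$'' in that formula become ``$r-1$'' and ``$r$'' respectively) to write
\[
\mathrm{JW}_{\uv_2} = \mathrm{JW}_{\uv_2'} \star \id_{B_{s_r}} + \tfrac{[r-2]_{s_1}}{[r-1]_{s_1}} \cdot D,
\]
where $\uv_2' = (s_1, \ldots, s_{r-1})$ and $D$ is the specific ``sandwich'' diagram described in~\ref{it:JW-recurrence-1}. After framing by $\id_{B_{\uv_1}}$ on the left and $\id_{B_{\uv_3}}$ on the right, and composing with $\mathrm{JW}_\uw$ on top, the first summand contributes $\mathrm{JW}_\uw$ by the inductive hypothesis applied to the triple $(\uv_1, \uv_2', (s_r) \cdot \uv_3)$, whose concatenation still equals $\uw$ (hence is still reduced).

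It therefore remains to show that the second summand vanishes after framing and composition with $\mathrm{JW}_\uw$. The top of $D$ factors as $(\mathrm{JW}_{\uv_2'} \star \id_{B_{s_r}}) \circ D'$, where $D'$ is the portion of $D$ lying below its topmost $\uv_2'$-box. Applying the inductive hypothesis once more absorbs this top $\mathrm{JW}_{\uv_2'}$, reducing the task to showing that $\mathrm{JW}_\uw \circ (\id_{B_{\uv_1}} \star D' \star \id_{B_{\uv_3}}) = 0$. The key feature of the top of $D'$ is that the $s_{r-1}$-strand begins at a dot (rather than continuing into the diagram); using the Frobenius, trivalent, and cyclicity relations (and the reflection symmetries~\ref{it:JW-horizontal-reflection}--\ref{it:JW-vertical-reflection}), this configuration just below $\mathrm{JW}_\uw$ can be rewritten as an honest pitchfork sitting atop $\mathrm{JW}_\uw$, which vanishes by~\ref{it:death-pitchfork}. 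Lemmas~\ref{lemJW-handle} and~\ref{lem:JW-dots-JW} may be invoked to simplify the subsidiary configurations.

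The main obstacle is precisely this last step: the careful diagrammatic verification that the top layer of $D'$, once placed beneath $\mathrm{JW}_\uw$ and framed by identity strands, does present (after suitable manipulation) a pitchfork configuration subject to~\ref{it:death-pitchfork}. This is a concrete but delicate computation, since the $s_{r-1}$-strand of $D'$ starts from a dot in the middle of $\uw$, and the neighboring strands must be organized into a trivalent vertex via the Frobenius relations before~\ref{it:death-pitchfork} can be applied.
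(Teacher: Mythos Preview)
Your argument is essentially correct, but it proceeds in the opposite direction from the paper's, and you have overestimated the difficulty of the last step.

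The paper inducts on $\ell(\uw)-\ell(\uv_2)$, with base case $\uv_2=\uw$ handled by the idempotence~\ref{it:JW-projector}. In the step one writes, say, $\uv_3=(t)\uv_3'$, applies the inductive hypothesis to the triple $(\uv_1,\uv_2(t),\uv_3')$, and then expands $\mathrm{JW}_{\uv_2(t)}$ via the recurrence~\ref{it:JW-recurrence-2} (or its vertical reflection when one shortens $\uv_1$ instead). The leading term of~\ref{it:JW-recurrence-2} is $\mathrm{JW}_{\uv_2}\star\id_t$, which recovers exactly the left-hand side; every correction term in~\ref{it:JW-recurrence-2} already carries a pitchfork at its very top, so after framing it sits immediately below $\mathrm{JW}_\uw$ and dies by~\ref{it:death-pitchfork} together with~\ref{it:JW-horizontal-reflection}. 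No intermediate absorption of a smaller $\mathrm{JW}$ is needed. Note that~\ref{it:JW-recurrence-1} would not serve here: its correction term has $\mathrm{JW}_{\uv_2}$ at the top, which would lead back to the quantity one is trying to compute.

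Your route via~\ref{it:JW-recurrence-1} also works, but two comments are in order. First, the ``delicate computation'' you flag is not delicate at all: once the top $\mathrm{JW}_{\uv_2'}$ of the sandwich has been absorbed into $\mathrm{JW}_\uw$ by induction, the layer of $D'$ now touching $\mathrm{JW}_\uw$ \emph{is} already a pitchfork --- the $s_{r-1}$-strand terminates in a dot while the neighbouring $s_{r-2}$- and $s_r$-strands (same colour) are joined by a trivalent. No Frobenius rewriting is required, and Lemmas~\ref{lemJW-handle} and~\ref{lem:JW-dots-JW} are not needed. Also, this pitchfork lies \emph{below} $\mathrm{JW}_\uw$, not above; one then invokes~\ref{it:death-pitchfork} via the horizontal-reflection invariance~\ref{it:JW-horizontal-reflection}. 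Second, your base case must include $\ell(\uv_2)\le 2$, since $\mathrm{JW}_{(u,\check u)}=\id$ as well, and~\ref{it:JW-recurrence-1} only applies once $\ell(\uv_2)\ge 3$.
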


\begin{proof}
We prove this property by induction on $\ell(\uw)-\ell(\uv_2)$. If this difference is $0$, then the desired formula follows from~\ref{it:JW-projector}. Then the induction step is proved using~\ref{it:JW-recurrence-2} 
%(or its vertical reflection) 
and the ``death by pitchfork'' property~\ref{it:death-pitchfork}.
\end{proof}

Let now $\uw, \uv \in \hW$ with $\pi(\uv) < \pi(\uw)$, and write $\uw = (s_1, \sdots, s_r)$. Consider the ``light leaves morphisms'' $B_{\uw} \to B_{\uv} (n)$ constructed in~\cite[\S 6]{ew}, for all $n \in\Z$. (Note that in our present setting no ``rex move'' is required to construct these morphisms, so that the construction does not involve any choice.) These morphisms are parametrized by the sequences $\underline{e}=(e_1, \sdots, e_r)$ of elements of $\{0,1\}$ such that $\pi(\uv) = s_1^{e_1} \cdots s_r^{e_r}$; we write $f_{\underline{e}}$ for the morphism associated with $\underline{e}$. (Note that our assumptions imply that $\underline{e} \neq (1, \sdots, 1)$.)

\begin{lem}
\label{lem:LL-JW}
If $\underline{e}$ is not of the form $(0, \sdots, 0, 1, \sdots, 1)$ (where the sequence of $1$'s is allowed to be empty) or $(0, \sdots, 0, 1, \sdots, 1, 0)$ (where the sequence of $0$'s is allowed to be empty), then $f_{\underline{e}} \circ \mathrm{JW}_{\uw}=0$.
\end{lem}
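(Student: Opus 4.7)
The plan is to argue by induction on $r = \ell(\uw)$. The cases $r \leq 2$ are immediate: a direct enumeration shows that every $\underline{e}$ with $\pi(\uv) < \pi(\uw)$ is already of one of the two allowed forms, so there is nothing to prove.

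For the inductive step, set $\uw' := (s_1, \sdots, s_{r-1})$ and $\underline{e}' := (e_1, \sdots, e_{r-1})$. The key technical ingredient is the identity
\[
(\mathrm{JW}_{\uw'} \otimes \id_{B_{s_r}}) \circ \mathrm{JW}_{\uw} = \mathrm{JW}_{\uw} = \mathrm{JW}_{\uw} \circ (\mathrm{JW}_{\uw'} \otimes \id_{B_{s_r}}),
\]
which follows from Lemma~\ref{lem:composition-JW} combined with the reflection symmetries~\ref{it:JW-horizontal-reflection} and~\ref{it:JW-vertical-reflection}. By the construction of light leaves, $f_{\underline{e}}$ always factors as $h \circ (f_{\underline{e}'} \otimes \id_{B_{s_r}})$, where $h$ is an identity (U1), a dot (U0), a trivalent (D0), or a trivalent followed by a dot (D1), possibly preceded by a rex move in the D-cases. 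Inserting the above factorization then yields
\[
f_{\underline{e}} \circ \mathrm{JW}_{\uw} = h \circ \bigl( (f_{\underline{e}'} \circ \mathrm{JW}_{\uw'}) \otimes \id_{B_{s_r}} \bigr) \circ \mathrm{JW}_{\uw},
\]
so when $\underline{e}'$ is non-special relative to $\uw'$, the inductive hypothesis closes the case.

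A short combinatorial analysis shows that the only way for $\underline{e}'$ to be special while $\underline{e}$ is non-special is that $\underline{e}' = (0^a, 1^b, 0)$ with $b \geq 1$ and $\underline{e} = (\underline{e}', e_r)$ for some $e_r \in \{0,1\}$. In this edge situation $x_{r-1} = s_{a+1} \cdots s_{r-2}$ ends in $s_{r-2}$, and since $\uw$ alternates we have $s_r = s_{r-2}$; hence $s_r$ is a right descent of $x_{r-1}$ and the last step is forced to be a D-step. Moreover $\ell(x_{r-1}) = b \leq r - 2 < m_{st}$, so $x_{r-1}$ is not the longest element of the dihedral reflection subgroup and no rex move is required. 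Consequently the diagram above $\mathrm{JW}_{\uw}$ contains a trivalent joining strands $r-2$ and $r$ together with a dot on the intervening strand $r-1$ (plus, if $e_r = 1$, an additional dot killing the merged strand).

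The main obstacle is to confirm rigorously that this trivalent--dot configuration vanishes on top of $\mathrm{JW}_{\uw}$. The plan is to use the Frobenius unit relation to slide the dot on strand $r-1$ past the trivalent linking $r-2$ and $r$ (or, equivalently, to insert a copy of $\mathrm{JW}_{\uw'}$ below the configuration via the factorization recalled above and then expand it using the recursion~\ref{it:JW-recurrence-2}); either way one produces a local pitchfork atop the resulting (smaller) Jones--Wenzl projector, which dies by property~\ref{it:death-pitchfork}. Lemmas~\ref{lemJW-handle}, \ref{lem:JW-2dots} and~\ref{lem:JW-dots-JW} may be needed to simplify some of the intermediate diagrams arising during this expansion.
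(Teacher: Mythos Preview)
Your argument is correct, but it takes a substantially longer path than the paper's, and the step you flag as ``the main obstacle'' is in fact no obstacle at all.

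The paper's proof is a one-line observation with no induction. For \emph{any} non-special $\underline{e}$, locate the position $i$ where the initial block of $0$'s followed by $1$'s first breaks (so $e_i = 1$, $e_{i+1} = 0$, and $i \le r-2$). The light leaf $f_{\underline{e}}$ then visibly contains, directly above the bottom boundary, a trivalent (or cup) joining the same-coloured strands $i$ and $i+2$, with strand $i+1$ carrying a dot. This configuration \emph{is} a pitchfork in the sense of \ref{it:death-pitchfork}, so $f_{\underline{e}} \circ \mathrm{JW}_{\uw} = 0$ immediately.

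Your induction rediscovers exactly this configuration, but only after peeling off letters until the pitchfork sits at positions $r-2,\,r-1,\,r$. The factorisation $(\mathrm{JW}_{\uw'} \star \id) \circ \mathrm{JW}_{\uw} = \mathrm{JW}_{\uw}$ and the case analysis of the edge case are correct, but they buy you nothing: once you have the trivalent joining strands $r-2$ and $r$ with a dot on $r-1$, that is already the pitchfork, and \ref{it:death-pitchfork} applies directly. There is no need to slide dots via the Frobenius relation, expand $\mathrm{JW}_{\uw'}$ through \ref{it:JW-recurrence-2}, or invoke Lemmas~\ref{lemJW-handle}, \ref{lem:JW-2dots}, \ref{lem:JW-dots-JW}. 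The dots on the other strands (positions $1,\ldots,a$ and $r-1$) live on separate strands and do not obstruct placing the trivalent directly atop $\mathrm{JW}_{\uw}$.

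In short: the induction is valid but unnecessary, and your ``plan'' for the final step should be replaced by a direct appeal to \ref{it:death-pitchfork}.
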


\begin{proof}
Assume that $\underline{e}$ is as in the statement. Then $\underline{e}$ starts with a (possibly empty) series of $0$'s, followed by a series of $1$'s. Let $i$ be the index of the last element in this series. Then $i \leq r-2$, $e_{i+1}=0$ and $e_{i+2}$ is either $0$ or $1$. Hence $f_{\underline{e}}$ is either of the form
\[
\begin{array}{c}
\begin{tikzpicture}[scale=0.3,thick]
\draw (-2, -1) -- (-2,0);
\node at (-2,0) {\small $\bullet$};
\node at (-.9, -0.5) {$\cdots$};
\draw (0, -1) -- (0,0);
\node at (0,0) {\small $\bullet$};
\draw (1,-1) -- (1,1);
\node at (2.1,-0.5) {$\cdots$};
\draw (3,-1) -- (3,1);
\draw (4,-1) -- (4,0.5) -- (6,0.5) -- (6,-1);
\draw (5,-1)-- (5,0);
\node at (5,0) {\small $\bullet$};
\node at (4,-1.6) {\small $s_i$};
\draw (7,-1) rectangle (10,1);
\node at (8.5,0) {\small $?$};
\end{tikzpicture}
\end{array}
\]
or of the form
\[
\begin{array}{c}
\begin{tikzpicture}[scale=0.3,thick]
\draw (-2, -1) -- (-2,0);
\node at (-2,0) {\small $\bullet$};
\node at (-.9, -0.5) {$\cdots$};
\draw (0, -1) -- (0,0);
\node at (0,0) {\small $\bullet$};
\draw (1,-1) -- (1,1);
\node at (2.1,-0.5) {$\cdots$};
\draw (3,-1) -- (3,1);
\draw (4,-1) -- (4,0.5) -- (6,0.5) -- (6,-1);
\draw (5,-1)-- (5,0);
\draw (5,0.5) -- (5,1);
\node at (5,0) {\small $\bullet$};
\node at (4,-1.6) {\small $s_i$};
\draw (7,-1) rectangle (10,1);
\node at (8.5,0) {\small $?$};
\end{tikzpicture}
\end{array}.
\]
In any case $f_{\underline{e}}$ contains a pitchfork, hence satisfies $f_{\underline{e}} \circ \mathrm{JW}_{\uw}=0$ by~\ref{it:death-pitchfork}.
\end{proof}

\begin{cor}
\label{cor:dih-indecomp-hom}
If $w,w' \in W$ and $n \in \Z$, then
\[
\Hom_{\Diag(\fh,W)}(B_w, B_{w'}(n)) \neq \{0\} \quad \Rightarrow \quad n\geq0.
\]
Moreover,
\begin{align*}
\Hom_{\Diag(\fh,W)}(B_w, B_{w'}) \neq \{0\} \quad &\Rightarrow \quad w=w'; \\
  \Hom_{\Diag(\fh,W)}(B_w, B_{w'}(1)) \neq \{0\} \quad &\Rightarrow \quad |\ell(w) - \ell(w')| = 1.
\end{align*}
\end{cor}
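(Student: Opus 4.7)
The plan is to choose reduced expressions $\uw = (s_1, \sdots, s_r)$ and $\uw'$ for $w$ and $w'$, and to exploit property~\ref{it:JW-projector}, which identifies $B_w$ (resp.~$B_{w'}$) as the direct summand of $B_\uw$ (resp.~$B_{\uw'}$) cut out by the idempotent $\mathrm{JW}_\uw$ (resp.~$\mathrm{JW}_{\uw'}$). This yields
\[
\Hom_{\Diag(\fh,W)}(B_w, B_{w'}(n)) = \mathrm{JW}_{\uw'} \circ \Hom(B_\uw, B_{\uw'}(n)) \circ \mathrm{JW}_\uw,
\]
which I will analyze using the double leaves basis of~\cite[Theorem~6.11]{ew}. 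The latter expresses $\Hom(B_\uw, B_{\uw'})$ as a span of products $g^\vee_{\underline{e}'} \circ f_{\underline{e}}$ of light leaves, indexed by pairs of subexpressions $(\underline{e}, \underline{e}')$ of $(\uw, \uw')$ with common endpoint $v := \pi(\uw_{\underline{e}}) = \pi(\uw'_{\underline{e}'})$.

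Next I will apply Lemma~\ref{lem:LL-JW} to deduce that $f_{\underline{e}} \circ \mathrm{JW}_\uw = 0$ unless $\underline{e}$ takes the form $(0, \sdots, 0, 1, \sdots, 1)$ or $(0, \sdots, 0, 1, \sdots, 1, 0)$; the analogous restriction on $\underline{e}'$ follows by reflecting the lemma vertically via property~\ref{it:JW-vertical-reflection}. For any such allowed $\underline{e}$, the recursive construction of the light leaf never encounters a right descent: the ``current element'' at the $i$-th step is a proper initial segment of an alternating reduced expression and so has a unique right descent equal to its last letter, which differs from the next letter $s_i$ by the alternation of $\uw$. Consequently $f_{\underline{e}}$ is built exclusively from U0 moves (dots, of degree $+1$) and U1 moves (identities, of degree $0$), yielding
\[
\deg f_{\underline{e}} \;=\; \#\{i : e_i = 0\} \;=\; \ell(w) - \ell(v),
\]
and analogously $\deg g^\vee_{\underline{e}'} = \ell(w') - \ell(v)$.

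Hence any nonzero double leaf contributing to $\Hom(B_w, B_{w'}(n))$ will have total degree $n = \ell(w) + \ell(w') - 2\ell(v)$ for some $v$ with $v \leq w$ and $v \leq w'$ in the Bruhat order. Since these inequalities imply $\ell(v) \leq \min(\ell(w), \ell(w'))$, we obtain $n \geq |\ell(w) - \ell(w')| \geq 0$, which gives the first assertion. For the second, $n = 0$ will force $\ell(v) = \ell(w) = \ell(w')$, and combined with $v \leq w$, $v \leq w'$, this gives $v = w = w'$. For the third, the bound leaves $|\ell(w) - \ell(w')| \in \{0, 1\}$, but $\ell(w) = \ell(w')$ would require $\ell(w) - \ell(v) = 1/2$, which is absurd.

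The main obstacle will be the inductive verification that allowed light leaves involve only U0 and U1 moves, since this underlies the degree formula $\deg f_{\underline{e}} = \ell(w) - \ell(v)$. The argument relies essentially on the alternation of reduced expressions in a dihedral group, and has no counterpart for general Coxeter groups.
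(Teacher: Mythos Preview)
Your approach is essentially the same as the paper's: both realize $B_w$ via $\JW_\uw$, invoke the double leaves basis, and use Lemma~\ref{lem:LL-JW} to kill all but the special subexpressions. The paper then simply observes that each surviving non-identity light leaf has strictly positive degree, whereas you compute the degree exactly as $\ell(w)-\ell(v)$ by checking that only U0 and U1 moves occur; this is correct and makes the degree-$1$ case cleaner than the paper's terse ``also follow from these considerations.''

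Two small points. First, the double leaves basis is a basis over $R$, not over $\bk$, so a nonzero element of degree $n$ is an $R$-linear combination of surviving double leaves of degree at most $n$, not necessarily equal to $n$; since $R$ is concentrated in nonnegative even degrees, you actually get $n \ge \ell(w)+\ell(w')-2\ell(v)$ with the same parity, which is what your three conclusions need. Second, passing from $f_{\underline{e}} \circ \JW_\uw = 0$ to $\JW_{\uw'} \circ f^{\mathrm{op}}_{\underline{e}'} = 0$ uses the invariance of $\JW_{\uw'}$ under \emph{horizontal} reflection, i.e.\ property~\ref{it:JW-horizontal-reflection}, not~\ref{it:JW-vertical-reflection}.
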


\begin{proof}
Choose reduced expressions $\uw$ and $\uw'$ for $w$ and $w'$, respectively.  If $w = w'$, assume that $\uw = \uw'$. Consider the ``double leaves basis'' of the $R$-module
\[
\bigoplus_{n \in \Z} \Hom(B_\uw, B_{\uw'} (n)),
\]
see~\cite[Theorem~6.11]{ew}. This basis is parametrized by certain pairs of sequences $(\underline{e}, \underline{e}')$ of elements of $\{0,1\}$, where $\underline{e}$ (resp.~$\underline{e}'$) has length $\ell(\uw)$ (resp.~$\ell(\uw')$).  Explicitly, the elements of the basis are of the form $f^\mathrm{op}_{\underline{e}'} \circ f_{\underline{e}}$, where $f_{\underline{e}}$ is as in Lemma~\ref{lem:LL-JW}, and $f^{\mathrm{op}}_{\underline{e}'}$ is given by the same construction flipped vertically.  Thus, all elements of
\[
\bigoplus_{n \in \Z} \Hom(B_w, B_{w'} (n))
\]
are given by $R$-linear combinations of elements of the form
\begin{equation}\label{eqn:dih-indecomp-hom}
\JW_{\uw'} \circ f^{\mathrm{op}}_{\underline{e}'} \circ f_{\underline{e}} \circ \JW_\uw.
\end{equation}
If $\underline{e} = (1, \sdots, 1)$, then $f_{\underline{e}} = \id_{B_\uw}$, and likewise for $f^{\mathrm{op}}_{\underline{e}'}$.  Thus, the case $\underline{e} = (1, \sdots, 1)$, $\underline{e}' = (1, \sdots, 1)$ can occur only when $\uw = \uw'$; in this case, $f^{\mathrm{op}}_{\underline{e}'} \circ f_{\underline{e}} = \id_{B_{\uw}}$.

Suppose now that $\underline{e}$ is not of the form $(1,\sdots,1)$.  If it is of the form considered in Lemma~\ref{lem:LL-JW}, then $f_{\underline{e}}$ has strictly positive degree, and hence so does $f_{\underline{e}} \circ \JW_\uw$.  Otherwise, that same Lemma tells us that $f_{\underline{e}} \circ \JW_\uw = 0$.  The same reasoning tells us that if $\underline{e}'$ is not of the form $(1,\sdots,1)$, then $\JW_{\uw'} \circ f^{\mathrm{op}}_{\underline{e}'}$ is either zero or of strictly positive degree.

We conclude that~\eqref{eqn:dih-indecomp-hom} is either of strictly positive degree or the identity map of $B_w$, as desired. The special cases of morphisms of degree $0$ or $1$ also follow from these considerations.
\end{proof}

%%%%%%%%%%%%%%%%%%%%%%%%%%%%%%%%%%%%%%%%%%%%%%%%%%%%%%%%%
\section{``Inductive'' description of indecomposable objects}
\label{sec:Bw-convolution}
%%%%%%%%%%%%%%%%%%%%%%%%%%%%%%%%%%%%%%%%%%%%%%%%%%%%%%%%%

The following result is a diagrammatic version of the description of indecomposable Soergel bimodules for dihedral groups, see~\cite[\S 4]{soergel-bim}.

\begin{lem} \label{lem:BwBu}
 Let $w \in W$ and $u \in \{s, t\}$.
 Then
 \[
  B_w \star B_u \cong
  \begin{cases}
   B_{wu}                       & \text{if $w \in \{ 1, \check u \}$;} \\
   B_{wu} \oplus B_{w\check{u}} & \text{if $wu > w$ and $w \notin \{ 1, \check u \}$;} \\
   B_w(-1) \oplus B_w(1)        & \text{if $wu < w$.}
  \end{cases}
 \]
\end{lem}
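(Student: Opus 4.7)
The first case is immediate. When $w = 1$, $B_1 = B_\varnothing$ is the monoidal unit, so $B_1 \star B_u \cong B_u = B_{1 \cdot u}$; when $w = \check u$, the pair $(\check u, u)$ is a reduced expression for $\check u u$, so by property~\ref{it:JW-projector} we have $\JW_{(\check u, u)} = \id$ and $B_{\check u u} = B_{(\check u, u)} = B_{\check u} \star B_u$.

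The third case rests on the identity $B_u \star B_u \cong B_u(-1) \oplus B_u(1)$, which I would prove directly in $\DiagBSp(\fh,W)$: using the Frobenius, needle and barbell relations together with the invertibility of $[2]_u$ (guaranteed by hypothesis~\ref{it:dihassump-quantum-numbers}), one writes down two orthogonal idempotents of $\End(B_{(u,u)})$ summing to the identity and cutting out $B_u(-1)$ and $B_u(1)$. To deduce the case~3 decomposition, pick a reduced expression $\uw = \uw' \cdot u$ for $w$ ending in $u$. Then $B_w \star B_u$ is the image of $\JW_\uw \star \id_{B_u}$ inside $B_{\uw'} \star B_u \star B_u \cong B_\uw(-1) \oplus B_\uw(1)$, and a diagrammatic check using~\ref{it:death-pitchfork} shows that $\JW_\uw \star \id_{B_u}$ commutes with the idempotents realizing this decomposition, so its image is $B_w(-1) \oplus B_w(1)$, as claimed.

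The second case is the substantive one; I proceed by induction on $k := \ell(w)$, treating all three cases of the lemma together. The hypotheses $wu > w$ and $w \notin \{1, \check u\}$ force $k \geq 2$ and $w$ to admit a reduced expression $\uw = (s_1, \dots, s_{k-1}, \check u)$; then $\uw u$ is a reduced expression for $wu$, and property~\ref{it:JW-recurrence-1} applied to it yields
\[
\JW_\uw \star \id_{B_u} \;=\; \JW_{\uw u} \;+\; \lambda\, R,
\]
where $\lambda \in \bk^\times$ (again by~\ref{it:dihassump-quantum-numbers}) and $R$ factors through a Bott--Samelson object built from the shorter expression $(s_1, \dots, s_{k-1})$. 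Using~\ref{it:death-pitchfork} and Lemma~\ref{lem:composition-JW} to verify the cross-term relations, one sees that $\JW_{\uw u}$ and a suitable rescaling of $R$ are orthogonal idempotents of $\End(B_\uw \star B_u)$, realizing a decomposition $B_w \star B_u \cong B_{wu} \oplus \cE$, where $\cE$ is the image of the second idempotent.

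The main obstacle is identifying $\cE$ with $B_{w \check u}$. Since the idempotent cutting out $\cE$ factors through $B_{(s_1, \dots, s_{k-1})}$, Krull--Schmidt together with the characterization of the $B_v$'s shows that all indecomposable summands of $\cE$ are of the form $B_v(n)$ with $\ell(v) \leq k-1$. To pin down $\cE$ exactly, I compute $B_w \star B_u \star B_u$ two different ways: on one hand, as $B_w \star (B_u \star B_u) \cong (B_w \star B_u)(-1) \oplus (B_w \star B_u)(1)$ using the case~3 identity; on the other, as $(B_w \star B_u) \star B_u \cong (B_{wu} \oplus \cE) \star B_u$, where case~3 applies to $B_{wu}$ (since $(wu) \cdot u = w < wu$) and the induction hypothesis applied to the shorter element $w \check u$ computes $\cE \star B_u$. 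Comparing the two descriptions and invoking Corollary~\ref{cor:dih-indecomp-hom} to control multiplicities then forces $\cE \cong B_{w \check u}$, completing the proof.
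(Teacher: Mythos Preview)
Your case~1 is fine and matches the paper. Your case~3 idea is workable, though two details are off: the invertibility you actually need is Demazure surjectivity (the existence of $\delta \in V^*$ with $\langle \alpha_u^\vee, \delta\rangle = 1$), not $[2]_u \in \bk^\times$; and passing $\JW_\uw$ past the $B_{uu}$-idempotents uses~\ref{it:JW'} and Frobenius associativity rather than death-by-pitchfork. The paper handles case~3 by writing the splitting maps $p_{\pm 1}, i_{\pm 1}$ with the $\JW_\uw$'s already built in and verifying the five relations directly.

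The genuine gap is in case~2. Your comparison equation $\cE(-1)\oplus\cE(1) \cong \cE \star B_u$ does not determine $\cE$: it is satisfied by \emph{any} direct sum of $B_v(n)$'s with $vu < v$. Concretely, take $k=4$, $u=t$, $w=tsts$; then $B_\ux \cong B_{tst}\oplus B_t$ by the $k=2,3$ cases, and each of $0$, $B_t$, $B_{tst}$, $B_{tst}\oplus B_t$ satisfies your equation (since both $tst$ and $t$ end in $t$, case~3 applies to each). Corollary~\ref{cor:dih-indecomp-hom} gives no leverage to distinguish these, and your phrase ``the induction hypothesis applied to $w\check u$ computes $\cE \star B_u$'' already presupposes $\cE = B_{w\check u}$.

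What you are missing is that the second term of~\ref{it:JW-recurrence-1} factors not merely through the Bott--Samelson $B_\ux$ but through the \emph{indecomposable} $B_{w\check u}$: the middle of that diagram is $\JW_\ux$, whose image is $B_{w\check u}$. The paper makes this explicit by writing down $i' : B_\ux \to B_{\uw u}$ and $p' : B_{\uw u} \to B_\ux$ (each containing a $\JW_\ux$) and computing $p' \circ i' = \JW_\ux$ directly; this pins down the complementary summand as $\mathrm{im}(\JW_\ux) = B_{w\check u}$ with no ambiguity. Once you know the idempotent factors through $B_{w\check u}$, $\cE$ is a summand of an indecomposable and must be $0$ or $B_{w\check u}$; but ruling out $0$ still requires the honest calculation of $p' \circ i'$ (which is where the coefficient $[n-1]_v/[n]_v$ and the quantum-number identity enter).
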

\begin{proof}
 The claim is obvious if $w = 1$, and follows from $\JW_{(\check u, u)} = \id_{B_{(\check u, u)}}$ if $w = \check u$.
 
 Suppose $wu < w$. Then there exists $\uw \in \hW$ ending in $u$ so that $\pi(\uw) = w$. Let $\delta \in V^*$ be such that $\langle \alpha_u^\vee, \delta \rangle = 1$, and set $\delta' = -u(\delta)$. Consider the following morphisms:
 \[
  p_1 =
  % almost horizontal reflection of i_{-1}
  \begin{array}{c}\begin{tikzpicture}[xscale=0.3, yscale=-0.2,thick]
   \draw (-3,4) -- (-3,5); \node at (-1.4,4.5) {$\cdots$}; \draw (0,4) -- (0,5);
   \draw (-3.5,2) rectangle (0.5,4); \node at (-1.4,3) {\small $\JW_\uw$};
   \draw (-3,-2) -- (-3,2); \node at (-1.4,0) {$\cdots$}; \draw (0,-2) -- (0,2); \node at (0.75,1) {\footnotesize $\delta$}; \draw (0,0) -- (1.5,0) -- (1.5,5);
   \draw (-3.5,-4) rectangle (0.5,-2); \node at (-1.4,-3) {\small $\JW_\uw$};
   \draw (-3,-5) -- (-3,-4); \node at (-1.4,-4.5) {$\cdots$}; \draw (0,-5) -- (0,-4);
  \end{tikzpicture}\end{array}
   : B_{\uw u} \to B_\uw(1),
  \qquad
  p_{-1} =
  % horizontal reflection of i_1
  \begin{array}{c}\begin{tikzpicture}[xscale=0.3, yscale=-0.2,thick]
   \draw (-3,4) -- (-3,5); \node at (-1.4,4.5) {$\cdots$}; \draw (0,4) -- (0,5);
   \draw (-3.5,2) rectangle (0.5,4); \node at (-1.4,3) {\small $\JW_\uw$};
   \draw (-3,-2) -- (-3,2); \node at (-1.4,0) {$\cdots$}; \draw (0,-2) -- (0,2); \draw (0,0) -- (1.5,0) -- (1.5,5);
   \draw (-3.5,-4) rectangle (0.5,-2); \node at (-1.4,-3) {\small $\JW_\uw$};
   \draw (-3,-5) -- (-3,-4); \node at (-1.4,-4.5) {$\cdots$}; \draw (0,-5) -- (0,-4);
  \end{tikzpicture}\end{array}
   : B_{\uw u} \to B_\uw(-1),
 \]
 \[
  i_1 = 
  \begin{array}{c}\begin{tikzpicture}[xscale=0.3, yscale=0.2,thick]
   \draw (-3,4) -- (-3,5); \node at (-1.4,4.5) {$\cdots$}; \draw (0,4) -- (0,5);
   \draw (-3.5,2) rectangle (0.5,4); \node at (-1.4,3) {\small $\JW_\uw$};
   \draw (-3,-2) -- (-3,2); \node at (-1.4,0) {$\cdots$}; \draw (0,-2) -- (0,2); \draw (0,0) -- (1.5,0) -- (1.5,5);
   \draw (-3.5,-4) rectangle (0.5,-2); \node at (-1.4,-3) {\small $\JW_\uw$};
   \draw (-3,-5) -- (-3,-4); \node at (-1.4,-4.5) {$\cdots$}; \draw (0,-5) -- (0,-4);
  \end{tikzpicture}\end{array}
   : B_\uw(1) \to B_{\uw u},
  \qquad
  i_{-1} = 
  \begin{array}{c}\begin{tikzpicture}[xscale=0.3, yscale=0.2,thick]
   \draw (-3,4) -- (-3,5); \node at (-1.4,4.5) {$\cdots$}; \draw (0,4) -- (0,5);
   \draw (-3.5,2) rectangle (0.5,4); \node at (-1.4,3) {\small $\JW_\uw$};
   \draw (-3,-2) -- (-3,2); \node at (-1.4,0) {$\cdots$}; \draw (0,-2) -- (0,2); \node at (0.75,1) {\footnotesize $\delta'$}; \draw (0,0) -- (1.5,0) -- (1.5,5);
   \draw (-3.5,-4) rectangle (0.5,-2); \node at (-1.4,-3) {\small $\JW_\uw$};
   \draw (-3,-5) -- (-3,-4); \node at (-1.4,-4.5) {$\cdots$}; \draw (0,-5) -- (0,-4);
  \end{tikzpicture}\end{array}
   : B_\uw(-1) \to B_{\uw u}.
 \]
 Since $B_w \star B_u$, resp.~$B_w$, is the image of $\JW_\uw \star \id_{B_u}$, resp.~$\JW_\uw$, the claim follows from the following formulas:
 \[
  \begin{gathered}
   p_{-1} \circ i_1 = 0, \quad p_1 \circ i_{-1} = 0, \quad p_{-1} \circ i_{-1} = \JW_\uw, \quad p_1 \circ i_1 = \JW_\uw, \\
   i_1 \circ p_1 + i_{-1} \circ p_{-1} = \JW_\uw \star \id_{B_u}.
  \end{gathered}
 \]
 
The first formula follows from Lemma~\ref{lemJW-handle}. For the second formula, pull $\delta\delta'$ to the right; since $\partial_u(\delta\delta') = 0$, the claim again follows from Lemma~\ref{lemJW-handle}.
The third and fourth formulas are proved in the same way, using $\alpha_u(\delta) = \alpha_u(\delta') = 1$. The last formula follows from the formula
 \[
  \begin{array}{c}\begin{tikzpicture}[xscale=0.3, yscale=0.2,thick]
   \draw (-3,1) -- (-3,4); \node at (-1.4,2.5) {$\cdots$}; \draw (0,1) -- (0,4); \draw (0,2) -- (1.5,2) -- (1.5,4);
   \draw (-3.5,-1) rectangle (0.5,1); \node at (-1.4,0) {\small $\JW_\uw$};
   \draw (-3,-4) -- (-3,-1); \node at (-1.4,-2.5) {$\cdots$}; \draw (0,-4) -- (0,-1); \node at (0.75,-3) {\footnotesize $\delta$}; \draw (0,-2) -- (1.5,-2) -- (1.5,-4);
  \end{tikzpicture}\end{array}
  +
  \begin{array}{c}\begin{tikzpicture}[xscale=0.3, yscale=0.2,thick]
   \draw (-3,1) -- (-3,4); \node at (-1.4,2.5) {$\cdots$}; \draw (0,1) -- (0,4); \node at (0.75,3) {\footnotesize $\delta'$}; \draw (0,2) -- (1.5,2) -- (1.5,4);
   \draw (-3.5,-1) rectangle (0.5,1); \node at (-1.4,0) {\small $\JW_\uw$};
   \draw (-3,-4) -- (-3,-1); \node at (-1.4,-2.5) {$\cdots$}; \draw (0,-4) -- (0,-1); \draw (0,-2) -- (1.5,-2) -- (1.5,-4);
  \end{tikzpicture}\end{array}
  =
  \begin{array}{c}\begin{tikzpicture}[xscale=0.3, yscale=0.2,thick]
   \draw (-3,1) -- (-3,4); \node at (-1.4,2.5) {$\cdots$}; \draw (0,1) -- (0,4);
   \draw (-3.5,-1) rectangle (0.5,1); \node at (-1.4,0) {\small $\JW_\uw$};
   \draw (-3,-4) -- (-3,-1); \node at (-1.4,-2.5) {$\cdots$}; \draw (0,-4) -- (0,-1);
   \draw (1.5,-4) -- (1.5,4);
  \end{tikzpicture}\end{array},
 \]
 which in turn follows from property~\ref{it:JW'} and the following computation:
 \[
   \begin{array}{c}\begin{tikzpicture}[xscale=0.3, yscale=0.2,thick]
   \draw (-3,-4) rectangle (-1,4); \node at (-2,0) {\small ?};
   \draw (-1,0) -- (0,0); \draw (0,-4) -- (0,4); \draw (0,2) -- (1.5,2) -- (1.5,4); \draw (0,-2) -- (1.5,-2) -- (1.5,-4);
   \node at (0.75,-3) {\footnotesize $\delta$};
  \end{tikzpicture}\end{array}
  +
  \begin{array}{c}\begin{tikzpicture}[xscale=0.3, yscale=0.2,thick]
   \draw (-3,-4) rectangle (-1,4); \node at (-2,0) {\small ?};
   \draw (-1,0) -- (0,0); \draw (0,-4) -- (0,4); \draw (0,2) -- (1.5,2) -- (1.5,4); \draw (0,-2) -- (1.5,-2) -- (1.5,-4);
   \node at (0.75,3) {\footnotesize $\delta'$};
  \end{tikzpicture}\end{array}
  =
  \begin{array}{c}\begin{tikzpicture}[xscale=0.3, yscale=0.2,thick]
   \draw (-3,-4) rectangle (-1,4); \node at (-2,0) {\small ?};
   \draw (-1,0) -- (0,0); \draw (0,-4) -- (0,4); \draw (0,2) -- (1.5,2); \draw (1.5,-4) -- (1.5,4);
   \node at (0.75,-3) {\footnotesize $\delta$};
  \end{tikzpicture}\end{array}
  +
  \begin{array}{c}\begin{tikzpicture}[xscale=0.3, yscale=0.2,thick]
   \draw (-3,-4) rectangle (-1,4); \node at (-2,0) {\small ?};
   \draw (-1,0) -- (0,0); \draw (0,-4) -- (0,4); \draw (0,2) -- (1.5,2); \draw (1.5,-4) -- (1.5,4);
   \node at (0.75,3) {\footnotesize $\delta'$};
  \end{tikzpicture}\end{array}
  =
  \begin{array}{c}\begin{tikzpicture}[xscale=0.3, yscale=0.2,thick]
   \draw (-3,-4) rectangle (-1,4); \node at (-2,0) {\small ?};
   \draw (-1,0) -- (0,0); \draw (0,-4) -- (0,4); \draw (1.5,-4) -- (1.5,4);
  \end{tikzpicture}\end{array}.
 \]

 Finally, suppose that $wu > w$ and $w \notin \{1, \check u \}$. Let $n = \ell(w)$. Then $2 \le n \le m_{st}-1$, so $w$ has a unique reduced expression $\uw \in \hW$; it is of the form $\uw = \ux \check u$, where $\ux \in \hW$ satisfies $\pi(\ux) = w{\check u}$. 
% Let $v$ be the first simple reflection appearing in $\uw$. 
 Consider the following morphisms:
 \[
  p =
  \begin{array}{c}\begin{tikzpicture}[xscale=0.3, yscale=0.2,thick]
\draw (-3,-1.5) rectangle (3,1.5);
\draw (-2.5,-5) -- (-2.5,-1.5);
\draw (2.5,-5) -- (2.5,-1.5);
\draw (-2.5,5) -- (-2.5,1.5);
\draw (2.5,5) -- (2.5,1.5);
\node at (0,0) {$\mathrm{JW}_{\uw u}$};
\node at (0,-3) {$\cdots$};
\node at (0,3) {$\cdots$};
  \end{tikzpicture}\end{array}
   : B_{\uw u} \to B_{\uw u},
  \qquad
  p'= -\frac{[n-1]_{\check u}}{[n]_u}
  % horizontal reflection of i'
  \begin{array}{c}\begin{tikzpicture}[xscale=0.3, yscale=-0.2,thick]
   \draw (-3,3) -- (-3,5); \node at (-1.4,4) {$\cdots$}; \draw (0,3) -- (0,5); \draw (1,3) -- (1,5); 
   \draw (-3.5,1) rectangle (1.5,3); \node at (-0.8,2) {\small $\JW_\uw$};
   \draw (-3,-1) -- (-3,1); \node at (-1.4,0) {$\cdots$}; \draw (0,-1) -- (0,1); \draw (1,0.3) -- (1,1); \node at (1,0.3) {\small $\bullet$}; \draw (0,-0.4) -- (2,-0.4) -- (2,5);
   \draw (-3.5,-3) rectangle (0.5,-1); \node at (-1.4,-2) {\small $\JW_\ux$};
   \draw (-3,-5) -- (-3,-3); \node at (-1.4,-4) {$\cdots$}; \draw (0,-5) -- (0,-3);
  \end{tikzpicture}\end{array}
   : B_{\uw u} \to B_\ux,
 \]
 \[
  i =
  % horizontal reflection of p
  \begin{array}{c}\begin{tikzpicture}[xscale=0.3, yscale=-0.2,thick]
\draw (-3,-1.5) rectangle (3,1.5);
\draw (-2.5,-5) -- (-2.5,-1.5);
\draw (2.5,-5) -- (2.5,-1.5);
\draw (-2.5,5) -- (-2.5,1.5);
\draw (2.5,5) -- (2.5,1.5);
\node at (0,0) {$\mathrm{JW}_{\uw u}$};
\node at (0,-3) {$\cdots$};
\node at (0,3) {$\cdots$};
  \end{tikzpicture}\end{array}
   : B_{\uw u} \to B_{\uw u},
  \qquad
  i' =
  \begin{array}{c}\begin{tikzpicture}[xscale=0.3, yscale=0.2,thick]
   \draw (-3,3) -- (-3,5); \node at (-1.4,4) {$\cdots$}; \draw (0,3) -- (0,5); \draw (1,3) -- (1,5); 
   \draw (-3.5,1) rectangle (1.5,3); \node at (-0.8,2) {\small $\JW_\uw$};
   \draw (-3,-1) -- (-3,1); \node at (-1.4,0) {$\cdots$}; \draw (0,-1) -- (0,1); \draw (1,0.3) -- (1,1); \node at (1,0.3) {\small $\bullet$}; \draw (0,-0.4) -- (2,-0.4) -- (2,5);
   \draw (-3.5,-3) rectangle (0.5,-1); \node at (-1.4,-2) {\small $\JW_\ux$};
   \draw (-3,-5) -- (-3,-3); \node at (-1.4,-4) {$\cdots$}; \draw (0,-5) -- (0,-3);
  \end{tikzpicture}\end{array}
   : B_\ux \to B_{\uw u}.
 \]
 As before, it suffices prove the following formulas:
 \[
  \begin{gathered}
   p' \circ i = 0, \quad p \circ i' = 0, \quad p' \circ i' = \JW_\ux, \quad p \circ i = \JW_{\uw u}, \\
   i \circ p + i' \circ p' = \JW_\uw \star \id_{B_u}.
  \end{gathered}
 \]
 The first two formulas follow from Lemma~\ref{lem:composition-JW} and the ``death by pitchfork'' property~\ref{it:death-pitchfork}. The fourth is clear. The fifth 
 follows from property~\ref{it:JW-recurrence-1}.
 Finally, we see that $p' \circ i'$ is equal to
 \begin{multline*}
  -\frac{[n-1]_{\check u}}{[n]_u}
  \begin{array}{c}\begin{tikzpicture}[xscale=0.3, yscale=0.2,thick]
   \draw (-3,5) -- (-3,7); \node at (-1.4,6) {$\cdots$}; \draw (0,5) -- (0,7);
   \draw (-3.5,3) rectangle (0.5,5); \node at (-1.4,4) {\small $\JW_\ux$};
   \draw (-3,1) -- (-3,3); \node at (-1.4,2) {$\cdots$}; \draw (0,1) -- (0,3); \draw (0,2.4) -- (2,2.4);
   \draw (1,1) -- (1,1.7); \node at (1,1.7) {\small $\bullet$};
   \draw (-3.5,-1) rectangle (1.5,1); \node at (-0.8,0) {\small $\JW_\uw$};
   \draw (1,-1.7) -- (1,-1); \node at (1,-1.7) {\small $\bullet$};
   \draw (-3,-3) -- (-3,-1); \node at (-1.4,-2) {$\cdots$}; \draw (0,-3) -- (0,-1); \draw (0,-2.4) -- (2,-2.4) -- (2,2.4);
   \draw (-3.5,-5) rectangle (0.5,-3); \node at (-1.4,-4) {\small $\JW_\ux$};
   \draw (-3,-7) -- (-3,-5); \node at (-1.4,-6) {$\cdots$}; \draw (0,-7) -- (0,-5);
  \end{tikzpicture}\end{array}
 \overset{(\dag)}{=}
  -\frac{[n-1]_{\check u}}{[n]_u}\left(
  \begin{array}{c}\begin{tikzpicture}[xscale=0.3, yscale=0.2,thick]
   \draw (-3,5) -- (-3,7); \node at (-1.4,6) {$\cdots$}; \draw (0,5) -- (0,7);
   \draw (-3.5,3) rectangle (0.5,5); \node at (-1.4,4) {\small $\JW_\ux$};
   \draw (-3,-3) -- (-3,3); \node at (-1.4,0) {$\cdots$}; \draw (0,-3) -- (0,3); \node at (0.75,0) {\small $\alpha_{\check u}$}; \draw (0,-2.4) -- (1.5,-2.4) -- (1.5,2.4) -- (0,2.4);
   \draw (-3.5,-5) rectangle (0.5,-3); \node at (-1.4,-4) {\small $\JW_\ux$};
   \draw (-3,-7) -- (-3,-5); \node at (-1.4,-6) {$\cdots$}; \draw (0,-7) -- (0,-5);
  \end{tikzpicture}\end{array}
  + \frac{[n-2]_u}{[n-1]_{\check u}}
  \begin{array}{c}\begin{tikzpicture}[xscale=0.3, yscale=0.2,thick]
   \draw (-3,7) -- (-3,9); \node at (-1.4,8) {$\cdots$}; \draw (0,7) -- (0,9);
   \draw (-3.5,5) rectangle (0.5,7); \node at (-1.4,6) {\small $\JW_\ux$};
   \draw (-3,3) -- (-3,5); \node at (-1.4,4) {$\cdots$}; \draw (0,3) -- (0,5); \draw (0,4) -- (1.5,4) -- (1.5,0);
   \draw (-3.5,1) rectangle (0.5,3); \node at (-1.4,2) {\small $\JW_\ux$};
   \draw (0,0.4) -- (0,1); \node at (0,0.4) {\small $\bullet$};
   \draw (-3,-1) -- (-3,1); \node at (-1.8,0) {$\cdots$}; \draw (-0.8,-1) -- (-0.8,1);
   \draw (0,-1) -- (0,-0.4); \node at (0,-0.4) {\small $\bullet$};
   \draw (-3.5,-3) rectangle (0.5,-1); \node at (-1.4,-2) {\small $\JW_\ux$};
   \draw (-3,-5) -- (-3,-3); \node at (-1.4,-4) {$\cdots$}; \draw (0,-5) -- (0,-3); \draw (0,-4) -- (1.5,-4) -- (1.5,0);
   \draw (-3.5,-7) rectangle (0.5,-5); \node at (-1.4,-6) {\small $\JW_\ux$};
   \draw (-3,-9) -- (-3,-7); \node at (-1.4,-8) {$\cdots$}; \draw (0,-9) -- (0,-7);
  \end{tikzpicture}\end{array}
  \right) \\
  \overset{(\ast)}{=} \frac{[n-1]_{\check u}[2]_u - [n-2]_u}{[n]_u} \ \JW_\ux.
 \end{multline*}
 Here $(\dag)$ follows from~\ref{it:JW-recurrence-1}, and $(\ast)$ is proved by pulling $\alpha_{\check u}$ to the right in the first diagram, and using Lemma~\ref{lem:JW-dot-trivalent} (and its horizontal reflection) in the second diagram. Finally, using~\eqref{eq:2q-recursion} we see that the coefficient in the last term is $1$, which finishes the proof.
\end{proof}

%%%%%%%%%%%%%%%%%%%%%%%%%%%%%%%%%%%%%%%%%%%%%%%%%%%%%%%%%%%%%%%%%%%%%%%%%%%
\section{Morphisms between indecomposables}
\label{sec:llmorphisms}
%%%%%%%%%%%%%%%%%%%%%%%%%%%%%%%%%%%%%%%%%%%%%%%%%%%%%%%%%%%%%%%%%%%%%%%%%%%

This section owes much to~\cite{sauerwein} (where, however, only symmetric realizations are considered). Recall the notation $\s{n}$, $\t{n}$, 
%$\sov{n}$, $\tov{n}$, 
$\ss{n}$, $\tt{n}$, 
%$\ssov{n}$ and $\ttov{n}$ 
introduced in Section~\ref{sec:JW}. We will denote the images of these expressions in $W$ by $\sov{n}$\index{nsov@{$\sov{n}$}}, $\tov{n}$\index{ntov@{$\tov{n}$}}, $\ssov{n}$ and $\ttov{n}$ respectively. 

% For $n \ge 0$, we consider the expressions\index{ns@{$\s{n}$}}\index{nt@{$\t{n}$}}
% \begin{equation*}
%  \s{n} = (s,t, \sdots) \mbox{ ($n$ terms)}, \quad \t{n} = (t,s, \sdots) \mbox{ ($n$ terms)}. \\
% \end{equation*}
% We will denote the images of these expressions in $W$ by $\sov{n}$\index{nsov@{$\sov{n}$}} and $\tov{n}$\index{ntov@{$\tov{n}$}} respectively. We define $\ss{n}$, $\tt{n}$, $\ssov{n}$ and $\ttov{n}$ in a similar way, with the subscript indicating the last term in the expression.

For any $x, y \in W$ with $|\ell(y) - \ell(x)|= 1$ there exists a canonical morphism
$\LL_x^y \in \Hom(B_x, B_y(1))$ obtained by
composing a single dot map with the projection and inclusion
maps. For example, if $x = \sov{n}$ and $y = \tov{n+1}$ then $\LL_x^y$\index{LL@{$\LL_x^y$}}
(resp.~$\LL_y^x$) is given by the composition
\begin{gather*}
  B_{\sov{n}} \hookrightarrow B_{\s{n}}
  \xrightarrow{ \tikz{
      \draw (0,0) -- (0,-.2); \node (a) at (0,-.2) {$\bullet$}; \node
      (b)
      at (0.35,-.1) {\small $\star \id$}} } B_{\t{n+1}} \twoheadrightarrow B_{\tov{n+1}} \\
( \text{resp.} \quad   B_{\tov{n+1}} \hookrightarrow B_{\t{n+1}}
  \xrightarrow{ \tikz{
      \draw (0,0) -- (0,.2); \node (a) at (0,.2) {$\bullet$}; \node
      (b)
      at (0.35,.1) {\small $\star \id$}} }
B_{\s{n}}
\twoheadrightarrow B_{\sov{n}}).
\end{gather*}
Corollary~\ref{cor:dih-indecomp-hom} and its proof imply that we have
\begin{equation}
\label{eqn:Hom-dih-length-diff-1}
\Hom_{\Diag(\fh,W)}(B_x, B_y(1)) = \bk \cdot \LL_x^y.
\end{equation}

These morphisms satisfy a number of relations that we explain in the following lemmas.

\begin{lem}
If $x, y, y', z$ are such that $\ell(z) = \ell(y) + 1 =
  \ell(y') + 1 = \ell(x) + 2$, then
  \begin{gather}
    \label{eq:upup}
    \LL_y^z \circ \LL_x^y =     \LL_{y'}^z \circ \LL_x^{y'}, \\
    \label{eq:downdown}
    \LL_y^x \circ \LL_z^y =     \LL_{y'}^x \circ \LL_z^{y'}.
  \end{gather}
\end{lem}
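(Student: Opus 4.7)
My plan is to unfold both identities into diagrams in the Bott--Samelson category $\DiagBS(\fh,W)$ and reduce each side to the form $\JW_\uz \circ (\text{two dots}) \circ \JW_\ux$, after which the claim becomes a Jones--Wenzl calculation. First, \eqref{eq:downdown} is obtained from \eqref{eq:upup} by vertical reflection of diagrams: via properties~\ref{it:JW-horizontal-reflection}--\ref{it:JW-vertical-reflection}, $\LL_b^a$ for $\ell(a)<\ell(b)$ is the vertical reflection of $\LL_a^b$, and vertical reflection reverses the order of composition. So it suffices to prove \eqref{eq:upup}.

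In a finite dihedral group, the hypotheses $\ell(z)=\ell(y)+1=\ell(y')+1=\ell(x)+2$, together with $x\le y,y'\le z$ and $y\neq y'$, force $\{y,y'\}=\{\sov{n+1},\tov{n+1}\}$ where $n=\ell(x)$. Up to swapping $s$ and $t$ I may assume $x=\sov{n}$, so that $z\in\{\sov{n+2},\tov{n+2}\}$. I then choose reduced expressions $\ux=\s{n}$, $\uy=\s{n+1}$ (obtained from $\ux$ by \emph{appending} one letter), $\uy'=\t{n+1}=t\cdot\ux$ (obtained from $\ux$ by \emph{prepending} $t$), and $\uz=\s{n+2}$ or $\t{n+2}$ accordingly. (If $\ell(z)=m_{st}$ the element $z=w_0$ admits two reduced expressions, but one checks using Corollary~\ref{cor:dih-indecomp-hom} and the explicit construction that the morphisms $\LL_a^b$ are independent of such a choice.) Writing each $\LL_a^b$ as ``$\JW_\ub\circ(\text{one bottom dot})\circ\JW_\ua$'' and using Lemma~\ref{lem:composition-JW} (combined with the interchange law between $\star$ and $\circ$) to absorb the intermediate projector $\JW_\uy$ or $\JW_{\uy'}$ into $\JW_\uz$, each composition takes the form
\[
 \JW_\uz \circ \bigl(\text{two bottom dots at certain strands of }\uz\bigr) \circ \JW_\ux.
\]

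The proof then concludes by a direct comparison of the strand positions on each side, according to the following case analysis. In the \textbf{same-type case} (e.g.\ $x=\sov{n}$, $z=\sov{n+2}$), the composition through $y$ has both letters \emph{appended} to $\ux$, placing dots on the last two (consecutive) strands of $\uz$, while the composition through $y'$ has both letters \emph{prepended}, placing dots on the first two (consecutive) strands; by Lemma~\ref{lem:JW-2dots}, the two diagrams are equal. In the \textbf{opposite-type case} (e.g.\ $x=\sov{n}$, $z=\tov{n+2}$), one path through $y$ appends a letter and then prepends $t$, while the path through $y'$ prepends $t$ and then appends a letter; in both cases the dots land at \emph{exactly} the same two positions, namely strand $1$ and strand $n+2$ of $\uz$, so the diagrams already coincide in $\DiagBS(\fh,W)$ without even invoking Lemma~\ref{lem:JW-2dots}. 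The main obstacle is merely this bookkeeping: one must correctly identify, in each of the four factorizations $x\rightsquigarrow y\rightsquigarrow z$ and $x\rightsquigarrow y'\rightsquigarrow z$, which strand of the ambient expression $\uz$ carries which dot; once this is done, the identity is immediate from the cited properties of Jones--Wenzl projectors.
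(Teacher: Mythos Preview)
Your proof is correct and follows essentially the same route as the paper: reduce \eqref{eq:downdown} to \eqref{eq:upup} by flipping diagrams, then in each of the two cases express the composite as $\JW_\uz$ with two bottom dots and invoke Lemma~\ref{lem:JW-2dots} (in the ``same-type'' case) or observe the diagrams literally coincide (in the ``opposite-type'' case). One terminological slip: what you call ``vertical reflection'' (reversing the order of composition, i.e.\ the top-bottom flip) is what the paper calls \emph{horizontal} reflection, cf.~\ref{it:JW-horizontal-reflection}; this does not affect the argument.
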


\begin{proof}
We prove~\eqref{eq:upup}; then~\eqref{eq:downdown} follows by horizontal reflection. Assume first that there exists $u \in \{s,t\}$ and $n \geq 0$ such that
\[
x =\uov{n}, \quad y=\uov{n+1}, \quad y'=\vov{n+1}, \quad z=\uov{n+2},
\]
where $v=\check{u}$.
Then, setting
\[
\underline{x} ={}_u \widehat{n}, \quad \underline{y}={}_u \widehat{n+1}, \quad \underline{y}'={}_v \widehat{n+1}, \quad \underline{z}={}_u \widehat{n+2},
\]
as a morphism from $B_{\underline{x}}$ to $B_{\underline{z}}$ we have
\[
\LL_y^z \circ \LL_x^y = 
\begin{array}{c}
\begin{tikzpicture}[scale=0.3,thick]
\draw (-3,-4) rectangle (0,-2);
\node at (-1.5,-3) {$\mathrm{JW}_{\underline{x}}$};
\draw (-3,-1) rectangle (1,1);
\node at (-1,0) {$\mathrm{JW}_{\underline{y}}$};
\draw (-3,2) rectangle (2,4);
\node at (-0.5,3) {$\mathrm{JW}_{\underline{z}}$};
\draw (-2.5,-5) -- (-2.5,-4);
\draw (-2.5, -2) -- (-2.5,-1);
\draw (-2.5,1) -- (-2.5,2);
\draw (-2.5,4) -- (-2.5,5);
\draw (-0.5,-5) -- (-0.5,-4);
\draw (-0.5,-2) -- (-0.5,-1);
\draw (0.5,1) -- (0.5,2);
\draw (1.5,4) -- (1.5,5);
\draw (0.5,-1.5) -- (0.5,-1);
\node at (0.5,-1.5) {\small $\bullet$};
\draw (1.5,1.5) -- (1.5,2);
\node at (1.5,1.5) {\small $\bullet$};
\node at (-1.4,-4.5) {$\cdots$};
\node at (-1.4,-1.5) {$\cdots$};
\node at (-1,1.5) {$\cdots$};
\node at (-0.5,4.5) {$\cdots$};
\end{tikzpicture}
\end{array}.
\]
Using Lemma~\ref{lem:composition-JW}
we deduce that
\[
\LL_y^z \circ \LL_x^y = 
    \begin{array}{c}
\begin{tikzpicture}[xscale=0.3, yscale=0.2,thick]
 \draw (-3,-2) rectangle (4,2);
 \node at (0.5,0) {$\mathrm{JW}_{\underline{z}}$};
 \draw (-2, -4) -- (-2, -2);
 \draw (1,-4) -- (1,-2);
 \draw (2,-3) -- (2, -2);
 \node at (3,-3) {\small $\bullet$};
  \node at (2,-3) {\small $\bullet$};
 \draw (3,-3) -- (3,-2);
  \draw (-2, 4) -- (-2, 2);
 \draw (3,4) -- (3,2);
 \node at (0.5,3) {$\cdots$};
 \node at (-0.5,-3) {$\cdots$};
\end{tikzpicture}
\end{array}.
\]
Similarly, we have
\[
\LL_{y'}^z \circ \LL_x^{y'} = 
    \begin{array}{c}
\begin{tikzpicture}[xscale=-0.3, yscale=0.2,thick]
 \draw (-3,-2) rectangle (4,2);
 \node at (0.5,0) {$\mathrm{JW}_{\underline{z}}$};
 \draw (-2, -4) -- (-2, -2);
 \draw (1,-4) -- (1,-2);
 \draw (2,-3) -- (2, -2);
 \node at (3,-3) {\small $\bullet$};
  \node at (2,-3) {\small $\bullet$};
 \draw (3,-3) -- (3,-2);
  \draw (-2, 4) -- (-2, 2);
 \draw (3,4) -- (3,2);
 \node at (0.5,3) {$\cdots$};
 \node at (-0.5,-3) {$\cdots$};
\end{tikzpicture}
\end{array}.
\]
Hence the desired equality follows from Lemma~\ref{lem:JW-2dots}.

Next, we have to consider the situation when
\[
x =\uov{n}, \quad y=\uov{n+1}, \quad y'=\vov{n+1}, \quad z=\vov{n+2}
\]
for some $u \in \{s,t\}$ and $n \geq 0$, where again $v=\check{u}$. In this situation, setting $\underline{z}={}_v \widehat{n+2}$ we see that
\[
\LL_y^z \circ \LL_x^y = 
    \begin{array}{c}
\begin{tikzpicture}[xscale=0.3, yscale=0.2,thick]
 \draw (-3,-2) rectangle (4,2);
 \node at (0.5,0) {$\mathrm{JW}_{\underline{z}}$};
 \draw (-2, -3) -- (-2, -2);
 \draw (-1,-4) -- (-1,-2);
 \draw (2,-4) -- (2, -2);
 \node at (3,-3) {\small $\bullet$};
  \node at (-2,-3) {\small $\bullet$};
 \draw (3,-3) -- (3,-2);
  \draw (-2, 4) -- (-2, 2);
 \draw (3,4) -- (3,2);
 \node at (0.5,3) {$\cdots$};
 \node at (0.5,-3) {$\cdots$};
\end{tikzpicture}
\end{array}
=
\LL_{y'}^z \circ \LL_x^{y'},
\]
and the claim follows.
\end{proof}

\begin{lem}
If $n \geq 0$, $u,v \in \{s,t\}$ and if
$x = \uov{n}$, $y = \uov{n+1} = u \cdots v$ then
  \begin{gather}
    \label{eq:updown2}
    \LL_y^x \circ \LL_x^y =     \begin{cases}
\id_{B_x} \star \alpha_v + \frac{[n-1]_{\check v}}{[n]_v}\LL_w^x \circ \LL_x^w & \text{if $n \ge 1$
  and $w := \uov{n-1}$}, \\
\alpha_v & \text{if $n = 0$.}\end{cases}
  \end{gather}
\end{lem}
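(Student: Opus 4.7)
The plan is to compute $\LL_y^x \circ \LL_x^y$ as a diagrammatic composition, and then to apply the Jones--Wenzl recursion~\ref{it:JW-recurrence-1} to $\JW_{\underline{y}}$ where $\underline{y} = {}_u\widehat{n+1}$.

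The case $n=0$ is immediate: then $x=1$, $y=u$, $v=u$, the morphism $\LL_x^y$ is the bottom dot $B_\varnothing \to B_u(1)$, the morphism $\LL_y^x$ is the top dot $B_u \to B_\varnothing(1)$, and their composition is $\alpha_u = \alpha_v$ by the barbell relation.

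For $n \ge 1$, set $\underline{x} = {}_u\widehat{n}$, so that $\underline{y} = \underline{x} \cdot v$, and write $\iota_x : B_x \hookrightarrow B_{\underline{x}}$ and $\pi_x : B_{\underline{x}} \twoheadrightarrow B_x$ for the inclusion and projection obtained from $\JW_{\underline{x}}$ (so $\iota_x \circ \pi_x = \JW_{\underline{x}}$ and $\pi_x \circ \iota_x = \id_{B_x}$). The explicit light-leaves constructions of $\LL_x^y$ and $\LL_y^x$ combine to give
\[
 \LL_y^x \circ \LL_x^y \;=\; \pi_x \circ (\id_{\underline{x}} \star \usebox\upperdot) \circ \JW_{\underline{y}} \circ (\id_{\underline{x}} \star \usebox\lowerdot) \circ \iota_x,
\]
where the dots are on a $v$-strand. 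I would then apply~\ref{it:JW-recurrence-1} to expand $\JW_{\underline{y}} = \JW_{\underline{x}} \star \id_{B_v} + \tfrac{[n-1]_u}{[n]_u}\, R$, where $R$ is the ``recursive'' diagram involving two copies of $\JW_{\underline{x}}$, one copy of $\JW_{\underline{w}}$ (with $\underline{w} = {}_u\widehat{n-1}$), two dots on the $\check v$-strand, and two cap--trivalent bridges to the outer $v$-strand.

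The first summand contributes $\pi_x \circ (\JW_{\underline{x}} \star \alpha_v) \circ \iota_x$ by the barbell relation, and a quick interchange computation identifies this with $\id_{B_x} \star \alpha_v$. For the second summand, the key step is the following diagrammatic simplification: when the $v$-strand at the bottom (resp.\ top) of $R$ is pre-composed (resp.\ post-composed) with the outer dot, the resulting configuration ``dot $+$ cap $+$ trivalent'' collapses to the identity of the $x=0$ strand by the Frobenius unit relation. What remains is exactly
\[
 \JW_{\underline{x}} \circ (\id \star \usebox\lowerdot) \circ \JW_{\underline{w}} \circ (\id \star \usebox\upperdot) \circ \JW_{\underline{x}},
\]
where now the dots live on a $\check v$-strand (the last simple reflection of $\underline{x}$). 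Sandwiching with $\pi_x$ and $\iota_x$ and using $\pi_x \circ \JW_{\underline{x}} = \pi_x$, $\JW_{\underline{x}} \circ \iota_x = \iota_x$, and $\iota_w \circ \pi_w = \JW_{\underline{w}}$, one recognizes this as $\LL_w^x \circ \LL_x^w$, yielding the stated formula.

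The main obstacle will be verifying cleanly the Frobenius-unit reduction of the $v$-strand configuration inside $R$, since the recursive diagram from~\ref{it:JW-recurrence-1} contains a trivalent vertex (rather than a literal cap) joining the outer $v$-strand to an interior strand of $\JW_{\underline{x}}$; making sure the strand counts and the residual dot on the $\check v$-strand land in the correct position to reassemble $\LL_w^x \circ \LL_x^w$ is the only nontrivial bookkeeping.
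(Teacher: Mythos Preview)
Your proposal is correct and takes essentially the same approach as the paper: apply the Jones--Wenzl recursion~\ref{it:JW-recurrence-1} to $\JW_{\underline{y}}$ and simplify each of the two resulting terms. The paper's proof is the one-liner ``This relation follows from Lemma~\ref{lem:composition-JW} and property~\ref{it:JW-recurrence-1}''; you have simply spelled out the details, with the role of Lemma~\ref{lem:composition-JW} being played by your use of the idempotent identities $\pi_x \circ \JW_{\underline x} = \pi_x$ and $\JW_{\underline x} \circ \iota_x = \iota_x$, and your Frobenius-unit reduction of the trivalent-plus-dot in the recursive term is exactly the intended simplification.
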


\begin{proof}
 This relation follows from Lemma~\ref{lem:composition-JW} and property~\ref{it:JW-recurrence-1}.
\end{proof}

\begin{lem}
If $n \geq 0$, $u \in \{s,t\}$ and if
$x = \uov{n}$, $y = \uvov{n+1}$
% = \check{u} \dots v$ 
then
  \begin{gather}
    \label{eq:updown1}
    \LL_y^x \circ \LL_x^y =     \begin{cases}
\alpha_{\check u} \star \id_{B_x} + \frac{[n-1]_u}{[n]_{\check u}}\LL_w^x \circ \LL_x^w & \text{if $n \ge 1$
  and $w := \uvov{n-1}$}, \\
\alpha_{\check u} & \text{if $n = 0$.}\end{cases}
  \end{gather}
\end{lem}

\begin{proof}
 This relation is a direct consequence of Lemma~\ref{lem:composition-JW} and the ``left'' version of the relation in~\ref{it:JW-recurrence-1}.
\end{proof}

\begin{lem}
\label{lem:updown34}
 Let $n \geq 1$, let $u \in \{s,t\}$, and let $v=\check{u}$.
 If $x = \uov{n}, x' = \vov{n}, y = \uov{n+1}, y' = \vov{n+1}$ then
  \begin{gather}
    \label{eq:updown3}
    \LL_y^{x'} \circ \LL_x^y =   \begin{cases}\LL_1^{x'} \circ \LL_x^1  &
    \text{if $n = 1$,}\\
\frac{1}{[n]_u} \LL_w^{x'} \circ \LL_x^w +
    \LL_{w'}^{x'} \circ \LL_x^{w'} & 
    \text{if $n \ge 2$,}  \end{cases}
\\
    \label{eq:updown4}
    \LL_{y'}^{x'} \circ \LL_x^{y'} =   \begin{cases}\LL_1^{x'} \circ \LL_x^1  &
    \text{if $n = 1$,}\\
\LL_w^{x'} \circ \LL_x^w +\frac{1}{[n]_v} 
    \LL_{w'}^{x'} \circ \LL_x^{w'} & 
    \text{if $n \ge 2$}
\end{cases}
  \end{gather}
(where in the cases $n \ge 2$ we set $w := \uov{n-1}, w' := \vov{n-1}$).
\end{lem}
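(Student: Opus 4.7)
The plan is to follow the same strategy that established~\eqref{eq:updown1} and~\eqref{eq:updown2}. First, I would unfold both $\LL_x^y$ and $\LL_y^{x'}$ according to their definitions. Each such light leaf decomposes as an inclusion $B_z \hookrightarrow B_{\uu{z}}$ (realized by $\mathrm{JW}_{\uu{z}}$), a single dot-and-identity morphism that changes the length by~$1$, and a projection $B_{\uu{z'}} \twoheadrightarrow B_{z'}$ (again realized by $\mathrm{JW}_{\uu{z'}}$). Stacking these, the composition $\LL_y^{x'} \circ \LL_x^y$ becomes a single diagram containing three Jones--Wenzl projectors $\mathrm{JW}_{\uu{x}}$, $\mathrm{JW}_{\uu{y}}$, $\mathrm{JW}_{\uu{x'}}$ (where $\uu{x} = \s{n}$ or $\t{n}$ and similarly for the others) together with two dots in the intermediate region.

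Next, I would use Lemma~\ref{lem:composition-JW} to absorb the middle projector $\mathrm{JW}_{\uu{y}}$ into one of its neighbours, reducing the picture to a single diagram containing only $\mathrm{JW}_{\uu{x}}$ and $\mathrm{JW}_{\uu{x'}}$ joined by the remaining dots and a trivalent vertex at the position where the extra strand of $\uu{y}$ was eliminated. At this point the diagram is of the shape to which the JW recursion~\ref{it:JW-recurrence-1} (or one of its four reflections) directly applies to $\mathrm{JW}_{\uu{x'}}$. This recursion produces two terms: a ``main'' term matching (after one more application of Lemma~\ref{lem:composition-JW}) the composition $\LL_{w'}^{x'} \circ \LL_x^{w'}$, together with a correction term whose coefficient is the quotient of quantum numbers produced by~\ref{it:JW-recurrence-1}. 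Using the ``death by pitchfork'' property~\ref{it:death-pitchfork}, Lemma~\ref{lem:JW-dot-trivalent}, and one further application of Lemma~\ref{lem:composition-JW}, the correction term simplifies to $\frac{[1]}{[n]_u} \LL_w^{x'} \circ \LL_x^w$ (and analogously $\frac{[1]}{[n]_v}$ in~\eqref{eq:updown4}), completing the identity. Equation~\eqref{eq:updown4} is obtained by exactly the same argument after swapping the roles of $u$ and $v$ (equivalently, by horizontal reflection of the diagrams).

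The main obstacle, as often with diagrammatic Soergel calculations, is bookkeeping: one must carefully choose which of the four reflected versions of~\ref{it:JW-recurrence-1} to apply and verify that the parity conventions and the colours of the various strands line up correctly so that the resulting diagrams really do reassemble into the compositions $\LL_w^{x'} \circ \LL_x^w$ and $\LL_{w'}^{x'} \circ \LL_x^{w'}$. In particular, one needs to check that no additional ``ghost'' terms survive beyond the two predicted ones; this relies on repeated use of property~\ref{it:death-pitchfork} to kill every diagram in which a pitchfork lands on top of some $\mathrm{JW}$ factor.

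The case $n = 1$ must be handled separately, because there is no $w, w'$ to which the recursion refers. Here $\mathrm{JW}_{\uu{x}}$ and $\mathrm{JW}_{\uu{x'}}$ are identities, and the claim reduces to a direct diagrammatic computation showing that both sides of~\eqref{eq:updown3} (and~\eqref{eq:updown4}) equal a single diagram consisting of a dot on the $u$-strand followed by a dot on the $v$-strand, which is by inspection the morphism $\LL_1^{x'} \circ \LL_x^1$.
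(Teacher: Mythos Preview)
Your overall strategy---unfold the $\LL$ maps as projector/dot/projector compositions, expand a Jones--Wenzl projector via a recursion, and kill all but two of the resulting terms using pitchfork death---is indeed the paper's approach. But two specific choices in your proposal go wrong.

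First, Lemma~\ref{lem:composition-JW} absorbs the \emph{shorter} projector into the \emph{longer} one, not the other way round. Since $\uu{y}$ has length $n+1$ while $\uu{x}$ and $\uu{x'}$ have length $n$, you cannot ``absorb $\JW_{\uu{y}}$ into one of its neighbours''; rather you would absorb $\JW_{\uu{x}}$ and $\JW_{\uu{x'}}$ into $\JW_{\uu{y}}$, leaving a single $\JW_{\uu{y}}$ with dots on the bottom-right and top-left strands.

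Second, and more seriously, the recursion you invoke is the wrong one. Property~\ref{it:JW-recurrence-1} applied to any of the projectors here yields a correction coefficient of the form $[r-1]/[r]$, which is not $[1]/[n]_u$. The identity requires the \emph{single-clasp} expansion~\ref{it:JW-recurrence-2} applied to $\JW_{\uu{y}}$: that recursion has many terms, but the ones with $2 \le a \le r-2$ and the final one all contain pitchforks that die against $\JW_{\uu{x}}$ or $\JW_{\uu{x'}}$, leaving exactly the first term (giving $\LL_{w'}^{x'}\circ \LL_x^{w'}$ after Lemma~\ref{lem:composition-JW}) and the second term, whose coefficient $[1]/[n]_{u'}$ (with $u'$ the last letter of $\uu{y}$) equals $[1]/[n]_u$ once one uses $[n]_s=[n]_t$ for $n$ odd. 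After a further application of Lemma~\ref{lem:JW-dot-trivalent}, this second term becomes $\LL_w^{x'}\circ \LL_x^w$. Your handling of the case $n=1$ is fine.
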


\begin{proof}
We prove~\eqref{eq:updown3}; the proof of~\eqref{eq:updown4} is similar. If $n=1$, the relation is obvious. If $n \geq 2$, what we have to consider is the composition
\[
\begin{array}{c}
\begin{tikzpicture}[scale=0.3,thick]
 \draw (-3,-1) rectangle (2,1);
 \node at (-0.5,0) {$\mathrm{JW}_{\underline{y}}$};
 \draw (-3,-5) rectangle (1,-3);
 \node at (-1,-4) {$\mathrm{JW}_{\underline{x}}$};
 \draw (-2,3) rectangle (2,5);
 \node at (0,4) {$\mathrm{JW}_{\underline{x}'}$}; 
 \draw (-2, -6) -- (-2, -5);
 \draw (0, -6) -- (0, -5);
 \draw (-1, 6) -- (-1, 5);
 \draw (1, 6) -- (1, 5);
 \draw (-2,-3) -- (-2,-1);
 \draw (0,-3) -- (0,-1);
 \draw (-1,3) -- (-1,1);
 \draw (-2,1) -- (-2,2);
 \node at (-2,2) {\small $\bullet$};
 \draw (1,3) -- (1,1);
 \draw (1,-1) -- (1,-2);
 \node at (1,-2) {\small $\bullet$};
  \node at (-0.9,-2) {$\cdots$};
  \node at (0.2,2) {$\cdots$};
  \node at (-0.9,-5.5) {$\cdots$};
  \node at (0.1,5.5) {$\cdots$};
\end{tikzpicture}
\end{array},
\]
where $\underline{x}={}_u \widehat{n}$, $\underline{y} = {}_u \widehat{n+1}$ and $\underline{x}'={}_v \widehat{n}$.
Consider the relation stated in~\ref{it:JW-recurrence-2} (for $\uw=\underline{y}$). In this relation, only the first two terms contribute to the composition under consideration (since the other ones contain pitchforks). Therefore we have
\[
\begin{array}{c}
\begin{tikzpicture}[scale=0.3,thick]
 \draw (-3,-1) rectangle (2,1);
 \node at (-0.5,0) {$\mathrm{JW}_{\underline{y}}$};
 \draw (-3,-5) rectangle (1,-3);
 \node at (-1,-4) {$\mathrm{JW}_{\underline{x}}$};
 \draw (-2,3) rectangle (2,5);
 \node at (0,4) {$\mathrm{JW}_{\underline{x}'}$}; 
 \draw (-2, -6) -- (-2, -5);
 \draw (0, -6) -- (0, -5);
 \draw (-1, 6) -- (-1, 5);
 \draw (1, 6) -- (1, 5);
 \draw (-2,-3) -- (-2,-1);
 \draw (0,-3) -- (0,-1);
 \draw (-1,3) -- (-1,1);
 \draw (-2,1) -- (-2,2);
 \node at (-2,2) {\small $\bullet$};
 \draw (1,3) -- (1,1);
 \draw (1,-1) -- (1,-2);
 \node at (1,-2) {\small $\bullet$};
  \node at (-0.9,-2) {$\cdots$};
  \node at (0.2,2) {$\cdots$};
  \node at (-0.9,-5.5) {$\cdots$};
  \node at (0.1,5.5) {$\cdots$};
\end{tikzpicture}
\end{array}
=
\begin{array}{c}
\begin{tikzpicture}[scale=0.3,thick]
 \draw (-3,-1) rectangle (1,1);
 \node at (-1,0) {$\mathrm{JW}_{\underline{x}}$};
 \draw (-3,-5) rectangle (1,-3);
 \node at (-1,-4) {$\mathrm{JW}_{\underline{x}}$};
 \draw (-2,3) rectangle (2,5);
 \node at (0,4) {$\mathrm{JW}_{\underline{x}'}$}; 
 \draw (-2, -6) -- (-2, -5);
 \draw (0, -6) -- (0, -5);
 \draw (-1, 6) -- (-1, 5);
 \draw (1, 6) -- (1, 5);
 \draw (-2,-3) -- (-2,-1);
 \draw (0,-3) -- (0,-1);
 \draw (-1.3,3) -- (-1.3,1);
 \draw (-2,1) -- (-2,2);
 \node at (-2,2) {\small $\bullet$};
 \draw (1,3) -- (1,2);
 \draw (0.3,1) -- (0.3,3);
 \node at (1,2) {\small $\bullet$};
  \node at (-0.9,-2) {$\cdots$};
  \node at (-0.4,2) {\tiny $\cdots$};
  \node at (-0.9,-5.5) {$\cdots$};
  \node at (0.1,5.5) {$\cdots$};
\end{tikzpicture}
\end{array}
+
\frac{1}{[n]_{u'}}
\begin{array}{c}
\begin{tikzpicture}[scale=0.3,thick]
 \draw (-3,-1) rectangle (1,1);
 \node at (-1,0) {$\mathrm{JW}_{\underline{x}}$};
 \draw (-3,-5) rectangle (1,-3);
 \node at (-1,-4) {$\mathrm{JW}_{\underline{x}}$};
 \draw (-3,3) rectangle (1,5);
 \node at (-1,4) {$\mathrm{JW}_{\underline{x}'}$}; 
 \draw (-2, -6) -- (-2, -5);
 \draw (0, -6) -- (0, -5);
 \draw (-2, 6) -- (-2, 5);
 \draw (0, 6) -- (0, 5);
   \node at (-0.9,-5.5) {$\cdots$};
  \node at (-0.9,5.5) {$\cdots$};
  \draw (-2,-3) -- (-2,-2) -- (-4,-2) -- (-4,2) -- (-2,2);
  \draw (-2,1) -- (-2,3);
  \draw (-2.5,3) -- (-2.5,2.5);
  \node at (-2.5,2.5) {\small $\bullet$};
  \draw (-2,-1) -- (-2,-1.5);
  \node at (-2,-1.5) {\small $\bullet$};
  \draw (-1.5,-3) -- (-1.5,-1);
  \draw (0,-3) -- (0,-1);
  \node at (-0.6,-2) {\small $\cdots$};
  \draw (0.5,1) -- (0.5,1.5);
  \node at (0.5,1.5) {\small $\bullet$};
  \draw (0,1) -- (0,3);
  \node at (-1,2) {$\cdots$};
 \end{tikzpicture}
\end{array},
\]
where $u'$ is the last entry in $\underline{y}$. Using Lemma~\ref{lem:JW-dot-trivalent} and the fact that $[n]_s = [n]_t$ if $n$ is odd we deduce that
\[
\begin{array}{c}
\begin{tikzpicture}[scale=0.3,thick]
 \draw (-3,-1) rectangle (2,1);
 \node at (-0.5,0) {$\mathrm{JW}_{\underline{y}}$};
 \draw (-3,-5) rectangle (1,-3);
 \node at (-1,-4) {$\mathrm{JW}_{\underline{x}}$};
 \draw (-2,3) rectangle (2,5);
 \node at (0,4) {$\mathrm{JW}_{\underline{x}'}$}; 
 \draw (-2, -6) -- (-2, -5);
 \draw (0, -6) -- (0, -5);
 \draw (-1, 6) -- (-1, 5);
 \draw (1, 6) -- (1, 5);
 \draw (-2,-3) -- (-2,-1);
 \draw (0,-3) -- (0,-1);
 \draw (-1,3) -- (-1,1);
 \draw (-2,1) -- (-2,2);
 \node at (-2,2) {\small $\bullet$};
 \draw (1,3) -- (1,1);
 \draw (1,-1) -- (1,-2);
 \node at (1,-2) {\small $\bullet$};
  \node at (-0.9,-2) {$\cdots$};
  \node at (0.2,2) {$\cdots$};
  \node at (-0.9,-5.5) {$\cdots$};
  \node at (0.1,5.5) {$\cdots$};
\end{tikzpicture}
\end{array}
=
\begin{array}{c}
\begin{tikzpicture}[scale=0.3,thick]
 \draw (-2.5,-1) rectangle (1,1);
 \node at (-0.5,0) {$\mathrm{JW}_{\underline{x}}$};
 \draw (-2,3) rectangle (1.5,5);
 \node at (0,4) {$\mathrm{JW}_{\underline{x}'}$}; 
 \draw (-1, 6) -- (-1, 5);
 \draw (1, 6) -- (1, 5);
 \draw (-2,-2) -- (-2,-1);
 \draw (0,-2) -- (0,-1);
 \draw (-1.3,3) -- (-1.3,1);
 \draw (-2,1) -- (-2,2);
 \node at (-2,2) {\small $\bullet$};
 \draw (1,3) -- (1,2);
 \draw (0.3,1) -- (0.3,3);
 \node at (1,2) {\small $\bullet$};
  \node at (-0.9,-1.5) {$\cdots$};
  \node at (-0.4,2) {\tiny $\cdots$};
  \node at (0.1,5.5) {$\cdots$};
\end{tikzpicture}
\end{array}
+
\frac{1}{[n]_{u}}
\begin{array}{c}
\begin{tikzpicture}[scale=0.3,thick]
 \draw (-2.5,-1) rectangle (1,1);
 \node at (-1,0) {$\mathrm{JW}_{\underline{x}}$};
 \draw (-3,3) rectangle (0.5,5);
 \node at (-1,4) {$\mathrm{JW}_{\underline{x}'}$}; 
 \draw (-2, 6) -- (-2, 5);
 \draw (0, 6) -- (0, 5);
  \node at (-0.9,5.5) {$\cdots$};
  \draw (-2,-2) -- (-2,-1);
  \draw (-2,1) -- (-2,3);
  \draw (-2.5,3) -- (-2.5,2.5);
  \node at (-2.5,2.5) {\small $\bullet$};
  \draw (0,-2) -- (0,-1);
  \node at (-0.9,-1.5) {$\cdots$};
  \draw (0.5,1) -- (0.5,1.5);
  \node at (0.5,1.5) {\small $\bullet$};
  \draw (0,1) -- (0,3);
  \node at (-1,2) {$\cdots$};
 \end{tikzpicture}
\end{array}.
\]
By Lemma~\ref{lem:composition-JW},
the first term identifies with $\mathrm{LL}_{w'}^{x'} \circ \mathrm{LL}_x^{w'}$. And for similar reasons the second term identifies with $\mathrm{LL}_w^{x'} \circ \mathrm{LL}_x^w$, which finishes the proof.
\end{proof}

\begin{rmk} As seen in the proof of Lemma~\ref{lem:updown34}, the $n \ge 2$ cases of \eqref{eq:updown3} and \eqref{eq:updown4}
  can be rewritten as
   \begin{gather}
    \label{eq:updown3p}
    \LL_y^{x'} \circ \LL_x^y =   
\frac{1}{[n]_{u'}} \LL_w^{x'} \circ \LL_x^w +
    \LL_{w'}^{x'} \circ \LL_x^{w'},
\\
    \label{eq:updown4p}
    \LL_{y'}^{x'} \circ \LL_x^{y'} =   
\LL_w^{x'} \circ \LL_x^w +\frac{1}{[n]_{\check u'}} 
    \LL_{w'}^{x'} \circ \LL_x^{w'}
 \end{gather}
where $u' \in \{ s, t\}$ is such that $y = u\cdots u'$.
\end{rmk}

\begin{lem}
If $x = u \cdots v$ with $u,v \in \{ s, t \}$ and $f \in R$ then
  \begin{gather}
    \label{eq:polys}
    \id_{B_x} \star f - x(f) \star \id_{B_x} =
(\LL_w^x \circ \LL_x^w) \star \partial_v(f) - \partial_u(xf) \star (
\LL_{w'}^x \circ \LL_x^{w'})
  \end{gather}
where $w = xv$ and $w' = ux$.
\end{lem}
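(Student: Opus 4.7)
The plan is to realize both sides as morphisms in $\End^\bullet(B_\uw)$ for the unique reduced expression $\uw = (s_1,\dots,s_n)$ of $x$ (where $s_1=u$ and $s_n=v$), making use of the identification $\id_{B_x} = \JW_\uw$, and then pulling $f$ across all $n$ strands via repeated application of the nil-Hecke relation.

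More precisely, I would first express $\id_{B_x} \star f$ and $x(f) \star \id_{B_x}$ as $\JW_\uw$ composed on both sides with a box containing $f$ placed to the right of the rightmost strand, and to the left of the leftmost strand, respectively. Setting $f_i := s_{i+1} s_{i+2} \cdots s_n(f)$ (so that $f_n = f$ and $f_0 = x(f)$), the nil-Hecke relation applied at strand $i$ rewrites a box containing $f_i$ placed to the right of strand $s_i$ as a box containing $f_{i-1} = s_i(f_i)$ placed to its left, plus the term $\partial_{s_i}(f_i) \cdot \pi_i$, where $\pi_i$ denotes the pitchfork diagram obtained by replacing strand $i$ with a pair of dots (one at the top and one at the bottom) while keeping the other strands untouched. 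Iterating from $i=n$ down to $i=1$ gives, inside $\End^\bullet(B_\uw)$,
\[
\id_{B_\uw} \star f - x(f) \star \id_{B_\uw} \;=\; \sum_{i=1}^n \partial_{s_i}(f_i)\,\pi_i.
\]

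The key step is then to sandwich this identity between copies of $\JW_\uw$. By Lemma~\ref{lem:JW-dots-JW}, every interior term (i.e., with $2 \le i \le n-1$) satisfies $\JW_\uw \circ \pi_i \circ \JW_\uw = 0$, so only the boundary terms survive. For the last strand ($i=n$), the morphism $\JW_\uw \circ \pi_n \circ \JW_\uw$ factors as a composition through $B_w$ (with $w = xv$): the downward dot on strand $n$ composed with $\JW_\uw$ is, up to the intermediate $\JW$-insertion permitted by Lemma~\ref{lem:composition-JW}, the definition of $\LL_x^w$, and the upward dot composed with $\JW_\uw$ gives $\LL_w^x$, so that $\JW_\uw \circ \pi_n \circ \JW_\uw = \LL_w^x \circ \LL_x^w$. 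An entirely analogous argument at strand $1$ yields $\JW_\uw \circ \pi_1 \circ \JW_\uw = \LL_{w'}^x \circ \LL_x^{w'}$ with $w' = ux$. Finally, the coefficients are computed directly: at $i=n$ one has $\partial_{s_n}(f_n) = \partial_v(f)$, while at $i=1$, using $s_2 \cdots s_n = ux$ and the identity $\partial_u \circ u = -\partial_u$ on $R$, one gets
\[
\partial_{s_1}(f_1) \;=\; \partial_u\bigl(ux(f)\bigr) \;=\; -\partial_u(xf),
\]
which gives exactly the claimed formula.

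The main technical point, and the step deserving the most care, is the identification $\JW_\uw \circ \pi_1 \circ \JW_\uw = \LL_{w'}^x \circ \LL_x^{w'}$ and its counterpart at the last strand: one must verify that the doubled $\JW$-insertion implicit in reading $\LL_w^x \circ \LL_x^w$ as a sandwich (Jones--Wenzl at $B_x$, dot down, Jones--Wenzl at $B_w$, dot up, Jones--Wenzl at $B_x$) simplifies to the single sandwich with one interior $\JW_\uw$ and a dot-pair on the boundary strand. This is precisely where Lemma~\ref{lem:composition-JW} (which absorbs shorter $\JW$'s into longer ones) intervenes; once this identification is secured, the rest of the argument is purely formal.
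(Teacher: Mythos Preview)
Your approach is correct and essentially identical to the paper's: both argue by realizing the identity inside $\End^\bullet(B_{\underline{x}})$, pulling $f$ from right to left across all strands via iterated nil-Hecke, invoking Lemma~\ref{lem:JW-dots-JW} to kill the interior correction terms, and using Lemma~\ref{lem:composition-JW} to identify the two surviving boundary terms with $\LL_w^x\circ\LL_x^w$ and $\LL_{w'}^x\circ\LL_x^{w'}$. Your explicit telescoping and the computation $\partial_u(ux(f))=-\partial_u(xf)$ at the leftmost strand spell out details the paper leaves implicit, but the argument is the same.
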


\begin{proof}
Let $\ux=(u, \sdots, v)$, and
consider the morphism
\[
\begin{array}{c}
\begin{tikzpicture}[scale=0.3,thick]
 \draw (-3,-3) rectangle (3,-1);
 \node at (0,-2) {$\mathrm{JW}_{\ux}$};
  \draw (-3,3) rectangle (3,1);
 \node at (0,2) {$\mathrm{JW}_{\ux}$};
 \draw (-2, -4) -- (-2, -3);
  \draw (-2, -1) -- (-2, 1);
 \draw (2,-4) -- (2, -3);
  \draw (2,-1) -- (2, 1);
  \draw (-2, 3) -- (-2, 4);
 \draw (2,3) -- (2, 4);
 \node at (0,3.5) {$\cdots$};
  \node at (0,0) {$\cdots$};
 \node at (0,-3.5) {$\cdots$};
\end{tikzpicture}
\end{array} f.
\]
Pulling $f$ through the middle strands and using Lemma~\ref{lem:JW-dots-JW} we obtain that this morphism equals
\[
\left(
x(f)
\begin{array}{c}
\begin{tikzpicture}[scale=0.3,thick]
 \draw (-3,-3) rectangle (3,-1);
 \node at (0,-2) {$\mathrm{JW}_{\ux}$};
  \draw (-3,3) rectangle (3,1);
 \node at (0,2) {$\mathrm{JW}_{\ux}$};
 \draw (-2, -4) -- (-2, -3);
  \draw (-2, -1) -- (-2, 1);
 \draw (2,-4) -- (2, -3);
  \draw (2,-1) -- (2, 1);
  \draw (-2, 3) -- (-2, 4);
 \draw (2,3) -- (2, 4);
 \node at (0,3.5) {$\cdots$};
  \node at (0,0) {$\cdots$};
 \node at (0,-3.5) {$\cdots$};
\end{tikzpicture}
\end{array}
\right)
+
\left(
\begin{array}{c}
\begin{tikzpicture}[scale=0.3,thick]
 \draw (-3,-3) rectangle (3,-1);
 \node at (0,-2) {$\mathrm{JW}_{\ux}$};
  \draw (-3,3) rectangle (3,1);
 \node at (0,2) {$\mathrm{JW}_{\ux}$};
 \draw (-2, -4) -- (-2, -3);
  \draw (-2, -1) -- (-2, 1);
 \draw (2,-4) -- (2, -3);
  \draw (1.5,-1) -- (1.5, 1);
  \draw (2,-1) -- (2,-0.4);
  \draw (2,0.4) -- (2,1);
  \node at (2,-0.4) {\small $\bullet$};
  \node at (2,0.4) {\small $\bullet$};
  \draw (-2, 3) -- (-2, 4);
 \draw (2,3) -- (2, 4);
 \node at (0,3.5) {$\cdots$};
  \node at (0,0) {$\cdots$};
 \node at (0,-3.5) {$\cdots$};
\end{tikzpicture}
\end{array}
\partial_v(f)
\right)
-
\left(
\partial_u(x(f))
\begin{array}{c}
\begin{tikzpicture}[yscale=0.3,xscale=-0.3,thick]
 \draw (-3,-3) rectangle (3,-1);
 \node at (0,-2) {$\mathrm{JW}_{\ux}$};
  \draw (-3,3) rectangle (3,1);
 \node at (0,2) {$\mathrm{JW}_{\ux}$};
 \draw (-2, -4) -- (-2, -3);
  \draw (-2, -1) -- (-2, 1);
 \draw (2,-4) -- (2, -3);
  \draw (1.5,-1) -- (1.5, 1);
  \draw (2,-1) -- (2,-0.4);
  \draw (2,0.4) -- (2,1);
  \node at (2,-0.4) {\small $\bullet$};
  \node at (2,0.4) {\small $\bullet$};
  \draw (-2, 3) -- (-2, 4);
 \draw (2,3) -- (2, 4);
 \node at (0,3.5) {$\cdots$};
  \node at (0,0) {$\cdots$};
 \node at (0,-3.5) {$\cdots$};
\end{tikzpicture}
\end{array}
\right).
\]
Here the second term identifies with $(\LL_w^x \circ \LL_x^w) \star \partial_v(f)$, and the third one with $\partial_u(xf) \star (\LL_{w'}^x \circ \LL_x^{w'})$
(see Lemma~\ref{lem:composition-JW}).
The claim follows.
\end{proof}

%%%%%%%%%%%%%%%%%%%%%%%%%%%%%%%%%%%%%%%%%%%%%%%%%%%%%%%%%%%%%%%%%%%%%%%%%%%
\section{Breaking and two-dot morphisms}
%%%%%%%%%%%%%%%%%%%%%%%%%%%%%%%%%%%%%%%%%%%%%%%%%%%%%%%%%%%%%%%%%%%%%%%%%%%

In Chapter~\ref{chap:rouquier} it will be convenient to have abbreviations for certain commonly
occurring morphisms. Given $x \in W$ with $\ell(x) > 1$ we consider
``left and right breaking'' morphisms\index{Brl@{$\Br_l$}}\index{Brr@{$\Br_r$}}
\[
\Br_l := \LL^x_w \circ \LL_x^w \quad \text{and} \quad \Br_r := \LL^x_{w'}
  \circ \LL_x^{w'} 
\]
where $w$ (resp.~$w'$) is the unique element of length $\ell(x) - 1$
with the same right (resp.~left) descent set as $w$. If $\ell(x) = 1$
we set
\[
\Br_l := \LL_1^x \circ \LL_x^1 =: \Br_r.
\]
If $\underline{x}$ is a reduced expression for $x$, then as morphisms from $B_{\underline{x}}$ to itself we have
\[
\Br_l =
\begin{array}{c}
\begin{tikzpicture}[scale=-0.3,thick]
 \draw (-3,-3) rectangle (3,-1);
 \node at (0,-2) {$\mathrm{JW}_{\underline{x}}$};
  \draw (-3,3) rectangle (3,1);
 \node at (0,2) {$\mathrm{JW}_{\underline{x}}$};
 \draw (-2, -4) -- (-2, -3);
  \draw (-2, -1) -- (-2, 1);
 \draw (2,-4) -- (2, -3);
  \draw (1.5,-1) -- (1.5, 1);
  \draw (2,-1) -- (2,-0.4);
  \draw (2,0.4) -- (2,1);
  \node at (2,-0.4) {\small $\bullet$};
  \node at (2,0.4) {\small $\bullet$};
  \draw (-2, 3) -- (-2, 4);
 \draw (2,3) -- (2, 4);
 \node at (0,3.5) {$\cdots$};
  \node at (0,0) {$\cdots$};
 \node at (0,-3.5) {$\cdots$};
\end{tikzpicture}
\end{array} \quad \text{and} \quad \Br_r =
\begin{array}{c}
\begin{tikzpicture}[scale=0.3,thick]
 \draw (-3,-3) rectangle (3,-1);
 \node at (0,-2) {$\mathrm{JW}_{\underline{x}}$};
  \draw (-3,3) rectangle (3,1);
 \node at (0,2) {$\mathrm{JW}_{\underline{x}}$};
 \draw (-2, -4) -- (-2, -3);
  \draw (-2, -1) -- (-2, 1);
 \draw (2,-4) -- (2, -3);
  \draw (1.5,-1) -- (1.5, 1);
  \draw (2,-1) -- (2,-0.4);
  \draw (2,0.4) -- (2,1);
  \node at (2,-0.4) {\small $\bullet$};
  \node at (2,0.4) {\small $\bullet$};
  \draw (-2, 3) -- (-2, 4);
 \draw (2,3) -- (2, 4);
 \node at (0,3.5) {$\cdots$};
  \node at (0,0) {$\cdots$};
 \node at (0,-3.5) {$\cdots$};
\end{tikzpicture}
\end{array}.
\]

Given $x \in W$ with $1 \le \ell(x) < m_{st}$, consider the ``left and
right two-dot'' morphisms\index{TDl@{$\TD_l$}}\index{TDr@{$\TD_r$}}
\[
\TD_l := \LL^x_y \circ \LL_x^y
 \quad \text{and} \quad \TD_r := \LL^x_{y'}
  \circ \LL_x^{y'},
\]
where $y$ (resp.~$y'$) is the unique element of length $\ell(x) + 1$ whose
right (resp.~left) descent set contains that of $x$. If $\underline{x}$ is a reduced expression for $x$, and $\underline{y}$ (resp.~$\underline{y}'$) is the reduced expression obtained by adding one simple reflection to the left (resp.~right) of $\underline{x}$, then as a morphism from $B_{\underline{x}}$ to itself we have
\[
\TD_l = \begin{array}{c}
\begin{tikzpicture}[xscale=0.3,yscale=0.4,thick]
 \draw (-3,-1) rectangle (3,1);
 \node at (0,0) {$\mathrm{JW}_{\underline{y}}$};
 \draw (-2,-1.5) -- (-2,-1);
 \node at (-2,-1.5) {\small $\bullet$};
 \draw (-1.5,-2) -- (-1.5,-1);
  \draw (-1.5,2) -- (-1.5,1);
 \draw (2,-2) -- (2,-1);
 \draw (-2,1.5) -- (-2,1);
 \node at (-2,1.5) {\small $\bullet$};
 \draw (2,2) -- (2,1);
 \node at (0.2,-1.5) {$\cdots$};
 \node at (0.2,1.5) {$\cdots$};
\end{tikzpicture}
\end{array}
\quad \text{and} \quad
\TD_r = \begin{array}{c}
\begin{tikzpicture}[xscale=-0.3,yscale=0.4,thick]
 \draw (-3,-1) rectangle (3,1);
 \node at (0,0) {$\mathrm{JW}_{\underline{y}'}$};
 \draw (-2,-1.5) -- (-2,-1);
 \node at (-2,-1.5) {\small $\bullet$};
 \draw (-1.5,-2) -- (-1.5,-1);
  \draw (-1.5,2) -- (-1.5,1);
 \draw (2,-2) -- (2,-1);
 \draw (-2,1.5) -- (-2,1);
 \node at (-2,1.5) {\small $\bullet$};
 \draw (2,2) -- (2,1);
 \node at (0.2,-1.5) {$\cdots$};
 \node at (0.2,1.5) {$\cdots$};
\end{tikzpicture}
\end{array}.
\]

%==========================================================================
\chapter{Rouquier complexes}
\label{chap:rouquier}
%==========================================================================

In~\cite[\S9]{rouquier}, Rouquier obtained a categorification of the braid group via certain chain complexes of Soergel bimodules, often called \emph{minimal Rouquier complexes}.  This chapter is devoted to analogues of Rouquier's construction for finite dihedral groups.  We begin with a biequivariant version (Proposition~\ref{prop:Rouquier-convolution-new}) that is quite similar to Rouquier's own result. But the bulk of the work in this chapter goes toward the free-monodromic version (Theorem~\ref{thm:Rouquier-convolution-fm-new}). An important property of the free-monodromic minimal Rouquier complexes is that they are convolutive.

The assumptions and notations of Chapter~\ref{chap:dihedral} remain in force in this chapter.

%%%%%%%%%%%%%%%%%%%%%%%%%%%%%%%%%%%%%%%%%%%%%%%%%%%%%%%%%%%%%%%%%%%%%%%%%%%
\section{Biequivariant minimal Rouquier complexes}
\label{sec:BE-rouquier}
%%%%%%%%%%%%%%%%%%%%%%%%%%%%%%%%%%%%%%%%%%%%%%%%%%%%%%%%%%%%%%%%%%%%%%%%%%%

Let $m \in \{0, \sdots, m_{st}\}$ and $u \in \{s,t\}$, and set
\[
v=\check{u}, \quad \uw = {}_u \widehat{m}, \quad w=\pi(\uw) = \uov{m}.
\]
The \emph{minimal Rouquier complex}\index{minimal Rouquier complex!biequivariant} $(\Delta_{\uw, \min}, \delta_c)$ (where ``$c$'' stands for ``classical'') is the following biequivariant complex:\index{standard object!Deltauwmin@$\Delta_{\uw,\min}$}
\[
 \begin{tikzcd}
  & B_1(m) \\
  B_u(m-1) \ar[ur] & \oplus & B_v(m-1) \ar[ul] \\
  B_{uv}(m-2) \ar[u, "-"] \ar[urr] & \oplus & B_{vu}(m-2) \ar[ull] \ar[u, "-"'] \\
  \vdots \ar[u] &  & \vdots \ar[u] \\
  B_{\uov{m-1}}(1) \ar[u, "(-1)^m"] & \oplus & B_{\vov{m-1}}(1) \ar[u, "(-1)^m"'] \\
  & B_w \ar[ul, "(-1)^{m+1}"] \ar[ur]
 \end{tikzcd}
\]
Here, $(\Delta_{\uw, \min})^0 = B_w$ and the components of $\delta_c$ are the $\LL$ maps from~\S\ref{sec:llmorphisms}, with a minus sign on each ``vertical'' component whose source is labeled by an even-length element. If $m$ is even, there is also a minus sign on the component $B_w \leadsto B_{\uov{m-1}}(1)$. The fact that this indeed defines a complex follows from~\eqref{eq:upup}. 

The following result is a diagrammatic version of a result of Rouquier~\cite{rouquier}.

\begin{prop}
\label{prop:Rouquier-convolution-new}
Let $\uw \in \hW$, and suppose $\uw = (s_1, \sdots, s_r)$.  Then there is an isomorphism
\[
\Delta_{s_1} \ustar \Delta_{s_2} \ustar \cdots \ustar \Delta_{s_r} \simto \Delta_{\uw,\min}
\]
in $\BE(\fh,W)$.
\end{prop}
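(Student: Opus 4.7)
The plan is to argue by induction on $r = \ell(\uw)$, with base cases $r \leq 1$ immediate from the definitions of $\Delta_s$ and $\Delta_{\uw,\min}$. For the inductive step, write $\uw = \uw' \cdot s_r$ with $\uw'$ a reduced expression (still of dihedral type ${}_u\widehat{m-1}$). By the inductive hypothesis, $\Delta_{s_1} \ustar \cdots \ustar \Delta_{s_{r-1}} \cong \Delta_{\uw',\min}$, so it suffices to construct an isomorphism
\[
 \Delta_{\uw',\min} \ustar \Delta_{s_r} \simto \Delta_{\uw,\min}
\]
in $\BE(\fh,W)$.

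To produce this isomorphism, we pass to the Karoubian envelope $\BE(\fh,W)_\Kar$ (equivalent to $\BE(\fh,W)$ by Lemma~\ref{lem:idem-equiv}) and decompose each term $B_v \star B_{s_r}$ appearing in $\Delta_{\uw',\min} \ustar \Delta_{s_r}$ using Lemma~\ref{lem:BwBu}. Since $\uw$ is reduced and dihedral, for all interior $v = \uov{k}$ or $v = \vov{k}$ with $v s_r > v$ and $v \notin \{1, \check{s_r}\}$, this decomposition yields $B_v \star B_{s_r} \cong B_{v s_r} \oplus B_{v \check{s_r}}$; the extreme cases are handled separately. Collecting these, we identify the underlying graded object of $\Delta_{\uw',\min} \ustar \Delta_{s_r}$ as the direct sum of the underlying graded object of $\Delta_{\uw,\min}$ together with a collection of ``extraneous'' pairs $B_y(k) \oplus B_y(k)$ lying in adjacent cohomological degrees. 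Using the composition formulas \eqref{eq:upup}--\eqref{eq:polys} for the maps $\LL_x^y$, one verifies that the differential components connecting each such pair are isomorphisms, so the pairs form contractible direct-summand subcomplexes which can be stripped off by Gaussian elimination. The surviving complex has the graded object of $\Delta_{\uw,\min}$, and the induced differential is computed from the $\LL$-map components of $\delta_c$ precomposed with the inclusion/projection maps of the Lemma~\ref{lem:BwBu} decompositions.

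The main obstacle is the bookkeeping. One must explicitly describe the inclusion and projection morphisms realizing $B_v \star B_{s_r} \cong B_{v s_r} \oplus B_{v\check{s_r}}$ in terms of Jones--Wenzl projectors, dots, and trivalent vertices, and then chase compositions through the convolution differential $\delta_c \ustar \id + \id \ustar \delta_{s_r}$, keeping careful track of the signs appearing in~\eqref{eqn:convolution-mor} and of the alternating signs in the definition of $\delta_c$ itself. Once this is carried out, the dihedral composition identities of Section~\ref{sec:llmorphisms} match the induced differential to that of $\Delta_{\uw,\min}$ on the nose, completing the inductive step.
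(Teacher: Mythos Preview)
Your outline is correct and shares its key step (Gaussian elimination via Lemma~\ref{lem:BwBu}) with the paper, but the paper organizes the endgame differently so as to avoid the bookkeeping you anticipate. Rather than eliminating on all of $\Delta_{\uw',\min} \ustar \Delta_{s_r}$ and then matching the induced differential to $\delta_c$ ``on the nose'', the paper first isolates the $({-}) \ustar B_u$ piece: Lemma~\ref{lem:FwBu} shows by Gaussian elimination that $\Delta_{\huv{m},\min} \ustar B_u$ collapses to the two-term complex $A_{\ovu{m+1}}^{\ovu{m}} = (B_{\ovu{m+1}} \xrightarrow{\LL} B_{\ovu{m}}(1))$. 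Both $\Delta_{\uw',\min} \ustar \Delta_u$ and $\Delta_{\uw,\min}$ are then identified, up to shift, with cones of maps $A_{\ovu{m+1}}^{\ovu{m}} \to \Delta_{\uw',\min}(1)$. By~\eqref{eqn:Hom-dih-length-diff-1} this $\Hom$-space is one-dimensional, so it only remains to check that the map arising for $\Delta_{\uw',\min} \ustar \Delta_u$ is nonzero; this holds because that object is indecomposable, being by induction a $\ustar$-product of the invertible objects $\Delta_{s_i}$ (Lemma~\ref{lem:convolution-DN-new}). This Hom-rigidity argument replaces your explicit differential-matching step entirely. Your approach can be pushed through, but it requires tracking the $-c\,a^{-1}b$ correction terms through every elimination and verifying that the resulting signs and scalars agree with those in $\delta_c$; the paper's route sidesteps all of this.
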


To prove this proposition we need some preparatory lemmas.
For $w, w' \in W$ such that $|\ell(w) - \ell(w')| = 1$, define $A_w^{w'}$ to be the biequivariant complex
\[
 B_w \xrightarrow{\LL_w^{w'}} B_{w'}(1),
\]
where $B_w$ is in position 0.

\begin{lem} \label{lem:FwBu}
 Let $1 \le m \le m_{st} - 1$ and $u \in \{s, t\}$. There exists an isomorphism
 \[
  \Delta_{\huv{m}, \min} \ustar B_u \cong A_{\ovu{m+1}}^{\ovu{m}}.
 \]
\end{lem}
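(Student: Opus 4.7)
The plan is to argue by induction on $m$, proving the statement simultaneously for both choices of $u \in \{s, t\}$.

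For the base case $m = 1$, a direct computation suffices: $\Delta_{\huv{1}, \min} = \Delta_{\check u}$ is the two-term complex $B_{\check u} \to B_\varnothing(1)$ with differential the dot map, so $\Delta_{\huv{1}, \min} \ustar B_u$ equals $B_{\check u} \star B_u \to B_u(1)$. By Lemma~\ref{lem:BwBu} (case $w = \check u$), $B_{\check u} \star B_u \cong B_{\check u u} = B_{\ovu{2}}$, and under this identification the differential becomes $\LL^{\ovu{1}}_{\ovu{2}}$, yielding precisely $A^{\ovu{1}}_{\ovu{2}}$.

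For the inductive step ($m \geq 2$), I will write $\huv{m} = (v_m) \cdot \huv{m-1}$, where $v_m$ denotes the first letter of $\huv{m}$: explicitly, $v_m = u$ if $m$ is even and $v_m = \check u$ if $m$ is odd. Proposition~\ref{prop:Rouquier-convolution-new} then gives $\Delta_{\huv{m}, \min} \cong \Delta_{v_m} \ustar \Delta_{\huv{m-1}, \min}$. Convolving with $B_u$ and applying the inductive hypothesis yields
\[
\Delta_{\huv{m}, \min} \ustar B_u \;\cong\; \Delta_{v_m} \ustar A^{\ovu{m-1}}_{\ovu{m}}.
\]

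It then suffices to prove $\Delta_{v_m} \ustar A^{\ovu{m-1}}_{\ovu{m}} \cong A^{\ovu{m}}_{\ovu{m+1}}$. The left-hand side is a complex supported in positions $0, 1, 2$, whose terms, via the horizontal reflection of Lemma~\ref{lem:BwBu}, decompose as
\[
B_{\ovu{m+1}} \oplus B_{\ovu{m-1}} \;\longrightarrow\; B_{\ovu{m-1}} \oplus B_{\ovu{m-1}}(2) \oplus B_{\ovu{m}}(1) \;\longrightarrow\; B_{\ovu{m-1}}(2).
\]
I will show that the component of the differential pairing the two copies of $B_{\ovu{m-1}}$ (in positions $0$ and $1$) is an isomorphism, and likewise for the two copies of $B_{\ovu{m-1}}(2)$ (in positions $1$ and $2$); these pairs then split off as contractible direct summands. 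The remaining subcomplex $B_{\ovu{m+1}} \to B_{\ovu{m}}(1)$ will have differential proportional to $\LL^{\ovu{m}}_{\ovu{m+1}}$, giving $A^{\ovu{m}}_{\ovu{m+1}}$.

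The main obstacle is the last step: explicitly verifying that the relevant diagonal components of the differential are isomorphisms and that the surviving morphism is a nonzero multiple of the LL map. This is a diagrammatic calculation using the explicit inclusion and projection maps between $B_v \star B_w$ and its indecomposable summands, as constructed in the proof of Lemma~\ref{lem:BwBu} via Jones--Wenzl idempotents, together with the light-leaves composition identities \eqref{eq:upup}--\eqref{eq:polys} from Section~\ref{sec:llmorphisms}.
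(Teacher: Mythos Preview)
Your inductive approach is structurally different from the paper's direct Gaussian elimination, and it could be made to work, but there is a genuine circularity you need to address. You invoke Proposition~\ref{prop:Rouquier-convolution-new} to obtain $\Delta_{\huv{m},\min}\cong\Delta_{v_m}\ustar\Delta_{\huv{m-1},\min}$. In the paper's logical order, however, Proposition~\ref{prop:Rouquier-convolution-new} is proved \emph{using} Lemma~\ref{lem:FwBu} (via Lemma~\ref{lem:conv-Rouquier-BE}), so as written your argument is circular. This can be repaired: Proposition~\ref{prop:Rouquier-convolution-new} for expressions of length $m$ requires Lemma~\ref{lem:FwBu} only for parameters $\le m-1$, which is your inductive hypothesis. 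But you must set this up explicitly as a simultaneous induction on $m$ establishing Lemma~\ref{lem:FwBu}$(m)$, Lemma~\ref{lem:conv-Rouquier-BE}$(m)$, and Proposition~\ref{prop:Rouquier-convolution-new}$(m{+}1)$ together, rather than citing the full proposition.

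On the computational side, your sketch is essentially sound but leaves the key verifications unchecked. You need a left-handed version of Lemma~\ref{lem:BwBu} (this follows from the horizontal/vertical symmetries, but should be stated). The claim that the surviving component $B_{\ovu{m+1}}\to B_{\ovu{m}}(1)$ is an $\LL$ map is made easier by the observation that $\Hom(B_{\ovu{m+1}},B_{\ovu{m-1}})=0$ (Corollary~\ref{cor:dih-indecomp-hom}), so the Gaussian eliminations do not perturb this component; you still need to check it is nonzero, which amounts to identifying the composition of the JW inclusion $B_{\ovu{m+1}}\hookrightarrow B_{v_m}\star B_{\ovu{m}}$ with the left dot map as a nonzero multiple of $\LL^{\ovu{m}}_{\ovu{m+1}}$.

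By contrast, the paper's proof avoids all of this bookkeeping by decomposing every term of $\Delta_{\huv{m},\min}\ustar B_u$ at once via Lemma~\ref{lem:BwBu} and performing a single (larger) Gaussian elimination; this is more laborious in one step but logically self-contained.
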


\begin{proof}
 Arrange the terms of $\Delta_{\huv{m}, \min}$ as follows:
 \[
  \begin{tikzcd}[row sep=0]
                      &        & B_1(m) \\
  B_u(m-1)            & \oplus & B_{\check u}(m-1) \\
  B_{\check u u}(m-2) & \oplus & B_{u \check u}(m-2) \\
  \vdots              &        & \vdots \\
  B_{\ovu{m-2}}(2)    & \oplus & B_{\ovuv{m-2}}(2) \\
  B_{\ovu{m-1}}(1)    & \oplus & B_{\ovuv{m-1}}(1) \\
                      &        & B_{\ovuv{m}} \\
  \end{tikzcd}
 \]
 Consider the complex $\Delta_{\huv{m}, \min} \ustar B_u$. By composing with the explicit direct summand decompositions from the proof of Lemma~\ref{lem:BwBu}, we find that this complex is isomorphic to a complex denoted $\Delta_{\huv{m}, \min, u}$ with the following terms, where for clarity we place a prime on terms in the right-hand column:
 \[
  \begin{tikzcd}[row sep=0]
                                                  &        & B'_u(m) \\
   B_u(m) \oplus B_u(m-2)                         & \oplus & B'_{\check u u}(m-1) \\
   B_{\check u u}(m-1) \oplus B_{\check u u}(m-3) & \oplus & B'_{\ovu{3}}(m-2) \oplus B'_u(m-2) \\
   \vdots                                         &        & \vdots \\
   B_{\ovu{m-2}}(3) \oplus B_{\ovu{m-2}}(1)       & \oplus & B'_{\ovu{m-1}}(2) \oplus B'_{\ovu{m-3}}(2) \\
   B_{\ovu{m-1}}(2) \oplus B_{\ovu{m-1}}          & \oplus & B'_{\ovu{m}}(1) \oplus B'_{\ovu{m-2}}(1) \\
                                                  &        & B'_{\ovu{m+1}} \oplus B'_{\ovu{m-1}}
  \end{tikzcd}
 \]
 To prove the desired isomorphism, it suffices to show that the following components of $\Delta_{\huv{m}, \min, u}$ are isomorphisms:
 \begin{enumerate}
  \item
  \label{it:Rouquier-1}
  $B_u(m) \leadsto B'_u(m)$;
  \item 
  \label{it:Rouquier-2}
  $B_{\ovu{m-n}}(n+1) \oplus B'_{\ovu{m-n-1}}(n) \leadsto B_{\ovu{m-n-1}}(n) \oplus B'_{\ovu{m-n}}(n+1)$ for $1 \le n \le m-2$; 
  \item
  \label{it:Rouquier-3}
  $B'_{\ovu{m-1}} \leadsto B_{\ovu{m-1}}$.
 \end{enumerate}
 Indeed, it would then follow by repeated use of Gaussian elimination that $\Delta_{\huv{m}, \min, u}$ is isomorphic to a complex consisting of the remaining terms $B'_{\ovu{m+1}}$ and $B'_{\ovu{m}}(1)$. Moreover, the component $B'_{\ovu{m+1}} \leadsto B'_{\ovu{m}}(1)$ clearly remains unchanged while eliminating the terms in~\eqref{it:Rouquier-1} and~\eqref{it:Rouquier-2}, and even while eliminating the terms in~\eqref{it:Rouquier-3} since $\Hom(B'_{\ovu{m+1}}, B_{\ovu{m-1}}) = 0$ (by Corollary~\ref{cor:dih-indecomp-hom}).
 
 We begin with~\eqref{it:Rouquier-2}. 
 %Let $v$ be the first simple reflection appearing in $\ovu{m-n-1}$. 
 We will show that the component in question has the form
 \[
  \begin{bmatrix}
   0 & -\frac{[m-n]_u}{[m-n-1]_u} \cdot \id_{B_{\ovu{m-n-1}}} \\
   \id_{B_{\ovu{m-n}}} & ?
  \end{bmatrix},
 \]
 and is therefore invertible. 
 We need to compute the following composition:
\newsavebox\dLLid
\savebox\dLLid{
 \begin{tikzpicture}[xscale=0.3, yscale=0.2,thick]
  \draw (-2,4) -- (-2,5); \node at (-0.9,4.5) {$\cdots$}; \draw (0,4) -- (0,5);
  \draw (-2.5,2) rectangle (0.5,4);
  \node at (-1,3) {\small $\JW$};
  \draw (-3,-2) -- (-3,0); \node at (-3,0) {\small $\bullet$}; \draw (-2,-2) -- (-2,2); \node at (-0.9,0) {$\cdots$}; \draw (0,-2) -- (0,2);
  \draw (-3.5,-4) rectangle (0.5,-2);
  \node at (-1.5,-3) {\small $\JW$};
  \draw (-3,-5) -- (-3,-4); \draw (-2,-5) -- (-2,-4); \node at (-0.9,-4.5) {$\cdots$}; \draw (0,-5) -- (0,-4);
  \draw (1,-5) -- (1,5);
 \end{tikzpicture}
}

\newsavebox\LLdid
\savebox\LLdid{
 \begin{tikzpicture}[xscale=0.3, yscale=0.2,thick]
  \draw (-3,4) -- (-3,5); \node at (-1.9,4.5) {$\cdots$}; \draw (-1,4) -- (-1,5);
  \draw (-3.5,2) rectangle (-0.5,4);
  \node at (-2,3) {\small $\JW$};
  \draw (-3,-2) -- (-3,2); \node at (-1.9,0) {$\cdots$}; \draw (-1,-2) -- (-1,2); \draw (0,-2) -- (0,0); \node at (0,0) {\small $\bullet$};
  \draw (-3.5,-4) rectangle (0.5,-2);
  \node at (-1.5,-3) {\small $\JW$};
  \draw (-3,-5) -- (-3,-4); \node at (-1.9,-4.5) {$\cdots$}; \draw (-1,-5) -- (-1,-4); \draw (0,-5) -- (0,-4);
  \draw (1,-5) -- (1,5);
 \end{tikzpicture}
}

\newsavebox\BwBuione
\savebox\BwBuione{
 \begin{tikzpicture}[xscale=0.3, yscale=0.2,thick]
  \draw (-3,4) -- (-3,5); \node at (-1.4,4.5) {$\cdots$}; \draw (0,4) -- (0,5);
  \draw (-3.5,2) rectangle (0.5,4);
  \node at (-1.5,3) {\small $\JW$};
  \draw (-3,-2) -- (-3,2); \node at (-1.4,0) {$\cdots$}; \draw (0,-2) -- (0,2); \draw (0,0) -- (1.5,0) -- (1.5,5);
  \draw (-3.5,-4) rectangle (0.5,-2);
  \node at (-1.5,-3) {\small $\JW$};
  \draw (-3,-5) -- (-3,-4); \node at (-1.4,-4.5) {$\cdots$}; \draw (0,-5) -- (0,-4);
 \end{tikzpicture}
}

\newsavebox\BwBuiprime
\savebox\BwBuiprime{
 \begin{tikzpicture}[xscale=0.3, yscale=0.2,thick]
  \draw (-3,3) -- (-3,5); \node at (-1.4,4) {$\cdots$}; \draw (0,3) -- (0,5); \draw (1,3) -- (1,5); 
  \draw (-3.5,1) rectangle (1.5,3);
  \node at (-1,2) {\small $\JW$};
  \draw (-3,-1) -- (-3,1); \node at (-1.4,0) {$\cdots$}; \draw (0,-1) -- (0,1); \draw (1,0.3) -- (1,1); \node at (1,0.3) {\small $\bullet$}; \draw (0,-0.4) -- (2,-0.4) -- (2,5);
  \draw (-3.5,-3) rectangle (0.5,-1);
  \node at (-1.5,-2) {\small $\JW$};
  \draw (-3,-5) -- (-3,-3); \node at (-1.4,-4) {$\cdots$}; \draw (0,-5) -- (0,-3);
 \end{tikzpicture}
}

\newsavebox\BwBupminusone
\savebox\BwBupminusone{
 \begin{tikzpicture}[xscale=0.3, yscale=-0.2,thick]
  \draw (-3,4) -- (-3,5); \node at (-1.4,4.5) {$\cdots$}; \draw (0,4) -- (0,5);
  \draw (-3.5,2) rectangle (0.5,4);
  \node at (-1.5,3) {\small $\JW$};
  \draw (-3,-2) -- (-3,2); \node at (-1.4,0) {$\cdots$}; \draw (0,-2) -- (0,2); \draw (0,0) -- (1.5,0) -- (1.5,5);
  \draw (-3.5,-4) rectangle (0.5,-2);
  \node at (-1.5,-3) {\small $\JW$};
  \draw (-3,-5) -- (-3,-4); \node at (-1.4,-4.5) {$\cdots$}; \draw (0,-5) -- (0,-4);
 \end{tikzpicture}
}

\newsavebox\BwBup
\savebox\BwBup{
 \begin{tikzpicture}[xscale=0.3, yscale=0.2,thick]
\draw (-3.5,-1) rectangle (2.5,1);
\draw (-3,-5) -- (-3,-1);
\draw (-3,1) -- (-3,5);
\draw (2,-5) -- (2,-1);
\draw (2,1) -- (2,5);
 \node at (-0.3,-3) {$\cdots$};
 \node at (-0.3,3) {$\cdots$};
 \node at (-0.3,0) {\small $\JW$};
 \end{tikzpicture}
}

\[
 \begin{tikzcd}[row sep=1in]
  B_{\ovu{m-n-1}}(n) & \oplus & B'_{\ovu{m-n}}(n+1) \\
  B_{\ovu{m-n-1}}(n+1) \star B_u \ar[u, "\begin{array}{c}\usebox\BwBupminusone\end{array} = p_{-1}"] & \oplus & B_{\ovuv{m-n-1}}(n+1) \star B_u \ar[u, "p = \begin{array}{c}\usebox\BwBup\end{array}"']\\
  & \\
  B_{\ovu{m-n}}(n) \star B_u \ar[uu, "\usebox\dLLid\phantom{===}"] \ar[uurr, "\usebox\LLdid" description, near start] & \oplus & B_{\ovuv{m-n}}(n) \star B_u \ar[uull, "\usebox\LLdid" description, near start] \ar[uu, "\phantom{===}\usebox\dLLid"'] \\
  B_{\ovu{m-n}}(n+1) \ar[u, "\begin{array}{c}\usebox\BwBuione\end{array} = i_1"] & \oplus & B'_{\ovu{m-n-1}}(n) \ar[u, "i' = \begin{array}{c}\usebox\BwBuiprime\end{array}"']
 \end{tikzcd}
\]
 The component $B_{\ovu{m-n}}(n+1) \leadsto B_{\ovu{m-n-1}}(n)$ vanishes by the same argument as in the proof of Lemma~\ref{lemJW-handle}.
 The component $B_{\ovu{m-n}}(n+1) \leadsto B'_{\ovu{m-n}}(n+1)$ is the identity map by Lemma~\ref{lem:JW-dot-trivalent} and Lemma~\ref{lem:composition-JW}. A calculation very similar the one we did for $p' \circ i'$ in the proof of Lemma~\ref{lem:BwBu} shows that the component $B'_{\ovu{m-n-1}}(n) \leadsto B_{\ovu{m-n-1}}(n)$ equals
 \[
  \begin{array}{c}\begin{tikzpicture}[xscale=0.3, yscale=0.2,thick]
   \draw (-3,7) -- (-3,9); \node at (-1.4,8) {$\cdots$}; \draw (0,7) -- (0,9);
   \draw (-3.5,5) rectangle (0.5,7);
   \node at (-1.5,6) {\small $\JW$};
   \draw (-3,3) -- (-3,5); \node at (-1.4,4) {$\cdots$}; \draw (0,3) -- (0,5); \draw (1,3) -- (1,3.7); \node at (1,3.7) {\small $\bullet$}; \draw (0,4.3) -- (2,4.3);
   \draw (-3.5,1) rectangle (1.5,3);
   \node at (-1,2) {\small $\JW$};
   \draw (-3,-1) -- (-3,1); \node at (-1.4,0) {$\cdots$}; \draw (0,-1) -- (0,1); \draw (1,0.3) -- (1,1); \node at (1,0.3) {\small $\bullet$}; \draw (0,-0.4) -- (2,-0.4) -- (2,4.3);
   \draw (-3.5,-3) rectangle (0.5,-1);
   \node at (-1.5,-2) {\small $\JW$};
   \draw (-3,-5) -- (-3,-3); \node at (-1.4,-4) {$\cdots$}; \draw (0,-5) -- (0,-3);
  \end{tikzpicture}\end{array}
  = -\frac{[m-n]_u}{[m-n-1]_u} \cdot \id_{B_{\ovu{m-n-1}}}.
 \]
 This proves the claim.
 
 Similarly, the component~\eqref{it:Rouquier-1}, resp.~\eqref{it:Rouquier-3}, may be viewed as a special case of the component $p \circ (\LL \star \id) \circ i_1$, resp.~$p_{-1} \circ (\LL \star \id) \circ i'$, already calculated when treating~\eqref{it:Rouquier-2}.
\end{proof}

\begin{lem}
\label{lem:conv-Rouquier-BE}
 Let $1 \le m \le m_{st}-1$ and $u \in \{s, t\}$. Then there exists an isomorphism
 \[
  \Delta_{\huv{m}, \min} \ustar \Delta_u \cong \Delta_{\widehat{m+1}_u, \min}.
 \]
\end{lem}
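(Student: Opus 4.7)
The plan is to compute $\Delta_{\huv{m}, \min} \ustar \Delta_u$ explicitly and reduce it to $\Delta_{\widehat{m+1}_u, \min}$ by iterated Gaussian elimination, following the strategy used in the proof of Lemma~\ref{lem:FwBu}.

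Since $\Delta_u$ is the two-term complex $(B_u \xrightarrow{\usebox\upperdot} B_\varnothing(1))$ concentrated in positions~$0$ and~$1$, the definition of convolution gives
\[
 (\Delta_{\huv{m}, \min} \ustar \Delta_u)^i = (\Delta_{\huv{m}, \min})^i \star B_u \oplus (\Delta_{\huv{m}, \min})^{i-1}(1),
\]
with differential $\delta_c \ustar \id + \id \ustar \usebox\upperdot$ (with the signs prescribed by the signed interchange law). Decomposing each factor $(\Delta_{\huv{m}, \min})^i \star B_u$ via Lemma~\ref{lem:BwBu} yields a much larger complex with many indecomposable summands; as in the proof of Lemma~\ref{lem:FwBu}, the next step is to perform iterated Gaussian elimination to cancel the contractible pieces in pairs, reusing the explicit $p_{\pm 1}, i_{\pm 1}, p, i, p', i'$ components extracted in the proof of Lemma~\ref{lem:BwBu}.

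After all cancellations, I expect the surviving terms to be: a single $B_{\ovu{m+1}}$ in position~$0$ (coming from the ``up'' summand of $B_{\ovuv{m}} \star B_u$), pairs $B_{\ovu{m+1-i}}(i) \oplus B_{\ovuv{m+1-i}}(i)$ in each intermediate position $i \in \{1, \ldots, m\}$ (one summand inherited from $(\Delta_{\huv{m}, \min})^{i-1}(1)$ and one the ``up'' summand of the decomposition at position~$i$), and $B_1(m+1)$ in position~$m+1$. This matches the graded structure of $\Delta_{\widehat{m+1}_u, \min}$. The remaining components of the differential, consisting of ``horizontal'' $\LL$ maps inherited from $\delta_c$ and ``diagonal'' $\LL$ maps produced by composing the dot with the inclusions and projections of Lemma~\ref{lem:BwBu}, can be identified one by one with the $\LL$ components of the differential of $\Delta_{\widehat{m+1}_u, \min}$, via the same local calculations (involving Lemmas~\ref{lem:JW-dot-trivalent}--\ref{lem:composition-JW}) used in the proof of Lemma~\ref{lem:FwBu}.

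The main obstacle will be the sign bookkeeping: one must verify that the alternating vertical signs $(-1)^m, (-1)^{m+1}, \ldots$ in the definition of $\Delta_{\widehat{m+1}_u, \min}$ are correctly produced by the combination of (a)~the signs already present in $\Delta_{\huv{m}, \min}$, (b)~the sign from the signed interchange law applied to $\id \ustar \usebox\upperdot$, and (c)~the signs introduced when inverting the pairs of identity morphisms during Gaussian elimination. The boundary positions $0$, $1$, $m$, $m+1$—where the generic case $(wu > w$, $w \notin \{1, \check u\})$ of Lemma~\ref{lem:BwBu} degenerates into one of the other cases—also require separate attention.
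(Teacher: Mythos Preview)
Your approach is different from the paper's, and while it can be made to work, it is considerably more laborious.

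The paper does \emph{not} Gaussian-eliminate the full complex $\Delta_{\huv{m}, \min} \ustar \Delta_u$. Instead, it observes that this complex is (a shift of) the cone on the map $\Delta_{\huv{m}, \min} \ustar B_u \to \Delta_{\huv{m}, \min}(1)$ induced by the dot, and then invokes Lemma~\ref{lem:FwBu} to replace $\Delta_{\huv{m}, \min} \ustar B_u$ by the two-term complex $A_{\ovu{m+1}}^{\ovu{m}}$. After this replacement one has a cone on some chain map $\lambda' : A_{\ovu{m+1}}^{\ovu{m}} \to \Delta_{\huv{m}, \min}(1)$. The target $\Delta_{\widehat{m+1}_u, \min}$ is, by inspection, also such a cone, on the ``obvious'' chain map $\lambda$ built from $\LL$ maps. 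The proof then finishes with two soft facts: (i) the space $\Hom(A_{\ovu{m+1}}^{\ovu{m}}, \Delta_{\huv{m}, \min}(1))$ is one-dimensional, generated by $\lambda$ (this follows from~\eqref{eqn:Hom-dih-length-diff-1}); and (ii) $\lambda' \neq 0$, because by induction $\Delta_{\huv{m}, \min}$ is a convolution of invertible objects $\Delta_s, \Delta_t$ (Lemma~\ref{lem:convolution-DN-new}), so $\Delta_{\huv{m}, \min} \ustar \Delta_u$ is indecomposable and cannot be a cone on the zero map. Hence $\lambda'$ is a nonzero scalar multiple of $\lambda$, and the cones are isomorphic.

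Compared with your plan, this completely sidesteps the ``sign bookkeeping'' and the boundary-case analysis you flagged: all the hard elimination has already been packaged into Lemma~\ref{lem:FwBu}, and the identification of the remaining differential is replaced by a one-dimensional Hom argument rather than a component-by-component check. Your direct approach would work, but the step ``the remaining components of the differential can be identified one by one with the $\LL$ components'' hides nontrivial work: Gaussian elimination modifies surviving components by Schur-complement terms $d - c a^{-1} b$, and you would have to verify that each such correction either vanishes (for degree or Hom-vanishing reasons, as in the argument for the component $B'_{\ovu{m+1}} \leadsto B'_{\ovu{m}}(1)$ in the proof of Lemma~\ref{lem:FwBu}) or contributes the right scalar. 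The paper's cone-plus-rigidity argument is the cleaner route.
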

\begin{proof}
 We use induction on $m$. The case $m = 1$ is clear. Let $m > 1$. Note that
 \[
  \Delta_{\widehat{m+1}_u, \min}[1] \cong \cone(A_{\ovu{m+1}}^{\ovu{m}} \xrightarrow{\lambda} \Delta_{\huv{m}, \min}(1)),
 \]
 where $\lambda$ is represented by the obvious chain map (with components the $\LL$ maps with appropriate signs). Meanwhile,
 \begin{multline*}
  (\Delta_{\huv{m}, \min} \ustar \Delta_u)[1] \cong \cone(\Delta_{\huv{m}, \min} \ustar B_u \to \Delta_{\huv{m}, \min}(1)) \\
  \overset{\text{Lemma~\ref{lem:FwBu}}}{\cong} \cone(A_{\ovu{m+1}}^{\ovu{m}} \xrightarrow{\lambda'} \Delta_{\huv{m}, \min}(1))
 \end{multline*}
 for some $\lambda'$. By induction, $\Delta_{\huv{m}, \min}$ is isomorphic to a product of copies of $\Delta_s$ and $\Delta_t$; since both are invertible (by Lemma~\ref{lem:convolution-DN-new}), it follows that $\Delta_{\huv{m}, \min} \ustar \Delta_u$ is indecomposable. Hence $\lambda' \neq 0$. Moreover, it follows easily from~\eqref{eqn:Hom-dih-length-diff-1} that
 \[
  \Hom(A_{\ovu{m+1}}^{\ovu{m}}, \Delta_{\huv{m}, \min}(1)) = \bk \cdot \lambda,
 \]
 so $\lambda'$ is a nonzero multiple of $\lambda$. The claim follows.
\end{proof}

\begin{proof}[Proof of Proposition~{\rm \ref{prop:Rouquier-convolution-new}}]
The claim follows from Lemma~\ref{lem:conv-Rouquier-BE} and induction on $r$.
\end{proof}

We finish this section with the following consequence of Proposition~\ref{prop:Rouquier-convolution-new}.

\begin{cor}
\label{cor:End-Delta-min}
Let $\uw \in \hW$. Then we have
\[
\gEnd_\RE \bigl( \ForBERE(\Delta_{\uw, \min}) \bigr) \cong \bk.
\]
\end{cor}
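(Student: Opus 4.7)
The plan is to deduce this immediately by combining Proposition~\ref{prop:Rouquier-convolution-new} with Lemma~\ref{lem:End-Delta}. Write $\uw = (s_1, \sdots, s_r)$, and recall from Example~\ref{ex:std-costd} that $\Delta_\uw = \Delta_{s_1} \ustar \cdots \ustar \Delta_{s_r}$. Proposition~\ref{prop:Rouquier-convolution-new} provides an isomorphism $\Delta_\uw \simto \Delta_{\uw, \min}$ in $\BE(\fh, W)$.

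Applying the triangulated functor $\ForBERE : \BE(\fh, W) \to \RE(\fh, W)$ transports this isomorphism to an isomorphism $\ForBERE(\Delta_\uw) \cong \ForBERE(\Delta_{\uw, \min})$ in $\RE(\fh, W)$, and hence an isomorphism of bigraded $\bk$-algebras
\[
\gEnd_\RE\bigl(\ForBERE(\Delta_\uw)\bigr) \cong \gEnd_\RE\bigl(\ForBERE(\Delta_{\uw, \min})\bigr).
\]
Lemma~\ref{lem:End-Delta} (which applies to any expression) identifies the left-hand side with $\bk$, completing the proof.

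There is no real obstacle here; the entire content of the corollary is packaged into Proposition~\ref{prop:Rouquier-convolution-new}, whose proof (via Lemmas~\ref{lem:FwBu} and~\ref{lem:conv-Rouquier-BE}) was the genuine work of this section.
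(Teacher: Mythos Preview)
Your proposal is correct and follows exactly the same approach as the paper: the paper's own proof simply says that the corollary follows from Proposition~\ref{prop:Rouquier-convolution-new} and Lemma~\ref{lem:End-Delta}.
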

\begin{proof}
This follows from Proposition~\ref{prop:Rouquier-convolution-new} and Lemma~\ref{lem:End-Delta}.
\end{proof}

%%%%%%%%%%%%%%%%%%%%%%%%%%%%%%%%%%%%%%%%%%%%%%%%%%%%%%%%%%%%%%%%%%%%%%%%%%%
\section{Lifting minimal Rouquier complexes}
%%%%%%%%%%%%%%%%%%%%%%%%%%%%%%%%%%%%%%%%%%%%%%%%%%%%%%%%%%%%%%%%%%%%%%%%%%%

In this section we fix $m$, $u$, $v$, $\uw$, $w$ as in the beginning of~\S\ref{sec:BE-rouquier}. Our goal is to explicitly describe a convolutive free-monodromic lift $(\tD_{\uw, \min}, \tdelta_\uw)$\index{minimal Rouquier complex!free-monodromic}\index{standard object!Deltatuwmin@$\tD_{\uw,\min}$} of $\Delta_{\uw, \min}$. First, define the following auxiliary elements $\delta'$, $\delta''_x$ (for $x \in W$), and $\delta_\bot$ of $\uEnd^\rhd_\FM(\Delta_{\uw, \min})^{1}_{0}$.  Each component of these elements is of the form $\pm(\text{an $\LL$ map}) \otimes (\text{an element of $V$})$.  To reduce clutter, we suppress the $\LL$ maps and write down only the sign and the element of $V$.  

In addition, here, and in most calculations in this chapter, we suppress internal shifts (i.e., we write ``$B_{\uov{n}}$'' instead of ``$B_{\uov{n}}(n)$''), but they are always present.
\begin{multline*}
 \delta'
 = 
 \begin{tikzcd}[ampersand replacement=\&,row sep=scriptsize]
       \& B_1 \ar[dl, "\otimes \varpi_u"'] \ar[dr, "\otimes \varpi_v"] \\
 B_u \& \& B_v
 \end{tikzcd} \\
 + \sum_{\substack{1 \le n \le m-2 \\ n \text{ odd}}} \left(
 \begin{tikzcd}[ampersand replacement=\&]
  B_{\uov{n}} \ar[d, "- \otimes \varpi_v^{u,n+1}"'] \ar[dr, "\otimes \varpi_{u,n+1}^v" near end] \\
  B_{\uov{n+1}} \& B_{\vov{n+1}}
 \end{tikzcd}
 +
 \begin{tikzcd}[ampersand replacement=\&]
  \& B_{\vov{n}} \ar[dl, "\otimes \varpi_{v,n+1}^u"' near end] \ar[d, "- \otimes \varpi_u^{v,n+1}"] \\
  B_{\uov{n+1}} \& B_{\vov{n+1}}
 \end{tikzcd} 
 \right) \\
 + \sum_{\substack{1 \le n \le m-2 \\ n \text{ even}}} \left(
 \begin{tikzcd}[ampersand replacement=\&]
  B_{\uov{n}} \ar[d, "\otimes \varpi_u^{v,n+1}"'] \ar[dr, "\otimes \varpi_{v,n+1}^u" near end] \\
  B_{\uov{n+1}} \& B_{\vov{n+1}}
 \end{tikzcd}
 +
 \begin{tikzcd}[ampersand replacement=\&]
  \& B_{\vov{n}} \ar[dl, "\otimes \varpi_{u,n+1}^v"' near end] \ar[d, "\otimes \varpi_v^{u,n+1}"] \\
  B_{\uov{n+1}} \& B_{\vov{n+1}}
 \end{tikzcd}
 \right),
\end{multline*}
\begin{multline*}
 \delta''_x =
 \begin{tikzcd}[ampersand replacement=\&]
       \& B_1 \ar[dl, "\otimes x^{-1}(\varpi_u)"'] \ar[dr, "\otimes x^{-1}(\varpi_v)"] \\
 B_u \& \& B_v
 \end{tikzcd} \\
 + \sum_{1 \le n \le m-2} \left(
 \begin{tikzcd}[ampersand replacement=\&]
  B_{\uov{n}} \ar[d, "(-1)^n \otimes x^{-1}(\varpi_{u,n+1}^v)"'] \ar[dr, "\otimes x^{-1}(\varpi_v^{u,n+1})" near end] \\
  B_{\uov{n+1}} \& B_{\vov{n+1}}
 \end{tikzcd}
 \right.
 \\
 +
 \left.
 \begin{tikzcd}[ampersand replacement=\&]
  \& B_{\vov{n}} \ar[dl, "\otimes x^{-1}(\varpi_u^{v,n+1})"' near end] \ar[d, "(-1)^n \otimes x^{-1}(\varpi_{v,n+1}^u)"] \\
  B_{\uov{n+1}} \& B_{\vov{n+1}}
 \end{tikzcd}
 \right),
\end{multline*}
\[
 \delta_\bot =
 \begin{cases}
 \begin{tikzcd}
   B_{\uov{m-1}} \ar[dr, "\otimes \alpha_u^\vee"' near start] & & B_{\vov{m-1}} \ar[dl, "\otimes (-w^{-1}(\alpha_u^\vee))" near start] \\
   & B_w
  \end{tikzcd}
  &\text{if $m$ is odd;} \\
 \begin{tikzcd}
  B_{\uov{m-1}} \ar[dr, "- \otimes \alpha_v^\vee"' near start] & & B_{\vov{m-1}} \ar[dl, "\otimes (-w^{-1}(\alpha_u^\vee))" near start] \\
   & B_w
  \end{tikzcd}
  &\text{if $m$ is even.}
 \end{cases}
\]
Note that in both $\delta'$ and $\delta''_x$, the components with source $B_1$ can be considered as special cases (when $n = 0$) of components with source either $B_{\uov{n}}$ or $B_{\vov{n}}$. 

Now, set
\[
 \delta_{-1} = \delta' - \delta''_w + \delta_\bot, \qquad \tdelta_\uw = \delta_c + \theta_w + \delta_{-1},
\]
where $\theta_x$ is the sum of self-loop components
\[
\sum_i x(e_i) \otimes \id \otimes \check e_i = \sum_i e_i \otimes \id \otimes x^{-1}(\check e_i)
\]
at every term, for any $x \in W$.

\begin{prop}
\label{prop:free-monodromic-minimal-Rouquier-new}
 With the definitions above, $(\Delta_{\uw, \min}, \tdelta_\uw)$ is a (convolutive) free-monodromic complex, i.e.~$\tdelta_\uw$ satisfies
 \begin{equation}
 \label{eq:tdelta-differential-new}
  \tdelta_\uw \circ \tdelta_\uw + \kappa(\tdelta_\uw) = \Theta.
 \end{equation}
 This object will be denoted $\tD_{\uw,\min}$; it satisfies
\begin{equation}
\label{eqn:Rouquier-For-new}
\ForFMLM(\tD_{\uw,\min}) \cong \ForBELM(\Delta_{\uw,\min}).
\end{equation}
\end{prop}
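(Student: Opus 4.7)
The proposition bundles three claims: that $\tdelta_\uw$ is a genuine free-monodromic differential, that $(\Delta_{\uw,\min}, \tdelta_\uw)$ is convolutive, and that $\ForFMLM(\tD_{\uw,\min}) \cong \ForBELM(\Delta_{\uw,\min})$. The second and third are essentially immediate. Convolutivity follows by inspection of the formulas: every summand of $\tdelta_\uw$ has its $\Lambda$-factor in $\bk \oplus V^*(-2)[1]$ and its $R^\vee$-factor in $\bk \oplus V\la -2\ra$. The isomorphism under $\ForFMLM$ holds because this functor applies $\epsilon_{R^\vee}$ on the right, which vanishes on $V$; it therefore kills $\theta_w$, $\delta'$, $\delta''_w$, and $\delta_\bot$, leaving $\ForBELM(\delta_c)$.

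The main task is to verify~\eqref{eq:tdelta-differential-new}. I would first observe that $\kappa$ vanishes on any summand whose $\Lambda$-factor is a scalar, so $\kappa(\tdelta_\uw) = \kappa(\theta_w) = \sum_i 1 \otimes (w(e_i) \ustar \id) \otimes \check e_i$. Next, expanding $\tdelta_\uw \circ \tdelta_\uw = (\delta_c + \theta_w + \delta_{-1}) \circ (\delta_c + \theta_w + \delta_{-1})$, the pure squares $\delta_c \circ \delta_c$ and $\theta_w \circ \theta_w$ both vanish: the former because $(\Delta_{\uw,\min}, \delta_c)$ is a chain complex in $\BE(\fh,W)$, and the latter by the standard antisymmetry argument (the wedge on $\Lambda$ is antisymmetric while the product on $R^\vee$ is symmetric). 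The remaining terms must therefore combine with $\kappa(\theta_w)$ to produce $\Theta$.

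I would then organize this remaining identity by sorting its contributions according to their source and target positions in the underlying $\DiagBSp$-sequence of $\Delta_{\uw,\min}$, and verify each local piece separately. ``Diagonal'' pieces (self-loops on a fixed summand $B_x$ of $\Delta_{\uw,\min}$) come from $\kappa(\theta_w)$ and from the self-loop parts of $\delta_c \circ \delta_{-1} + \delta_{-1} \circ \delta_c$ and $\delta_{-1} \circ \delta_{-1}$; the nil-Hecke relation of~\S\ref{sec:ew-diagram}, applied as in the computation underlying Example~\ref{ex:ts-leftmon} and Example~\ref{ex:ts-mon-action}, shows that these combine to exactly $\Theta$. ``Off-diagonal'' pieces (between distinct summands) come from $\delta_c \circ \theta_w + \theta_w \circ \delta_c$, from the off-diagonal parts of $\delta_c \circ \delta_{-1} + \delta_{-1} \circ \delta_c$, and from $\delta_{-1} \circ \delta_{-1}$; these must cancel.

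The main obstacle is this off-diagonal cancellation. The term $\delta_c \circ \theta_w + \theta_w \circ \delta_c$ measures the failure of the light-leaves maps in $\delta_c$ to commute with the left monodromy by $w$, and by~\eqref{eq:polys} its contribution at each pair of positions is a sum of two-dot compositions $\LL \circ \LL$ decorated with explicit polynomial coefficients. The term $\delta_c \circ \delta' + \delta' \circ \delta_c$ produces the same kind of two-dot compositions via the relations~\eqref{eq:updown1}--\eqref{eq:updown4p}, and the dual weights $\varpi^{s,n}_t$, $\varpi^{t,n}_s$ of~\eqref{eq:varpi-s-t-n} appearing in $\delta'$ are chosen precisely so that these two contributions cancel. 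The term $\delta''_w$ accounts for the $w^{-1}$-twist that arises when pulling $\theta_w$ past $\delta_c$, while $\delta_\bot$ is the boundary correction needed at the bottom position $B_w$, where the two branches of $\Delta_{\uw,\min}$ meet and the uniform pattern of $\delta'$ breaks down. The full verification is a finite case analysis over adjacent positions in the complex; the most delicate pieces arise at the bottom vertex and when the parity of $n$ changes, where the sign and quantum-number conventions must line up exactly with the definitions of the $\varpi$ elements.
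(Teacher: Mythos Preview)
Your overall outline is sensible, and you correctly dispose of convolutivity and~\eqref{eqn:Rouquier-For-new}. However, your organization of the main computation contains a genuine error that would derail the argument.

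You claim that $\delta_c \circ \theta_w + \theta_w \circ \delta_c$ is a nonzero ``off-diagonal'' contribution measuring the failure of the $\LL$ maps to commute with the $w$-twist, and that $\delta''_w$ is designed to cancel it. In fact this anticommutator vanishes identically: $\theta_w$ is graded-central by~\eqref{eqn:Theta-center} (the proof of that identity does not depend on the choice of basis, so it applies equally to $\theta_w = \sum_i w(e_i) \otimes \id \otimes \check e_i$), and $\delta_c$ has odd cohomological degree. The same reasoning kills $\theta_w \circ \delta_{-1} + \delta_{-1} \circ \theta_w$. So $\theta_w$ contributes to~\eqref{eq:tdelta-differential-new} only through $\kappa(\theta_w)$, not through any anticommutator.

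The correct bookkeeping is by \emph{chain degree} (cohomological degree minus internal degree of the $\uEnd_\BE$ component): $\delta_c$, $\theta_w$, $\delta_{-1}$ have chain degrees $1$, $0$, $-1$ respectively, and $\Theta$, $\kappa(\theta_w)$ have chain degree $0$. In chain degrees $\pm 1$ and $\pm 2$ the identity is immediate as above. In chain degree $0$ the identity reads $\delta_c \circ \delta_{-1} + \delta_{-1} \circ \delta_c = \Theta - \kappa(\theta_w)$, and in chain degree $-2$ it reads $\delta_{-1} \circ \delta_{-1} = 0$; these two are the real content. Expanding $\delta_{-1} = \delta' - \delta''_w + \delta_\bot$, the paper proves that on generic components (away from $\delta_\bot$) the identities $\delta_c \delta' + \delta' \delta_c = \Theta$ and $\delta_c \delta''_x + \delta''_x \delta_c = \kappa(\theta_x)$ hold \emph{separately} for any $x \in W$ on self-loops, and both vanish on cross-terms; likewise $\delta' \delta'$, $\delta''_x \delta''_x$, and $\delta' \delta''_x + \delta''_x \delta'$ are controlled separately. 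So $\delta'$ is not designed to cancel a $\theta_w$-anticommutator; it is designed so that $\delta_c \delta' + \delta' \delta_c$ \emph{equals} $\Theta$ on each $B_{\uov{n}}$, via~\eqref{eq:updown2}--\eqref{eq:updown1} and the identities~\eqref{eq:diagonal-2-q-identity-1-new}--\eqref{eq:diagonal-2-q-identity-2-new}. The term $\delta''_w$ is there to produce exactly $\kappa(\theta_w)$, and $\delta_\bot$ handles the bottom vertex where the generic pattern for $\delta' - \delta''_w$ fails (here, and only here, the specific value $x = w$ matters). Once you reorganize along these lines, the verification becomes the case analysis the paper carries out.
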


The proof of this proposition occupies the rest of this section. It is very technical, but none of its details will be needed later on. Of course,~\eqref{eqn:Rouquier-For-new} is obvious once we have proved that~$\tD_{\uw,\min}$ makes sense as a free-monodromic complex. So, we only have to check~\eqref{eq:tdelta-differential-new}.

We define the \emph{chain degree} of an element in $\uEnd_\FM(\tD_{\uw,\min})$ to be
\begin{multline*}
(\text{cohomological degree of the $\uEnd_\BE$ component})\\ - (\text{internal degree of the $\uEnd_\BE$ component}).
\end{multline*}
For example, $\delta_c$, $\theta_w$, and $\delta_{-1}$ have chain degrees $1$, $0$, and $-1$ respectively.  

We will look at~\eqref{eq:tdelta-differential-new} in each chain degree $d$ separately. For $|d| > 2$, \eqref{eq:tdelta-differential-new} reads $0 = 0$. For $d = 2$, it reads $\delta_c \circ \delta_c = 0$, which holds since $(\Delta_{\uw, \min}, \delta_c)$ is a complex. For $d = 1$, it reads $\theta_w \circ \delta_c + \delta_c \circ \theta_w = 0$, which holds since $\theta_w$ anticommutes with elements of odd cohomological degree, by~\eqref{eqn:Theta-center}. The relation holds for $d = -1$ for the same reason.

For $d \in \{0, -2\}$, we will first consider the ``generic components'' of $\tdelta_\uw \circ \tdelta_\uw + \kappa(\tdelta_\uw)$, i.e.,~those not involving $\delta_\bot$. For generic components, \eqref{eq:tdelta-differential-new} will follow from separate relations involving $\delta'$ and $\delta''_x$, for any $x \in W$. Since these components are symmetric in $u$ and $v$, it suffices to consider those with source $B_{\uov{n}}$.

%--------------------------------------------------------------------------
\subsection{$d = 0$, generic components}
%--------------------------------------------------------------------------

In this case, the generic components of $\tdelta_\uw \circ \tdelta_\uw + \kappa(\tdelta_\uw)$ are those with source $B_{\uov{n}}$ or $B_{\vov{n}}$, $0 \le n \le m-2$. Here, \eqref{eq:tdelta-differential-new} follows from the two lemmas below (with $x = w$), which address self-loop (e.g.~$B_{\uov{n}} \leadsto B_{\uov{n}}$) and cross-term (e.g.~ $B_{\uov{n}} \leadsto B_{\vov{n}}$) components, respectively.

\begin{lem}
\label{lem:d=0-1-new}
 We have
 \begin{align*}
  \delta' \circ \delta_c + \delta_c \circ \delta' &= \Theta \\
  \delta''_x \circ \delta_c + \delta_c \circ \delta''_x &= \kappa(\theta_x), \mbox{ all } x \in W
 \end{align*}
 for components $B_{\uov{n}} \leadsto B_{\uov{n}}$ and $B_{\vov{n}} \leadsto B_{\vov{n}}$, $0 \le n \le m-2$.
\end{lem}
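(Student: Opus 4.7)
Plan: Both identities will be verified by direct computation at each self-loop $B_{\uov{n}} \leadsto B_{\uov{n}}$, the case $B_{\vov{n}}$ following by symmetry, for $0 \le n \le m-2$.

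For $n = 0$ the computation is short: only $(\delta_c \circ \delta')|_{B_1 \leadsto B_1}$ is nonzero (since $\delta_c$ vanishes on $B_1$), and by \eqref{eq:updown2} applied at $n = 0$ the two ascending--descending compositions through $B_u$ and $B_v$ evaluate to $\alpha_u \otimes \varpi_u + \alpha_v \otimes \varpi_v$, which is $\Theta$ restricted to the self-loop at $B_1$ in view of the dual-basis relation $\langle \alpha_{u'}, \varpi_{v'}\rangle = \delta_{u'v'}$. The argument for $\delta''_x$ is identical after twisting each $\varpi$ by $x^{-1}$.

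For $1 \le n \le m-2$, the operator $(\delta' \circ \delta_c + \delta_c \circ \delta')|_{B_{\uov{n}} \leadsto B_{\uov{n}}}$ is a sum of four contributions: two ``ascent-then-descent'' compositions through $B_{\uov{n+1}}$ and $B_{\vov{n+1}}$, and two ``descent-then-ascent'' compositions through $B_{\uov{n-1}}$ and $B_{\vov{n-1}}$; analogously for $\delta''_x$. I would expand each two-step $\LL$-composition using \eqref{eq:updown2}, \eqref{eq:updown1}, \eqref{eq:updown3p}, and \eqref{eq:updown4p}, obtaining in every case a polynomial-box term of the form $\id \star \alpha$ or $\alpha \star \id$ plus a scalar multiple of a breaking morphism $\Br_r$ or $\Br_l$. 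Multiplying by the $\varpi$-coefficients of $\delta'$ (respectively $\delta''_x$) and collecting, the breaking-morphism contributions cancel using the recursion \eqref{eq:varpi-s-t-n} together with the quadratic identity \eqref{eq:2q-square-identity} for the 2-colored quantum numbers. The remaining polynomial-box contributions are then identified, after using \eqref{eq:polys} to convert between $\id \star \alpha$ and $\alpha \star \id$ as needed, with $\Theta$ in the first case and with $\kappa(\theta_x)$ in the second.

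The main obstacle is the breaking-term cancellation, which is parity-sensitive and depends critically on the precise form of the $\varpi$-coefficients in the definitions of $\delta'$ and $\delta''_x$. Conceptually, the different subscript-versus-superscript conventions in the two operators are exactly what distinguishes the right-hand sides $\Theta = \sum_i (\id \star e_i) \otimes \check e_i$ and $\kappa(\theta_x) = \sum_i (e_i \star \id) \otimes x^{-1}(\check e_i)$.
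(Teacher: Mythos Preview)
Your overall strategy---direct computation at each self-loop, expanding the two-step $\LL$-compositions and comparing with $\Theta$ or $\kappa(\theta_x)$---is the paper's approach, and your $n=0$ treatment is essentially what the paper leaves implicit. But your plan for $n\ge 1$ contains two concrete errors.

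First, the relations \eqref{eq:updown3p} and \eqref{eq:updown4p} are not used here. They govern the ``off-diagonal'' compositions $B_{\uov{n}}\to B_{\vov{n}}$ and enter in the \emph{next} lemma on cross-terms. For the self-loop at $B_{\uov{n}}$, the four paths give exactly $\Br_r$, $\Br_l$, $\TD_r$, $\TD_l$; only \eqref{eq:updown2} and \eqref{eq:updown1} are needed, to rewrite $\TD_r$ and $\TD_l$ as a polynomial-box term plus a multiple of $\Br_r$ or $\Br_l$.

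Second---and this is the real gap---the breaking-morphism contributions do \emph{not} cancel on the left-hand side. After expanding $\TD_r,\TD_l$ via \eqref{eq:updown2}--\eqref{eq:updown1}, the left-hand side has the shape
\[
(\id\star\alpha)\otimes(\cdot)+(\alpha\star\id)\otimes(\cdot)+\Br_r\otimes(\cdot)+\Br_l\otimes(\cdot),
\]
with nonzero $\Br$-coefficients. The point is that the right-hand side acquires $\Br$-terms too: one writes $\Theta$ (resp.\ $\kappa(\theta_x)$) in the basis $\{\alpha_v,\alpha_{u,n+1}\}$ and then applies \eqref{eq:polys} to $\id\star\alpha_{u,n+1}$ (resp.\ $\alpha_{u,n+1}\star\id$), which produces exactly the $\Br_r,\Br_l$ terms needed. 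The polynomial-box terms then match directly, and equality of the $\Br$-coefficients reduces to the identities \eqref{eq:diagonal-2-q-identity-1-new}--\eqref{eq:diagonal-2-q-identity-2-new}, which follow from the recursion \eqref{eq:2q-recursion-alt}. The quadratic identity \eqref{eq:2q-square-identity} you cite is the one that controls the cross-term lemma, not this one. So your outline has the role of \eqref{eq:polys} backwards: it is used on the right-hand side to \emph{produce} the $\Br$-terms to match, not on the left to massage box terms after a (nonexistent) cancellation.
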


\begin{proof}
 We compute each side of the desired equalities, in the case of the components $B_{\uov{n}} \leadsto B_{\uov{n}}$ with $n>0$. (As mentioned above, the case $n=0$ and the case of components $B_{\vov{n}} \leadsto B_{\vov{n}}$ are similar.)
 
 \emph{First formula, $n$ odd}: The relevant components (of $\delta_c$ and $\delta'$) are as follows:
 \[
  \begin{tikzcd}
   B_{\uov{n-1}} \ar[d, "\otimes \varpi_u^{v,n}"', bend right=20] & B_{\vov{n-1}} \ar[dl, "\otimes \varpi_{u,n}^v" near start, bend left=20] \\
   B_{\uov{n}} \ar[u] \ar[ur] \ar[d, "\otimes \varpi_v^{u,n+1}"', bend right=20] \ar[dr, "\otimes \varpi_{u,n+1}^v" near end, bend left=20] \\
   B_{\uov{n+1}} \ar[u] & B_{\vov{n+1}}. \ar[ul]
  \end{tikzcd}
 \]
 (The signs are omitted here, as they always cancel.) So on $B_{\uov{n}}$ we have
 \[
  \delta' \circ \delta_c + \delta_c \circ \delta' = \Br_r \otimes \varpi_u^{v,n} + \Br_l \otimes \varpi_{u,n}^v + \TD_r \otimes \varpi_v^{u,n+1} + \TD_l \otimes \varpi_{u,n+1}^v.
 \]
 By \eqref{eq:updown2}--\eqref{eq:updown1}, this equals
 \begin{multline*}
  (\id \star \alpha_v) \otimes \varpi_v^{u,n+1} + (\alpha_v \star \id) \otimes \varpi_{u,n+1}^v \\
  + \Br_r \otimes \left( \frac{[n-1]_u}{[n]} \varpi_v^{u,n+1} + \varpi_u^{v,n} \right) + \Br_l \otimes \left( \frac{[n-1]_u}{[n]} \varpi_{u,n+1}^v + \varpi_{u,n}^v \right).
 \end{multline*}
 Meanwhile, using the basis $\{ \alpha_v, \alpha_{u,n+1} \}$,
 \begin{multline*}
  \Theta_{B_{\uov{n}}} = (\id \star \alpha_v) \otimes \varpi_v^{u,n+1}
  + (\id \star \alpha_{u,n+1}) \otimes \varpi_{u,n+1}^v \\
  = (\id \star \alpha_v) \otimes \varpi_v^{u,n+1} + (\alpha_v \star \id) \otimes \varpi_{u,n+1}^v \\
  + \Br_r \otimes \la \alpha_u^\vee, \alpha_{u,n+1} \ra \varpi_{u,n+1}^v + \Br_l \otimes [2]_u \varpi_{u,n+1}^v.
 \end{multline*}
 Here, the second equality follows from \eqref{eq:polys}. We will see below that the two sides agree.
 
 \emph{First formula, $n$ even}: The relevant components (of $\delta_c$ and $\delta'$) are as follows:
 \[
  \begin{tikzcd}
   B_{\uov{n-1}} \ar[d, "\otimes \varpi_v^{u,n}"', bend right=20] & B_{\vov{n-1}} \ar[dl, "\otimes \varpi_{v,n}^u" near start, bend left=20] \\
   B_{\uov{n}} \ar[u] \ar[ur] \ar[d, "\otimes \varpi_u^{v,n+1}"', bend right=20] \ar[dr, "\otimes \varpi_{v,n+1}^u" near end, bend left=20] \\
   B_{\uov{n+1}} \ar[u] & B_{\vov{n+1}}. \ar[ul]
  \end{tikzcd}
 \]
 Again using \eqref{eq:updown2}--\eqref{eq:updown1} we see that on $B_{\uov{n}}$ we have
 \begin{multline*}
  \delta' \circ \delta_c + \delta_c \circ \delta' = \Br_r \otimes \varpi_v^{u,n} + \Br_l \otimes \varpi_{v,n}^u + \TD_r \otimes \varpi_u^{v,n+1} + \TD_l \otimes \varpi_{v,n+1}^u \\
  = (\id \star \alpha_u) \otimes \varpi_u^{v,n+1} + (\alpha_v \star \id) \otimes \varpi_{v,n+1}^u \\
  + \Br_r \otimes \left( \frac{[n-1]}{[n]_u} \varpi_u^{v,n+1} + \varpi_v^{u,n} \right) + \Br_l \otimes \left( \frac{[n-1]}{[n]_v} \varpi_{v,n+1}^u + \varpi_{v,n}^u \right).
 \end{multline*}
 Meanwhile, using the basis $\{ \alpha_u, \alpha_{v,n+1} \}$,
 \begin{multline*}
  \Theta_{B_{\uov{n}}} = (\id \star \alpha_u) \otimes \varpi_u^{v,n+1} + (\id \star \alpha_{v,n+1}) \otimes \varpi_{v,n+1}^u \\
  = (\id \star \alpha_u) \otimes \varpi_u^{v,n+1} + (\alpha_v \star \id) \otimes \varpi_{v,n+1}^u \\
  + \Br_r \otimes \la \alpha_v^\vee, \alpha_{v,n+1} \ra \varpi_{v,n+1}^u + \Br_l \otimes [2]_u \varpi_{v,n+1}^u.
 \end{multline*}
 
 In either parity, the main terms (involving $\id \star \alpha_u$ or $\alpha_v \star \id$) agree on the two sides, and the desired equality of lower-order terms (involving $\Br_l$ or $\Br_r$) follows from formulas involving $2$-colored quantum numbers. Up to switching $u$ and $v$, they can be summarized as follows:
 \begin{equation} \label{eq:diagonal-2-q-identity-1-new}
  \la \alpha_u^\vee, \alpha_{u,n+1} \ra \varpi_{u,n+1}^v
  =
   \frac{[n-1]_u}{[n]_v} \varpi_v^{u,n+1} + \varpi_u^{v,n}.
 \end{equation}
 \begin{equation} \label{eq:diagonal-2-q-identity-2-new}
  \frac{[n-1]_u}{[n]_u} \varpi_{u,n+1}^v + \varpi_{u,n}^v
  =
  \begin{cases}
   [2]_u \varpi_{u,n+1}^v &\text{if $n$ is odd;} \\
   [2]_v \varpi_{u,n+1}^v &\text{if $n$ is even.}
  \end{cases}
 \end{equation}
 By \eqref{eq:alpha-s-t-n} and \eqref{eq:2q-recursion-alt},
 \[
  \la \alpha_u^\vee, \alpha_{u, n+1} \ra = [n+1]_u \cdot 2 - [n]_v \cdot [2]_u = [n+1]_u - [n-1]_u.
 \]
 This and \eqref{eq:varpi-s-t-n} show \eqref{eq:diagonal-2-q-identity-1-new}. Using \eqref{eq:varpi-s-t-n}, \eqref{eq:diagonal-2-q-identity-2-new} reduces immediately to \eqref{eq:2q-recursion}.
 
 \emph{Second formula}: The relevant components (of $\delta_c$ and $\delta''_x$) are as follows:
 \[
  \begin{tikzcd}
   B_{\uov{n-1}} \ar[d, "\otimes x^{-1}(\varpi_{u,n}^v)"', bend right=20] & B_{\vov{n-1}} \ar[dl, "\otimes x^{-1}(\varpi_u^{v,n})" near start, bend left=20] \\
   B_{\uov{n}} \ar[u] \ar[ur] \ar[d, "\otimes x^{-1}(\varpi_{u,n+1}^v)"', bend right=20] \ar[dr, "\otimes x^{-1}(\varpi_v^{u,n+1})" near end, bend left=20] \\
   B_{\uov{n+1}} \ar[u] & B_{\vov{n+1}}. \ar[ul]
  \end{tikzcd}
 \]
 So
 \begin{multline*}
  \delta''_x \circ \delta_c + \delta_c \circ \delta''_x = \Br_r \otimes x^{-1}(\varpi_{u,n}^v) + \Br_l \otimes x^{-1}(\varpi_u^{v,n}) \\
  + \TD_r \otimes x^{-1}(\varpi_{u,n+1}^v) + \TD_l \otimes x^{-1}(\varpi_v^{u,n+1}).
 \end{multline*}
 By \eqref{eq:updown2}--\eqref{eq:updown1},
this equals
 \begin{multline*}
  (\id \star \alpha_u) \otimes x^{-1}(\varpi_{u,n+1}^v) + (\alpha_v \star \id) \otimes x^{-1}(\varpi_v^{u,n+1}) \\
  + \Br_r \otimes \left( \frac{[n-1]}{[n]_u} x^{-1}(\varpi_{u,n+1}^v) + x^{-1}(\varpi_{u,n}^v) \right) \\
  +
   \Br_l \otimes \left( \frac{[n-1]}{[n]_v}x^{-1}(\varpi_v^{u,n+1}) + x^{-1}(\varpi_u^{v,n}) \right) 
 \end{multline*}
 if $n$ is even, and
  \begin{multline*}
  (\id \star \alpha_v) \otimes x^{-1}(\varpi_{u,n+1}^v) + (\alpha_v \star \id) \otimes x^{-1}(\varpi_v^{u,n+1}) \\
  + \Br_r \otimes \left( \frac{[n-1]_u}{[n]} x^{-1}(\varpi_{u,n+1}^v) + x^{-1}(\varpi_{u,n}^v) \right) \\
  +
   \Br_l \otimes \left( \frac{[n-1]_u}{[n]}x^{-1}(\varpi_v^{u,n+1}) + x^{-1}(\varpi_u^{v,n}) \right) 
 \end{multline*}
 if $n$ is odd.
 Meanwhile, using the basis $\{ \alpha_{u,n+1}, \alpha_v \}$, we see that
\[
  \kappa(\theta_x) = (\alpha_{u,n+1} \star \id) \otimes x^{-1}(\varpi_{u,n+1}^v) + (\alpha_v \star \id) \otimes x^{-1}(\varpi_v^{u,n+1}),
\]
which equals
\begin{multline*}
(\id \star \alpha_u) \otimes x^{-1}(\varpi_{u,n+1}^v) + (\alpha_v \star \id) \otimes x^{-1}(\varpi_v^{u,n+1}) \\
 + \Br_l \otimes \la \alpha_u^\vee, \alpha_{u,n+1} \ra x^{-1}(\varpi_{u,n+1}^v) + \Br_r \otimes [2]_v x^{-1}(\varpi_{u,n+1}^v)
\end{multline*}
if $n$ is even and
\begin{multline*}
(\id \star \alpha_v) \otimes x^{-1}(\varpi_{u,n+1}^v) + (\alpha_v \star \id) \otimes x^{-1}(\varpi_v^{u,n+1}) \\
 + \Br_l \otimes \la \alpha_u^\vee, \alpha_{u,n+1} \ra x^{-1}(\varpi_{u,n+1}^v) +  \Br_r \otimes [2]_u x^{-1}(\varpi_{u,n+1}^v)
\end{multline*}
if $n$ is odd.
 Again the main terms agree on the two sides. The equality of lower-order terms reduces to formulas independent of $x$; in fact, they are exactly \eqref{eq:diagonal-2-q-identity-1-new} and \eqref{eq:diagonal-2-q-identity-2-new}.
\end{proof}

\begin{lem}
\label{lem:d=0-2-new}
 We have
 \begin{align*}
  \delta' \circ \delta_c + \delta_c \circ \delta' &= 0 \\
  \delta''_x \circ \delta_c + \delta_c \circ \delta''_x &= 0, \mbox{ all } x \in W
 \end{align*}
 for components $B_{\uov{n}} \leadsto B_{\vov{n}}$ and $B_{\vov{n}} \leadsto B_{\uov{n}}$, $1 \le n \le m-2$.
\end{lem}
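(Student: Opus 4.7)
The plan is to verify both identities componentwise. By the $u \leftrightarrow v$ symmetry it suffices to treat the component with source $B_{\uov{n}}$ and target $B_{\vov{n}}$ for $1 \le n \le m-2$. Four two-step paths contribute: two pass through an intermediate term at length $n+1$ (namely $B_{\uov{n+1}}$ or $B_{\vov{n+1}}$; these come from $\delta_c \circ \delta'$) and two pass through length $n-1$ (namely $B_{\uov{n-1}}$ or $B_{\vov{n-1}}$; these come from $\delta' \circ \delta_c$). Using the explicit descriptions of $\delta_c$ and $\delta'$ collected in the proof of Lemma~\ref{lem:d=0-1-new} (together with analogous components at the shifted sources $B_{\uov{n\pm 1}}$, $B_{\vov{n\pm 1}}$, obtained by re-indexing), each contribution takes the form $(\LL_b^{\vov{n}} \circ \LL_{\uov{n}}^b) \otimes \omega$ for some $b$ of length $n \pm 1$ and some $\omega \in V$ arising as a signed product of one $\varpi$-element (from $\delta'$) with a sign coming from $\delta_c$.

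Next, I would rewrite the two length-$(n+1)$ compositions $\LL_{\uov{n+1}}^{\vov{n}} \circ \LL_{\uov{n}}^{\uov{n+1}}$ and $\LL_{\vov{n+1}}^{\vov{n}} \circ \LL_{\uov{n}}^{\vov{n+1}}$ via~\eqref{eq:updown3p} and~\eqref{eq:updown4p} (for $n \ge 2$; the $n = 1$ case uses the specializations of~\eqref{eq:updown3}--\eqref{eq:updown4}) in terms of the length-$(n-1)$ compositions
\[
 A := \LL_{\uov{n-1}}^{\vov{n}} \circ \LL_{\uov{n}}^{\uov{n-1}}, \qquad B := \LL_{\vov{n-1}}^{\vov{n}} \circ \LL_{\uov{n}}^{\vov{n-1}}.
\]
Collecting the four contributions yields an expression of the form $A \otimes C_A + B \otimes C_B$ (or just $(\LL_1^{\vov{1}} \circ \LL_{\uov{1}}^1) \otimes C$ when $n = 1$) with $C_A, C_B \in V$. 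After substituting the formulas~\eqref{eq:varpi-s-t-n} and separating the coefficients of $\varpi_u$ and $\varpi_v$, the vanishing $C_A = C_B = 0$ reduces to the $2$-colored quantum-number identity~\eqref{eq:2q-square-identity}, together with~\eqref{eq:2q-s-t-odd}--\eqref{eq:2q-s-t-even}. The second identity $\delta''_x \circ \delta_c + \delta_c \circ \delta''_x = 0$ is handled in the same way: one replaces each $\varpi$-element by its $x^{-1}$-image and adjusts for the signs in the components of $\delta''_x$ (which differ from those in $\delta'$ by an overall parity pattern $(-1)^n$ on half the components), and the same quantum-number identities again suffice.

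The hard part will be careful sign bookkeeping: the Rouquier differential $\delta_c$ carries a sign $(-1)^{\ell(\mathrm{source})+1}$ on vertical components, the definition of $\delta'$ (respectively $\delta''_x$) carries sign patterns depending on the parity of the source index $n$, and~\eqref{eq:updown3p}--\eqref{eq:updown4p} introduce coefficients involving the last letter of the intermediate expression. Once these signs and indices are correctly tracked, the reduction of $C_A$ and $C_B$ to~\eqref{eq:2q-square-identity} becomes a routine calculation, which one would naturally carry out case-by-case on the parity of $n$.
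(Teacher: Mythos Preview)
Your proposal is correct and follows essentially the same approach as the paper: both proceed componentwise (reducing by the $u\leftrightarrow v$ symmetry), collect the four two-step contributions, rewrite the length-$(n{+}1)$ compositions via~\eqref{eq:updown3}--\eqref{eq:updown4} in terms of the length-$(n{-}1)$ ones, and then reduce the resulting vanishing to identities among the $\varpi$-elements that follow from~\eqref{eq:varpi-s-t-n} and~\eqref{eq:2q-square-identity}. The paper records the two needed identities explicitly (one of which is immediate from~\eqref{eq:varpi-s-t-n}, the other from~\eqref{eq:2q-square-identity}) and splits into cases by the parity of $n$, but this is exactly the ``routine calculation'' you anticipate.
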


The proof of Lemma~\ref{lem:d=0-2-new} will be similar to that of Lemma~\ref{lem:d=0-1-new}, now using \eqref{eq:updown3}--\eqref{eq:updown4} instead of \eqref{eq:updown2}--\eqref{eq:updown1}. 
We simplify the notation further compared to
the proof of Lemma~\ref{lem:d=0-1-new}: we indicate only the $R^\vee$ parts of our morphisms and possible signs, and omit the components belonging to $\uEnd_\BE(\tD_{\uw,\min})$ (which should be clear from context).

\begin{proof}
We treat the case $n>1$, and leave the modifications in the case $n=1$ to the reader.

 \emph{First formula, $n$ odd}: The relevant components (of $\delta_c$ and $\delta'$) are as follows:
 \[
  \begin{tikzcd}
   B_{\uov{n-1}} \ar[dr, "\otimes \varpi_{v,n}^u"] \\
   B_{\uov{n}} \ar[u] \ar[dr, "\otimes \varpi_{u,n+1}^v"'] & B_{\vov{n}} \\
   & B_{\vov{n+1}} \ar[u, "-"']
  \end{tikzcd}
  +
  \begin{tikzcd}
    & B_{\vov{n-1}} \ar[d, "\otimes \varpi_v^{u,n}"] \\
   B_{\uov{n}} \ar[ur] \ar[d, "- \otimes \varpi_v^{u,n+1}"'] & B_{\vov{n}}. \\
   B_{\uov{n+1}} \ar[ur] &
  \end{tikzcd}
 \]
 So
 \[
  \delta' \circ \delta_c + \delta_c \circ \delta' = \otimes \varpi_{v,n}^u + \otimes \varpi_v^{u,n} + \otimes (-\varpi_{u,n+1}^v) + \otimes (-\varpi_v^{u,n+1}).
 \]
 By \eqref{eq:updown3}--\eqref{eq:updown4}, this equals
 \[
   \otimes \left( \varpi_{v,n}^u - \varpi_{u,n+1}^v - \frac{1}{[n]}\varpi_v^{u,n+1} \right) + \otimes \left( \varpi_v^{u,n} - \varpi_v^{u,n+1} - \frac{1}{[n]}\varpi_{u,n+1}^v \right).
 \]
 We will see below that this vanishes.
 
 \emph{First formula, $n$ even}: The relevant components (of $\delta_c$ and $\delta'$) are as follows:
 \[
  \begin{tikzcd}
   B_{\uov{n-1}} \ar[dr, "\otimes \varpi_{u,n}^v"] \\
   B_{\uov{n}} \ar[u, "-"] \ar[dr, "\otimes \varpi_{v,n+1}^u"'] & B_{\vov{n}} \\
   & B_{\vov{n+1}} \ar[u]
  \end{tikzcd}
  +
  \begin{tikzcd}
    & B_{\vov{n-1}} \ar[d, "- \otimes \varpi_u^{v,n}"] \\
   B_{\uov{n}} \ar[ur] \ar[d, "\otimes \varpi_u^{v,n+1}"'] & B_{\vov{n}}. \\
   B_{\uov{n+1}} \ar[ur] &
  \end{tikzcd}
 \]
 So
 \[
  \delta' \circ \delta_c + \delta_c \circ \delta' = \otimes (-\varpi_{u,n}^v) + \otimes (-\varpi_u^{v,n}) + \otimes \varpi_{v,n+1}^u + \otimes \varpi_u^{v,n+1}.
 \]
 By \eqref{eq:updown3}--\eqref{eq:updown4}, this equals
 \[
   \otimes \left(-\varpi_{u,n}^v + \varpi_{v,n+1}^u + \frac{1}{[n]_u}\varpi_u^{v,n+1} \right) + \otimes \left( -\varpi_u^{v,n} + \varpi_u^{v,n+1} + \frac{1}{[n]_v}\varpi_{v,n+1}^u \right).
 \]
 
 Thus in either parity, the first formula follows from the following formulas, up to switching $u$ and $v$:
 \begin{gather} \label{eq:cross-term-2q-identity-one-new}
  \varpi_{v,n}^u - \varpi_{u,n+1}^v = \frac{1}{[n]_v}\varpi_v^{u,n+1}, \\
 \label{eq:cross-term-2q-identity-two-new}
  \varpi_v^{u,n} - \varpi_v^{u,n+1} = \frac{1}{[n]_u}\varpi_{u,n+1}^v.
 \end{gather}
 Using \eqref{eq:varpi-s-t-n}, we get \eqref{eq:cross-term-2q-identity-one-new} immediately, while \eqref{eq:cross-term-2q-identity-two-new} reduces to \eqref{eq:2q-square-identity}.
 
 \emph{Second formula}: The relevant components (of $\delta_c$ and $\delta''_x$) are as follows:
 \[
  \begin{tikzcd}
   B_{\uov{n-1}} \ar[dr, "\otimes x^{-1}(\varpi_v^{u,n})"] \\
   B_{\uov{n}} \ar[u, "(-1)^{n+1}"] \ar[dr, "\otimes x^{-1}(\varpi_v^{u,n+1})"'] & B_{\vov{n}} \\
   & B_{\vov{n+1}} \ar[u, "(-1)^n"']
  \end{tikzcd}
  +
  \begin{tikzcd}
    & B_{\vov{n-1}} \ar[d, "(-1)^{n+1} \otimes x^{-1}(\varpi_{v,n}^u)"] \\
   B_{\uov{n}} \ar[ur] \ar[d, "(-1)^n \otimes x^{-1}(\varpi_{u,n+1}^v)"'] & B_{\vov{n}}. \\
   B_{\uov{n+1}} \ar[ur] &
  \end{tikzcd}
 \]
 So
 \begin{multline*}
  (-1)^n(\delta''_x \circ \delta_c + \delta_c \circ \delta''_x) = \otimes (-x^{-1}(\varpi_v^{u,n})) + \otimes (-x^{-1}(\varpi_{v,n}^u)) \\
  + \otimes x^{-1}(\varpi_v^{u,n+1}) + \otimes x^{-1}(\varpi_{u,n+1}^v).
 \end{multline*}
 By \eqref{eq:updown3}--\eqref{eq:updown4}, this equals
 \begin{multline*}
   \otimes \left( -x^{-1}(\varpi_v^{u,n}) + x^{-1}(\varpi_v^{u,n+1}) + \frac{1}{[n]_u}x^{-1}(\varpi_{u,n+1}^v) \right) \\
   + \otimes \left( -x^{-1}(\varpi_{v,n}^u) + x^{-1}(\varpi_{u,n+1}^v) + \frac{1}{[n]_v}x^{-1}(\varpi_v^{u,n+1}) \right).
 \end{multline*}
 The two terms vanish by~\eqref{eq:cross-term-2q-identity-two-new} and~\eqref{eq:cross-term-2q-identity-one-new}, respectively.
\end{proof}

%--------------------------------------------------------------------------
\subsection{$d = -2$, generic components}
%--------------------------------------------------------------------------

Here we fix $n<m-2$.
Below we will use the following fact, which can be checked explicitly:
 \begin{equation}
 \label{eqn:Winv-new}
 \text{the element $\frac{1}{[2]_s}\varpi_s^2 - \varpi_s\varpi_t + \frac{1}{[2]_t}\varpi_t^2 \in \mathrm{Sym}^2(V)$ is $W$-invariant.}
 \end{equation}

In the present setting, \eqref{eq:tdelta-differential-new} reads $\delta_{-1} \circ \delta_{-1} = 0$. The relevant generic components of $\delta_{-1} \circ \delta_{-1}$ come in two shapes:

\begin{enumerate}
\item
\emph{``Triangle''}:
\[
 \delta' = 
 \begin{array}{cc}
 \begin{tikzcd}
  B_{\uov{n}} \ar[d, "- \otimes \varpi_v^{u,n+1}"'] \ar[dr, "\otimes \varpi_{u,n+1}^v"] \\
  B_{\uov{n+1}} \ar[d, "\otimes \varpi_u^{v,n+2}"'] & B_{\vov{n+1}} \ar[dl, "\otimes \varpi_{u,n+2}^v"] \\
  B_{\uov{n+2}}
 \end{tikzcd}
 &
 \begin{tikzcd}
  B_{\uov{n}} \ar[d, "\otimes \varpi_u^{v,n+1}"'] \ar[dr, "\otimes \varpi_{v,n+1}^u"] \\
  B_{\uov{n+1}} \ar[d, "- \otimes \varpi_v^{u,n+2}"'] & B_{\vov{n+1}} \ar[dl, "\otimes \varpi_{v,n+2}^u"] \\
  B_{\uov{n+2}}
 \end{tikzcd} \\
 \text{if $n$ is odd,} & \text{if $n$ is even.}
 \end{array}
\]
\[
 \delta''_x = 
 \begin{tikzcd}
  B_{\uov{n}} \ar[d, "(-1)^n \otimes x^{-1}(\varpi_{u,n+1}^v)"'] \ar[dr, "\otimes x^{-1}(\varpi_v^{u,n+1})"] \\
  B_{\uov{n+1}} \ar[d, "(-1)^{n+1} \otimes x^{-1}(\varpi_{u,n+2}^v)"'] & B_{\vov{n+1}} \ar[dl, "\otimes x^{-1}(\varpi_u^{v,n+2})"] \\
  B_{\uov{n+2}};
 \end{tikzcd}
\]
\item
\emph{``Parallelogram''}:
\[
 \delta' =
 \begin{array}{cc}
 \begin{tikzcd}
  B_{\uov{n}} \ar[d, "- \otimes \varpi_v^{u,n+1}"'] \ar[dr, "\otimes \varpi_{u,n+1}^v"] \\
  B_{\uov{n+1}} \ar[dr, "\otimes \varpi_{v,n+2}^u"'] & B_{\vov{n+1}} \ar[d, "\otimes \varpi_v^{u,n+2}"] \\
  & B_{\vov{n+2}}
 \end{tikzcd}
 &
 \begin{tikzcd}
  B_{\uov{n}} \ar[d, "\otimes \varpi_u^{v,n+1}"'] \ar[dr, "\otimes \varpi_{v,n+1}^u"] \\
  B_{\uov{n+1}} \ar[dr, "\otimes \varpi_{u,n+2}^v"'] & B_{\vov{n+1}} \ar[d, "- \otimes \varpi_u^{v,n+2}"] \\
  & B_{\vov{n+2}}
 \end{tikzcd} \\
 \text{if $n$ is odd,} & \text{if $n$ is even.}
 \end{array}
\]
\[
 \delta''_x =
 \begin{tikzcd}
  B_{\uov{n}} \ar[d, "(-1)^n \otimes x^{-1}(\varpi_{u,n+1}^v)"'] \ar[dr, "\otimes x^{-1}(\varpi_v^{u,n+1})"] \\
  B_{\uov{n+1}} \ar[dr, "\otimes x^{-1}(\varpi_v^{u,n+2})"'] & B_{\vov{n+1}} \ar[d, "(-1)^{n+1} \otimes x^{-1}(\varpi_{v,n+2}^u)"] \\
  & B_{\vov{n+2}}.
 \end{tikzcd}
\]
\end{enumerate}

Since
\[
 \delta_{-1} \circ \delta_{-1} = \delta' \circ \delta' + \delta''_w \circ \delta''_w - (\delta' \circ \delta''_w + \delta''_w \circ \delta'),
\]
we see that~\eqref{eq:tdelta-differential-new} follows from Lemmas~\ref{lem:tdelta-differential-minus-two-W-invt-new} and~\ref{lem:tdelta-differential-minus-two-cross-terms-new} below.

\begin{lem}
\label{lem:tdelta-differential-minus-two-W-invt-new}
 Let $W$ act on $\uEnd_\BE(\Delta_{\uw,\min}) \otimes R^\vee$ by acting on $R^\vee$. Then $\delta' \circ \delta'$ is $W$-invariant, and $\delta''_x \circ \delta''_x =
 -\delta' \circ \delta'$ for all $x \in W$.
\end{lem}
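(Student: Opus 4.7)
The plan begins with a simple functoriality observation for $\delta''_x$ in $x$. By definition, $\delta''_x$ and $\delta''_1$ have the same $\uEnd_\BE$-components, and the $R^\vee$-coefficients of $\delta''_x$ are obtained from those of $\delta''_1$ by applying $x^{-1}$. Hence in the $W$-action on $\uEnd_\BE(\Delta_{\uw,\min}) \otimes R^\vee$ through the $R^\vee$ factor one has
\[
  \delta''_x \circ \delta''_x \;=\; x^{-1}\bigl(\delta''_1 \circ \delta''_1\bigr),
\]
so the two statements of the lemma together are equivalent to the conjunction of (a) $\delta' \circ \delta'$ is $W$-invariant, and (b) $\delta''_1 \circ \delta''_1 = -\delta' \circ \delta'$. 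I would therefore prove (a) and (b) simultaneously by one explicit computation of the generic components of $\delta' \circ \delta'$ and $\delta''_1 \circ \delta''_1$.

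The nonzero generic components of $\delta'\circ\delta'$ and $\delta''_1\circ\delta''_1$ all have cohomological source-to-target displacement $2$, and fall into the two shapes (``triangle'' and ``parallelogram'') already listed in the $d=-2$ discussion above. For each triangle component $B_{\uov{n}} \to B_{\uov{n+2}}$ (and symmetrically $B_{\vov{n}} \to B_{\vov{n+2}}$), the two contributing paths (through $B_{\uov{n+1}}$ and through $B_{\vov{n+1}}$) yield the same underlying morphism in $\DiagBS(\fh,W)$ by~\eqref{eq:upup}; the analogous statement for parallelograms uses both \eqref{eq:upup} and~\eqref{eq:downdown}. In each case the whole component therefore collapses to a single $\DiagBS$-morphism tensored with an element of $R^\vee$ which is a sum of two $\varpi$-products.

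Substituting the explicit formulas~\eqref{eq:varpi-s-t-n} (together with their $(s,t)$-swapped versions) and simplifying the resulting degree-$2$ polynomial in $\varpi_s$ and $\varpi_t$ by means of the two-colored quantum number identities \eqref{eq:2q-s-t-odd}--\eqref{eq:2q-product-1} and \eqref{eq:2q-square-identity}, the coefficient of every generic component of $\delta' \circ \delta'$ will turn out to be a $\bk$-multiple of the element
\[
  Z \;=\; \tfrac{1}{[2]_s}\varpi_s^{\,2} - \varpi_s\varpi_t + \tfrac{1}{[2]_t}\varpi_t^{\,2},
\]
which is $W$-invariant by~\eqref{eqn:Winv-new}; that settles~(a). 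The same calculation performed for $\delta''_1 \circ \delta''_1$ yields the opposite coefficient on each component, giving~(b). Combined with the initial reduction, this proves the lemma.

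The main obstacle is bookkeeping rather than conceptual: the computation splits according to the parity of $n$ (which controls which $\varpi$-labels appear in $\delta'$) and according to whether the component is of triangle or parallelogram type, giving four cases whose coefficients must be compared term by term in $R^\vee$. Once these cases are organized cleanly, each one reduces after clearing denominators to a single identity among two-colored quantum numbers — in practice, an invocation of~\eqref{eq:2q-square-identity} — and no further ingredient beyond~\eqref{eqn:Winv-new} is required.
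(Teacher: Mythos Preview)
Your proposal is correct and follows essentially the same route as the paper's proof: reduce to a computation in $R^\vee$ by noting that the two paths in each component share the same underlying $\DiagBS$-morphism (this is \eqref{eq:upup}, since the $\LL$-maps in $\delta'$ and $\delta''_x$ increase length; the paper cites \eqref{eq:downdown}, and your claim that the parallelogram case needs both is a harmless slip---only \eqref{eq:upup} is required in either shape), then show that the resulting quadratic in $\varpi_s,\varpi_t$ is a multiple of the $W$-invariant $Z$ of~\eqref{eqn:Winv-new}. The only organizational difference is that you reduce $\delta''_x\circ\delta''_x$ to the case $x=1$ up front via $\delta''_x = x^{-1}(\delta''_1)$, whereas the paper computes $\delta''_x\circ\delta''_x$ directly as $-x^{-1}(\delta'\circ\delta')$ and then invokes the $W$-invariance at the end; the two are equivalent.
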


\begin{proof}
Using~\eqref{eq:downdown}, the lemma reduces to the analogous claims inside $(R^\vee)^{0}_{-4}$. 
 In the calculations below, we only show the $R^\vee$ part.
 
 \emph{``Triangle''}: For $n$ odd, by \eqref{eq:varpi-s-t-n} we have
 \begin{multline*}
  \delta' \circ \delta' = -\varpi_v^{u,n+1}\varpi_u^{v,n+2} + \varpi_{u,n+1}^v\varpi_{u,n+2}^v \\
  = -\left(-\frac{[n]_v}{[n+1]_u}\varpi_u + \varpi_v\right) \left(\varpi_u - \frac{[n+1]_u}{[n+2]_v}\varpi_v\right) + \left(\frac{1}{[n+1]_u}\varpi_u\right) \left(\frac{1}{[n+2]_u}\varpi_u\right) \\
  = \left(\frac{[n]_v}{[n+1]_u} + \frac{1}{[n+1]_u[n+2]_u}\right)\varpi_u^2 - \left(1 + \frac{[n]_v}{[n+2]_v}\right)\varpi_u\varpi_v + \frac{[n+1]_u}{[n+2]_v}\varpi_v^2.
 \end{multline*}
 By~\eqref{eq:2q-recursion-alt},
 \[
  1 + \frac{[n]_v}{[n+2]_v} = [2]_v \frac{[n+1]_u}{[n+2]_v}.
 \]
 By \eqref{eq:2q-square-identity} and \eqref{eq:2q-s-t-odd}--\eqref{eq:2q-s-t-even}, we have
 \[
  [2]_u \left(\frac{[n]_v}{[n+1]_u} + \frac{1}{[n+1]_u[n+2]_u}\right) = [2]_v \frac{[n+1]_u}{[n+2]_v}.
 \]
 Hence $\delta' \circ \delta'$ is $W$-invariant by~\eqref{eqn:Winv-new}.
 The last claim of the lemma is clear:
 \[
  \delta''_x \circ \delta''_x = -x^{-1}(\omega_{u,n+1}^v\omega_{u,n+2}^v - \varpi_v^{u,n+1}\varpi_u^{v,n+2}) = -x^{-1}(\delta' \circ \delta') = -\delta' \circ \delta'.
 \]
 
 For $n$ even, $u$ and $v$ are switched in the components of $\delta'$. 
A very similar calculation shows that $\delta' \circ \delta'$ is again $W$-invariant. Moreover, since 
$1 + \frac{[n]_u}{[n+2]_u} = 1 + \frac{[n]_v}{[n+2]_v}$ (see~\eqref{eq:2q-product-0}), it follows that $\delta' \circ \delta'$ is given by the same formula as in the case $n$ is odd. Then the last statement is again immediate.
 
 \emph{``Parallelogram''}: For $n$ odd, by \eqref{eq:varpi-s-t-n} we have
 \begin{multline*}
  \delta' \circ \delta' = -\varpi_v^{u,n+1}\varpi_{v,n+2}^u + \varpi_{u,n+1}^v\varpi_v^{u,n+2} \\
  = -\left(-\frac{[n]_v}{[n+1]_u}\varpi_u + \varpi_v\right) \left(\frac{1}{[n+2]_v}\varpi_v\right) + \left(\frac{1}{[n+1]_u}\varpi_u\right) \left(-\frac{[n+1]_v}{[n+2]_u}\varpi_u + \varpi_v\right) \\
  = -\frac{[n+1]_v}{[n+1]_u[n+2]_u}\varpi_u^2 + \left(\frac{[n]_v}{[n+1]_u[n+2]_v} + \frac{1}{[n+1]_u}\right)\varpi_u\varpi_v - \frac{1}{[n+2]_v}\varpi_v^2.
 \end{multline*}
 By \eqref{eq:2q-recursion-alt} and~\eqref{eq:2q-s-t-odd}--\eqref{eq:2q-s-t-even} respectively, we have
 \[
  \frac{[n]_v}{[n+1]_u[n+2]_v} + \frac{1}{[n+1]_u} = [2]_u\frac{[n+1]_v}{[n+1]_u[n+2]_u} = [2]_v\frac{1}{[n+2]_v}.
 \]
 Hence $\delta' \circ \delta'$ is $W$-invariant by~\eqref{eqn:Winv-new}.
 The rest of the argument is the same as for the ``triangle'' components.
\end{proof}

\begin{lem} \label{lem:tdelta-differential-minus-two-cross-terms-new}
 We have $\delta' \circ \delta''_x + \delta''_x \circ \delta' = 0$ for all $x \in W$.
\end{lem}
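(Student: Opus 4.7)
The computation will follow the pattern of the proofs of Lemmas~\ref{lem:d=0-2-new} and~\ref{lem:tdelta-differential-minus-two-W-invt-new}. In the present ``$d = -2$'' setting, the generic components of $\delta' \circ \delta''_x + \delta''_x \circ \delta'$ partition into two shape types: ``triangles'' (with source $B_{\uov{n}}$ or $B_{\vov{n}}$ and target $B_{\uov{n+2}}$ or $B_{\vov{n+2}}$) and ``parallelograms'' (with mixed endpoints), and within each shape there are two subcases according to the parity of $n$. By $u \leftrightarrow v$ symmetry it suffices to treat the components with source $B_{\uov{n}}$. In each such case, the two paths contributing to the composition (through $B_{\uov{n+1}}$ and through $B_{\vov{n+1}}$) have identical $\uEnd_\BE$ factors by~\eqref{eq:downdown}, so those factors pull out cleanly and the identity reduces to one in $\mathrm{Sym}^2(V) \subset R^\vee$.

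The plan is to rewrite this reduced identity in the form $\sum_i \phi_x(a_i, b_i) = 0$, where
\[
\phi_x : V \times V \to \mathrm{Sym}^2(V),
\qquad
\phi_x(a,b) = a \cdot x^{-1}(b) - b \cdot x^{-1}(a),
\]
is antisymmetric and the pairs $(a_i, b_i)$ are extracted from the $R^\vee$-coefficients of $\delta'$ on the relevant components. Since $\phi_x$ is antisymmetric, it factors through a linear map $\Lambda^2 V \to \mathrm{Sym}^2(V)$; in particular, the desired identity follows (uniformly in $x$) from the stronger wedge identity $\sum_i a_i \wedge b_i = 0$ in $\Lambda^2 V$.

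All pairs $(a_i, b_i)$ appearing in practice lie inside the $\bk$-plane $\mathrm{span}_\bk(\varpi_u, \varpi_v) \subset V$ (which, by~\eqref{eqn:varpi-defn}, is $W$-stable), so the wedge identity reduces in each case to a single scalar identity on the coefficient of $\varpi_u \wedge \varpi_v$. Expanding with~\eqref{eq:varpi-s-t-n} one finds, for example, in the triangle case with $n$ odd the identity $\frac{1}{[n+2]_u} - \frac{1}{[n+2]_v} = 0$, which holds because $n+2$ is odd and~\eqref{eq:2q-s-t-odd} applies. In the parallelogram case with $n$ odd one obtains an expression that simplifies to $0$ using~\eqref{eq:2q-square-identity} in the form $[n+1]_u [n+1]_v = [n]_u [n+2]_v + 1 = [n]_v [n+2]_u + 1$, and parallel calculations handle the two cases with $n$ even.

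The main delicate point is to correctly track the signs of the components of both $\delta'$ and $\delta''_x$ --- which differ between the $n$-odd and $n$-even cases and which in $\delta''_x$ carry an extra factor $(-1)^n$ --- and to correctly identify the resulting pairs $(a_i, b_i)$ appearing in the antisymmetric sum. Once this bookkeeping is done, each of the four cases reduces to a quantum-number identity of the same general type as those already used in the proofs of Lemmas~\ref{lem:d=0-1-new}, \ref{lem:d=0-2-new} and~\ref{lem:tdelta-differential-minus-two-W-invt-new}.
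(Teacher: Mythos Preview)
Your approach is correct and is genuinely more conceptual than the paper's. The paper proceeds by brute force: in each of the four cases (triangle/parallelogram, $n$ odd/even) it expands everything using \eqref{eq:varpi-s-t-n}, collects the coefficients of $\varpi_u x^{-1}(\varpi_u)$, $\varpi_u x^{-1}(\varpi_v)$, $\varpi_v x^{-1}(\varpi_u)$, $\varpi_v x^{-1}(\varpi_v)$ separately, and shows each one vanishes via quantum-number identities such as \eqref{eq:2q-s-t-odd}, \eqref{eq:2q-product-0}, and \eqref{eq:2q-square-identity}. Your observation that the cross term organizes itself as a sum $\sum_i \phi_x(a_i,b_i)$ with $\phi_x(a,b)=a\,x^{-1}(b)-b\,x^{-1}(a)$, and hence factors through $\Lambda^2(\bk\varpi_u\oplus\bk\varpi_v)\cong\bk$, cuts the work roughly in half: each case becomes a single scalar identity on the coefficient of $\varpi_u\wedge\varpi_v$, and the $x$-independence is manifest rather than emerging at the end of a calculation. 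The underlying quantum-number identities you invoke are the same as the paper's, so nothing is lost.

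One point worth making explicit (you allude to it under ``the main delicate point'') is \emph{why} the pairing into $\phi_x$-terms works: comparing the definitions of $\delta'$ and $\delta''_x$ component by component, one sees that on each edge $\delta''_x$ is $\pm x^{-1}$ applied to the coefficient of $\delta'$ on the \emph{other} edge with the same source (vertical $\leftrightarrow$ diagonal), with the sign flipping with the parity of $n$. This swap is exactly what makes the two paths through $B_{\uov{n+1}}$ and $B_{\vov{n+1}}$ pair off into $\phi_x(A,D)+\phi_x(B,C)$. Stating this once up front would replace the case-by-case sign bookkeeping you flag and turn your sketch into a complete proof.
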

\begin{proof}
 We simply compute the left-hand side using \eqref{eq:varpi-s-t-n}. There are four cases.
 
 \emph{$B_{\uov{n}} \leadsto B_{\uov{n+2}}$, $n$ odd}:
 {\footnotesize
 \begin{multline*}
  -\varpi_v^{u,n+1}x^{-1}(\varpi_{u,n+2}^v) - x^{-1}(\varpi_{u,n+1}^v)\varpi_u^{v,n+2} + \varpi_{u,n+1}^vx^{-1}(\varpi_u^{v,n+2}) + x^{-1}(\varpi_v^{u,n+1})\varpi_{u,n+2}^v \\
  = -\left(-\frac{[n]_v}{[n+1]_u}\varpi_u + \varpi_v\right) x^{-1}\left(\frac{1}{[n+2]_u}\varpi_u\right) 
   - x^{-1}\left(\frac{1}{[n+1]_u}\varpi_u\right) \left(\varpi_u - \frac{[n+1]_u}{[n+2]_v}\varpi_v\right) \\
   + \left(\frac{1}{[n+1]_u}\varpi_u\right) x^{-1}\left(\varpi_u - \frac{[n+1]_u}{[n+2]_v}\varpi_v\right)
   + x^{-1}\left(-\frac{[n]_v}{[n+1]_u}\varpi_u + \varpi_v\right) \left(\frac{1}{[n+2]_u}\varpi_u\right).
 \end{multline*}
 }Collecting terms of the form $\varpi_ux^{-1}(\varpi_u)$, $\varpi_vx^{-1}(\varpi_u)$, and $\varpi_ux^{-1}(\varpi_v)$, and using~\eqref{eq:2q-s-t-odd}, we see that each coefficient vanishes.

 \emph{$B_{\uov{n}} \leadsto B_{\uov{n+2}}$, $n$ even}:
{\footnotesize
 \begin{multline*}
  -\varpi_u^{v,n+1}x^{-1}(\varpi_{u,n+2}^v) - x^{-1}(\varpi_{u,n+1}^v)\varpi_v^{u,n+2}
  + \varpi_{v,n+1}^ux^{-1}(\varpi_u^{v,n+2}) + x^{-1}(\varpi_v^{u,n+1})\varpi_{v,n+2}^u \\
  = -\left(\varpi_u - \frac{[n]_u}{[n+1]_v}\varpi_v\right) x^{-1}\left(\frac{1}{[n+2]_u}\varpi_u\right)
   - x^{-1}\left(\frac{1}{[n+1]_u}\varpi_u\right) \left(-\frac{[n+1]_v}{[n+2]_u}\varpi_u + \varpi_v\right) \\
   + \left(\frac{1}{[n+1]_v}\varpi_v\right) x^{-1}\left(\varpi_u - \frac{[n+1]_u}{[n+2]_v}\varpi_v\right)
   + x^{-1}\left(-\frac{[n]_v}{[n+1]_u}\varpi_u + \varpi_v\right) \left(\frac{1}{[n+2]_v}\varpi_v\right).
 \end{multline*}
 }Collecting terms and using~\eqref{eq:2q-s-t-odd}, the coefficients of $\varpi_ux^{-1}(\varpi_u)$ and $\varpi_vx^{-1}(\varpi_v)$ clearly vanish, while the coefficient
 \[
  \frac{[n]_u}{[n+1]_v[n+2]_u} - \frac{1}{[n+1]_u} + \frac{1}{[n+1]_v} - \frac{[n]_v}{[n+1]_u[n+2]_v}
 \]
 of $\varpi_vx^{-1}(\varpi_u)$ vanishes by \eqref{eq:2q-product-0} and~\eqref{eq:2q-s-t-odd}.
 
 \emph{$B_{\uov{n}} \leadsto B_{\vov{n+2}}$, $n$ odd}:
 {\tiny
 \begin{multline*}
  -\varpi_v^{u,n+1}x^{-1}(\varpi_v^{u,n+2}) -\varpi_{v,n+2}^ux^{-1}(\varpi_{u,n+1}^v) \\
  + \varpi_{u,n+1}^vx^{-1}(\varpi_{v,n+2}^u) + x^{-1}(\varpi_v^{u,n+1})\varpi_v^{u,n+2} \\
  = -\left(-\frac{[n]_v}{[n+1]_u}\varpi_u + \varpi_v\right) x^{-1}\left(-\frac{[n+1]_v}{[n+2]_u}\varpi_u + \varpi_v\right)
   -\left(\frac{1}{[n+2]_v}\varpi_v\right) x^{-1}\left(\frac{1}{[n+1]_u}\varpi_u\right) \\
   + \left(\frac{1}{[n+1]_u}\varpi_u\right) x^{-1}\left(\frac{1}{[n+2]_v}\varpi_v\right)
   + x^{-1}\left(-\frac{[n]_v}{[n+1]_u}\varpi_u + \varpi_v\right) \left(-\frac{[n+1]_v}{[n+2]_u}\varpi_u + \varpi_v\right).
 \end{multline*}
 }Collecting terms, the coefficients of $\varpi_ux^{-1}(\varpi_u)$ and $\varpi_vx^{-1}(\varpi_v)$ clearly vanish, while the coefficients
 \[
  \pm\left( \frac{[n]_v}{[n+1]_u} + \frac{1}{[n+1]_u[n+2]_v} - \frac{[n+1]_v}{[n+2]_u} \right)
 \]
 of $\varpi_ux^{-1}(\varpi_v)$ and $\varpi_vx^{-1}(\varpi_u)$ vanish by \eqref{eq:2q-square-identity}.

 \emph{$B_{\uov{n}} \leadsto B_{\vov{n+2}}$, $n$ even}:
 {\tiny
 \begin{multline*}
  \varpi_u^{v,n+1}x^{-1}(\varpi_v^{u,n+2}) + x^{-1}(\varpi_{u,n+1}^v)\varpi_{u,n+2}^v \\
  -\varpi_{v,n+1}^ux^{-1}(\varpi_{v,n+2}^u) - x^{-1}(\varpi_v^{u,n+1})\varpi_u^{v,n+2} \\
  = \left(\varpi_u - \frac{[n]_u}{[n+1]_v}\varpi_v\right) x^{-1}\left(-\frac{[n+1]_v}{[n+2]_u}\varpi_u + \varpi_v\right)
   + x^{-1}\left(\frac{1}{[n+1]_u}\varpi_u\right) \left(\frac{1}{[n+2]_u}\varpi_u\right) \\
   -\left(\frac{1}{[n+1]_v}\varpi_v\right) x^{-1}\left(\frac{1}{[n+2]_v}\varpi_v\right) - x^{-1}\left(-\frac{[n]_v}{[n+1]_u}\varpi_u + \varpi_v\right) \left(\varpi_u - \frac{[n+1]_u}{[n+2]_v}\varpi_v\right) \\
  = \left(-\frac{[n+1]_v}{[n+2]_u} + \frac{1}{[n+1]_u[n+2]_u} + \frac{[n]_v}{[n+1]_u}\right)\varpi_ux^{-1}(\varpi_u) \\
   + \left(-\frac{[n]_u}{[n+1]_v} - \frac{1}{[n+1]_v[n+2]_v} + \frac{[n+1]_u}{[n+2]_v}\right)\varpi_vx^{-1}(\varpi_v) \\
   + (1-1) \cdot \varpi_ux^{-1}(\varpi_v) + \left(\frac{[n]_u}{[n+2]_u} - \frac{[n]_v}{[n+2]_v}\right)\varpi_vx^{-1}(\varpi_u).
 \end{multline*}
 }The coefficients of $\varpi_ux^{-1}(\varpi_u)$ and $\varpi_vx^{-1}(\varpi_v)$ vanish by \eqref{eq:2q-square-identity}, and the coefficient of $\varpi_vx^{-1}(\varpi_u)$ vanishes by \eqref{eq:2q-product-0}.
\end{proof}

%--------------------------------------------------------------------------
\subsection{Relations involving $\delta_\bot$}
%--------------------------------------------------------------------------

For these components, we cannot treat $\delta'$ and $\delta''_w$ separately, and \eqref{eq:tdelta-differential-new} relies on $x = w$. We begin by finding alternative expressions for components of $\delta' - \delta''_w$ with source $B_{\uov{m-1}}$ and $B_{\vov{m-1}}$.

For $a \in \{s, t\}$,
\begin{multline*}
 \uov{n}(\varpi_a) = \la \uov{n}(\varpi_a), \alpha_u \ra \varpi_u + \la \uov{n}(\varpi_a), \alpha_v \ra \varpi_v \\
 = \begin{cases}
    -\la \varpi_a, \alpha_{u,n} \ra \varpi_u + \la \varpi_a, \alpha_{u,n+1} \ra \varpi_v &\text{if $n$ is odd;} \\
    -\la \varpi_a, \alpha_{v,n} \ra \varpi_u + \la \varpi_a, \alpha_{v,n+1} \ra \varpi_v &\text{if $n$ is even.}
    \end{cases}
\end{multline*}
So by \eqref{eq:alpha-s-t-n},
\begin{equation} \label{eq:n-varpi-new}
 \begin{split}
 \uov{n}(\varpi_u) = 
  \begin{cases}
   -[n]_u\varpi_u + [n+1]_u\varpi_v &\text{if $n$ is odd;} \\
   -[n-1]_u\varpi_u + [n]_u\varpi_v &\text{if $n$ is even;}
  \end{cases} \\
 \uov{n}(\varpi_v) = 
  \begin{cases}
   -[n-1]_v\varpi_u + [n]_v\varpi_v &\text{if $n$ is odd;} \\
   -[n]_v\varpi_u + [n+1]_v\varpi_v &\text{if $n$ is even.}
  \end{cases}
 \end{split}
\end{equation}
Of course, the same formulas with $u$ and $v$ swapped hold.

\begin{lem} \label{lem:ell-minus-one-components-new}
If $m$ is odd, we have
 \begin{align*}
  \varpi_u^{v,m-1} - w^{-1}(\varpi_{v,m-1}^u) &= \alpha_u^\vee; \\
  \varpi_{v,m-1}^u - w^{-1}(\varpi_u^{v,m-1}) &= -w^{-1}(\alpha_u^\vee); \\
  -\varpi_v^{u,m-1} + w^{-1}(\varpi_{u,m-1}^v) &= \frac{1}{[m-1]_u}w^{-1}(\alpha_u^\vee); \\
  \varpi_{u,m-1}^v - w^{-1}(\varpi_v^{u,m-1}) &= \frac{1}{[m-1]_u}\alpha_u^\vee.
 \end{align*}
 If $m$ is even, we have
 \begin{align*} 
  \varpi_v^{u,m-1} - w^{-1}(\varpi_{v,m-1}^u) &= \alpha_v^\vee; \\
  \varpi_{u,m-1}^v - w^{-1}(\varpi_u^{v,m-1}) &= -w^{-1}(\alpha_u^\vee); \\
  \varpi_u^{v,m-1} - w^{-1}(\varpi_{u,m-1}^v) &= -\frac{1}{[m-1]}w^{-1}(\alpha_u^\vee); \\
  \varpi_{v,m-1}^u - w^{-1}(\varpi_v^{u,m-1}) &= \frac{1}{[m-1]}\alpha_v^\vee.
 \end{align*}
\end{lem}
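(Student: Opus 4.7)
The proof will be a direct (if somewhat tedious) linear-algebra computation in the plane $\bk\cdot\varpi_u \oplus \bk\cdot\varpi_v \subset V$. The plan is to expand every term on the left-hand side using the explicit formulas~\eqref{eq:varpi-s-t-n} for $\varpi_u^{v,m-1}, \varpi_{u,m-1}^v, \varpi_v^{u,m-1}, \varpi_{v,m-1}^u$, to apply $w^{-1}$ to the relevant terms using~\eqref{eq:n-varpi-new}, and then to match the $\varpi_u$- and $\varpi_v$-coefficients with the right-hand side. Since $w=\uov{m}$, one has $w^{-1}=\uov{m}$ if $m$ is odd and $w^{-1}=\vov{m}$ if $m$ is even; in the latter case the required formula for $\vov{m}(\varpi_u)$ and $\vov{m}(\varpi_v)$ is obtained from~\eqref{eq:n-varpi-new} by swapping $u$ and $v$.

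The one non-trivial identification is to express the right-hand side in the $\{\varpi_u,\varpi_v\}$-basis. Inverting the defining formula~\eqref{eqn:varpi-defn} gives
\[
\alpha_u^\vee \;=\; 2\varpi_u \,+\, a_{uv}\varpi_v \;=\; 2\varpi_u - [2]_u\varpi_v,
\]
and analogously $\alpha_v^\vee=-[2]_v\varpi_u+2\varpi_v$; the identities $a_{st}=-[2]_s$ and $a_{ts}=-[2]_t$ follow directly from the definitions in~\S\ref{sec:realizations}. Applying $w^{-1}$ to these then gives $w^{-1}(\alpha_u^\vee)$ as an explicit combination of $\varpi_u,\varpi_v$ (with coefficients that are quantum numbers).

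As a sample, for $m$ odd the first identity $\varpi_u^{v,m-1}-w^{-1}(\varpi_{v,m-1}^u)=\alpha_u^\vee$ expands to
\[
\Bigl(\varpi_u-\tfrac{[m-2]_u}{[m-1]_v}\varpi_v\Bigr)\;-\;\tfrac{1}{[m-1]_v}\bigl(-[m-1]_v\varpi_u+[m]_v\varpi_v\bigr)
\;=\;2\varpi_u-\tfrac{[m-2]_u+[m]_v}{[m-1]_v}\varpi_v,
\]
and matching with $2\varpi_u-[2]_u\varpi_v$ reduces to $[m]_v=[2]_u[m-1]_v-[m-2]_u$. Since $m-2$ is odd, \eqref{eq:2q-s-t-odd} gives $[m-2]_u=[m-2]_v$, and then the equality is precisely the $y$-version of the recursion~\eqref{eq:2q-recursion-alt}.

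The remaining seven identities are handled by the same procedure: each reduces, after a short calculation, to one of \eqref{eq:2q-recursion}, \eqref{eq:2q-recursion-alt}, \eqref{eq:2q-s-t-odd}, or \eqref{eq:2q-s-t-even}, occasionally combined with \eqref{eq:2q-square-identity}. There is no conceptual obstacle here; the only real hazard is bookkeeping—in particular, choosing the correct parity-dependent form of the recursion (\eqref{eq:2q-s-t-odd} vs.\ \eqref{eq:2q-s-t-even}), keeping track of which of $\varpi^{v,n}_u$ or $\varpi^u_{v,n}$ carries an $\varpi_u$ versus a $\varpi_v$ contribution, and handling the sign flips that appear in the $m$ even case (where $\vov{m}$ swaps the roles of $\alpha_u^\vee$ and $\alpha_v^\vee$ compared with the $m$ odd case). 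Grouping the eight identities into pairs related by swapping $u\leftrightarrow v$ and by the involution $n\mapsto m-1$ cuts the work roughly in half.
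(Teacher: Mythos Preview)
Your approach is essentially the same as the paper's: both expand each $\varpi_{\bullet}^{\bullet,m-1}$ via~\eqref{eq:varpi-s-t-n}, apply $w^{-1}$ using~\eqref{eq:n-varpi-new}, identify $\alpha_u^\vee=2\varpi_u-[2]_u\varpi_v$, and reduce the coefficient matching to~\eqref{eq:2q-recursion-alt}; your sample computation for the first identity is line-for-line the paper's. The paper's shortcut for the remaining identities is slightly different from yours---rather than a $u\leftrightarrow v$ swap (which is awkward since $w=\uov{m}$ is not symmetric in $u,v$), it applies $\pm w$ to the already-proved first and third (resp.\ fifth and seventh) formulas to obtain the second and fourth (resp.\ sixth and eighth)---but this is a minor organizational difference.
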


\begin{proof}
 Suppose $m$ is odd. By \eqref{eq:varpi-s-t-n} and \eqref{eq:n-varpi-new}, 
 \[
  w^{-1}(\varpi_{v,m-1}^u) = \uov{m} \left( \frac{1}{[m-1]_v}\varpi_v \right) = -\varpi_u + \frac{[m]_v}{[m-1]_v}\varpi_v,
 \]
 so by \eqref{eq:varpi-s-t-n} and \eqref{eq:2q-recursion-alt},
 \begin{multline*}
  \varpi_u^{v,m-1} - w^{-1}(\varpi_{v,m-1}^u) = \left( \varpi_u - \frac{[m-2]_u}{[m-1]_v}\varpi_v \right) - \left( -\varpi_u + \frac{[m]_v}{[m-1]_v}\varpi_v \right) \\
  = 2\varpi_u - \frac{[m-2]_u + [m]_v}{[m-1]_v}\varpi_v = 2\varpi_u - [2]_u\varpi_v = \alpha_u^\vee.
 \end{multline*}
 This shows the first formula. Similarly,
 \[
  w^{-1}(\varpi_{u,m-1}^v) = \uov{m} \left( \frac{1}{[m-1]_u}\varpi_u \right) = \frac{1}{[m-1]_u}(-[m]_u\varpi_u + [m+1]_u\varpi_v),
 \]
 so
 {\small
 \begin{multline*}
  -\varpi_v^{u,m-1} + w^{-1}(\varpi_{u,m-1}^v) = \left( \frac{[m-2]_v}{[m-1]_u}\varpi_u - \varpi_v \right) + \frac{1}{[m-1]_u}(-[m]_u\varpi_u + [m+1]_u\varpi_v) \\
  = \frac{1}{[m-1]_u} \bigl( ([m-2]_v - [m]_u)\varpi_u + (-[m-1]_u + [m+1]_u)\varpi_v \bigr).
 \end{multline*}
 }Meanwhile,
 {\small
 \begin{multline*}
  w^{-1}(\alpha_u^\vee) = \uov{m}(2\varpi_u - [2]_u\varpi_v) = 2(-[m]_u\varpi_u + [m+1]_u\varpi_v) - [2]_u(-[m-1]_v\varpi_u + [m]_v\varpi_v) \\
  = (-2[m]_u + [2]_u[m-1]_v)\varpi_u + (2[m+1]_u - [2]_u[m]_v)\varpi_v \\
  = (-[m]_u + [m-2]_u)\varpi_u + ([m+1]_u - [m-1]_u)\varpi_v
 \end{multline*}
 }by~\eqref{eq:2q-recursion-alt}. This shows the third formula. The second (resp.~fourth) formula is obtained by applying $-w$ (resp.~$w$) to the first (resp.~third) formula (using the fact that $w=w^{-1}$ because $m$ is odd).
 
 If $m$ is even,
 \[
  w^{-1}(\varpi_{v,m-1}^u) = \vov{m} \left( \frac{1}{[m-1]}\varpi_v \right) = \frac{[m]_v}{[m-1]}\varpi_u - \varpi_v,
 \]
 so
 \begin{multline*}
  \varpi_v^{u,m-1} - w^{-1}(\varpi_{v,m-1}^u) = \left( -\frac{[m-2]_v}{[m-1]}\varpi_u + \varpi_v \right) - \left( \frac{[m]_v}{[m-1]}\varpi_u - \varpi_v \right) \\
  = \frac{-[m-2]_v - [m]_v}{[m-1]}\varpi_u + 2\varpi_v = -[2]_v\varpi_u + 2\varpi_v = \alpha_v^\vee
 \end{multline*}
 by~\eqref{eq:2q-recursion-alt},
 which shows the fifth formula. Similarly,
 \[
  w^{-1}(\varpi_{u,m-1}^v) = \vov{m} \left( \frac{1}{[m-1]}\varpi_u \right) = \frac{1}{[m-1]}([m+1]\varpi_u - [m]_u\varpi_v),
 \]
 so
 {\small
 \begin{multline*}
  \varpi_u^{v,m-1} - w^{-1}(\varpi_{u,m-1}^v) = \left( \varpi_u - \frac{[m-2]_u}{[m-1]}\varpi_v \right) - \frac{1}{[m-1]} \left( [m+1]\varpi_u - [m]_u\varpi_v \right) \\
  = \frac{1}{[m-1]} \bigl( ([m-1] - [m+1])\varpi_u + ([m]_u - [m-2]_u)\varpi_v \bigr).
 \end{multline*}
 }Meanwhile,
 \begin{multline*}
  w^{-1}(\alpha_u^\vee) = \vov{m}(2\varpi_u - [2]_u\varpi_v) \\
  = 2([m+1]_u\varpi_u - [m]_u\varpi_v) - [2]_u([m]_v\varpi_u - [m-1]_v\varpi_v) \\
  = (2[m+1]_u - [2]_u[m]_v)\varpi_u + (-2[m]_u + [2]_u[m-1]_v)\varpi_v \\
  = ([m+1] - [m-1])\varpi_u + (-[m]_u + [m-2]_u)\varpi_v.
 \end{multline*}
 This shows the seventh formula. The sixth (resp.~eighth) formula is obtained by applying $-w$ to the fifth (resp.~seventh) formula, followed by exchanging $u$ and $v$, noting that this exchanges $w$ and $w^{-1}$.
\end{proof}

By Lemma~\ref{lem:ell-minus-one-components-new}, the relevant components are as follows: if $m$ is odd,
\[
 \delta' - \delta''_w =
 \begin{tikzcd}
  B_{\uov{m-2}} \ar[d, "- \otimes \left( -\frac{1}{[m-1]_u}w^{-1}(\alpha_u^\vee) \right)"'] \ar[dr, "\otimes \frac{1}{[m-1]_u}\alpha_u^\vee" near end] \\
  B_{\uov{m-1}} & B_{\vov{m-1}}
 \end{tikzcd}
 +
 \begin{tikzcd}
  & B_{\vov{m-2}} \ar[dl, "\otimes (-w^{-1}(\alpha_u^\vee))"' near end] \ar[d, "- \otimes \alpha_u^\vee"] \\
  B_{\uov{m-1}} & B_{\vov{m-1}}
 \end{tikzcd}
\]
\[
 \delta_\bot =
 \begin{tikzcd}
  B_{\uov{m-1}} \ar[d, "\otimes \alpha_u^\vee"'] & B_{\vov{m-1}} \ar[dl, "\otimes (-w^{-1}(\alpha_u^\vee))" near start]\\
  B_{\uov{m}}
 \end{tikzcd}
\]
and if $m$ is even,
\[
 \delta' - \delta''_w =
 \begin{tikzcd}
  B_{\uov{m-2}} \ar[d, "\otimes \left( -\frac{1}{[m-1]}w^{-1}(\alpha_u^\vee) \right)"'] \ar[dr, "\otimes \frac{1}{[m-1]}\alpha_v^\vee" near end] \\
  B_{\uov{m-1}} & B_{\vov{m-1}}
 \end{tikzcd}
 +
 \begin{tikzcd}
  & B_{\vov{m-2}} \ar[dl, "\otimes (-w^{-1}(\alpha_u^\vee))"' near end] \ar[d, "\otimes \alpha_v^\vee"] \\
  B_{\uov{m-1}} & B_{\vov{m-1}}
 \end{tikzcd}
\]
\[
 \delta_\bot =
 \begin{tikzcd}
  B_{\uov{m-1}} \ar[d, "- \otimes \alpha_v^\vee"'] & B_{\vov{m-1}} \ar[dl, "\otimes (-w^{-1}(\alpha_u^\vee))" near start]\\
  B_{\uov{m}}
 \end{tikzcd}
\]
It is now clear that $\delta_\bot \circ (\delta' - \delta''_w) = 0$, which is the desired relation for $d = -2$. The cross-term components for $d = 0$ (see the relevant components below) are also easily seen to vanish using \eqref{eq:updown3}--\eqref{eq:updown4} as in the analogous computation for the generic components.

$B_{\uov{m-1}} \leadsto B_{\vov{m-1}}$:
 \[
  \begin{tikzcd}
   B_{\uov{m-2}} \ar[dr, "\otimes \frac{1}{[m-1]_u}\alpha_u^\vee"] \\
   B_{\uov{m-1}} \ar[u, "-"] & B_{\vov{m-1}} \\
   \phantom{B_{\uov{m}}}
  \end{tikzcd}
  +
  \begin{tikzcd}
    & B_{\vov{m-2}} \ar[d, "- \otimes \alpha_u^\vee"] \\
   B_{\uov{m-1}} \ar[ur] \ar[d, "\otimes \alpha_u^\vee"'] & B_{\vov{m-1}} \\
   B_{\uov{m}} \ar[ur] &
  \end{tikzcd}
  \quad \text{if $m$ is odd,}
 \]
 \[
  \begin{tikzcd}
   B_{\uov{m-2}} \ar[dr, "\otimes \frac{1}{[m-1]}\alpha_v^\vee"] \\
   B_{\uov{m-1}} \ar[u] & B_{\vov{m-1}} \\
   \phantom{B_{\uov{m}}}
  \end{tikzcd}
  +
  \begin{tikzcd}
    & B_{\vov{m-2}} \ar[d, "\otimes \alpha_v^\vee"] \\
   B_{\uov{m-1}} \ar[ur] \ar[d, "- \otimes \alpha_v^\vee"'] & B_{\vov{m-1}} \\
   B_{\uov{m}} \ar[ur] &
  \end{tikzcd}
  \quad \text{if $m$ is even.}
 \]

$B_{\vov{m-1}} \leadsto B_{\uov{m-1}}$:
 \[
  \begin{tikzcd}
   & B_{\vov{m-2}} \ar[dl, "\otimes (-w^{-1}(\alpha_u^\vee))"'] \\
   B_{\uov{m-1}} & B_{\vov{m-1}} \ar[u, "-"'] \ar[dl, "\otimes (-w^{-1}(\alpha_u^\vee))"] \\
   B_{\uov{m}} \ar[u]
  \end{tikzcd}
  +
  \begin{tikzcd}
   B_{\uov{m-2}} \ar[d, "- \otimes \left( -\frac{1}{[m-1]_u}w^{-1}(\alpha_u^\vee) \right)"'] \\
   B_{\uov{m-1}} & B_{\vov{m-1}} \ar[ul] \\
   \phantom{B_{\uov{m}}}
  \end{tikzcd}
  \quad \text{if $m$ is odd,}
 \]
 \[
  \begin{tikzcd}
   & B_{\vov{m-2}} \ar[dl, "\otimes (-w^{-1}(\alpha_u^\vee))"'] \\
   B_{\uov{m-1}} & B_{\vov{m-1}} \ar[u] \ar[dl, "\otimes (-w^{-1}(\alpha_u^\vee))"] \\
   B_{\uov{m}} \ar[u, "-"]
  \end{tikzcd}
  +
  \begin{tikzcd}
   B_{\uov{m-2}} \ar[d, "\otimes \left( -\frac{1}{[m-1]}w^{-1}(\alpha_u^\vee) \right)"'] \\
   B_{\uov{m-1}} & B_{\vov{m-1}} \ar[ul] \\
   \phantom{B_{\uov{m}}}
   \end{tikzcd}
   \quad \text{if $m$ is even.}
 \]

For the self-loop components, we will use the following computation. Define $\alpha_{s, n}^\vee, \alpha_{t, n}^\vee \in V$ for $n \ge 1$ as follows:
\[
 \begin{array}{rclcccrcl}
  \alpha_{s, 1}^\vee &=& \alpha_s^\vee      & & & & \alpha_{t, 1}^\vee &=& \alpha_t^\vee \\
  \alpha_{s, 2}^\vee &=& s(\alpha_t^\vee)   & & & & \alpha_{t, 2}^\vee &=& t(\alpha_s^\vee) \\
  \alpha_{s, 3}^\vee &=& st(\alpha_s^\vee)  & & & & \alpha_{t, 3}^\vee &=& ts(\alpha_t^\vee) \\
  \alpha_{s, 4}^\vee &=& sts(\alpha_t^\vee) & & & & \alpha_{t, 4}^\vee &=& tst(\alpha_s^\vee) \\
                &\vdots&          & & & &               &\vdots&
 \end{array}
\]
An easy induction shows that for any $n \geq 1$ we have
\begin{equation} \label{eq:alpha-s-t-n-check-new}
 \alpha_{s, n}^\vee = [n]_t\alpha_s^\vee + [n-1]_s\alpha_t^\vee, \qquad \alpha_{t, n}^\vee = [n-1]_t\alpha_s^\vee + [n]_s\alpha_t^\vee.
\end{equation}

\begin{lem}
 If $m$ is odd, we have
 \begin{gather} \label{eq:delta-bot-self-loop-one-new}
  \frac{[m-2]}{[m-1]_u}\alpha_u^\vee - \frac{1}{[m-1]_u}w^{-1}(\alpha_u^\vee) = u(\alpha_v^\vee);
 \\ \label{eq:delta-bot-self-loop-two-new}
  \frac{[m-2]}{[m-1]_u}(-w^{-1}(\alpha_u^\vee)) + \frac{1}{[m-1]_u}\alpha_u^\vee = -w^{-1}u(\alpha_v^\vee).
 \end{gather}
 If $m$ is even, we have
 \begin{gather} \label{eq:delta-bot-self-loop-three-new}
  \frac{[m-2]_u}{[m-1]}\alpha_v^\vee -\frac{1}{[m-1]}w^{-1}(\alpha_u^\vee) = v(\alpha_u^\vee);
 \\ \label{eq:delta-bot-self-loop-four-new}
  \frac{[m-2]_v}{[m-1]}(-w^{-1}(\alpha_u^\vee)) + \frac{1}{[m-1]}\alpha_v^\vee = -w^{-1}u(\alpha_v^\vee).
 \end{gather}
\end{lem}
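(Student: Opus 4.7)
The plan is as follows. The four identities reduce to two by direct symmetry arguments, and the two remaining ones are verified by an elementary computation in the two-dimensional subspace $\bk\alpha_u^\vee + \bk\alpha_v^\vee$ of $V$ (which is stable under $W$).

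First I would observe that~\eqref{eq:delta-bot-self-loop-two-new} is obtained from~\eqref{eq:delta-bot-self-loop-one-new} by applying $-w^{-1}$ to both sides, using that $w^{-1}=w$ when $m$ is odd (so $w^{-2}(\alpha_u^\vee)=\alpha_u^\vee$). Similarly,~\eqref{eq:delta-bot-self-loop-four-new} follows from~\eqref{eq:delta-bot-self-loop-three-new} by first swapping the roles of $u$ and $v$ in~\eqref{eq:delta-bot-self-loop-three-new}, which replaces $w=\uov{m}$ by $w'=\vov{m}$, and then applying $-w^{-1}$; here one uses that for $m$ even we have $(w')^{-1} = \uov{m} = w$. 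This leaves only~\eqref{eq:delta-bot-self-loop-one-new} and~\eqref{eq:delta-bot-self-loop-three-new} to be proved.

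The key ingredient is an explicit formula for $w^{-1}(\alpha_u^\vee)$, which is the dual analogue of the computation at the beginning of the proof of Lemma~\ref{lem:ell-minus-one-components-new}. Using~\eqref{eq:alpha-s-t-n-check-new} and a straightforward induction on $m$, one obtains
\[
 w^{-1}(\alpha_u^\vee) = \begin{cases} -\alpha_{u,m}^\vee = -[m]_v\alpha_u^\vee - [m-1]_u\alpha_v^\vee & \text{if $m$ is odd,}\\ -\alpha_{v,m}^\vee = -[m-1]\,\alpha_u^\vee - [m]_u\alpha_v^\vee & \text{if $m$ is even,}\end{cases}
\]
where in the even case $[m-1]_v = [m-1]$ by~\eqref{eq:2q-s-t-odd}.

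Substituting this into the left-hand side of~\eqref{eq:delta-bot-self-loop-one-new}, and using $[m-2]_v=[m-2]$ for $m$ odd, yields
\[
 \tfrac{[m-2]+[m]_v}{[m-1]_u}\alpha_u^\vee + \alpha_v^\vee,
\]
and the recursion~\eqref{eq:2q-recursion-alt} collapses the coefficient to $[2]_v$; since $[2]_v = -\langle \alpha_u,\alpha_v^\vee\rangle$ by the definition of the $2$-colored quantum numbers, the result equals $u(\alpha_v^\vee)$. The proof of~\eqref{eq:delta-bot-self-loop-three-new} is entirely parallel: now~\eqref{eq:2q-recursion-alt} is applied in the form $[m-2]_u+[m]_u = [2]_u[m-1]$, and one identifies $[2]_u\alpha_v^\vee + \alpha_u^\vee$ with $v(\alpha_u^\vee)$. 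There is no real obstacle; the only care needed is in tracking the parity of $m$ and selecting the correct form of the quantum-number recursion in each case.
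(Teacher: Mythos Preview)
Your proof is correct and follows essentially the same approach as the paper: compute $w^{-1}(\alpha_u^\vee)$ as $-\alpha_{u,m}^\vee$ (resp.\ $-\alpha_{v,m}^\vee$) via~\eqref{eq:alpha-s-t-n-check-new}, substitute, and collapse the coefficient using~\eqref{eq:2q-recursion-alt}; then derive the second formula in each parity from the first by applying $-w^{-1}$ (and, in the even case, the $u\leftrightarrow v$ swap). The only cosmetic difference is the order of operations---the paper applies $-w$ and then swaps, whereas you swap and then apply $-w^{-1}$---but since these are equivalent there is nothing to add.
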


\begin{proof}
If $m$ is odd, $w^{-1}(\alpha_u^\vee) = -\alpha_{u,m}^\vee$, so by \eqref{eq:alpha-s-t-n-check-new} and \eqref{eq:2q-recursion-alt},
 \[
  \frac{[m-2]}{[m-1]_u}\alpha_u^\vee - \frac{1}{[m-1]_u}w^{-1}(\alpha_u^\vee) = \frac{[m-2] + [m]}{[m-1]_u}\alpha_u^\vee + \alpha_v^\vee = [2]_v\alpha_u^\vee + \alpha_v^\vee = u(\alpha_v^\vee),
 \]
 proving \eqref{eq:delta-bot-self-loop-one-new}. Then \eqref{eq:delta-bot-self-loop-two-new} is obtained from this by applying $-w$ and using that $w = w^{-1}$.

If $m$ is even, $w^{-1}(\alpha_u^\vee) = -\alpha_{v,m}^\vee$, so by \eqref{eq:alpha-s-t-n-check-new} and \eqref{eq:2q-recursion-alt},
{\small
 \begin{multline*}
  \frac{[m-2]_u}{[m-1]}\alpha_v^\vee -\frac{1}{[m-1]}w^{-1}(\alpha_u^\vee) = \alpha_u^\vee + \frac{[m-2]_u + [m]_u}{[m-1]}\alpha_v^\vee = \alpha_u^\vee + [2]_u\alpha_v^\vee = v(\alpha_u^\vee),
 \end{multline*}
 }proving \eqref{eq:delta-bot-self-loop-three-new}. Then \eqref{eq:delta-bot-self-loop-four-new} is obtained from this by applying $-w$ then switching $u$ and $v$, observing that this switches $w$ and $w^{-1}$.
\end{proof}

We compute each side of \eqref{eq:tdelta-differential-new} for the remaining self-loop components, using \eqref{eq:updown2}--\eqref{eq:updown1} as we did for the generic components.

\emph{Self-loop at $B_{\uov{m-1}}$, $m$ odd:}
 \[
  \begin{tikzcd}
   B_{\uov{m-2}} \ar[d, "-\otimes \left( -\frac{1}{[m-1]_u}w^{-1}(\alpha_u^\vee) \right)"', bend right=20] & B_{\vov{m-2}} \ar[dl, "\otimes (-w^{-1}(\alpha_u^\vee))" near start, bend left=20] \\
   B_{\uov{m-1}} \ar[u, "-" swap] \ar[ur] \ar[d, "\otimes \alpha_u^\vee"', bend right=20] \\
   B_{\uov{m}}. \ar[u]
  \end{tikzcd}
 \]
 \begin{multline*}
  \delta_{-1} \circ \delta_c + \delta_c \circ \delta_{-1} = \Br_r \otimes \left( -\frac{1}{[m-1]_u}w^{-1}(\alpha_u^\vee) \right) + \Br_l \otimes (-w^{-1}(\alpha_u^\vee)) + \TD_r \otimes \alpha_u^\vee \\
  = (\id \star \alpha_u) \otimes \alpha_u^\vee + \Br_r \otimes \left( \frac{[m-2]}{[m-1]_u}\alpha_u^\vee -\frac{1}{[m-1]_u}w^{-1}(\alpha_u^\vee) \right) + \Br_l \otimes (-w^{-1}(\alpha_u^\vee)).
 \end{multline*}
 Meanwhile,
 \[
  \Theta_{B_{\uov{m-1}}} = \sum (\id \star e_i) \otimes \check e_i
 \]
 and
 \begin{multline*}
  \kappa(\theta_w) = \sum (w(e_i) \star \id) \otimes \check e_i = \sum (\id \star u(e_i)) \otimes \check e_i \\
  - \sum \Br_r \otimes \la \alpha_v^\vee, u(e_i) \ra \check e_i + \sum \Br_l \otimes \la \alpha_u^\vee, w(e_i) \ra \check e_i
 \end{multline*}
 by~\eqref{eq:polys},
 so
 \[
  \Theta_{B_{\uov{m-1}}} - \kappa(\theta_w) = (\id \star \alpha_u) \otimes \alpha_u^\vee \\
  + \Br_r \otimes u(\alpha_v^\vee) + \Br_l \otimes (-w^{-1}(\alpha_u^\vee)).
 \]
 Now the desired equality follows from~\eqref{eq:delta-bot-self-loop-one-new}.

\emph{Self-loop at $B_{\vov{m-1}}$, $m$ odd:}
 \[
  \begin{tikzcd}
   B_{\uov{m-2}} \ar[dr, "\otimes \frac{1}{[m-1]_u}\alpha_u^\vee"' near start, bend right=20] & B_{\vov{m-2}} \ar[d, "- \otimes \alpha_u^\vee", bend left=20] \\
   & B_{\vov{m-1}} \ar[ul] \ar[u, "-"] \ar[dl, "\otimes (-w^{-1}(\alpha_u^\vee))"' near end, bend right=20] \\
   B_{\uov{m}}. \ar[ur]
  \end{tikzcd}
 \]
 {\small
 \begin{multline*}
  \delta_{-1} \circ \delta_c + \delta_c \circ \delta_{-1} = \Br_l \otimes \frac{1}{[m-1]_u}\alpha_u^\vee + \Br_r \otimes \alpha_u^\vee + \TD_l \otimes (-w^{-1}(\alpha_u^\vee)) \\
  = (\alpha_u \star \id) \otimes (-w^{-1}(\alpha_u^\vee)) + \Br_l \otimes \left( \frac{[m-2]}{[m-1]_u}(-w^{-1}(\alpha_u^\vee)) + \frac{1}{[m-1]_u}\alpha_u^\vee \right) + \Br_r \otimes \alpha_u^\vee.
 \end{multline*}
 }Meanwhile,
 \begin{multline*}
  \Theta_{B_{\vov{m-1}}} = \sum (\id \star e_i) \otimes \check e_i \\
  = \sum (uw(e_i) \star \id) \otimes \check e_i - \sum \Br_l \otimes \la \alpha_v^\vee, uw(e_i) \ra \check e_i + \sum \Br_r \otimes \la \alpha_u^\vee, e_i \ra \check e_i \\
  = \sum (e_i \star \id) \otimes w^{-1}u(\check e_i) + \Br_l \otimes (-w^{-1}u(\alpha_v^\vee)) + \Br_r \otimes \alpha_u^\vee
  \end{multline*}
  by~\eqref{eq:polys},
 and
 \[
  \kappa(\theta_w) = \sum (u(e_i) \star \id) \otimes w^{-1}u(\check e_i),
 \]
 so
 \[
  \Theta_{B_{\vov{m-1}}} - \kappa(\theta_w) = (\alpha_u \star \id) \otimes (-w^{-1}(\alpha_u^\vee)) + \Br_l \otimes (-w^{-1}u(\alpha_v^\vee)) + \Br_r \otimes \alpha_u^\vee.
 \]
 Now the desired equality follows from~\eqref{eq:delta-bot-self-loop-two-new}.

\emph{Self-loop at $B_{\uov{m-1}}$, $m$ even:}
 \[
  \begin{tikzcd}
   B_{\uov{m-2}} \ar[d, "\otimes \left( -\frac{1}{[m-1]}w^{-1}(\alpha_u^\vee) \right)"', bend right=20] & B_{\vov{m-2}} \ar[dl, "\otimes (-w^{-1}(\alpha_u^\vee))" near start, bend left=20] \\
   B_{\uov{m-1}} \ar[u] \ar[ur] \ar[d, "- \otimes \alpha_v^\vee"', bend right=20] \\
   B_{\uov{m}}. \ar[u, "-" swap]
  \end{tikzcd}
 \]
 \begin{multline*}
  \delta_{-1} \circ \delta_c + \delta_c \circ \delta_{-1} = \Br_r \otimes \left( -\frac{1}{[m-1]}w^{-1}(\alpha_u^\vee) \right) + \Br_l \otimes (-w^{-1}(\alpha_u^\vee)) + \TD_r \otimes \alpha_v^\vee \\
  = (\id \star \alpha_v) \otimes \alpha_v^\vee + \Br_r \otimes \left( \frac{[m-2]_u}{[m-1]}\alpha_v^\vee -\frac{1}{[m-1]}w^{-1}(\alpha_u^\vee) \right) + \Br_l \otimes (-w^{-1}(\alpha_u^\vee)).
 \end{multline*}
 Meanwhile,
 \[
  \Theta_{B_{\uov{m-1}}} = \sum (\id \star e_i) \otimes \check e_i
 \]
 and
 \begin{multline*}
  \kappa(\theta_w) = \sum (w(e_i) \star \id) \otimes \check e_i \\
  = \sum (\id \star v(e_i)) \otimes \check e_i - \sum \Br_r \otimes \la \alpha_u^\vee, v(e_i) \ra \check e_i + \sum \Br_l \otimes \la \alpha_u^\vee, w(e_i) \ra \check e_i
 \end{multline*}
 by~\eqref{eq:polys},
 so
 \[
  \Theta_{B_{\uov{m-1}}} - \kappa(\theta_w) = (\id \star \alpha_v) \otimes \alpha_v^\vee \\
  + \Br_r \otimes v(\alpha_u^\vee) + \Br_l \otimes (-w^{-1}(\alpha_u^\vee)).
 \]
 Now the desired equality follows from~\eqref{eq:delta-bot-self-loop-three-new}.

\emph{Self-loop at $B_{\vov{m-1}}$, $m$ even:}
 \[
  \begin{tikzcd}
   B_{\uov{m-2}} \ar[dr, "\otimes \frac{1}{[m-1]}\alpha_v^\vee"' near start, bend right=20] & B_{\vov{m-2}} \ar[d, "\otimes \alpha_v^\vee", bend left=20] \\
   & B_{\vov{m-1}} \ar[ul] \ar[u] \ar[dl, "\otimes (-w^{-1}(\alpha_u^\vee))"' near end, bend right=20] \\
   B_{\uov{m}}. \ar[ur]
  \end{tikzcd}
 \]
 \begin{multline*}
  \delta_{-1} \circ \delta_c + \delta_c \circ \delta_{-1} = \Br_l \otimes \frac{1}{[m-1]}\alpha_v^\vee + \Br_r \otimes \alpha_v^\vee + \TD_l \otimes (-w^{-1}(\alpha_u^\vee)) \\
  =  (\alpha_u \star \id) \otimes (-w^{-1}(\alpha_u^\vee)) + \Br_l \otimes \left( \frac{[m-2]_v}{[m-1]}(-w^{-1}(\alpha_u^\vee)) + \frac{1}{[m-1]}\alpha_v^\vee \right) + \Br_r \otimes \alpha_v^\vee.
 \end{multline*}
 Meanwhile,
 \begin{multline*}
  \Theta_{B_{\vov{m-1}}} = \sum (\id \star e_i) \otimes \check e_i \\
  = \sum (uw(e_i) \star \id) \otimes \check e_i - \sum \Br_l \otimes \la \alpha_v^\vee, uw(e_i) \ra \check e_i + \sum \Br_r \otimes \la \alpha_v^\vee, e_i \ra \check e_i \\
  = \sum (e_i \star \id) \otimes w^{-1}u(\check e_i) + \Br_l \otimes (-w^{-1}u(\alpha_v^\vee)) + \Br_r \otimes \alpha_v^\vee
  \end{multline*}
  by~\eqref{eq:polys},
 and
 \[
  \kappa(\theta_w) = \sum (u(e_i) \star \id) \otimes w^{-1}u(\check e_i),
 \]
 so
 \[
  \Theta_{B_{\vov{m-1}}} - \kappa(\theta_w) = (\alpha_u \star \id) \otimes (-w^{-1}(\alpha_u^\vee)) + \Br_l \otimes (-w^{-1}u(\alpha_v^\vee)) + \Br_r \otimes \alpha_v^\vee.
 \]
 Now the desired equality follows from~\eqref{eq:delta-bot-self-loop-four-new}.
 
\emph{Self-loop at $B_{\uov{m}}$:} by~\eqref{eq:polys} we have
\begin{multline*}
 \Theta_{B_{\uov{m}}} = \sum (\id \star e_i) \otimes \check e_i \\
 = \sum (w(e_i) \star \id) \otimes \check e_i - \sum \Br_l \otimes \la \alpha_u^\vee, w(e_i) \ra \check e_i
 + \begin{cases}
    \Br_r \otimes \alpha_u^\vee &\text{if $m$ is odd;} \\
    \Br_r \otimes \alpha_v^\vee &\text{if $m$ is even}
   \end{cases} \\
 = \kappa(\theta_w) + \Br_l \otimes (-w^{-1}(\alpha_u^\vee))
 + \begin{cases}
    \Br_r \otimes \alpha_u^\vee &\text{if $m$ is odd;} \\
    \Br_r \otimes \alpha_v^\vee &\text{if $m$ is even}
   \end{cases} \\
 = \kappa(\theta_w) + \delta_\bot \circ \delta_c.
\end{multline*}

This concludes the proof of Proposition~\ref{prop:free-monodromic-minimal-Rouquier-new}.

%%%%%%%%%%%%%%%%%%%%%%%%%%%%%%%%%%%%%%%%%%%%%%%%%%%%%%%%%%%%%%%%%%%%%%%%%%%
\section{Convolution of free-monodromic minimal Rouquier complexes}
%%%%%%%%%%%%%%%%%%%%%%%%%%%%%%%%%%%%%%%%%%%%%%%%%%%%%%%%%%%%%%%%%%%%%%%%%%%

The goal of this section is to prove the following free-monodromic analogue of Proposition~\ref{prop:Rouquier-convolution-new}.

\begin{thm}
\label{thm:Rouquier-convolution-fm-new}
If $\uw \in \hW$ and $\uw = (s_1, \sdots, s_r)$, then there exists a morphism
\[
\tD_{s_1} \hatstar \tD_{s_2} \hatstar \dots \hatstar \tD_{s_r} \to \tD_{\uw,\min}
\]
in $\Conv_\FM^\rhd(\fh,W)_\Kar$ whose image in $\FM(\fh,W)_\Kar$ is an isomorphism.
\end{thm}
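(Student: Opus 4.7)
The plan is to proceed by induction on $r = \ell(\uw)$. The cases $r = 0$ and $r = 1$ are tautological. For $r \geq 2$, let $\underline{v} = (s_1, \ldots, s_{r-1})$. By the inductive hypothesis there is a morphism $\phi_{r-1} : \tD_{s_1} \hatstar \cdots \hatstar \tD_{s_{r-1}} \to \tD_{\underline{v}, \min}$ in $\Conv_\FM^\rhd(\fh,W)_\Kar$ which becomes an isomorphism in $\FM(\fh,W)_\Kar$. Since $\hatstar$ restricts to a bifunctor on $\Conv_\FM^\rhd$ (see \S\ref{sec:conv-categories}) and $(-) \hatstar \tD_{s_r}$ extends to an endofunctor of $\FM(\fh,W)$ by Proposition~\ref{prop:convolution-Dmix}, the morphism $\phi_{r-1} \hatstar \id_{\tD_{s_r}}$ lies in $\Conv_\FM^\rhd$ and is an isomorphism in $\FM_\Kar$. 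Thus it suffices to construct a morphism $\psi : \tD_{\underline{v}, \min} \hatstar \tD_{s_r} \to \tD_{\uw, \min}$ in $\Conv_\FM^\rhd$ which is an isomorphism in $\FM_\Kar$.

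Combining \eqref{eqn:For-hatstar}, Lemma~\ref{lem:convolution-mon-equ}, \eqref{eqn:Rouquier-For-new}, the compatibility of $\ForBELM$ with right convolution by biequivariant objects (end of \S\ref{sec:lmon-mixed-complexes}), and the biequivariant isomorphism $\Delta_{\underline{v}, \min} \ustar \Delta_{s_r} \cong \Delta_{\uw, \min}$ of Lemma~\ref{lem:conv-Rouquier-BE}, we find that both $\tD_{\underline{v}, \min} \hatstar \tD_{s_r}$ and $\tD_{\uw, \min}$ become isomorphic in $\LM(\fh,W)$ after applying $\ForFMLM$, each lifting $\ForBELM(\Delta_{\uw, \min})$. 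By Corollary~\ref{cor:End-Delta-min} and Theorem~\ref{thm:leftmon-con}, $\gEnd_\LM(\ForBELM(\Delta_{\uw, \min})) \cong \bk$ is concentrated in bidegree $(0,0)$. Applying Lemma~\ref{lem:hom-mon-lmon} to every pair formed from these two objects, we deduce that each of the relevant $\gHom_\FM$-spaces is concentrated in cohomological degree $0$, is free of rank one as a right $R^\vee$-module, and is generated in internal degree $0$. A generator $\psi$ of $\gHom_\FM(\tD_{\underline{v},\min} \hatstar \tD_{s_r}, \tD_{\uw,\min})^0_0$ then lifts the $\LM$ isomorphism (up to a unit in $\bk^\times$, which we normalize away), and a Nakayama argument applied to the two endomorphism rings — which are also free rank-one $R^\vee$-modules with generator $\id$ by the same appeal to Lemma~\ref{lem:hom-mon-lmon} — shows that $\psi$ is an isomorphism in $\FM_\Kar$.

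The remaining, and main, obstacle is to show that $\psi$ can be represented by a chain map lying in $\uHom_\FM^\rhd$ rather than just in $\uHom_\FM$. To handle this, I would construct $\psi$ by hand, mimicking the biequivariant Gaussian elimination underlying Lemmas~\ref{lem:BwBu}, \ref{lem:FwBu}, and~\ref{lem:conv-Rouquier-BE}. The inclusion and projection morphisms that power those proofs are built entirely from $\LL$-maps and therefore lie in $\uHom_\BE$; regarded with trivial $R^\vee$-factor they assemble naturally into a candidate chain map in $\uHom_\FM^\rhd(\tD_{\underline{v},\min} \hatstar \tD_{s_r}, \tD_{\uw,\min})$. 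The hard part will be to verify that this candidate is an honest chain map with respect to the free-monodromic differentials on both sides: one must match, componentwise, the convolution differential $\delta_{\tD_{\underline{v},\min}} \hatstar \id + \id \hatstar \delta_{\tD_{s_r}} - \psi_{\tD_{\underline{v},\min} \hatstar \tD_{s_r}}$ on the source against $\tdelta_\uw = \delta_c + \theta_w + \delta_{-1}$ on the target, and this reduces (after Gaussian elimination of the invertible biequivariant components as in Lemma~\ref{lem:FwBu}) to a collection of identities among the monodromy contributions $\theta_w$, $\delta_{-1}$, the convolution correction $\psi_{\cdot}$ from Definition~\ref{defn:monconv}, and the $2$-coloured quantum-number relations of \S\ref{sec:realizations}. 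These identities are exactly of the type established in the verification of Proposition~\ref{prop:free-monodromic-minimal-Rouquier-new}, so they can be handled by an extension of those computations. Once this explicit chain map is in hand, the composition $\psi \circ (\phi_{r-1} \hatstar \id_{\tD_{s_r}})$ is the morphism required by the theorem, and the induction closes.
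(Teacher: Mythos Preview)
Your reduction by induction to a single step $\tD_{\uv,\min}\hatstar\tD_u \to \tD_{\uw,\min}$, and your use of Corollary~\ref{cor:End-Delta-min} plus Lemma~\ref{lem:hom-mon-lmon} to show that the $\LM$-isomorphism lifts uniquely to an isomorphism in $\FM_\Kar$, match the paper's argument exactly.  The divergence is entirely in the last step: showing that the lift can be represented in $\uHom_\FM^\rhd$.

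Here your proposal has a genuine gap.  You plan to write down explicitly the biequivariant Gaussian-elimination map from Lemmas~\ref{lem:FwBu}--\ref{lem:conv-Rouquier-BE}, regard it (with trivial $R^\vee$-factor) as a candidate in $\uHom_\FM^\rhd$, and then verify by direct computation that it intertwines the two free-monodromic differentials.  You do not carry this out, and the assertion that the required identities are ``exactly of the type established in Proposition~\ref{prop:free-monodromic-minimal-Rouquier-new}'' is optimistic: that proposition checks the single relation $\tdelta^2+\kappa(\tdelta)=\Theta$ on one object, whereas here you must match two \emph{different} free-monodromic differentials across a map, and there is no a~priori reason the bare biequivariant map is a chain map without adding $R^\vee$-valued correction terms.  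Even if feasible, this would be a substantial additional calculation.

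The paper avoids all of this with a short degree argument showing that \emph{any} chain map $f\in\uHom_\FM(\cF,\cG)^0_0$ representing the isomorphism (where $\cF=\tD_{\uv,\min}\hatstar\tD_u$, $\cG=\tD_{\uw,\min}$) automatically lies in $\uHom_\FM^\rhd$.  Decompose $\uHom_\FM(\cF,\cG)^0_0$ by $\Lambda$-degree $n\ge0$, $\uHom_\BE$-bidegree $(i,j)$, and $R^\vee$-degree $-2m$, subject to $n+i=0$ and $2n+j=2m$.  When $i\ge0$ one has $n=0$, so the component is already in $\uHom_\FM^\rhd$.  When $i<0$, the factor $\Hom_{\Diag}(\cF^p,\cG^q\{j\})$ is controlled by Corollary~\ref{cor:dih-indecomp-hom} (which says $\Hom_{\Diag}(B_w,B_{w'}(n))=0$ for $n<0$ and pins down what happens at $n=0,1$) combined with Lemma~\ref{lem:components-conv-Rouquier} (which lists exactly which $B_v(n)$ occur as summands of $\cF^p$); together these force every such component to vanish except possibly when $i=-1$.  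A second pass shows any surviving $i=-1$ component has $m=0$, hence survives in $\ForFMLM(f)$; but $\ForFMLM(f)$ differs from the image of the biequivariant isomorphism only by $d(k)$ for some $k\in\uHom_\LM(\cF,\cG)^{-1}_0$, and the same degree bounds force $k\in\uHom_\BE$, hence $d(k)\in\uHom_\BE$, a contradiction.  No explicit construction is needed.
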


The proof of this proposition will use the following lemma, which is a direct consequence of Lemma~\ref{lem:BwBu}.

\begin{lem}
\label{lem:components-conv-Rouquier}
Let $\uw \in \hW$ and $u \in \{s,t\}$ be such that $\uw u \in \hW$. Then the $p$-th component of the $\Diag$-sequence underlying $\tD_{\uw, \min} \hatstar \tD_{u}$ is a direct sum of objects of the form $B_v(n)$ with 
\[
(\ell(v),n) \in \{(\ell(\uw)-p+1,p), (\ell(\uw)-p-1,p), (\ell(\uw)-p,p+1), (\ell(\uw)-p,p-1)\}.
\]
\end{lem}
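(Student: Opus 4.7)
The plan is to carry out the computation directly, the only non-trivial input being Lemma~\ref{lem:BwBu} applied term by term. First I would recall from the construction preceding Proposition~\ref{prop:free-monodromic-minimal-Rouquier-new} that the underlying $\DiagBSp$-sequence of $\tD_{\uw,\min}$ coincides with that of $\Delta_{\uw,\min}$. Writing $m := \ell(\uw)$, inspection of the definition of $\Delta_{\uw,\min}$ given at the start of~\S\ref{sec:BE-rouquier} shows that $(\tD_{\uw,\min})^i$ vanishes unless $0 \le i \le m$; in that range every indecomposable summand is of the form $B_y(i)$ with $\ell(y) = m - i$ (explicitly $B_w$ at $i = 0$, $B_1(m)$ at $i = m$, and $B_{\sov{m-i}}(i) \oplus B_{\tov{m-i}}(i)$ for $0 < i < m$).

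Next, by Definition~\ref{defn:monconv} the $\DiagBSp$-sequence underlying $\tD_{\uw,\min} \hatstar \tD_u$ is the $\ustar$-convolution $\tD_{\uw,\min} \ustar \tD_u$. Since $(\tD_u)^j$ is nonzero only for $j \in \{-1, 0, 1\}$, with $(\tD_u)^{-1} = B_\varnothing(-1)$, $(\tD_u)^0 = B_u$ and $(\tD_u)^1 = B_\varnothing(1)$, formula~\eqref{eqn:convolution-obj} identifies the $p$-th component with
\[
 \bigl( (\tD_{\uw,\min})^{p-1} \star B_\varnothing(1) \bigr) \oplus \bigl( (\tD_{\uw,\min})^p \star B_u \bigr) \oplus \bigl( (\tD_{\uw,\min})^{p+1} \star B_\varnothing(-1) \bigr).
\]
It is then enough to check the summand condition on each of these three pieces separately.

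The first and third pieces are immediate: convolution with $B_\varnothing(\pm 1)$ is pure grading shift, so each indecomposable summand there contributes an object $B_v(p)$ with $\ell(v) = m - p + 1$ and $\ell(v) = m - p - 1$ respectively, giving exactly the pairs $(m - p + 1, p)$ and $(m - p - 1, p)$. For the middle piece I would invoke Lemma~\ref{lem:BwBu} on each indecomposable summand $B_x(p)$ of $(\tD_{\uw,\min})^p$ (so $\ell(x) = m - p$), using the identification $B_x(p) \star B_u = (B_x \star B_u)(p)$. The three possible shapes given by that lemma are $B_{xu}$, $B_{xu} \oplus B_{x\check u}$, or $B_x(-1) \oplus B_x(1)$: the first two yield summands $B_v(p)$ with $\ell(v) = m - p \pm 1$, and the third yields summands $B_x(p \pm 1)$ with $\ell(x) = m - p$. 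In each case $(\ell(v), n)$ lies in the prescribed four-element set. I anticipate no real obstacle; the proof is pure bookkeeping and the case analysis has already been carried out in Lemma~\ref{lem:BwBu}. The boundary positions $p \in \{-1, 0, m, m+1\}$, where some of the three contributions vanish, require no separate argument since the surviving summands still satisfy the constraint (for example, at $p = -1$ the only surviving term is $B_w \star B_\varnothing(-1) = B_w(-1)$, which matches the pair $(m - p - 1, p) = (m, -1)$).
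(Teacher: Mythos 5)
Your overall route is exactly the paper's: the paper dispenses with this lemma as a direct consequence of Lemma~\ref{lem:BwBu}, and your reduction (underlying sequence of $\tD_{\uw,\min}\hatstar\tD_u$ equals $\Delta_{\uw,\min}\ustar\tD_u$ via Definition~\ref{defn:monconv} and~\eqref{eqn:convolution-obj}, each term of $\Delta_{\uw,\min}$ in position $i$ being $B_y(i)$ with $\ell(y)=\ell(\uw)-i$, then Lemma~\ref{lem:BwBu} on the piece convolved with $B_u$) is precisely the intended bookkeeping.

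There is, however, a factual slip in your description of $\tD_u$: its underlying $\DiagBSp$-sequence is $(\sdots,0,B_u,B_\varnothing(1),0,\sdots)$ with nonzero terms only in positions $0$ and $1$; there is no $B_\varnothing(-1)$ in position $-1$. You have confused $\tD_u$ with the free-monodromic tilting object $\Tmon_u$, whose sequence does have that extra term. Consequently the $p$-th component of the convolution is only
\[
\bigl((\tD_{\uw,\min})^{p-1}\star B_\varnothing(1)\bigr)\oplus\bigl((\tD_{\uw,\min})^{p}\star B_u\bigr),
\]
and your third piece $(\tD_{\uw,\min})^{p+1}\star B_\varnothing(-1)$ simply does not occur; likewise at $p=-1$ the component is zero, not $B_w(-1)$. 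The error is harmless for the conclusion, since the spurious piece would anyway have contributed pairs $(\ell(\uw)-p-1,p)$ lying in the allowed set, and the two pieces that genuinely occur are analyzed correctly (the shift piece gives $(\ell(\uw)-p+1,p)$, and Lemma~\ref{lem:BwBu} applied to $B_y(p)\star B_u$ gives the four listed possibilities). So the statement follows from what you wrote, but the identification of the object should be corrected.
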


\begin{proof}[Proof of Theorem~{\rm\ref{thm:Rouquier-convolution-fm-new}}]
As in the proof of Proposition~\ref{prop:Rouquier-convolution-new}, it suffices to prove that if $\uv \in \hW$ and $u \in \{s,t\}$ are such that $\uv u \in \hW$, then there exists a morphism
\[
\tD_{\uv, \min} \hatstar \tD_u \to \tD_{\uv u, \min}
\]
in $\Conv_\FM^\rhd(\fh,W)_\Kar$ whose image in $\FM(\fh,W)_\Kar$ is an isomorphism.

By~\eqref{eqn:Rouquier-For-new} we have $\ForFMLM(\tD_{\uv u,\min}) \cong \ForBELM(\Delta_{\uv u,\min})$. On the other hand, using~\eqref{eqn:For-hatstar} and Lemma~\ref{lem:convolution-mon-equ}, we see that
\[
\ForFMLM(\tD_{\uv, \min} \hatstar \tD_u) \cong \ForFMLM(\tD_{\uv, \min}) \ustar \Delta_u \cong \ForBELM(\Delta_{\uv, \min}) \ustar \Delta_u \cong \ForBELM(\Delta_{\uv u,\min})
\]
by Proposition~\ref{prop:Rouquier-convolution-new}.
Using these isomorphisms, Corollary~\ref{cor:End-Delta-min}, Theorem~\ref{thm:leftmon-con}, and Lemma~\ref{lem:hom-mon-lmon}, we obtain that if $M$ and $N$ are either $\tD_{\uv u,\min}$ or $\tD_{\uv, \min} \hatstar \tD_u$, the right $R^\vee$-module $\gHom_\FM(M,N)$ is free, and that
\[
\gHom_\FM(M, N) \otimes_{R^\vee} \bk \cong \gEnd_\LM \bigl( \ForBELM(\Delta_{\uv u, \min}) \bigr) \cong \bk.
\]
In particular, it follows that the functor $\ForFMLM$ induces an isomorphism
\[
\Hom_{\FM(\fh,W)_{\Kar}}(M, N) \simto \Hom_{\LM(\fh,W)_{\Kar}}(\ForFMLM(M), \ForFMLM(N)).
\]
We deduce that any isomorphism
\[
\ForFMLM(\tD_{\uv u,\min}) \simto \ForFMLM(\tD_{\uv, \min} \hatstar \tD_u), \ \text{resp.} \
\ForFMLM(\tD_{\uv, \min} \hatstar \tD_u) \simto \ForFMLM(\tD_{\uv u,\min}),
\]
can be lifted uniquely to an isomorphism
\[
\tD_{\uv u,\min} \simto \tD_{\uv, \min} \hatstar \tD_u, \quad \text{resp.} \quad \tD_{\uv, \min} \hatstar \tD_u \simto \tD_{\uv u,\min}. 
\]
What remains to be proved is that the isomorphism $\tD_{\uv, \min} \hatstar \tD_u \simto \tD_{\uv u,\min}$ automatically belongs to $\Conv_\FM^\rhd(\fh,W)_\Kar$.

So, let us fix a chain map $f : \tD_{\uv, \min} \hatstar \tD_u \to \tD_{\uv u,\min}$ which induces an isomorphism in $\FM(\fh,W)_\Kar$. To simplify notation we set $\cF:=\tD_{\uv, \min} \hatstar \tD_u$ and $\cG:=\tD_{\uv u,\min}$. Then $f$ belongs to
\begin{multline*}
\uHom_\FM(\cF,\cG)^0_0 = \bigoplus_{\substack{n \geq 0, \ m \geq 0 \\ n+i=0, \\ 2n+j-2m=0}} \Lambda^n_{2n} \otimes \uHom_\BE(\cF,\cG)^i_j \otimes (R^\vee)^0_{-2m} \\
= \bigoplus_{\substack{n \geq 0, \ m \geq 0 \\ n+i=0, \\ 2n+j-2m=0}} \Lambda^n_{2n} \otimes \prod_{q-p=i-j} \Hom_\BE(\cF^p(-p),\cG^q(-q)(i)) \otimes (R^\vee)^0_{-2m}.
\end{multline*}
In these factors, if $i \geq 0$ then $n=0$, so the corresponding component belongs to $\uHom^\rhd_\FM(\cF,\cG)$. From Corollary~\ref{cor:dih-indecomp-hom} and Lemma~\ref{lem:components-conv-Rouquier} we see that the other components vanish except maybe when $i=-1$.

We claim that $f$ does not have any nonzero component for which $i=-1$. Indeed, for such a component to be nonzero, by Lemma~\ref{lem:components-conv-Rouquier} again we must have $\ell(\uv)-p = \ell(\uv u)-q$, and hence $j=-1+p-q =-2$. Since $j=2m-2$, we deduce that $m=0$. Thus, if $f$ has a nonzero component in $\Lambda^1_2 \otimes \uHom_\BE(\cF,\cG) \otimes R^\vee$, then $\ForFMLM(f) \in \uHom_\LM(\cF,\cG)^0_0$ has a nonzero component in $\Lambda^1_2 \otimes \uHom_\BE(\cF,\cG)^{-1}_{-2}$.

On the other hand, by Corollary~\ref{cor:End-Delta-min}, the image of $\ForFMLM(f)$ in $\gHom_\LM(\cF,\cG)$ must be a multiple of the image under $\ForBELM$ of the isomorphism of Proposition~\ref{prop:Rouquier-convolution-new}. Therefore, there is a $k \in \uHom_\LM(\cF,\cG)^{-1}_0$ such that $\ForFMLM(f)-d_{\uHom_\LM(\cF,\cG)}(k) \in \uHom_\BE(\cF,\cG)$. However, as above we have
\begin{multline*}
\uHom_\LM(\cF,\cG)^{-1}_0 = \bigoplus_{n \geq 0} \Lambda^n_{2n} \otimes \uHom_\BE(\cF,\cG)^{-n-1}_{-2n} \\
= \bigoplus_{n \geq 0} \Lambda^n_{2n} \otimes \prod_{q-p=n-1} \Hom_\BE(\cF^p(-p), \cG^q(-q)(-n-1)).
\end{multline*}
Hence, again for the same reason as above, $k$ must belong to $\uHom_\BE(\cF,\cG)$. Since $\ForFMLM(\cF)$ and $\ForFMLM(\cG)$ belong to the essential image of $\ForBELM$, we deduce that $d_{\uHom_\LM(\cF,\cG)}(k) \in \uHom_\BE(\cF,\cG)$ also.
This implies that $f \in \uHom_\BE(\cF,\cG)$, a contradiction.
\end{proof}

%==========================================================================
\chapter{Flag varieties for Kac--Moody groups}
\label{chap:kac-moody}
%==========================================================================

For the remainder of the paper, we will restrict our attention to Cartan realizations of crystallographic Coxeter groups. For such realizations, the category $\DiagBSp(\fh,W)$ is equivalent to the category of parity complexes on a Kac--Moody flag variety. This equivalence lets us invoke powerful tools from sheaf theory and geometry in our study of free-monodromic complexes.  

This chapter consists mostly of preliminaries and definitions related to the Kac--Moody setting. We will define sheaf-theoretic analogues of the four categories $\BE(\fh,W)$, $\RE(\fh,W)$, $\LM(\fh,W)$, and $\FM(\fh,W)$.  At the end of the chapter, we prove a few new statements on (co)standard and tilting sheaves.

%%%%%%%%%%%%%%%%%%%%%%%%%%%%%%%%%%%%%%%%%%%%%%%%%%%%%%%%%%%%%%%%%%%%%%%%%%%
\section{Cartan realizations of crystallographic Coxeter groups}
\label{sec:Cartan-realizations}
%%%%%%%%%%%%%%%%%%%%%%%%%%%%%%%%%%%%%%%%%%%%%%%%%%%%%%%%%%%%%%%%%%%%%%%%%%%

Let $A = (a_{ij})_{i,j \in I}$ be a generalized Cartan matrix, with rows and columns parametrized by a finite set $I$.  Let $(I,\bX, \{\alpha_i : i \in I\}, \{\alpha_i^\vee : i \in I\})$ be an associated Kac--Moody root datum in the sense of~\cite[\S1.2]{tits}.  Thus, $\bX$ is a finitely generated free abelian group, and the $\alpha_i$ and the $\alpha_i^\vee$ are elements of $\bX$ and $\bX^*$, respectively, satisfying $\alpha_i^\vee(\alpha_j) = a_{ij}$.  (Here, as usual, we put $\bX^* = \Hom_\Z(\bX,\Z)$.)

The matrix $A$ determines a crystallographic Coxeter group $(W,S)$ by a well-known recipe.  The set of simple reflections $S$ is equipped with a fixed bijection $S \simto I$ (denoted by $s \mapsto i_s$), and for distinct $s, t \in S$, the order of $st$ (necessarily $2$, $3$, $4$, $6$, or $\infty$) is determined by the integer $a_{i_si_t}a_{i_ti_s}$. (See, for instance,~\cite[\S3.1]{tits} for details.)

Let $\bk$ be an integral domain. Using the Kac--Moody root datum, we can construct a realization of $(W,S)$ over $\bk$ as follows: we set $V := \bk \otimes_\Z \bX^*$ (so that $V^*$ is identified with $\bk \otimes \bX$), and for $s \in S$, we define $\alpha_s$, resp.~$\alpha_s^\vee$, to be the image of $\alpha_{i_s}$, resp.~$\alpha^\vee_{i_s}$, in $V^*$, resp.~in $V$. This realization is always balanced, but it might not satisfy Demazure surjectivity.

More precisely, let us define $\Zdem$\index{Zdem@$\Zdem$} to be $\Z$ if the maps $\alpha_i : \bX^* \to \Z$ and $\alpha_i^\vee : \bX \to \Z$ are surjective for all $i \in I$, and as $\Z[\frac{1}{2}]$ otherwise. Then Demazure surjectivity holds if there exists a ring morphism $\Zdem \to \bk$.

A realization of $(W,S)$ obtained in this way is called a \emph{Cartan realization}\index{Cartan realization}\index{realization!Cartan}.  From now on, we assume that our Coxeter group is crystallographic, that we are working with a Cartan realization, and that there exists a ring morphism $\Zdem \to \bk$. Note that the discussion in Section~\ref{sec:JW} shows that the assumptions ensuring that the Elias--Williamson category is well behaved are always satisfied for such realizations.

%%%%%%%%%%%%%%%%%%%%%%%%%%%%%%%%%%%%%%%%%%%%%%%%%%%%%%%%%%%%%%%%%%%%%%%%%%%
\section{Parity complexes on flag varieties}
\label{sec:parity-complexes}
%%%%%%%%%%%%%%%%%%%%%%%%%%%%%%%%%%%%%%%%%%%%%%%%%%%%%%%%%%%%%%%%%%%%%%%%%%%

To $A$ and the given root datum $(I,\bX,\{\alpha_i : i \in I\}, \{\alpha_i^\vee : i \in I\})$, one can associate (following Mathieu~\cite{mathieu-KM}) a Kac--Moody group $\GKM_\Z$ with a canonical subgroup $\BKM_\Z$. Here, $\GKM_\Z$ is a group ind-scheme over $\Z$, and $\BKM_\Z$ is a subgroup scheme. Let $\UKM_\Z$ be the pro-unipotent radical of $\BKM_\Z$. We denote by $\GKM$, resp.~$\BKM$, resp.~$\UKM$, the base-change of $\GKM_\Z$, resp.~$\BKM_\Z$, resp.~$\UKM_\Z$, to $\C$. Then the quotient $\GKM/\BKM$ has a natural structure of complex ind-projective ind-variety, and we have a Bruhat decomposition
\[
\GKM/\BKM = \bigsqcup_{w \in W} \mathscr{X}_w
\]
where each $\mathscr{X}_w$ is a $\BKM$-orbit isomorphic to an affine space of dimension $\ell(w)$.  See~\cite[\S 9.1]{rw} for a brief description of this construction, and~\cite{mathieu, mathieu-KM} for full details.

\begin{rmk}
\begin{enumerate}
\item If $A$ is of finite type, then the Kac--Moody root datum is equivalent to a root datum in the ordinary sense, and $\GKM$ is the associated complex connected reductive group.
\item In~\cite{mathieu-KM}, the root datum is assumed to satisfy additional conditions (which, in the finite-type case, amount to looking only at semisimple, simply connected groups).  An explanation of why these conditions can be dropped can be found in~\cite[\S6]{tits} (see also the remarks in~\cite[\S9.1]{rw}).
\end{enumerate}
\end{rmk} 

Let us fix a Noetherian commutative ring $\bk$ of finite global dimension. We will work with the $\BKM$-equivariant derived category of $\bk$-sheaves on $\GKM/\BKM$, usually denoted by $\Db_\BKM(\GKM/\BKM,\bk)$.  In this paper, we will instead use the ``stacky'' notation
\[
\Db(\BGB,\bk).
\]
This category is equipped with a convolution product $\star$, making it into a monoidal category. Following the convention of~\cite{modrap2}, the cohomological shift functor on $\Db(\BGB,\bk)$ will be denoted by
\[
\cF \mapsto \cF\{1\}.
\]

For any expression $\uw$ we have a ``Bott--Samelson resolution''
\[
\nu_{\uw} : \mathsf{BS}(\uw) \to \GKM/\BKM.
\]
We set\index{Euw@{$\cE(\uw)$}}
\[
\cE(\uw) := (\nu_{\uw})_* \underline{\bk}_{\mathsf{BS}(\uw)} \{\ell(\uw)\},
\]
and denote by $\ParityBS(\BGB, \bk)$\index{categories of parity complexes!ParityBSBGB@{$\ParityBS(\BGB, \bk)$}} the full subcategory of $\Db(\BGB,\bk)$ whose objects are of the form $\cE(\uw)\{n\}$ for some expression $\uw$ and some $n \in \Z$.
(These objects are parity complexes in the sense of~\cite{jmw}; see~\cite[Part~3]{rw} for details.) This category is a monoidal subcategory of $\Db(\BGB,\bk)$.  Next, let $\ParityBS^\oplus(\BGB, \bk)$\index{categories of parity complexes!ParityBSBGBp@{$\ParityBS^\oplus(\BGB, \bk)$}} be its additive envelope (see~\S\ref{sec:additive-hull-Diag}). This category inherits from $\ParityBS(\BGB, \bk)$ the structure of a monoidal category.

One can consider similar constructions in the derived category of constructible complexes for the Bruhat stratification, or, equivalently, the $\UKM$-equivariant derived category of $\GKM/\BKM$, denoted by $\Db(\UGBold,\bk)$.  (For a discussion of this equivalence, see, for instance,~\cite[\S5.3]{modrap1}.)  The resulting categories will be denoted\index{categories of parity complexes!ParityBSUGB@{$\ParityBS(\UGBold, \bk)$}}\index{categories of parity complexes!ParityBSUGBp@{$\ParityBS^\oplus(\UGBold, \bk)$}}
\[
\ParityBS(\UGBold, \bk) \quad \text{and} \quad \ParityBS^\oplus(\UGBold, \bk).
\]
The convolution construction provides a right action of the monoidal category $\ParityBS(\BGB, \bk)$ on $\ParityBS(\UGBold, \bk)$, and hence also a right action of the monoidal category $\ParityBS^\oplus(\BGB, \bk)$ on $\ParityBS^\oplus(\UGBold, \bk)$.

In the case when
$\bk$ is in addition an integral domain admitting a ring morphism $\Zdem \to \bk$, the results of~\cite[Part~3]{rw} explain the relation between these categories and the Elias--Williamson category of~\S\ref{sec:ew-diagram} associated with the realization considered in~\S\ref{sec:Cartan-realizations}. More precisely, by~\cite[Theorem~10.6]{rw} there exists a canonical equivalence of monoidal categories
\[
\DiagBS(\fh,W) \simto \ParityBS(\BGB, \bk)
\]
which intertwines the shift functor $(1)$ on $\DiagBS(\fh,W)$ and the cohomological shift $\{1\}$ on $\ParityBS(\BGB, \bk)$ and sends $B_\uw$ to $\cE(\uw)$. This equivalence induces an equivalence of additive monoidal categories
\[
\DiagBSp(\fh,W) \simto \ParityBS^\oplus(\BGB, \bk),
\]
and equivalences of module categories
\[
\oDiagBS(\fh,W) \simto \ParityBS(\UGBold, \bk), \quad \oDiagBSp(\fh,W) \simto \ParityBS^\oplus(\UGBold, \bk).
\]

If $\bk$ is a field or a complete local ring, we will also denote by\index{categories of parity complexes!ParityBGB@{$\Parity(\BGB, \bk)$}}\index{categories of parity complexes!ParityUGB@{$\Parity(\UGBold, \bk)$}}
\[
\Parity(\BGB, \bk), \quad \text{resp.} \quad \Parity(\UGBold, \bk),
\]
the category of all parity complexes in $\Db(\BGB,\bk)$, resp.~in $\Db(\UGBold,\bk)$. The results of~\cite{jmw} show that $\Parity(\BGB, \bk)$ identifies with the Karoubian envelope of $\ParityBS(\BGB, \bk)$, and that $\Parity(\UGBold, \bk)$ identifies with the Karoubian envelope of $\ParityBS(\UGBold, \bk)$.
As a consequence, the equivalences considered above induce equivalences
\[
\Diag(\fh,W) \simto \Parity(\BGB, \bk), \qquad \oDiag(\fh,W) \simto \Parity(\UGBold, \bk).
\]

%%%%%%%%%%%%%%%%%%%%%%%%%%%%%%%%%%%%%%%%%%%%%%%%%%%%%%%%%%%%%%%%%%%%%%%%%%%
\section{Parity sequences}
\label{sec:parity-sequences}
%%%%%%%%%%%%%%%%%%%%%%%%%%%%%%%%%%%%%%%%%%%%%%%%%%%%%%%%%%%%%%%%%%%%%%%%%%%

Via the equivalences recalled in~\S\ref{sec:parity-complexes}, all the work carried out in the preceding chapters can be transferred to the setting of (sequences of) parity complexes on $\GKM/\BKM$.  In contrast, the results in this chapter and the last chapter are proved here only for realizations that come from the Kac--Moody setting.  To avoid confusion about the applicability of these results, we now introduce new notation for the main categories of interest.

The $\ParityBS^\oplus(\BGB, \bk)$-sequences will be called \emph{Bott--Samelson parity sequences}. Similarly,
if $\bk$ is a field or a complete local ring, 
the $\Parity(\BGB,\bk)$-sequences will be called
\emph{parity sequences}.\index{parity sequence}

In both settings, we define three shift-of-grading functors as follows:\index{shifts!d@{$\{1\}$}}\index{shifts!b@{$\langle 1\rangle$}}\index{shifts!c@{$[1]$}}
\[
\cF[n]^i = \cF^{i+n},
\qquad
\cF\la n\ra^i = \cF^{i+n}\{-n\},
\qquad
\cF\{n\}^i = \cF^i\{n\}.
\]
These shift functors commute with each other and are related by the formula $\la 1 \ra = [1]\{-1\}$. Following~\cite{modrap2}, in this setting the functor $\langle 1 \rangle$ will be called \emph{Tate twist}\index{Tate twist}.

In a slight abuse of language, when no confusion is likely, we will sometimes omit the modifier ``Bott--Samelson,'' and simply use the term ``parity sequence'' in both of the above situations.

Given two (Bott--Samelson) parity sequences $\cF = (\cF^i)_{i \in \Z}$ and $\cG = (\cG^i)_{i \in \Z}$, we define a bigraded $\bk$-module $\uHom_{\BE}(\cF,\cG)$\index{HomuBE@$\uHom_\BE$} by
\[
\uHom_{\BE}(\cF,\cG)^i_j = \prod_{q-p=i-j} \Hom_{\ParityBS(\BGB, \bk)}(\cF^p,\cG^q\{j\}).
\]
We also define\index{HomuRE@$\uHom_\RE$}\index{HomuLM@$\uHom_\LM$}\index{HomuFM@$\uHom_\FM$}
\begin{align*}
\uHom_\RE(\cF,\cG) &:= \bk \otimes_R \uHom_\BE(\cF,\cG), \\
\uHom_\LM(\cF,\cG) &:= \Lambda \otimes \uHom_\BE(\cF,\cG), \\
\uHom_\FM(\cF,\cG) &:= \Lambda \otimes \uHom_\BE(\cF,\cG) \otimes R^\vee.
\end{align*}
In the case where $\cF = \cG$, we also write $\uEnd_\BE(\cF)$, $\uEnd_\RE(\cF)$, etc., for these bigraded $\bk$-modules.

We can now define a number of categories analogous to those we studied in Chapters~\ref{chap:soergel-diagrams}--\ref{chap:fm-complexes}.  

\begin{defn}
The \emph{biequivariant mixed derived category}\index{biequivariant} associated to $\GKM$, denoted by\index{categories of parity sequences!DmixBGB@{$\Dmix(\BGB,\bk)$}}
\[
\Dmix(\BGB,\bk),
\]
is the category whose objects are pairs $(\cF,\delta)$, where $\cF$ is a Bott--Samelson parity sequence, and $\delta \in \uEnd_\BE(\cF)^1_0$ is an element such that $\delta \circ \delta = 0$.

The \emph{right-equivariant mixed derived category}\index{right-equivariant} associated to $\GKM$, denoted by\index{categories of parity sequences!DmixUGBold@{$\Dmix(\UGBold,\bk)$}}
\[
\Dmix(\UGBold,\bk),
\]
is the category whose objects are pairs $(\cF,\delta)$, where $\cF$ is a Bott--Samelson parity sequence, and $\delta \in \uEnd_\RE(\cF)^1_0$ is an element such that $\delta \circ \delta = 0$.

The \emph{left-monodromic mixed derived category}\index{left-monodromic} associated to $\GKM$, denoted by\index{categories of parity sequences!DmixUGB@{$\Dmix(\UGB,\bk)$}}
\[
\Dmix(\UGB,\bk),
\]
is the category whose objects are pairs $(\cF,\delta)$, where $\cF$ is a Bott--Samelson parity sequence, and $\delta \in \uEnd_\LM(\cF)^1_0$ is an element such that $\delta \circ \delta + \kappa(\delta) = 0$.

The \emph{free-monodromic mixed derived category}\index{free-monodromic} associated to $\GKM$, denoted by\index{categories of parity sequences!DmixUGU@{$\Dmix(\UGU,\bk)$}}
\[
\Dmix(\UGU,\bk),
\]
is the category whose objects are pairs $(\cF,\delta)$, where $\cF$ is a Bott--Samelson parity sequence, and $\delta \in \uEnd_\FM(\cF)^1_0$ is an element such that $\delta \circ \delta + \kappa(\delta) = \Theta_\cF$.

In each case, the morphisms and composition maps are defined in terms of certain complexes, just like in Chapters~\ref{chap:soergel-diagrams}--\ref{chap:fm-complexes}.
\end{defn}

These categories come equipped with canonical equivalences to their counterparts $\BE(\fh,W)$, $\RE(\fh,W)$, etc., considered in Chapters~\ref{chap:soergel-diagrams}--\ref{chap:fm-complexes}.  For simplicity, we will use much of the same notation as in those chapters, e.g.~for the extension-of-scalars functors $\bk'$ or for the forgetful functors $\ForFMLM$, etc.  The category of convolutive\index{convolutive} objects in $\Dmix(\UGU,\bk)$, resp.~$\Dmix(\UGB,\bk)$, is denoted by $\Conv(\UGU, \bk)$,\index{ConvUGU@{$\Conv(\UGU,\bk)$}} resp.~$\Conv(\UGB,\bk)$,\index{ConvUGB@{$\Conv(\UGB,\bk)$}}.

When $\bk$ is a field or a complete local ring, one can repeat these definitions using objects of $\Parity(\BGB,\bk)$ rather than $\ParityBS^\oplus(\BGB,\bk)$.  As in~\S\ref{sec:karoubian1} and~\S\ref{sec:karoubian2}, the resulting categories are denoted by
\begin{gather*}
\Dmix(\BGB,\bk)_\Kar,
\qquad
\Dmix(\UGBold,\bk)_\Kar,
\\
\Dmix(\UGB,\bk)_\Kar,
\qquad
\Dmix(\UGU,\bk)_\Kar,
\end{gather*}
respectively.  The proofs of Lemmas~\ref{lem:idem-equiv} and~\ref{lem:idem-equiv-fm} yield the following.

\begin{lem}
\label{lem:idem-equiv-km}
The obvious functors
\begin{align*}
\Dmix(\BGB,\bk ) &\to \Dmix(\BGB,\bk)_\Kar, \\
\Dmix(\UGBold,\bk) &\to \Dmix(\UGBold,\bk)_\Kar, \\
\Dmix(\UGB,\bk) &\to \Dmix(\UGB,\bk)_\Kar
\end{align*}
are equivalences of categories.  Similarly, the obvious functor
\[
\Dmix(\UGU,\bk) \to \Dmix(\UGU,\bk)_\Kar
\]
is at least fully faithful.
\end{lem}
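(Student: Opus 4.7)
My plan is to adapt the proofs of Lemmas~\ref{lem:idem-equiv} and~\ref{lem:idem-equiv-fm} to the Kac--Moody setting, using the identification (recalled at the end of~\S\ref{sec:parity-complexes}) of $\Parity(\BGB,\bk)$, resp.~$\Parity(\UGBold,\bk)$, with the Karoubian envelope of $\ParityBS^\oplus(\BGB,\bk)$, resp.~$\ParityBS^\oplus(\UGBold,\bk)$. The very first step, fully faithfulness of all four functors, will be formal: since $\ParityBS^\oplus(\BGB,\bk)$ is a \emph{full} additive subcategory of $\Parity(\BGB,\bk)$, the bigraded $\bk$-modules $\uHom_\BE(\cF,\cG)$, $\uHom_\RE(\cF,\cG)$, $\uHom_\LM(\cF,\cG)$, and $\uHom_\FM(\cF,\cG)$ do not depend on whether one views $\cF$ and $\cG$ as Bott--Samelson parity sequences or as parity sequences; as morphism spaces in all four categories are defined as the $(0,0)$-cohomology of such complexes, fully faithfulness is immediate.

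Next, for essential surjectivity of the first three functors I would run the same Bruhat induction as in the proof of Lemma~\ref{lem:idem-equiv}. For $\Dmix(\BGB,\bk) \to \Dmix(\BGB,\bk)_\Kar$ it suffices to show that every indecomposable parity complex $\cE_w$, regarded as a parity sequence concentrated in degree $0$, lies in the essential image. Induct on $w$ with respect to the Bruhat order: the case $w=e$ is trivial (take $\uw = \varnothing$), and for $w > e$ pick a reduced expression $\uw$ for $w$ and use the decomposition $\cE(\uw) \cong \cE_w \oplus \cF$ with $\cF$ a direct sum of shifts of $\cE_v$ for $v < w$; then $\cE_w$ is (isomorphic to) the cone of a morphism $\cF \to \cE(\uw)$, and lies in the essential image by induction, full faithfulness, and triangulatedness. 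The same argument transports verbatim to $\Dmix(\UGBold,\bk)$. For $\Dmix(\UGB,\bk)$ an additional preliminary step is needed: I would first imitate the proof of Lemma~\ref{lem:dmixUGB-generate} to show that $\Dmix(\UGB,\bk)_\Kar$ is generated as a triangulated category by the essential image of $\Parity(\BGB,\bk)$. That proof is purely formal, relying only on the bigraded-degree considerations governing $\uHom_\BE$ (parity vanishing of $\Hom$ groups between parity complexes), and these carry over without change from $\DiagBSp(\fh,W)$ to $\Parity(\BGB,\bk)$.

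For the fourth functor the plan stops at full faithfulness, exactly as in Lemma~\ref{lem:idem-equiv-fm}: the equality of $\uHom_\FM$ groups computed on either side immediately yields it. The main obstacle to upgrading this to an equivalence, and the reason the lemma only claims full faithfulness, is the absence of a convenient generation statement for $\Dmix(\UGU,\bk)_\Kar$ of the type provided by Lemma~\ref{lem:dmixUGB-generate}. The naive attempt to replicate that argument fails in the free-monodromic setting because the splitting of the top cohomological term of a $\DiagBSp$-sequence is no longer compatible with the free-monodromic differential condition $\delta \circ \delta + \kappa(\delta) = \Theta$, and I do not see how to circumvent this without new input; as noted in the paper, this question is left open.
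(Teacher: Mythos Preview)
Your proposal is correct and follows exactly the approach of the paper, which simply states that the proofs of Lemmas~\ref{lem:idem-equiv} and~\ref{lem:idem-equiv-fm} carry over. You have correctly unpacked what those proofs become in the parity-complex setting: full faithfulness from the fullness of $\ParityBS^\oplus(\BGB,\bk) \subset \Parity(\BGB,\bk)$, essential surjectivity for the first three via Bruhat induction on indecomposable parity complexes (with the preliminary generation step for $\Dmix(\UGB,\bk)_\Kar$ via the analogue of Lemma~\ref{lem:dmixUGB-generate}), and the stopping point at full faithfulness for the free-monodromic case.
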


(As with Lemma~\ref{lem:idem-equiv-fm}, the last of these functors is expected to be an equivalence as well.)

%%%%%%%%%%%%%%%%%%%%%%%%%%%%%%%%%%%%%%%%%%%%%%%%%%%%%%%%%%%%%%%%%%%%%%%%%%%
\section{Mixed perverse sheaves}
%%%%%%%%%%%%%%%%%%%%%%%%%%%%%%%%%%%%%%%%%%%%%%%%%%%%%%%%%%%%%%%%%%%%%%%%%%%

When $\bk$ is a field or a complete local ring, the category $\Dmix(\BGB,\bk)_\Kar$ coincides with the one that was denoted $\Dmix_\BKM(\GKM/\BKM, \bk)$ in~\cite[\S3.5 and~\S4.3]{modrap2}.  Likewise, the category $\Dmix(\UGBold,\bk)_\Kar$ defined here coincides with the category denoted by $\Dmix_{(\BKM)}(\GKM/\BKM, \bk)$ in~\cite[\S2.1 and~\S4.1]{modrap2}.

\begin{rmk}
In~\cite{modrap2} the Tate twist $\langle 1 \rangle$ on $\Dmix_{\BKM}(\GKM/\BKM, \bk)$ is defined by $\{-1\}[1]$, and hence sends (in the present notation) a pair $(\cF, \delta)$ to $(\cF \langle 1 \rangle, -\delta)$. This change of convention is harmless since the two functors are easily seen to be isomorphic. 
\end{rmk}

One of the main topics of~\cite{modrap2} was the definition and study of the \emph{perverse t-structure}\index{perverse t-structure} on these two categories.  In this section, we review some facts about these t-structures, whose hearts are denoted by
\[
\Perv^\mix(\BGB,\bk)
\qquad\text{and}\qquad
\Perv^\mix(\UGBold,\bk),
\]
respectively.  We will make implicit use of the equivalences in Lemma~\ref{lem:idem-equiv-km}, and regard these as subcategories of $\Dmix(\BGB,\bk)$ and $\Dmix(\UGBold,\bk)$, respectively.  We assume throughout this section that $\bk$ is a field or a complete local ring.

For any $w \in W$, there are objects
\begin{align*}
\Delta_w, \nabla_w &\in \Perv^\mix(\BGB,\bk),  \\
\ForBERE(\Delta_w), \ForBERE(\nabla_w) &\in \Perv^\mix(\UGBold,\bk),
\end{align*}
called \emph{standard} and \emph{costandard perverse sheaves}, respectively.\index{standard object!Deltaw@$\Delta_w$}\index{costandard object!nablaw@$\nabla_w$}  For an explicit definition of the latter, see~\cite[\S3.1]{modrap2}.  (In~\cite{modrap2}, the same notation is used in both $\Perv^\mix(\BGB,\bk)$ and $\Perv^\mix(\UGBold,\bk)$; but in this paper, it will be convenient to maintain the distinction between them.)  When $w$ is a simple reflection, the proof of~\cite[Lemma~2.4]{modrap2} shows that these objects agree with those defined in Example~\ref{ex:std-costd}.

The adjunction properties established in~\cite[\S2]{modrap2} imply that
\begin{equation}
\label{eqn:Delta-gEnd}
\gEnd_\RE(\ForBERE(\Delta_w)) \cong \gEnd_\RE(\ForBERE(\nabla_w)) \cong \bk.
\end{equation}
In addition, according to~\cite[Lemma~3.2]{modrap2}, we have
\begin{equation}
\label{eqn:Delta-nabla-gHom}
\gHom_\RE(\ForBERE(\Delta_w), \ForBERE(\nabla_v)) \cong
\begin{cases}
\bk & \text{if $v = w$,} \\
0 & \text{otherwise.}
\end{cases}
\end{equation}

The fact that the objects $\Delta_w$ and $\nabla_w$ (and therefore also the objects $\ForBERE(\Delta_w)$ and $\ForBERE(\nabla_w)$) lie in the heart of the perverse t-structure is proved in~\cite[Proposition~4.6]{modrap2}.  Moreover, according to~\cite[Proposition~4.4(1)]{modrap2}, if $\uw$ is any reduced expression for $w$, then 
\begin{equation}
\label{eqn:D-N-independence}
\Delta_\uw \cong \Delta_w
\qquad\text{and}\qquad
\nabla_\uw \cong \nabla_w
\end{equation}
(where $\Delta_\uw$ and $\nabla_\uw$ are defined in Example~\ref{ex:std-costd}).
In particular, for any $\uw \in \hW$, the objects $\Delta_\uw$ and $\nabla_\uw$ depend only on $\pi(\uw)$.  Similarly, for $\uw \in \hW$, the objects $\ForBERE(\Delta_\uw)$ and $\ForBERE(\nabla_\uw)$ depend only on $\pi(\uw)$.

\begin{lem}
\label{lem:tD-tN-ForUB}
For any 
$\uw \in \hW$, there are isomorphisms
\[
\ForFMLM(\tD_{\uw}) \cong \ForBELM(\Delta_{\pi(\uw)})
\qquad\text{and}\qquad
\ForFMLM(\tN_{\uw}) \cong \ForBELM(\nabla_{\pi(\uw)}).
\]
\end{lem}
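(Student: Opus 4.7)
The plan is to prove both isomorphisms simultaneously by induction on $k := \ell(\uw)$, treating the $\tD$ and $\tN$ cases in parallel.

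For the base case $k = 0$, $\uw$ is empty and $\tD_\varnothing = \tN_\varnothing = \Tmon_\varnothing$; the differential $\theta = \sum_i e_i \otimes \id \otimes \check e_i$ is killed by the counit $\epsilon_{R^\vee}$ defining $\ForFMLM$, since each $\check e_i$ lies in $V \subset R^\vee$ of strictly positive internal degree. Hence $\ForFMLM(\Tmon_\varnothing)$ has trivial differential, which agrees with $\ForBELM(\Delta_1) = \ForBELM(\nabla_1) = \ForBELM(B_\varnothing)$. For $k = 1$, inspection of the explicit formula for $\tD_s$ in~\S\ref{sss:ts-freemon} (resp.~$\tN_s$ in~\S\ref{sec:coherence}) shows that every summand of $\delta_{\tD_s}$ apart from the single top dot $B_s \to B_\varnothing\{1\}$ contains either a $\theta$-term or an $\alpha_s^\vee$-factor in $R^\vee$, and so is annihilated by $\ForFMLM$; what remains is exactly the differential of $\ForBELM(\Delta_s)$ from Example~\ref{ex:std-costd}. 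The verification for $\tN_s$ and $\nabla_s$ is symmetric.

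For the inductive step, suppose $k \geq 2$ and the result holds for all reduced expressions of length $< k$. Writing $\uw = (\uv, s)$ with $\uv$ reduced of length $k-1$, the coherence result of~\S\ref{sec:coherence} gives $\tD_\uw = \tD_\uv \hatstar \tD_s$. I would then chain
\begin{align*}
\ForFMLM(\tD_\uw) &\cong \tD_\uv \hatstar \ForFMLM(\tD_s) \cong \tD_\uv \hatstar \ForBELM(\Delta_s) \\
&\cong \ForFMLM(\tD_\uv) \ustar \Delta_s \cong \ForBELM(\Delta_\uv) \ustar \Delta_s \\
&\cong \ForBELM(\Delta_\uv \ustar \Delta_s) = \ForBELM(\Delta_\uw) \cong \ForBELM(\Delta_{\pi(\uw)}),
\end{align*}
invoking, in order: \eqref{eqn:For-hatstar}; the base case; Lemma~\ref{lem:convolution-mon-equ}; the inductive hypothesis combined with~\eqref{eqn:D-N-independence} (which is legitimate because $\uv$ is reduced); the compatibility of $\ForBELM$ with the right $\ustar$-action noted immediately after Lemma~\ref{lem:convolution-mon-equ}; and~\eqref{eqn:D-N-independence} once more for $\uw$. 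The argument for $\tN_\uw$ and $\nabla_{\pi(\uw)}$ is line-for-line identical.

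The whole proof thus reduces to the length-one bookkeeping together with three formal compatibility isomorphisms, all of which are already established in the paper, so I do not expect any substantive obstacle. The most delicate point is merely to keep track of which incarnation of $\hatstar$ is in play at each stage --- free-monodromic with free-monodromic via~\eqref{eqn:For-hatstar}, versus free-monodromic acting on a $\ForBELM$-image via Lemma~\ref{lem:convolution-mon-equ} --- but these two are related in exactly the way needed.
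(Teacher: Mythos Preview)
Your proof is correct and follows essentially the same route as the paper's: induction on $\ell(\uw)$ using \eqref{eqn:For-hatstar} and Lemma~\ref{lem:convolution-mon-equ} to obtain $\ForFMLM(\tD_\uw) \cong \ForBELM(\Delta_\uw)$, followed by~\eqref{eqn:D-N-independence}. The paper's proof is terser (it does not separate out the $k=1$ case, which your inductive step already covers), but the content is the same.
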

\begin{proof}
It is easy to see by induction on the length of $\uw$, using \eqref{eqn:For-hatstar} and Lemma~\ref{lem:convolution-mon-equ}, that $\ForFMLM(\tD_\uw) \cong \ForBELM(\Delta_\uw)$.  Then, 
the discussion of~\cite[Proposition~4.4(1)]{modrap2} above implies that $\ForBELM(\Delta_\uw) \cong \ForBELM(\Delta_{\pi(w)})$.  The same arguments apply to $\tN_\uw$.
\end{proof}

\begin{prop}
\label{prop:tD-tN}
Let $\uv, \uw \in \hW$.
 \begin{enumerate}
  \item
  \label{it:tD-tN-independence}
The objects $\tD_{\uw}$ and $\tN_{\uw}$ depend only on $\pi(\uw)$ (up to isomorphism).
  \item
  \label{it:tD-tN-Hom}
  We have
  \[
   \gHom_\FM(\tD_{\uv}, \tN_{\uw}) \cong \begin{cases}
                                                R^\vee & \text{if $\pi(\uv)=\pi(\uw)$;} \\
                                                0 & \text{otherwise.}
                                               \end{cases}
  \]
 \end{enumerate}
\end{prop}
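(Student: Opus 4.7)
The proof of both parts rests on Lemma~\ref{lem:hom-mon-lmon}, which under a cohomological vanishing hypothesis computes $\gHom_\FM$ in terms of $\gHom_\LM$ and identifies the former with a free right $R^\vee$-module whose rank is witnessed by the latter via the counit $R^\vee \to \bk$. The necessary inputs are Lemma~\ref{lem:tD-tN-ForUB} (which rewrites $\ForFMLM(\tD_\uw)$ and $\ForFMLM(\tN_\uw)$ as $\ForBELM(\Delta_{\pi(\uw)})$ and $\ForBELM(\nabla_{\pi(\uw)})$), Theorem~\ref{thm:leftmon-con} (which transports computations from $\LM$ to $\RE$), and the graded Hom formulas~\eqref{eqn:Delta-gEnd} and~\eqref{eqn:Delta-nabla-gHom}; the latter play the role of a ``purity'' statement, as the relevant $\gHom$ spaces between (co)standards in $\RE$ are concentrated in bidegree $(0,0)$ and free over $\bk$ there.

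For part (2), I apply Lemma~\ref{lem:hom-mon-lmon} to the pair $(\tD_\uv, \tN_\uw)$. Its hypothesis is then exactly~\eqref{eqn:Delta-nabla-gHom}. Its conclusion is that $\gHom_\FM(\tD_\uv, \tN_\uw)$ is concentrated in cohomological degree zero and that its degree-zero component $M$ is a graded free right $R^\vee$-module with $M \otimes_{R^\vee} \bk$ isomorphic to the corresponding Hom in $\LM$. When $\pi(\uv) \neq \pi(\uw)$ this forces $M = 0$; when $\pi(\uv) = \pi(\uw)$ it forces $M$ to be free of rank one with generator in bidegree $(0,0)$, hence bigradedly isomorphic to $R^\vee$.

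For part (1), I apply Lemma~\ref{lem:hom-mon-lmon} to the pairs $(\tD_\uv, \tD_\uw)$ and $(\tN_\uv, \tN_\uw)$ with $\pi(\uv) = \pi(\uw) = w$; the hypothesis is now supplied by~\eqref{eqn:Delta-gEnd}, which covers both the standard and costandard cases. The conclusion gives in particular that the map $\gHom_\FM(\tD_\uv, \tD_\uw)^0_0 \to \gHom_\LM(\ForFMLM(\tD_\uv), \ForFMLM(\tD_\uw))^0_0$ induced by $\ForFMLM$ is an isomorphism of one-dimensional $\bk$-modules. Using Lemma~\ref{lem:tD-tN-ForUB} to fix any isomorphism $\eta : \ForFMLM(\tD_\uv) \simto \ForFMLM(\tD_\uw)$ in $\LM$, I lift $\eta$ and $\eta^{-1}$ uniquely to morphisms $\phi : \tD_\uv \to \tD_\uw$ and $\psi : \tD_\uw \to \tD_\uv$ in $\FM$. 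Applying the same injectivity of $\ForFMLM$ on $\gHom_\FM(\tD_\uv, \tD_\uv)^0_0$ and on $\gHom_\FM(\tD_\uw, \tD_\uw)^0_0$, the identities $\psi \circ \phi = \id$ and $\phi \circ \psi = \id$ already hold in $\LM$, so they lift to the same identities in $\FM$, and hence $\phi$ is an isomorphism. The argument for $\tN_\uv \cong \tN_\uw$ is identical.

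The proof presents no real obstacle: it is bookkeeping around Lemma~\ref{lem:hom-mon-lmon}, and the only geometric input is the purity of (co)standards encoded in~\eqref{eqn:Delta-gEnd} and~\eqref{eqn:Delta-nabla-gHom}. The one delicate point is that Lemma~\ref{lem:tD-tN-ForUB} does not produce a canonical isomorphism $\ForFMLM(\tD_\uv) \cong \ForFMLM(\tD_\uw)$ when the two expressions represent the same element, but this is harmless: the statement merely asserts that an isomorphism exists in $\FM$, and any choice of $\eta$ suffices.
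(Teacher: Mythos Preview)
Your proof is correct and follows essentially the same approach as the paper: both rely on Lemma~\ref{lem:hom-mon-lmon} together with~\eqref{eqn:Delta-gEnd} and~\eqref{eqn:Delta-nabla-gHom} (transported via Theorem~\ref{thm:leftmon-con} and Lemma~\ref{lem:tD-tN-ForUB}) to reduce the $\FM$-Hom computations to the known $\RE$-Hom computations. The only cosmetic difference is that in part~\eqref{it:tD-tN-independence} the paper picks arbitrary generators $\varphi,\psi$ and argues that $\psi\circ\varphi$ and $\varphi\circ\psi$ are invertible, whereas you lift specific mutually inverse isomorphisms and deduce $\psi\circ\phi=\id$ directly from injectivity of $\ForFMLM$ on degree-$(0,0)$ Homs; these are equivalent formulations of the same argument.
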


\begin{proof}
 \eqref{it:tD-tN-independence}
 We treat the case of $\tD_{\uw}$; the case of $\tN_{\uw}$ is similar. Let $\uv$ and $\uw$ be two reduced expressions such that $\pi(\uv)=\pi(\uw)=:w$. We fix isomorphisms $\ForFMLM(\tD_{\uv}) \cong \ForBELM(\Delta_w)$ and $\ForFMLM(\tD_{\uw}) \cong \ForBELM(\Delta_w)$ as in Lemma~\ref{lem:tD-tN-ForUB}. Then we have
 \[
  \gHom_\LM(\ForFMLM(\tD_{\uv}), \ForFMLM(\tD_{\uw})) \cong \gHom_\LM(\ForBELM(\Delta_w), \ForBELM(\Delta_w)) = \bk,
 \]
by~\eqref{eqn:Delta-gEnd}.  Using Lemma~\ref{lem:hom-mon-lmon}, we deduce that
\[
\Hom_{\Dmix(\UGU,\bk)}(\tD_{\uv}, \tD_{\uw})\cong\bk,
\]
and that the functor $\ForFMLM$ induces an isomorphism
\[
 \Hom_{\Dmix(\UGU,\bk)}(\tD_{\uv}, \tD_{\uw}) \simto \End_{\Dmix(\UGB,\bk)}(\ForBELM(\Delta_w)).
\]
Similar comments apply to each of
\begin{multline*}
\Hom_{\Dmix(\UGU,\bk)}(\tD_{\uv}, \tD_{\uv}), \quad
\Hom_{\Dmix(\UGU,\bk)}(\tD_{\uw}, \tD_{\uw}), \\
\Hom_{\Dmix(\UGU,\bk)}(\tD_{\uw}, \tD_{\uv}).
\end{multline*}
Then, if $\varphi : \tD_{\uv} \to \tD_{\uw}$ and $\psi : \tD_{\uw} \to \tD_{\uv}$ are generators of the $\bk$-modules $\Hom_{\Dmix(\UGU,\bk)}(\tD_{\uv}, \tD_{\uw})$ and $\Hom_{\Dmix(\UGU,\bk)}(\tD_{\uw}, \tD_{\uv})$ respectively, we see that $\psi \circ \varphi$ and $\varphi \circ \psi$ are invertible, so that $\varphi$ and $\psi$ are isomorphisms.

Part~\eqref{it:tD-tN-Hom} follows similarly from Lemma~\ref{lem:hom-mon-lmon} and~\eqref{eqn:Delta-nabla-gHom}.
\end{proof}

\begin{rmk}
\label{rmk:generator-Hom-tD-tN}
Let $\uw$ be a reduced expression. The morphism $\tD_\uw \to \tN_\uw$ defined by $p_\uw$ is nonzero by Lemma~\ref{lem:tD-tN-uw}. Hence it is a generator of the free $R^\vee$-module $\gHom_\FM(\tD_\uw, \tN_\uw)$.
\end{rmk}

%%%%%%%%%%%%%%%%%%%%%%%%%%%%%%%%%%%%%%%%%%%%%%%%%%%%%%%%%%%%%%%%%%%%%%%%%%%
\section{Tilting perverse sheaves}
%%%%%%%%%%%%%%%%%%%%%%%%%%%%%%%%%%%%%%%%%%%%%%%%%%%%%%%%%%%%%%%%%%%%%%%%%%%

According to~\cite[Proposition~3.11]{modrap2}, when $\bk$ is a field, $\Perv^\mix(\UGBold,\bk)$ has a natural structure of a graded highest weight category, with the objects $\Delta_w$ (resp.~$\nabla_w$) as the standard (resp.~costandard) objects. As in~\cite[Proposition~3.14]{modrap2}, one can consider the collection of tilting objects therein.  Let
\[
\Tilt^\mix(\UGBold,\bk)
\]
be the full additive subcategory of $\Perv^\mix(\UGBold,\bk)$ consisting of tilting objects, i.e.~objects which admit a filtration with subquotients of the form $\Delta_w \langle n \rangle$, and a filtration with subquotients of the form $\nabla_w \langle n \rangle$. This category plays a central role in the main results of~\cite{modrap2}.

In this section, we will consider certain subcategories of $\Dmix(\UGU,\bk)$ and $\Dmix(\UGB,\bk)$ that behave similarly to $\Tilt^\mix(\UGBold,\bk)$.  The reader should beware that for $\Dmix(\UGB,\bk)$, the definition does \emph{not} simply invoke Theorem~\ref{thm:leftmon-con} to transfer the notion from $\Dmix(\UGBold,\bk)$.  Instead, the relationship with $\Tilt^\mix(\UGBold,\bk)$ will be a statement that requires proof; see Proposition~\ref{prop:tiltBS-field} below.

We allow $\bk$ to be any Noetherian integral domain of finite global dimension.  Recall that for any expression $\uw=(s_1, \sdots, s_r)$, we have defined in~\S\ref{sec:functoriality-conjecture} the object
\[
\Tmon_\uw := \Tmon_{s_1} \hatstar \cdots \hatstar \Tmon_{s_r} \quad \in \Conv(\UGU,\bk).
\]
We also set\index{tilting object!Tuw@$\cT_\uw$}
\[
\cT_\uw := \ForFMLM(\Tmon_\uw) \quad \in \Conv(\UGB,\bk).
\]
By~\eqref{eqn:For-hatstar}, we have a canonical isomorphism
\begin{equation}
\label{eqn:Tuw}
\cT_\uw \cong \Tmon_{s_1} \hatstar \cdots \hatstar \Tmon_{s_n} \hatstar \cT_1.
\end{equation}
When we want to emphasize the ring of coefficients, we will write $\Tmon_\uw^\bk$ instead of $\Tmon_\uw$, and $\cT_\uw^\bk$ instead of $\cT_\uw$. Then, for any Noetherian integral domain $\bk'$ of finite global dimension and any ring morphism $\bk \to \bk'$, we have canonical isomorphisms
\begin{equation}
\label{eqn:k'-tilting}
\bk'(\Tmon_\uw^\bk) \cong \Tmon_\uw^{\bk'}, \qquad \bk'(\cT_\uw^\bk) \cong \cT_\uw^{\bk'}.
\end{equation}

Copying the definition in~\S\ref{sec:functoriality-conjecture},
we will denote by\index{categories of tilting sheaves!TiltBSUGU@{$\TiltBSp(\UGU, \bk)$}}
\[
\TiltBSp(\UGU, \bk)
\]
the full subcategory of $\Dmix(\UGU,\bk)$ whose objects are direct sums of objects of the form $\Tmon_\uw \langle n \rangle$.
Similarly, we will denote by\index{categories of tilting sheaves!TiltBSUGB@{$\TiltBSp(\UGB, \bk)$}}
\[
\TiltBSp(\UGB, \bk)
\]
the full subcategory of $\Dmix(\UGB,\bk)$ whose objects are direct sums of objects of the form $\cT_\uw \langle n \rangle$.
When $\bk$ is a field or a complete local ring, we will also work with the Karoubian envelopes of these categories, denoted by
\begin{align*}
\Tilt(\UGU,\bk) &:= \Kar(\TiltBSp(\UGU,\bk)), \\
\Tilt(\UGB,\bk) &:= \Kar(\TiltBSp(\UGB,\bk)),
\end{align*}
respectively.  By construction, the functor $\ForFMLM$ restricts to a functor from the category $\TiltBSp(\UGU,\bk)$ to $\TiltBSp(\UGB,\bk)$, and from $\Tilt(\UGU,\bk)$ to $\Tilt(\UGB,\bk)$.

Objects of the form $\Tmon_\uw$ or $\cT_\uw$ will be called \emph{Bott--Samelson tilting perverse sheaves}, and more generally, objects of $\Tilt(\UGU,\bk)$ or $\Tilt(\UGB,\bk)$ will be called (free-monodromic or left-monodromic) \emph{tilting perverse sheaves}.  (Note, however, that we have \emph{not} defined a perverse t-structure on $\Dmix(\UGU,\bk)$, or even on $\Dmix(\UGB,\bk)$ for general $\bk$.)

The main result of this section is the following.

\begin{prop}
\label{prop:tiltBS-field}
Assume that $\bk$ is a field. Then the functor $\ForLMRE$ induces an equivalence of additive categories
\[
\ForLMRE: \Tilt(\UGB,\bk) \simto \Tilt^\mix(\UGBold,\bk).
\]
\end{prop}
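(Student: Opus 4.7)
Since $\ForLMRE$ is a triangulated equivalence by Theorem~\ref{thm:leftmon-con}, fully faithfulness is automatic, and the task reduces to identifying the essential image of $\Tilt(\UGB,\bk)$ with $\Tilt^\mix(\UGBold,\bk)$.

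The main step is to prove by induction on $\ell(\uw)$ that $\ForLMRE(\cT_\uw) \in \Tilt^\mix(\UGBold,\bk)$ for every expression $\uw$. The base case is immediate: $\ForLMRE(\cT_\varnothing) \cong \ForBERE(B_\varnothing)$ is the skyscraper $\Delta_1 = \nabla_1$, which is tilting. For the inductive step, write $\uw = \uw' \cdot s$. From $\Tmon_\uw \cong \Tmon_{\uw'} \hatstar \Tmon_s$ and~\eqref{eqn:For-hatstar} we obtain $\cT_\uw \cong \Tmon_{\uw'} \hatstar \cT_s$ in $\Dmix(\UGB,\bk)$. Transporting the short exact sequence $0 \to \Delta_s \to \cT_s' \to \nabla_s \to 0$ in $\Perv^\mix(\UGBold,\bk)$ through $\ForLMRE^{-1}$ yields a distinguished triangle $\ForBELM(\Delta_s) \to \cT_s \to \ForBELM(\nabla_s) \xrightarrow{+1}$ in $\Dmix(\UGB,\bk)$. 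Applying the triangulated functor $\Tmon_{\uw'} \hatstar (-)$ (Proposition~\ref{prop:convolutive-hatstar-exact}), invoking Lemma~\ref{lem:convolution-mon-equ} to rewrite $\Tmon_{\uw'} \hatstar \ForBELM(\Delta_s) \cong \cT_{\uw'} \ustar \Delta_s$ (and analogously for $\nabla_s$), and then pushing forward via $\ForLMRE$ (which is compatible with the right $\Dmix(\BGB,\bk)$-action), produces the distinguished triangle
\[
 \ForLMRE(\cT_{\uw'}) \ustar \Delta_s \to \ForLMRE(\cT_\uw) \to \ForLMRE(\cT_{\uw'}) \ustar \nabla_s \xrightarrow{+1}
\]
in $\Dmix(\UGBold,\bk)$. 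By the inductive hypothesis $\ForLMRE(\cT_{\uw'})$ is tilting, so the outer terms are perverse (inheriting standard, resp.~costandard, filtrations), forcing the middle term to be perverse as well. Tiltingness follows from $\Ext$-vanishing: using the adjunction between $(-) \ustar \Delta_s$ and $(-) \ustar \nabla_s$ (ultimately stemming from Lemma~\ref{lem:convolution-DN-new}) together with the tiltingness of $\ForLMRE(\cT_{\uw'})$, the outer terms have vanishing higher $\Ext$ against all $\nabla_w$ and $\Delta_w$, and the long exact sequence transfers the same vanishing to $\ForLMRE(\cT_\uw)$, which by~\cite{modrap2}'s characterization implies tiltingness.

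For essential surjectivity, I would invoke the standard classification of indecomposable tiltings in the graded highest weight category $\Perv^\mix(\UGBold,\bk)$: each indecomposable $\cT_w^\mix$ (up to Tate twist) appears as the unique summand of $\ForLMRE(\cT_\uw)$ whose support meets $\mathscr{X}_w$ for any reduced expression $\uw$ of $w$, with the remaining summands indexed by $v < w$. Descending Bruhat induction then places every indecomposable tilting in the essential image; since $\Tilt(\UGB,\bk)$ is the Karoubian envelope of $\TiltBSp(\UGB,\bk)$ and $\ForLMRE$ is additive, this yields the full equivalence with $\Tilt^\mix(\UGBold,\bk)$.

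The main technical obstacle is the detailed $\Ext$-vanishing step in the inductive argument: knowing only that the outer terms of the triangle are perverse with one-sided filtrations is insufficient to conclude tiltingness of the middle, so one must carefully invoke the adjunction and~\cite{modrap2}'s results on the interaction of convolution by $\Delta_s$ and $\nabla_s$ with the perverse t-structure to obtain the required two-sided $\Ext$-vanishing.
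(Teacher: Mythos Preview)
Your argument contains a genuine error: the short exact sequence $0 \to \Delta_s \to \cT_s' \to \nabla_s \to 0$ does not exist. The object $\cT_s'$ has length~$3$ (its composition factors are $L_1\langle -1\rangle$, $L_s$, $L_1\langle 1\rangle$), while $\Delta_s$ and $\nabla_s$ each have length~$2$, so the lengths do not match. Concretely, the image of $\Delta_s \hookrightarrow \cT_s'$ has cokernel $\Delta_1\langle 1\rangle$, whereas the kernel of $\cT_s' \twoheadrightarrow \nabla_s$ is $\nabla_1\langle -1\rangle$; these differ. The two correct short exact sequences are
\[
\Delta_s \hookrightarrow \cT_s' \twoheadrightarrow \Delta_1\langle 1\rangle
\qquad\text{and}\qquad
\nabla_1\langle -1\rangle \hookrightarrow \cT_s' \twoheadrightarrow \nabla_s,
\]
coming from the standard and costandard filtrations separately.

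This is not merely a cosmetic slip: the whole shape of your inductive step depends on having a single triangle whose outer terms carry a standard and a costandard filtration respectively, and then extracting tiltingness of the middle term by an $\Ext$-argument. But that $\Ext$-argument is also problematic: knowing that the outer terms have one-sided filtrations does not by itself yield tiltingness of the middle, and your adjunction step would require knowing e.g.\ that $\nabla_v \ustar \nabla_s$ is costandard-filtered for all $v$, which fails when $vs < v$ (where $\nabla_s \ustar \nabla_s$ is not a single costandard). The paper's approach avoids both problems by treating standard and costandard filtrations \emph{separately}: it shows (Lemma~\ref{lem:Cs-Delta-nabla}, proved using the two correct short exact sequences above) that the left-convolution functor $C_s = \Tmon_s \hatstar ({-})$ sends each $\ForBELM(\Delta_w)$ to an object with a two-step standard filtration and each $\ForBELM(\nabla_w)$ to one with a two-step costandard filtration. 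It follows immediately (Lemma~\ref{lem:properties-Cs}) that $C_s$ preserves the tilting property, and then~\eqref{eqn:Tuw} gives the result by iterated application of $C_{s_1}, \ldots, C_{s_r}$.
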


\begin{rmk}
Proposition~\ref{prop:tiltBS-field} holds more generally when $\bk$ is a complete local ring. But for simplicity we restrict to the case of fields.
\end{rmk}

Before proving Proposition~\ref{prop:tiltBS-field}, we require several lemmas.  For brevity, given $s \in S$, we denote by $C_s$ the functor $C_{\Tmon_s}$ from Proposition~\ref{prop:convolution-Dmix}.

\begin{lem}
\label{lem:Cs-Delta-nabla}
Assume that $\bk$ is a field, and
let $w \in W$ and $s \in S$.
\begin{enumerate}
\item
\label{it:ses-Cs-delta-nabla-1}
If $sw>w$, then there exist distinguished triangles
\begin{multline*}
\ForBELM(\Delta_{sw}) \to C_s(\ForBELM(\Delta_w)) \to \ForBELM(\Delta_w) \langle 1 \rangle \xrightarrow{[1]} \\
\text{and} \quad
\ForBELM(\nabla_{sw}) \langle -1 \rangle \to C_s(\ForBELM(\nabla_w)) \to \ForBELM(\nabla_{sw}) \xrightarrow{[1]}
\end{multline*}
in $\Dmix(\UGB,\bk)$.
\item
\label{it:ses-Cs-delta-nabla-2}
If $sw<w$, then there exist distinguished triangles
\begin{multline*}
\ForBELM(\Delta_w) \langle -1 \rangle \to C_s(\ForBELM(\Delta_w)) \to \ForBELM(\Delta_{sw}) \xrightarrow{[1]} \\
\text{and} \quad
\ForBELM(\nabla_{sw}) \to C_s(\ForBELM(\nabla_w)) \to \ForBELM(\nabla_w) \langle 1 \rangle \xrightarrow{[1]}
\end{multline*}
in $\Dmix(\UGB,\bk)$.
\end{enumerate}
\end{lem}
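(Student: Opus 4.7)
The strategy is to first establish two short distinguished triangles in $\Dmix(\UGB,\bk)$ involving the object $\cT_s = \ForFMLM(\Tmon_s)$ itself:
\[
\ForBELM(\Delta_s) \to \cT_s \to \cT_\varnothing\langle 1\rangle \xrightarrow{[1]} \qquad \text{and} \qquad \cT_\varnothing\langle -1\rangle \to \cT_s \to \ForBELM(\nabla_s) \xrightarrow{[1]}.
\]
Each of these is obtained by viewing the explicit parity-sequence description of $\cT_s$ from Example~\ref{ex:ts-leftmon} as a termwise-split short exact sequence of left-monodromic complexes. For the first triangle, the embedding $\ForBELM(\Delta_s) \hookrightarrow \cT_s$ is the identity on positions $0$ and $1$; the quotient lives only in position $-1$ and is identified with $\cT_\varnothing\langle 1\rangle$. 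For the second triangle, the embedding $\cT_\varnothing\langle -1\rangle \hookrightarrow \cT_s$ is the identity on position $1$, and the quotient is $\ForBELM(\nabla_s)$. The only point needing verification is that the embeddings and projections respect the full left-monodromic differential on $\cT_s$, including the $\Lambda$-component $-\alpha_s \otimes \id$ appearing in Example~\ref{ex:ts-leftmon}; this is immediate because that component goes from position $-1$ to position $1$, which is killed at the appropriate end by each of the four maps.

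Next, apply the endofunctor $({-}) \ustar \Delta_w$, respectively $({-}) \ustar \nabla_w$, of $\Dmix(\UGB,\bk)$. This functor is triangulated, so the two base triangles yield four distinguished triangles; in view of Lemma~\ref{lem:convolution-mon-equ} and the fact that $\cT_\varnothing \ustar \cF \cong \ForBELM(\cF)$ for $\cF \in \Dmix(\BGB,\bk)$, they read
\begin{align*}
\ForBELM(\Delta_s \ustar \Delta_w) &\to C_s(\ForBELM(\Delta_w)) \to \ForBELM(\Delta_w)\langle 1\rangle \xrightarrow{[1]}, \\
\ForBELM(\Delta_w)\langle -1\rangle &\to C_s(\ForBELM(\Delta_w)) \to \ForBELM(\nabla_s \ustar \Delta_w) \xrightarrow{[1]}, \\
\ForBELM(\Delta_s \ustar \nabla_w) &\to C_s(\ForBELM(\nabla_w)) \to \ForBELM(\nabla_w)\langle 1\rangle \xrightarrow{[1]}, \\
\ForBELM(\nabla_w)\langle -1\rangle &\to C_s(\ForBELM(\nabla_w)) \to \ForBELM(\nabla_s \ustar \nabla_w) \xrightarrow{[1]}.
\end{align*}

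Finally, identify the convolutions $\Delta_s \ustar \Delta_w$, $\nabla_s \ustar \nabla_w$, $\nabla_s \ustar \Delta_w$ and $\Delta_s \ustar \nabla_w$ using~\eqref{eqn:D-N-independence} together with Lemma~\ref{lem:convolution-DN-new}. If $sw > w$, pick a reduced expression $\uw$ for $w$; then $(s,\uw)$ is reduced for $sw$, so $\Delta_s \ustar \Delta_w \cong \Delta_{sw}$ and $\nabla_s \ustar \nabla_w \cong \nabla_{sw}$, and the first and fourth triangles above give case~\eqref{it:ses-Cs-delta-nabla-1}. If $sw < w$, pick a reduced expression $\uv$ for $sw$, so that $(s,\uv)$ is reduced for $w$ and $\Delta_w \cong \Delta_s \ustar \Delta_{sw}$, $\nabla_w \cong \nabla_s \ustar \nabla_{sw}$; then Lemma~\ref{lem:convolution-DN-new} yields $\nabla_s \ustar \Delta_w \cong \Delta_{sw}$ and $\Delta_s \ustar \nabla_w \cong \nabla_{sw}$, and the second and third triangles above give case~\eqref{it:ses-Cs-delta-nabla-2}.

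There is no real obstacle: the argument is essentially formal once the two base triangles for $\cT_s$ are in hand. The only nontrivial point is the check that the defining short exact sequences are compatible with the $\Lambda$-part of the left-monodromic differential on $\cT_s$, and this is verified by direct inspection.
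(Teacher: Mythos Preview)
Your proof is correct and follows essentially the same strategy as the paper: establish the two base triangles for $\cT_s$, then convolve on the right by $\Delta_w$ or $\nabla_w$ and identify the resulting terms. The only cosmetic differences are that the paper obtains the base triangles from the short exact sequences $\ForBERE(\Delta_s) \hookrightarrow \cT'_s \twoheadrightarrow \ForBERE(\Delta_1)\langle 1\rangle$ and $\ForBERE(\nabla_1)\langle -1\rangle \hookrightarrow \cT'_s \twoheadrightarrow \ForBERE(\nabla_s)$ in $\Perv^\mix(\UGBold,\bk)$ (transported via Theorem~\ref{thm:leftmon-con}) rather than from the explicit left-monodromic description, and identifies the convolutions by invoking \cite[Proposition~4.4]{modrap2} directly rather than combining~\eqref{eqn:D-N-independence} with Lemma~\ref{lem:convolution-DN-new}.
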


\begin{proof}
Since $\ForBELM(\Delta_w)$ and $\ForBELM(\nabla_w)$ belong to $\Conv(\UGB)$, we have
\[
C_s(\ForBELM(\Delta_w)) \cong \Tmon_s \hatstar \ForBELM(\Delta_w), \quad C_s(\ForBELM(\nabla_w)) \cong \Tmon_s \hatstar \ForBELM(\nabla_w).
\]
Then using Lemma~\ref{lem:convolution-mon-equ} we deduce isomorphisms
\[
C_s(\ForBELM(\Delta_w)) \cong \cT_s \ustar \Delta_w, \quad C_s(\ForBELM(\nabla_w)) \cong \cT_s \ustar \nabla_w.
\]
Now it is well known (and easy to check by hand) that there exist exact sequences
\[
\ForBERE(\Delta_s) \hookrightarrow \cT'_s \twoheadrightarrow \ForBERE(\Delta_1) \langle 1 \rangle \quad \text{and} \quad \ForBERE(\nabla_1) \langle -1 \rangle \hookrightarrow \cT'_s \twoheadrightarrow \ForBERE(\nabla_s)
\]
in $\Perv^\mix(\UGBold,\bk)$, and hence distinguished triangles
\[
\ForBELM(\Delta_s) \to \cT_s \to \ForBELM(\Delta_1) \langle 1 \rangle \xrightarrow{[1]} \ \text{and} \ \ForBELM(\nabla_1) \langle -1 \rangle \to \cT_s \to \ForBELM(\nabla_s) \xrightarrow{[1]}
\]
in $\Dmix(\UGB,\bk)$. The wished-for triangles are obtained by applying the functor $(-) \ustar \Delta_w$ or $(-) \ustar \nabla_w$ and using~\cite[Proposition~4.4]{modrap2}.
\end{proof}

\begin{lem}
\label{lem:properties-Cs}
Assume that $\bk$ is a field.
If $\cF$ belongs to $\Dmix(\UGB,\bk)$ and $\ForLMRE(\cF)$ is a tilting mixed perverse sheaf, then $\ForLMRE(C_s(\cF))$ is a tilting mixed perverse sheaf.
\end{lem}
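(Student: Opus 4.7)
The approach is a dévissage, transporting a standard (resp.\ costandard) filtration of $\ForLMRE(\cF)$ back to $\Dmix(\UGB,\bk)$ via the triangulated equivalence of Theorem~\ref{thm:leftmon-con}, pushing it through the triangulated functor $C_s$ (Proposition~\ref{prop:convolutive-hatstar-exact}), and then controlling the pieces using Lemma~\ref{lem:Cs-Delta-nabla}. The hypothesis that $\ForLMRE(\cF) \in \Tilt^\mix(\UGBold,\bk)$ gives us filtrations
\[
 0 = \cF_0 \subset \cF_1 \subset \cdots \subset \cF_N = \ForLMRE(\cF)
\]
in $\Perv^\mix(\UGBold,\bk)$ whose successive subquotients are of the form $\ForBERE(\Delta_{w_i}) \langle n_i \rangle$, and similarly one with subquotients $\ForBERE(\nabla_{w_i'}) \langle n_i' \rangle$. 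Each such filtration is equivalent to a sequence of distinguished triangles, which I would lift through $\ForLMRE^{-1}$ to distinguished triangles in $\Dmix(\UGB,\bk)$
\[
 \cG_{i-1} \to \cG_i \to \ForBELM(\Delta_{w_i}) \langle n_i \rangle \xrightarrow{[1]}
\]
with $\cG_0 = 0$ and $\cG_N \cong \cF$.

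Applying $C_s$ and then $\ForLMRE$ to each of these triangles produces distinguished triangles in $\Dmix(\UGBold,\bk)$ whose third vertex is $\ForLMRE(C_s(\ForBELM(\Delta_{w_i}))) \langle n_i \rangle$. By Lemma~\ref{lem:Cs-Delta-nabla}\eqref{it:ses-Cs-delta-nabla-1}--\eqref{it:ses-Cs-delta-nabla-2} (together with $\ForLMRE \circ \ForBELM \cong \ForBERE$), each such third vertex itself admits a length-two $\Delta$-filtration in $\Perv^\mix(\UGBold,\bk)$. Proceeding by induction on $i$, and splicing these length-two filtrations into the big triangles, I would conclude that $\ForLMRE(C_s(\cF)) = \ForLMRE(C_s(\cG_N))$ admits a $\Delta$-filtration. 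Running the entirely symmetric argument starting from the $\nabla$-filtration of $\ForLMRE(\cF)$ and using the second triangles in Lemma~\ref{lem:Cs-Delta-nabla} then shows that $\ForLMRE(C_s(\cF))$ also admits a $\nabla$-filtration.

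Finally, an object of $\Dmix(\UGBold,\bk)$ that possesses both a $\Delta$-filtration and a $\nabla$-filtration automatically lies in $\Perv^\mix(\UGBold,\bk)$ (iterated extensions of perverse by perverse are perverse, since any morphism from a perverse object to the $[1]$-shift of a perverse object is an actual Yoneda extension class in the heart) and is, by definition, a tilting mixed perverse sheaf; hence $\ForLMRE(C_s(\cF)) \in \Tilt^\mix(\UGBold,\bk)$. The main technical point to watch is the bookkeeping of the inductive lifts in the first step: one must ensure that the lifted triangles fit together coherently so that the terminal object of the lifted tower is genuinely isomorphic to $\cF$, which uses nothing beyond Theorem~\ref{thm:leftmon-con} being a triangulated equivalence, but is the place where all the notational care is needed.
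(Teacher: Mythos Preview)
Your proposal is correct and follows essentially the same approach as the paper's proof: lift the standard (resp.\ costandard) filtration of $\ForLMRE(\cF)$ to a tower of distinguished triangles in $\Dmix(\UGB,\bk)$ via Theorem~\ref{thm:leftmon-con}, apply the triangulated functor $C_s$, and use Lemma~\ref{lem:Cs-Delta-nabla} inductively. The only cosmetic difference is that the paper writes the tower in descending form (with $\cF_1=\cF$, $\cF_{k+1}=0$, and triangles $\ForBELM(\Delta_{w_i})\langle n_i\rangle \to \cF_i \to \cF_{i+1}\xrightarrow{[1]}$) rather than your ascending form, which is immaterial.
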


\begin{proof}
If $\ForLMRE(\cF)$ is a tilting mixed perverse sheaf, then there exist $k \in \Z_{\geq 0}$, $w_1, \sdots, w_k \in W$, $n_1, \sdots, n_k \in \Z$, $\cF_1, \sdots, \cF_{k+1}$ in $\Dmix(\UGB,\bk)$, and distinguished triangles
\[
\ForBELM(\Delta_{w_i}) \langle n_i \rangle \to \cF_i \to \cF_{i+1} \xrightarrow{[1]}
\]
for $i \in \{1, \sdots, k\}$,
with $\cF_1 = \cF$ and $\cF_{k+1} = 0$. Applying $\ForLMRE \circ C_s$ to these triangles, and using Lemma~\ref{lem:Cs-Delta-nabla}, we see (by decreasing induction on $i$) that each $\ForLMRE(C_s(\cF_i))$ is a mixed perverse sheaf which admits a standard filtration. In particular, this is the case for $\ForLMRE(C_s(\cF))$. Similar arguments show that $\ForLMRE(C_s(\cF))$ also admits a costandard filtration; hence it is tilting.
\end{proof}

\begin{proof}[{Proof of Proposition~\ref{prop:tiltBS-field}}]
Lemma~\ref{lem:properties-Cs} and~\eqref{eqn:Tuw} imply that $\ForLMRE$ sends objects of the form $\Tmon_\uw\la n\ra$ to objects of $\Tilt^\mix(\UGBold,\bk)$.  Since the latter is Karoubian, we see that $\ForLMRE$ does indeed induce a functor
\[
\Tilt(\UGB,\bk) \to \Tilt^\mix(\UGBold,\bk).
\]
It is immediate from Theorem~\ref{thm:leftmon-con} that this functor is fully faithful.  A routine support argument shows that it is essentially surjective.
\end{proof}

Transferring the known properties of the category $\Tilt^\mix(\UGBold,\bk)$ (proved in~\cite{modrap2}) to the category $\Tilt(\UGB,\bk)$, we deduce the following result.

\begin{cor}
\label{cor:lift-tilt-lm}
Assume that $\bk$ is a field.  The category $\Tilt(\UGB,\bk)$ is Krull--Schmidt. For any $w \in W$, there exists a unique (up to isomorphism) indecomposable object $\cT_w$ characterized by the following properties:
\begin{enumerate}
\item for any $\uw \in \hW$ with $\pi(\uw) = w$, $\cT_w$ occurs as a direct summand of $\cT_\uw$ with multiplicity $1$;
\item $\cT_w$ does not occur as a direct summand of any $\cT_\uv \langle n \rangle$ with $\ell(\uv) < \ell(w)$.
\end{enumerate}
Moreover, the assignment $(w,n) \mapsto \cT_w \langle n \rangle$ induces a bijection between $W \times \Z$ and the set of isomorphism classes of indecomposable objects in $\Tilt(\UGB,\bk)$.
\end{cor}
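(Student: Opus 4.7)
The strategy is to transfer everything through the equivalence
\[
\ForLMRE : \Tilt(\UGB,\bk) \simto \Tilt^\mix(\UGBold,\bk)
\]
of Proposition~\ref{prop:tiltBS-field}, since the analogous classification of indecomposable tilting objects in $\Tilt^\mix(\UGBold,\bk)$ is already known from~\cite{modrap2} (it is a consequence of the graded highest weight structure on $\Perv^\mix(\UGBold,\bk)$, cf.~\cite[Proposition~3.14]{modrap2}). In particular, $\Tilt^\mix(\UGBold,\bk)$ is Krull--Schmidt, indecomposables are parametrized up to Tate twist by $W$, and for each $w \in W$ the indecomposable tilting object $\cT_w^{\mathrm{RE}}$ is characterized inside the family $\{\cT'_\uw\}_{\uw \in \hW}$ by the two multiplicity conditions stated in the corollary.

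Concretely, I would first observe that the Krull--Schmidt property on $\Tilt(\UGB,\bk)$ is immediate from the equivalence. Next, I would identify $\ForLMRE(\cT_\uw)$ with the Bott--Samelson tilting object $\cT'_\uw := \cT'_{s_1} \ustar \cdots \ustar \cT'_{s_r}$ used in~\cite{modrap2}: by Lemma~\ref{lem:convolution-mon-equ} and~\eqref{eqn:Tuw} we have
\[
\ForLMRE(\cT_\uw) \cong \ForLMRE(\Tmon_{s_1} \hatstar \cdots \hatstar \Tmon_{s_r} \hatstar \cT_\varnothing) \cong \cT'_{s_1} \ustar \cdots \ustar \cT'_{s_r},
\]
by an easy induction using Lemma~\ref{lem:convolution-mon-equ} (applied repeatedly with one factor at a time) together with the fact that $\ForLMRE(\cT_\varnothing)$ is the skyscraper. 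Once this identification is in place, the unique indecomposable summand of $\cT'_\uw$ in $\Tilt^\mix(\UGBold,\bk)$ singled out by the multiplicity conditions corresponds, under the inverse equivalence, to a unique indecomposable summand $\cT_w \in \Tilt(\UGB,\bk)$ of $\cT_\uw$, and the two characterizing properties pass through verbatim because the equivalence is additive and sends $\cT_\uw$ to $\cT'_\uw$ and commutes with $\langle n \rangle$ (the Tate twists on the two sides both being defined via $\{-1\}[1]$).

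Finally, the bijection between $W \times \Z$ and isomorphism classes of indecomposables follows: essential surjectivity of $\ForLMRE$ transports the known bijection from $\Tilt^\mix(\UGBold,\bk)$, and the fact that the $\cT_w \langle n \rangle$ are pairwise non-isomorphic is again immediate from the equivalence.

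I expect no serious obstacle here; the main point that needs a little care is the compatibility check $\ForLMRE(\cT_\uw) \cong \cT'_\uw$ in $\Tilt^\mix(\UGBold,\bk)$, i.e.~that the convolution operation $\hatstar$ on the free-monodromic side is compatible, after applying $\ForFMLM$ and then $\ForLMRE$, with the convolution $\ustar$ on the right-equivariant side. But this is exactly what Lemma~\ref{lem:convolution-mon-equ} together with~\eqref{eqn:For-hatstar} and the isomorphism $\ForBERE = \ForLMRE \circ \ForBELM$ supplies, so the argument is essentially a bookkeeping exercise on top of Proposition~\ref{prop:tiltBS-field}.
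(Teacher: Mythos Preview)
Your approach is correct and is exactly what the paper does: the corollary is stated with no proof beyond the preceding sentence ``Transferring the known properties of the category $\Tilt^\mix(\UGBold,\bk)$ (proved in~\cite{modrap2}) to the category $\Tilt(\UGB,\bk)$, we deduce the following result,'' and your proposal simply unpacks that transfer through the equivalence of Proposition~\ref{prop:tiltBS-field}. One small point of care: the expression $\cT'_{s_1} \ustar \cdots \ustar \cT'_{s_r}$ is not literally well-defined, since $\RE(\fh,W)$ (equivalently $\Dmix(\UGBold,\bk)$) is not monoidal under $\ustar$, and Lemma~\ref{lem:convolution-mon-equ} only applies when the right factor is in the image of $\ForBELM$, which $\cT_{s_i}$ is not; the cleanest way to make the identification you want is to invoke the standard-filtration computation in Lemmas~\ref{lem:Cs-Delta-nabla}--\ref{lem:properties-Cs} (already used in the proof of Proposition~\ref{prop:tiltBS-field}) to see that $\ForLMRE(\cT_\uw)$ has the same graded standard multiplicities as the Bott--Samelson tilting object of~\cite{modrap2}, hence is isomorphic to it.
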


%%%%%%%%%%%%%%%%%%%%%%%%%%%%%%%%%%%%%%%%%%%%%%%%%%%%%%%%%%%%%%%%%%%%%%%%%%%
\section{Morphisms between Bott--Samelson tilting perverse sheaves}
%%%%%%%%%%%%%%%%%%%%%%%%%%%%%%%%%%%%%%%%%%%%%%%%%%%%%%%%%%%%%%%%%%%%%%%%%%%

In this section, $\bk$ is again an arbitrary Noetherian integral domain
of finite global dimension such that there exists a ring morphism $\Zdem \to \bk$.

\begin{prop}
\label{prop:Hom-Tilt-lmon}
For any expressions $\uv$, $\uw$ and any $i,j \in \Z$, we have
\begin{equation}
\label{eqn:Hom-Tilt-lmon}
\Hom_{\Dmix(\UGB,\bk)}(\cT_\uv, \cT_\uw [i] \langle j \rangle) = 0 \quad \text{unless $i=0$,}
\end{equation}
and each $\Hom_{\Dmix(\UGB,\bk)}(\cT_\uv, \cT_\uw \langle j \rangle)$ is free over $\bk$. Moreover, for any Noetherian integral domain $\bk'$ of finite global dimension and any ring morphism $\bk \to \bk'$, the functor $\bk'$ induces an isomorphism
\[
\bk' \otimes_\bk \Hom_{\Dmix(\UGB, \bk)}(\cT^\bk_\uv, \cT^\bk_\uw \langle j \rangle) \simto \Hom_{\Dmix(\UGB, \bk')}(\cT^{\bk'}_\uv, \cT^{\bk'}_\uw \langle j \rangle).
\]
\end{prop}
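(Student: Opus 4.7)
The plan is to transfer the problem to a Hom complex, prove everything for field coefficients using Proposition~\ref{prop:tiltBS-field}, and propagate to general $\bk$ via a universal coefficient argument over $\Zdem$.

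Via Theorem~\ref{thm:leftmon-con} and Remark~\ref{rmk:Hom-shift}, the space $\Hom_{\Dmix(\UGB,\bk)}(\cT_\uv,\cT_\uw[i]\la j\ra)$ identifies with the bidegree-$(i,-j)$ cohomology of $C^\bk := \uHom_\LM(\cT^\bk_\uv, \cT^\bk_\uw)$. Three structural properties of $C^\bk$ are crucial. First, each bigraded piece $(C^\bk)^i_j$ is a free $\bk$-module of finite rank, combining the finite-rank freeness of $\Lambda$ with the freeness of $\Hom$ spaces in $\DiagBSp(\fh,W)$ in each bidegree (via the double leaves basis of~\cite[Theorem~6.11]{ew}). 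Second, for any ring morphism $\bk\to\bk'$, the natural map $\bk'\otimes_\bk C^\bk \to C^{\bk'}$ is an isomorphism of complexes, by~\eqref{eqn:ext-scalars-uHomU}~and~\eqref{eqn:k'-tilting}. Third, for each fixed internal degree $j$, the complex $(C^\bk)^\bullet_j$ is bounded in cohomological degree, since $\cT^\bk_\uv, \cT^\bk_\uw$ have finite cohomological support as parity sequences and parity $\Hom$ spaces are bounded below in internal degree.

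When $\bk$ is a field, Proposition~\ref{prop:tiltBS-field} identifies $\cT^\bk_\uv, \cT^\bk_\uw$ with tilting objects in the graded highest weight category $\Perv^\mix(\UGBold,\bk)$~\cite[Proposition~3.11]{modrap2}. Standard tilting theory then gives $\Hom_{\Dmix(\UGBold,\bk)}(\ForLMRE(\cT_\uv),\ForLMRE(\cT_\uw)[i]\la j\ra)=0$ for $i\ne 0$: the $i<0$ case is a t-structure axiom, while the $i>0$ case follows from the vanishing of higher $\mathrm{Ext}$ between $\Delta$-filtered and $\nabla$-filtered objects in $\Perv^\mix(\UGBold,\bk)$, lifted to the derived category via the compatibility established in~\cite{modrap2}. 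The surviving $i=0$ Hom is finite-dimensional over the field, hence free.

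Next I would establish the proposition for $\bk=\Zdem$. Fix $j$; then $(C^{\Zdem})^\bullet_j$ is a bounded complex of finite-rank free $\Zdem$-modules, so its cohomology modules are finitely generated over $\Zdem$. Since $\Zdem$ is a PID, for every residue field $\bk_0$ of $\Zdem$ (the fraction field $\Q$ and each $\F_p$ for primes $p$ not inverted in $\Zdem$), the universal coefficient sequence
\[
0 \to H^i((C^{\Zdem})_j) \otimes_{\Zdem} \bk_0 \to H^i((C^{\bk_0})_j) \to \mathrm{Tor}_1^{\Zdem}(H^{i+1}((C^{\Zdem})_j),\bk_0) \to 0
\]
holds, and by the field case its middle term vanishes for $i\ne 0$. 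A descending induction on $i$ starting from the top of the bounded complex (using that a finitely generated $\Zdem$-module whose reduction to every residue field vanishes is itself zero) yields $H^i((C^{\Zdem})_j)=0$ for $i\ne 0$. The sequence then forces $\mathrm{Tor}_1^{\Zdem}(H^0((C^{\Zdem})_j),\bk_0)=0$ for every residue field, so $H^0((C^{\Zdem})_j)$ is torsion-free, hence free over $\Zdem$. For a general $\bk$ with its canonical map $\Zdem\to\bk$, the base-change isomorphism above gives $C^\bk \cong \bk\otimes_{\Zdem} C^{\Zdem}$; and since $(C^{\Zdem})^\bullet_j$ is a bounded complex of flat $\Zdem$-modules quasi-isomorphic to a free module concentrated in degree $0$, tensoring with $\bk$ preserves this quasi-isomorphism. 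This yields simultaneously the vanishing for $i\ne 0$, the freeness of $H^0((C^\bk)_j)$ over $\bk$, and the base change statement via associativity of tensor products.

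The main obstacle is the descending induction over $\Zdem$: tracking Tor contributions, verifying finite generation of the cohomology, and applying Nakayama's lemma at each prime must be carried out carefully. A secondary subtlety lies in the field case, where the compatibility between $\mathrm{Ext}$ in $\Perv^\mix(\UGBold,\bk)$ and $\Hom$ in $\Dmix(\UGBold,\bk)$ for tilting objects depends on specific properties of the perverse t-structure constructed in~\cite{modrap2}.
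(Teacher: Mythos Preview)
Your approach is correct and follows the same three-step strategy as the paper: establish the field case via Proposition~\ref{prop:tiltBS-field} and the vanishing of higher $\mathrm{Ext}$'s between tiltings, deduce the $\Zdem$ case, then propagate to general $\bk$. The difference lies only in the technical handling of the $\Zdem$ step. You observe that for each fixed internal degree $j$, the complex $\uHom_\LM(\cT_\uv^\Zdem,\cT_\uw^\Zdem)^\bullet_j$ is a \emph{bounded} complex of finite-rank free $\Zdem$-modules, and then apply the universal coefficient theorem directly. The paper instead works with the full bigraded complex at once---which is unbounded in cohomological degree but has bounded cohomology (via Theorem~\ref{thm:leftmon-con})---and therefore proves a small K-flatness lemma (any complex of free $\Zdem$-modules with bounded cohomology is K-flat) before passing to derived tensor products. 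Your route is a bit more elementary and sidesteps that lemma. One minor simplification you can make: no descending induction is actually needed, since the universal coefficient sequence already gives an injection $H^i(C^{\Zdem})\otimes\bk_0 \hookrightarrow H^i(C^{\bk_0})$ for every $i$, so the field-case vanishing immediately kills $H^i(C^{\Zdem})$ for $i\neq 0$; the $\mathrm{Tor}_1$ term is only needed afterward, at $i=-1$, to conclude that $H^0(C^{\Zdem})$ is torsion-free and hence free.
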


\begin{proof}
We start by proving~\eqref{eqn:Hom-Tilt-lmon}.
First, assume that $\bk$ is a field. Then the claim follows from Theorem~\ref{thm:leftmon-con}, Proposition~\ref{prop:tiltBS-field}, and well-known properties of tilting mixed perverse sheaves (see~\cite[\S\S3.2--3.4]{modrap2}).

Now we consider the case $\bk=\Zdem$. The complex $\uHom_\LM(\cT_\uv^\Zdem, \cT_\uw^\Zdem)$ is a complex of free $\Zdem$-modules of finite rank, with bounded cohomology (by Theorem~\ref{thm:leftmon-con}). We claim that this complex is K-flat in the sense of~\cite{spaltenstein}, so that for any ring $\bk'$ we have
\begin{equation}
\label{eqn:uHom-Z-bk'}
\bk' \mathbin{\mathop{\otimes}^L}_\Zdem \uHom_\LM(\cT_\uv^\Zdem, \cT_\uw^\Zdem) \cong \bk' \otimes_\Zdem \uHom_\LM(\cT_\uv^\Zdem, \cT_\uw^\Zdem)
\end{equation}
in the derived category of $\bk'$-modules. 
In fact, we will show more generally that any complex of free $\Zdem$-modules with bounded cohomology is K-flat. Choose an integer $n$ such that $\coH^i(M)=0$ for any $i \geq n$. Since $M^{n+1}$ is free of finite rank, $M^n / \ker(d^n) \cong \mathrm{im}(d^n) \subset M^{n+1}$ is free, so that $\ker(d^n)$ is a direct summand of $M^n$. Let $N^n \subset M^n$ be a complement of $\ker(d^n)$. Then
\[
M \cong (\cdots \to M^{n-1} \to \ker(d^n) \to 0 \to \cdots) \oplus (\cdots \to 0 \to N^n \to M^{n+1} \to \cdots)
\]
as complexes of $\Zdem$-modules. The first term on the right-hand side is a bounded above complex of free $\Zdem$-modules, so it is K-flat. On the other hand, the same considerations as above show that the second term is a direct sum of complexes of the form
\[
\cdots \to 0 \to \Zdem \xrightarrow{\id} \Zdem \to 0 \to \cdots,
\]
which are obviously K-flat. Since any direct sum of K-flat complexes is K-flat, this proves the claim.

Any object in the bounded derived category of $\Zdem$-modules is isomorphic to its cohomology, so that we can rewrite~\eqref{eqn:uHom-Z-bk'} as an isomorphism
\[
\bk' \mathbin{\mathop{\otimes}^L}_\Zdem \gHom_\LM(\cT_\uv^\Zdem, \cT_\uw^\Zdem) \cong \bk' \otimes_\Zdem \uHom_\LM(\cT_\uv^\Zdem, \cT_\uw^\Zdem)
\]
in the derived category of $\bk'$-modules.
Moreover, by~\eqref{eqn:ext-scalars-uHomU} and~\eqref{eqn:k'-tilting}, the right-hand side is isomorphic to $\uHom_\LM(\cT_\uv^{\bk'}, \cT_\uw^{\bk'})$. In particular, if $\bk'$ is a field, the proposition in the case of $\bk'$ proved above and Remark~\ref{rmk:Hom-shift} imply that the complex $\bk' \mathbin{\mathop{\otimes}^L_\Zdem} \gHom_\LM(\cT_\uv^\Zdem, \cT_\uw^\Zdem)$ is concentrated in degree $0$. Since this property is satisfied for any field, we deduce that $\gHom_\LM(\cT_\uv^\Zdem, \cT_\uw^\Zdem)$ is concentrated in degree $0$, and free over $\Zdem$, proving the proposition over $\Zdem$.

Finally we treat the general case. As above we have an isomorphism
\[
\bk \mathbin{\mathop{\otimes}^L}_\Zdem \gHom_\LM(\cT_\uv^\Zdem, \cT_\uw^\Zdem) \cong \uHom_\LM(\cT_\uv^\bk, \cT_\uw^\bk)
\]
in the derived category of $\bk$-modules. Since $\gHom_\LM(\cT_\uv^\Zdem, \cT_\uw^\Zdem)$ is concentrated in degree $0$ and free over $\Zdem$, this says that the complex $\uHom_\LM(\cT_\uv^\bk, \cT_\uw^\bk)$ is quasi-isomorphic to one that is concentrated in degree $0$, and free over $\bk$. Using Remark~\ref{rmk:Hom-shift} again, we deduce~\eqref{eqn:Hom-Tilt-lmon}.

We also obtain a canonical isomorphism
\[
\bk \otimes_\Zdem \Hom_{\Dmix(\UGB, \Zdem)}(\cT_\uv^\Zdem, \cT_\uw^\Zdem \langle j \rangle) \simto \Hom_{\Dmix(\UGB, \bk)}(\cT_\uv^\bk, \cT_\uw^\bk \langle j \rangle)
\]
for any $j \in \Z$. The claim about the functor $\bk'$ follows.
\end{proof}

\begin{cor}
\label{cor:Hom-Tilt-mon}
For any expressions $\uv$, $\uw$ and any $i,j \in \Z$, we have
\[
\gHom_\FM(\Tmon_\uv, \Tmon_\uw)^i_j = 0 \quad \text{unless $i=0$,}
\]
$\gHom_\FM(\Tmon_\uv, \Tmon_\uw)^0_\bullet$ is graded free as a right $R^\vee$-module, and the morphism
\[
\gHom_\FM(\Tmon_\uv, \Tmon_\uw)^0_\bullet \otimes_{R^\vee} \bk \to \gHom_\LM(\cT_\uv, \cT_\uw)^0_\bullet
\]
induced by the functor $\ForFMLM$ is an isomorphism.
Finally, for any Noetherian integral domain $\bk'$ of finite global dimension and any ring morphism $\bk \to \bk'$, the functor $\bk'$ induces an isomorphism
\[
\bk' \otimes_\bk \Hom_{\Dmix(\UGU, \bk)}(\Tmon^\bk_\uv, \Tmon^\bk_\uw \langle j \rangle) \simto \Hom_{\Dmix(\UGU, \bk')}(\Tmon^{\bk'}_\uv, \Tmon^{\bk'}_\uw \langle j \rangle)
\]
for any $j \in \Z$.
\end{cor}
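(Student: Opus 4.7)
The plan is to derive the first three claims by a single application of Lemma~\ref{lem:hom-mon-lmon} with $\cF = \Tmon_\uv$ and $\cG = \Tmon_\uw$, and then to prove the base change assertion by exploiting the freeness of $\gHom_\FM^0_\bullet$ over $R^\vee$ together with graded Nakayama. The hypotheses of Lemma~\ref{lem:hom-mon-lmon}---namely the vanishing of $\Hom_{\Dmix(\UGB,\bk)}(\cT_\uv, \cT_\uw[i]\langle j\rangle)$ for $i \neq 0$ and the $\bk$-freeness of these groups for $i = 0$---are supplied directly by Proposition~\ref{prop:Hom-Tilt-lmon}. Applying the lemma then yields simultaneously the vanishing in cohomological degrees $i \neq 0$, the graded freeness of $\gHom_\FM(\Tmon_\uv, \Tmon_\uw)^0_\bullet$ as a right $R^\vee$-module, and the isomorphism with $\gHom_\LM(\cT_\uv, \cT_\uw)^0_\bullet$ after reduction modulo the augmentation ideal of $R^\vee$.

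For the base change statement I would set $M^\bk := \Hom_{\Dmix(\UGU,\bk)}(\Tmon_\uv^\bk, \Tmon_\uw^\bk\langle j\rangle)$ and $N^\bk := \Hom_{\Dmix(\UGB,\bk)}(\cT_\uv^\bk, \cT_\uw^\bk\langle j\rangle)$, and likewise for $\bk'$. Since $R^\vee_{\bk'} \cong \bk' \otimes_\bk R^\vee_\bk$, the freeness of $M^\bk$ over $R^\vee_\bk$ established in the previous paragraph implies that $\bk' \otimes_\bk M^\bk$ is a graded free $R^\vee_{\bk'}$-module with the same graded ranks of free generators as $M^\bk$. The extension-of-scalars functor provides a natural $R^\vee_{\bk'}$-linear map $\phi: \bk' \otimes_\bk M^\bk \to M^{\bk'}$, and applying $(-) \otimes_{R^\vee_{\bk'}} \bk'$ to $\phi$ identifies it with the base change map $\bk' \otimes_\bk N^\bk \to N^{\bk'}$, which is an isomorphism by Proposition~\ref{prop:Hom-Tilt-lmon}. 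By graded Nakayama this forces $\phi$ to be surjective, and a Hilbert series (or equivalently rank) comparison then shows that the kernel must vanish, so $\phi$ is an isomorphism.

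I do not expect a serious obstacle: once parts (1)--(3) are in hand, the base change reduces to a piece of graded commutative algebra. The only delicate point is verifying that the extension-of-scalars map is $R^\vee_{\bk'}$-linear; this follows from the explicit description of the right $R^\vee$-action on $\uHom_\FM(\cF,\cG) = \Lambda \otimes \uHom_\BE(\cF,\cG) \otimes R^\vee$ via its rightmost tensor factor, which is manifestly preserved under the isomorphism $\bk' \otimes_\bk \uHom_\FM(\Tmon_\uv^\bk, \Tmon_\uw^\bk) \simto \uHom_\FM(\Tmon_\uv^{\bk'}, \Tmon_\uw^{\bk'})$. In particular, no K-flatness argument in the style of the proof of Proposition~\ref{prop:Hom-Tilt-lmon} is required, since the freeness of $M^\bk$ over $R^\vee$ already encodes the requisite flatness for base change.
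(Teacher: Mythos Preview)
Your proposal is correct and follows exactly the paper's approach: the first three claims are deduced from Lemma~\ref{lem:hom-mon-lmon} using Proposition~\ref{prop:Hom-Tilt-lmon} to verify its hypotheses, and the base change claim is deduced from the graded freeness over $R^\vee$ together with the base change property already established in Proposition~\ref{prop:Hom-Tilt-lmon}. One small notational slip: you define $M^\bk$ for a single fixed $j$ but then argue using graded freeness and graded Nakayama, so you really mean $M^\bk = \gHom_\FM(\Tmon^\bk_\uv, \Tmon^\bk_\uw)^0_\bullet = \bigoplus_j \Hom_{\Dmix(\UGU,\bk)}(\Tmon^\bk_\uv, \Tmon^\bk_\uw\langle j\rangle)$; with that correction your rank/Nakayama argument goes through and specializes to each $j$ at the end.
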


\begin{proof}
All the claims except the last one
follow from Lemma~\ref{lem:hom-mon-lmon} and Proposition~\ref{prop:Hom-Tilt-lmon}. The last claim can be deduced from the other ones and the corresponding property in Proposition~\ref{prop:Hom-Tilt-lmon}.
\end{proof}

%%%%%%%%%%%%%%%%%%%%%%%%%%%%%%%%%%%%%%%%%%%%%%%%%%%%%%%%%%%%%%%%%%%%%%%%%%%
\section{Lifting indecomposable tilting perverse sheaves}
\label{sec:lifting-tilting}
%%%%%%%%%%%%%%%%%%%%%%%%%%%%%%%%%%%%%%%%%%%%%%%%%%%%%%%%%%%%%%%%%%%%%%%%%%%

In this section, we upgrade Corollary~\ref{cor:lift-tilt-lm} to the following statement about $\Tilt(\UGU,\bk)$. (This result is not needed for the present paper, but will be used in~\cite{mkdkm}.)

\begin{thm}
\label{thm:lift-tilt-mon}
Assume that $\bk$ is a field.  The category $\Tilt(\UGU,\bk)$ is Krull--Schmidt. For any $w \in W$, there exists a unique (up to isomorphism) indecomposable object $\Tmon_w$ such that $\ForFMLM(\Tmon_w) \cong \cT_w$.  In addition, $\Tmon_w$ is characterized by the following properties:
\begin{enumerate}
\item for any $\uw \in \hW$ with $\pi(\uw) = w$, $\Tmon_w$ occurs as a direct summand of $\Tmon_\uw$ with multiplicity $1$;
\item $\Tmon_w$ does not occur as a direct summand of any $\Tmon_\uv \langle n \rangle$ with $\ell(\uv) < \ell(w)$.
\end{enumerate}
Moreover, the assignment $(w,n) \mapsto \Tmon_w \langle n \rangle$ induces a bijection between $W \times \Z$ and the set of isomorphism classes of indecomposable objects in $\Tilt(\UGU,\bk)$.
\end{thm}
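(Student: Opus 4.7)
The plan is to prove the theorem by lifting idempotents from $\End(\cT_\uw)$ to $\End(\Tmon_\uw)$, where $\uw \in \hW$ is any reduced expression for $w$. First, by Corollary~\ref{cor:Hom-Tilt-mon} the bigraded $\bk$-algebra $\mathcal{A} := \gEnd_\FM(\Tmon_\uw)^0_\bullet$ is a finitely generated graded free right $R^\vee$-module; fix a homogeneous $R^\vee$-basis $\{e_1,\ldots,e_n\}$ of degrees $d_1,\ldots,d_n$, and set $d_{\max} := \max_i d_i$. Since $R^\vee$ is non-positively graded, $\End(\Tmon_\uw) = \mathcal{A}^0_0$ is a finite-dimensional $\bk$-algebra, and $\mathcal{A}^0_d = 0$ for $d > d_{\max}$. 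In particular, every Hom space in $\Tilt(\UGU,\bk)$ is finite-dimensional (being a subspace of such a Hom space between Bott--Samelson objects), so $\Tilt(\UGU,\bk)$ is Krull--Schmidt.

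The crucial step is to show that the kernel $I$ of the induced surjection $\pi : \End(\Tmon_\uw) \twoheadrightarrow \End(\cT_\uw)$ is a nilpotent ideal. A direct inspection of the composition law in $\uHom_\FM = \Lambda \otimes \uHom_\BE \otimes R^\vee$, together with commutativity of $R^\vee$, shows that the right $R^\vee$-action on $\mathcal{A}$ commutes with composition on both sides; hence $J := \mathcal{A} \cdot R^\vee_+$ (where $R^\vee_+ := \ker(\epsilon_{R^\vee})$) is a two-sided ideal of $\mathcal{A}$, and by Corollary~\ref{cor:Hom-Tilt-mon} it coincides with the kernel of the surjection $\mathcal{A} \twoheadrightarrow \gEnd_\LM(\cT_\uw)^0_\bullet$ induced by $\ForFMLM$. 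Thus $I = J \cap \mathcal{A}^0_0$. The same commutation gives $J^N \subseteq \mathcal{A} \cdot (R^\vee_+)^N$, whose bidegree $(0,0)$ component consists of products $a \cdot r$ with $a \in \mathcal{A}^0_d$, $r \in (R^\vee_+)^N$, and $d \geq 2N$; this vanishes for $N > d_{\max}/2$, whence $I^N = 0$.

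Given this nilpotency, standard idempotent lifting shows that the primitive idempotent in $\End(\cT_\uw)$ projecting onto the multiplicity-one summand $\cT_w$ of $\cT_\uw$ (Corollary~\ref{cor:lift-tilt-lm}) lifts to a primitive idempotent in $\End(\Tmon_\uw)$, whose image in $\Tilt(\UGU,\bk)$ is an indecomposable $\Tmon_w$ with $\ForFMLM(\Tmon_w) \cong \cT_w$. For uniqueness, applying the same analysis to $\gHom_\FM(\Tmon, \Tmon')^0_\bullet$ and $\gEnd_\FM(\Tmon)^0_\bullet$ for two such lifts $\Tmon, \Tmon'$ (both being summands of Bott--Samelson objects, so the corollary still applies) shows that the reductions $\Hom(\Tmon, \Tmon') \twoheadrightarrow \Hom(\cT_w, \cT_w)$ and $\End(\Tmon) \twoheadrightarrow \End(\cT_w)$ have nilpotent kernels; any lift of an isomorphism $\cT_w \simto \cT_w$ is therefore invertible modulo a nilpotent ideal, hence itself an isomorphism. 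The bijection $(w, n) \mapsto \Tmon_w \la n\ra$ and the characterization of $\Tmon_w$ by its multiplicity in $\Tmon_\uw$ then transfer directly from the analogous statements for $\cT_w \la n\ra$ in Corollary~\ref{cor:lift-tilt-lm}, since $\ForFMLM$ preserves and reflects Krull--Schmidt decompositions.

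The main obstacle is the nilpotency of $I$. What makes the argument go through is the fact, immediately checked at the chain level, that composition in $\uHom_\FM$ commutes with the right $R^\vee$-action on both sides, which upgrades the ``external'' description of $\ker(\ForFMLM)$ from Corollary~\ref{cor:Hom-Tilt-mon} into a two-sided-ideal description; nilpotency is then forced by the bounded degree range of the $R^\vee$-basis of $\mathcal{A}$ that the same corollary provides.
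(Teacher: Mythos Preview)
Your proof is correct and follows essentially the same approach as the paper. Both arguments hinge on the observation that the kernel of $\End(\Tmon_\uw) \to \End(\cT_\uw)$ consists of elements lying in $\ker(\epsilon_{R^\vee}) \cdot \gEnd_\FM(\Tmon_\uw)$, and that degree considerations (boundedness above of the internal grading of $\gEnd_\FM$) force such elements to be nilpotent; you spell out this mechanism a bit more explicitly via the $R^\vee$-basis, while the paper states it in one sentence. The only cosmetic difference is that the paper first invokes Krull--Schmidt to decompose $\Tmon_\uw$ and then argues that $\ForFMLM$ sends indecomposables to indecomposables, whereas you lift the primitive idempotent directly; these are equivalent perspectives on the same nilpotent-kernel lifting argument.
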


\begin{proof}
The fact that $\Tilt(\UGU,\bk)$ is Krull--Schmidt follows from~\cite[Corollary~A.2]{cyz}. We claim that for any $\cF, \cG$ in $\Tilt(\UGU, \bk)$, the morphism
\[
\Hom_{\Tilt(\UGU, \bk)}(\cF, \cG) \to \Hom_{\Tilt(\UGB, \bk)}(\ForFMLM(\cF), \ForFMLM(\cG))
\]
induced by $\ForFMLM$ is surjective. Indeed, if $\cF$ and $\cG$ are of the form $\Tmon_\uw$,
then this claim follows from Corollary~\ref{cor:Hom-Tilt-mon}. The general case follows, since any object of the category $\Tilt(\UGU,\bk)$ is isomorphic to a direct summand of a direct sum of Tate twists of such objects.
It follows in particular from this claim that if $\cF$ is an indecomposable object of $\Tilt(\UGU,\bk)$, then $\ForFMLM(\cF)$ is an indecomposable object of $\Tilt(\UGB,\bk)$ (since its endomorphism ring is local, as a quotient of a local ring).

Now let $w \in W$, choose some reduced expression $\uw$ for $w$, and consider the decomposition of $\Tmon_\uw$ into indecomposable direct summands. The image under $\ForFMLM$ of this decomposition is the decomposition of $\cT_\uw$ into indecomposable direct summands, so that $\Tmon_\uw$ admits an indecomposable direct summand whose image under $\ForFMLM$ is $\cT_w$. This shows the existence of $\Tmon_w$.

We fix a choice of $\Tmon_w$ for any $w \in W$. Then to conclude the proof we only need to prove that any indecomposable object in $\Tilt(\UGU, \bk)$ is isomorphic to $\Tmon_w \langle n \rangle$ for some $(w,n) \in W \times \Z$. Let $\cF$ be such an indecomposable object. Then by the classification of indecomposable objects in $\Tilt(\UGB, \bk)$ (see Corollary~\ref{cor:lift-tilt-lm}), we know that there exists $(w,n) \in W \times \Z$ such that $\ForFMLM(\cF) \cong \cT_w \langle n \rangle$. Fix some inverse isomorphisms $f : \ForFMLM(\cF) \simto \cT_w \langle n \rangle$ and $g : \cT_w \langle n \rangle \simto \ForFMLM(\cF)$. By the claim at the beginning of the proof, there exist morphisms $\tilde f : \cF \to \Tmon_w \langle n \rangle$ and $\tilde g : \Tmon_w \langle n \rangle \to \cF$ such that $f = \ForFMLM(\tilde f)$ and $g = \ForFMLM(\tilde g)$. Then $\tilde g \circ \tilde f$ is a degree $(0,0)$ element of $\gEnd_\FM(\cF)$ which is equal to $\id_\cF$ modulo $\ker(\epsilon_{R^\vee})$. 
Since $\gEnd_\FM(\cF)$ is bounded above for the internal grading, degree considerations show that any element in $\gEnd_\FM(\cF)^0_0$ which belongs to $\ker(\epsilon_{R^\vee}) \cdot \gEnd_\FM(\cF)$ is nilpotent. Hence $\tilde g \circ \tilde f - \id_\cF$ is nilpotent, which implies that $\tilde g \circ \tilde f$ is invertible. Similarly $\tilde f \circ \tilde g$ is invertible, and so $\tilde f$ and $\tilde g$ are invertible. In particular, $\cF$ is isomorphic to $\Tmon_w \langle n \rangle$.
\end{proof}

\begin{rmk}
The object $\Tmon_s$ that was defined in~\S\ref{sss:ts-freemon} is clearly a special case of the object $\Tmon_w$ introduced in Theorem~\ref{thm:lift-tilt-mon}, so that there is no conflict of notation. (Unlike $\Tmon_s$, however, the object $\Tmon_w$ cannot, in general, be chosen in a canonical way.)
\end{rmk}

%==========================================================================
\chapter{Proof of functoriality in the Kac--Moody case}
\label{chap:functoriality}
%==========================================================================

In this chapter, we finally prove that, if $\bk$ is Noetherian and of finite global dimension, the operation $\hatstar$ defines a bifunctor on $\TiltBSp(\UGU,\bk)$, making it into a monoidal category.

%%%%%%%%%%%%%%%%%%%%%%%%%%%%%%%%%%%%%%%%%%%%%%%%%%%%%%%%%%%%%%%%%%%%%%%%%%%
\section{Localization}
\label{sec:localization}
%%%%%%%%%%%%%%%%%%%%%%%%%%%%%%%%%%%%%%%%%%%%%%%%%%%%%%%%%%%%%%%%%%%%%%%%%%%

In this section we assume that $\bk$ is a field. Note that the Demazure surjectivity assumption implies that the images of simple coroots in $R^\vee$ are nonzero.
We denote by $Q^\vee$ the localization of the ring $R^\vee$ at the multiplicative subset generated by the $W$-conjugates of the images of all simple coroots $\alpha_s^\vee$ ($s \in S$). This ring has a natural bigrading induced by that of $R^\vee$.

If $\cF$ and $\cG$ are parity sequences, we set\index{Homuloc@$\uHom_\loc$}
\[
 \uHom_\loc(\cF, \cG) := \uHom_\FM(\cF, \cG) \otimes_{R^\vee} Q^\vee = \Lambda \otimes \uHom_\BE(\cF,\cG) \otimes Q^\vee.
\]
If $\cF$ and $\cG$ are in $\Dmix(\UGU, \bk)$,
then $\uHom_\loc(\cF, \cG)$ admits a differential induced by that of $\uHom_\FM(\cF,\cG)$, and we set
\[
 \gHom_\loc(\cF,\cG) := \coH^{\bullet}_{\bullet} \bigl( \uHom_\loc(\cF, \cG) \bigr).
\]
Since $Q^\vee$ is flat over $R^\vee$, we have a natural isomorphism of bigraded $Q^\vee$-modules
\begin{equation}
\label{eqn:Hom-mon-loc}
  \gHom_\loc(\cF,\cG) \cong \gHom_\FM(\cF,\cG) \otimes_{R^\vee} Q^\vee.
\end{equation}

It is clear that for $\cF, \cG, \cH$ in $\Dmix(\UGU, \bk)$,
the composition map~\eqref{eqn:composition-mon} induces a similar map
\begin{equation}
\label{eqn:composition-loc}
 \uHom_\loc(\cG,\cH) \otimes \uHom_\loc(\cF,\cG) \to \uHom_\loc(\cF,\cH),
\end{equation}
which is also a morphism of dgg $\bk$-modules. Therefore we can define a category $\Dmix_\loc(\UGU,\bk)$\index{categories of parity sequences!DmixUGUloc@{$\Dmix_\loc(\UGU,\bk)$}} whose objects are the same as those of $\Dmix(\UGU,\bk)$, whose morphism spaces are given by
\[
 \Hom_{\Dmix_\loc(\UGU,\bk)}(\cF,\cG):=\coH^0_0 \bigl( \uHom_\loc(\cF,\cG) \bigr),
\]
and whose composition law is induced by~\eqref{eqn:composition-loc}. The Tate twist $\langle 1 \rangle$ induces a functor on $\Dmix_\loc(\UGU,\bk)$, which will be denoted similarly. If $I \neq \varnothing$, this functor satisfies $\langle 1 \rangle \circ \langle 1 \rangle \cong \id$ (since multiplication by a simple coroot is an isomorphism).

We denote by $\Conv_\loc(\UGU,\bk)$ the full subcategory of $\Dmix_\loc(\UGU,\bk)$ whose objects are the convolutive free-monodromic complexes.

Of course, the same construction can be carried out starting from the category $\Dmix(\UGU,\bk)_\Kar$ instead of $\Dmix(\UGU,\bk)$.  We obtain a new category
\[
\Dmix_\loc(\UGU,\bk)_\Kar,
\]
along with a full subcategory $\Conv_\loc(\UGU,\bk)_\Kar$ of convolutive objects.

If $\cF$ and $\cG$ are in $\Dmix(\UGU, \bk)$, by construction there exists a natural (injective) morphism of dgg $\bk$-modules
\[
 \uHom_\FM(\cF,\cG) \to \uHom_\loc(\cF,\cG).
\]
Taking the induced map on cohomology allows us to define a functor\index{Loc@{$\Loc$}}
\[
 \Loc : \Dmix(\UGU,\bk) \to \Dmix_\loc(\UGU,\bk).
\]
We also obtain a similar functor for the category $\Dmix(\UGU,\bk)_\Kar$, along with the following commutative diagram:
\begin{equation}
\label{eqn:loc-karoubian}
\begin{tikzcd}
\Dmix(\UGU,\bk) \ar[r, "\Loc"] \ar[d] & \Dmix_\loc(\UGU,\bk) \ar[d] \\
\Dmix(\UGU,\bk)_\Kar \ar[r, "\Loc"] & \Dmix_\loc(\UGU,\bk)_\Kar,
\end{tikzcd}
\end{equation}
in which the vertical arrows are fully faithful. (For the left arrow, see Lemma~\ref{lem:idem-equiv-km}; the case of the right arrow can be treated similarly.)

\begin{lem}
\label{lem:Loc-Tilt}
 The restriction of the functor $\Loc$ to $\TiltBSp(\UGU,\bk)$ is faithful.
\end{lem}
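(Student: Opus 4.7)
The plan is to reduce the faithfulness of $\Loc$ on $\TiltBSp(\UGU,\bk)$ to a straightforward algebraic statement about localizing free modules, using the structural result on $\Hom$-spaces between Bott--Samelson tilting complexes provided by Corollary~\ref{cor:Hom-Tilt-mon}.

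First, observe that $R^\vee = \Sym(V\langle -2\rangle)$ with $V = \bk \otimes_\Z X_*(\TKM)$ is a polynomial ring over the field $\bk$, hence an integral domain; moreover the multiplicative subset being inverted is generated by the $W$-translates of the (nonzero, by Demazure surjectivity) simple coroots, so it does not contain $0$. Consequently the localization map $R^\vee \hookrightarrow Q^\vee$ is injective, and tensoring with $Q^\vee$ over $R^\vee$ preserves injectivity on any right $R^\vee$-module that is (graded) free.

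Now let $\cF, \cG \in \TiltBSp(\UGU,\bk)$. By Corollary~\ref{cor:Hom-Tilt-mon}, $\gHom_\FM(\cF,\cG)^i_j$ vanishes unless $i=0$, and $\gHom_\FM(\cF,\cG)^0_\bullet$ is graded free as a right $R^\vee$-module. Combining this with the preceding paragraph and the natural identification~\eqref{eqn:Hom-mon-loc}, the canonical map
\[
\gHom_\FM(\cF,\cG) \longrightarrow \gHom_\FM(\cF,\cG) \otimes_{R^\vee} Q^\vee \cong \gHom_\loc(\cF,\cG)
\]
is an injective morphism of bigraded $\bk$-modules. Restricting to bidegree $(0,0)$ yields the injectivity of
\[
\Hom_{\Dmix(\UGU,\bk)}(\cF,\cG) \longrightarrow \Hom_{\Dmix_\loc(\UGU,\bk)}(\cF,\cG),
\]
which is precisely the statement that the restriction of $\Loc$ to $\TiltBSp(\UGU,\bk)$ is faithful. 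In this argument there is no real obstacle: all the work has already been done in Corollary~\ref{cor:Hom-Tilt-mon}, and what remains is the elementary observation that localization at a nonzero element of an integral domain is injective on free modules.
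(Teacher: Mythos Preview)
Your proof is correct and takes essentially the same approach as the paper's: the paper's proof is the one-line assertion that the claim follows from Corollary~\ref{cor:Hom-Tilt-mon} and~\eqref{eqn:Hom-mon-loc}, and you have supplied exactly the details that unpacking that sentence requires (including the observation, which the paper also makes at the start of \S\ref{sec:localization}, that Demazure surjectivity forces the simple coroots to be nonzero in $R^\vee$).
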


\begin{proof}
 The claim follows from Corollary~\ref{cor:Hom-Tilt-mon} and~\eqref{eqn:Hom-mon-loc}.
\end{proof}

\begin{lem}
\label{lem:conv-Loc-isom}
Let $\cF, \cG, \cH$ be convolutive objects in either $\Dmix(\UGU,\bk)$ or $\Dmix(\UGU, \bk)_\Kar$, and let $f : \cG \to \cH$ be a morphism. 
If $\Loc(f)$ is an isomorphism, then $\Loc(\id_\cF \hatstar f)$ is an isomorphism.
Similarly, if $\Loc(f)=0$, then $\Loc(\id_\cF \hatstar f)=0$.
\end{lem}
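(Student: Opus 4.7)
The plan is to lift $C_\cF = \id_\cF \hatstar (-)$, which is an endofunctor of $\Dmix(\UGU, \bk)$ by Proposition~\ref{prop:convolution-Dmix}, to an endofunctor $C_\cF^\loc$ of $\Dmix_\loc(\UGU, \bk)$ intertwining the localization functor, i.e., satisfying $C_\cF^\loc \circ \Loc = \Loc \circ C_\cF$. Once this is accomplished, both assertions of the lemma follow from the elementary fact that (additive) functors preserve isomorphisms and zero morphisms: the identity $\Loc(\id_\cF \hatstar f) = C_\cF^\loc(\Loc(f))$ will show that $\Loc(\id_\cF \hatstar f)$ is an isomorphism (resp.\ vanishes) whenever $\Loc(f)$ is.

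The key step, and essentially the only computational content of the proof, is to verify that at the chain level the assignment $f \mapsto \id_\cF \hatstar f$ commutes with the right $R^\vee$-action, i.e., that $\id_\cF \hatstar (f \cdot r) = (\id_\cF \hatstar f) \cdot r$ for all $f \in \uHom_\FM(\cG, \cH)$ and all $r \in R^\vee$. This follows directly from the explicit formula of~\S\ref{sec:fm-convolution-morph}: writing $f = t \otimes f' \otimes x$ with $t \in \Lambda$, $f' \in \uHom_\BE(\cG, \cH)$, and $x \in R^\vee$, one has $\id_\cF \hatstar f = (\nu_\cF(t) \ustar f') \cdot x$, and the right action by $r$ multiplies the rightmost $R^\vee$-tensorand on both sides. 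Tensoring with $\id_{Q^\vee}$ over $R^\vee$ then produces a $\bk$-linear map $\uHom_\loc(\cG, \cH) \to \uHom_\loc(\cF \hatstar \cG, \cF \hatstar \cH)$, whose compatibility with differentials is inherited from Proposition~\ref{prop:monconv-diff}.

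Passing to cohomology delivers the desired map on morphism spaces in $\Dmix_\loc$; this map is functorial in $f$ by the first case of Lemma~\ref{lem:monconv-weak-interchange} (which already gives $\id_\cF \hatstar (g \circ f) = (\id_\cF \hatstar g) \circ (\id_\cF \hatstar f)$ at the chain level) and additive by $\bk$-linearity of the chain-level construction. This defines $C_\cF^\loc$ on $\Conv_\loc(\UGU, \bk)$, and the intertwining identity $C_\cF^\loc \circ \Loc = \Loc \circ C_\cF$ holds by construction. The Karoubian case is handled either by rerunning the same construction inside $\Dmix(\UGU, \bk)_\Kar$, or by extending $C_\cF^\loc$ along the Karoubian envelope using the commutative square~\eqref{eqn:loc-karoubian}. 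I do not foresee any serious obstacle: the only delicate point is the $R^\vee$-linearity verification, which becomes routine once the explicit formula for $\id_\cF \hatstar (-)$ is written out.
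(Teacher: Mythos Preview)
Your proposal is correct and follows essentially the same approach as the paper's proof: verify that $g \mapsto \id_\cF \hatstar g$ is right $R^\vee$-linear at the chain level, tensor up to $Q^\vee$ to get a map on $\uHom_\loc$, invoke Proposition~\ref{prop:monconv-diff} for compatibility with differentials and Lemma~\ref{lem:monconv-weak-interchange} for compatibility with composition, and conclude that the resulting localized functor preserves isomorphisms and zero morphisms. The paper's proof is slightly terser (it does not write out the explicit formula for $\id_\cF \hatstar f$) but the argument is the same.
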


\begin{proof}
We treat the case of $\Dmix(\UGU, \bk)$; the case of the category $\Dmix(\UGU, \bk)_\Kar$ is similar.
For any 
$\cG'$, $\cH'$ in $\Conv(\UGU, \bk)$, the map
\[
\uHom_\FM(\cG', \cH') \to \uHom_\FM(\cF \hatstar \cG', \cF \hatstar \cH')
\]
defined by $g \mapsto \id_\cF \hatstar g$ is $R^\vee$-linear (for the natural right action). Therefore, it induces a map
\[
\uHom_\loc(\cG', \cH') \to \uHom_\loc(\cF \hatstar \cG', \cF \hatstar \cH').
\]
This operation is compatible with composition (by Lemma~\ref{lem:monconv-weak-interchange}) and with differentials (by Proposition~\ref{prop:monconv-diff}), so it defines a functor
\[
\Conv_\loc(\UGU,\bk) \to \Conv_\loc(\UGU,\bk),
\]
which is compatible with the functor
\[
\cF \hatstar (-) : \Conv(\UGU,\bk) \to \Conv(\UGU,\bk)
\]
in the natural sense. Therefore, the latter functor must send morphisms whose image under $\Loc$ are isomorphisms, resp.~$0$, to morphisms which have the same property.
\end{proof}

\begin{lem}
\label{lem:conv-Loc-isom-tD-tN}
Let $\cF$ and $\cG$ be convolutive objects in either $\Dmix(\UGU, \bk)$ or $\Dmix(\UGU, \bk)_\Kar$,
and let $f : \cF \to \cG$ be a morphism such that $\Loc(f)$ is an isomorphism. 
For any expression $\uw$, the morphisms $\Loc(f \hatstar \id_{\tD_\uw})$ and $\Loc(f \hatstar \id_{\tN_\uw})$ are isomorphisms.
\end{lem}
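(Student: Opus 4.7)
The plan is to show that, for each expression $\uw$, the endofunctor $(-) \hatstar \tD_\uw$ on $\Conv_\FM$ extends to a functor on the localized category $\Conv_\loc(\UGU, \bk)$ (with the analogous statement for $\tN_\uw$); granting this, since functors preserve isomorphisms, $\Loc(f \hatstar \id_{\tD_\uw})$ will automatically be an isomorphism whenever $\Loc(f)$ is. By induction on $\ell(\uw)$, using $\tD_{\uv \cdot s} = \tD_\uv \hatstar \tD_s$ together with Lemma~\ref{lem:associativity-morphisms}\eqref{it:associativity-morphisms-1}, which identifies $f \hatstar \id_{\tD_\uw}$ with $(f \hatstar \id_{\tD_\uv}) \hatstar \id_{\tD_s}$, it suffices to treat the case of a single simple reflection $\uw = (s)$; the base case $\uw = \varnothing$ is immediate from Lemma~\ref{lem:unitor}.

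For the single-letter case, the crucial observation is an $s$-semilinearity identity: for any $g \in \gHom_\FM(\cF, \cG)$ and $x \in R^\vee$,
\[
(g \hatstar x) \hatstar \id_{\tD_s} = (g \hatstar \id_{\tD_s}) \hatstar s(x).
\]
This follows by applying associativity at the level of morphisms (Proposition~\ref{prop:associator}) together with the identity $x \hatstar \id_{\tD_s} = \id_{\tD_s} \hatstar s(x)$ in $\gEnd_\FM(\tD_s)$. The latter in turn is a consequence of Lemma~\ref{lem:unitor} (which identifies the actions $x \hatstar \id_{\tD_s}$ and $\id_{\tD_s} \hatstar x$ with the left monodromy $\mu_{\tD_s}$ and the right monodromy, respectively) combined with the explicit formula $\mu_{\tD_s}(x) = 1 \otimes \id_{\tD_s} \otimes s(x)$ from Lemma~\ref{lem:mu-tD-tN-new} (using $\pi((s))^{-1} = s$).

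Since $s$ permutes the $W$-orbit of simple coroots, it extends to an automorphism of $Q^\vee$. The semilinearity then allows one to define a $\bk$-linear map
\[
a_\loc : \gHom_\loc(\cF, \cG) \to \gHom_\loc(\cF \hatstar \tD_s, \cG \hatstar \tD_s), \qquad a_\loc(g \otimes q) := (g \hatstar \id_{\tD_s}) \otimes s(q),
\]
which is well-defined with respect to the tensor product relation over $R^\vee$. Compatibility of $a_\loc$ with composition (functoriality) follows from the interchange law (Lemma~\ref{lem:interchange-rhd-lhd}), applicable because $\id_{\tD_s}$ admits a lift in $\uHom_\FM^\rhd(\tD_s, \tD_s)$. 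Since $a_\loc$ sends $\Loc(f) = f \otimes 1$ to $\Loc(f \hatstar \id_{\tD_s})$, functoriality of $a_\loc$ gives the desired conclusion. The case of $\tN_\uw$ is treated identically, since Lemma~\ref{lem:mu-tD-tN-new} also yields $\mu_{\tN_s}(x) = 1 \otimes \id_{\tN_s} \otimes s(x)$.

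The main obstacle is the semilinearity identity itself, which requires a careful comparison of left and right monodromy actions on $\tD_s$ (and $\tN_s$). Once this is in place, the extension to localization and functoriality are essentially formal consequences of the interchange machinery already developed in Chapters~\ref{chap:fm-convolution} and~\ref{chap:murmurs}.
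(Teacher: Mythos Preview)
Your proof is correct and follows essentially the same strategy as the paper: establish that $({-}) \hatstar \tD_\uw$ extends to a functor on the localized category by showing the map on morphism spaces is twisted $R^\vee$-linear (via $\pi(\uw)^{-1}$), so that it passes to $Q^\vee$. The only differences are presentational. The paper proves the semilinearity directly at the level of $\uHom_\FM$ for an arbitrary expression $\uw$, simply unwinding the definition of $g \hatstar \id_{\tD_\uw}$ and invoking Lemma~\ref{lem:mu-tD-tN-new} to see that the $R^\vee$-factor $y$ of $g$ becomes $\pi(\uw)^{-1}(y)$; you instead induct down to a single simple reflection and deduce the semilinearity at the level of $\gHom_\FM$ via the unitor/associator compatibilities. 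Both routes arrive at the same functor on $\Conv_\loc$, and the conclusion then follows as in Lemma~\ref{lem:conv-Loc-isom}.
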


\begin{proof}
We observe using Lem\-ma~\ref{lem:mu-tD-tN-new} that the map
\[
\uHom_\FM(\cF,\cG) \to \uHom_\FM(\cF \hatstar \tD_\uw, \cG \hatstar \tD_\uw)
\]
defined by $g \mapsto g \hatstar \id_{\tD_\uw}$ is ``twisted $R^\vee$-linear'' for the action of $\pi(\uw)^{-1} \in W$ on $R^\vee$.  That is, for $x \in R^\vee$, this map sends $g \hatstar x$ to $(g \hatstar \id_{\tD_\uw}) \hatstar \pi(\uw)^{-1}(x)$.  (For an analogous statement in the biequivariant setting, see~\cite[Proposition~A.17]{ar:agsr}.) Since the map $R^\vee \to Q^\vee$ is $W$-equivariant, there is still an induced map
\[
\uHom_\loc(\cF,\cG) \to \uHom_\loc(\cF \hatstar \tD_\uw, \cG \hatstar \tD_\uw),
\]
and hence a functor $({-}) \hatstar \tD_\uw: \Conv_\loc(\UGU,\bk) \to \Conv_\loc(\UGU,\bk)$.  Similar considerations apply to $({-}) \hatstar \tN_\uw$.  The rest of the argument proceeds as in the proof of Lemma~\ref{lem:conv-Loc-isom}.
\end{proof}

%%%%%%%%%%%%%%%%%%%%%%%%%%%%%%%%%%%%%%%%%%%%%%%%%%%%%%%%%%%%%%%%%%%%%%%%%%%
\section{Images of (co)standard and tilting objects in \texorpdfstring{$\Dmix_\loc(\UGU,\bk)$}{Dmixloc(UGU,k)}}
%%%%%%%%%%%%%%%%%%%%%%%%%%%%%%%%%%%%%%%%%%%%%%%%%%%%%%%%%%%%%%%%%%%%%%%%%%%

In this section, we will show that after localization, Bott--Samelson tilting objects split as direct sums of standard (or costandard) objects.  We retain the assumption that $\bk$ is a field.

\begin{prop}
\label{prop:loc-tD-tN-tilt}
Let $s$ be a simple reflection.
\begin{enumerate}
 \item
 \label{it:tilti-tD-loc}
   There exist chain maps
\[
\vartheta_s^1 \in \uHom^\rhd_\FM(\tD_\varnothing \langle -1 \rangle \oplus \tD_s, \Tmon_s) \quad \text{and} \quad
 \vartheta_s^2 \in \uHom_\FM^\rhd(\Tmon_s, \tD_\varnothing \langle 1 \rangle \oplus \tD_s \langle 2 \rangle)
 \]
such that
\[
\vartheta_s^2 \circ \vartheta_s^1 = 1 \otimes \id_{\tD_\varnothing \langle -1 \rangle \oplus \tD_s} \otimes \alpha_s^\vee \quad \text{and} \quad \vartheta_s^1 \circ \vartheta_s^2 = 1 \otimes \id_{\Tmon_s} \otimes \alpha_s^\vee
\]
in $\uEnd_\FM(\tD_\varnothing \langle -1 \rangle \oplus \tD_s)$ and $\uEnd_\FM(\Tmon_s)$, respectively.
  \item
  \label{it:tilti-tN-loc}
     There exist chain maps
 \[
 \varkappa_s^1 \in \uHom^\rhd_\FM(\tN_\varnothing \langle -1 \rangle \oplus \tN_s \langle -2 \rangle, \Tmon_s) \quad \text{and} \quad
 \varkappa_s^2 \in \uHom^\rhd_\FM(\Tmon_s, \tN_\varnothing \langle 1 \rangle \oplus \tN_s)
 \]
such that
\[
\varkappa_s^2 \circ \varkappa_s^1 = 1 \otimes \id_{\tN_\varnothing \langle -1 \rangle \oplus \tN_s \langle -2 \rangle} \otimes \alpha_s^\vee \quad \text{and} \quad \varkappa_s^1 \circ \varkappa_s^2 = 1 \otimes\id_{\Tmon_s} \otimes \alpha_s^\vee
\]
in $\uEnd_\FM(\tN_\varnothing \langle -1 \rangle \oplus \tN_s \langle -2 \rangle)$ and $\uEnd_\FM(\Tmon_s)$, respectively. 
\end{enumerate}
\end{prop}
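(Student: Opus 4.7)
The conceptual content of this proposition is that, after inverting the images of $W$-conjugates of simple coroots, the Bott--Samelson tilting object $\Tmon_s$ ``decomposes'' into a sum of (co)standard objects: $\Tmon_s \cong \tD_\varnothing\langle -1\rangle \oplus \tD_s$ in the standard picture, and $\Tmon_s \cong \tN_\varnothing\langle -1\rangle \oplus \tN_s\langle -2\rangle$ in the costandard one. The conditions $\vartheta_s^2 \circ \vartheta_s^1 = 1 \otimes \id \otimes \alpha_s^\vee$ and $\vartheta_s^1 \circ \vartheta_s^2 = 1 \otimes \id \otimes \alpha_s^\vee$ precisely express that the proposed splittings are inverse up to multiplication by $\alpha_s^\vee$, which becomes invertible in $Q^\vee$. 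Before passing to any localization, however, the proposition is stated as the construction of explicit chain maps in $\uHom_\FM^\rhd$ with prescribed composition, which I plan to exhibit by direct formulas using the diagrammatic descriptions of $\Tmon_s$ in \S\ref{sss:ts-freemon}, of $\tD_s$ and $\tN_s$ in \S\ref{sec:coherence}, and of the unit--counit morphisms $\hat\eta_s$, $\hat\epsilon_s$ in \S\ref{sss:unit-freemon}.

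For part~\eqref{it:tilti-tD-loc} I would take $\vartheta_s^1$ to be the sum of the natural chain map $\iota_s : \tD_s \hookrightarrow \Tmon_s$ given by the inclusion of the $\DiagBSp$-subsequence $(B_s, B_\varnothing(1))$ into $(B_\varnothing(-1), B_s, B_\varnothing(1))$, together with the morphism $\hat\eta_s : \tD_\varnothing\langle -1\rangle \to \Tmon_s$; note that both lie in $\uHom_\FM^\rhd$, as the explicit formula for $\hat\eta_s$ in \S\ref{sss:unit-freemon} shows. Dually, $\vartheta_s^2$ should combine the natural projection $\Tmon_s \twoheadrightarrow \tD_\varnothing\langle 1\rangle$ (quotienting by the subsequence defining $\tD_s$) with a morphism $\Tmon_s \to \tD_s\langle 2\rangle$ built from $\hat\epsilon_s$ together with an $\alpha_s^\vee$-twist to place it in the correct bidegree; the only nontrivial component of the latter should involve the identity map between the copies of $B_\varnothing(-1)$ at positions $-1$ and $-2$ (the former in $\Tmon_s$, the latter in $\tD_s\langle 2\rangle$). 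The construction for part~\eqref{it:tilti-tN-loc} is entirely dual, replacing $\tD_s$ by $\tN_s$, and using $\hat\epsilon_s$ (resp.\ $\hat\eta_s$) as well as the chain maps $p_s$ and $q_s$ from \S\ref{sec:coherence} satisfying $q_s\circ p_s = 1\otimes \id\otimes \alpha_s^\vee$ and $p_s\circ q_s = 1\otimes\id\otimes\alpha_s^\vee$.

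Once the formulas are written down, the verifications split into two parts. First, checking that each $\vartheta_s^i$ and $\varkappa_s^i$ is a chain map reduces to verifying $d(\vartheta_s^i)=0$ component by component: the differential on $\Tmon_s$ involves not only the dot morphisms but also the self-loops $\theta - \alpha_s\otimes\id\otimes\alpha_s^\vee$ and the ``long arrow'' $-\alpha_s\otimes\id\otimes 1$, and each must be compared with the corresponding differential on $\tD_\varnothing\langle -1\rangle \oplus \tD_s$ (or its costandard counterpart). Since $\iota_s$, the projection, $\hat\eta_s$ and $\hat\epsilon_s$ have already been checked to be chain maps, the only subtle verifications concern the ``off-diagonal'' components involving $\alpha_s^\vee$, and here the key diagrammatic inputs are the barbell and nil-Hecke identities used in the proof of Proposition~\ref{sss:unit-freemon}'s relations. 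Second, the composition formulas $\vartheta_s^2\circ\vartheta_s^1 = 1\otimes\id\otimes\alpha_s^\vee$ and $\vartheta_s^1\circ\vartheta_s^2 = 1\otimes\id\otimes\alpha_s^\vee$ reduce, after collecting terms, to the identity $\hat\epsilon_s\circ\hat\eta_s = \alpha_s^\vee \hatstar\id$ established in Proposition~\ref{sss:unit-freemon}, together with the elementary fact that $\iota_s$ composed with the projection onto $\tD_\varnothing\langle 1\rangle$ is zero. The dual identities for $\varkappa_s^i$ similarly follow from $\hat\epsilon_s\circ\hat\eta_s$ together with the relations $p_s\circ q_s, q_s\circ p_s = 1\otimes\id\otimes\alpha_s^\vee$.

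\textbf{Main obstacle.} The hardest part will be bookkeeping: the differential on $\Tmon_s$ has seven nonzero components and each side of each verification is a matrix entry living in $\Lambda\otimes\uHom_\BE\otimes R^\vee$, so the right choice of signs and of ``off-diagonal'' correction terms in $\vartheta_s^2$ and $\varkappa_s^1$ must be pinned down so that \emph{both} $\vartheta_s^2\circ\vartheta_s^1$ and $\vartheta_s^1\circ\vartheta_s^2$ simultaneously evaluate to $1\otimes\id\otimes\alpha_s^\vee$. The natural candidates built only from $\iota_s$, projections, $\hat\eta_s$ and $\hat\epsilon_s$ are very likely to work after possibly a sign flip, but isolating the correct normalization requires a careful accounting of the sign conventions in Definitions~\ref{defn:left-mon} and in Definition~\ref{defn:monconv} for $\hatstar$ on morphisms.
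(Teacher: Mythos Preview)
Your overall strategy matches the paper exactly: one sets $\vartheta_s^1 = \hat\eta_s \oplus f_s$ with $f_s : \tD_s \to \Tmon_s$ the evident inclusion, and $\vartheta_s^2 = \hat\epsilon_s \oplus g_s$ for a suitable $g_s : \Tmon_s \to \tD_s\langle 2\rangle$, then checks the four cross-compositions and the identity $\vartheta_s^1 \circ \vartheta_s^2 = 1\otimes\id\otimes\alpha_s^\vee$ directly. The component $\hat\epsilon_s$ is indeed (up to sign) the projection you describe, and $\hat\epsilon_s\circ\hat\eta_s = 1\otimes\id\otimes\alpha_s^\vee$ holds already in $\uEnd_\FM$, as you say.

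There is, however, a concrete gap in your description of $g_s$. The map with ``only'' the identity component $B_\varnothing(-1) \to B_\varnothing(-1)$ is \emph{not} a chain map: the self-loop on the target is $\theta-\alpha_s\otimes\id\otimes\alpha_s^\vee$ while on the source it is $\theta$, and the target differential also has an outgoing component $1\otimes\usebox\lowerdot\otimes\alpha_s^\vee$ to $B_s(-2)$, so $d(g_s)$ does not vanish. More fatally, with your $g_s$ the composition $g_s\circ f_s$ is \emph{zero}, since the image of $f_s$ is supported on $B_s$ and $B_\varnothing(1)$, disjoint from where your $g_s$ is nonzero. The paper's $g_s$ has three components: the identity on $B_\varnothing(-1)$ \emph{and} the maps $1\otimes\id\otimes\alpha_s^\vee$ on both $B_s$ and $B_\varnothing(1)$. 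These extra components are exactly what makes $g_s$ a chain map and what forces $g_s\circ f_s = 1\otimes\id_{\tD_s}\otimes\alpha_s^\vee$. Once these are in place, the remaining verifications ($\hat\epsilon_s\circ f_s = 0$, $g_s\circ\hat\eta_s = 0$, and $\vartheta_s^1\circ\vartheta_s^2 = 1\otimes\id\otimes\alpha_s^\vee$) are routine.

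For part~\eqref{it:tilti-tN-loc}, the maps $p_s$ and $q_s$ from \S6.4 go between $\tD_s$ and $\tN_s$ and do not involve $\Tmon_s$, so they cannot be used directly. The paper instead writes down explicit chain maps $\Tmon_s \to \tN_s$ and $\tN_s\langle -2\rangle \to \Tmon_s$ analogous to $f_s,g_s$ above (each again with three nonzero components), and proceeds identically.
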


\begin{proof}
\eqref{it:tilti-tD-loc}
 Consider the chain map $f_s \in \uHom_\FM^\rhd(\tD_s, \Tmon_s)$ depicted below:
   \[
   \begin{tikzcd}[column sep=huge]
\cE_\varnothing\{1\} \ar[loop, in=160, out=180, distance=30, "\theta - \alpha_s \otimes \id \otimes \alpha_s^\vee" pos=0.3] \ar[d, bend left=80, "\{2\}" description, "\ \ 1 \otimes \usebox\lowerdot \otimes \alpha_s^\vee" pos=0.4] \ar[rr, "\id"] && \cE_\varnothing\{1\} \ar[loop right, in=0, out=20, distance=40, "\theta - \alpha_s \otimes \id \otimes \alpha_s^\vee" pos=0.6] \ar[d, bend left=80, "\{2\}" description, "\ \ 1 \otimes \usebox\lowerdot \otimes \alpha_s^\vee" pos=0.4] \\
\cE_s \ar[u, "\usebox\upperdot"] \ar[loop, in=200, out=180, distance=40, "\theta - \alpha_s \otimes \id \otimes \alpha_s^\vee", pos=0.5, swap] \ar[rr, "\id"'] && \cE_s \ar[u, "\usebox\upperdot"'] \ar[loop right, in=-20, out=0, distance=50, "\theta - \alpha_s \otimes \id \otimes \alpha_s^\vee"] \\
&& \cE_\varnothing\{-1\}. \ar[u, "\usebox\lowerdot"'] \ar[uu, bend left=40, "\{-2\}" description, near start, "-\alpha_s \otimes \id \otimes 1" near end] \ar[loop right, in=-20, out=20, distance=35, "\theta"]
    \end{tikzcd}
 \]
Consider also the chain map $g_s \in \uHom_\FM^\rhd(\Tmon_s, \tD_s \langle 2 \rangle)$ shown here:
   \[
   \begin{tikzcd}[column sep=huge]
   \cE_\varnothing\{1\} \ar[loop, in=90, out=110, distance=20, "\theta - \alpha_s \otimes \id \otimes \alpha_s^\vee" pos=0.6] \ar[d, bend left=80, "\{2\}" description, "\ \ 1 \otimes \usebox\lowerdot \otimes \alpha_s^\vee" pos=0.4] \ar[rrdd, in=120, out=0, bend left=20, "1 \otimes \id \otimes \alpha_s^\vee" near end, "\{2\}" description] && \\
   \cE_s \ar[u, "\usebox\upperdot"'] \ar[loop right, in=-20, out=0, distance=20, "\theta - \alpha_s \otimes \id \otimes \alpha_s^\vee"] \ar[rrdd, in=120, out=0, bend right=20, "1 \otimes \id \otimes \alpha_s^\vee"' near end, "\{2\}" description] \\
   \cE_\varnothing\{-1\} \ar[u, "\usebox\lowerdot"' near start] \ar[uu, bend left=40, "\{-2\}" description, "-\alpha_s \otimes \id \otimes 1" near end] \ar[loop right, in=-110, out=-90, distance=15, "\theta"] \ar[rr, "\id"] && \cE_\varnothing\{-1\} \ar[d, bend left=80, "\{2\}" description, "\ \ 1 \otimes \usebox\lowerdot \otimes \alpha_s^\vee" pos=0.5] \ar[loop right, in=0, out=20, distance=50, "\theta - \alpha_s \otimes \id \otimes \alpha_s^\vee"] \\
   && \cE_s\{-2\}. \ar[loop, in=-20, out=0, distance=40, "\theta - \alpha_s \otimes \id \otimes \alpha_s^\vee" pos=0.6] \ar[u, "\usebox\upperdot"']
   \end{tikzcd}
 \]
Finally, recall that we have defined a ``unit'' chain map $\hat{\eta}_s \in \uHom_\FM(\tD_\varnothing \langle -1 \rangle, \Tmon_s)$ and a ``counit'' chain map
$\hat{\epsilon}_s \in \uHom_\FM(\Tmon_s, \tD_\varnothing \langle 1 \rangle)$ in~\S\ref{sss:unit-freemon}.
It is easy to check that
 \[
 g_s \circ f_s = 1 \otimes \id_{\tD_s} \otimes \alpha_s^\vee, \quad \hat{\epsilon}_s \circ f_s = 0, \quad g_s \circ \hat{\eta}_s = 0, \quad \hat{\epsilon}_s \circ \hat{\eta}_s = 1 \otimes \id_{\tD_\varnothing} \otimes \alpha_s^\vee
 \]
 and that
 $
 (\hat{\eta}_s \oplus f_s) \circ (\hat{\epsilon}_s \oplus g_s) = 1 \otimes \id_{\Tmon_s} \otimes \alpha_s^\vee
 $.
Hence we can set $\vartheta_s^1 = \hat{\eta}_s \oplus f_s$ and $\vartheta_s^2 = \hat{\epsilon}_s \oplus g_s$.

 \eqref{it:tilti-tN-loc}
 The proof is similar, using the chain maps
    \[
   \begin{tikzcd}[column sep=huge]
   \cE_\varnothing\{1\} \ar[loop, in=90, out=110, distance=20, "\theta - \alpha_s \otimes \id \otimes \alpha_s^\vee" pos=0.6] \ar[d, bend left=80, "\{2\}" description, "\ \ 1 \otimes \usebox\lowerdot \otimes \alpha_s^\vee" pos=0.4] \ar[rrdd, in=120, out=0, "1 \otimes \id \otimes \alpha_s^\vee" near start, "\{2\}" description] && \\
   \cE_s \ar[u, "\usebox\upperdot"'] \ar[loop right, in=-20, out=0, distance=20, "\theta - \alpha_s \otimes \id \otimes \alpha_s^\vee"] \ar[rr, "\id" near end] && \cE_s \ar[loop, in=0, out=20, distance=40, "\theta - \alpha_s \otimes \id \otimes \alpha_s^\vee" pos=0.6] \ar[d, bend left=80, "\{2\}" description, "\ \ 1 \otimes \usebox\upperdot \otimes \alpha_s^\vee" pos=0.7] \\
   \cE_\varnothing\{-1\} \ar[u, "\usebox\lowerdot"'] \ar[uu, bend left=40, "\{-2\}" description, "-\alpha_s \otimes \id \otimes 1" near end] \ar[loop right, in=-110, out=-90, distance=15, "\theta"] \ar[rr, "\id"] && \cE_\varnothing\{-1\} \ar[u, "\usebox\lowerdot"'] \ar[loop right, in=-20, out=0, distance=50, "\theta - \alpha_s \otimes \id \otimes \alpha_s^\vee"]
   \end{tikzcd}
 \]
 in $\uHom_\FM^\rhd(\Tmon_s, \tN_s)$ and
 \[
    \begin{tikzcd}[column sep=huge]
    \cE_s \{2\} \ar[rrdd, in=120, out=0, "1 \otimes \id \otimes \alpha_s^\vee" near start, "\{2\}" description] \ar[d, bend left=80, "\{2\}" description, "\ \ 1 \otimes \usebox\upperdot \otimes \alpha_s^\vee" pos=0.5] \ar[loop, in=160, out=180, distance=40, "\theta - \alpha_s \otimes \id \otimes \alpha_s^\vee", pos=0.5] \\
\cE_\varnothing\{1\} \ar[u, "\usebox\lowerdot"] \ar[loop, in=200, out=180, distance=30, "\theta - \alpha_s \otimes \id \otimes \alpha_s^\vee", swap] \ar[rr, "\id"] && \cE_\varnothing\{1\} \ar[loop right, in=0, out=20, distance=40, "\theta - \alpha_s \otimes \id \otimes \alpha_s^\vee" pos=0.6] \ar[d, bend left=80, "\{2\}" description, "\ \ 1 \otimes \usebox\lowerdot \otimes \alpha_s^\vee" pos=0.4] \\
&& \cE_s \ar[u, "\usebox\upperdot"'] \ar[loop right, in=-20, out=0, distance=50, "\theta - \alpha_s \otimes \id \otimes \alpha_s^\vee"] \\
&& \cE_\varnothing\{-1\} \ar[u, "\usebox\lowerdot"'] \ar[uu, bend left=40, "\{-2\}" description, "-\alpha_s \otimes \id \otimes 1" near start] \ar[loop right, in=-20, out=20, distance=35, "\theta"]
    \end{tikzcd}
 \]
 in $\uHom_\FM^\rhd(\tN_s \langle -2 \rangle, \Tmon_s)$.
 \end{proof}
 
\begin{cor}
\label{cor:Tmon-tD-tN}
Let $\uw$ be an expression. 
\begin{enumerate}
\item
\label{it:Tmon-tD}
There exist $k \in \Z_{\geq 0}$, expressions $\uw_1, \sdots, \uw_k$, integers $n_1, \sdots, n_k$ and a morphism
\[
\bigoplus_{i=1}^k \tD_{\uw_i} \langle n_i \rangle \to \Tmon_{\uw}
\]
in $\Conv^\rhd(\UGU,\bk)$ whose image under $\Loc$ is an isomorphism.
\item
\label{it:Tmon-tN}
There exist $k \in \Z_{\geq 0}$, expressions $\uw_1', \sdots, \uw_k'$, integers $n_1', \sdots, n_k'$ and a morphism
\[
\Tmon_{\uw} \to \bigoplus_{i=1}^k \tN_{\uw_i'} \langle n_i' \rangle
\]
in $\Conv^\rhd(\UGU,\bk)$ whose image under $\Loc$ is an isomorphism.
\end{enumerate}
\end{cor}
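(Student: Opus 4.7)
The plan is to induct on $\ell(\uw)$. The base case $\uw = \varnothing$ is immediate, since $\Tmon_\varnothing = \tD_\varnothing = \tN_\varnothing$, so the identity morphism works in both parts. For the inductive step, write $\uw = \uv \cdot s$ for some simple reflection $s$, so that by the definition of $\Tmon_{(-)}$ and \S\ref{sec:coherence} there is a canonical isomorphism $\Tmon_\uw \cong \Tmon_\uv \hatstar \Tmon_s$. I focus on part~(1); part~(2) is entirely parallel, replacing $\vartheta_s^1$ by the chain map $\varkappa_s^2 : \Tmon_s \to \tN_\varnothing \la 1 \ra \oplus \tN_s$ supplied by Proposition~\ref{prop:loc-tD-tN-tilt}(2) and reversing the direction of all morphisms.

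By induction, choose a morphism $f : \bigoplus_i \tD_{\uw_i} \la n_i \ra \to \Tmon_\uv$ in $\Conv^\rhd(\UGU, \bk)$ whose image under $\Loc$ is an isomorphism. Proposition~\ref{prop:loc-tD-tN-tilt}(1) provides a chain map $\vartheta_s^1 : \tD_\varnothing \la -1\ra \oplus \tD_s \to \Tmon_s$ lifting to $\uHom_\FM^\rhd$, hence a morphism in $\Conv^\rhd(\UGU,\bk)$. Since $\hatstar$ is a bifunctor on $\Conv^\rhd$ (see \S\ref{sec:conv-categories}), the convolution $f \hatstar \vartheta_s^1$ is a morphism in $\Conv^\rhd(\UGU,\bk)$ with target $\Tmon_\uv \hatstar \Tmon_s \cong \Tmon_\uw$. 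Distributing $\hatstar$ over the direct sum in the source and using the unitor $\tD_{\uw_i} \hatstar \tD_\varnothing \cong \tD_{\uw_i}$ together with the defining identity $\tD_{\uw_i} \hatstar \tD_s = \tD_{\uw_i s}$, the source decomposes as
\[
 \left(\bigoplus\nolimits_i \tD_{\uw_i} \la n_i - 1 \ra\right) \;\oplus\; \left(\bigoplus\nolimits_i \tD_{\uw_i s} \la n_i \ra\right),
\]
giving a morphism of the required form.

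To see that $\Loc(f \hatstar \vartheta_s^1)$ is an isomorphism, I would use the interchange law of Lemma~\ref{lem:interchange-rhd-lhd}, valid because $\vartheta_s^1$ admits a lift in $\uHom_\FM^\rhd$, to rewrite
\[
 f \hatstar \vartheta_s^1 \;=\; \bigl(\id_{\Tmon_\uv} \hatstar \vartheta_s^1\bigr) \circ \bigl(f \hatstar \id_{\tD_\varnothing \la -1 \ra \oplus \tD_s}\bigr).
\]
The identities $\vartheta_s^2 \circ \vartheta_s^1 = 1 \otimes \id \otimes \alpha_s^\vee = \vartheta_s^1 \circ \vartheta_s^2$ from Proposition~\ref{prop:loc-tD-tN-tilt}(1) show that $\Loc(\vartheta_s^1)$ is invertible, since $\alpha_s^\vee$ is invertible in $Q^\vee$; hence Lemma~\ref{lem:conv-Loc-isom} yields that $\Loc(\id_{\Tmon_\uv} \hatstar \vartheta_s^1)$ is an isomorphism. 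Lemma~\ref{lem:conv-Loc-isom-tD-tN} (combined with additivity of $\hatstar$ over direct sums and compatibility with Tate twist) yields that $\Loc(f \hatstar \id_{\tD_\varnothing\la -1\ra \oplus \tD_s})$ is an isomorphism, and composing completes the induction. The argument is essentially bookkeeping given Proposition~\ref{prop:loc-tD-tN-tilt}; the only non-routine point is tracking the $\rhd$-condition so that Lemma~\ref{lem:interchange-rhd-lhd} is applicable at each stage, which is automatic here because $\vartheta_s^1$, $\varkappa_s^2$, and the identity all lie in $\uHom_\FM^\rhd$.
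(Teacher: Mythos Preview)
Your proof is correct and follows essentially the same approach as the paper: induction on $\ell(\uw)$, using the morphism $\vartheta_s^1$ from Proposition~\ref{prop:loc-tD-tN-tilt} together with Lemmas~\ref{lem:conv-Loc-isom} and~\ref{lem:conv-Loc-isom-tD-tN} to handle the two factors. The only cosmetic difference is that you first form $f \hatstar \vartheta_s^1$ and then decompose it via Lemma~\ref{lem:interchange-rhd-lhd}, whereas the paper directly writes down the composition $(\id \hatstar \vartheta_s^1) \circ (f \hatstar \id)$; these are the same morphism.
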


\begin{proof}
We only prove~\eqref{it:Tmon-tD}; the proof of~\eqref{it:Tmon-tN} is similar.
We proceed by induction on the length of $\uw$. If this length is $0$ there is nothing to prove.

Assume now that the length of $\uw$ is positive. Let $s$ be the last simple reflection appearing in $\uw$, and let $\uv$ be the expression obtained from $\uw$ by omitting $s$. By induction there exist expressions $\uv_1, \sdots, \uv_k$, integers $n_1, \sdots, n_k$ and a morphism
\[
f : \bigoplus_{i=1}^k \tD_{\uv_i} \langle n_i \rangle \to \Tmon_{\uv}
\]
in $\Conv^\rhd(\UGU, \bk)$ such that $\Loc(f)$ is an isomorphism.
Using Lemma~\ref{lem:conv-Loc-isom-tD-tN},
we deduce that
\[
f \hatstar \id : \left( \bigoplus_{i=1}^k \tD_{\uv_i} \langle n_i \rangle \right) \hatstar (\tD_\varnothing \langle -1 \rangle \oplus \tD_s) \to \Tmon_{\uv} \hatstar (\tD_\varnothing \langle -1 \rangle \oplus \tD_s)
\]
also has the property that $\Loc(f \hatstar \id)$ is an isomorphism.

Now by Lemma~\ref{lem:conv-Loc-isom} and Proposition~\ref{prop:loc-tD-tN-tilt}\eqref{it:tilti-tD-loc},
\[
\id \hatstar \vartheta_s^1 : \Tmon_{\uv} \hatstar (\tD_\varnothing \langle -1 \rangle \oplus \tD_s) \to \Tmon_\uv \hatstar \Tmon_s = \Tmon_{\uw}
\]
has the property that $\Loc(\id \hatstar \vartheta_s^1)$ is an isomorphism. Hence $(\id \hatstar \vartheta_s^1) \circ (f \hatstar \id)$ has the desired properties.
\end{proof}

%%%%%%%%%%%%%%%%%%%%%%%%%%%%%%%%%%%%%%%%%%%%%%%%%%%%%%%%%%%%%%%%%%%%%%%%%%%
\section{Convolution of standard or costandard objects}
%%%%%%%%%%%%%%%%%%%%%%%%%%%%%%%%%%%%%%%%%%%%%%%%%%%%%%%%%%%%%%%%%%%%%%%%%%%

We continue to assume that $\bk$ is a field.

\begin{lem}
\label{lem:tD-tN-ss}
 Let $s$ be a simple reflection. 
 \begin{enumerate}
  \item 
  \label{it:tD-ss}
 There exist morphisms
 \[
  \tD_\varnothing \to \tD_s \hatstar \tD_s \langle 2 \rangle \quad \text{and} \quad \tD_s \hatstar \tD_s \to \tD_\varnothing
 \]
 in $\Conv^\rhd(\UGU,\bk)$ whose images under $\Loc$ are isomorphisms.
 \item
   \label{it:tN-ss}
 There exist morphisms
 \[
  \tN_\varnothing \to \tN_s \hatstar \tN_s \quad \text{and} \quad \tN_s \hatstar \tN_s \to \tN_\varnothing \langle 2 \rangle
 \]
 in $\Conv^\rhd(\UGU,\bk)$ whose images under $\Loc$ are isomorphisms.
 \end{enumerate}
 \end{lem}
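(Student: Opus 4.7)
My plan is to prove (1); part (2) follows by the analogous argument, replacing $\tD_s \hatstar \tN_s$ by $\tN_s \hatstar \tD_s$ and exchanging the roles of $p_s$ and $q_s$. The key idea is that after localization, $\tD_s$ is invertible with inverse $\tD_s\langle 2\rangle$, and this can be exhibited by composing maps through $\tD_s \hatstar \tN_s$ using the chain maps $p_s \in \uHom^\rhd_\FM(\tD_s, \tN_s)$ and $q_s \in \uHom^\rhd_\FM(\tN_s, \tD_s\langle 2\rangle)$. Indeed, by Lemma~\ref{lem:tD-tN-uw} applied to $\uw = (s)$, we have $q_s \circ p_s = 1 \otimes \id_{\tD_s} \otimes \alpha_s^\vee$ and $p_s \circ q_s = 1 \otimes \id_{\tN_s} \otimes \alpha_s^\vee$, so $\Loc(p_s)$ and $\Loc(q_s)$ are mutually inverse isomorphisms. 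By Lemma~\ref{lem:conv-Loc-isom}, the morphisms $\id_{\tD_s} \hatstar p_s : \tD_s \hatstar \tD_s \to \tD_s \hatstar \tN_s$ and $\id_{\tD_s} \hatstar q_s : \tD_s \hatstar \tN_s \to \tD_s \hatstar \tD_s\langle 2\rangle$ are then isomorphisms in $\Dmix_\loc(\UGU, \bk)$, and they lie in $\Conv^\rhd(\UGU, \bk)$ since $p_s, q_s \in \uHom^\rhd_\FM$ and convolution with an identity preserves $\uHom^\rhd$. It therefore suffices to produce morphisms $\alpha : \tD_\varnothing \to \tD_s \hatstar \tN_s$ and $\beta : \tD_s \hatstar \tN_s \to \tD_\varnothing$ in $\Conv^\rhd(\UGU, \bk)$ whose $\Loc$-images are isomorphisms; the desired morphisms will then be $(\id_{\tD_s} \hatstar q_s) \circ \alpha$ and $\beta \circ (\id_{\tD_s} \hatstar p_s)$.

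Next I would identify the Hom spaces where $\alpha$ and $\beta$ live. Combining~\eqref{eqn:For-hatstar}, Lemma~\ref{lem:tD-tN-ForUB}, Lemma~\ref{lem:convolution-mon-equ}, and Lemma~\ref{lem:convolution-DN-new}, we compute
\[
 \ForFMLM(\tD_s \hatstar \tN_s) \cong \ForBELM(\Delta_s \ustar \nabla_s) \cong \ForBELM(B_\varnothing) = \cT_\varnothing \cong \ForFMLM(\tD_\varnothing).
\]
Since $\gEnd_\LM(\cT_\varnothing) \cong \bk$ by~\eqref{eqn:t1-leftmon}, Lemma~\ref{lem:hom-mon-lmon} applies and shows that both $\gHom_\FM(\tD_\varnothing, \tD_s \hatstar \tN_s)^0_\bullet$ and $\gHom_\FM(\tD_s \hatstar \tN_s, \tD_\varnothing)^0_\bullet$ are free of rank one over $R^\vee$, each generated in bidegree $(0,0)$ by a class whose image under $\ForFMLM$ is (up to a unit scalar) the identity of $\cT_\varnothing$. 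Once we exhibit such representatives $\alpha, \beta$, the composition $\beta \circ \alpha$ is an element of $\gEnd_\FM(\tD_\varnothing)^0_0 = \bk$ that maps to the identity under $\ForFMLM$, hence is a unit, so $\Loc(\alpha)$ and $\Loc(\beta)$ are automatically inverse isomorphisms.

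The hard part will be to produce the representatives $\alpha, \beta$ as genuine chain maps in $\uHom^\rhd_\FM \cap \ker(d)$. The subspace $\uHom^\rhd_\FM = \uHom_\BE \otimes R^\vee$ is not preserved by the full differential on $\uHom_\FM$ because the $\Lambda$-components of $\delta_{\tD_s}$ and $\delta_{\tN_s}$ (and hence of $\delta_{\tD_s \hatstar \tN_s}$) push elements outside $\uHom^\rhd$; the existence of a $\rhd$-representative of each generator is therefore not automatic. My approach would be to start from the chain maps $g : B_\varnothing \to \Delta_s \ustar \nabla_s$ and $f : \Delta_s \ustar \nabla_s \to B_\varnothing$ constructed in the proof of Lemma~\ref{lem:convolution-DN-new}, which lie in $\uHom_\BE \subset \uHom^\rhd_\FM$, and to add $R^\vee$-valued correction terms so that the chain map condition $\delta \circ (-) - (-) \circ \delta = 0$ holds in $\uHom_\FM$ (noting that $\kappa$ vanishes on $\uHom^\rhd_\FM$). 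Using the explicit forms of $\delta_{\tD_s}$ and $\delta_{\tN_s}$, the resulting system of equations is tractable and solvable thanks to Demazure surjectivity, producing the required $\alpha$ and $\beta$; composing with $\id_{\tD_s} \hatstar q_s$ and $\id_{\tD_s} \hatstar p_s$ as above then yields the two morphisms required by (1).
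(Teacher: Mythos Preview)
Your outline is sound, but it is more roundabout than the paper's argument and leaves the decisive step unfinished. The paper does not pass through $\tD_s \hatstar \tN_s$ at all: it simply writes out the free-monodromic complex $\tD_s \hatstar \tD_s$ explicitly (four summands $\cE_\varnothing\{2\}$, $\cE_s \star \cE_\varnothing\{1\}$, $\cE_\varnothing\{1\} \star \cE_s$, $\cE_s \star \cE_s$ with all self-loops equal to $\theta$), and then exhibits explicit chain maps $x_s \in \uHom_\FM^\rhd(\tD_s\hatstar\tD_s,\tD_\varnothing)$ and $y_s \in \uHom_\FM^\rhd(\tD_\varnothing,\tD_s\hatstar\tD_s\langle 2\rangle)$ satisfying $x_s y_s = \id \otimes \alpha_s^\vee$ and $y_s x_s = \id \otimes \alpha_s^\vee + d(h)$ for an explicit $h$. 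That is the entire proof; no $p_s,q_s$, no Lemma~\ref{lem:hom-mon-lmon}, no abstract rank-one argument.

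Your route replaces this single explicit computation by two: first produce $\rhd$-chain maps $\alpha,\beta$ between $\tD_\varnothing$ and $\tD_s\hatstar\tN_s$, then compose with $\id\hatstar p_s$ and $\id\hatstar q_s$. The second step is fine, but the first is exactly the same kind of hands-on computation as the paper's, carried out on a different (and no simpler) object; you acknowledge it as ``the hard part'' and then do not do it. The invocation of Demazure surjectivity is a red herring---no such input is needed in the paper's direct calculation. Finally, a small correction: from $\beta\alpha$ being a unit in $\gEnd_\FM(\tD_\varnothing)^0_0=\bk$ you cannot yet conclude that $\Loc(\alpha)$ and $\Loc(\beta)$ are isomorphisms; you must also check $\alpha\beta$, which does follow by the same argument applied to $\gEnd_\FM(\tD_s\hatstar\tN_s)^0_0\cong\bk$.
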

 
\begin{proof}
We only prove \eqref{it:tD-ss}; the proof of~\eqref{it:tN-ss} is similar.

\newsavebox\lowerdotline
\savebox\lowerdotline{%
\begin{tikzpicture}[scale=0.3,thick,baseline]
 \draw (0,-0.5) to (0,0.5);
 \draw (0.5,-1) to (0.5,0.5);
 \node at (0,-0.5) {$\bullet$};
\end{tikzpicture}%
}

\newsavebox\upperdotline
\savebox\upperdotline{%
\begin{tikzpicture}[scale=0.3,thick,baseline]
 \draw (0,-0.5) to (0,0.5);
 \draw (0.5,-0.5) to (0.5,1);
 \node at (0,0.5) {$\bullet$};
\end{tikzpicture}%
}

\newsavebox\linelowerdot
\savebox\linelowerdot{%
\begin{tikzpicture}[scale=0.3,thick,baseline]
 \draw (0,-0.5) to (0,0.5);
 \draw (-0.5,-1) to (-0.5,0.5);
 \node at (0,-0.5) {$\bullet$};
\end{tikzpicture}%
}

\newsavebox\lineupperdot
\savebox\lineupperdot{%
\begin{tikzpicture}[scale=0.3,thick,baseline]
 \draw (0,-0.5) to (0,0.5);
 \draw (-0.5,-0.5) to (-0.5,1);
 \node at (0,0.5) {$\bullet$};
\end{tikzpicture}%
}

 The free-monodromic complex $\tD_s \hatstar \tD_s$ can be depicted as follows:
 \[
   \begin{tikzcd}
   & \cE_\varnothing \{2\} \ar[loop, in=90, out=110, distance=20, "\theta" pos=0.6] \ar[ld, "-\usebox\lowerdot \otimes \alpha_s^\vee"] \ar[rd, bend left=20, "\usebox\lowerdot \otimes \alpha_s^\vee"] & \\
   \cE_s \star \cE_\varnothing \{1\} \ar[loop, in=180, out=200, distance=20, "\theta" pos=0.6] \ar[rd, "-\usebox\linelowerdot \otimes \alpha_s^\vee" near end] \ar[ru, bend left=20, "\usebox\upperdot"] & \oplus & \cE_\varnothing\{1\} \star \cE_s \ar[ul, "\usebox\upperdot"] \ar[loop, in=0, out=20, distance=20, "\theta" pos=0.6] \ar[ld, bend left=20, "-\usebox\lowerdotline \otimes \alpha_s^\vee"] \\
   & \cE_s \star \cE_s \ar[loop, in=-110, out=-90, distance=20, "\theta" pos=0.6] \ar[ru, "\usebox\upperdotline"] \ar[lu, bend left=20, "-\usebox\lineupperdot"]
   \end{tikzcd}
 \]
 We consider the element $x_s \in \uHom_\FM(\tD_s \hatstar \tD_s, \tD_1)^0_0$ defined by
 \[
   \begin{tikzcd}[column sep=huge]
    \cE_\varnothing\{2\} \ar[rrdd, bend left=20, "1 \otimes \id \otimes \alpha_s^\vee" near start, "\{2\}" description]&& \\
    \cE_s \{1\} \oplus \cE_s\{1\} && \\
    \cE_s \star \cE_s \ar[rr, "\usebox\capmor"] && \cE_\varnothing
   \end{tikzcd}
 \]
and the element $y_s \in \uHom_\FM(\tD_1, \tD_s \hatstar \tD_s)^0_{-2}$ defined by
 \[
   \begin{tikzcd}[column sep=huge]
    \cE_\varnothing\{2\} \ar[rrdd, bend right=20, "-1 \otimes \usebox\cupmor \otimes \alpha_s^\vee" near start, swap, "\{2\}" description] \ar[rr, "\id"] && \cE_\varnothing\{2\} \\
    && \cE_s \{1\} \oplus \cE_s\{1\} && \\
     && \cE_s \star \cE_s.
   \end{tikzcd}
 \]
 Then $x_s$ and $y_s$ are chain maps from $\tD_s \hatstar \tD_s$ to $\tD_1$ and from $\tD_1$ to $\tD_s \hatstar \tD_s \langle 2 \rangle$ respectively, and we have $x_s \circ y_s = 1 \otimes \id_{\tD_1} \otimes \alpha_s^\vee$. One can also check (using~\eqref{eqn:formula-BsBs}) that $y_s \circ x_s = 1 \otimes \id_{\tD_s \hatstar \tD_s} \otimes \alpha_s^\vee + d(h)$, where $h \in \uEnd_\FM(\tD_s \hatstar \tD_s)^{-1}_{-2}$ is defined by
  \[
   \begin{tikzcd}[column sep=tiny]
   & \cE_\varnothing \{2\} & &&&& & \cE_\varnothing \{2\} & \\
   \cE_s \star \cE_\varnothing \{1\} & \oplus & \cE_\varnothing\{1\} \star \cE_s \ar[rrrrrd, bend right=20, "-1 \otimes \usebox\ymor \otimes \alpha_s^\vee"] &&&& \cE_s \star \cE_\varnothing \{1\} & \oplus & \cE_\varnothing\{1\} \star \cE_s. \\
   & \cE_s \star \cE_s \ar[rrrrru, bend right=20, "1 \otimes \usebox\invymor \otimes 1"] & &&&& & \cE_s \star \cE_s &
   \end{tikzcd}
 \]
 Therefore, $x_s$ and $y_s$ induce the desired morphisms.
\end{proof}

\begin{cor}
\label{cor:Hom-tD-tN-loc}
If $\uv$ and $\uw$ are expressions, we have
  \[
   \gHom_{\loc}(\Loc(\tD_{\uv}), \Loc(\tN_{\uw})) \cong \begin{cases}
                                                Q^\vee & \text{if $\pi(\uv)=\pi(\uw)$;} \\
                                                0 & \text{otherwise.}
                                               \end{cases}
  \]
\end{cor}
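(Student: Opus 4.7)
The plan is to reduce this directly to Proposition~\ref{prop:tD-tN}\eqref{it:tD-tN-Hom} by invoking the localization identity~\eqref{eqn:Hom-mon-loc}. Indeed, that identity states
\[
\gHom_\loc(\cF,\cG) \cong \gHom_\FM(\cF,\cG) \otimes_{R^\vee} Q^\vee
\]
for any parity sequences $\cF, \cG$, with the left-hand side computing morphisms in $\Dmix_\loc(\UGU,\bk)$ once $\cF,\cG \in \Dmix(\UGU,\bk)$. Applying this to $\cF = \tD_{\uv}$ and $\cG = \tN_{\uw}$ reduces the computation to that of $\gHom_\FM(\tD_{\uv}, \tN_{\uw})$, which has already been described in Proposition~\ref{prop:tD-tN}\eqref{it:tD-tN-Hom}.

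More precisely, by Proposition~\ref{prop:tD-tN}\eqref{it:tD-tN-Hom} we have an isomorphism of right $R^\vee$-modules
\[
\gHom_\FM(\tD_{\uv}, \tN_{\uw}) \cong
\begin{cases} R^\vee & \text{if $\pi(\uv) = \pi(\uw)$,} \\ 0 & \text{otherwise.} \end{cases}
\]
(In the nonzero case, a generator is provided by the morphism induced by $p_\uw$, as noted in Remark~\ref{rmk:generator-Hom-tD-tN}, although this is not needed here.) Tensoring over $R^\vee$ with $Q^\vee$ and combining with~\eqref{eqn:Hom-mon-loc} immediately gives the desired conclusion.

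There is no real obstacle; this is a one-line consequence of results already in hand. The only point to verify is that the localization functor $\Loc$ is compatible with $\gHom$ in the claimed way, but this is exactly the content of~\eqref{eqn:Hom-mon-loc}, which rests on the flatness of $Q^\vee$ over $R^\vee$ (noted just after the definition of $Q^\vee$ in~\S\ref{sec:localization}).
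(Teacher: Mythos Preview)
There is a genuine gap. Proposition~\ref{prop:tD-tN}\eqref{it:tD-tN-Hom} is stated and proved only for $\uv,\uw \in \hW$, i.e., for \emph{reduced} expressions, whereas the corollary concerns arbitrary expressions. Your direct citation of that proposition for general $\uv,\uw$ is therefore unjustified, and it is not clear that the formula $\gHom_\FM(\tD_{\uv},\tN_{\uw}) \cong R^\vee$ or $0$ even holds before localization when $\uv,\uw$ are not reduced.

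The paper's proof handles exactly this issue. It first argues, using Lemma~\ref{lem:tD-tN-ss} (which produces morphisms such as $\tD_\varnothing \to \tD_s \hatstar \tD_s\langle 2\rangle$ that become isomorphisms after applying $\Loc$) together with Lemma~\ref{lem:conv-Loc-isom-tD-tN} and Proposition~\ref{prop:tD-tN}\eqref{it:tD-tN-independence}, that $\Loc(\tD_{\underline{u}})$ and $\Loc(\tN_{\underline{u}})$ depend, up to isomorphism and Tate twist, only on $\pi(\underline{u})$. This reduction to reduced expressions is the missing step in your argument, and only after it can one combine~\eqref{eqn:Hom-mon-loc} with Proposition~\ref{prop:tD-tN}\eqref{it:tD-tN-Hom}. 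Note that this reduction genuinely requires localization: for instance $\tD_{(s,s)}$ is not isomorphic to $\tD_\varnothing$ in $\Dmix(\UGU,\bk)$, since the composites of the morphisms in Lemma~\ref{lem:tD-tN-ss} are $\alpha_s^\vee$ times the identity rather than the identity itself.
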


\begin{proof}
Using
Proposition~\ref{prop:tD-tN}\eqref{it:tD-tN-independence} and Lemmas~\ref{lem:conv-Loc-isom-tD-tN} and~\ref{lem:tD-tN-ss}, we see that up to isomorphism and Tate twist, $\Loc(\tD_{\uu{u}})$ and $\Loc(\tN_{\uu{u}})$ only depend on $\pi(\uu{u})$. Hence we can assume that $\uv$ and $\uw$ are reduced. In this case, the claim follows from Proposition~\ref{prop:tD-tN}\eqref{it:tD-tN-Hom} and~\eqref{eqn:Hom-mon-loc}.
\end{proof}

%%%%%%%%%%%%%%%%%%%%%%%%%%%%%%%%%%%%%%%%%%%%%%%%%%%%%%%%%%%%%%%%%%%%%%%%%%%
\section{Proof of the main theorem}
%%%%%%%%%%%%%%%%%%%%%%%%%%%%%%%%%%%%%%%%%%%%%%%%%%%%%%%%%%%%%%%%%%%%%%%%%%%

We come back to the general assumptions of~\S\ref{sec:Cartan-realizations}.  In particular, $\bk$ is an integral domain that admits a ring homomorphism $\Zdem \to \bk$.  In addition, we now assume that $\bk$ is Noetherian and of finite global dimension.  

In this section, we will prove that $\hatstar$ equips
the category $\TiltBSp(\UGU,\bk)$ with the structure of a monoidal category.
As explained in~\S\ref{sec:functoriality-conjecture}, this can be reduced to the ``interchange law'' of~\eqref{eqn:conj-interchange}.  We begin by proving that a very special case of the interchange law holds after localization, under the assumptions of Chapter~\ref{chap:dihedral}. 
%(in the case when $\bk$ is a field).

\begin{lem}
\label{lem:functoriality-tD-tN}
%Assume that $\bk$ is a field. 
Assume that, for any pair $(s,t)$ of distinct simple reflections in $S$ generating a finite subgroup of $W$, the assumptions of Chapter~\ref{chap:dihedral} hold.
Let $\uv, \uw$ be expressions,
let $i \in \Z$, and let $k : \tD_\uv \to \tN_{\uw} \langle i \rangle$ be a morphism in the category $\Dmix(\UGU,\bk)$. Then for any $\cF, \cG$ in $\Conv(\UGU,\bk)$ and any morphism $f : \cF \to \cG$, we have
 \begin{equation}
 \label{eqn:functoriality-tD-tN}
 \Loc((\id_\cG \hatstar k) \circ (f \hatstar \id_{\tD_\uv})) = \Loc(f \hatstar k).
 \end{equation}
\end{lem}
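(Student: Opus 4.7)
The plan is to split the argument according to whether $\pi(\uv)=\pi(\uw)$. If $\pi(\uv) \neq \pi(\uw)$, then by Proposition~\ref{prop:tD-tN}\eqref{it:tD-tN-Hom} the space $\gHom_\FM(\tD_\uv, \tN_\uw)$ is zero; hence $k=0$ and both sides of~\eqref{eqn:functoriality-tD-tN} vanish. So I focus on the case $\pi(\uv)=\pi(\uw)$.

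Both sides of~\eqref{eqn:functoriality-tD-tN} are additive and right-$R^\vee$-linear in $k$ (viewed as ranging over all $\gHom_\FM(\tD_\uv, \tN_\uw)^{0}_{-i'}$ for varying $i' \in \Z$, with the right monodromy action of~\eqref{eqn:right-rv-action}): linearity of the left-hand side is immediate from the formula for $\hatstar$ on morphisms, and linearity of the right-hand side follows from~\eqref{eqn:rightmon-interchange}. After applying $\Loc$, both sides become right-$Q^\vee$-linear, by the flatness isomorphism~\eqref{eqn:Hom-mon-loc}. By Proposition~\ref{prop:tD-tN}\eqref{it:tD-tN-Hom}, $\gHom_\FM(\tD_\uv, \tN_\uw) \cong R^\vee$ is free of rank one as a right $R^\vee$-module; hence $\gHom_\loc(\Loc(\tD_\uv), \Loc(\tN_\uw)) \cong Q^\vee$ is free of rank one, and it suffices to verify~\eqref{eqn:functoriality-tD-tN} for a single $k$ whose localization generates it.

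I would take as generator $k_0 := \phi \circ p_\uv : \tD_\uv \to \tN_\uw$, where $p_\uv \in \uHom^\rhd_\FM(\tD_\uv, \tN_\uv)$ is the chain map from Chapter~\ref{chap:murmurs} (a generator of $\gHom_\FM(\tD_\uv, \tN_\uv)$ by Remark~\ref{rmk:generator-Hom-tD-tN}) and $\phi : \tN_\uv \simto \tN_\uw$ is the isomorphism supplied by Proposition~\ref{prop:tD-tN}\eqref{it:tD-tN-independence}. For $k=k_0$, expanding $(\id_\cG \hatstar k_0) \circ (f \hatstar \id_{\tD_\uv})$ via functoriality of the endofunctor $\id_\cG \hatstar (-)$ (Proposition~\ref{prop:convolution-Dmix}) and the interchange law for $p_\uv$ (which holds by Lemma~\ref{lem:interchange-rhd-lhd} because $p_\uv \in \uHom^\rhd_\FM$) yields $(\id_\cG \hatstar \phi) \circ (f \hatstar p_\uv)$. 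Thus~\eqref{eqn:functoriality-tD-tN} for $k_0$ reduces to the identity $f \hatstar (\phi \circ p_\uv) = (\id_\cG \hatstar \phi) \circ (f \hatstar p_\uv)$, which is itself an instance of interchange; by Lemma~\ref{lem:interchange-rhd-lhd}, it holds as soon as $\Loc(\phi)$ admits a lift in $\uHom^\rhd_\FM \otimes_{R^\vee} Q^\vee$.

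The main technical obstacle is therefore the construction of this lift of $\Loc(\phi)$. I would attack it using the minimal Rouquier complexes of Chapter~\ref{chap:rouquier}: Theorem~\ref{thm:Rouquier-convolution-fm-new} produces morphisms $\tD_\uv \to \tD_{\uv,\min}$ and $\tD_\uw \to \tD_{\uw,\min}$ in $\Conv^\rhd_\FM$ that become isomorphisms in $\FM(\fh,W)_\Kar$, and the target depends only on $\pi(\uv)=\pi(\uw)$. Combining these with the maps $p_\uv, q_\uv, p_\uw, q_\uw$ of Lemma~\ref{lem:tD-tN-uw}---whose compositions are multiplication by $a_\uv, a_\uw \in R^\vee$, invertible in $Q^\vee$---assembles a chain of localized isomorphisms $\Loc(\tN_\uv) \cong \Loc(\tD_\uv) \cong \Loc(\tD_{\uv,\min}) = \Loc(\tD_{\uw,\min}) \cong \Loc(\tD_\uw) \cong \Loc(\tN_\uw)$. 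The subtle step is verifying that the inverses of $\Loc(\tD_\uv \to \tD_{\uv,\min})$ and $\Loc(\tD_\uw \to \tD_{\uw,\min})$ admit lifts in $\uHom^\rhd_\FM \otimes_{R^\vee} Q^\vee$, which I expect to handle by exhibiting ``backward'' maps $\tD_{\uv,\min} \to \tD_\uv\langle 2m\rangle$ in $\uHom^\rhd_\FM$ whose compositions with the forward maps are multiplication by units of $Q^\vee$, in direct analogy with the pair $(p_\uv, q_\uv)$ of Lemma~\ref{lem:tD-tN-uw}.
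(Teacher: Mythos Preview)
Your proposal has a genuine gap: both cases rely on Proposition~\ref{prop:tD-tN}, but that proposition is stated and proved only for \emph{reduced} expressions $\uv,\uw\in\hW$, whereas the lemma concerns arbitrary expressions. For non-reduced $\uv$ or $\uw$ you have no justification for the assertions that $\gHom_\FM(\tD_\uv,\tN_\uw)$ vanishes (when $\pi(\uv)\neq\pi(\uw)$) or is free of rank one over $R^\vee$ (when $\pi(\uv)=\pi(\uw)$), nor that the isomorphism $\phi:\tN_\uv\simto\tN_\uw$ exists before localizing. The only available general statement is Corollary~\ref{cor:Hom-tD-tN-loc}, which computes $\gHom_\loc$ after applying $\Loc$. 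This is exactly what the paper uses: for $\pi(\uv)\neq\pi(\uw)$ one only gets $\Loc(k)=0$, and then Lemma~\ref{lem:conv-Loc-isom} plus Lemma~\ref{lem:interchange-rhd-lhd} kill both sides.

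A second obstruction lies in your Case~2 plan. The minimal Rouquier complexes $\tD_{\uv,\min}$ of Chapter~\ref{chap:rouquier} are constructed only for finite dihedral groups, so the chain $\Loc(\tD_\uv)\cong\Loc(\tD_{\uv,\min})=\Loc(\tD_{\uw,\min})\cong\Loc(\tD_\uw)$ has no meaning for general $W$; and even in the dihedral case Theorem~\ref{thm:Rouquier-convolution-fm-new} gives only the forward map in $\Conv^\rhd_\FM$, not the backward one you need. The paper avoids both problems by an induction on the total number of elementary moves needed to connect $\uv$ and $\uw$ to a common reduced expression: insertions of $(s,s)$ are handled by Lemma~\ref{lem:tD-tN-ss}, and single braid moves by applying Theorem~\ref{thm:Rouquier-convolution-fm-new} only to the rank-$2$ subexpression where the move occurs. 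Each step produces an auxiliary morphism $k'$ (or $k''$) lying in $\Conv^\rhd$ with $\Loc(k')$ invertible; one then uses Lemma~\ref{lem:interchange-rhd-lhd} for this $k'$, applies the inductive hypothesis to $k\circ k'$ (or $k''\circ k$), and cancels $\Loc(\id\hatstar k')$ at the end via Lemma~\ref{lem:conv-Loc-isom}. No backward $\rhd$-lifts are ever required.
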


\begin{proof}
First, assume that $\pi(\uv) \neq \pi(\uw)$. Then $\Loc(k)=0$ by Corollary~\ref{cor:Hom-tD-tN-loc}, so $\Loc(\id_\cF \hatstar k)=0$ and $\Loc(\id_\cG \hatstar k)=0$ by Lemma~\ref{lem:conv-Loc-isom}. Since $f \hatstar k=(f \hatstar \id_{\tN_\uw \langle i \rangle}) \circ (\id_\cF \hatstar k)$ by Lemma~\ref{lem:interchange-rhd-lhd}, the claim is then clear.

Assume now that
$\pi(\uv)=\pi(\uw)$. Since the vertical arrows in~\eqref{eqn:loc-karoubian} are fully faithful, it is enough to prove~\eqref{eqn:functoriality-tD-tN} after passage to $\Dmix_\loc(\UGU,\bk)_\Kar$.  We will work in $\Dmix(\UGU,\bk)_\Kar$ or $\Dmix_\loc(\UGU,\bk)_\Kar$ for the remainder of the proof.  (We must do this in order for the morphism $g''$ defined in~\eqref{eqn:gpp-defn} below to make sense.)

Choose a reduced expression $\uu{u} \in \hW$ for $\pi(\uv) = \pi(\uw)$. There exist sequences $(\uv_0, \sdots, \uv_n)$ and $(\uw_0, \sdots, \uw_m)$ of expressions such that
\[
\uw_0=\uv_0=\uu{u}, \quad \uv_n = \uv, \quad \uw_m = \uw,
\]
and such that:
\begin{itemize}
\item
for any $i \in \{0, \sdots, n-1\}$, $\uv_{i+1}$ is obtained from $\uv_i$ by either inserting $(s,s)$ between two simple reflections or at an end (for some $s \in S$), or replacing a subsequence $(s,t,\sdots)$ by $(t,s,\sdots)$ (where $s,t$ are distinct simple reflections such that $st$ has finite order $m_{st}$, and each sequence has $m_{st}$ terms);
\item
for any $j \in \{0, \sdots, m-1\}$, $\uw_{j+1}$ is obtained from $\uw_j$ by either inserting $(s,s)$ between two simple reflections or at an end (for some $s \in S$) or replacing a subsequence $(s,t,\sdots)$ by $(t,s,\sdots)$ (where $s,t$ are distinct simple reflections such that $st$ has finite order $m_{st}$, and each sequence has $m_{st}$ terms).
\end{itemize}
We argue by induction on the minimal possible value of $n+m$.

If $n+m=0$, then $\uv=\uw$ is a reduced expression. In this case the $R^\vee$-module $\gHom_\FM(\tD_{\uv}, \tN_\uw)$ is free of rank $1$ by Proposition~\ref{prop:tD-tN}\eqref{it:tD-tN-Hom}, and is generated by a morphism which belongs to $\Conv^\rhd(\UGU,\bk)_\Kar$ by Remark~\ref{rmk:generator-Hom-tD-tN}. Hence the desired equality follows from Lemma~\ref{lem:interchange-rhd-lhd}.

Now assume that $n+m >0$, and that $n>0$. We set $\uv':=\uv_{n-1}$. If $\uv$ is obtained from $\uv'$ by inserting $(s,s)$ between two simple reflections or at an end, we consider the morphism $k' : \tD_{\uv'} \to \tD_{\uv} \langle 2 \rangle$ 
obtained by convolving the first morphism in Lemma~\ref{lem:tD-tN-ss}\eqref{it:tD-ss} with the appropriate identity morphisms. Then $\Loc(k')$ is invertible by Lemma~\ref{lem:conv-Loc-isom} and Lemma~\ref{lem:conv-Loc-isom-tD-tN}. Since $k'$ belongs to $\Conv^\rhd(\UGU,\bk)_\Kar$, by Lemma~\ref{lem:interchange-rhd-lhd} we have
\[
(\id \hatstar k) \circ (f \hatstar \id) \circ (\id \hatstar k') = (\id \hatstar k) \circ (f \hatstar k') = (\id \hatstar kk') \circ (f \hatstar \id).
\]
Applying
induction to $k \circ k'$, we deduce that
\[
\Loc((\id \hatstar k) \circ (f \hatstar \id)) \circ \Loc(\id \hatstar k') = \Loc(f \hatstar (kk')) = \Loc(f \hatstar k) \circ \Loc(\id \hatstar k').
\]
(In the last equality, we again use Lemma~\ref{lem:interchange-rhd-lhd}.)
Since $\Loc(\id \hatstar k')$ is invertible by Lemma~\ref{lem:conv-Loc-isom}, we deduce that
\[
\Loc((\id \hatstar k) \circ (f \hatstar \id)) = \Loc(f \hatstar k)
\]
in this case, as desired.

If now $\uv$ is obtained from $\uv'$ by replacing a subsequence $(s,t,\sdots)$ by $(t,s,\sdots)$, let us choose some isomorphism $g' : \tD_{(s,t,\sdots)} \simto \tD_{(t,s,\sdots)}$ in $\Dmix(\UGU,\bk)$, see Proposition~\ref{prop:tD-tN}\eqref{it:tD-tN-independence}. Consider also a morphism
\begin{equation}
\label{eqn:gpp-defn}
g'' : \tD_{(t,s,\sdots)} \simto \tD_{(t,s,\sdots), \min}
\end{equation}
as in Theorem~\ref{thm:Rouquier-convolution-fm-new}. Then, by the proof of Proposition~\ref{prop:tD-tN}\eqref{it:tD-tN-independence}, $g''g'$ is a multiple of the composition
\[
\tD_{(s,t,\sdots)} \simto \tD_{(s,t,\sdots), \min} \simto \tD_{(t,s,\sdots), \min},
\]
where the first morphism is again as in Theorem~\ref{thm:Rouquier-convolution-fm-new} and the second one is the obvious isomorphism (given by the identity of the underlying parity sequence if $m_{st}$ is odd, and multiplication by $-1$ on $B_{st\cdots}$ and identity elsewhere if $m_{st}$ is even). Hence $g''g'$ belongs to $\Conv^\rhd(\UGU,\bk)_\Kar$.

Now, let $\tD'_\uv$ be the free-monodromic complex obtained by replacing $\tD_{(t,s,\sdots)}$ by $\tD_{(t,s,\sdots), \min}$ in $\tD_\uv$. Let $k' : \tD_{\uv'} \to \tD_\uv$ and $k'' : \tD_\uv \to \tD'_{\uv}$ be the morphisms induced by $g'$ and $g''$ respectively. Then $k''k'$ and $k''$ belong to $\Conv^\rhd(\UGU,\bk)_\Kar$, and $k'$ and $k''$ are isomorphisms in $\Dmix(\UGU,\bk)_\Kar$. Applying induction to $kk'$, we see that
\begin{multline*}
 \Loc \bigl( (\id \hatstar k)(f \hatstar \id)(\id \hatstar k') \bigr) =  \Loc \bigl( (\id \hatstar k(k'')^{-1})(\id \hatstar k'')(f \hatstar \id)(\id \hatstar k') \bigr) \\
=  \Loc \bigl( (\id \hatstar k(k'')^{-1})(f \hatstar \id)(\id \hatstar k''k') \bigr) = \Loc \bigl( (\id \hatstar k(k'')^{-1}) (\id \hatstar k''k') (f \hatstar \id) \bigr) \\
=  \Loc \bigl( (\id \hatstar kk')(f \hatstar \id) \bigr) =  \Loc \bigl( (f \hatstar \id)(\id \hatstar kk') \bigr) \\
=  \Loc \bigl( (f \hatstar \id)(\id \hatstar k)(\id \hatstar k') \bigr).
\end{multline*}
It follows that $(\id \hatstar k)(f \hatstar \id)=(f \hatstar \id)(\id \hatstar k)$ in this case as well.

Finally the case $n=0$, $m>0$ can be treated similarly, using Lemma~\ref{lem:tD-tN-ss}\eqref{it:tN-ss} instead of Lemma~\ref{lem:tD-tN-ss}\eqref{it:tD-ss}. (In these arguments one also needs $\tN$-versions of minimal Rouquier complexes, which are simply obtained from the $\tD$-versions by reflecting all diagrams along an horizontal axis.)
Details are left to the reader.
\end{proof}

%For the definition of the ring $\Zdem$ in the following statement, see~\S\ref{sec:Cartan-realizations}

\begin{thm}
\label{thm:functoriality-hatstar}
Let $\GKM$, $\BKM$, and $\UKM$ be as in~{\rm\S\ref{sec:parity-complexes}}, and let $\bk$ be a Noetherian integral domain of finite global dimension that admits a ring homomorphism $\Zdem \to \bk$.
 \begin{enumerate}
  \item 
  \label{it:functoriality-UGU}
  The operations $(\cF, \cG) \mapsto \cF \hatstar \cG$ and $(f,g) \mapsto f \hatstar g$ define a functor
  \[
   \TiltBSp(\UGU,\bk) \times \TiltBSp(\UGU,\bk) \to \TiltBSp(\UGU,\bk),
  \]
  and hence equip $\TiltBSp(\UGU,\bk)$ with the structure of a monoidal category.
  
  \item
  \label{it:functoriality-UGB}
  The operations $(\cF, \cG) \mapsto \cF \hatstar \cG$ and $(f,g) \mapsto f \hatstar g$ define a functor
  \[
   \TiltBSp(\UGU,\bk) \times \TiltBSp(\UGB,\bk) \to \TiltBSp(\UGB,\bk),
  \]
  making the category $\TiltBSp(\UGB,\bk)$ into a module for the monoidal category $\TiltBSp(\UGU,\bk)$.
 \end{enumerate}
\end{thm}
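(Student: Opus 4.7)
The strategy is to reduce Theorem~\ref{thm:functoriality-hatstar} in several steps to Lemma~\ref{lem:functoriality-tD-tN}. Since the unitor and associator isomorphisms for $\hatstar$ are already in hand (Lemma~\ref{lem:unitor}, Proposition~\ref{prop:associator}) and satisfy the required coherence conditions (Proposition~\ref{prop:hatstar-coherence}), the content of part~\eqref{it:functoriality-UGU} is exactly the interchange law~\eqref{eqn:conj-interchange} on $\TiltBSp(\UGU,\bk)$. Since both sides of that identity are $\bk$-bilinear in the pair of morphisms involved, and since $\Hom$-spaces between Bott--Samelson tilting objects satisfy base change and are free over $\Zdem$ (Corollary~\ref{cor:Hom-Tilt-mon}), I can reduce the proof to the case when $\bk$ is a field.

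Assuming $\bk$ is a field, Lemma~\ref{lem:Loc-Tilt} tells us that the functor $\Loc$ is faithful on $\TiltBSp(\UGU,\bk)$, so it will suffice to verify the interchange law after applying $\Loc$. Given morphisms $f : \cF \to \cG$ and $k : \cF' \to \cG'$ in $\TiltBSp(\UGU,\bk)$, Corollary~\ref{cor:Tmon-tD-tN} provides morphisms $\alpha : \cF'' \to \cF'$ and $\beta : \cG' \to \cG''$ in $\Conv^\rhd(\UGU,\bk)$ whose images under $\Loc$ are isomorphisms, where $\cF''$ is a direct sum of Tate twists of objects $\tD_{\uv_i}$ and $\cG''$ is a direct sum of Tate twists of objects $\tN_{\uw_j}$. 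Setting $k' = \beta \circ k \circ \alpha : \cF'' \to \cG''$, and applying Lemma~\ref{lem:interchange-rhd-lhd} repeatedly (each instance being legitimate because the relevant leg is either $\alpha$ or $\beta$, which lie in $\uHom_\FM^\rhd$, or an identity morphism, which lies trivially in $\uHom_\FM^\lhd$), we will find
\begin{gather*}
 f \hatstar k' = (\id_\cG \hatstar \beta) \circ (f \hatstar k) \circ (\id_\cF \hatstar \alpha), \\
 (\id_\cG \hatstar k') \circ (f \hatstar \id_{\cF''}) = (\id_\cG \hatstar \beta) \circ \bigl( (\id_\cG \hatstar k) \circ (f \hatstar \id_{\cF'}) \bigr) \circ (\id_\cF \hatstar \alpha).
\end{gather*}
The morphisms $\Loc(\id_\cF \hatstar \alpha)$ and $\Loc(\id_\cG \hatstar \beta)$ are isomorphisms by Lemma~\ref{lem:conv-Loc-isom}, and Lemma~\ref{lem:functoriality-tD-tN} (applied componentwise to $k'$) gives $\Loc(f \hatstar k') = \Loc((\id_\cG \hatstar k') \circ (f \hatstar \id_{\cF''}))$; cancelling the two localized isomorphisms then yields $\Loc(f \hatstar k) = \Loc((\id_\cG \hatstar k) \circ (f \hatstar \id_{\cF'}))$, and the symmetric identity $\Loc(f \hatstar k) = \Loc((f \hatstar \id_{\cG'}) \circ (\id_\cF \hatstar k))$ follows by an analogous argument. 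Combined with the already-known functoriality of $\hatstar$ in each variable separately (Proposition~\ref{prop:convolution-Dmix}), this gives the full interchange law.

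Part~\eqref{it:functoriality-UGB} will follow from part~\eqref{it:functoriality-UGU} by means of $\ForFMLM$: the compatibility~\eqref{eqn:For-hatstar} together with Corollary~\ref{cor:Hom-Tilt-mon} expresses every morphism between Bott--Samelson tilting objects in $\TiltBSp(\UGB,\bk)$ as a $\bk$-linear combination of images under $\ForFMLM$ of morphisms in $\TiltBSp(\UGU,\bk)$, and the $\bk$-bilinearity of both sides of the interchange law transfers the result to the module setting.

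The essential difficulty has already been overcome in Lemma~\ref{lem:functoriality-tD-tN}, whose proof leverages the inductive reduction through minimal Rouquier complexes of Chapter~\ref{chap:rouquier} together with the dihedral calculus of Chapter~\ref{chap:dihedral}. The passage from that lemma to the theorem itself is then the formal localization argument above, and the only delicate point is ensuring that each invocation of Lemma~\ref{lem:interchange-rhd-lhd} satisfies at least one of its two hypotheses --- which is always the case here because $\alpha$ and $\beta$ were chosen in $\Conv^\rhd$ and the complementary legs are identity morphisms.
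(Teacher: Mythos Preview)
Your proposal is correct and follows essentially the same route as the paper's own proof: reduce to the interchange law via Proposition~\ref{prop:hatstar-coherence}, use Corollary~\ref{cor:Hom-Tilt-mon} to pass to a field, apply Lemma~\ref{lem:Loc-Tilt} to work after $\Loc$, sandwich $k$ between the $\Conv^\rhd$ morphisms from Corollary~\ref{cor:Tmon-tD-tN}, invoke Lemma~\ref{lem:functoriality-tD-tN}, and cancel via Lemma~\ref{lem:conv-Loc-isom}; part~\eqref{it:functoriality-UGB} is then deduced from the fullness of $\ForFMLM$ on Bott--Samelson tilting objects. The only cosmetic difference is that the paper reduces directly to the single identity $(\id \hatstar k)\circ(f \hatstar \id) = f \hatstar k$ (arguing as in the proof of Lemma~\ref{lem:monconv-weak-interchange}) rather than also stating the symmetric form.
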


\begin{proof}
\eqref{it:functoriality-UGU}
As recalled above, in view of Proposition~\ref{prop:hatstar-coherence}, it is enough to prove the interchange law~\eqref{eqn:conj-interchange}.  Arguing as in the proof of Lem\-ma~\ref{lem:monconv-weak-interchange}, we only have to prove that for any morphisms $f : \Tmon_{\uv} \to \Tmon_{\uv'} \langle n \rangle$ and $k : \Tmon_{\uw} \to \Tmon_{\uw'} \langle m \rangle$ we have
\begin{equation}
\label{eqn:functoriality}
 (\id \hatstar k) \circ (f \hatstar \id) = f \hatstar k
\end{equation}
in $\Hom_{\Dmix(\UGU,\bk)}(\Tmon_{\uv\uw}, \Tmon_{\uv'\uw'} \langle n+m \rangle)$. Since the functors $\bk$ and $\Q$ induce isomorphisms
\[
 \bk \otimes_\Zdem \gHom_{\Dmix(\UGU, \Zdem)}(\Tmon^\Zdem_{\uu{u}}, \Tmon_{\uu{u}'}^\Zdem) \simto \gHom_{\Dmix(\UGU, \bk)}(\Tmon^{\bk}_{\uu{u}}, \Tmon_{\uu{u}'}^{\bk})
\]
and
\[
 \Q \otimes_\Zdem \gHom_{\Dmix(\UGU, \Zdem)}(\Tmon^\Zdem_{\uu{u}}, \Tmon_{\uu{u}'}^\Zdem) \simto \gHom_{\Dmix(\UGU, \Q)}(\Tmon^{\Q}_{\uu{u}}, \Tmon_{\uu{u}'}^{\Q})
\]
respectively by Corollary~\ref{cor:Hom-Tilt-mon}, it suffices to treat the case $\bk=\Q$. 
In this case the assumptions of Chapter~\ref{chap:dihedral} are satisfied for any pair of simple reflections generating a finite subgroup of $W$; see Lemma~\ref{lem:dihedral-faithfulness}.
%In fact, the argument below works more generally whenever $\bk$ is a field.  
This is the condition we will need for our arguments below, and we henceforth assume it is satisfied from now on.
%We assume this from now on.

Since the functor $\Loc$ is faithful on $\TiltBSp(\UGU, \bk)$ by Lemma~\ref{lem:Loc-Tilt}, to prove~\eqref{eqn:functoriality} it suffices to prove that
\[
\Loc((\id \hatstar k) \circ (f \hatstar \id)) = \Loc(f \hatstar k).
\]

By Corollary~\ref{cor:Tmon-tD-tN}, there exist expressions $\uu{u}_1, \sdots, \uu{u}_i$ and $\uu{u}'_1, \sdots, \uu{u}'_j$, integers $n_1, \sdots, n_i$ and $n'_1, \sdots, n'_j$, and morphisms
\[
k' : \bigoplus_{p=1}^i \tD_{\uu{u}_p} \langle n_p \rangle \to \Tmon_{\uw}, \quad k'' : \Tmon_{\uw'} \langle m \rangle \to \bigoplus_{q=1}^j \tN_{\uu{u}'_q} \langle n'_q \rangle
\]
in $\Conv^\rhd(\UGU,\bk)$ whose images under $\Loc$ are isomorphisms. By Lem\-ma~\ref{lem:functoriality-tD-tN}, we have
\[
\Loc((\id \hatstar (k''kk')) \circ (f \hatstar \id)) = \Loc(f \hatstar (k''kk')).
\]
Using Lemma~\ref{lem:interchange-rhd-lhd}, this implies that we have
\begin{multline*}
\Loc(\id \hatstar k'') \circ \Loc(\id \hatstar k) \circ \Loc(f \hatstar \id) \circ \Loc(\id \hatstar k') \\
= \Loc(\id \hatstar k'') \circ \Loc(f \hatstar k) \circ \Loc(\id \hatstar k').
\end{multline*}
Since $\Loc(\id \hatstar k'')$ and $\Loc(\id \hatstar k')$ are isomorphisms by Lemma~\ref{lem:conv-Loc-isom}, we deduce that $\Loc((\id \hatstar k) \circ (f \hatstar \id)) = \Loc(f \hatstar k)$, as desired.

\eqref{it:functoriality-UGB}
Similarly, what we have to prove is that for any morphisms $f : \Tmon_{\uv} \to \Tmon_{\uv'} \langle n \rangle$ and $k : \cT_{\uw} \to \cT_{\uw'} \langle m \rangle$ we have
\[
 (\id \hatstar k) \circ (f \hatstar \id) = f \hatstar k
\]
in $\Hom_{\Dmix(\UGB,\bk)}(\cT_{\uv\uw}, \cT_{\uv'\uw'} \langle n+m \rangle)$. Since the restriction of the functor $\ForFMLM$ to $\TiltBSp(\UGU,\bk)$ is full by Corollary~\ref{cor:Hom-Tilt-mon}, this equality follows from~\eqref{it:functoriality-UGU}.
\end{proof}

%==========================================================================

\backmatter

%==========================================================================

%==========================================================================

\printindex


\begin{thebibliography}{AMRW}

\bibitem[AMRW]{mkdkm}
P.~Achar, S.~Makisumi, S.~Riche and G.~Williamson, \emph{Koszul duality for Kac--Moody groups and characters of tilting modules},  J. Amer. Math. Soc. 32 (2019), no. 1, 261–310.

\bibitem[AR1]{ar:kdsf}
P.~Achar and S.~Riche, {\em Koszul duality and semisimplicity of Frobenius},
  Ann. Inst. Fourier {\bf 63} (2013), 1511--1612.

\bibitem[AR2]{modrap1}
P.~Achar and S.~Riche, {\em Modular perverse sheaves on flag varieties I: tilting and parity sheaves} (with a joint appendix with G.~Williamson), Ann. Sci. \'{E}c. Norm. Sup\'{e}r.~\textbf{49} (2016), 325--370.

\bibitem[AR3]{modrap2}
P.~Achar and S.~Riche, {\em Modular perverse sheaves on flag varieties II:
  Koszul duality and formality}, Duke Math. J. {\bf 165} (2016), 161--215.
  
\bibitem[ARd]{ar:agsr}
P.~Achar and L.~Rider, {\em The affine Grassmannian and the Springer resolution
  in positive characteristic}, Compos. Math. {\bf 152} (2016), 2627--2677, With
  an appendix joint with S.~Riche.
  
\bibitem[BBDG]{bbd}
A.~Be{\u\i}linson, J.~Bernstein, P.~Deligne, and O.~Gabber, {\em Faisceaux pervers},
  Analyse et topologie sur les espaces singuliers, I (Luminy, 1981),
  Ast\'erisque, vol. 100, 
%  Soc. Math. France, Paris, 
  1982, pp.~5--171.
  
\bibitem[BG]{bg}
A.~Be{\u\i}linson and V.~Ginzburg, \emph{Wall-crossing functors and $\mathcal{D}$-modules},
Represent. Theory \textbf{3} (1999), 1--31.

\bibitem[BGS]{bgs}
A.~Be{\u\i}linson, V.~Ginzburg, and W.~Soergel, {\em Koszul duality patterns in representation theory}, J.~Amer.~Math.~Soc.~{\bf 9} (1996), 473--527.
  
\bibitem[BY]{by}
R.~Bezrukavnikov and Z.~Yun, \emph{On Koszul duality for Kac--Moody groups},
Represent.~Theory \textbf{17} (2013), 1--98.
  
\bibitem[CYZ]{cyz}
X.-W.~Chen, Y.~Ye, and P.~Zhang, \emph{Algebras of derived dimension zero},
Comm. Algebra \textbf{36} (2008), 1--10.

\bibitem[E]{elias}
B.~Elias, \emph{The two-color Soergel calculus}, Compos. Math. \textbf{152} (2016), 327--398.

\bibitem[EL]{el}
B.~Elias and N.~Libedinsky, \emph{Indecomposable Soergel bimodules for universal Coxeter groups}, with an appendix by Ben Webster,
Trans. Amer. Math. Soc. \textbf{369} (2017), no. 6, 3883--3910.

\bibitem[EW1]{ew}
B.~Elias and G.~Williamson, {\em Soergel calculus}, Represent. Theory \textbf{20}, 295--374.

\bibitem[EW2]{ew2}
B.~Elias and G.~Williamson,
\emph{Localized calculus for the Hecke category},
preprint~\href{https://arxiv.org/abs/2011.05432}{arXiv:2011.05432}.

\bibitem[JMW]{jmw}
D.~Juteau, C.~Mautner, and G.~Williamson, {\em Parity sheaves}, J. Amer. Math. Soc. {\bf 27} (2014), 1169--1212.

\bibitem[KS]{ks}
M.~Kashiwara and P.~Schapira, \emph{Sheaves on manifolds}, 
Grundlehren der Mathematischen Wissenschaften 292, 
Springer, 1990.

%\bibitem[K]{kumar}
%S.~Kumar, {\em Kac--Moody groups, their flag varieties and representation
%  theory}, Progress in Mathematics, vol. 204, Birkh\"auser Boston Inc., 2002.

\bibitem[Ma]{maclane}
S.~Mac Lane, \emph{Categories for the working mathematician, Second edition}, Graduate Texts in Mathematics 5, Springer, 1998.

\bibitem[M]{makisumi}
S.~Makisumi, {\em Modular Koszul duality for Soergel bimodules}, preprint~\href{https://arxiv.org/abs/1703.01576}{arXiv:1703.01576}.

\bibitem[Mt1]{mathieu}
 O.~Mathieu, \emph{Formules de caract\`eres pour les alg\`ebres de Kac--Moody g\'en\'erales}, Ast\'erisque, vol. 159--160, 1988, 267 pp.

\bibitem[Mt2]{mathieu-KM}
O.~Mathieu, \emph{Construction d'un groupe de Kac--Moody et applications}, Compositio Math.~\textbf{69} (1989), 37--60.

\bibitem[MR]{mr:etsps}
C.~Mautner and S.~Riche, {\em Exotic tilting sheaves, parity sheaves on affine
  Grassmannians, and the Mirkovi{\'c}--Vilonen conjecture}, J. Eur. Math. Soc.~\textbf{20} (2018), 2259--2332.
  
\bibitem[RSW]{mkd}
S.~Riche, W.~Soergel, and G.~Williamson, \emph{Modular Koszul duality}, Compos.~Math.~\textbf{150} (2014), 273--332.

\bibitem[RW]{rw}
S.~Riche and G.~Williamson, {\em Tilting modules and the $p$-canonical basis},
 Ast{\'e}risque, vol.~397, 2018.
%preprint arXiv:1512.08296.

\bibitem[Ro]{rouquier}
R.~Rouquier, \emph{Categorification of $\mathfrak{sl}_2$ and braid groups}, in \emph{Trends in representation theory of algebras and related topics}, 137--167,
Contemp. Math. 406, Amer. Math. Soc., 
%Providence, RI, 
2006. 

\bibitem[Sa]{sauerwein}
M.~Sauerwein, \emph{Koszul duality and Soergel bimodules for dihedral groups}, 
 Trans. Amer. Math. Soc. \textbf{370} (2018), no. 2, 1251--1283.
%preprint arXiv:1504.06545.

\bibitem[S1]{soergel}
W.~Soergel, {\em On the relation between intersection cohomology and
  representation theory in positive characteristic}, Commutative algebra,
  homological algebra and representation theory (Catania/Genoa/Rome, 1998), J.
  Pure Appl. Algebra {\bf 152} (2000), 311--335.

\bibitem[S2]{soergel-bim}
W.~Soergel, \emph{Kazhdan--Lusztig-Polynome und unzerlegbare Bimoduln \"uber Polynomringen}, J. Inst. Math. Jussieu \textbf{6} (2007), 501--525. 

\bibitem[Spa]{spaltenstein}
N.~Spaltenstein, \emph{Resolutions of unbounded complexes}, Compos.~Math.~\textbf{65} (1988), 121--154.

\bibitem[T]{tits}
J.~Tits, {\em Groupes associ\'es aux alg\`ebres de Kac--Moody}, S\'eminaire
  Bourbaki, Vol.~1988/89, Ast\'erisque, vol.~177--178, 1989, pp.~7--31.

\bibitem[Ve]{verdier}
J.-L. Verdier, {\em Sp\'ecialisation de faisceaux et monodromie mod\'er\'ee}, Analyse et topologie sur les espaces siguliers II, III (Luminy, 1981),
  Ast\'erisque, vol. 101, 
%  Soc. Math. France, 
  1983, pp.~332--364.
  
\end{thebibliography}
\end{document}